%
%

%
%

\documentclass[11pt,reqno]{amsart}

\setlength{\oddsidemargin}{0.125in}
\setlength{\evensidemargin}{0.125in}
\setlength{\textwidth}{6.375in}
\setlength{\textheight}{8.5in}
\topskip 0in
\topmargin 0.375in
\footskip 0.25in










\setcounter{tocdepth}{4}



\newtheorem{thm}{Theorem}[section]
\newtheorem*{thm*}{Theorem}
\newtheorem{lem}[thm]{Lemma}
\newtheorem*{lem*}{Lemma}

\newtheorem{cor}[thm]{Corollary}
\newtheorem{claim}[thm]{Claim}
\newtheorem{prop}[thm]{Proposition}

\theoremstyle{definition}

\newtheorem{assump}[thm]{Assumption}
\renewcommand{\thecase}{}
\newtheorem*{case*}{Case}

\newtheorem{defn}[thm]{Definition}
\newtheorem*{defn*}{Definition}
\newtheorem{exmp}[thm]{Example}
\newtheorem*{exmp*}{Example}
\newtheorem{hyp}[thm]{Hypothesis}

\newtheorem{notn}[thm]{Notation} 
\newtheorem{prob}[thm]{Problem}
\newtheorem{rmk}[thm]{Remark}
\newtheorem*{rmk*}{Remark}
\renewcommand{\thestep}{}

\theoremstyle{remark}


\makeatletter
\def\alphenumi{
  \def\theenumi{\alph{enumi}}
  \def\p@enumi{\theenumi}
  \def\labelenumi{(\@alph\c@enumi)}}
\makeatother




\makeatletter
\def\thecase{\@arabic\c@case}
\makeatother




\makeatletter
\def\thestep{\@arabic\c@step}
\makeatother


%
%

\newcount\hh
\newcount\mm
\mm=\time
\hh=\time
\divide\hh by 60
\divide\mm by 60
\multiply\mm by 60
\mm=-\mm
\advance\mm by \time
\def\hhmm{\number\hh:\ifnum\mm<10{}0\fi\number\mm}

\setlength{\marginparwidth}{1.0in}
\let\oldmarginpar\marginpar
\renewcommand\marginpar[1]{\-\oldmarginpar[\raggedleft\footnotesize #1]%
{\raggedright\footnotesize #1}}


\renewcommand\emptyset{\varnothing}





\newcommand\HH{\mathbb{H}}

\newcommand\KK{\mathbb{K}}

\newcommand\RR{\mathbb{R}}


\newcommand\fm{{\mathfrak{m}}}

\newcommand\fw{{\mathfrak{w}}}


\newcommand\sB{{\mathscr{B}}}
\newcommand\sC{{\mathscr{C}}}

\newcommand\sO{{\mathscr{O}}}

\newcommand\sU{{\mathscr{U}}}






\newcommand\eps{\varepsilon}



\newcommand\less{\setminus}



\DeclareMathOperator{\esssup}{ess\,sup}

\newcommand\sign{\operatorname{sign}}

\newcommand\supp{\operatorname{supp}}



%
%

\numberwithin{equation}{section}

\newcommand{\essinf}{\operatornamewithlimits{ess\ inf}}
\renewcommand{\esssup}{\operatornamewithlimits{ess\ sup}}

\usepackage{amssymb}
\usepackage{hyperref}
\usepackage{mathrsfs}
\usepackage[usenames]{color}
\usepackage{url}

\hypersetup{pdftitle={Existence, Uniqueness and Regularity for Obstacle Problems}}
\hypersetup{pdfauthor={Panagiota Daskalopoulos and Paul M. N. Feehan}}

%
%

\begin{document}

\title[Existence, Uniqueness and Regularity for Obstacle Problems]
{Existence, Uniqueness, and Global Regularity for Degenerate Elliptic Obstacle Problems in Mathematical Finance}

\author[P. Daskalopoulos]{Panagiota Daskalopoulos}
\address{Department of Mathematics, Columbia University, New York, NY 10027}
\email[P. Daskalopoulos]{pdaskalo@math.columbia.edu}

\author[P. Feehan]{Paul M. N. Feehan}
\address{Department of Mathematics, Rutgers, The State University of New Jersey, 110 Frelinghuysen Road, Piscataway, NJ 08854}
\email[P. Feehan]{feehan@math.rutgers.edu}

\begin{abstract}
The Heston stochastic volatility process, which is widely used as an asset price model in mathematical finance, is a paradigm for a degenerate diffusion process where the degeneracy in the diffusion coefficient is proportional to the square root of the distance to the boundary of the half-plane. The generator of this process with killing, called the elliptic Heston operator, is a second-order degenerate elliptic partial differential operator whose coefficients have linear growth in the spatial variables and where the degeneracy in the operator symbol is proportional to the distance to the boundary of the half-plane. With the aid of weighted Sobolev spaces, we prove existence, uniqueness, and global regularity of solutions to stationary variational inequalities and obstacle problems for the elliptic Heston operator on unbounded subdomains of the half-plane. In mathematical finance, solutions to obstacle problems for the elliptic Heston operator correspond to value functions for perpetual American-style options on the underlying asset.
\end{abstract}

%

\subjclass[2000]{Primary 35J70, 35J86, 49J40, 35R45; Secondary 35R35, 49J20, 60J60}

\keywords{American-style option, degenerate elliptic differential operator, degenerate diffusion process, free boundary problem, Heston stochastic volatility process, mathematical finance, obstacle problem, variational inequality, weighted Sobolev space}

\thanks{PD was partially supported by NSF grant DMS-0905749. PF was partially supported by NSF grant DMS-1059206}

\date{September 6, 2011}

\maketitle
\tableofcontents

\section{Introduction}
\label{subsec:Intro}
We consider questions of existence, uniqueness, and regularity of solutions, $u:\sO\to\RR$, to the obstacle problem
\begin{equation}
\label{eq:IntroObstacleProblem}
\min\{Au-f,u-\psi\} = 0 \quad \hbox{a.e. on }\sO, \quad u = g \quad\hbox{on } \Gamma_1,
\end{equation}
where $\sO\subset\HH$ is a possibly unbounded domain in the open upper half-plane $\HH := \RR^{d-1}\times(0,\infty)$ (where $d\geq 2$), $\Gamma_1 = \partial\sO\cap\HH$ is the portion of the boundary $\partial\sO$ of $\sO$ which lies in $\HH$, $f:\sO\to\RR$ is a source function, the function $g:\sO\cup\Gamma_1\to\RR$ prescribes a Dirichlet boundary condition along $\Gamma_1$ and $\psi:\sO\cup\Gamma_1\to\RR$ is an obstacle function which is compatible with $g$ in the sense that $\psi\leq g$ on $\Gamma_1$, while $A$ is an elliptic differential operator on $\sO$ which is degenerate along the interior, $\Gamma_0$, of $\{y=0\}\cap\partial\sO$ and which we require to non-empty throughout this article. However, no boundary condition is prescribed along $\Gamma_0$. Rather, we shall see that the problem \eqref{eq:IntroObstacleProblem} is well-posed when we seek solutions in suitable function spaces which describe their qualitative behavior near the boundary portion $\Gamma_0$: for example, continuity of derivatives up to $\Gamma_0$ via suitable weighted H\"older spaces (by analogy with \cite{DaskalHamilton1998}) or integrability of derivatives in a neighborhood of $\Gamma_0$ via suitable weighted Sobolev spaces (by analogy with \cite{Koch}). In this article, we set $d=2$ and choose $A$ to be the generator of the two-dimensional Heston stochastic volatility process with killing \cite{Heston1993}, a degenerate diffusion process well known in mathematical finance and a paradigm for a broad class of degenerate Markov processes, driven by $d$-dimensional Brownian motion, and corresponding generators which are degenerate elliptic integro-differential operators:
\begin{equation}
\label{eq:OperatorHestonIntro}
Av := -\frac{y}{2}\left(v_{xx} + 2\rho\sigma v_{xy} + \sigma^2 v_{yy}\right) - (r-q-y/2)v_x - \kappa(\theta-y)v_y + rv, \quad v\in C^\infty(\HH).
\end{equation}
Throughout this article, the coefficients of $A$ are required to obey

\begin{assump}
[Ellipticity condition for the Heston operator coefficients]
\label{assump:HestonCoefficients}
The coefficients defining $A$ in \eqref{eq:OperatorHestonIntro} are constants obeying
\begin{equation}
\label{eq:EllipticHeston}
\sigma \neq 0, -1< \rho < 1,
\end{equation}
and $\kappa>0$, $\theta>0$, $r\geq 0$, and $q \geq 0$.
\end{assump}

\begin{rmk}[A change of variables and the Heston operator coefficients]
\label{rmk:HestonCoefficientb1}
With the aid of simple affine changes of variables on $\RR^2$ which maps $(\HH,\partial\HH)$ onto $(\HH,\partial\HH)$ (Lemma \ref{lem:RescalingHeston}), we can also arrange that the combination of coefficients, $b_1 = r-q- \kappa\theta\rho/\sigma$, is zero and, unless stated otherwise, we shall rely this fact (Assumption \ref{assump:HestonCoefficientb1}) when convenient throughout our article; the constant $b_1$ is one of the coefficients of the derivative, $u_x$, appearing in the bilinear form, $a(\cdot,\cdot)$ (Definition \ref{defn:HestonWithKillingBilinearForm}), associated with the operator $A$.
\end{rmk}

A recent citation search revealed that almost 900 articles\footnote{A Thompson-Reuters Web of Knowledge \cite{ThomsonReuters} citation search performed on June 16, 2011 yielded 883 references.} in scientific journals cite the stochastic volatility model proposed by Steven Heston in \cite{Heston1993} and even this may not include articles on related stochastic volatility models or unpublished technical reports by researchers at industry financial engineering groups. The widespread use of degenerate stochastic processes in financial engineering highlights the need to address a circle of unresolved fundamental questions concerning degenerate Markov processes and related obstacle and boundary value problems. As we describe in \S \ref{subsec:Survey} and \S \ref{subsec:Guide}, important questions regarding existence, uniqueness, and regularity of solutions to problem \eqref{eq:IntroObstacleProblem} or problem \eqref{eq:IntroBoundaryValueProblem} below have not been addressed thus far in the literature on degenerate partial differential operators.

In mathematical finance, a solution $u$ to the elliptic obstacle problem \eqref{eq:IntroObstacleProblem} when $f=0$ can be interpreted as the value function for a \emph{perpetual American-style option} with \emph{payoff} function given by the obstacle function, $\psi$, while a solution $u$ to the corresponding \emph{parabolic} obstacle problem on $\sO\times[0,T]$, with $0<T<\infty$, can be interpreted as the value function for a \emph{finite-maturity} American-style option with payoff function given by a terminal condition function, $h:\sO\to\RR$, which typically coincides on $\sO\times\{T\}$ with the obstacle function, $\psi$. For example, in the case of an American-style put option, one chooses $\psi(x,y) = (E-e^x)^+, (x,y)\in\sO$, where $E>0$ is a positive constant. This class of obstacle problems may be generalized further by considering problems with two obstacles, such as upper and lower obstacle functions, $\psi_1$ and $\psi_2$ \cite{Friedman_1982}, \cite{Petrosyan_Shagholian_Uraltseva}.

To provide a stepping-stone to a solution to the obstacle problem, we shall first need to consider  questions of existence, uniqueness, and regularity of solutions to the elliptic boundary value problem,
\begin{equation}
\label{eq:IntroBoundaryValueProblem}
Au = f \quad \hbox{a.e. on }\sO, \quad u = g \quad\hbox{on } \Gamma_1.
\end{equation}
Like problem \eqref{eq:IntroObstacleProblem}, we will see that \eqref{eq:IntroBoundaryValueProblem} is well-posed without a boundary condition along $\Gamma_0$ when we seek solutions in suitable weighted H\"older or weighted Sobolev spaces. While solutions to \eqref{eq:IntroBoundaryValueProblem} do not have an immediate interpretation in mathematical finance, a solution, $u$, to the corresponding \emph{parabolic} boundary value problem on $\sO\times[0,T]$ can be interpreted as the value function for a \emph{European-style option} with payoff function given by a terminal condition function, $h:\sO\to\RR$. For example, in the case of a European-style put option, one chooses $h(x,y) = (E-e^x)^+, (x,y)\in\sO$.

\subsection{Summary of main results}
\label{subsec:Summary}
We shall state a selection of our main results here and then refer the reader to our guide to this article in \S \ref{subsec:Guide} for more of our results on existence, uniqueness and regularity of solutions to variational equations and inequalities and corresponding obstacle problems. We shall seek solutions to \eqref{eq:IntroObstacleProblem} in the weighted Sobolev space (see Definitions \ref{defn:H1WeightedSobolevSpaces} and \ref{defn:H2WeightedSobolevSpaces})
$$
H^2(\sO,\fw) = \{u \in L^2(\sO,\fw): (1+y)^{1/2}u, (1+y)|Du|,  y|D^2u| \in L^2(\sO,\fw)\},
$$
where the domain $\sO$ is as in Definition \ref{defn:HestonDomain}, $Du=(u_x,u_y)$, $D^2u = (u_{xx}, u_{xy}, u_{yx}, u_{yy})$, all derivatives of $u$ are defined in the sense of distributions, and
$$
\|u\|_{H^2(\sO,\fw)}^2 = \int_\sO\left( y^2|D^2u|^2 + (1+y)^2|Du|^2 + (1+y)u^2\right)\,\fw\,dxdy,
$$
with weight function $\fw:\HH\to(0,\infty)$ given by
$$
\fw(x,y) = y^{\beta-1}e^{-\gamma|x|-\mu y}, \quad (x,y) \in \HH,
$$
where $\beta = 2\kappa\theta/\sigma^2$, $\mu = 2\kappa/\sigma^2$, and $0<\gamma<\gamma_0$, where $\gamma_0$ depends only on the constant coefficients of $A$ in \eqref{eq:OperatorHestonIntro}.

\subsubsection{Existence, uniqueness, and regularity of solutions to the obstacle problem}
We first summarize our main results concerning the obstacle problem \eqref{eq:IntroObstacleProblem}. Because the bilinear form (Definition \ref{defn:HestonWithKillingBilinearForm}) defined by the operator $A$ is non-coercive, the domain $\sO$ is unbounded, the coefficients of $A$ are unbounded, and the Rellich-Kondrachov compact embedding theorem does not always hold for weighted Sobolev spaces or unbounded domains, we shall need to seek solutions when the source function obeys certain pointwise growth properties. Therefore, we introduce the

\begin{defn}[Admissible envelope functions for the obstacle problem]
\label{defn:EnvelopeFunctionsObstacle}
Given $g, \psi\in H^2(\sO,\fw)$, we call $M, m \in H^2(\sO,\fw)$ a pair of \emph{admissible envelope functions for the obstacle problem} \eqref{eq:IntroObstacleProblem} if
$$
m \leq g \leq M \hbox{ on }\Gamma_1, \quad m \leq M \hbox{ on }\sO, \quad Am\leq AM \hbox{ a.e on }\sO, \quad\hbox{and}\quad \psi \leq M \hbox{ on }\sO,
$$
and $M, m$ obey
\begin{gather*}
(1+y)^2M, (1+y)^2m \in L^2(\sO,\fw),
\\
(1+y)^{1/2}M, (1+y)^{1/2}m  \in L^q(\sO,\fw),
\end{gather*}
for some $q>2$.
\end{defn}

\begin{defn}[Admissible source function for the obstacle problem]
\label{defn:SourceFunctionObstacle}
Given $M, m \in H^2(\sO,\fw)$, we call $f \in L^2(\sO,\fw)$ an \emph{admissible source function for the obstacle problem} \eqref{eq:IntroObstacleProblem} if
\begin{gather*}
Am\leq f \leq AM \hbox{ a.e on }\sO,
\\
(1+y)f\in L^2(\sO,\fw).
\end{gather*}
\end{defn}

In order to prove uniqueness of solutions to the non-coercive variational equation (Problem \ref{prob:HestonMixedBVPInhomogeneous}) corresponding to \eqref{eq:IntroBoundaryValueProblem} or inequality (Problem \ref{prob:NonhomogeneousHestonVIProblem}) corresponding to \eqref{eq:IntroObstacleProblem}, we shall need the following auxiliary

\begin{defn}[Barrier function for uniqueness of solutions]
\label{defn:BarrierFunctionObstacle}
Given $M, m, g \in H^2(\sO,\fw)$, we call $\varphi \in H^2(\sO,\fw)$ a \emph{barrier function} for the operator $A$ on the domain $\sO$ if
\begin{equation}
\label{eq:MainTheoremVarphi}
\begin{gathered}
A\varphi\geq Ag \quad\hbox{a.e. on }\sO, \quad A(m+\varphi)> 2Ag \quad\hbox{a.e. on }\sO, \quad\hbox{and}\quad \varphi\geq g \quad\hbox{on }\Gamma_1,
\\
(1+y)\varphi \in L^2(\sO,\fw) \quad\hbox{and}\quad (1+y)^{1/2}\varphi \in L^q(\sO,\fw),
\\
\esssup_{(x,y)\in\sO}\frac{(1+y)(M+\varphi-2g)(x,y)}{A(m+\varphi-2g)(x,y)} < \infty.
\end{gathered}
\end{equation}
\end{defn}

\begin{thm}[Existence and uniqueness of solutions to the obstacle problem]
\label{thm:MainExistenceUniquenessObstacleProblem}
Assume that the constant $r$ in \eqref{eq:OperatorHestonIntro} is strictly positive and that the domain $\sO$ obeys Hypothesis \ref{hyp:DomainCombinedCondition}. Given $g, \psi\in H^2(\sO,\fw)$ which are compatible in the sense that,
$$
\psi\leq g \hbox{ on }\Gamma_1,
$$
let $M, m \in H^2(\sO,\fw)$ be a pair of admissible envelope functions in the sense of Definition \ref{defn:EnvelopeFunctionsObstacle} and, in addition, require that $g, \psi$ obey
$$
(1+y)^{1/2}g \in L^q(\sO,\fw) \quad\hbox{and}\quad (1+y)^{3/2}g, (1+y)\psi \in H^2(\sO,\fw),
$$
for some $q>2$. Let $f \in L^2(\sO,\fw)$ be an admissible source function in the sense of Definition \ref{defn:EnvelopeFunctionsObstacle}. Furthermore, require that there is a barrier function $\varphi \in H^2(\sO,\fw)$ obeying \eqref{eq:MainTheoremVarphi}. Then there exists a unique solution $u\in H^2(\sO,\fw)$ to \eqref{eq:IntroObstacleProblem}, $(1+y)^2u\in L^2(\sO,\fw)$, and $u$ obeys
\begin{gather*}
\max\{m,\psi\} \leq u \leq M \hbox{ on }\sO,
\\
y^\beta(\rho u_x + \sigma u_y) = 0 \quad\hbox{(trace sense)},
\end{gather*}
and there is a positive constant $C$ depending only on the constant coefficients of $A$ and the constants in Hypothesis \ref{hyp:HestonDomainNearGammaOne} prescribing the geometry of the boundary, $\Gamma_1$, such that
\begin{align*}
\|u\|_{H^2(\sO,\fw)} &\leq C\left(\|(1+y)f\|_{L^2(\sO,\fw)} + \|(1+y)^{3/2}g\|_{H^2(\sO,\fw)}  \right.
\\
&\quad + \left. \|(1+y)\psi\|_{H^2(\sO,\fw)} + \|(1+y)^2u\|_{L^2(\sO,\fw)} \right).
\end{align*}
\end{thm}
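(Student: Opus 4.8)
\emph{Proof strategy.} The plan is to recast problem \eqref{eq:IntroObstacleProblem} as the variational inequality of Problem \ref{prob:NonhomogeneousHestonVIProblem}, namely to find $u$ in the closed convex subset $\mathcal K := \{v \in H^1(\sO,\fw) : v \geq \psi \text{ a.e. on } \sO,\ v = g \text{ on }\Gamma_1\}$ for which $a(u,v-u) \geq (f,v-u)_{L^2(\sO,\fw)}$ for every $v \in \mathcal K$, where $a(\cdot,\cdot)$ is the Heston bilinear form of Definition \ref{defn:HestonWithKillingBilinearForm}, and then to promote an $H^1(\sO,\fw)$-solution of this inequality to an $H^2(\sO,\fw)$-strong solution of \eqref{eq:IntroObstacleProblem}. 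First I would subtract the Dirichlet datum, replacing $(u,f,\psi,m,M,\varphi)$ by $(u-g,\, f-Ag,\, \psi-g,\, m-g,\, M-g,\, \varphi-g)$, so that the boundary condition along $\Gamma_1$ becomes homogeneous; the moment hypotheses on $g$ and $\psi$ together with Definitions \ref{defn:EnvelopeFunctionsObstacle} and \ref{defn:SourceFunctionObstacle} ensure that the shifted data remain admissible and that the linear functionals entering the iteration below are bounded on $H^1(\sO,\fw)$.

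For existence I would circumvent the non-coercivity of $a$, the unboundedness of the coefficients of $A$ and of $\sO$, and the failure of Rellich--Kondrachov, by working with the shifted bilinear form $a_\lambda := a + \lambda(\cdot,\cdot)_{L^2(\sO,\fw)}$, which satisfies a G\aa rding inequality on $H^1(\sO,\fw)$ once $\lambda$ is large; here the strict positivity $r>0$ and the distinguished exponents $\beta = 2\kappa\theta/\sigma^2$ and $\mu = 2\kappa/\sigma^2$ in the weight $\fw$ are exactly what permit the unbounded first-order terms of $A$ to be absorbed after integration by parts. The Lions--Stampacchia theorem solves the coercive inequality for each admissible right-hand side, and I would remove the shift by the monotone iteration $u_0 := M$, $a_\lambda(u_{n+1},v-u_{n+1}) \geq (f+\lambda u_n, v-u_{n+1})_{L^2(\sO,\fw)}$ on $\mathcal K$: comparison for coercive variational inequalities, using $Am \leq f \leq AM$ and $\psi \leq M$, forces $\max\{m,\psi\} \leq u_{n+1} \leq u_n \leq M$ a.e., so $(u_n)$ decreases in $L^2(\sO,\fw)$ and, by the uniform bound from coercivity of $a_\lambda$, converges weakly in $H^1(\sO,\fw)$ to a solution $u$ of the original inequality with $\max\{m,\psi\} \leq u \leq M$ a.e.; this sandwich together with $(1+y)^2m, (1+y)^2M \in L^2(\sO,\fw)$ yields $(1+y)^2u \in L^2(\sO,\fw)$. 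To promote $u$ to an $H^2(\sO,\fw)$-strong solution of \eqref{eq:IntroObstacleProblem} and to obtain the displayed estimate, I would invoke the weighted Sobolev regularity theory for the Heston variational inequality developed earlier in the article --- difference quotients in $x$, together with a careful analysis of the $y$-degeneracy at $\Gamma_0$ --- applied with the data $(1+y)f \in L^2(\sO,\fw)$, $(1+y)^{3/2}g \in H^2(\sO,\fw)$ and $(1+y)\psi \in H^2(\sO,\fw)$, the constant $C$ inheriting its dependence only on the coefficients of $A$ and on the geometry of $\Gamma_1$ encoded in Hypothesis \ref{hyp:HestonDomainNearGammaOne}. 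Finally, $y^\beta(\rho u_x + \sigma u_y) = 0$ in the trace sense is read off from the weak formulation, since $y^\beta(\rho u_x + \sigma u_y)$ is precisely the conormal boundary term that the weight $\fw$ produces at $\Gamma_0$ when $a(u,\cdot)$ is integrated by parts, and the variational inequality forces it to vanish because no test function is constrained along $\Gamma_0$.

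The step I expect to be the main obstacle is uniqueness, which cannot follow from testing the inequality with the difference of two solutions because $a$ is non-coercive and $\sO$ is unbounded; this is the role of the barrier function $\varphi$ of Definition \ref{defn:BarrierFunctionObstacle}. Given two solutions $u_1,u_2 \in H^2(\sO,\fw)$ with $\max\{m,\psi\} \leq u_i \leq M$ a.e., set $h := M+\varphi-2g$, so that $Ah = AM + A\varphi - 2Ag \geq A(m+\varphi)-2Ag > 0$ a.e. on $\sO$ by \eqref{eq:MainTheoremVarphi} and $Am \leq AM$; thus $h$ is a positive supersolution with $h \geq 0$ on $\Gamma_1$, hence $h \geq 0$ on $\sO$ by the weighted maximum principle, and the last line of \eqref{eq:MainTheoremVarphi} moreover bounds $(1+y)h$ by a constant multiple of $Ah$. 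For $\eps>0$ I would compare $u_1$ with $u_2+\eps h$: since $Au_2 \geq f = Au_1$ on $\{u_1>\psi\}$ one gets $A(u_1-u_2-\eps h) \leq -\eps Ah < 0$ there, while on $\{u_1=\psi\}$ the pointwise bound $u_1-u_2-\eps h \leq -\eps h \leq 0$ holds because $u_2 \geq \psi$ and $h \geq 0$, and $u_1-u_2-\eps h = -\eps(M+\varphi-2g) \leq 0$ on $\Gamma_1$ because $M,\varphi \geq g$ there; the bound on $(1+y)h$ in terms of $Ah$ makes $u_1-u_2-\eps h$ admissible for the weighted maximum principle for $A$ on $\sO$, whose validity on this unbounded weighted class rests on $r>0$ and on the quantitative $\esssup$ condition in \eqref{eq:MainTheoremVarphi}, and we conclude $u_1-u_2 \leq \eps h$ a.e. on $\sO$. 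Letting $\eps \downarrow 0$ gives $u_1 \leq u_2$, and exchanging the roles of $u_1$ and $u_2$ gives $u_1 = u_2$. The real work here is the weighted maximum principle itself --- equivalently, the coercivity of the operator obtained by conjugating $A$ by the positive supersolution $h$ --- which is precisely where the quantitative bound in \eqref{eq:MainTheoremVarphi} is consumed.
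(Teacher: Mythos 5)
Your existence and regularity strategy is essentially the paper's: homogenize the Dirichlet datum by subtracting $g$, pass to a coercive bilinear form, run the monotone iteration $u_0=M$, $u_{n+1}\ge u_{n+2}\ge\cdots\ge\max\{m,\psi\}$, extract a weak limit, and then quote the global $H^2(\sO,\fw)$ theory for the variational inequality. Two corrections there. First, the coercive shift must be $a_\lambda(u,v)=a(u,v)+\lambda((1+y)u,v)_{L^2(\sO,\fw)}$, not $a+\lambda(\cdot,\cdot)_{L^2(\sO,\fw)}$: the G\aa rding deficit in \eqref{eq:HestonBilinearFormGarding} is $\|(1+y)^{1/2}u\|_{L^2(\sO,\fw)}^2$ and the $V$-norm contains $\int(1+y)u^2\fw$, so an unweighted $L^2$ shift does not restore coercivity on an unbounded domain. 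Second, passing to the limit in the iteration is not just weak $H^1$ convergence: the term $\lambda((1+y)u_{n-1},v-u_n)_H$ requires \emph{strong} $L^2(\sO,\fw)$ convergence of $(1+y)^{1/2}u_n$, which the paper extracts from the pointwise sandwich together with the $L^q$, $q>2$, integrability of $(1+y)^{1/2}M$, $(1+y)^{1/2}m$ (Corollary \ref{cor:WeakUpperLowerBoundsIsStrongLp}); this is precisely the substitute for the unavailable Rellich--Kondrachov theorem and is where those $L^q$ hypotheses in Definition \ref{defn:EnvelopeFunctionsObstacle} are consumed. Also, the weighted Neumann trace condition is not forced by the variational inequality; it is a property of every function in $H^2(\sO,\fw)$ (Lemma \ref{lem:HestonWeightedNeumannBoundaryProperty}).

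The genuine gap is uniqueness. Your argument compares $u_1$ with $u_2+\eps h$, $h=M+\varphi-2g$, and concludes via a ``weighted maximum principle for $A$ on $\sO$'' applied to $w=u_1-u_2-\eps h$ on the open set $\{u_1>\psi\}$. No such maximum principle is available here, and establishing one is not a routine step: $A$ degenerates along $\Gamma_0$ where no boundary condition is imposed, $\sO$ and the coefficients of $A$ are unbounded, and $w$ is only known to lie in a weighted Sobolev space, so it need not be bounded, need not attain a supremum, and has no pointwise control at $\Gamma_0$ or at infinity. If you instead try to run your comparison by energy methods --- testing the two inequalities with $u_1\mp(u_1-u_2-\eps h)^+$ and adding --- you arrive at $a(w^+,w^+)\le-\eps(Ah,w^+)_H\le 0$, which proves nothing because $a$ is not coercive; to proceed you must conjugate by the positive supersolution $h$ and show the conjugated form is coercive, and that requires a strict positive lower bound on $h$, control of the commutator terms involving $Dh/h$ in the weighted spaces, and a separate argument that $h\ge 0$ in the first place (which you derive from the very maximum principle you are trying to establish). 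You correctly identify this as ``the real work,'' but it is left entirely undone, and it is the whole content of the uniqueness assertion.

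The paper takes a different route that avoids any pointwise maximum principle: after using $\varphi$ to reduce to the case of a strictly positive source and nonnegative solutions (Lemmas \ref{lem:EllipticHestonUniquenessAuxiliaryFunction}--\ref{lem:VIEllipticHestonReductionNonnegativeSourceFunctionUniqueness}), it runs the Bensoussan--Lions multiplicative argument: set $\alpha_0=\min\{1,u_2/u_1\}$, $\bar\alpha_0=\essinf\alpha_0$, and use the $\esssup$ bound in \eqref{eq:MainTheoremVarphi} to choose a constant $\beta_0\in(\bar\alpha_0,1)$ with $\beta_0(f+\lambda(1+y)u_1)\le f+\lambda(1+y)u_2$; the already-proved energy comparison principle for the \emph{coercive} variational inequality (Theorem \ref{thm:VIExistenceUniquenessEllipticCoerciveHestonIncreasingF}) then gives $\beta_0u_1\le u_2$, contradicting the definition of $\bar\alpha_0$. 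Note also that this is where the strict inequality $A(m+\varphi)>2Ag$ in \eqref{eq:MainTheoremVarphi} is needed (positivity of the reduced source so that the quotient $u_2/u_1$ and the ratio $(1+y)u_1/f$ make sense), whereas your sketch only uses $Ah>0$. If you want to salvage your additive $\eps h$ comparison, you must either prove the weighted maximum principle in this degenerate, unbounded, $H^2(\sO,\fw)$ setting or recast it as a coercivity statement for the $h$-conjugated bilinear form; as written, the step is missing.
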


\begin{rmk}[References to hypotheses in the body of the article]
The hypotheses in Theorem \ref{thm:MainExistenceUniquenessObstacleProblem}, in addition to those on the domain, $\sO$, summarize the conditions \eqref{eq:fBounds} and \eqref{eq:ObstacleFunctionLessThanBoundaryConditionFunction}, together with Hypotheses \ref{hyp:UpperLowerBoundsSolutionsCoercive}, \ref{hyp:AuxBoundUniquenessSolutionsNoncoerciveEquation}, \ref{hyp:NoncoerciveHeston}, \ref{hyp:UpperBoundObstacleFunction}, \ref{hyp:UpperLowerBoundsSolutionsNoncoerciveInequality}, \ref{hyp:AuxBoundUniquenessSolutionsNoncoerciveInequality}, and \ref{hyp:GlobalH2UpperLowerBoundsSolutionsNoncoerciveInequality}, except that here we allow $g$ to be non-zero on $\Gamma_1$. The hypotheses on $g$ arise from the reduction of the inhomogeneous Dirichlet boundary condition, $u=g$ on $\Gamma_1$, to the homogeneous case by subtracting $g$ from $\psi,M,m,\varphi,u$ and subtracting $Ag$ from $f$.
\end{rmk}

\begin{rmk}[Properties of the solution near the boundary portion $\Gamma_0$]
\label{rmk:WeightedNeumannBoundaryProperty}
According to Lemma \ref{lem:ybetaDuZeroTrace}, the ``weighted Neumann property'' for $u$ on the boundary portion $\Gamma_0$ asserted in Theorem \ref{thm:MainExistenceUniquenessObstacleProblem}, that $y^\beta(\rho u_x + \sigma u_y) = 0$ (trace sense), is equivalent to
$$
y^\beta(\rho u_x + \sigma u_y) \to 0 \quad\hbox{in } L^1(\Gamma_0,e^{-\gamma|x|}\,dx) \hbox{ as }y\downarrow 0.
$$
In a sequel to this article, we shall show that under suitable additional regularity hypotheses on the source function, $f$, the solution, $u$, is at least $C^1$ up to $\Gamma_0$ and this trivially implies that the weighted Neumann property for $u$ on $\Gamma_0$ is obeyed.
\end{rmk}

\begin{rmk}[Example of an admissible domain]
A simple example of a domain obeying the conditions of Hypothesis \ref{hyp:HestonDomainNearGammaZero} is $\sO = (x_0,x_1)\times(0,\infty)$, where $-\infty\leq x_0<x_1\leq\infty$, with $\Gamma_0 = (x_0,x_1)\times\{0\}$ and $\Gamma_1 = \{x_0,x_1\}\times(0,\infty)$.
\end{rmk}

\begin{rmk}[Examples of functions $M,m,\varphi,f,g$ obeying the hypotheses]
See Lemmas \ref{lem:PointwiseBoundForSolution} and \ref{lem:VIUniquenessEllipticHestonPostivefuSufficientConditions} for a broad class of non-trivial examples of functions $M,m,\varphi,f,g$ obeying the hypotheses of Theorem \ref{thm:MainExistenceUniquenessObstacleProblem}.
\end{rmk}

\begin{rmk}[Local $H^2$ regularity of solutions]
In applications to mathematical finance, the obstacle function, $\psi$, is typically only Lipschitz (for example, $\psi(x,y) = (E-x)^+$) and only in $H^2(\sU,\fw)$ for some possibly unbounded subdomain $\sU\subsetneqq\sO$. Theorem \ref{thm:H2BoundSolutionHestonVarIneqLipschitz} provides a local version of Theorem \ref{thm:MainExistenceUniquenessObstacleProblem} and shows that $u \in H^2(\sU',\fw)$ for subdomains $\sU'\subset\sU$.
\end{rmk}

\begin{thm}[Regularity of solutions to the obstacle problem in the interior and up to the boundary portion $\Gamma_1$]
\label{thm:MainRegularityObstacleProblem}
Assume the hypotheses of Theorem \ref{thm:MainExistenceUniquenessObstacleProblem} and, for $2<p<\infty$, that
$$
f \in L^p_{\textrm{loc}}(\sO\cup\Gamma_1) \quad\hbox{and}\quad g, \psi \in W^{2,p}_{\textrm{loc}}(\sO\cup\Gamma_1).
$$
Then $u \in W^{2,p}_{\textrm{loc}}(\sO\cup\Gamma_1)$ and, if $\alpha = 1-2/p$, then $u \in C^{1,\alpha}_{\textrm{loc}}(\sO\cup\Gamma_1)$.
\end{thm}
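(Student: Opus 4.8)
The plan is to bootstrap from the global $H^2(\sO,\fw)$ membership established in Theorem \ref{thm:MainExistenceUniquenessObstacleProblem} to local $W^{2,p}$ regularity, using the standard penalization/obstacle-problem machinery combined with interior and boundary Calder\'on--Zygmund estimates for uniformly elliptic operators; the key point is that the degeneracy of $A$ is confined to $\Gamma_0$, so on any compactly contained subset of $\sO\cup\Gamma_1$ the operator $A$ is uniformly elliptic with smooth (indeed affine) coefficients, and classical theory applies. First I would recall that a solution $u \in H^2(\sO,\fw)$ of the variational inequality \eqref{eq:IntroObstacleProblem} satisfies, on each open ball $B \Subset \sO\cup\Gamma_1$, the pointwise relation $Au = f$ a.e.\ on the non-coincidence set $\{u>\psi\}$ and $Au \geq f$, $u \geq \psi$ everywhere; equivalently $Au = f + \mu$ where the "reaction measure" $\mu = (A\psi - f)^+ \chi_{\{u=\psi\}} \in L^p_{\mathrm{loc}}$ because $A\psi \in L^p_{\mathrm{loc}}$ (from $\psi \in W^{2,p}_{\mathrm{loc}}$ and the affine coefficients of $A$) and $f \in L^p_{\mathrm{loc}}$. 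Thus $u$ solves $Au = F$ with $F := \min\{f, A\psi\}$ restricted appropriately---more precisely $u = \max\{v,\psi\}$ locally where $v$ solves the obstacle-free equation---and since $u$ already lies in $W^{2,2}_{\mathrm{loc}}$ one has $Au \in L^2_{\mathrm{loc}}$, so the right-hand side $f + \mu$ is a well-defined $L^{\min(p,2)}$ function; but in fact the cleaner route is: on $\{u>\psi\}$, $Au = f \in L^p_{\mathrm{loc}}$, and on the interior of $\{u=\psi\}$, $D^2 u = D^2\psi \in L^p_{\mathrm{loc}}$ directly, so in either region the second derivatives are controlled in $L^p$, while the common boundary $\partial\{u>\psi\}$ has the property that $D^2 u = D^2\psi$ a.e.\ there as well (a.e.\ point of a level set of a $W^{2,p}$ difference is a point of vanishing Hessian difference).

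The cleanest implementation avoids this ad hoc splitting and instead uses penalization: approximate \eqref{eq:IntroObstacleProblem} by $A u_\eps + \beta_\eps(u_\eps - \psi) = f$ with $\beta_\eps$ a smooth monotone penalty, obtain uniform $W^{2,p}_{\mathrm{loc}}(\sO\cup\Gamma_1)$ bounds on $u_\eps$ via interior/boundary $L^p$ estimates (Agmon--Douglis--Nirenberg up to $\Gamma_1$, where the uniform Dirichlet condition $u=g \in W^{2,p}$ holds and $\Gamma_1$ is smooth enough by the domain hypotheses), using the key fact that $\beta_\eps(u_\eps-\psi) = f - Au_\eps$ and that one can bound $\|\beta_\eps(u_\eps-\psi)\|_{L^p(B')}$ by testing against $(u_\eps-\psi)$-type cutoffs to get $\beta_\eps(u_\eps-\psi) \leq (A\psi - f)^+ + o(1)$, which is bounded in $L^p_{\mathrm{loc}}$ independently of $\eps$; then pass to the limit $\eps\downarrow 0$, identifying the limit with $u$ by the uniqueness from Theorem \ref{thm:MainExistenceUniquenessObstacleProblem} and weak lower semicontinuity. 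This gives $u \in W^{2,p}_{\mathrm{loc}}(\sO\cup\Gamma_1)$. The Morrey--Sobolev embedding $W^{2,p} \embed C^{1,\alpha}$ with $\alpha = 1-2/p$ (valid since $d=2 < p$) then yields the final H\"older regularity statement $u \in C^{1,\alpha}_{\mathrm{loc}}(\sO\cup\Gamma_1)$.

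I expect the main obstacle to be the boundary estimate up to $\Gamma_1$: one must verify that the domain hypotheses (Hypothesis \ref{hyp:HestonDomainNearGammaOne} and those feeding into Hypothesis \ref{hyp:DomainCombinedCondition}) guarantee enough regularity of $\Gamma_1$---at least $C^{1,1}$ or the ability to locally flatten with a $W^{2,p}$-compatible change of variables---for the boundary $L^p$ Schauder/Calder\'on--Zygmund estimates to apply, and that the compatibility $\psi \leq g$ on $\Gamma_1$ together with $g \in W^{2,p}_{\mathrm{loc}}(\sO\cup\Gamma_1)$ lets one reduce to the homogeneous boundary condition locally without losing the obstacle structure. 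A secondary technical point is ensuring the penalization estimate for $\|\beta_\eps(u_\eps-\psi)\|_{L^p_{\mathrm{loc}}}$ is genuinely $\eps$-uniform near $\Gamma_1$; this is standard when $g$ and $\psi$ are compatible and sufficiently regular, but requires care with the cutoff functions so that no uncontrolled boundary terms appear. Away from $\Gamma_1$, i.e.\ for interior balls $B \Subset \sO$, there is no difficulty: $A$ is uniformly elliptic with smooth coefficients there, and the classical interior $W^{2,p}$ regularity theory for obstacle problems (e.g.\ as in Friedman \cite{Friedman_1982}) applies verbatim.
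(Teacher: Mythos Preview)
Your strategy is sound and shares the paper's key insight: on any $\sU \Subset \sO\cup\Gamma_1$ the operator $A$ is uniformly elliptic with smooth coefficients, so classical obstacle-problem regularity applies once one has bootstrapped to $W^{2,2}_{\mathrm{loc}}$ from the global $H^2(\sO,\fw)$ result. The implementations differ, however. The paper does not re-run a penalization argument; instead it localizes via a cutoff $\zeta$ supported in $\sU$, shows (Claim \ref{claim:ObstacleFunctionLocalization}) that $\zeta u$ solves a variational inequality on $\sU$ with obstacle $\zeta\psi$ and source $f_\zeta = \zeta f + [A,\zeta]u$, observes that $[A,\zeta]u \in W^{1,2}(\sU)\hookrightarrow L^p(\sU)$ by the two-dimensional Sobolev embedding (this is where the starting $H^2$ regularity is used), converts to a coercive inequality by adding $\lambda(1+y)\zeta u \in L^p$, and then simply cites \cite[Exercise 1.3.1]{Friedman_1982} for the $W^{2,p}$ conclusion. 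Your penalization route is essentially what lies \emph{behind} that citation, so you are proposing to reprove what the paper invokes as a black box; your alternative ``direct'' route (writing $Au = f\chi_{\{u>\psi\}} + A\psi\chi_{\{u=\psi\}} \in L^p_{\mathrm{loc}}$ via Stampacchia's lemma and then applying Calder\'on--Zygmund) is also correct and arguably the most transparent of the three. Each approach buys something: the paper's is the shortest to write; yours makes the mechanism explicit and would generalize more readily if one did not have Friedman's result to cite. Your concern about $\Gamma_1$-regularity is handled in the paper by Hypothesis \ref{hyp:HestonDomainNearGammaOne} (uniform $C^{2,\alpha}$ charts), which is exactly what the boundary $L^p$ estimates need.
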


\begin{thm}[Optimal interior regularity of solutions to the obstacle problem]
\label{thm:MainOptimalRegularityObstacleProblem}
Assume the hypotheses of Theorem \ref{thm:MainExistenceUniquenessObstacleProblem} and, for $0<\alpha<1$, that
$$
f \in C^\alpha_{\textrm{loc}}(\sO\cup\Gamma_1) \quad\hbox{and}\quad g, \psi \in C^2_{\textrm{loc}}(\sO\cup\Gamma_1).
$$
Then $u \in C^{1,1}(\sO)$.
\end{thm}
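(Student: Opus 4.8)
The plan is to upgrade the regularity already supplied by Theorem \ref{thm:MainRegularityObstacleProblem} to the optimal $C^{1,1}(\sO)$ bound, using the standard penalization/comparison technique adapted to the degenerate Heston operator. By Theorem \ref{thm:MainExistenceUniquenessObstacleProblem} we already have $u\in H^2(\sO,\fw)$ with $\max\{m,\psi\}\le u\le M$, and by Theorem \ref{thm:MainRegularityObstacleProblem} we have $u\in W^{2,p}_{\mathrm{loc}}(\sO\cup\Gamma_1)\cap C^{1,\alpha}_{\mathrm{loc}}(\sO\cup\Gamma_1)$ for every $p<\infty$; so the issue is purely to obtain a \emph{uniform} bound on $D^2u$ on compact subsets of $\sO$ (note the statement asserts interior regularity on $\sO$, staying away from $\Gamma_0$, though local estimates up to $\Gamma_1$ would follow similarly). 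First I would fix a point $z_0=(x_0,y_0)\in\sO$ and a ball $B_{2R}(z_0)\Subset\sO$ on which $A$ is \emph{uniformly} elliptic with smooth bounded coefficients, since $y\ge y_0/2>0$ there; thus on $B_{2R}(z_0)$ the operator $A$ behaves like a classical non-degenerate second-order elliptic operator, and the obstacle problem \eqref{eq:IntroObstacleProblem} reduces to a \emph{classical} obstacle problem with $C^\alpha$ source $f$ and $C^2$ obstacle $\psi$.

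The key steps, in order, are: \textbf{(i)} On $B_{2R}(z_0)$ reduce to zero obstacle by replacing $u$ with $v:=u-\psi$, $f$ with $\tilde f:=f-A\psi\in C^\alpha_{\mathrm{loc}}$, so that $v$ solves $\min\{Av-\tilde f,\,v\}=0$ with $v\ge0$. \textbf{(ii)} Observe the fundamental pointwise bound: on the coincidence set $\{v=0\}$ one has $D^2v=0$ a.e., while on the non-coincidence set $\{v>0\}$ one has $Av=\tilde f$, hence $-(a^{ij}v_{ij})=\tilde f+(\text{lower order in }v)$. Since $v\in W^{2,p}_{\mathrm{loc}}$ and $v\ge0$ with $v=0$ on the boundary of its positivity set, the standard Caffarelli-type argument (or the Brezis--Kinderlehrer / Frehse penalization estimate) gives $\|D^2v\|_{L^\infty(B_R(z_0))}\le C\big(\|\tilde f\|_{C^\alpha(B_{2R})}+\|v\|_{L^\infty(B_{2R})}\big)$, with $C$ depending on the ellipticity constant and $C^1$-norms of the coefficients on $B_{2R}(z_0)$. \textbf{(iii)} Re-expand $v=u-\psi$ to conclude $\|D^2u\|_{L^\infty(B_R(z_0))}\le C\big(\|f\|_{C^\alpha}+\|\psi\|_{C^2}+\|u\|_{L^\infty}+\|D^2\psi\|_{L^\infty}\big)$ locally. \textbf{(iv)} Combine with the already-established $C^{1,\alpha}_{\mathrm{loc}}$ bound and a covering argument over compact subsets of $\sO$; since $u$, $Du$, $D^2u$ are all locally bounded, $u\in C^{1,1}(\sO)$.

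The main obstacle is Step (ii): obtaining the \emph{genuine $L^\infty$ bound on the full Hessian} (not merely the $W^{2,p}$ bound already in hand) for the classical obstacle problem when the operator is not the Laplacian but a general variable-coefficient elliptic operator whose zeroth-order coefficient $r$ need not have a favorable sign for the penalized equation, and whose first-order terms are large (since $A$ has coefficients with linear growth in $y$, though on $B_{2R}(z_0)$ they are merely bounded constants-in-scale). The cleanest route is the penalization method: approximate by $v_\varepsilon$ solving $Av_\varepsilon+\beta_\varepsilon(v_\varepsilon)=\tilde f$ on $B_{2R}(z_0)$ with a smooth monotone penalty $\beta_\varepsilon$ and appropriate boundary data, differentiate the equation, and test with a cutoff to show $\sup_\varepsilon\|\beta_\varepsilon(v_\varepsilon)\|_{L^\infty(B_R)}<\infty$; the bound on the penalty term plus interior Schauder/Calderón--Zygmund estimates for $Av_\varepsilon=\tilde f-\beta_\varepsilon(v_\varepsilon)$ then yields the uniform $W^{2,\infty}$ bound, and passing to the limit $v_\varepsilon\to v$ in $C^{1,\alpha}_{\mathrm{loc}}$ transfers it to $v$. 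One must be slightly careful that the lower-order terms of $A$ do not destroy the sign structure used to bound $\beta_\varepsilon(v_\varepsilon)$ from above and below; this is handled by the now-standard device of first absorbing the $u_x,u_y,u$ terms (which are already controlled in $C^{1,\alpha}$) into the source before running the penalization, so that effectively one is estimating the Hessian for $-a^{ij}D_{ij}v_\varepsilon+\beta_\varepsilon(v_\varepsilon)=(\text{known }C^\alpha\text{ function})$, the classical scalar case. The degeneracy at $\Gamma_0$ plays no role here because every estimate is confined to a ball $B_{2R}(z_0)\Subset\sO\subset\HH$ on which $y$ is bounded below.
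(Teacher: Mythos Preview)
Your strategy---localize to a ball $B_{2R}(z_0)\Subset\sO$ where $A$ is uniformly elliptic with bounded smooth coefficients, then invoke classical $C^{1,1}$ regularity for non-degenerate obstacle problems, and cover---is exactly what the paper does. The paper executes it slightly differently: it multiplies by a cutoff $\zeta$ so that $\zeta u$ solves a localized coercive variational inequality on a bounded $\sU\Subset\sO\cup\Gamma_1$ with source $f_\zeta=\zeta f+[A,\zeta]u$ (which lies in $C^\alpha(\bar\sU)$ because $[A,\zeta]$ is first order and $u\in C^{1,\alpha}_{\mathrm{loc}}$ by the preceding theorem) and obstacle $\zeta\psi\in C^2(\bar\sU)$, and then cites Friedman's $W^{2,\infty}_{\mathrm{loc}}$ theorem as a black box rather than rederiving it by penalization.

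One slip in your execution is worth flagging. In Step (i) you assert $\tilde f:=f-A\psi\in C^\alpha_{\mathrm{loc}}$, but $A\psi$ contains the second-order part $-\tfrac{y}{2}(\psi_{xx}+2\rho\sigma\psi_{xy}+\sigma^2\psi_{yy})$, and under the stated hypothesis $\psi\in C^2_{\mathrm{loc}}$ this is only continuous, not H\"older. Subtracting the obstacle therefore degrades the source from $C^\alpha$ to $C^0$, and the classical $C^{1,1}$ estimate you invoke in Step (ii) (which genuinely needs H\"older source---note also that a uniform $L^\infty$ bound on $\beta_\varepsilon(v_\varepsilon)$ together with Calder\'on--Zygmund yields only uniform $W^{2,p}$ for finite $p$, not $W^{2,\infty}$, so your penalization sketch does not by itself reach $C^{1,1}$) no longer applies as written. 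The repair is exactly what the paper does: do \emph{not} subtract $\psi$; keep the localized problem in the form ``$C^\alpha$ source, $C^2$ obstacle,'' which matches the hypotheses of the classical result (Friedman, \emph{Variational Principles and Free-Boundary Problems}, Theorems~1.4.1 and~1.4.3) directly.
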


\begin{rmk}[Local $C^{1,1}$ regularity of solutions]
If $\psi$ is only $C^2$ on a relatively open subset of $\sO\cup\Gamma_1$, then Theorem \ref{thm:MainOptimalRegularityObstacleProblem} may be localized as described in Corollary \ref{cor:LocalC11Regularity}.
\end{rmk}

\begin{rmk}[Optimal interior regularity of solutions to the obstacle problem]
It is well-known that the best possible regularity of a solution, $u$, to an elliptic obstacle problem is $u\in C^{1,1}(\sO) = W^{2,\infty}_{\textrm{loc}}(\sO)$, even when the source, boundary data, and obstacle functions and domain boundary are $C^\infty$. A simple, explicit one-dimensional example from mathematical finance which illustrates this phenomenon (albeit with a Lipschitz obstacle function) is provided by the perpetual American-style put option when the underlying asset process is geometric Brownian motion with drift \cite[\S 8.3]{Shreve2}. For the open subset $\sC(u) := \{u>\psi\} \subset \sO$ where $Au=f$, we expect the solution, $u$, to be $C^\infty$ on $\sC(u)$ when $f$ is $C^\infty$ on $\sO$.
\end{rmk}

\begin{rmk}[Optimal regularity of solutions to the obstacle problem up to the boundary portion $\Gamma_1$]
If we strengthened the compatibility condition $\psi\leq g$ on $\Gamma_1$ to $\psi<g$ on $\Gamma_1$, then \cite[Corollary 6.3]{Rodrigues_1987} would yield $u \in C^{1,1}_\textrm{loc}(\sO\cup\Gamma_1)$.
\end{rmk}

\begin{rmk}[Extensions of preceding results in sequels to this article]
See \S \ref{subsec:Sequels} for a survey of our research on extensions and applications of Theorems \ref{thm:MainExistenceUniquenessObstacleProblem}, \ref{thm:MainRegularityObstacleProblem}, and \ref{thm:MainOptimalRegularityObstacleProblem} in sequels to this article.
\end{rmk}

\subsubsection{Existence, uniqueness, and regularity of solutions to the boundary value problem}
Next, we summarize our main results concerning the boundary value problem \eqref{eq:IntroBoundaryValueProblem}. When $\sO=\HH$, it is possible to construct the fundamental solution in terms of confluent hypergeometric functions using the Fourier-Laplace transform (see, for example, \cite{Feehan_integralheston}) or by making use of the affine structure of the coefficients and adapting the method of Heston \cite{Heston1993} (see also \cite{DuffiePanSingleton2000}). However, while explicit formulae for the fundamental solution are important, they alone provide little insight into the boundary behavior of solutions to \eqref{eq:IntroBoundaryValueProblem} or questions of well-posedness for \eqref{eq:IntroBoundaryValueProblem} or the existence of Green's functions when $\sO$ is replaced by even relatively simple domains such as a quadrant, $\RR^+\times\RR^+$, or infinite strip, $(x_0,x_1)\times\RR^+$.

As in the hypotheses of Theorem \ref{thm:MainExistenceUniquenessObstacleProblem}, we require the existence of certain admissible envelope functions, $M,m\in H^2(\sO,\fw)$ compatible with $g\in H^2(\sO,\fw)$, a source function, $f\in L^2(\sO,\fw)$, with admissible growth, and a barrier function, $\varphi\in H^2(\sO,\fw)$. However, the requirements are simpler.

\begin{thm}[Existence and uniqueness of solutions to the boundary value problem]
\label{thm:MainExistenceUniquenessBoundaryValueProblem}
Assume that the constant $r$ in \eqref{eq:OperatorHestonIntro} is strictly positive and that the domain $\sO$ obeys Hypothesis \ref{hyp:DomainCombinedCondition}. Let $f \in L^2(\sO,\fw)$ and $g \in H^2(\sO,\fw)$. Suppose there are functions $M, m \in H^2(\sO,\fw)$ such that
$$
m \leq g \leq M \hbox{ on }\Gamma_1, \quad m \leq M \hbox{ on }\sO, \quad\hbox{and}\quad Am\leq f \leq AM \hbox{ a.e on }\sO,
$$
and $M, m, f$, and $g$ obey
$$
(1+y)M, (1+y)m, (1+y)^{1/2}f\in L^2(\sO,\fw) \quad\hbox{and}\quad (1+y)^{1/2}g\in H^2(\sO,\fw),
$$
then there exists a solution $u\in H^2(\sO,\fw)$ to \eqref{eq:IntroBoundaryValueProblem}, $(1+y)u\in L^2(\sO,\fw)$, and $u$ obeys
\begin{gather*}
m \leq u \leq M \hbox{ on }\sO,
\\
y^\beta(\rho u_x + \sigma u_y) = 0 \quad\hbox{(trace sense)},
\end{gather*}
and there is a positive constant, $C$, depending only on the constant coefficients of the operator, $A$, and the constants in Hypothesis \ref{hyp:HestonDomainNearGammaOne} prescribing the geometry of the boundary, $\Gamma_1$, such that
$$
\|u\|_{H^2(\sO,\fw)} \leq C\left(\|(1+y)^{1/2}f\|_{L^2(\sO,\fw)} + \|(1+y)^{1/2}g\|_{H^2(\sO,\fw)} + \|(1+y)u\|_{L^2(\sO,\fw)}\right).
$$
If there is a function $\varphi \in H^2(\sO,\fw)$ obeying \eqref{eq:MainTheoremVarphi}, then the solution $u$ is unique.
\end{thm}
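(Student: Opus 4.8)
\emph{Proof proposal.} The plan is to pass from the pointwise boundary value problem \eqref{eq:IntroBoundaryValueProblem} to the variational equation determined by the bilinear form $a(\cdot,\cdot)$ of Definition \ref{defn:HestonWithKillingBilinearForm}, produce a weak solution in the energy space $H^1(\sO,\fw)$, upgrade it to $H^2(\sO,\fw)$ by weighted a priori estimates, read off the weighted Neumann trace on $\Gamma_0$, and obtain uniqueness from the barrier function.

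\textbf{Step 1 (Reduction and weak formulation).} Using the affine change of variables of Remark \ref{rmk:HestonCoefficientb1} we may assume $b_1=0$, and subtracting $g$ from $u,M,m$ and $Ag$ from $f$ reduces everything to homogeneous Dirichlet data on $\Gamma_1$; the stated hypotheses on $g\in H^2(\sO,\fw)$ are exactly what forces $Ag$ and the translated envelopes to satisfy the required bounds weighted by $(1+y)^{1/2}$ and $(1+y)$. Multiplying $Au=f$ by $\fw v$ with $v\in H^1(\sO,\fw)$ vanishing on $\Gamma_1$ in the trace sense and integrating by parts, the choice $\fw=y^{\beta-1}e^{-\gamma|x|-\mu y}$ with $\beta=2\kappa\theta/\sigma^2$, $\mu=2\kappa/\sigma^2$ recasts the principal part of $\fw A$ in divergence form with a factor $y^\beta$ that annihilates the boundary contribution on $\Gamma_0$; thus $u$ is sought from $a(u,v)=(f,v)_{L^2(\sO,\fw)}$ for all admissible $v$, with \emph{no} condition imposed on $\Gamma_0$.

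\textbf{Step 2 (Existence of an $H^1$ solution).} The form $a$ is continuous on $H^1(\sO,\fw)$ but is not coercive, since $\sO$ is unbounded, the drift coefficients of $A$ grow linearly, and the Rellich--Kondrachov embedding fails in this setting; testing with $u$ and using $-1<\rho<1$ together with $\kappa,\theta,r>0$ yields only the \emph{weak} estimate $\|u\|_{H^1(\sO,\fw)}\le C(\|(1+y)^{1/2}f\|_{L^2(\sO,\fw)}+\|(1+y)u\|_{L^2(\sO,\fw)})$, where the first-order terms created by the unbounded drift are absorbed into the gradient term at the cost of the lower-order weight $(1+y)$. I would then exhaust $\sO$ by bounded subdomains $\sO_n$, solve the (mildly regularized near $\{y=0\}$ if needed) problem on each by applying the Lax--Milgram theorem to the form shifted by a multiple of $((1+y)\cdot,\cdot)_{L^2(\sO,\fw)}$ together with a Fredholm/continuity argument, and use $m\le g\le M$ on $\Gamma_1$, $m\le M$, and $Am\le f\le AM$ to make $(m,M)$ a pair of sub/supersolutions, so that the weak maximum principle gives $m\le u_n\le M$ on $\sO_n$. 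Since $(1+y)M,(1+y)m\in L^2(\sO,\fw)$, this pointwise bound and the weak estimate furnish a uniform $H^1(\sO,\fw)$ bound, and a weak limit $u$ solves the variational equation on $\sO$, satisfies $m\le u\le M$, and has $(1+y)u\in L^2(\sO,\fw)$.

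\textbf{Step 3 ($H^2$ regularity, weighted Neumann trace, and the estimate).} Away from $\{y=0\}$ the operator $A$ is uniformly elliptic with smooth coefficients, so interior estimates and boundary estimates near $\Gamma_1$ (via Hypothesis \ref{hyp:HestonDomainNearGammaOne} on the geometry of $\Gamma_1$) give local $H^2$ regularity there. \emph{The main obstacle} is the global $H^2(\sO,\fw)$ bound up to the degenerate boundary $\Gamma_0$: one differentiates the equation in the tangential variable $x$ via difference quotients, tests against $\fw$ times suitably weighted second-derivative and conormal combinations, and integrates by parts using the precise exponent $\beta$ and one-dimensional Hardy inequalities in $y$ to control $y|D^2u|$ in $L^2(\sO,\fw)$ — the delicate bookkeeping that keeps the weights $y$, $(1+y)$, $(1+y)^{1/2}$ consistent while absorbing the unbounded-coefficient terms is the technical heart. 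This produces $\|u\|_{H^2(\sO,\fw)}\le C(\|(1+y)^{1/2}f\|_{L^2(\sO,\fw)}+\|(1+y)^{1/2}g\|_{H^2(\sO,\fw)}+\|(1+y)u\|_{L^2(\sO,\fw)})$. Once $u\in H^2(\sO,\fw)$, the variational identity has no boundary term on $\Gamma_0$, so Lemma \ref{lem:ybetaDuZeroTrace} shows that $y^\beta(\rho u_x+\sigma u_y)$ has vanishing trace on $\Gamma_0$.

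\textbf{Step 4 (Uniqueness).} Let $w=u_1-u_2$ be the difference of two solutions, so $Aw=0$ weakly, $w=0$ on $\Gamma_1$, and $-(M-m)\le w\le M-m$ pointwise. For $\eps>0$ put $z_\eps=w-\eps(\varphi-g)$; the first line of \eqref{eq:MainTheoremVarphi} gives $Az_\eps=-\eps A(\varphi-g)\le 0$, while $\varphi\ge g$ on $\Gamma_1$ gives $z_\eps\le 0$ there, and $z_\eps$ lies in a class with controlled growth because $|w|\le M-m$ and $(1+y)(M+\varphi-2g)\in L^2(\sO,\fw)$. The strengthened conditions $A(m+\varphi)>2Ag$ and $\esssup_{\sO}(1+y)(M+\varphi-2g)/A(m+\varphi-2g)<\infty$ supply exactly the growth control needed to run a weak (Phragmén--Lindelöf type) maximum principle for $A$ on the unbounded domain $\sO$, where the hypothesis $r>0$ is used decisively to handle the zeroth-order term; hence $z_\eps\le 0$, i.e. $w\le\eps(\varphi-g)$. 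Letting $\eps\downarrow 0$ yields $w\le 0$, and the same argument applied to $-w$ gives $w\equiv 0$, so the solution is unique.
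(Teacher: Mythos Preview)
Your Steps 1--3 are broadly aligned with the paper's strategy, though the technical routes differ. For existence (Step 2), the paper does \emph{not} exhaust by bounded subdomains or use a Fredholm argument; instead it works directly on the unbounded $\sO$ via a monotone iteration (Theorem \ref{thm:ExistenceUniquenessEllipticHeston_Improved}): with $u_0=0$, one solves $a_\lambda(u_n,v)=(f+\lambda(1+y)u_{n-1},v)_H$ for the \emph{coercive} form $a_\lambda$ by Lax--Milgram, shows $m\le u_1\le\cdots\le u_n\le M$ by the coercive comparison principle, and passes to a weak limit. Your ``Fredholm/continuity'' step is vague precisely where it matters, since for the non-coercive operator you have no a priori comparison principle to feed into it; the paper's iteration circumvents this. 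For $H^2$ regularity (Step 3), the paper avoids difference quotients by approximating $f$ by $C^\infty_0(\sO)$ functions and invoking an external Schauder-type result (Theorem \ref{thm:SmoothnessUpToBdryWeakSolution}) to get $u\in C^\infty(\bar\sO)$, then derives the weighted $H^2$ estimate by testing with $v=\zeta^2 u_y$ and integrating by parts (Propositions \ref{prop:AuxiliarySpecialWeightedH1EstimateSisZero}, \ref{prop:AuxiliaryWeightedH2EstimateSisZero}); your difference-quotient scheme is plausible when $\sO=\HH$ but less clear near a general $\Gamma_1$.

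The genuine gap is in Step 4. You invoke a ``weak (Phragm\'en--Lindel\"of type) maximum principle'' for the non-coercive, degenerate operator $A$ on the unbounded domain $\sO$, but no such principle is available here, and you do not show how the specific hypotheses on $\varphi$ would produce one. The conditions in \eqref{eq:MainTheoremVarphi}---in particular the essential-sup bound $\esssup_\sO(1+y)(M+\varphi-2g)/A(m+\varphi-2g)<\infty$---are \emph{not} designed as growth control for a barrier argument; they are tailored to the Bensoussan--Lions ratio argument the paper actually uses. After reducing (via Lemmas \ref{lem:EllipticHestonUniquenessAuxiliaryFunction}--\ref{lem:EllipticHestonReductionNonnegativeSourceFunctionUniqueness}) to the case $f>0$, $u_i\ge 0$, the paper sets $\alpha_0=\min\{1,u_2/u_1\}$, $\bar\alpha_0=\essinf\alpha_0<1$, and chooses $\beta_0\in(\bar\alpha_0,1)$ so that $\beta_0(f+\lambda(1+y)u_1)\le f+\lambda(1+y)u_2$; the essential-sup hypothesis is exactly what makes such a $\beta_0$ exist (see \eqref{eq:EnoughToMakeSupu1LessThanOrEqualToInff}--\eqref{eq:SupSourceSolutionRatioBound}). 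One then applies the \emph{coercive} comparison principle (Corollary \ref{cor:ExistenceUniquenessEllipticCoerciveHeston}) to conclude $\beta_0 u_1\le u_2$, contradicting the definition of $\bar\alpha_0$. Your $z_\eps=w-\eps(\varphi-g)$ approach would require establishing $z_\eps\le 0$ from $Az_\eps\le 0$ and $z_\eps\le 0$ on $\Gamma_1$ alone, with no boundary condition on $\Gamma_0$ and no compactness at infinity---this is precisely the kind of weak maximum principle that fails for non-coercive operators without additional structure, and the paper's machinery does not supply it.
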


\begin{rmk}[References to hypotheses in the body of the article]
The hypotheses in Theorem \ref{thm:MainExistenceUniquenessBoundaryValueProblem}, in addition to those on the domain, $\sO$, summarize the conditions \eqref{eq:fBounds}, \eqref{eq:Sqrt1plusyfL2}, and \eqref{eq:OneplusyMmInL2}, together with Hypotheses \ref{hyp:UpperLowerBoundsSolutionsCoercive}, \ref{hyp:NoncoerciveHeston}, \ref{hyp:AuxBoundUniquenessSolutionsNoncoerciveEquation}, and \ref{hyp:AuxBoundUniquenessSolutionsNoncoerciveInequality}, except that here we allow $g$ to be non-zero on $\Gamma_1$.  The hypotheses on $g$ arise from the reduction of the inhomogeneous Dirichlet boundary condition, $u=g$ on $\Gamma_1$, to the homogeneous case by subtracting $g$ from $M,m,\varphi,u$ and subtracting $Ag$ from $f$.
\end{rmk}

Remark \ref{rmk:WeightedNeumannBoundaryProperty} also applies to the solution $u$ provided by Theorem \ref{thm:MainExistenceUniquenessBoundaryValueProblem}.

\begin{thm}[Regularity of solutions to the boundary value problem]
\label{thm:MainRegularityBoundaryValueProblem}
Assume the hypotheses of Theorem \ref{thm:MainExistenceUniquenessBoundaryValueProblem} and also assume $f\in L^q_{\textrm{loc}}(\sO\cup\Gamma_0)$ and $g \in W^{2,q}_{\textrm{loc}}(\sO\cup\Gamma_0)$, for some $q>2+\beta$. Then the solution $u$ to \eqref{eq:IntroBoundaryValueProblem} provided by Theorem \ref{thm:MainExistenceUniquenessBoundaryValueProblem} is in $C^\alpha_{\textrm{loc}}(\sO\cup\Gamma_1)\cap C_{\textrm{loc}}(\bar\sO)$, for $\alpha\in (0,1)$. If in addition, $f\in C^{k,\alpha}_{\textrm{loc}}(\sO\cup\Gamma_1)$ and $g\in C^{k+2,\alpha}_{\textrm{loc}}(\sO\cup\Gamma_1)$ for an integer $k\geq 0$, and the boundary portion $\Gamma_1$ is $C^{k+2,\alpha}$, then the solution $u$ lies in $C^{k+2,\alpha}_{\textrm{loc}}(\sO\cup\Gamma_1)\cap C_{\textrm{loc}}(\bar\sO)$.
\end{thm}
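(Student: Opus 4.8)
The plan is to split the analysis into the region where $A$ is uniformly elliptic — the interior of $\sO$ and a neighborhood of the boundary portion $\Gamma_1\subset\HH$, where the coefficients of $A$ are $C^\infty$ (indeed affine) — which is treated by classical local elliptic theory, and a neighborhood of the degenerate boundary portion $\Gamma_0\subset\partial\HH$, which is treated by the weighted Sobolev machinery of the article. The hypothesis $q>2+\beta$ is used only in the second region, and there it is exactly the threshold for a weighted Sobolev embedding into continuous functions: $2+\beta=d+\beta$ (with $d=2$) is the homogeneous dimension of the measure $y^\beta\,dx\,dy$ governing the degenerate gradient of $A$ near $\Gamma_0$.

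\emph{Step 1: regularity away from $\Gamma_0$.} On any relatively compact open $V$ with $\bar V\subset\sO\cup\Gamma_1$, the coefficient $y/2$ of the principal part of $A$ lies between two positive constants and $\fw$ is comparable to $1$, so the a priori bound $u\in H^2(\sO,\fw)$ provided by Theorem~\ref{thm:MainExistenceUniquenessBoundaryValueProblem} already gives $u\in H^2(V\cap\sO)$; the two-dimensional Sobolev embedding $H^2\hookrightarrow C^{0,\alpha}$ on half-balls — valid since $\Gamma_1$ is at least Lipschitz under Hypothesis~\ref{hyp:DomainCombinedCondition} — then yields $u\in C^{\alpha}_{\textrm{loc}}(\sO\cup\Gamma_1)$ for every $\alpha\in(0,1)$. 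For the higher-order statement I would assume $f\in C^{k,\alpha}_{\textrm{loc}}(\sO\cup\Gamma_1)$, $g\in C^{k+2,\alpha}_{\textrm{loc}}(\sO\cup\Gamma_1)$, $\Gamma_1\in C^{k+2,\alpha}$, apply interior $L^q$ (Calder\'on--Zygmund) estimates to get $u\in W^{2,q}_{\textrm{loc}}(\sO)\subset C^{1,\alpha}_{\textrm{loc}}(\sO)$, and then run interior and boundary Schauder estimates on a covering of $\sO\cup\Gamma_1$ with a routine bootstrap (the coefficients of $A$ being $C^\infty$) to conclude $u\in C^{k+2,\alpha}_{\textrm{loc}}(\sO\cup\Gamma_1)$. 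No regularity past $C^0$ is asserted at $\Gamma_0$, consistent with Remark~\ref{rmk:WeightedNeumannBoundaryProperty}.

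\emph{Step 2: continuity up to $\Gamma_0$.} This is where $f\in L^q_{\textrm{loc}}(\sO\cup\Gamma_0)$ and $g\in W^{2,q}_{\textrm{loc}}(\sO\cup\Gamma_0)$, $q>2+\beta$, enter, and it is the crux. Fix $(x_0,0)\in\Gamma_0$ and a box $R=(x_0-\delta,x_0+\delta)\times(0,\delta)\subset\sO$. Dividing $Au=f$ by $y$ puts the equation on $R$ in the model form of a uniformly elliptic operator with lower-order coefficients singular of order $1/y$, but with strictly positive zeroth-order part $r/y$ and inward drift $-\kappa\theta\,y^{-1}\partial_y$ — the structure to which the weighted estimates apply. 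From $u\in H^2(\sO,\fw)$ one has, locally near $\Gamma_0$, that $u$ and $Du$ lie in $L^2$ with respect to the natural degenerate energy weight $y\fw$ (comparable to $y^\beta$). I would upgrade the exponent from $2$ to $q$: rescaling the dyadic boxes $Q_{y_0}=(x_0-y_0,x_0+y_0)\times(y_0/2,3y_0/2)$ to unit size, where $A$ (divided by $y$) is uniformly elliptic with coefficients bounded uniformly in $y_0\le\delta$, one combines the unweighted interior $W^{2,q}$ estimate on these boxes with the $L^q_{\textrm{loc}}$ control of $f$ and $g$ and a Moser-type iteration in the weighted space, obtaining that $u$ and $Du$ belong to $L^q(y\fw)$ locally near $(x_0,0)$. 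A half-ball of radius $r$ at $\Gamma_0$ has $y\fw$-measure comparable to $r^{\,2+\beta}$, so the weighted first-order Sobolev space thus reached embeds into $C(\bar R)$ precisely because $q>2+\beta$; hence $u$ extends continuously to $\Gamma_0\cap\partial R$. Varying $(x_0,0)$ and $\delta$ gives $u\in C_{\textrm{loc}}(\bar\sO)$, completing the proof.

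\emph{Expected main obstacle.} Step~1 is routine; the difficulty is the weighted $L^q$ gradient estimate near $\Gamma_0$, i.e.\ upgrading $Du$ from the energy-level weighted $L^2$ bound (which comes for free from $u\in H^2(\sO,\fw)$) to the weighted $L^q$ bound the embedding requires. The obstruction is that the weight $y^{\beta-1}$ fails the Muckenhoupt $A_2$ condition once $\beta\ge 2$, so the Fabes--Kenig--Serapioni framework is not available off the shelf; instead one must exploit the specific Keldysh structure of $A$ — inward drift and positivity of $r$ at $\Gamma_0$ — in the rescaled estimates, and it is in making these estimates summable as $Q_{y_0}\to\Gamma_0$ that the hypothesis $q>2+\beta$ is needed.
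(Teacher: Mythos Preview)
Your Step~1 is correct and essentially matches the paper: the paper obtains $u\in C^\alpha_{\textrm{loc}}(\sO\cup\Gamma_1)$ directly from $u\in H^2(\sO,\fw)$ via the two-dimensional Sobolev embedding (Lemma~\ref{lem:H2SobolevEmbedding}), and obtains the higher-order $C^{k+2,\alpha}_{\textrm{loc}}(\sO\cup\Gamma_1)$ regularity by localizing the boundary Schauder estimate \cite[Theorem~6.19]{GT} on balls near $\Gamma_1$. Your intermediate Calder\'on--Zygmund step is not needed, since $H^2\hookrightarrow C^\alpha$ already puts you in position to start the Schauder bootstrap.

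The genuine discrepancy is in Step~2. The paper does \emph{not} supply a self-contained proof of $u\in C_{\textrm{loc}}(\sO\cup\bar\Gamma_0)$; it invokes the companion result \cite{Feehan_Pop_regularityweaksoln} as a black box (see the proof of Theorem~\ref{thm:HolderContinuityHestonStatVarEquality}), which in turn adapts the weighted-H\"older approach of Daskalopoulos--Hamilton \cite{DaskalHamilton1998}. Your heuristic that $2+\beta$ is the homogeneous dimension of the measure $y^\beta\,dx\,dy$ and hence the embedding threshold is the correct intuition for why the condition $q>2+\beta$ appears, and the dyadic-rescaling strategy you outline is a natural route. But your sketch is not a proof: the step ``a Moser-type iteration in the weighted space, obtaining that $u$ and $Du$ belong to $L^q(y\fw)$'' is precisely the obstacle you yourself flag, and you have not shown how to sum the rescaled $W^{2,q}$ estimates across scales without losing control of constants, nor how to handle the non-$A_p$ weight when $\beta\geq 2$. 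So on this point your proposal is an outline of what a direct proof might look like, whereas the paper simply cites the result.
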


\begin{rmk}[Extensions of preceding results in sequels to this article]
See \S \ref{subsec:Sequels} for a survey of our research on extensions and applications of Theorems \ref{thm:MainExistenceUniquenessBoundaryValueProblem} and \ref{thm:MainRegularityBoundaryValueProblem} in sequels to this article.
\end{rmk}

\subsection{Survey of previous research in degenerate boundary value and obstacle problems}
\label{subsec:Survey}
Questions of existence, uniqueness, and regularity of solutions to ``standard'' obstacle problems (for example, bounded domains with smooth boundary, uniformly elliptic differential operators with smooth coefficients, bounded and smooth functions, and smooth boundary data) are addressed by Bensoussan and Lions \cite{Bensoussan_Lions}, Friedman \cite{Friedman_1982}, Kinderlehrer and Stampacchia \cite{Kinderlehrer_Stampacchia_1980}, Petrosyan, Shagholian, and Uralt'seva \cite{Petrosyan_Shagholian_Uraltseva}, Rodrigues \cite{Rodrigues_1987}, and Troianello \cite{Troianiello}, and elsewhere. Bensoussan and Lions \cite{Bensoussan_Lions} provide a comprehensive treatment of both standard problems and certain extensions which allow, in certain cases and combinations, for unbounded domains and non-coercive operators. However, as we shall see, the features apparent in \eqref{eq:IntroObstacleProblem} present a particular combination of difficulties which, as far as we can tell, is not addressed in the literature. These difficulties include the
\begin{enumerate}
\item Degeneracy of the operator, $A$, along the domain boundary portion $\Gamma_0$,
\item Non-coercivity of the bilinear form, $a(\cdot,\cdot)$, associated with $A$,
\item Unboundedness of the domain, $\sO$,
\item Unboundedness of the coefficients of $A$,
\item Lipschitz regularity of the obstacle function, $\psi$, and
\item Corner points of the domain where the boundary portions $\Gamma_0$ and $\Gamma_1$ meet.
\end{enumerate}
We also allow the source function, $f$, Dirichlet boundary data function, $g$, and obstacle function, $\psi$, to be unbounded. While these choices do present some additional difficulties, these functions are often bounded in typical applications. Also, though the points of $\partial\sO$ where $\Gamma_0$ and $\Gamma_1$ meet are \emph{geometric} corner points, we know from \cite{DaskalHamilton1998} that because $A$ is degenerate along $\Gamma_0$, we may consider, in a certain sense, the boundary portion, $\Gamma_0$, to be an ``interior'' subset of the domain, $\sO$.

\subsubsection{Degenerate partial differential equations}
We next provide a brief survey of some of the literature relevant to problems \eqref{eq:IntroObstacleProblem} and \eqref{eq:IntroBoundaryValueProblem}. Earlier research treating existence, uniqueness, and regularity problems for degenerate elliptic or parabolic partial differential equations includes the articles by Fichera \cite{Fichera_1956}, Glushko and Savchenko \cite{GlushkoSavchenko}, Kohn and Nirenberg \cite{Kohn_Nirenberg_1967}, McKean \cite{McKean_1956}, Murthy and Stampacchia \cite{Murthy_Stampacchia_1968}, Stroock and Varadhan \cite{StroockVaradhan1972}, and monographs such as those of Drabek, Kufner, and Nicolosi \cite{DrabekKufnerNicolosi}, Freidlin \cite{Freidlin}, Friedman \cite{FriedmanSDE}, Levendorski{\u\i} \cite{LevendorskiDegenElliptic}, and Ole{\u\i}nik and Radkevi{\v{c}} \cite{Oleinik_Radkevic}. The accumulation of related research in this field has become vast and we will not attempt to survey it comprehensively here except to note that the hypotheses required by the main theorems in well-known articles such as \cite{Kohn_Nirenberg_1967, Murthy_Stampacchia_1968} either exclude problems such as \eqref{eq:IntroBoundaryValueProblem} because their hypotheses are too restrictive or yield conclusions which are not as strong as Theorem \ref{thm:MainExistenceUniquenessBoundaryValueProblem}.

More recently, the porous medium equation --- a degenerate, quasi-linear parabolic partial differential equation --- has stimulated development of the theory of regularity of solutions to degenerate partial differential equations. The $C^\infty$-regularity of solutions up to the boundary and the $C^\infty$-regularity of the ``free boundaries'' in porous medium problems has been proved by P. Daskalopoulos and her collaborators \cite{DaskalHamilton1998, Daskalopoulos_Rhee_2003} using weighted H\"older spaces defined by a ``cycloidal'' Riemmianian metric on the upper half-plane and independently by Koch \cite{Koch} using a combination of weighted $L^p$ spaces and weighted H\"older spaces defined by the cycloidal metric. (Note, however, that free boundaries in porous medium problems are not necessarily free boundaries in the sense of obstacle problems.) The linearization of the porous medium equation has a structure which is similar to the (parabolic) Heston equation  \cite{DaskalHamilton1998, Koch}, and so research on the porous medium equation is especially relevant to problems \eqref{eq:IntroObstacleProblem} and \eqref{eq:IntroBoundaryValueProblem}. We shall not use weighted H\"older spaces in this article, although we shall in sequels focusing on regularity near the boundary portion, $\Gamma_0$, of solutions to \eqref{eq:IntroObstacleProblem} or \eqref{eq:IntroBoundaryValueProblem}. The weights defining our weighted Sobolev spaces use the same powers of the distance to the boundary portion, $\Gamma_0$, as those employed by Koch, but we also include exponential decay factors which ensure that the upper half-plane, $\HH$, has finite volume with respect to the measure defined by our weight function, $\fw$. Our development of the relevant weighted Sobolev space theory is more comprehensive than that of \cite{Koch} and our application differs from \cite{Koch} in many significant aspects because we allow (i) unbounded domains, $\sO$, (ii) non-coercive operators, $A$, (iii) unbounded lower-order coefficients in $A$, and (iv) non-empty boundary portions, $\Gamma_1$.

While there is considerable body of literature on linear degenerate elliptic or parabolic partial differential equations, much of that concerns operators obeying H\"ormander's hypoellipticity condition, which is not obeyed by the Heston operator, $A$; see, for example, Lunardi \cite{Lunardi_1997} and Priola \cite{Priola_2009}. Research on the existence and uniqueness theory for elliptic or parabolic partial differential equations with unbounded coefficients includes that of Krylov and Priola \cite{Krylov_Priola_2010} and the references contained therein.

Recent investigations using probability methods in the mathematical finance literature and focusing on fundamental questions of existence, uniqueness, and global regularity of solutions to the associated parabolic terminal/boundary value problem are due to Bayraktar and Xing \cite{Bayraktar_Xing_2009}, Constanzino, Nistor and Mazzucato and their collaborators \cite{Cheng_Costanzino_Liechty_Mazzucato_Nistor_2009, Constantinescu_Costanzino_Mazzucato_Nistor_2009}, and Ekstr\"om, Tysk, and Janson \cite{Ekstrom_Tysk_bcsftse, Ekstrom_Tysk_bssvm, Janson_Tysk_2006}.

\subsubsection{Variational inequalities and obstacle problems for degenerate differential operators}
Recent work on obstacle problems, not directly concerned with finance, includes that of Blanchet, Dolbeault, and Monneau \cite{Blanchet_Dolbeault_Monneau_2005}, Caffarelli \cite{Caffarelli_fermi_1998, Caffarelli_jfa_1998}, Caffarelli, Petrosyan, and Shahgholian \cite{Caffarelli_Petrosyan_Shahgholian_2004}, and Caffarelli and Salsa \cite{Caffarelli_Salsa_2005}. While obstacle problems have been considered for certain degenerate elliptic and parabolic problems in \cite{FriedmanSDE}, the cases considered do not fully cover problems of interest in finance such as those defined by the Heston operator.

Recent work, with application to option pricing, on evolutionary variational inequalities and obstacle problems includes that of Bayraktar and Xing \cite{Bayraktar_Xing_2009b}, Chadam and Chen \cite{Chen_Chadam_2006, Chen_Chadam_Jiang_Zheng_2008}, Ekstr\"om and Tysk \cite{Ekstrom_2004, Ekstrom_Tysk_2006}, Laurence and Salsa \cite{Laurence_Salsa_2009}, Nystr\"om \cite{Nystrom_2007, Nystrom_2008}, and Petrosyan and Shahgholian \cite{Petrosyan_Shahgholian_2007}. Previous work on degenerate evolutionary variational inequalities and obstacle problems includes Mastroeni and Matzeu \cite{Mastroeni_Matzeu_1996, Mastroeni_Matzeu_1998} and Touzi \cite{Touzi_1999}. In the case of obstacle problems for hypoelliptic operators see, for example, the work of DiFrancesco, Frentz, Nystr\"om, Pascucci, and Polidoro \cite{DiFrancesco_Pascucci_Polidoro_2008, Frentz_Nystrom_Pascucci_Polidoro_2010, Nystrom_Pascucci_Polidoro_2010}.

\subsubsection{Mild solutions and viscosity solutions}
Some researchers have established existence, uniqueness, and regularity results --- for more flexible notions of solutions --- to degenerate variational inequality or obstacle problems. For example, Barbu and Marinelli \cite{Barbu_Marinelli} have established such results for \emph{mild} solutions to a class of such problems, while Lee \cite{Lee_2001} and Savin \cite{Savin_2004} have obtained results on the existence, uniqueness, and regularity of \emph{viscosity solutions} \cite{Crandall_Ishii_Lions_1992} to obstacle problems involving the Monge-Amp\`ere operator. However, the primary focus of our research --- in this article and its sequels --- is on the existence and uniqueness of solutions to variational equations and inequalities, interpreted as \emph{weak} solutions to boundary value and obstacle problems, and the regularity theory required to prove existence and uniqueness of \emph{classical} solutions to boundary value and obstacle problems defined by degenerate elliptic-parabolic operators.

\subsection{Further research and applications to probability and mathematical finance}
\label{subsec:Sequels}
We briefly summarize work in progress or near completion on extensions of results of this article to elliptic and parabolic obstacle problems defined by generators of degenerate diffusion processes and motivated by option valuation problems in mathematical finance.

\subsubsection{Regularity of solutions to elliptic obstacle problems}
In a sequel to this article, we augment Theorem \ref{thm:MainOptimalRegularityObstacleProblem} by proving that the solution, $u$, is continuous up to the boundary $\partial\sO$ and then that $u \in C^{1,1}_{\textrm{loc}}(\bar\sO)$ by adapting the weighted H\"older space methods of \cite{DaskalHamilton1998}. We achieve this regularity result even when the obstacle function, $\psi$, is only Lipschitz by adapting arguments of Laurence and Salsa \cite{Laurence_Salsa_2009}. In addition, we augment Theorem \ref{thm:MainRegularityBoundaryValueProblem} by proving that the solution, $u$, is in $C^{k,\alpha}_{\textrm{loc}}(\bar\sO)$, again by adapting the weighted H\"older space methods of \cite{DaskalHamilton1998}.

\subsubsection{Existence, uniqueness, and regularity of solutions to parabolic obstacle problems}
In a sequel to this article, we also consider evolutionary variational inequalities and obstacle problems for the parabolic Heston operator, $-\partial_t + A$, and prove analogues of Theorems \ref{thm:MainExistenceUniquenessObstacleProblem} and \ref{thm:MainOptimalRegularityObstacleProblem}, together with analogues of Theorems \ref{thm:MainExistenceUniquenessBoundaryValueProblem} and \ref{thm:MainRegularityBoundaryValueProblem} in the case of evolutionary variational equations and terminal/boundary value problems.

\subsubsection{Geometry and regularity of the free boundary}
The free boundary, $F(u) := \sO\cap\partial\sC(u)$, in an obstacle problem is the boundary of the open subset $\sC(u)$ of the domain $\sO$ where the solution is strictly greater than the obstacle function, $u>\psi$, and equality holds in the partial differential inequality. Motivated by the beautiful results of Laurence and Salsa \cite{Laurence_Salsa_2009} in the case of the non-degenerate, multi-dimensional geometric Brownian motion with drift, we use a combination of probabilistic and analytical methods to determine the geometry and regularity of the free boundary defined by the obstacle problem for the parabolic Heston operator.

\subsection{Extensions to degenerate operators in higher dimensions}
\label{subsec:Extensions}
The Heston stochastic volatility process and its associated generator serve as paradigms for degenerate Markov processes and their degenerate elliptic generators which appear widely in mathematical finance.

\subsubsection{Degenerate diffusion processes and partial differential operators}
\label{subsubsec:DegenerateDiffusion}
Generalizations of the Heston process to higher-dimensional, degenerate diffusion processes may be accommodated by extending the framework developed in this article and we shall describe extensions in a sequel. First, the two-dimensional Heston process has natural $d$-dimensional analogues \cite{DaFonseca_Grasselli_Tebaldi_2008} defined, for example, by coupling non-degenerate $(d-1)$-diffusion processes with degenerate one-dimensional processes \cite{Cherny_Engelbert_2005, Mandl, Zettl}. Elliptic differential operators arising in this way have time-independent, affine coefficients but, as one can see from standard theory \cite{GT, Krylov_LecturesHolder, Krylov_LecturesSobolev, Lieberman} and previous work of Daskalopoulos and her collaborators \cite{DaskalHamilton1998, Daskalopoulos_Rhee_2003} on the porous medium equation, we would not expect significant new difficulties to arise when extending the methods and results of this article to the case of higher dimensions and variable coefficients, depending on both spatial variables and time and possessing suitable regularity and growth properties.

\subsubsection{Degenerate Markov processes and partial-integro differential operators}
\label{subsubsec:Markov}
The Heston process also has natural extensions to $d$-dimensional degenerate affine jump-diffusion processes with Markov generators which are degenerate elliptic partial-integro differential operators. A well-known example of such a two-dimensional process is due to Bates \cite{Bates1996} and the definition of this process has been extended to higher dimensions by Duffie, Pan, and Singleton \cite{DuffiePanSingleton2000}. Stationary jump diffusion processes of this kind and their partial-integro differential operator generators naturally lie within the framework of Feller processes and Feller generators \cite{Jacob_v1, Jacob_v2, Jacob_v3}, where the non-local nature of the partial-integro differential operators provides new challenges when considering obstacle problems; see \cite{Caffarelli_Figalli_2011ppt} for recent research by Caffarelli and Figalli in this direction.

\subsection{Mathematical highlights and guide to the article}
\label{subsec:Guide}
For the convenience of the reader, we provide a brief outline of the article. We begin in \S \ref{sec:EnergyEstimates} by defining the weighted Sobolev spaces and H\"older spaces we shall need throughout this article, discuss the equivalence between weak and strong solutions, and derive the key energy estimates for the bilinear form defined by the Heston operator (Propositions \ref{prop:EnergyGardingHeston} and \ref{prop:FullEnergyHeston}). In \S \ref{sec:VariationalEquation}, we establish existence and uniqueness of solutions to the variational equation for the elliptic Heston operator (Theorem \ref{thm:ExistenceUniquenessEllipticHeston_Improved}). In \S \ref{sec:StatVarInequality}, we adapt the methods of Bensoussan and Lions \cite[Chapter 3, \S 1]{Bensoussan_Lions} to prove existence and uniqueness of solutions to the non-linear penalized equation (Theorem \ref{thm:ExistenceUniquenessEllipticHestonPenalizedProblem}), a coercive variational inequality (Theorem \ref{thm:VIExistenceUniquenessEllipticCoerciveHeston}), and finally the non-coercive variational inequality (Theorem \ref{thm:VIExistenceUniquenessEllipticHeston_Improved}) corresponding to the obstacle problem \eqref{eq:IntroObstacleProblem}. In \S \ref{sec:H2RegularityEquality}, we prove $H^2(\sO,\fw)$ regularity and a priori $H^2(\sO,\fw)$ estimates for solutions to the variational equation corresponding to \eqref{eq:IntroBoundaryValueProblem} (Theorem \ref{thm:GlobalRegularityEllipticHestonSpecial}) together with H\"older continuity of solutions on $\bar\sO$ (Theorem \ref{thm:HolderContinuityHestonStatVarEquality}). Theorem \ref{thm:MainExistenceUniquenessBoundaryValueProblem} is proved at the end of \S \ref{subsec:H2RegularityEquality} while Theorem \ref{thm:MainRegularityBoundaryValueProblem} is proved at the end of \S \ref{subsec:HolderRegularityEquality}. Finally, in \S \ref{sec:RegularityInequality} we prove $H^2(\sO,\fw)$ regularity for solutions to the variational inequality corresponding to the obstacle problem \eqref{eq:IntroObstacleProblem} (Theorem \ref{thm:VIRegularityEllipticHestonNoncoercive}). Theorem \ref{thm:MainExistenceUniquenessObstacleProblem} is proved at the end of \S \ref{subsec:H2RegularityInequality} while Theorems \ref{thm:MainRegularityObstacleProblem} and \ref{thm:MainOptimalRegularityObstacleProblem} are proved at the end of \S \ref{subsec:LocalW2pC1AlphaRegularityInequality}. Because the obstacle function is often not in $H^2(\sO,\fw)$, we extend Theorem \ref{thm:VIRegularityEllipticHestonNoncoercive} to the case where the obstacle function is only in $H^2(\sU,\fw)$ for some open subset $\sU\subseteqq\sO$ (Theorem \ref{thm:H2BoundSolutionHestonVarIneqLipschitz}). With the aid of additional hypotheses on the source and obstacle functions, we obtain $W^{2,p}$, $C^{1,\alpha}$, and $C^{1,1}$ regularity of the solution (Theorem \ref{thm:LocalW2pRegularityHestonVI} and Corollaries \ref{cor:LocalC1alphaRegularity} and \ref{cor:LocalC11Regularity}).

Appendix \ref{sec:WeightedSobolevSpaces} contains the proofs of our results for our weighted Sobolev spaces and which underpin the methods of this article in an essential way; Appendix \ref{sec:LaxMilgram} describes a few simple consequences of the Lax-Milgram theorem which we use repeatedly; Appendix \ref{sec:CIR} summarizes an example illustrating subtleties in the boundary behavior of solutions to the elliptic Cox-Ingersoll-Ross equation, and thus the Heston equation, near the boundary portion, $\Gamma_0$.


\subsection{Notation and conventions} In the definition and naming of function spaces, including spaces of continuous functions, H\"older spaces, or Sobolev spaces, we follow Adams \cite{Adams} and alert the reader to occasional differences in definitions between \cite{Adams} and standard references such as Gilbarg and Trudinger \cite{GT} or Krylov \cite{Krylov_LecturesHolder, Krylov_LecturesSobolev}. These differences matter when the domain, $\sO$, is unbounded, as we allow throughout this article. For the signs attached to coefficients in our differential operators, we follow the conventions of Bensoussan and Lions \cite{Bensoussan_Lions} and Evans \cite{Evans}, keeping in mind our interest in applications to probability, and noting that their sign conventions are often opposite to those of \cite{GT}. We denote $\RR_+ := (0,\infty)$, $\bar\RR_+ := [0,\infty)$, $\HH := \RR\times\RR_+$, and $\bar\HH := \RR\times\bar\RR_+$. For $x\in\RR$, we set $x^+ = \max\{x,0\}$, $x^- = -\min\{x,0\}$, so $x=x^+-x^-$ and $|x| = x^+ + x^-$, a convention which differs from that of \cite[\S 7.4]{GT}. When we label a condition an \emph{Assumption}, then it is considered to be universal and in effect throughout this article and so not referenced explicitly in theorem and similar statements; when we label a condition a \emph{Hypothesis}, then it is only considered to be in effect when explicitly referenced.


\subsection{Acknowledgments}
P. Feehan thanks Peter Carr for introducing him in 2004 to stochastic volatility models and the Heston model in mathematical finance and encouraging his initial research on the Heston partial differential equation. He is grateful to Bruno Dupire, Pat Hagan, Peter Laurence, Victor Nistor, and Sergei Levendorski{\u\i} for helpful conversations and references. He especially thanks his Ph.D. student, Camelia Pop, for many useful discussions on degenerate partial differential equations and diffusion processes.

\section{Energy estimates}
\label{sec:EnergyEstimates}
We begin in \S \ref{subsec:RescalingHeston} by describing our assumptions on the Heston operator coefficients together with a simple affine change of coordinates which preserves the structure of the Heston operator but ensures that certain combinations of coefficients can be assumed to be zero without loss of generality (Lemma \ref{lem:RescalingHeston}), simplifying the derivation of certain estimates throughout this article. In \S \ref{subsec:FunctionSpaces}, we describe the weighted Sobolev spaces (Definitions \ref{defn:H1WeightedSobolevSpaces} and \ref{defn:H2WeightedSobolevSpaces}) we shall need in order to prove existence of solutions to variational equations and inequalities defined by the Heston operator, together with higher regularity properties. In \S \ref{subsec:HestonBilinearForm}, we define the bilinear form associated with the Heston operator (Definition \ref{defn:HestonWithKillingBilinearForm}), establish an integration by parts formula (Lemma \ref{lem:HestonIntegrationByParts}), and discuss when solutions to variational equations may be interpreted as strong solutions to a boundary value problem for the Heston partial differential operator (Lemma \ref{lem:HestonWeightedNeumannBVPHomogeneous}). In \S \ref{subsec:HestonEnergyEstimates}, we derive the key bilinear form estimates we will need, namely, a G\r{a}rding inequality (Proposition \ref{prop:EnergyGardingHeston}) and a continuity estimate (Proposition \ref{prop:FullEnergyHeston}). We conclude in \S \ref{subsec:BilinearFormEnergyIdentityEstimate} by deriving certain additional bilinear form identities and commutator estimates which we will need throughout this article.

\subsection{Heston operator coefficients and a change of variables}
\label{subsec:RescalingHeston}
It will be convenient to define the constants
\begin{gather}
\label{eq:DefnBetaMu}
\beta := \frac{2\kappa\theta}{\sigma^2} \quad\hbox{and}\quad \mu := \frac{2\kappa}{\sigma^2},
\\
\label{eq:DefinitionA1B1}
a_1 := \frac{\kappa\rho}{\sigma}-\frac{1}{2} \quad\hbox{and}\quad b_1 := r-q-\frac{\kappa\theta\rho}{\sigma}.
\end{gather}
The interpretation of $\beta, \mu$ is discussed in \cite{Feehan_cirsl}, while the role of $a_1, b_1$ is explained in Definition \ref{defn:HestonWithKillingBilinearForm}.

\begin{rmk}[Interpretation of the coefficients]
\label{rmk:HestonCoefficients}
The conditions on the coefficients ensure $\beta > 0$ and so the Heston stochastic process and solutions to the partial differential equations and obstacle problems will have tractable behavior. In mathematical finance, the constants  $q, r, \kappa, \sigma, \theta$ have the interpretation described in \cite{Heston1993}.

The conditions \eqref{eq:EllipticHeston} ensure that $y^{-1}A$ is uniformly elliptic on $\HH$. Indeed,
\begin{equation}
\label{eq:HestonModulusEllipticity}
\frac{y}{2}(\xi_1^2 + 2\rho\sigma\xi_1\xi_2 + \sigma^2\xi_2^2) \geq \nu_0 y(\xi_1^2 + \xi_2^2), \quad\forall (\xi_1,\xi_2) \in \RR^2,
\end{equation}
where
\begin{equation}
\label{eq:DefnNuZero}
\nu_0 := \min\{1,(1-\rho^2)\sigma^2\},
\end{equation}
and $\nu_0 > 0$ by Assumption \ref{assump:HestonCoefficients}.
\end{rmk}

It will prove useful to examine the effect of a certain affine change in the independent variables $(x,y)$ in equations or inequalities involving the Heston operator $A$ in \eqref{eq:OperatorHestonIntro}.

\begin{lem}[Affine changes of coordinates]
\label{lem:RescalingHeston}
Let the differential operator $A$ be given by \eqref{eq:OperatorHestonIntro}. Then, using compositions of changes of independent variables of the form
$$
(x,y) \mapsto (x, ay) \quad\hbox{and}\quad (x,y) \mapsto (x+my, y), \quad (x,y) \in \HH,
$$
where $m\geq 0, a>0$, and rescalings of the dependent variable,
$$
u\mapsto bu,
$$
where $b>0$, the equation $Au=f$ on $\sO$ can be transformed to one of the form $\tilde A\tilde u = \tilde f$ on $\tilde \sO$, where $\tilde A$ has the same structure as $A$ in \eqref{eq:OperatorHestonIntro} and its coefficients obey Assumption \ref{assump:HestonCoefficients} but the analogous combination of coefficients, $\tilde b_1$ in \eqref{eq:DefinitionA1B1}, is zero.
\end{lem}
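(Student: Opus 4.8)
The plan is to realize every finite composition of the permitted moves as a single affine map $\Phi\colon\HH\to\HH$ of the shape $\Phi(x,y)=(x+my,ay)$ with $m\ge 0$, $a>0$, together with a rescaling $\tilde u:=b\,u\circ\Phi^{-1}$, $b>0$, to compute $\tilde A$ explicitly by the chain rule, and then to choose $m,a$ (and $b$) so that $\tilde A$ is again of the Heston form \eqref{eq:OperatorHestonIntro} with $\tilde b_1=0$ and with coefficients obeying Assumption \ref{assump:HestonCoefficients}. The reduction to a single $\Phi$ is elementary: each of $(x,y)\mapsto(x,ay)$ and $(x,y)\mapsto(x+my,y)$ is a bijection of $\HH$ onto $\HH$ carrying $\partial\HH$ onto $\partial\HH$, hence so is any composition, and in any composition the first coordinate is $x$ plus a nonnegative multiple of $y$ while the second is a positive multiple of $y$; thus $\tilde\sO:=\Phi(\sO)\subset\HH$, and $\tilde\Gamma_0:=\Phi(\Gamma_0)$, $\tilde\Gamma_1:=\Phi(\Gamma_1)$ are again the parts of $\partial\tilde\sO$ on $\partial\HH$ and in $\HH$, so the transformed problem has exactly the form of the original one.

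Next I would insert, via the chain rule, $u_x=\tilde u_\xi$, $u_y=m\tilde u_\xi+a\tilde u_\eta$, $u_{xx}=\tilde u_{\xi\xi}$, $u_{xy}=m\tilde u_{\xi\xi}+a\tilde u_{\xi\eta}$, $u_{yy}=m^2\tilde u_{\xi\xi}+2ma\tilde u_{\xi\eta}+a^2\tilde u_{\eta\eta}$, and $y=\eta/a$ into \eqref{eq:OperatorHestonIntro}. One sees at once that the principal part of $\tilde A$ becomes $-\tfrac{\eta}{2a}$ times a constant positive-definite quadratic form in $(\partial_\xi,\partial_\eta)$, that each of the $\tilde u_\xi$ and $\tilde u_\eta$ coefficients is again affine in $\eta$, and that the zeroth-order term is a constant times $\tilde u$; hence $\tilde A$ has the structure of \eqref{eq:OperatorHestonIntro}. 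Imposing that the coefficient of $\tilde u_{\xi\xi}$ equal $1$ forces $a=1+2\rho\sigma m+\sigma^2 m^2$, and since $1+2\rho\sigma m+\sigma^2 m^2=(1-\rho^2)+(\rho+\sigma m)^2>0$ by \eqref{eq:EllipticHeston}, this $a$ is automatically positive; the induced $\tilde\sigma=\sqrt a\,\sigma\ne 0$ and $\tilde\rho=(\rho+\sigma m)/\sqrt a$ satisfy $\tilde\rho^{\,2}=(\rho+\sigma m)^2/\bigl((1-\rho^2)+(\rho+\sigma m)^2\bigr)<1$, so $-1<\tilde\rho<1$. Reading off the remaining coefficients then expresses $\tilde\kappa,\tilde\theta,\tilde r,\tilde q$, and hence $\tilde b_1$, as explicit rational functions of $m$ and the original data, while the rescaling $b$ enters only through $\tilde f=b\,f\circ\Phi^{-1}$ and the normalization of the unknown.

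The main obstacle is the final step: arranging $\tilde b_1=0$ through the remaining freedom in the change of variables \emph{while} keeping the transformed data inside the admissible range of Assumption \ref{assump:HestonCoefficients}. The chain-rule bookkeeping in the previous step is lengthy but routine; the delicate point is to check that the parameter choice which kills $\tilde b_1$ does not force $\tilde\kappa\le 0$, $\tilde\theta\le 0$, $\tilde r<0$, or $\tilde q<0$. I would organize this by rewriting all transformed quantities in terms of the invariants $\beta=2\kappa\theta/\sigma^2$, $\mu=2\kappa/\sigma^2$ of \eqref{eq:DefnBetaMu} and $a_1,b_1$ of \eqref{eq:DefinitionA1B1}, so that the dependence of $\tilde\kappa,\tilde\theta,\tilde r,\tilde q$ on the parameters becomes transparent and the required sign conditions reduce to inequalities among these invariants, all of which follow from $\sigma\ne0$, $-1<\rho<1$, $\kappa,\theta>0$, $r,q\ge 0$. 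Once these checks are in place, the transformed triple $(\tilde A,\tilde u,\tilde f)$ on $\tilde\sO$ is of the asserted form, completing the proof.
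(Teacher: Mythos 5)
Your reduction of an arbitrary composition to a single map $\Phi(x,y)=(x+my,ay)$, the chain-rule bookkeeping, and the verification that $a:=1+2\rho\sigma m+\sigma^2m^2=(1-\rho^2)+(\rho+\sigma m)^2>0$ with $\tilde\rho^{\,2}<1$ are all correct. But you overlook that the structure \eqref{eq:OperatorHestonIntro} is rigid in a second place: besides the coefficient of $\tilde u_{\xi\xi}$ being $-\eta/2$, the coefficient of $\tilde u_\xi$ must be $-(\tilde r-\tilde q-\eta/2)$, with slope exactly $+1/2$ in $\eta$. Your substitution gives the $\tilde u_\xi$-coefficient $-(r-q+\kappa\theta m)+\eta(1+2\kappa m)/(2a)$, so after multiplying the equation by the constant $c>0$ that normalizes the principal part one is also forced to impose $1+2\rho\sigma m+\sigma^2m^2=1+2\kappa m$, i.e. $m=0$ or $m=2(\kappa-\rho\sigma)/\sigma^2$; ``affine in $\eta$'' is not enough. (The rescaling $u\mapsto bu$ contributes nothing, since $A$ is linear.)

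The fatal gap is precisely the step you defer. For every composition of the allowed moves that lands back in the form \eqref{eq:OperatorHestonIntro} one reads off $\tilde r-\tilde q=c(r-q+\kappa\theta m)$, $\tilde\kappa=c\kappa$, $\tilde\theta=a\theta$, $\tilde\sigma^2=c\sigma^2a$, and $\tilde\rho\tilde\sigma=c\sigma(\rho+\sigma m)$, whence
\begin{equation*}
\frac{\tilde\kappa\tilde\theta\tilde\rho}{\tilde\sigma}=\frac{(c\kappa)(a\theta)\,c\sigma(\rho+\sigma m)}{c\sigma^2a}
=c\left(\frac{\kappa\theta\rho}{\sigma}+\kappa\theta m\right),
\qquad\hbox{so}\qquad
\tilde b_1=c\,b_1,\quad c>0.
\end{equation*}
The shear contribution $\kappa\theta m$ cancels identically, so no choice of $m,a,b$ makes $\tilde b_1$ vanish unless $b_1=0$ already. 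This is not an artifact of your parametrization: $b_1\partial_x$ is the part of the boundary drift $(r-q)\partial_x+\kappa\theta\partial_y$ tangential to $\Gamma_0$ after subtracting its component along the conormal direction $\rho\partial_x+\sigma\partial_y$, and any affine bijection of $\bar\HH$ carries this tangential residue to a positive multiple of the corresponding residue for $\tilde A$. Nor can you close the gap by following the paper's argument: its special case rests on the identity $\tilde b_1=a(r-q)-\kappa\theta\rho/(a\sigma)$, but with the paper's own rescaled coefficients \eqref{eq:RescaledHestonParameters} one has $\tilde\kappa\tilde\theta\tilde\rho/\tilde\sigma=a\kappa\theta\rho/\sigma$ and hence $\tilde b_1=ab_1$, so the choice of $a$ made there does not achieve $\tilde b_1=0$ either (and the subsequent ``rescaling of the dependent variable'' is in fact a dilation of $x$, which is not among the admitted transformations and still only multiplies $b_1$ by a positive constant). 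If you need $b_1=0$ you must either enlarge the class of transformations beyond affine maps of $\bar\HH$ or retain the $b_1$-term and control it directly, as is done via Hardy's inequality in Proposition \ref{prop:FullEnergyHestonHardy}.
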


\begin{proof}
We first consider the special case
\begin{equation}
\label{eq:HestonCoefficientsSpecialCase}
\rho/(r-q)\sigma > 0.
\end{equation}
Letting $(x,z)=(x,ay)$, for $a>0$, and writing $u(x,y) =: v(x,z)$, we have $u_y = av_z$, $u_{xy} = av_{xz}$, $u_{yy} = a^2v_{zz}$, and $u_{xx} = v_{xx}$. Then $Au = f$ becomes
$$
Au = -\frac{a^{-1}z}{2}\left(v_{xx} + 2\rho\sigma av_{xz} + \sigma^2 a^2v_{zz}\right) - (r-q-a^{-1}z/2)v_x - \kappa(\theta-a^{-1}z)av_z + rv = f,
$$
and thus
$$
-\frac{z}{2}\left(v_{xx} + 2\rho\sigma av_{xz} + \sigma^2 a^2v_{zz}\right) - (ra-qa-z/2)v_x - \kappa a(\theta a - z)v_z + ra v = af.
$$
Setting $\tilde f(x,z) := af(x,y)$ and
\begin{equation}
\label{eq:RescaledHestonParameters}
\tilde\sigma := \sigma a, \quad\tilde\kappa := \kappa a, \quad\tilde\theta := \theta a, \quad\tilde\rho := \rho, \quad\tilde r := ra,
\quad\tilde q := qa,
\end{equation}
the Heston equation takes the equivalent form
\begin{equation}
\label{eq:RescaledHestony}
\tilde Av := -\frac{z}{2}\left(v_{xx} + 2\tilde\rho\tilde\sigma v_{xz} + \tilde\sigma^2 v_{zz}\right) - (\tilde r - \tilde q - z/2)v_x
- \tilde\kappa(\tilde\theta - z)v_z + \tilde r v = \tilde f.
\end{equation}
Now we examine the effect on the combination of coefficients, $b_1$. From \eqref{eq:DefinitionA1B1}, we have
$$
\tilde b_1
=
\tilde r - \tilde q - \frac{\tilde \kappa\tilde \theta\tilde \rho}{\tilde\sigma}
=
a(r-q) - \frac{\kappa\theta\rho}{a\sigma}.
$$
Hence, $\tilde b_1 = 0$ when $\rho/(r-q)\sigma > 0$ and
$$
a^2 = \frac{\kappa\theta\rho}{(r-q)\sigma},
$$
so it suffices to choose $a := \sqrt{\kappa\theta\rho/(r-q)\sigma}$. This completes the proof of the special case.

For the general case, when \eqref{eq:HestonCoefficientsSpecialCase} may not hold, we first apply a translation $(z,y)=(x+my,y)$, where $m>0$, write $u(x,y) =: v(z,y)$, so that $u_x = v_z$, $u_y = mv_z + v_y$, $u_{xx} = v_{zz}$, $u_{xy} = mv_{zz} + v_{zy}$, and $u_{yy} = m(v_z)_y + (v_y)_y = m(mv_{zz} + v_{zy}) + mv_{yz} + v_{yy} = m^2v_{zz} + 2mv_{zy} + v_{yy}$. Thus,
\begin{align*}
Au &= -\frac{y}{2}\left(v_{zz} + 2\rho\sigma (mv_{zz} + v_{zy}) + \sigma^2 (m^2v_{zz} + 2mv_{zy} + v_{yy})\right)
\\
&\quad - (r-q-y/2)v_z - \kappa(\theta-y)(mv_z + v_y) + rv,
\end{align*}
and hence
\begin{equation}
\label{eq:TranslatedHestonx}
\begin{aligned}
Au &= -\frac{y}{2}\left((1+2\rho\sigma m + \sigma^2m^2)v_{zz} + 2(\rho\sigma + m\sigma^2)v_{zy} + \sigma^2 v_{yy}\right)
\\
&\quad - (r-q + \kappa\theta m - (\kappa m + 1/2)y)v_z - \kappa(\theta-y)v_y + rv.
\end{aligned}
\end{equation}
Therefore, setting $\xi := 1+2\rho\sigma m + \sigma^2m^2$ and noting that $\xi>0, \forall m>0$, since $\sigma\neq 0$ and $\rho\in(-1,1)$, we obtain
$$
\bar Av := -\frac{y}{2}\left(v_{zz} + 2\bar\rho\bar\sigma v_{zy} + \bar\sigma^2 v_{yy}\right)
- (\bar r - \bar q - by/2)v_z - \bar\kappa(\theta-y)v_y + \bar rv = \bar f,
$$
where
\begin{gather*}
\bar\rho\bar\sigma := \frac{m\sigma^2}{\xi}, \quad \bar\sigma := \frac{|\sigma|}{\sqrt{\xi}},
\quad \bar\rho := \frac{\rho\sigma + m\sigma^2}{|\sigma|\sqrt{\xi}}, \quad \bar r := \frac{r}{\xi},
\\
\bar q := \frac{q - \kappa\theta m}{\xi}, \quad b := \frac{2\kappa m + 1}{\xi}, \quad\bar\kappa := \frac{\kappa}{\xi},
\quad \bar f := \frac{f}{\xi}.
\end{gather*}
(Note that
$$
\bar\beta := \frac{2\bar\kappa\theta}{\bar\sigma^2} =  \frac{2\kappa\theta}{\sigma^2} = \beta,
$$
so $\bar\beta = \beta$, as expected.) Observe that $b > 0$ and $\bar\sigma > 0$ and, for large enough $m>0$, we have $\bar\rho > 0$. In addition,
$$
\bar\rho^2 = \frac{(\rho\sigma + m\sigma^2)^2}{\sigma^2\xi} = \frac{\rho^2 + 2\rho\sigma m + m^2\sigma^2}{1+2\rho\sigma m + \sigma^2m^2} < 1,
$$
since $\rho\in (-1,1)$, and thus $0<\bar\rho<1$. Moreover, for large enough $m>0$, we have $\bar r-\bar q>0$, so $\bar\rho/(\bar r-\bar q)\sigma > 0$. By rescaling the dependent variable, $\bar v := bv$, we obtain
$$
-\frac{y}{2}\left(\bar v_{xx} + 2\bar\rho\bar\sigma \bar v_{xy} + \bar\sigma^2 \bar v_{yy}\right) - (b^{-1}(\bar r-\bar q)-y/2)\bar v_x - \bar\kappa(\bar\theta-y)\bar v_y + \bar r \bar v = b^{-1}\bar f.
$$
Defining $\hat f := b^{-1}\bar f$ and $\hat q$ by $b^{-1}(\bar r-\tilde q) =: \bar r - \hat q$ and noting that $\bar r - \tilde q >0$ implies $\tilde r - \hat q >0$, we see that we are back in the situation of the special case \eqref{eq:HestonCoefficientsSpecialCase}, with $\bar\rho/(\tilde r-\hat q) > 0$, and so that rescaling argument applies. This completes the proof.
\end{proof}

\begin{rmk}[Invariance of $\beta$ under coordinate changes]
\label{rmk:RescalingHestonAndBeta}
The proof of Lemma \ref{lem:RescalingHeston} shows that the coordinate changes considered have no effect on $\beta$.
\end{rmk}

\begin{rmk}[Effect of coordinate changes on the Sobolev weight, shape of the domain, and Dirichlet data and obstacle functions]
\label{rmk:RescalingHestonAndSobolevSpaces}
Note that the weight $\fw$ in \eqref{eq:HestonWeight} is \emph{not} invariant under the coordinate changes described in the hypotheses of Lemma \ref{lem:RescalingHeston}; see Definitions \ref{defn:H1WeightedSobolevSpaces} and \ref{defn:H2WeightedSobolevSpaces}. Similarly, the shape of the domain $\sO\subseteq\HH$ is only invariant under
changes of coordinates of the form $(x,y) \mapsto (x, y+my)$ when $\sO=\HH$; while this change does matter away from a neighborhood of $\Gamma_0$, it does matter near $\Gamma_0$ since we will later assume (see Hypothesis \ref{hyp:HestonDomainNearGammaZero}). Similarly, the coordinate changes described in Lemma \ref{lem:RescalingHeston} also mean that the Dirichlet data function, $g$, in \eqref{eq:IntroObstacleProblem} or \eqref{eq:IntroBoundaryValueProblem} will replaced by a function, $\tilde g$ (unless $g=0$ on $\sO\cup\Gamma_1$, in which case the homogeneous Dirichlet boundary condition remains unchanged), while the obstacle function, $\psi$, in \eqref{eq:IntroObstacleProblem} will be replaced by an obstacle function, $\tilde\psi$.
\end{rmk}

With Remarks \ref{rmk:RescalingHestonAndSobolevSpaces} and \ref{rmk:WhyNeedHypothesisOnGammaZero} (below) in mind, we therefore assume throughout the article that the reduction in Lemma \ref{lem:RescalingHeston} has already been applied in order to satisfy

\begin{assump}
[Condition on the Heston operator coefficients]
\label{assump:HestonCoefficientb1}
The coefficients defining $A$ in \eqref{eq:OperatorHestonIntro} have the property that $b_1=0$ in \eqref{eq:DefinitionA1B1}.
\end{assump}

\subsection{Function spaces}
\label{subsec:FunctionSpaces}
As we noted in \S \ref{subsec:Intro}, we shall assume that the spatial domain has the following structure throughout this article:

\begin{defn}[Spatial domain for the Heston partial differential equation]
\label{defn:HestonDomain}
Let $\sO \subset \HH$ be a possibly unbounded domain with boundary $\partial\sO$, let $\Gamma_1 := \HH\cap\partial\sO$, let $\Gamma_0$ denote the interior of $\{y=0\}\cap\partial\sO$, and require that $\Gamma_0$ is non-empty.
\end{defn}

We write $\partial\sO = \Gamma_0\cup\bar\Gamma_1 = \bar\Gamma_0\cup\Gamma_1$ and note that the boundary portions $\Gamma_0$ and $\Gamma_1$ are relatively open in $\partial\sO$. If $\Gamma_0$ were empty then standard methods \cite{Bensoussan_Lions, Friedman_1982, GT, Kinderlehrer_Stampacchia_1980} would apply to all of the problems considered in this article.

\begin{hyp}[Hypothesis on the domain near $\Gamma_0$]
\label{hyp:HestonDomainNearGammaZero}
For $\sO$ as in Definition \ref{defn:HestonDomain}, there is a positive constant, $\delta_0$, such that for all $0<\delta\leq\delta_0$,
\begin{align*}
\sO^0_\delta := \sO\cap(\RR\times(0,\delta)) &= \Gamma_0 \times (0,\delta),
\\
\Gamma_1\cap(\RR\times(0,\delta)) &= \partial\Gamma_0\times (0,\delta),
\end{align*}
where $\Gamma_0\subseteqq\RR$ is a finite union of open intervals.
\end{hyp}

\begin{rmk}[Need for the hypothesis on the domain near $\Gamma_0$]
\label{rmk:WhyNeedHypothesisOnGammaZero}
If our article had allowed for elliptic operators with variable coefficients, $a^{ij}, b^i, c$, with suitable regularity and growth properties, then we could replace Hypothesis \ref{hyp:HestonDomainNearGammaZero} with the more geometric requirement that $\bar\Gamma_1\pitchfork\{y=0\}$ ($C^k$-transverse intersection, $k\geq 1$) by making use of $C^k$-diffeomorphisms of $\bar\HH$ to ``straighten'' the boundary, $\Gamma_1$, near where it meets $\Gamma_0$.
\end{rmk}

\begin{hyp}[Hypothesis on the domain near $\Gamma_1$]
\label{hyp:HestonDomainNearGammaOne}
For a domain, $\sO$, as in Definition \ref{defn:HestonDomain}, and constant, $\delta_0$, as in Hypothesis \ref{hyp:HestonDomainNearGammaZero}, integer $k\geq 1$, and $\alpha\in [0,1)$, require that the boundary portion, $\Gamma_1$, has the \emph{uniform $C^k$-regularity property} \cite[\S 4.6]{Adams} (respectively, uniform $C^{k,\alpha}$-regularity property, when $\alpha\in(0,1)$): there exists a locally finite open cover, $\{U_j\}$, of $\Gamma_1\cap (\RR\times(\delta_0/2,\infty))$ with $U_j \subset \RR\times(\delta_0/4,\infty), \forall j$ and a corresponding sequence $\{\Phi_j\}$ of $C^k$-smooth (respectively, $C^{k,\alpha}$-smooth) one-to-one transformations (see \cite[\S 3.34]{Adams}) with $\Phi_j$ taking $U_j$ onto $B(1)$, where $B(R) := \{(x,y)\in\RR^2: x^2+y^2<R^2\}$, such that
\begin{enumerate}
\item There is a constant, $\delta_1>0$, such that $\cup_{j=1}^\infty U_j' \supset \sO^1_{\delta_1}\cap(\RR\times(\delta_0/2,\infty))$, where $U_j' = \Psi_j(B(1/2))$, $\Psi = \Phi^{-1}$, and \cite[\S 4.5]{Adams}
$$
\sO^1_\delta := \{P\in \sO: \hbox{dist}(P,\Gamma_1) < \delta\}, \quad \delta > 0.
$$
\item There is a finite constant, $R_1\geq 1$, such that every collection of $R_1+1$ of sets $U_j$ has empty intersection;
\item For each $j$, $\Phi_j(U_j\cap\sO) = \{(x,y)\in B(1): x>0\}$;
\item If $(\phi_{j,1},\phi_{j,2})$ and $(\psi_{j,1},\psi_{j,2})$ denote the components of $\Phi_j$ and $\Psi_j$, respectively, then there exists a finite constant, $M_1$, such that, for all multi-indices $\alpha$, $|\alpha|\leq 2$, for $i=1,2$, and every $j$, we have
\begin{align*}
|D^\alpha\phi_{j,i}(z)| &\leq M_1, \quad z\in U_j,
\\
|D^\alpha\psi_{j,i}(w)| &\leq M_1, \quad w\in B(1).
\end{align*}
\end{enumerate}
\end{hyp}

\begin{rmk}[Application of Hypotheses \ref{hyp:HestonDomainNearGammaZero} and \ref{hyp:HestonDomainNearGammaOne}]
We will need Hypotheses \ref{hyp:HestonDomainNearGammaZero} and \ref{hyp:HestonDomainNearGammaOne} when we derive certain global estimates and regularity properties of a solution to a boundary value or obstacle problem near $\partial\sO$. When $\Gamma_1\subset\HH$ is a bounded $C^k$-curve, then it has the uniform $C^k$-regularity property \cite{Adams}.
\end{rmk}

We shall also need

\begin{hyp}[Extension operator property of the domain]
\label{hyp:GammeOneExtensionProperty}
For a domain, $\sO$, as in Definition \ref{defn:HestonDomain} and an integer $k\geq 1$, require that there is a \emph{simple $k$-extension operator from $\sO$ to $\HH$} in the sense of Definition \ref{defn:ExtensionOperator} (compare \cite[\S 4.24]{Adams}).
\end{hyp}

\begin{rmk}[Application of Hypothesis \ref{hyp:GammeOneExtensionProperty}]
Hypothesis \ref{hyp:GammeOneExtensionProperty} is required when we consider traces of functions on $\Gamma_1$.
\end{rmk}

We augment the standard definitions of spaces of continuous (and smooth) functions in \cite[\S 1.25 \& \S 1.26]{Adams}, \cite[\S 5.1]{Evans} with

\begin{defn}[Spaces of continuous functions]
\label{defn:ContinuousFunctions}
Let $\sU\subseteqq\RR^d$ be a domain with boundary $\partial\sU$ and closure $\bar\sU=\sU\cup\partial\sU$.
\begin{enumerate}
\item Let $T\subseteqq\partial \sU$ be relatively open. For any integer $\ell\geq 0$, then $C^\ell_{\textrm{loc}}(\sU\cup T)$ denotes the vector space of functions $u$ on $\sU$ with partial derivatives, $D^\alpha u$, for $0\leq |\alpha|\leq \ell$, which are continuous on $\sU$ and have continuous extensions to $\sU\cup T$. (Compare \cite[\S 4.4]{GT}.) When $T=\partial\sU$ (respectively, $T=\emptyset$), we abbreviate $C^\ell_{\textrm{loc}}(\sU\cup\partial\sU)$ by $C^\ell_{\textrm{loc}}(\bar\sU)$ (respectively, $C^\ell_{\textrm{loc}}(\sU\cup\emptyset)$ by $C^\ell(\sU)$). When $\ell=0$, we abbreviate $C^0_{\textrm{loc}}(\sU\cup T)$ by $C_{\textrm{loc}}(\sU\cup T)$.
\item Denote $C^\infty_{\textrm{loc}}(\sU\cup T) := \cap_{\ell\geq 0}C^\ell_{\textrm{loc}}(\sU\cup T)$.
\item Let $C^\infty_0(\sU\cup T)$ denote the subspace of $C^\infty$ functions with compact support in $\sU\cup T$. When $T=\partial\sU$ (respectively, $T=\emptyset$), we abbreviate $C^\infty_0(\sU\cup\partial\sU)$ by $C^\infty_0(\bar\sU)$ (respectively, $C^\infty_0(\sU\cup\emptyset)$ by $C^\infty_0(\sU)$).
\item As in \cite[\S 1.26]{Adams}, let $C^\ell(\bar\sU)$ denote the Banach space of functions $u$ on $\sU$ with partial derivatives, $D^\alpha u$, for $0\leq |\alpha|\leq \ell$, which are \emph{bounded} and \emph{uniformly} continuous on $\sU$.
\item As in \cite[\S 3.10]{Kufner}, denote $C^\infty(\bar\sU) := \cap_{\ell\geq 0}C^\ell(\bar\sU)$.
\end{enumerate}
\end{defn}

\begin{rmk}
Because we consider \emph{unbounded} domains, it is important to note the following:
\begin{enumerate}
\item Compare the definition of $C^\ell(\bar\sU)$ and related vector spaces in \cite[p. 10, \S 4.1, \& p. 73]{GT}, where it is only assumed that the derivatives $D^\alpha u$ are continuous on $U$, with continuous extensions to $\bar\sU$. We emphasize the distinction here because in \cite{GT} the authors typically assume that $\sU$ is \emph{bounded} whereas we wish to include the case where $\sU$ is \emph{unbounded}. (In other words, the definition of $C^\ell(\bar\sU)$ in \cite[p. 10]{GT} coincides with our definition of $C^\ell_{\textrm{loc}}(\bar\sU)$.)
\item We could have equivalently defined $C^\ell_{\textrm{loc}}(\bar\sU)$ as the vector space of functions $u$ on $\sU$ with partial derivatives, $D^\alpha u$, for $0\leq |\alpha|\leq \ell$, which are bounded and uniformly continuous on \emph{bounded subsets} of $\sU$.
\item When $\sU$ is bounded, then $C^\ell_{\textrm{loc}}(\bar\sU) = C^\ell(\bar\sU)$.
\end{enumerate}
\end{rmk}

By analogy with the definitions of the standard Sobolev spaces $W^{1,2}(\sO)$, $W^{1,2}_0(\sO)$ in \cite[\S 3.1]{Adams} and weighted Sobolev spaces \cite[\S 1, \S 3.4, \& \S 3.8]{Kufner} we introduce the

\begin{defn}[First-order weighted Sobolev spaces]
\label{defn:H1WeightedSobolevSpaces}
Let $\sO\subseteqq\HH$ be a domain. We choose a positive weight function,
\begin{equation}
\label{eq:HestonWeight}
\fw(x,y) := y^{\beta-1}e^{-\gamma|x|-\mu y}, \quad (x,y) \in \HH,
\end{equation}
for a suitable\footnote{See Proposition \ref{prop:EnergyGardingHeston} for constraints on the choice of $\gamma$.} positive constant, $\gamma$. Let $L^2(\sO,\fw)$ be the space of all measurable functions $u:\sO\to\RR$ for which
$$
\|u\|_{L^2(\sO,\fw)}^2 := \int_\sO u^2\fw\,dxdy < \infty,
$$
and denote $H^0(\sO,\fw) := L^2(\sO,\fw)$.
\begin{enumerate}
\item If $Du := (u_x,u_y)$ and $u_x,u_y$ are defined in the sense of distributions \cite[\S 1.57]{Adams}, we set
\begin{align*}
H^1(\sO,\fw) := \{u \in L^2(\sO,\fw): (1+y)^{1/2}u \hbox{ and } y^{1/2}|Du| \in L^2(\sO,\fw)\},
\end{align*}
and
\begin{equation}
\label{eq:H1NormHeston}
\|u\|_{H^1(\sO,\fw)}^2 := \int_\sO \left(y|Du|^2 + (1+y)u^2\right)\fw\,dxdy.
\end{equation}
\item Let $T\subseteq\partial\sO$ be relatively open and let $H^1_0(\sO\cup T,\fw)$ be the closure in $H^1(\sO,\fw)$ of $C^\infty_0(\sO\cup T)$.
\end{enumerate}
\end{defn}

\begin{rmk}[Comments on first-order weighted Sobolev spaces]
Note that:
\begin{enumerate}
\item We shall most often appeal to the case when $T=\Gamma_i, i=0,1$. Compare \cite[p. 7]{Garroni_Menaldi_2002} or \cite[pp. 215--216]{GT}. When $T=\emptyset$, we denote
$$
H^1_0(\sO\cup T,\fw) = H^1_0(\sO,\fw),
$$
that is, the closure of $C^\infty_0(\sO)$ in $H^1(\sO,\fw)$, while if $T=\partial\sO$, then $H^1_0(\sO\cup T,\fw)$ is the closure of $C^\infty_0(\bar\sO)$ in $H^1(\sO,\fw)$ and Corollary \ref{cor:KufnerPowerWeight} yields
$$
H^1_0(\sO\cup T,\fw) = H^1(\sO,\fw).
$$
\item For brevity and when the context is clear, we shall often denote
$$
H := L^2(\sO,\fw) \quad\hbox{and}\quad V = H^1(\sO,\fw), H^1_0(\sO\cup\Gamma_0,\fw), \hbox{ or } H^1_0(\sO,\fw),
$$
and
$$
|u|_H := \|u\|_{L^2(\sO,\fw)} \quad\hbox{and}\quad \|u\|_V := \|u\|_{H^1(\sO,\fw)}.
$$
\item In the present article, we shall not require $W^{k,p}(\sO,\fw)$ or its variants when $p\not= 2$, and so, for brevity, we denote $W^{k,2}(\sO,\fw)$ by $H^k(\sO,\fw)$, $k=0,1,2$, and similarly for its variants.
\item By a straightforward modification of the proof of \cite[Theorem 3.2]{Adams}, one can show that the spaces $H^k(\sO,\fw)$, $k=0,1$, and $H^1_0(\sO\cup T,\fw)$ are Banach spaces; compare \cite[Proposition 2.1.2]{Turesson_2000}.
\item The spaces $H^k(\sO,\fw)$, $k=0,1$, and $H^1_0(\sO\cup T,\fw)$ are Hilbert spaces with the inner products,
\begin{gather*}
(u,v)_{L^2(\sO,\fw)} := \int_\sO uv \fw\,dxdy
\\
(u,v)_{H^1(\sO,\fw)} := \int_\sO \left(y\langle Du,Dv \rangle + (1+y)uv\right)\fw\,dxdy.
\end{gather*}
\item We let $H^{-1}(\sO,\fw)$ denote the dual space to $H^1_0(\sO\cup\Gamma_0,\fw)$; compare \cite[\S 3.5]{Adams}, \cite[\S 5.9.1]{Evans}.
\end{enumerate}
\end{rmk}

\begin{rmk}[Alternative choices of Sobolev weight]
\label{rmk:HestonWeightSmoothed}
We could alternatively have chosen
$$
\fw(x,y) = y^{\beta-1}e^{-\gamma\sqrt{1+x^2}-\mu y}, \quad (x,y) \in \HH,
$$
but the simpler choice \eqref{eq:HestonWeight} will be adequate. \qed
\end{rmk}

\begin{rmk}[Finite volume of the spatial domain]
\label{rmk:FiniteVolumeDomain}
The choice of weight, $\fw$, in \eqref{eq:HestonWeight} ensures that $\sO\subseteqq\HH$ has finite measure, $\hbox{Vol}(\sO,\fw) := \int_\sO 1\,\fw\,dxdy < \infty$, when $0<\beta<\infty$, $\mu>0$, and $\gamma>0$. This point is important in compactness arguments; see \S \ref{subsec:WeightedSobolevContinuousCompactEmbeddings} for an explanation.
\end{rmk}

\begin{rmk}[Doubling and $A_p$ properties]
The weight $\fw$ on $\HH$ is neither $A_p$, $1\leq p<\infty$, in the sense of \cite[Definition 1.2.2]{Turesson_2000} nor doubling in the sense of \cite[Definition 1.2.6]{Turesson_2000}. However, when $\HH$ is equipped with the cycloidal metric of \cite[\S I.1]{DaskalHamilton1998} or \cite[\S 4.3]{Koch}, then the weight $y^\beta$ is $A_p$, $1\leq p<\infty$ by \cite[Corollary 4.3.4]{Koch}.
\end{rmk}

We will also need analogues of the standard second-order Sobolev spaces, $W^{2,2}(\sO)$ \cite[\S 7.5]{GT}:

\begin{defn}[Second-order weighted Sobolev spaces]
\label{defn:H2WeightedSobolevSpaces}
Let $\sO\subseteqq\HH$ be a domain and let
$$
H^2(\sO,\fw) := \{u \in L^2(\sO,\fw): (1+y)^{1/2}u, (1+y)|Du|,  y|D^2u| \in L^2(\sO,\fw)\},
$$
where $D^2u := (u_{xx}, u_{xy}, u_{yx}, u_{yy})$, all derivatives of $u$ are defined in the sense of distributions \cite[\S 1.57]{Adams}, and
\begin{equation}
\label{eq:H2NormHeston}
\|u\|_{H^2(\sO,\fw)}^2 := \int_\sO\left( y^2|D^2u|^2 + (1+y)^2|Du|^2 + (1+y)u^2\right)\,\fw\,dxdy.
\end{equation}
\end{defn}

\begin{rmk}[Comments on second-order weighted Sobolev spaces]
Note that:
\begin{enumerate}
\item We let $H^k_{\textrm{loc}}(\sO,\fw)$, $k=0,1,2$, denote the space of functions $u$ for which $u\in H^k(\sO',\fw)$ for all $\sO'\Subset\sO$.
\item The space $H^2(\sO,\fw)$ is a Banach space (again by modification of the proof of \cite[Theorem 3.2]{Adams}) and a Hilbert space with the inner product,
$$
(u,v)_{H^2(\sO,\fw)} := \int_\sO \left(y^2\langle D^2u,D^2v \rangle + (1+y)^2\langle Du,Dv \rangle + (1+y)uv\right)\fw\,dxdy.
$$
\end{enumerate}
\end{rmk}

Our definition of $H^2(\sO,\fw)$ is motivated, in part, by our requirements that
$$
H^2(\sO,\fw) \subset H^1(\sO,\fw)
$$
and that
$$
A: H^2(\sO,\fw) \to L^2(\sO,\fw), \quad u\mapsto Au,
$$
be a bounded operator.

\subsection{Bilinear form associated with the Heston operator}
\label{subsec:HestonBilinearForm}
We introduce the

\begin{defn}[Heston bilinear form]
\label{defn:HestonWithKillingBilinearForm}
If $u, v \in H^1(\sO,\fw)$ and $a_1,b_1$ are as in \eqref{eq:DefinitionA1B1}, then we call
\begin{equation}
\label{eq:HestonWithKillingBilinearForm}
\begin{aligned}
a(u,v) &:= \frac{1}{2}\int_\sO\left(u_xv_x + \rho\sigma u_yv_x
+ \rho\sigma u_xv_y + \sigma^2u_yv_y\right)y\,\fw\,dxdy
\\
&\quad - \frac{\gamma}{2}\int_\sO\left(u_x + \rho\sigma u_y\right)v \sign(x)y\,\fw\,dxdy
\\
&\quad - \int_\sO(a_1y + b_1)u_xv\,\fw\,dxdy + \int_\sO ruv\,\fw\,dxdy,
\end{aligned}
\end{equation}
the \emph{bilinear form associated with the Heston operator}, $A$, in \eqref{eq:OperatorHestonIntro}.\qed
\end{defn}

\begin{lem}[Integration by parts for the Heston operator]
\label{lem:HestonIntegrationByParts}
Require that the domain $\sO$ obeys Hypotheses \ref{hyp:HestonDomainNearGammaZero} and \ref{hyp:GammeOneExtensionProperty} with $k=1$. Suppose $u\in H^2(\sO,\fw)$ and $v\in H^1(\sO,\fw)$. Then $Au\in L^2(\sO,\fw)$ and
\begin{equation}
\label{eq:HestonIntegrationByPartsFormula}
(Au,v)_{L^2(\sO,\fw)} = a(u,v) - \frac{1}{2}\int_{\Gamma_1}\left(n_0(u_x + \rho\sigma u_y) + n_1(\rho\sigma u_x + \sigma^2 u_y)\right)vy\fw\,dS,
\end{equation}
where $\mathbf{n} := (n_0,n_1)$ is the outward-pointing unit normal vector field along $\Gamma_1$, $dS$ is the curve measure on $\Gamma_1$ induced by Lebesgue measure on $\RR^2$, and the integrand on $\Gamma_1$ is defined in the trace sense.
\end{lem}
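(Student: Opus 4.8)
The plan is to establish \eqref{eq:HestonIntegrationByPartsFormula} first for a dense class of smooth functions by the divergence theorem, and then extend it to all $u\in H^2(\sO,\fw)$ and $v\in H^1(\sO,\fw)$ by continuity and density.

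\emph{That $Au\in L^2(\sO,\fw)$.} From \eqref{eq:OperatorHestonIntro} and Assumption \ref{assump:HestonCoefficientb1}, the coefficients of the second-order derivatives of $u$ are bounded in absolute value by $Cy$, those of the first-order derivatives by $C(1+y)$, and the zeroth-order coefficient equals $r$; hence $|Au|\leq C\big(y|D^2u|+(1+y)|Du|+|u|\big)$ pointwise on $\sO$. Since $u\in H^2(\sO,\fw)$, Definition \ref{defn:H2WeightedSobolevSpaces} gives $y|D^2u|$, $(1+y)|Du|$, and $(1+y)^{1/2}u$ in $L^2(\sO,\fw)$, so $\|Au\|_{L^2(\sO,\fw)}\leq C\|u\|_{H^2(\sO,\fw)}$; in particular $(Au,v)_{L^2(\sO,\fw)}$ is well-defined.

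\emph{The smooth case.} Writing the principal part of $A$ in divergence form,
\[
y\big(u_{xx}+2\rho\sigma u_{xy}+\sigma^2 u_{yy}\big)=\divg\!\big(y\,[\,u_x+\rho\sigma u_y,\ \rho\sigma u_x+\sigma^2 u_y\,]\big)-\rho\sigma u_x-\sigma^2 u_y,
\]
one applies the divergence theorem to $-\tfrac12\int_\sO\divg(\cdots)\,v\fw\,dxdy$ for $u$ ranging over a class of smooth functions dense in $H^2(\sO,\fw)$ --- e.g.\ $C^\infty_0(\bar\sO)$, or $C^\infty(\bar\sO)\cap H^2(\sO,\fw)$ after inserting a spatial cutoff whose error terms vanish by dominated convergence using the exponential decay of $\fw$. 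Using $D\fw=\fw\big(-\gamma\sign(x),\,(\beta-1)/y-\mu\big)$ and the identities $\beta\sigma^2/2=\kappa\theta$, $\beta\rho\sigma/2=\kappa\theta\rho/\sigma$, $\mu\sigma^2/2=\kappa$, $\mu\rho\sigma/2=\kappa\rho/\sigma$ coming from \eqref{eq:DefnBetaMu}, together with $a_1=\kappa\rho/\sigma-1/2$ and $b_1=0$ from \eqref{eq:DefinitionA1B1} and Assumption \ref{assump:HestonCoefficientb1}, a routine bookkeeping of the interior integrals produced by the divergence theorem (the stray first-order terms from the rewriting above, the terms from differentiating $\fw$, and the explicit lower-order terms of $A$ all recombine) reproduces precisely the four integrals of $a(u,v)$ in \eqref{eq:HestonWithKillingBilinearForm}. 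The divergence theorem also produces the boundary integral $-\tfrac12\int_{\partial\sO}y\big(n_0(u_x+\rho\sigma u_y)+n_1(\rho\sigma u_x+\sigma^2 u_y)\big)v\fw\,dS$ over $\partial\sO=\bar\Gamma_0\cup\Gamma_1$. On $\Gamma_0\subset\{y=0\}$ the factor $y\fw=y^{\beta}e^{-\gamma|x|-\mu y}$ tends to $0$ as $y\downarrow 0$ because $\beta>0$, while $Du$ and $v$ remain bounded there by smoothness up to $\Gamma_0$, so this part vanishes; there is no contribution as $y\to\infty$ thanks to $e^{-\mu y}$ (or to compact support). Hence only the integral over $\Gamma_1$ survives, which is exactly \eqref{eq:HestonIntegrationByPartsFormula}. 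Hypothesis \ref{hyp:HestonDomainNearGammaZero} is what legitimises the divergence theorem near $\{y=0\}$, where $\sO=\Gamma_0\times(0,\delta)$ and $\Gamma_1$ is a finite union of vertical segments.

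\emph{Passage to the limit, and the main obstacle.} All three terms in \eqref{eq:HestonIntegrationByPartsFormula} are continuous bilinear forms on $H^2(\sO,\fw)\times H^1(\sO,\fw)$: the left-hand side by the estimate above and $|v|_{L^2(\sO,\fw)}\leq\|v\|_{H^1(\sO,\fw)}$; the form $a(\cdot,\cdot)$ by applying the Cauchy--Schwarz inequality to each of the four integrals in \eqref{eq:HestonWithKillingBilinearForm} with respect to the weights $y\fw$ or $\fw$ (using $b_1=0$ for the first-order integral); and the boundary integral over $\Gamma_1$ by the weighted trace inequalities of Appendix \ref{sec:WeightedSobolevSpaces}, which are available because Hypothesis \ref{hyp:GammeOneExtensionProperty} with $k=1$ furnishes the requisite bounded extension operator from $\sO$ to $\HH$. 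Since smooth functions are dense in both $H^2(\sO,\fw)$ and $H^1(\sO,\fw)$ by the results of Appendix \ref{sec:WeightedSobolevSpaces}, the identity established in the smooth case extends to arbitrary $u\in H^2(\sO,\fw)$ and $v\in H^1(\sO,\fw)$. I expect the substance of the argument to lie entirely in this last step: the computation is mechanical once the divergence form is written down, but it relies on having the density of smooth functions in the weighted spaces and a continuous weighted trace operator onto $\Gamma_1$ at one's disposal --- exactly the weighted-Sobolev machinery of Appendix \ref{sec:WeightedSobolevSpaces}. By contrast, the genuinely degenerate feature, the absence of any boundary term along $\Gamma_0$, costs nothing because $y\fw$ vanishes there (this is where $\beta>0$ enters), and no trace on $\Gamma_0$ is ever needed.
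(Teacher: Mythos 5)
Your proposal is correct and follows essentially the same route as the paper: reduce to smooth functions via density in the weighted spaces and continuity of the trace operators of Appendix \ref{sec:WeightedSobolevSpaces}, write the principal part in divergence form, integrate by parts, and observe that the $\Gamma_0$ contribution vanishes because $y\fw\sim y^\beta$ with $\beta>0$ while $Du$ and $v$ stay bounded. The only (cosmetic) differences are that the paper performs the density reduction at the outset rather than at the end, and absorbs the full factor $y^\beta$ into the divergence so that only $e^{-\gamma|x|-\mu y}$ remains to be differentiated, whereas you keep $y$ in the flux and differentiate $\fw$ directly; the bookkeeping is equivalent.
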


\begin{rmk}
\label{rmk:HestonIntegrationByParts}
Equation \eqref{eq:HestonIntegrationByPartsFormula} does not necessarily hold if the hypothesis $u\in H^2(\sO,\fw)$ is relaxed to $u\in H^2_{\textrm{loc}}(\sO,\fw)\cap H^1(\sO,\fw)$ and $Au\in L^2(\sO,\fw)$. Example \ref{exmp:CIR} and \cite[\S 13.4.21 \& \S 13.5.8]{AbramStegun} show that there are functions $u\in H^2_{\textrm{loc}}(\sO,\fw)\cap H^1(\sO,\fw)$ with $Au = 0$ on $\sO$ but $y^\beta u_y = \Gamma(\beta)/\Gamma(1+r/\kappa)\not= 0$ along $\Gamma_0$ and so the $\Gamma_0$-boundary integral in \eqref{eq:Gamma0BdryTermIsZero} is non-zero for such a function, $u$.
\end{rmk}

\begin{proof}[Proof of Lemma \ref{lem:HestonIntegrationByParts}]
We begin by reducing the problem to the case where $u \in C^2(\bar\sO)$ and $v\in C^1(\bar\sO)$. By Corollary \ref{cor:KufnerPowerWeightBoundedDerivatives}, the space $C^2(\bar\sO)$
is dense in $H^2(\sO,\fw)$ and $C^1(\bar\sO)$ is dense $H^1(\sO,\fw)$, so we may choose $\{u_l\}_{l\geq 0}\subset C^2(\bar\sO)$, a sequence converging in $H^2(\sO,\fw)$ to $u\in H^2(\sO,\fw)$, and choose $\{v_m\}_{m\geq 0}\subset C^1(\bar\sO)$, a sequence converging strongly in $H^1(\sO,\fw)$ to $v\in H^1(\sO,\fw)$. Our hypotheses on $\sO$ imply that Lemma \ref{lem:Traces} is applicable. Then Lemma \ref{lem:Traces} ensures that the sequence of traces, $\{u_l|_{\Gamma_1}\}_{l\geq 0}$, converges in $H^1(\Gamma_1,\fw)$ to $u|_{\Gamma_1} \in H^1(\Gamma_1,\fw)$ and that the sequence of traces, $\{y^{1/2}v_m|_{\Gamma_1}\}_{m\geq 0}$, converges in $L^2(\Gamma_1,\fw)$ to $y^{1/2}v|_{\Gamma_1} \in L^2(\Gamma_1,\fw)$. Consequently, by taking limits as $l,m\to\infty$ of the integral over $\Gamma_1$ in \eqref{eq:HestonIntegrationByPartsFormula} (with $u_l,v_m$ in place of $u,v$), we obtain
\begin{equation}
\label{eq:TraceDefnGamma1Integrand}
\begin{aligned}
{}&\lim_{l,m\to\infty}\int_{\Gamma_1}\left(n_0(u_{l,x} + \rho\sigma u_{l,y}) + n_1(\rho\sigma u_{l,x} + \sigma^2 u_{l,y})\right)v_my\fw\,dS
\\
&=
\int_{\Gamma_1}\left(n_0(u_x + \rho\sigma u_y) + n_1(\rho\sigma u_x + \sigma^2 u_y)\right)vy\fw\,dS.
\end{aligned}
\end{equation}
Thus, it suffices to prove the identity \eqref{eq:HestonIntegrationByPartsFormula} when $u \in C^2(\bar\sO)$ and $v\in C^1(\bar\sO)$.

From its definition in \eqref{eq:OperatorHestonIntro}, we observe that the expression $Au$ in $\sO$ may be written,
\begin{align*}
Au &= -\frac{1}{2}y^{1-\beta}\left(\left(y^\beta u_x\right)_x + \rho\sigma\left(y^\beta u_x\right)_y
+ \rho\sigma\left(y^\beta u_y\right)_x + \sigma^2\left(y^\beta u_y\right)_y\right)
\\
&\quad + \frac{\rho\sigma}{2}\beta u_x + \frac{\sigma^2}{2} \beta u_y - \left(r-q-\frac{y}{2}\right)u_x - \kappa(\theta-y)u_y + ru \quad\hbox{on }\sO.
\end{align*}
Thus, using $\beta := 2\kappa\theta/\sigma^2$ by \eqref{eq:DefnBetaMu} and recalling that $b_1 := r-q- \kappa\theta\rho/\sigma$ by \eqref{eq:DefinitionA1B1}, the preceding expression simplifies to
\begin{equation}
\label{eq:CleanedUpBasicHestonDivergenceFormula}
\begin{aligned}
Au &= -\frac{1}{2}y^{1-\beta}\left(\left(y^\beta u_x + \rho\sigma y^\beta u_y\right)_x + \left(\rho\sigma y^\beta u_x + y^\beta\sigma^2 u_y\right)_y\right)
\\
&\quad - b_1u_x + \frac{y}{2}u_x + \kappa yu_y + ru \quad\hbox{on }\sO.
\end{aligned}
\end{equation}
Multiplying both sides of \eqref{eq:CleanedUpBasicHestonDivergenceFormula} by $v\fw$, where $\fw = y^{\beta-1}e^{-\mu y -\gamma|x|}$ by \eqref{eq:HestonWeight}, gives
\begin{align*}
\int_\sO (Au)v\fw\,dxdy &= \frac{1}{2}\int_\sO \left(\left(y^\beta u_x + \rho\sigma y^\beta u_y\right)_x
+ \left(\rho\sigma y^\beta u_x + y^\beta\sigma^2 u_y\right)_y\right)ve^{-\mu y -\gamma|x|}\,dxdy
\\
&\quad + \int_\sO \left(-b_1u_x + \frac{y}{2}u_x + \kappa yu_y + ru\right)v\fw\,dxdy.
\end{align*}
Integrating by parts, using $(e^{-\gamma|x|})_x = -\gamma\sign(x)e^{-\gamma|x|}$ and $(e^{-\mu y})_y = -\mu e^{-\mu y}$, gives
\begin{equation}
\label{eq:HestonIntegrationByPartsFormulaRaw}
\begin{aligned}
{}& \int_\sO (Au)v\fw\,dxdy
\\
&= \frac{1}{2}\int_\sO y\left(u_xv_x + \rho\sigma u_xv_y + \rho\sigma u_yv_x + \sigma^2 u_yv_y\right)\fw\,dxdy
\\
&\quad -\frac{1}{2}\int_\sO y\left\{\gamma(u_x+\rho\sigma u_y)\sign(x) + \mu(\rho\sigma u_x+\sigma^2u_y\right\}v\fw\,dxdy
\\
&\quad + \int_\sO \left(-b_1u_x + \frac{y}{2}u_x + \kappa yu_y + ru\right)v\fw\,dxdy
\\
&\quad - \frac{1}{2}\int_{\Gamma_1} \left(n_0\left(y^\beta u_x + \rho\sigma y^\beta u_y\right) + n_1\left(\rho\sigma y^\beta u_x + y^\beta\sigma^2 u_y\right)\right)
ve^{-\mu y -\gamma|x|}\,dS
\\
&\quad - \frac{1}{2}\int_{\Gamma_0} n_1\left(\rho\sigma y^\beta u_x + y^\beta\sigma^2 u_y\right)e^{-\mu y -\gamma|x|}\,dx.
\end{aligned}
\end{equation}
Using $\mu := 2\kappa/\sigma^2$ by \eqref{eq:DefnBetaMu}, recalling that $a_1 := \kappa\rho/\sigma-1/2$ from \eqref{eq:DefinitionA1B1}, and gathering terms, the preceding expression becomes
\begin{align*}
\int_\sO (Au)v\fw\,dxdy
&= a(u,v) - \frac{1}{2}\int_{\Gamma_1} \left(n_0\left(u_x + \rho\sigma u_y\right) + n_1\left(\rho\sigma u_x + \sigma^2 u_y\right)\right)v y\fw\,dS
\\
&\quad - \frac{1}{2}\int_{\Gamma_0} n_1\left(\rho\sigma u_x + \sigma^2 u_y\right)v y\fw\,dx,
\end{align*}
where $a(u,v)$ is defined by \eqref{eq:HestonWithKillingBilinearForm}. But
$$
\int_{\Gamma_0} n_1\left(\rho\sigma u_x + \sigma^2 u_y\right)v y\fw\,dx = -\int_{\Gamma_0}\left(\rho\sigma u_x + \sigma^2 u_y\right)v y^\beta e^{-\gamma|x|-\mu y}\,dx,
$$
with $u_x, u_y, v \in C(\bar\sO)$, $\beta > 0$, and $n_1=-1$ along $\Gamma_0$, so
\begin{equation}
\label{eq:Gamma0BdryTermIsZero}
\int_{\Gamma_0} n_1\left(\rho\sigma u_x + \sigma^2 u_y\right)v y\fw\,dx
=
\int_{\Gamma_0} n_1\left(\rho\sigma u_x + \sigma^2 u_y\right)v y^\beta e^{-\gamma|x|-\mu y}\,dx = 0,
\end{equation}
and this yields \eqref{eq:HestonIntegrationByPartsFormula} for $u\in C^2(\bar\sO)$ and $v\in C^1(\bar\sO)$. This completes the proof.
\end{proof}

\begin{prob}[Classical solution to a homogeneous boundary value problem]
\label{prob:HestonMixedBVPHomogeneousClassical}
Given a function $f\in C^\alpha(\sO)$, for some $0<\alpha<1$, we call a function $u\in C^{2,\alpha}(\sO)\cap C_{\textrm{loc}}(\sO\cup\Gamma_1)$ a \emph{classical solution} to a boundary value problem for the Heston operator with homogeneous Dirichlet condition along $\Gamma_1$ if
\begin{align}
\label{eq:IntroHestonMixedProblemHomogeneous}
Au &= f \quad \hbox{on }\sO,
\\
\label{eq:IntroHestonMixedProblemHomogeneousBC}
u &= 0 \quad\hbox{on } \Gamma_1,
\\
\label{eq:WeightedNeumannHomogeneousBCLimit}
\lim_{y\downarrow 0}y^\beta(\rho u_x + \sigma u_y) &= 0 \quad\hbox{on } \Gamma_0.
\end{align}
\end{prob}

\begin{rmk}[Well-posedness of Problem \ref{prob:HestonMixedBVPHomogeneousClassical} and nature of the boundary condition along $\Gamma_0$]
\label{rmk:WellPosedClassicalBVP}
We shall see that additional hypotheses on $f$ are required to ensure that Problem \ref{prob:HestonMixedBVPHomogeneousClassical} is well-posed. (For example, \cite[Theorem 6.13]{GT} adds the hypothesis that $f \in L^\infty(\sO)$, though we will not require such a strong assumption in this article.) Note that if $u\in C^1_{\textrm{loc}}(\sO\cup\Gamma_0)$, then \eqref{eq:WeightedNeumannHomogeneousBCLimit} is obeyed automatically; Example \ref{exmp:CIR} explains the need for condition \eqref{eq:WeightedNeumannHomogeneousBCLimit}.
\end{rmk}

\begin{prob}[Strong solution to a homogeneous boundary value problem]
\label{prob:HestonStrongMixedBVPHomogeneous}
Given a function $f\in L^2(\sO,\fw)$, we call a function $u\in H^2(\sO,\fw)$ a \emph{strong solution} to a boundary value problem for the Heston operator with \emph{homogeneous} Dirichlet boundary condition on $\Gamma_1$ if $u$ obeys \eqref{eq:IntroHestonMixedProblemHomogeneous} (a.e. on $\sO$) and \eqref{eq:IntroHestonMixedProblemHomogeneousBC}.
\end{prob}

Lemma \ref{lem:HestonWeightedNeumannBVPHomogeneous} motivates the following definition of a solution to a variational equation for the Heston operator, by analogy with \cite[pp. 215--216]{GT}:

\begin{prob}[Solution to a homogeneous variational equation]
\label{prob:HestonWeakMixedBVPHomogeneous}
Given a function $f\in L^2(\sO,\fw)$, we call a function $u\in H^1_0(\sO\cup\Gamma_0)$ a \emph{solution to the variational equation} for the Heston operator with \emph{homogeneous} Dirichlet boundary condition on $\Gamma_1$ if
\begin{equation}
\label{eq:IntroHestonWeakMixedProblemHomogeneous}
a(u,v) = (f,v)_H, \quad \forall v \in H^1_0(\sO\cup\Gamma_0).
\end{equation}
\end{prob}

Lemma \ref{lem:HestonWeightedNeumannBVPHomogeneous} below explains why we may view solutions to Problem \ref{prob:HestonWeakMixedBVPHomogeneous} as ``weak solutions'' to Problem \ref{prob:HestonMixedBVPHomogeneousClassical} or \ref{prob:HestonStrongMixedBVPHomogeneous}

\begin{lem}[Equivalence of variational and strong solutions]
\label{lem:HestonWeightedNeumannBVPHomogeneous}
Require that the domain $\sO$ obeys Hypotheses \ref{hyp:HestonDomainNearGammaZero} and \ref{hyp:GammeOneExtensionProperty} with $k=1$. Let $f\in L^2(\sO,\fw)$ and suppose $u \in H^2(\sO,\fw)$.
\begin{enumerate}
\item If $u\in H^1_0(\sO\cup\Gamma_0,\fw)$ and $u$ solves Problem \ref{prob:HestonWeakMixedBVPHomogeneous}, then $u$ solves Problem \ref{prob:HestonStrongMixedBVPHomogeneous}.

\item If $u$ obeys \eqref{eq:IntroHestonMixedProblemHomogeneous} (a.e. on $\sO$) and \eqref{eq:IntroHestonMixedProblemHomogeneousBC}, then $u\in H^1_0(\sO\cup\Gamma_0,\fw)$ and solves Problem \ref{prob:HestonWeakMixedBVPHomogeneous}.
\end{enumerate}
\end{lem}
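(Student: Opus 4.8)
The plan is to derive both implications from the integration-by-parts identity \eqref{eq:HestonIntegrationByPartsFormula} of Lemma \ref{lem:HestonIntegrationByParts}, tested against functions $v\in H^1_0(\sO\cup\Gamma_0,\fw)$, using the characterization of $H^1_0(\sO\cup\Gamma_0,\fw)$ as the subspace of $H^1(\sO,\fw)$ whose trace on $\Gamma_1$ vanishes. The preliminary observation I would record is that, by the trace estimates underlying Lemma \ref{lem:HestonIntegrationByParts} (Lemma \ref{lem:Traces}), restriction to $\Gamma_1$ is continuous on $H^1(\sO,\fw)$; since every $\varphi\in C^\infty_0(\sO\cup\Gamma_0)$ has support disjoint from $\Gamma_1$, this shows that any $u\in H^1_0(\sO\cup\Gamma_0,\fw)$ has zero $\Gamma_1$-trace and, conversely, that any $u\in H^1(\sO,\fw)$ with zero $\Gamma_1$-trace lies in the $H^1(\sO,\fw)$-closure of $C^\infty_0(\sO\cup\Gamma_0)$ --- a density statement I would quote from the weighted Sobolev theory of Appendix \ref{sec:WeightedSobolevSpaces}, for which Hypotheses \ref{hyp:HestonDomainNearGammaZero} and \ref{hyp:GammeOneExtensionProperty} (with $k=1$) are exactly the needed ingredients.

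For part (1): given $u\in H^2(\sO,\fw)\cap H^1_0(\sO\cup\Gamma_0,\fw)$ solving \eqref{eq:IntroHestonWeakMixedProblemHomogeneous}, the preliminary observation yields $u=0$ on $\Gamma_1$, so \eqref{eq:IntroHestonMixedProblemHomogeneousBC} holds. Testing \eqref{eq:HestonIntegrationByPartsFormula} against $v\in H^1_0(\sO\cup\Gamma_0,\fw)$, the $\Gamma_1$-boundary integral drops because $v$ has zero trace there, so $(Au,v)_{L^2(\sO,\fw)}=a(u,v)=(f,v)_{L^2(\sO,\fw)}$; hence $(Au-f,v)_{L^2(\sO,\fw)}=0$ for all such $v$. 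Since $Au-f\in L^2(\sO,\fw)$ by Lemma \ref{lem:HestonIntegrationByParts} and $C^\infty_0(\sO)\subset H^1_0(\sO\cup\Gamma_0,\fw)$ is dense in $L^2(\sO,\fw)$ (exhaust $\sO$ by compact subsets on which $\fw$ is comparable to Lebesgue measure, then mollify), this forces $Au=f$ a.e.\ on $\sO$, so $u$ solves Problem \ref{prob:HestonStrongMixedBVPHomogeneous}.

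For part (2): given $u\in H^2(\sO,\fw)$ obeying \eqref{eq:IntroHestonMixedProblemHomogeneous} a.e.\ and \eqref{eq:IntroHestonMixedProblemHomogeneousBC}, Lemma \ref{lem:HestonIntegrationByParts} gives $Au\in L^2(\sO,\fw)$, and the preliminary observation upgrades the condition $u=0$ on $\Gamma_1$ to $u\in H^1_0(\sO\cup\Gamma_0,\fw)$. Then for any $v\in H^1_0(\sO\cup\Gamma_0,\fw)$ the $\Gamma_1$-integral in \eqref{eq:HestonIntegrationByPartsFormula} again vanishes (zero trace of $v$), giving $a(u,v)=(Au,v)_{L^2(\sO,\fw)}=(f,v)_{L^2(\sO,\fw)}$, which is \eqref{eq:IntroHestonWeakMixedProblemHomogeneous}; thus $u$ solves Problem \ref{prob:HestonWeakMixedBVPHomogeneous}.

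The one genuinely substantive input --- and the step I expect to be the main obstacle --- is the identification $H^1_0(\sO\cup\Gamma_0,\fw)=\{v\in H^1(\sO,\fw):v|_{\Gamma_1}=0\}$ used in both parts; notice that no boundary condition is, or can be, imposed on $\Gamma_0$, consistent with the automatic vanishing of the $\Gamma_0$-boundary term in \eqref{eq:Gamma0BdryTermIsZero}. Because the weight $\fw=y^{\beta-1}e^{-\gamma|x|-\mu y}$ degenerates or blows up along $\{y=0\}$ and decays at infinity, the classical unbounded-domain argument does not apply verbatim; instead one combines the product structure $\sO\cap(\RR\times(0,\delta))=\Gamma_0\times(0,\delta)$ near $\Gamma_0$ (where weighted one-dimensional Hardy-type inequalities control the cutoff in $y$ without requiring any trace on $\Gamma_0$), the boundary-straightening and extension afforded by Hypothesis \ref{hyp:GammeOneExtensionProperty} near $\Gamma_1$, a partition of unity, mollification, and a truncation in $x$ to handle the unboundedness of $\sO$. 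I expect this to be packaged (in the spirit of Corollaries \ref{cor:KufnerPowerWeight} and \ref{cor:KufnerPowerWeightBoundedDerivatives}) in Appendix \ref{sec:WeightedSobolevSpaces}.
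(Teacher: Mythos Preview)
Your proposal is correct and follows essentially the same route as the paper: both directions are deduced from the integration-by-parts identity \eqref{eq:HestonIntegrationByPartsFormula} after observing that the $\Gamma_1$-boundary term vanishes for test functions $v\in H^1_0(\sO\cup\Gamma_0,\fw)$, and the identification $H^1_0(\sO\cup\Gamma_0,\fw)=\{v\in H^1(\sO,\fw):v|_{\Gamma_1}=0\}$ that you flag as the main substantive input is exactly what the paper packages as Lemma~\ref{lem:EvansGamma1TraceZero} in Appendix~\ref{sec:WeightedSobolevSpaces}. Your anticipation of where and why this density/trace result lives (product structure near $\Gamma_0$, extension near $\Gamma_1$, truncation at infinity) is accurate.
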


\begin{proof} Lemma \ref{lem:H2SobolevEmbedding} implies that $u \in C^\alpha_{\textrm{loc}}(\sO\cup\Gamma_1)$ when $u\in H^2(\sO,\fw)$, as our hypotheses on $\sO$ ensure that $\sO$ obeys a uniform interior cone condition.

\emph{(1) Assume $u\in H^1_0(\sO\cup\Gamma_0,\fw)$ solves Problem \ref{prob:HestonWeakMixedBVPHomogeneous}}. From \eqref{eq:HestonIntegrationByPartsFormula} and \eqref{eq:IntroHestonWeakMixedProblemHomogeneous} we obtain, for all $v \in H^1_0(\sO\cup\Gamma_0,\fw)$,
\begin{equation}
\label{eq:IntroHestonWeightedNeumannBVPHomogeneousStrong}
(Au,v)_{L^2(\sO,\fw)} = (f,v)_{L^2(\sO,\fw)},
\end{equation}
since we $v=0$ on $\Gamma_1$ (trace sense) by Lemma \ref{lem:EvansGamma1TraceZero}. In particular, $(Au,v)_{L^2(\sO,\fw)} = (f,v)_{L^2(\sO,\fw)}, \forall v\in H^1_0(\sO,\fw)$, since $H^1_0(\sO,\fw)\subseteqq H^1_0(\sO\cup\Gamma_0,\fw)$, and thus $Au = f$ a.e. on $\sO$. The fact that $u=0$ on $\Gamma_1$ follows from Lemmas \ref{lem:Traces} and \ref{lem:EvansGamma1TraceZero}.

\emph{(2) Assume $u \in H^2(\sO,\fw)$ obeys \eqref{eq:IntroHestonMixedProblemHomogeneous} (a.e. on $\sO$) and \eqref{eq:IntroHestonMixedProblemHomogeneousBC}}. Since $u=0$ on $\Gamma_1$, then $u\in H^1_0(\sO\cup\Gamma_0,\fw)$ by Lemma \ref{lem:EvansGamma1TraceZero}. Thus, Lemma \ref{lem:HestonIntegrationByParts} implies that \eqref{eq:IntroHestonWeakMixedProblemHomogeneous} holds, since the $\Gamma_1$-boundary integral in \eqref{eq:HestonIntegrationByPartsFormula} is zero when $v=0$ on $\Gamma_1$ (trace sense).
\end{proof}

\begin{lem}[Weighted-Neumann boundary property of functions in $H^2(\sO,\fw)$]
\label{lem:HestonWeightedNeumannBoundaryProperty}
Require that the domain $\sO$ obeys Hypothesis \ref{hyp:HestonDomainNearGammaZero}. If $u \in H^2(\sO,\fw)$, then $u$ obeys
\begin{equation}
\label{eq:WeightedNeumannHomogeneousBCProb}
y^\beta(\rho u_x + \sigma u_y) = 0 \quad\hbox{on } \Gamma_0 \hbox{ (trace sense).}
\end{equation}
\end{lem}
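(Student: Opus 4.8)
The plan is to work in the collar $\sO^0_{\delta_0}=\Gamma_0\times(0,\delta_0)$ furnished by Hypothesis \ref{hyp:HestonDomainNearGammaZero}, on which $\Gamma_0\subseteq\RR$ is a finite union of open intervals (so $\int_{\Gamma_0}e^{-\gamma|x|}\,dx\leq 2/\gamma<\infty$) and $e^{-\mu y}$ is comparable to $1$; recall also $\beta>0$ (Remark \ref{rmk:HestonCoefficients}). Write $\phi:=\rho u_x+\sigma u_y$, so $|\phi|\leq C|Du|$ and $|\phi_y|\leq C|D^2u|$. Because $\beta>0$, the measure $d\mu:=y^{\beta-1}e^{-\gamma|x|}\,dy\,dx$ is finite on $\sO^0_{\delta_0}$, and since $u\in H^2(\sO,\fw)$ we have $\int_{\sO^0_{\delta_0}}|Du|^2\,d\mu<\infty$ and $\int_{\sO^0_{\delta_0}}(y|D^2u|)^2\,d\mu<\infty$ (using $(1+y)\geq 1$ and $e^{\mu y}\leq e^{\mu\delta_0}$ on the collar, together with $(1+y)|Du|,\ y|D^2u|\in L^2(\sO,\fw)$). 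Two applications of Cauchy--Schwarz --- directly against $d\mu$ for the first, and for the second splitting $y^\beta=y^{(\beta+1)/2}\,y^{(\beta-1)/2}$ and writing $y^{(\beta+1)/2}|D^2u|=(y|D^2u|)\,y^{(\beta-1)/2}$ --- then give
\[
\int_{\sO^0_{\delta_0}}y^{\beta-1}|\phi|\,e^{-\gamma|x|}\,dy\,dx<\infty
\quad\hbox{and}\quad
\int_{\sO^0_{\delta_0}}y^{\beta}|\phi_y|\,e^{-\gamma|x|}\,dy\,dx<\infty,
\]
so that $g(x,y):=\beta y^{\beta-1}\phi(x,y)+y^\beta\phi_y(x,y)$ lies in $L^1(\sO^0_{\delta_0},e^{-\gamma|x|}\,dy\,dx)$.

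Next I would run a Fubini/fundamental-theorem-of-calculus argument along vertical lines. Since $u\in H^2_{\textrm{loc}}(\sO,\fw)$, which coincides with the ordinary local space $H^2_{\textrm{loc}}(\sO)$ because $\fw$ is bounded away from $0$ and $\infty$ on compact subsets of $\sO$, for a.e.\ $x\in\Gamma_0$ the maps $y\mapsto u_x(x,y)$ and $y\mapsto u_y(x,y)$ are absolutely continuous on compact subintervals of $(0,\delta_0)$ with a.e.\ derivatives $u_{xy}(x,\cdot)$ and $u_{yy}(x,\cdot)$; hence $y\mapsto y^\beta\phi(x,y)$ is locally absolutely continuous on $(0,\delta_0)$ with derivative $g(x,\cdot)$ a.e. By Fubini, $\int_0^{\delta_0}|g(x,y)|\,dy<\infty$ for a.e.\ $x$, so the limit $\ell(x):=\lim_{y\downarrow 0}y^\beta\phi(x,y)$ exists and is finite for a.e.\ $x$, and $y^\beta\phi(x,y)=\ell(x)+\int_0^y g(x,s)\,ds$. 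Thus $|y^\beta\phi(x,y)-\ell(x)|\leq\int_0^y|g(x,s)|\,ds$; integrating over $\Gamma_0$ against $e^{-\gamma|x|}\,dx$ and applying dominated convergence (with dominating function $x\mapsto\int_0^{\delta_0}|g(x,s)|\,ds\in L^1(\Gamma_0,e^{-\gamma|x|}\,dx)$ from the first paragraph) yields $y^\beta\phi(\cdot,y)\to\ell$ in $L^1(\Gamma_0,e^{-\gamma|x|}\,dx)$ as $y\downarrow 0$.

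It remains to show $\ell\equiv 0$. Put $\psi(y):=\int_{\Gamma_0}y^{\beta-1}|Du(x,y)|^2e^{-\gamma|x|}\,dx$; then $\int_0^{\delta_0}\psi(y)\,dy=\int_{\sO^0_{\delta_0}}y^{\beta-1}|Du|^2e^{-\gamma|x|}\,dy\,dx\leq e^{\mu\delta_0}\int_\sO|Du|^2\fw\,dy\,dx<\infty$ since $u\in H^2(\sO,\fw)$, so $\psi\in L^1(0,\delta_0)$ and therefore there is a sequence $y_k\downarrow 0$ with $y_k\psi(y_k)\to 0$ (otherwise $\psi(y)\geq c/y$ near $0$, contradicting integrability). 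By Cauchy--Schwarz in $x$ (splitting $y^\beta=y^{(\beta-1)/2}y^{(\beta+1)/2}$, using $|\phi|\leq C|Du|$ and $\int_{\Gamma_0}e^{-\gamma|x|}\,dx<\infty$) one gets $\int_{\Gamma_0}y^\beta|\phi(x,y)|e^{-\gamma|x|}\,dx\leq C\psi(y)^{1/2}y^{(\beta+1)/2}$, hence $\|y_k^\beta\phi(\cdot,y_k)\|_{L^1(\Gamma_0,e^{-\gamma|x|}\,dx)}\leq C(y_k\psi(y_k))^{1/2}y_k^{\beta/2}\to 0$. Combined with the $L^1$-convergence to $\ell$ established above, this forces $\|\ell\|_{L^1(\Gamma_0,e^{-\gamma|x|}\,dx)}=0$, i.e.\ $\ell=0$ a.e. Therefore $y^\beta(\rho u_x+\sigma u_y)\to 0$ in $L^1(\Gamma_0,e^{-\gamma|x|}\,dx)$ as $y\downarrow 0$, which by Lemma \ref{lem:ybetaDuZeroTrace} is exactly the assertion \eqref{eq:WeightedNeumannHomogeneousBCProb}.

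I expect the main obstacle to be the rigorous justification of the absolute-continuity-along-a.e.-vertical-line step (and the identification of the constructed limit $\ell$ with the trace appearing in Lemma \ref{lem:ybetaDuZeroTrace}), together with the careful bookkeeping of the powers of $y$ in the weighted integrals over the collar --- in particular, making sure it is the $H^2$-regularity of $u$ (and not merely $H^1$) that renders $y^\beta\phi_y$, hence $g$, integrable. An alternative route to $\ell\equiv 0$ that avoids the subsequence step is a density argument: the estimates above show that $u\mapsto\ell$ is a bounded linear map $H^2(\sO,\fw)\to L^1(\Gamma_0,e^{-\gamma|x|}\,dx)$, and it vanishes on $C^2(\bar\sO)$ because $\beta>0$, while $C^2(\bar\sO)$ is dense in $H^2(\sO,\fw)$ by Corollary \ref{cor:KufnerPowerWeightBoundedDerivatives}.
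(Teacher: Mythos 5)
Your proof is correct, but it is not the paper's proof. The paper disposes of the lemma in three lines: by Lemma \ref{lem:TracesGammaZero} the map $u\mapsto y^\beta Du|_{\Gamma_0}$ is a bounded operator $H^2(\sO,\fw)\to L^2(\Gamma_0,e^{-\gamma|x|}\,dx;\RR^2)$; for $u\in C^2(\bar\sO)$ this trace vanishes pointwise because $u_x,u_y$ are bounded and $\beta>0$ (the computation behind \eqref{eq:Gamma0BdryTermIsZero}); and $C^2(\bar\sO)$ is dense in $H^2(\sO,\fw)$ by Corollary \ref{cor:KufnerPowerWeightBoundedDerivatives}, so the trace vanishes identically. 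That is exactly the ``alternative route'' you sketch in your final sentences --- so you have in effect found the paper's argument but relegated it to a remark and led with a longer, more hands-on one. Your primary argument (integrability of $\partial_y(y^\beta\phi)$ over the collar $\Gamma_0\times(0,\delta_0)$, FTC along a.e.\ vertical line to produce the boundary limit $\ell$, and a subsequence with $y_k\psi(y_k)\to 0$ to force $\ell=0$) is also sound: the powers of $y$ are bookkept correctly, and it genuinely uses the $H^2$ hypothesis in the right place, since the term $y^\beta\phi_y$ is of size $(y|D^2u|)\,y^{\beta-1}$ and is controlled precisely by $y|D^2u|\in L^2(\sO,\fw)$. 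What your route buys is a more concrete conclusion --- $L^1(\Gamma_0,e^{-\gamma|x|}\,dx)$ convergence of the actual slices of the precise representative, with modulus $\int_0^y|g(\cdot,s)|\,ds$ --- at the cost of the ACL machinery and of identifying your a.e.-slice limit $\ell$ with $T_0u$ from Lemma \ref{lem:ybetaDuZeroTrace}, the one point you rightly flag as needing care (it closes because $T_yu$ agrees with the slice restriction for a.e.\ $y>0$ while \eqref{eq:ybetaDuZeroTrace} already gives $T_yu\to T_0u$ in $L^1$). If you want the shortest correct proof, promote your closing density argument to the main text.
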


\begin{proof} Lemma \ref{lem:TracesGammaZero} implies that $y^\beta Du|_{\Gamma_0} \in L^2(\Gamma_0,e^{-\gamma|x|}\,dx;\RR^2)$ because $u \in H^2(\sO,\fw)$. The derivation of \eqref{eq:Gamma0BdryTermIsZero} in the proof of Lemma \ref{lem:HestonIntegrationByParts} shows that
$$
\int_{\Gamma_0} \left(\rho u_x + \sigma u_y\right)v y\fw\,dx = \int_{\Gamma_0} y^\beta\left(\rho u_x + \sigma u_y\right)v e^{-\gamma|x|}\,dx = 0,
$$
for all $v\in C^1(\bar\sO)$ when $u\in C^2(\bar\sO)$, and hence when $u \in H^2(\sO,\fw)$ by the approximation argument used in the proof of Lemma \ref{lem:HestonIntegrationByParts}. Therefore, $y^\beta(\rho u_x + \sigma u_y) = 0$ on $\Gamma_0$ (trace sense), which gives \eqref{eq:WeightedNeumannHomogeneousBCProb} (trace sense).
\end{proof}

\begin{rmk}[Nature of the boundary property \eqref{eq:WeightedNeumannHomogeneousBCProb}]
Note that if $u\in C^1_{\textrm{loc}}(\sO\cup\Gamma_0)$, then $u$ automatically has the property \eqref{eq:WeightedNeumannHomogeneousBCProb}; see also Lemma \ref{lem:ybetaDuZeroTrace} for another interpretation of \eqref{eq:WeightedNeumannHomogeneousBCProb}.
\end{rmk}

\begin{rmk}[Homogeneous boundary value problem for the Heston operator with Dirichlet condition along $\Gamma_0$]
A version of Problem \ref{prob:HestonWeakMixedBVPHomogeneous}, with an additional homogeneous Dirichlet boundary condition, $u=0$ on $\Gamma_0$, when $0<\beta<1$, is easily seen to be well-posed by methods which are almost identical to those employed in this article. However, solutions to this Dirichlet problem, when $0<\beta<1$, are not assured to be any more than $C^0$ up to the boundary, $\Gamma_0$, as Example \ref{exmp:CIR} illustrates.
\end{rmk}

We can also pose the corresponding \emph{inhomogeneous} boundary value problems for the Heston operator:

\begin{prob}[Classical solution to an inhomogeneous boundary value problem]
\label{prob:HestonMixedBVPInhomogeneousClassical}
Given functions $f\in C^\alpha(\sO)$, for some $0<\alpha<1$, and $g\in C^{2,\alpha}(\sO)\cap C_{\textrm{loc}}(\sO\cup\Gamma_1)$, we call a function $u\in C^{2,\alpha}(\sO)\cap C_{\textrm{loc}}(\sO\cup\Gamma_1)$ a \emph{classical solution} to a boundary value problem for the Heston operator with \emph{inhomogeneous} Dirichlet condition along $\Gamma_1$ if $u$ obeys \eqref{eq:IntroHestonMixedProblemHomogeneous}, \eqref{eq:WeightedNeumannHomogeneousBCLimit}, and
\begin{equation}
\label{eq:IntroHestonMixedProblemInhomogeneousBC}
u = g \quad\hbox{on } \Gamma_1.
\end{equation}
\end{prob}

\begin{prob}[Strong solution to an inhomogeneous boundary value problem]
\label{prob:HestonStrongMixedBVPInhomogeneous}
Given functions $f\in L^2(\sO,\fw)$ and $g\in H^2(\sO,\fw)$, we call a function $u\in H^2(\sO,\fw)$ a \emph{strong solution} to a boundary value problem for the Heston operator with \emph{inhomogeneous} Dirichlet condition along $\Gamma_1$ if $u$ obeys \eqref{eq:IntroHestonMixedProblemHomogeneous} (a.e. on $\sO$) and \eqref{eq:IntroHestonMixedProblemInhomogeneousBC}.
\end{prob}

\begin{prob}[Solution to an inhomogeneous variational equation]
\label{prob:HestonMixedBVPInhomogeneous}
Given functions $f\in L^2(\sO,\fw)$ and $g\in H^1(\sO,\fw)$, we call a function $u\in H^1(\sO,\fw)$ a \emph{solution to a variational equation} for the Heston operator with \emph{inhomogeneous} Dirichlet condition along $\Gamma_1$ if $u-g \in H^1_0(\sO\cup\Gamma_0)$, and
\begin{equation}
\label{eq:IntroHestonWeakMixedProblem}
a(u,v) = (f,v)_H, \quad \forall v \in H^1_0(\sO\cup\Gamma_0).
\end{equation}
\end{prob}

Again, a suitable version of Lemma \ref{lem:HestonWeightedNeumannBVPHomogeneous} explains why we may view solutions to Problem \ref{prob:HestonMixedBVPInhomogeneous} as ``weak solutions'' to Problem \ref{prob:HestonMixedBVPInhomogeneousClassical} or \ref{prob:HestonStrongMixedBVPInhomogeneous}. Unless stated otherwise, we restrict our attention to the study of the \emph{homogeneous} cases, Problems \ref{prob:HestonMixedBVPHomogeneousClassical}, \ref{prob:HestonStrongMixedBVPHomogeneous}, and \ref{prob:HestonWeakMixedBVPHomogeneous}.

\subsection{Energy estimates for the Heston operator}
\label{subsec:HestonEnergyEstimates}
We first derive a G\r{a}rding estimate for the bilinear form \eqref{eq:HestonWithKillingBilinearForm}.

\begin{prop}[Diagonal continuity and G\r{a}rding estimates]
\label{prop:EnergyGardingHeston}
Require that the domain $\sO$ obeys Hypotheses \ref{hyp:HestonDomainNearGammaZero} and \ref{hyp:GammeOneExtensionProperty} with $k=1$. Relax the requirement that Assumption \ref{assump:HestonCoefficientb1} is in effect. Then there are positive constants $C_1, C_2, \gamma_0$, depending only the constant coefficients of $A$, such that for all
\begin{equation}
\label{eq:gammapositivesmallenough}
0<\gamma\leq \gamma_0,
\end{equation}
and all $u\in H^1_0(\sO\cup\Gamma_0,\fw)$, the bilinear form $a(\cdot,\cdot)$ in \eqref{eq:HestonWithKillingBilinearForm} obeys
\begin{align}
\label{eq:HestonBilinearUpperDiagonal}
|a(u,u)| &\leq C_1\|u\|_V^2,
\\
\label{eq:HestonBilinearFormGarding}
a(u,u) &\geq \frac{1}{2}C_2\|u\|_V^2 - C_2\|(1+y)^{1/2}u\|_H^2,
\end{align}
where $C_2 := \min\{\sigma^2(1-\rho^2)/2, (1-\rho^2)/2\}$, $C_3 := \frac{1}{2}\max\{|a_1|,|b_1|\}$ (with $a_1,b_1$ are as in \eqref{eq:DefinitionA1B1}), $\gamma_0 := C_2/2C_3$, while $C_4 := \max\{\sigma^2(1-\rho^2)/2, (1-\rho^2)/2\}$ and $C_1:=\max\{C_4+\gamma C_3, r+\gamma C_3\}$.

When Assumption \ref{assump:HestonCoefficientb1} is in effect, so $b_1=0$, then Hypotheses \ref{hyp:HestonDomainNearGammaZero} and \ref{hyp:GammeOneExtensionProperty} are not required and \eqref{eq:HestonBilinearUpperDiagonal} and \eqref{eq:HestonBilinearFormGarding} hold for all $u\in H^1(\sO,\fw)$ and $\gamma \geq 0$.
\end{prop}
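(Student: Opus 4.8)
The plan is to specialize the bilinear form \eqref{eq:HestonWithKillingBilinearForm} to $v=u$ and estimate its four groups of terms separately: extract a coercive gradient contribution from the principal (second-order) part using the ellipticity inequality \eqref{eq:HestonModulusEllipticity}, control the first-order terms by a combination of integration by parts in $x$ and Young's inequality, and observe that the zeroth-order (killing) term $r\int_\sO u^2\fw$ is nonnegative and may simply be dropped in the lower bound.

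For the continuity bound \eqref{eq:HestonBilinearUpperDiagonal} only Cauchy--Schwarz and Young's inequality are needed. The principal diagonal term $\tfrac12\int_\sO(u_x^2+2\rho\sigma u_xu_y+\sigma^2 u_y^2)y\fw$ is nonnegative and, via the identity $u_x^2+2\rho\sigma u_xu_y+\sigma^2u_y^2=(1-\rho^2)u_x^2+(\rho u_x+\sigma u_y)^2$ together with $|Du|^2$-comparisons depending only on $\rho,\sigma$, is bounded above by $C_4\int_\sO y|Du|^2\fw$ with the constant $C_4$ of the statement; the $\gamma$-term is at most $\tfrac{\gamma}{2}\int_\sO|u_x+\rho\sigma u_y|\,|u|\,y\fw$, which by Young's inequality (keeping the weight $y$) is $\le\gamma C_3\int_\sO(y|Du|^2+(1+y)u^2)\fw$; the term $\int_\sO|a_1y+b_1|\,|u_x|\,|u|\fw$ is handled by the integration-by-parts identity described below and is $\le\gamma C_3\|(1+y)^{1/2}u\|_H^2$; and the killing term equals $r\|u\|_H^2\le r\|u\|_V^2$. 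Summing these and using $\|u\|_H\le\|u\|_V$ gives \eqref{eq:HestonBilinearUpperDiagonal} with $C_1=\max\{C_4+\gamma C_3,\,r+\gamma C_3\}$.

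For the G\r{a}rding inequality \eqref{eq:HestonBilinearFormGarding} the decisive step concerns the first-order term $-\int_\sO(a_1y+b_1)u_xu\,\fw\,dxdy=-\tfrac12\int_\sO(a_1y+b_1)(u^2)_x\fw\,dxdy$, since the part carrying $b_1$ (no $y$-weight) cannot be dominated pointwise by $\|u\|_V^2$. Integrating by parts in $x$ and using $\fw_x=-\gamma\,\sign(x)\fw$ (a valid weak identity, $|x|$ being Lipschitz), the lateral boundary integral along $\Gamma_1$ vanishes because $u\in H^1_0(\sO\cup\Gamma_0,\fw)$ has zero trace there, while $\Gamma_0$ contributes nothing since the outward normal there has no $x$-component; hence the term equals $-\tfrac{\gamma}{2}\int_\sO(a_1y+b_1)\,\sign(x)\,u^2\fw\,dxdy$, whose modulus is at most $\gamma\max\{|a_1|,|b_1|\}\int_\sO\tfrac12(1+y)u^2\fw=\gamma C_3\|(1+y)^{1/2}u\|_H^2$. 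Rigor for general $u\in H^1_0(\sO\cup\Gamma_0,\fw)$ is obtained by approximating $u$ in $H^1(\sO,\fw)$ by functions in $C^\infty_0(\sO\cup\Gamma_0)$, for which the boundary term is absent a priori, and passing to the limit; Hypotheses \ref{hyp:HestonDomainNearGammaZero} and \ref{hyp:GammeOneExtensionProperty} with $k=1$ supply the density and trace facts, exactly as in the proof of Lemma \ref{lem:HestonIntegrationByParts}. The principal term is bounded below by $\nu_0\int_\sO y|Du|^2\fw$ via \eqref{eq:HestonModulusEllipticity}, and the $\gamma$-term $-\tfrac{\gamma}{2}\int_\sO(u_x+\rho\sigma u_y)u\,\sign(x)y\fw$ is estimated by Young's inequality so as to absorb a fraction of $\nu_0\int_\sO y|Du|^2\fw$ at the cost of a zeroth-order remainder controlled by $\|(1+y)^{1/2}u\|_H^2$. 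Taking $\gamma\le\gamma_0=C_2/(2C_3)$ keeps the accumulated zeroth-order losses at the level of the stated constant; since $\nu_0\ge 2C_2$ (compare \eqref{eq:DefnNuZero} with the definition of $C_2$, using $1\ge 1-\rho^2$), enough of the gradient term survives to produce $\tfrac12C_2\int_\sO y|Du|^2\fw$, and adding and subtracting $\tfrac12C_2\int_\sO(1+y)u^2\fw$ rewrites the result as \eqref{eq:HestonBilinearFormGarding}.

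Finally, when $b_1=0$ the first-order term $-\int_\sO a_1y\,u_xu\,\fw\,dxdy$ carries the weight $y$ throughout, so Young's inequality applies to it directly — $|a_1|\int_\sO y|u_x|\,|u|\fw\le\varepsilon\int_\sO y|Du|^2\fw+C_\varepsilon\int_\sO(1+y)u^2\fw$ — with no integration by parts, hence no boundary term and no use of the trace theory; consequently neither the domain hypotheses nor the restriction to $H^1_0(\sO\cup\Gamma_0,\fw)$ is needed, and \eqref{eq:HestonBilinearUpperDiagonal}--\eqref{eq:HestonBilinearFormGarding} hold for all $u\in H^1(\sO,\fw)$ and $\gamma\ge 0$ (the value of $\gamma$ entering only through the constants already displayed). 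The main obstacle of the argument is precisely the first-order term \emph{without} the $y$-weight when $b_1\ne 0$: the integration-by-parts device, together with the verification that the $\Gamma_1$-boundary contribution vanishes on $H^1_0(\sO\cup\Gamma_0,\fw)$, is exactly what is required to handle it, and everything else reduces to careful bookkeeping with Young's inequality to pin down $C_1$, $C_2$, and $\gamma_0$.
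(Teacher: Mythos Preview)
Your proposal is correct and follows essentially the same route as the paper's proof: the paper likewise lower-bounds the principal quadratic form (using $2\rho\sigma u_xu_y\ge -\rho^2 u_x^2-\rho^2\sigma^2 u_y^2$, equivalent to your use of \eqref{eq:HestonModulusEllipticity}), writes $-(a_1y+b_1)u_xu=-\tfrac12(a_1y+b_1)(u^2)_x$ and integrates by parts in $x$ using the vanishing of $u$ on $\Gamma_1$ (the paper extends $u$ by zero to $\HH$ via Hypothesis~\ref{hyp:GammeOneExtensionProperty}, while you approximate by $C^\infty_0(\sO\cup\Gamma_0)$; both devices serve the same purpose), and estimates the $\gamma$-term by Cauchy--Schwarz/Young exactly as you describe. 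The treatment of the case $b_1=0$ --- dropping integration by parts and bounding $\int a_1 y\,u_xu\,\fw$ directly, thereby avoiding boundary terms --- also matches.
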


\begin{rmk}[Application of affine changes of coordinates]
\label{rmk:AffineChangeCoordinatesGarding}
Given Assumption \ref{assump:HestonCoefficientb1}, we can require that $b_1=0$ when applying Proposition \ref{prop:EnergyGardingHeston} and as needed elsewhere in this article.
\end{rmk}

\begin{proof}[Proof of Proposition \ref{prop:EnergyGardingHeston}]
To obtain \eqref{eq:HestonBilinearFormGarding}, observe that
$$
\rho^2 + 2\rho\sigma + \rho^2\sigma^2 = (\rho + \rho\sigma)^2 \geq 0,
$$
and so
$$
2\rho\sigma \geq -\rho^2 - \rho^2\sigma^2.
$$
Thus,
\begin{align*}
a(u,u) &= \frac{1}{2}\int_\sO\left(u_x^2 + 2\rho\sigma u_yu_x + \sigma^2u_y^2\right)y\,\fw\,dxdy +  \int_\sO ru^2\,\fw\,dxdy
\\
&\quad -\gamma \frac{1}{2}\int_\sO\left(u_x + \rho\sigma u_y\right)u \sign(x)y\,\fw\,dxdy
- \int_\sO(a_1y + b_1)u_xu\,\fw\,dxdy
\\
&\geq \frac{1}{2}\int_\sO\left((1-\rho^2)u_x^2 + \sigma^2(1-\rho^2)u_y^2\right)y\,\fw\,dxdy  + \int_\sO ru^2\,\fw\,dxdy
\\
&\quad -\gamma \frac{1}{2}\int_\sO\left(u_x + \rho\sigma u_y\right)u \sign(x)y\,\fw\,dxdy
- \frac{1}{2}\int_\sO(a_1y + b_1)(u^2)_x\,\fw\,dxdy.
\end{align*}
Because Hypothesis \ref{hyp:GammeOneExtensionProperty} holds when $k=1$, we may suppose that $u\in H^1_0(\sO\cup\Gamma_0,\fw)$ is the restriction of a function $\tilde u \in H^1(\HH,\fw)$ with $\tilde u = u$ on $\sO$ and $\tilde u = 0$ on $\HH\less\sO$ by a straightforward analogue of Theorem \ref{thm:BoundaryTrace} for our weighted Sobolev spaces; for simplicity, we will denote this extension again by $u$. Integrating by parts with respect to $x$ via Lemma \ref{lem:HestonIntegrationByParts} and using the fact that $u=0$ on $\Gamma_1$ in the trace sense (since $u\in H^1_0(\sO\cup\Gamma_0,\fw)$),
\begin{align*}
\frac{1}{2}\int_\sO(a_1y + b_1)(u^2)_x\,\fw\,dxdy &= \frac{1}{2}\int_{\Gamma_1} (a_1y + b_1)u^2\,\fw\,dy
- \frac{1}{2}\int_\sO(a_1y + b_1)u^2\,\fw_x\,dxdy
\\
&= -\frac{\gamma}{2}\int_\sO(a_1y + b_1)u^2\sign(x)\,\fw\,dxdy
\\
&\leq \gamma C_3'\int_\sO(1+y)u^2\,\fw\,dxdy,
\end{align*}
where $C_3' := \frac{1}{2}\max\{|a_1|,|b_1|\}$. Moreover,
\begin{align*}
{}&\gamma \frac{1}{2}\int_\sO\left(u_x + \rho\sigma u_y\right)u \sign(x)y\,\fw\,dxdy
\\
&\quad\leq \gamma C_3''\left(\int_\sO(|u_x| + |u_y|)|u| y\,\fw\,dxdy\right)
\\
&\quad\leq \gamma C_3''\left[\left(\int_\sO u_x^2y\,\fw\,dxdy\right)^{1/2}+\left(\int_\sO u_y^2y\,\fw\,dxdy\right)^{1/2}\right]
\left(\int_\sO u^2 y\,\fw\,dxdy\right)^{1/2}
\\
&\quad\leq \gamma \frac{C_3''}{2}\left[\int_\sO (u_x^2+u^2) y\,\fw\,dxdy + \int_\sO (u_y^2+u^2) y\,\fw\,dxdy\right]
\\
&\quad= \gamma \frac{C_3''}{2}\int_\sO\left(u_x^2 + u_y^2\right)y\,\fw\,dxdy + \gamma C_3''\int_\sO u^2y\,\fw\,dxdy,
\end{align*}
where $C_3'':=\max\{1/2, |\rho|\sigma/2\}$. Combining the preceding three inequalities and setting $C_2 := \min\{\sigma^2(1-\rho^2)/2, (1-\rho^2)/2\}$, and $C_3=C_3'+C_3''$ gives
\begin{align*}
a(u,u) &\geq C_2\int_\sO\left(u_x^2 + u_y^2\right)y\,\fw\,dxdy +  r\int_\sO u^2\,\fw\,dxdy
\\
&\quad - \gamma C_3\int_\sO\left(u_x^2 + u_y^2\right)y\,\fw\,dxdy - \gamma C_3\int_\sO (1+y)u^2\,\fw\,dxdy
\\
&= C_2\int_\sO\left(u_x^2 + u_y^2\right)y\,\fw\,dxdy +  r\int_\sO u^2\,\fw\,dxdy +  C_2\int_\sO (1+y)u^2\,\fw\,dxdy
\\
&\quad - \gamma C_3\int_\sO\left(u_x^2 + u_y^2\right)y\,\fw\,dxdy - \gamma C_3\int_\sO (1+y)u^2\,\fw\,dxdy - C_2\int_\sO (1+y)u^2\,\fw\,dxdy,
\end{align*}
and thus, using $r\geq 0$,
\begin{equation}
\label{eq:HestonBilinearFormPreGarding}
\begin{aligned}
a(u,u) &\geq C_2\int_\sO\left(u_x^2 + u_y^2\right)y\,\fw\,dxdy + C_2\int_\sO (1+y)u^2\,\fw\,dxdy
\\
&\quad - \gamma C_3\int_\sO\left(u_x^2 + u_y^2\right)y\,\fw\,dxdy - \gamma C_3\int_\sO (1+y)u^2\,\fw\,dxdy
\\
&\quad - C_2\int_\sO (1+y)u^2\,\fw\,dxdy.
\end{aligned}
\end{equation}
Choosing $\gamma_0 := C_2/2C_3$ and $0<\gamma\leq\gamma_0$ in \eqref{eq:HestonBilinearFormPreGarding} yields the lower bound \eqref{eq:HestonBilinearFormGarding} for $a(u,u)$.

Virtually the same argument, with $C_4 := \max\{\sigma^2(1-\rho^2)/2, (1-\rho^2)/2\}$, yields
\begin{align*}
|a(u,u)| &\leq C_4\int_\sO\left(u_x^2 + u_y^2\right)y\,\fw\,dxdy +  r\int_\sO u^2\,\fw\,dxdy
\\
&\quad + \gamma C_3\int_\sO\left(u_x^2 + u_y^2\right)y\,\fw\,dxdy + \gamma C_3\int_\sO (1+y)u^2\,\fw\,dxdy,
\end{align*}
and the upper bound for $a(u,u)$ follows with $C_1:=\max\{C_4+\gamma C_3, r+\gamma C_3\}$.

When $b_1=0$, the condition $u\in H^1_0(\sO\cup\Gamma_0,\fw)$ can be relaxed to $u\in H^1(\sO,\fw)$ since integration by parts with respect to $x$ is no longer required to estimate the term
$$
\frac{1}{2}\int_\sO b_1(u^2)_x\,\fw\,dxdy,
$$
and the estimates hold for any $\gamma\geq 0$. This completes the proof.
\end{proof}

\begin{rmk}[Refinement when $r>0$]
\label{rmk:EnergyGardingHeston}
The lower bound \eqref{eq:HestonBilinearFormGarding} for $a(u,u)$ can be sharpened slightly when $r>0$ to
$$
a(u,u) \geq \frac{1}{2}C_2'\|u\|_V^2 - C_2'\|y^{1/2}u\|_H^2,
\quad \forall u \in V,
$$
where $C_2'=\min\{C_2,r\}$, but this refinement seems to bring little benefit in practice.
\end{rmk}

\begin{assump}[Choice of the constant $\gamma$ in the definition of the Sobolev weight]
\label{assmp:gammmachoice}
For the remainder of this article, we choose $\gamma=\gamma_0$ in \eqref{eq:HestonWeight}, where $\gamma_0$ is defined in the statement of Proposition \ref{prop:EnergyGardingHeston} in terms of the constant coefficients of the operator $A$ in \eqref{eq:OperatorHestonIntro}.
\end{assump}

\begin{prop}[Continuity estimate]
\label{prop:FullEnergyHeston}
Relax the requirements that Assumptions \ref{assump:HestonCoefficientb1} or \ref{assmp:gammmachoice} are in effect. Then, for all $\gamma\geq 0$ and $u, v \in H^1(\sO,\fw)$,
\begin{equation}
\label{eq:StrongHestonBilinearFormContinuity}
|a(u,v)| \leq C_5\|u\|_{H^1(\sO,\fw)}\left(\|v\|_V + \|y^{-1/2}v\|_{L^2(\sO,\fw)}\right),
\end{equation}
where $C_5>0$ depends at most on the coefficients $r,q,\kappa,\theta,\rho,\sigma$, and $\gamma$.

When Assumption \ref{assump:HestonCoefficientb1} is in effect, so $b_1=0$, then, for all $\gamma\geq 0$ and $u, v \in H^1(\sO,\fw)$,
\begin{equation}
\label{eq:StrongerHestonBilinearFormContinuity}
|a(u,v)| \leq C_5\|u\|_{H^1(\sO,\fw)}\|v\|_{H^1(\sO,\fw)}.
\end{equation}
\end{prop}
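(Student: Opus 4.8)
The plan is to bound each of the four integrals comprising $a(u,v)$ in \eqref{eq:HestonWithKillingBilinearForm} separately via the Cauchy--Schwarz inequality, in each case distributing the weight $y\,\fw$ (or $\fw$) so that one factor is controlled by $\|u\|_{H^1(\sO,\fw)}$ and the other by $\|v\|_V$ or by $\|y^{-1/2}v\|_{L^2(\sO,\fw)}$, and then summing. Throughout I would use that $y\leq 1+y$ on $\HH$, so that $\int_\sO y\,v^2\,\fw\,dxdy$ and $\int_\sO v^2\,\fw\,dxdy$ are both dominated by $\|v\|_V^2=\|v\|_{H^1(\sO,\fw)}^2$, and likewise $\int_\sO y\,u_x^2\,\fw\,dxdy\leq\int_\sO y|Du|^2\,\fw\,dxdy\leq\|u\|_{H^1(\sO,\fw)}^2$, and similarly for $u$.

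For the principal (second-order) term, since $-1<\rho<1$ the symmetric matrix with entries $1,\rho\sigma,\rho\sigma,\sigma^2$ has entries bounded in terms of $\rho,\sigma$ alone, so $|u_xv_x+\rho\sigma u_yv_x+\rho\sigma u_xv_y+\sigma^2 u_yv_y|\leq C|Du||Dv|$; then $\frac{1}{2}\int_\sO|Du||Dv|\,y\,\fw\,dxdy\leq \frac{C}{2}\left(\int_\sO y|Du|^2\,\fw\right)^{1/2}\left(\int_\sO y|Dv|^2\,\fw\right)^{1/2}\leq \frac{C}{2}\|u\|_V\|v\|_V$. For the $\gamma$-term I would write the weight as $y=y^{1/2}\cdot y^{1/2}$ and apply Cauchy--Schwarz to get $\frac{\gamma}{2}\int_\sO|u_x+\rho\sigma u_y|\,|v|\,y\,\fw\,dxdy\leq C\gamma\|u\|_V\left(\int_\sO y\,v^2\,\fw\right)^{1/2}\leq C\gamma\|u\|_V\|v\|_V$. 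For the zeroth-order term, $r\int_\sO|u||v|\,\fw\,dxdy\leq r\|u\|_{L^2(\sO,\fw)}\|v\|_{L^2(\sO,\fw)}\leq r\|u\|_V\|v\|_V$.

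The only term producing the weaker factor on $v$ is the first-order term $\int_\sO(a_1y+b_1)u_xv\,\fw\,dxdy$, which I would split. For the $a_1y$ piece, distributing $y=y^{1/2}\cdot y^{1/2}$ gives $|a_1|\left(\int_\sO y\,u_x^2\,\fw\right)^{1/2}\left(\int_\sO y\,v^2\,\fw\right)^{1/2}\leq|a_1|\|u\|_V\|v\|_V$; for the $b_1$ piece, distributing $1=y^{1/2}\cdot y^{-1/2}$ gives $|b_1|\left(\int_\sO y\,u_x^2\,\fw\right)^{1/2}\left(\int_\sO y^{-1}v^2\,\fw\right)^{1/2}\leq|b_1|\|u\|_V\|y^{-1/2}v\|_{L^2(\sO,\fw)}$. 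Collecting the four bounds yields \eqref{eq:StrongHestonBilinearFormContinuity} with $C_5$ depending only on $\rho,\sigma,a_1,b_1,\gamma,r$, hence on $r,q,\kappa,\theta,\rho,\sigma,\gamma$ through \eqref{eq:DefinitionA1B1}. When Assumption \ref{assump:HestonCoefficientb1} is in effect, $b_1=0$, the $y^{-1/2}$-piece disappears and each remaining term has already been bounded by a constant times $\|u\|_{H^1(\sO,\fw)}\|v\|_{H^1(\sO,\fw)}$, which gives \eqref{eq:StrongerHestonBilinearFormContinuity}.

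There is no genuine obstacle here; the computation is a routine sequence of Cauchy--Schwarz estimates. The one point requiring care is that in the $b_1u_xv$ term the lone factor of $1$ must be routed onto $v$ as $y^{-1/2}$ rather than onto $u_x$, since $\|y^{-1/2}u_x\|_{L^2(\sO,\fw)}$ is not controlled by $\|u\|_{H^1(\sO,\fw)}$ whereas $\|y^{1/2}u_x\|_{L^2(\sO,\fw)}$ is; this is exactly what forces the appearance of $\|y^{-1/2}v\|_{L^2(\sO,\fw)}$ in \eqref{eq:StrongHestonBilinearFormContinuity} and explains why the stronger estimate \eqref{eq:StrongerHestonBilinearFormContinuity} needs $b_1=0$.
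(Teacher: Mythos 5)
Your proposal is correct and follows essentially the same route as the paper's proof: a term-by-term Cauchy--Schwarz estimate, splitting the weight as $y^{1/2}\cdot y^{1/2}$ for the principal, $\gamma$-, $a_1y$-, and zeroth-order terms, and as $y^{1/2}\cdot y^{-1/2}$ for the $b_1u_xv$ term, which is exactly the source of the $\|y^{-1/2}v\|_{L^2(\sO,\fw)}$ factor in \eqref{eq:StrongHestonBilinearFormContinuity}. Your closing remark about why the $y^{-1/2}$ must land on $v$ rather than $u_x$ matches the paper's reasoning precisely.
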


\begin{rmk}[Application of affine changes of coordinates]
\label{rmk:AffineChangeCoordinatesContinuity}
With the aid of Lemma \ref{lem:RescalingHeston}, we may assume without loss of generality that $b_1=0$ when applying Proposition \ref{prop:FullEnergyHeston} and as needed throughout the remainder of this article.
\end{rmk}

\begin{rmk}[Alternative to affine changes of coordinates]
\label{rmk:FullEnergyHeston}
When $\beta<1$ and $v\in H_0^1(\sO,\fw)$ or even $H_0^1(\sO\cup\Gamma_1,\fw)$, so $u=0$ along $\Gamma_0$ in the trace sense, then $\|y^{-1/2}v\|_H < \infty$ is finite by Theorem \ref{thm:HardyInequality}; when $\beta\geq 1$, then $H^1(\sO,\fw) = H_0^1(\sO\cup\Gamma_0,\fw)$ by Lemma \ref{lem:ImprovedH1ApproximationLemma} and so Theorem \ref{thm:HardyInequality} applies.
\end{rmk}

\begin{proof}[Proof of Proposition \ref{prop:FullEnergyHeston}]
To obtain the upper bound \eqref{eq:StrongHestonBilinearFormContinuity} for $|a(u,v)|$, write
$$
a(u,v) = a^{2,0}(u,v) + a^1(u,v),
$$
where
\begin{align*}
a^1(u,v) &:= -\frac{\gamma}{2}\int_\sO\left(u_x + \rho\sigma u_y\right)v \sign(x)y\,\fw\,dxdy
\\
&\quad - \int_\sO(a_1y + b_1)u_xv\,\fw\,dxdy,
\end{align*}
and separately consider $a^1(u,v)$ and $a^{2,0}(u,v)$. First, observe that
\begin{align*}
|(a_1yu_x,v)_{L^2(\sO,\fw)}| &\leq |a_1|\|y^{1/2}u_x\|_{L^2(\sO,\fw)}\|y^{1/2}v\|_{L^2(\sO,\fw)}
\\
&\leq C_6'\|u\|_V\|v\|_V,
\end{align*}
where $C_6' := |a_1|$. Second, note that
\begin{align*}
{}&\frac{\gamma}{2}\left|\int_\sO\left(u_x + \rho\sigma u_y\right)v \sign(x)y\,\fw\,dxdy\right|
\\
&\qquad \leq
C_6''\left(\|y^{1/2}u_x\|_{L^2(\sO,\fw)} + \|y^{1/2}u_y\|_{L^2(\sO,\fw)}\right)\|y^{1/2}v\|_{L^2(\sO,\fw)}
\\
&\qquad \leq C_6''\|u\|_V\|v\|_V,
\end{align*}
where $C_6'' := \max\{\gamma/2,\gamma\rho\sigma/2\}$. Third, we have
\begin{align*}
|(b_1u_x,v)_{L^2(\sO,\fw)}|
&\leq |b_1|\|y^{1/2}u_x\|_{L^2(\sO,\fw)}\|y^{-1/2}v\|_{L^2(\sO,\fw)}
\\
&\leq C_6'''\|u\|_V\|y^{-1/2}v\|_{L^2(\sO,\fw)},
\end{align*}
where $C_6''' := |b_1|$. Combining the preceding three estimates yields the estimate \eqref{eq:StrongHestonBilinearFormContinuity} for the term $a^1(u,v)$, with constant $C_6 := C_6'+C_6''+C_6'''$.

For the term
$$
a^{0,2}(u,v) := \frac{1}{2}\int_\sO\left(u_xv_x + \rho\sigma u_yv_x
+ \rho\sigma u_xv_y + \sigma^2 u_yv_y\right)y\,\fw\,dxdy
+ \int_\sO ruv\,\fw\,dxdy,
$$
observe that
\begin{align*}
|a^{0,2}(u,v)| &\leq \frac{1}{2}\left(y^{1/2}u_x,y^{1/2}v_x\right)_{L^2(\sO,\fw)} + \frac{\rho\sigma}{2}\left(y^{1/2}u_y,y^{1/2}v_x\right)_{L^2(\sO,\fw)}
\\
&\quad + \frac{\rho\sigma}{2}\left(y^{1/2}u_x,y^{1/2}v_y\right)_{L^2(\sO,\fw)}
+ \frac{\sigma^2}{2}\left(y^{1/2}u_y,y^{1/2}v_y\right)_{L^2(\sO,\fw)} + r(u,v)_{L^2(\sO,\fw)}
\\
&\leq C_7\|u\|_V\|v\|_V,
\end{align*}
where $C_7 := \frac{1}{2}+\rho\sigma+\frac{\sigma^2}{2}+r$. This yields the estimate \eqref{eq:StrongHestonBilinearFormContinuity} for the term $a^{2,0}(u,v)$, with constant $C_7$. Combining these observations gives the desired estimate \eqref{eq:StrongHestonBilinearFormContinuity} for $a(u,v)$ with constant $C_5 := C_6+C_7$.

When Assumption \ref{assump:HestonCoefficientb1} is in effect, so $b_1=0$, the estimate \eqref{eq:StrongerHestonBilinearFormContinuity} follows immediately from the proof of \eqref{eq:StrongHestonBilinearFormContinuity} in the case $b\neq 0$ since we do not need to estimate the term $(b_1u_x,v)_{L^2(\sO,\fw)}$.
\end{proof}

\subsection{Bilinear form energy identity and estimate}
\label{subsec:BilinearFormEnergyIdentityEstimate}
We shall employ the useful identities and estimates described here at several points in this article.

\begin{lem}[Bilinear form energy identity]
\label{lem:EnergyIdentityIntegrationByParts}
Let $u \in H^1(\sO,\fw)$ and $\varphi \in C^2_0(\RR^2)$. Then
\begin{equation}
\label{eq:EnergyIdentityIntegrationByParts}
\begin{aligned}
{}&a(\varphi u, \varphi u) - a(u, \varphi^2 u)
\\
&\quad = \frac{1}{2}\sigma^2(u, y\varphi_x^2 u)_{L^2(\sO,\fw)} + \sigma^2(u, y\varphi_x\varphi_y u)_{L^2(\sO,\fw)}
+ \frac{1}{2}\sigma^2(u, y\varphi_y^2 u)_{L^2(\sO,\fw)}
\\
&\qquad - \frac{\gamma}{2}\rho\sigma(u, y\varphi(\varphi_x + \varphi_y)\sign(x)u)_{L^2(\sO,\fw)}.
\end{aligned}
\end{equation}
\end{lem}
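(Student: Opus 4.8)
The plan is to compute $a(\varphi u,\varphi u)-a(u,\varphi^2 u)$ directly from the defining formula \eqref{eq:HestonWithKillingBilinearForm}, using only the bilinearity of $a(\cdot,\cdot)$ and the Leibniz rule for the product of the $C^2_0(\RR^2)$ function $\varphi$ (or $\varphi^2$) with the $H^1(\sO,\fw)$ function $u$ — that is, $(\varphi u)_x=\varphi_x u+\varphi u_x$, $(\varphi u)_y=\varphi_y u+\varphi u_y$, $(\varphi^2 u)_x=2\varphi\varphi_x u+\varphi^2 u_x$, and $(\varphi^2 u)_y=2\varphi\varphi_y u+\varphi^2 u_y$. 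No integration by parts and no regularity of $u$ beyond $u\in H^1(\sO,\fw)$ is required: \eqref{eq:EnergyIdentityIntegrationByParts} is really an identity of integrands which one then integrates against $\fw\,dx\,dy$, and every integral in sight converges since $\varphi$ and its first derivatives are bounded with compact support in $\RR^2$. I would split $a(\cdot,\cdot)$ into its four pieces: the principal (second-order) part, the $\gamma$-part $-\tfrac{\gamma}{2}\!\int(u_x+\rho\sigma u_y)v\sign(x)y\,\fw$, the drift part $-\!\int(a_1y+b_1)u_x v\,\fw$, and the zeroth-order part $\int ruv\,\fw$, and treat $a(\varphi u,\varphi u)-a(u,\varphi^2 u)$ one piece at a time.

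The structural point I would isolate is that in the difference \emph{every term containing a derivative of $u$ cancels}, leaving a quantity that is purely quadratic in $u$ with coefficients built from $\varphi$ and $\nabla\varphi$. Indeed: (i) the zeroth-order part contributes $\int r\varphi^2u^2\,\fw$ to both $a(\varphi u,\varphi u)$ and $a(u,\varphi^2 u)$, so it drops out; (ii) for the $\gamma$-part and the drift part, substituting $(\varphi u)_x=\varphi_x u+\varphi u_x$ etc. and using $\varphi u\cdot\varphi u_x=\varphi^2 u\,u_x$ shows that the terms in which the derivative falls on $u$ reproduce exactly the corresponding terms of $a(u,\varphi^2 u)$, leaving only the ``$u^2$'' remainders in which the derivative falls on $\varphi$; (iii) for the principal part, expanding $(\varphi u)_x^2+2\rho\sigma(\varphi u)_x(\varphi u)_y+\sigma^2(\varphi u)_y^2$ produces a group with coefficient $\varphi^2$ equal to the Heston quadratic form in $Du$ and a group $2\varphi u\big[(\varphi_x+\rho\sigma\varphi_y)u_x+(\rho\sigma\varphi_x+\sigma^2\varphi_y)u_y\big]$ linear in $Du$ — and both of these match the corresponding pieces of the expansion of $a(u,\varphi^2 u)$ with $v=\varphi^2 u$, hence cancel — plus a group purely quadratic in $u$ whose coefficient is the Heston quadratic form evaluated on $\nabla\varphi$. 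Collecting the surviving $u^2$-remainders from (ii) and (iii) and simplifying (using $\varphi\varphi_x=\tfrac{1}{2}(\varphi^2)_x$ and, from \eqref{eq:HestonWeight}, $\fw_x=-\gamma\sign(x)\fw$, if an integration by parts in $x$ shortens the bookkeeping) gives the right-hand side of \eqref{eq:EnergyIdentityIntegrationByParts}.

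The one real obstacle is the algebra in that last step: one must track the signs and the coefficients $\tfrac{1}{2}$, $\rho\sigma$, $\sigma^2$, $\gamma$, and the drift coefficient $a_1y+b_1$ through the cancellations of (ii)--(iii) and verify that the leftover is exactly the stated sum of four weighted inner products (under Assumption \ref{assump:HestonCoefficientb1} the drift remainder simplifies because $b_1=0$). There are no analytic subtleties — no $\Gamma_0$- or $\Gamma_1$-boundary terms appear, since the computation uses only the product rule and never integration by parts, and $\varphi^2u\in H^1(\sO,\fw)$ whenever $u\in H^1(\sO,\fw)$ — so once the bookkeeping is done the proof is complete.
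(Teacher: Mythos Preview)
Your proposal is correct and follows essentially the same route as the paper: split $a(\cdot,\cdot)$ into its four integral pieces (principal, $\gamma$-term, drift, zeroth-order), expand each via the Leibniz rule, and verify that all terms carrying a derivative of $u$ cancel so that only the quadratic-in-$u$ remainders survive. The paper carries this out with exactly the same organization (labelling the four pieces $I_1,\ldots,I_4$), and, like you, uses no integration by parts — so your parenthetical about possibly integrating by parts in $x$ is unnecessary and slightly at odds with your own closing remark that no boundary terms arise.
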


\begin{proof}
Let $I_1, I_2, I_3$, and $I_4$ denote the four integral terms in the expression \eqref{eq:HestonWithKillingBilinearForm} for the bilinear form, $a(u,v)$. First, we compute
\begin{align*}
((\varphi u)_x, y(\varphi  u)_x)_{L^2(\sO,\fw)} -  (u_x, y(\varphi^2 u)_x)_{L^2(\sO,\fw)}
&= (u, y\varphi_x^2 u)_{L^2(\sO,\fw)},
\\
((\varphi u)_y, y(\varphi  u)_x)_{L^2(\sO,\fw)} - (u_y, y(\varphi^2 u)_x)_{L^2(\sO,\fw)}
&=  (u, y\varphi_x\varphi_y u)_{L^2(\sO,\fw)} + (u, y\varphi\varphi_y u_x)_{L^2(\sO,\fw)}
\\
&\quad - (u_y, y\varphi\varphi_x u)_{L^2(\sO,\fw)},
\\
((\varphi u)_x, y(\varphi u)_y)_{L^2(\sO,\fw)} - (u_x, y(\varphi^2u)_y)_{L^2(\sO,\fw)}
&=  (u, y\varphi_y\varphi_x u)_{L^2(\sO,\fw)} + (u, y\varphi\varphi_x u_y)_{L^2(\sO,\fw)}
\\
&\quad - (u_x, y\varphi\varphi_y u)_{L^2(\sO,\fw)},
\\
((\varphi u)_y, y(\varphi u)_y)_{L^2(\sO,\fw)} - (u_y, y(\varphi^2u)_y)_{L^2(\sO,\fw)}
&=  (u, y\varphi_y^2 u)_{L^2(\sO,\fw)},
\end{align*}
and so
$$
I_1(\varphi u, \varphi u) - I_1(u, \varphi^2u) = \frac{1}{2}\sigma^2(u, y\varphi_x^2 u)_{L^2(\sO,\fw)} + \sigma^2(u, y\varphi_x\varphi_y u)_{L^2(\sO,\fw)}
+ \frac{1}{2}\sigma^2(u, y\varphi_y^2 u)_{L^2(\sO,\fw)}.
$$
Second, we obtain
\begin{align*}
I_2(\varphi u, \varphi u) &= - \frac{\gamma}{2}\int_\sO\left((\varphi u)_x + \rho\sigma (\varphi u)_y\right)\varphi u \sign(x)y\,\fw\,dxdy
\\
&= - \frac{\gamma}{2}\int_\sO\left(u_x + \rho\sigma u_y\right)\varphi^2 u \sign(x)y\,\fw\,dxdy
\\
&\quad - \frac{\gamma}{2}\int_\sO \rho\sigma(\varphi_x + \varphi_y) u \varphi u \sign(x)y\,\fw\,dxdy
\\
&= I_2(u, \varphi^2 u) - \frac{\gamma}{2}\int_\sO \rho\sigma\varphi(\varphi_x + \varphi_y) u^2\sign(x)y\,\fw\,dxdy.
\end{align*}
Third, we see by inspection that
$$
I_3(\varphi u, \varphi u) = I_3(u, \varphi^2 u) \quad\hbox{and}\quad I_4(\varphi u, \varphi u) = I_4(u, \varphi^2 u).
$$
Combining the identities for $I_i(\varphi u, \varphi u)$, $i=1,2,3,4$, yields \eqref{eq:EnergyIdentityIntegrationByParts}.
\end{proof}

\begin{rmk}[Significance of the identity \eqref{eq:EnergyIdentityIntegrationByParts}]
The important feature of the identity \eqref{eq:EnergyIdentityIntegrationByParts} is that the right-hand side contains no derivatives of $u$.
\end{rmk}

The identity \eqref{eq:EnergyIdentityIntegrationByParts} may also be derived using an expression for the commutator, $[A,\varphi]$, and Lemma \ref{lem:HestonIntegrationByParts}, although this method is less direct. Suppose $\varphi \in C^2_0(\RR^2)$ and $u\in H^2_{\textrm{loc}}(\sO)$. From \eqref{eq:OperatorHestonIntro}, we obtain
\begin{align*}
[A,\varphi]u &= -\frac{y}{2}\left(\varphi_{xx}u + 2\varphi_x u_x + 2\rho\sigma\left(\varphi_{xy}u + \varphi_x u_y + \varphi_y u_x\right)
+ \sigma^2\left(\varphi_{yy}u + 2\varphi_y u_y \right)\right)
\\
&\quad - (r-q-y/2)\varphi_x u - \kappa(\theta-y)\varphi_y u,
\end{align*}
and thus
\begin{equation}
\label{eq:ACommutator}
\begin{aligned}
{}[A,\varphi]u &= -y\left((\varphi_x + \rho\sigma\varphi_y)u_x + (\rho\sigma\varphi_x + \sigma^2\varphi_y)u_y\right)
\\
&\quad - \frac{y}{2}\left(\varphi_{xx} + 2\rho\sigma\varphi_{xy} + \sigma^2\varphi_{yy}\right)u
\\
&\quad - (r-q-y/2)\varphi_x u - \kappa(\theta-y)\varphi_y u.
\end{aligned}
\end{equation}
Since $[A,\varphi]$ is a first-order partial differential operator, the identity \eqref{eq:ACommutator} is valid when $u\in H^1_{\textrm{loc}}(\sO)$.

\begin{rmk}[Commutator identity for the coercive Heston operator]
The identity \eqref{eq:ACommutator} remains otherwise unchanged when $A$ is replaced by $A_\lambda$ using \eqref{eq:CoerciveHestonOperator}.
\end{rmk}

With the preceding observations in hand, we obtain

\begin{cor}[Bilinear form energy and commutator identities]
\label{cor:CommutatorInnerProduct}
Let $u, v \in H^1_0(\sO\cup\Gamma_0,\fw)$ and $\varphi \in C^2_0(\RR^2)$. Then
\begin{equation}
\label{eq:BilinearFormSquaredFunction}
a(\varphi u, \varphi v) = a(u,\varphi^2 v) + ([A,\varphi]u,\varphi v)_{L^2(\sO,\fw)},
\end{equation}
and, when $u=v$,
\begin{equation}
\begin{aligned}
\label{eq:CommutatorInnerProduct}
([A,\varphi]u,\varphi u)_{L^2(\sO,\fw)}
&= \frac{1}{2}\sigma^2(u, y\varphi_x^2 u)_{L^2(\sO,\fw)} + \sigma^2(u, y\varphi_x\varphi_y u)_{L^2(\sO,\fw)}
\\
&\quad + \frac{1}{2}\sigma^2(u, y\varphi_y^2 u)_{L^2(\sO,\fw)} - \frac{\gamma}{2}\rho\sigma(u, y\varphi(\varphi_x + \varphi_y)\sign(x)u)_{L^2(\sO,\fw)}.
\end{aligned}
\end{equation}
\end{cor}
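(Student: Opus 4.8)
The plan is to reduce everything to the integration-by-parts formula of Lemma \ref{lem:HestonIntegrationByParts} and the energy identity of Lemma \ref{lem:EnergyIdentityIntegrationByParts}. First observe that \eqref{eq:CommutatorInnerProduct} is an immediate consequence of \eqref{eq:BilinearFormSquaredFunction}: putting $v=u$ in \eqref{eq:BilinearFormSquaredFunction} gives $([A,\varphi]u,\varphi u)_{L^2(\sO,\fw)} = a(\varphi u,\varphi u) - a(u,\varphi^2 u)$, and Lemma \ref{lem:EnergyIdentityIntegrationByParts} (which establishes \eqref{eq:EnergyIdentityIntegrationByParts} for every $u\in H^1(\sO,\fw)$) evaluates the right-hand side to be exactly the right-hand side of \eqref{eq:CommutatorInnerProduct}. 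So the whole task is \eqref{eq:BilinearFormSquaredFunction}. For that, I would prove it first for $u,v$ in the dense subspace $C^\infty_0(\sO\cup\Gamma_0)$ of $H^1_0(\sO\cup\Gamma_0,\fw)$ and then extend by continuity.

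For $u,v\in C^\infty_0(\sO\cup\Gamma_0)$ and $\varphi\in C^2_0(\RR^2)$: the functions $\varphi u$ and $u$ are $C^2$ with compact support in $\sO\cup\Gamma_0$, and since $\beta>0$ the weight $\fw = y^{\beta-1}e^{-\gamma|x|-\mu y}$ is locally integrable up to $\{y=0\}$, so $\varphi u, u\in H^2(\sO,\fw)$ while $\varphi v,\varphi^2 v\in H^1(\sO,\fw)$. Moreover the supports of $u$ and $v$ lie at positive distance from $\bar\Gamma_1$, so the $\Gamma_1$-boundary integrals in \eqref{eq:HestonIntegrationByPartsFormula} vanish (equivalently, the traces of $\varphi v$ and $\varphi^2 v$ on $\Gamma_1$ vanish). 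Lemma \ref{lem:HestonIntegrationByParts} then gives $a(\varphi u,\varphi v) = (A(\varphi u),\varphi v)_{L^2(\sO,\fw)}$ and $a(u,\varphi^2 v) = (Au,\varphi^2 v)_{L^2(\sO,\fw)}$. Combining these with the pointwise Leibniz identity $A(\varphi u) = \varphi Au + [A,\varphi]u$ (valid for smooth $u$, with $[A,\varphi]$ given by \eqref{eq:ACommutator}) and the trivial identity $(\varphi Au,\varphi v)_{L^2(\sO,\fw)} = (Au,\varphi^2 v)_{L^2(\sO,\fw)}$ yields $a(\varphi u,\varphi v) = a(u,\varphi^2 v) + ([A,\varphi]u,\varphi v)_{L^2(\sO,\fw)}$, which is \eqref{eq:BilinearFormSquaredFunction} on the dense class.

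To pass to general $u,v\in H^1_0(\sO\cup\Gamma_0,\fw)$, I would note that each of the three bilinear expressions in \eqref{eq:BilinearFormSquaredFunction} is continuous on $H^1(\sO,\fw)\times H^1(\sO,\fw)$. Since $\varphi\in C^2_0(\RR^2)$ is bounded with bounded first derivatives, multiplication by $\varphi$ or $\varphi^2$ is bounded on $H^1(\sO,\fw)$; combined with the continuity estimate \eqref{eq:StrongerHestonBilinearFormContinuity} of Proposition \ref{prop:FullEnergyHeston} (available since Assumption \ref{assump:HestonCoefficientb1} holds) this handles the terms $a(\varphi u,\varphi v)$ and $a(u,\varphi^2 v)$. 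For $([A,\varphi]u,\varphi v)_{L^2(\sO,\fw)}$ one reads off from \eqref{eq:ACommutator} that the first-order terms in $[A,\varphi]u$ carry a factor $y$, so that part of the pairing is controlled by $\|y^{1/2}Du\|_{L^2(\sO,\fw)}\,\|y^{1/2}v\|_{L^2(\sO,\fw)}\le C\|u\|_V\|v\|_V$, while the zeroth-order terms have coefficients of at most linear growth in $y$, so that part is controlled by $\|(1+y)^{1/2}u\|_{L^2(\sO,\fw)}\,\|(1+y)^{1/2}v\|_{L^2(\sO,\fw)}\le C\|u\|_V\|v\|_V$. Because $C^\infty_0(\sO\cup\Gamma_0)$ is dense in $H^1_0(\sO\cup\Gamma_0,\fw)$ by definition, \eqref{eq:BilinearFormSquaredFunction} then holds for all $u,v\in H^1_0(\sO\cup\Gamma_0,\fw)$, and \eqref{eq:CommutatorInnerProduct} follows as explained above.

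The step I expect to require the most care is the continuity of the commutator pairing $([A,\varphi]\cdot,\varphi\cdot)_{L^2(\sO,\fw)}$ on $H^1(\sO,\fw)\times H^1(\sO,\fw)$: it works precisely because the weights $y$ and $1+y$ built into the $H^1(\sO,\fw)$ norm match, respectively, the degeneracy factor $y$ and the linear-in-$y$ coefficient growth appearing in $[A,\varphi]$ via \eqref{eq:ACommutator}; and one must be sure the $\Gamma_1$-boundary terms in \eqref{eq:HestonIntegrationByPartsFormula} genuinely vanish on the dense class, for which the compact support of the test functions in $\sO\cup\Gamma_0$, away from $\bar\Gamma_1$, is what matters. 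An alternative, purely computational route for \eqref{eq:BilinearFormSquaredFunction} that avoids density would expand $a(\varphi u,\varphi v)-a(u,\varphi^2 v)$ directly by the Leibniz rule, exactly as in the proof of Lemma \ref{lem:EnergyIdentityIntegrationByParts}, and then integrate by parts the surviving cross terms (the $\Gamma_0$-boundary integral vanishing as in \eqref{eq:Gamma0BdryTermIsZero}); this is serviceable but more cumbersome in the non-symmetric case $u\neq v$.
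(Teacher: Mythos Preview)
Your proposal is correct and follows essentially the same approach as the paper: establish \eqref{eq:BilinearFormSquaredFunction} first for smooth compactly supported functions via Lemma~\ref{lem:HestonIntegrationByParts} and the Leibniz identity, then extend by density, and deduce \eqref{eq:CommutatorInnerProduct} by comparing with Lemma~\ref{lem:EnergyIdentityIntegrationByParts}. The only minor difference is that the paper approximates only $u$ by $C^\infty_0(\sO\cup\Gamma_0)$ functions (keeping $v\in H^1_0(\sO\cup\Gamma_0,\fw)$ fixed), whereas you approximate both; your more explicit continuity analysis of the commutator pairing is a welcome elaboration of what the paper leaves implicit.
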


\begin{proof}
We temporarily require, in addition, that $u \in C^\infty_0(\sO\cup\Gamma_0)$ and recall that $C^\infty_0(\sO\cup\Gamma_0)$ is dense in $H^1_0(\sO\cup\Gamma_0,\fw)$ by Definition \ref{defn:H1WeightedSobolevSpaces}. Then
\begin{align*}
a(\varphi u, \varphi v) &= (A(\varphi u),\varphi v)_{L^2(\sO,\fw)} \quad\hbox{(Lemma \ref{lem:HestonIntegrationByParts})}
\\
&= (\varphi Au,\varphi v)_{L^2(\sO,\fw)} + ([A,\varphi]u,\varphi v)_{L^2(\sO,\fw)}
\\
&= (Au,\varphi^2 v)_{L^2(\sO,\fw)} + ([A,\varphi]u,\varphi v)_{L^2(\sO,\fw)}
\\
&= a(u,\varphi^2 v) + ([A,\varphi]u,\varphi v)_{L^2(\sO,\fw)} \quad\hbox{(Lemma \ref{lem:HestonIntegrationByParts})}.
\end{align*}
Since the left-hand and right-hand terms in the preceding identity are well-defined for any $u,v \in H^1_0(\sO\cup\Gamma_0,\fw)$, we obtain \eqref{eq:BilinearFormSquaredFunction} by choosing a sequence $\{u_n\}_{n\geq 1} \subset C^\infty_0(\sO\cup\Gamma_0)$ which converges strongly in $H^1(\sO,\fw)$ to $u$ and taking limits as $n\to \infty$.

We obtain the identity \eqref{eq:CommutatorInnerProduct} by comparing \eqref{eq:EnergyIdentityIntegrationByParts} and \eqref{eq:BilinearFormSquaredFunction}.
\end{proof}

\begin{rmk}[Coercive bilinear form and operator inner product identities]
The identity \eqref{eq:BilinearFormSquaredFunction} remains otherwise unchanged when $a(\cdot,\cdot)$ is replaced by $a_\lambda(\cdot,\cdot)$ using \eqref{eq:BilinearFormCoerciveHeston}; the identity \eqref{eq:CommutatorInnerProduct} remains otherwise unchanged when $A$ is replaced by $A_\lambda$ using \eqref{eq:CoerciveHestonOperator}.
\end{rmk}

\begin{cor}[Commutator inner product estimate]
\label{cor:CommutatorInnerProductEstimate}
Let $u \in H^1_0(\sO\cup\Gamma_0,\fw)$ and $\varphi \in C^2_0(\RR^2)$. Then there is a constant $C$, depending only on the constant coefficients of $A$, such that
\begin{equation}
\label{eq:CommutatorInnerProductEstimate}
|([A,\varphi]u,\varphi u)_{L^2(\sO,\fw)}| \leq C\|y^{1/2}(|D\varphi| + |D\varphi|^{1/2})u\|_{L^2(\sO,\fw)}^2.
\end{equation}
\end{cor}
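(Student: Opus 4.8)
The plan is to deduce the estimate directly from the energy--commutator identity \eqref{eq:CommutatorInnerProduct} of Corollary \ref{cor:CommutatorInnerProduct}, applied with $v=u$ (legitimate, since the corollary hypotheses are exactly $u\in H^1_0(\sO\cup\Gamma_0,\fw)$ and $\varphi\in C^2_0(\RR^2)$). This is the natural starting point because the key structural feature of \eqref{eq:CommutatorInnerProduct} --- emphasized already in the remark following Lemma \ref{lem:EnergyIdentityIntegrationByParts} --- is that its right-hand side contains \emph{no} derivatives of $u$, so the whole problem collapses to bounding four weighted $L^2$ integrals of $u$ itself by the single weighted $L^2$ quantity on the right of \eqref{eq:CommutatorInnerProductEstimate}. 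In particular, one should \emph{not} go back to the commutator formula \eqref{eq:ACommutator}, since that route would reintroduce second derivatives of $\varphi$, which \eqref{eq:CommutatorInnerProduct} has precisely disposed of.

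First I would handle the three ``diagonal'' terms $\tfrac12\sigma^2(u,y\varphi_x^2u)_{L^2(\sO,\fw)}$, $\sigma^2(u,y\varphi_x\varphi_yu)_{L^2(\sO,\fw)}$, and $\tfrac12\sigma^2(u,y\varphi_y^2u)_{L^2(\sO,\fw)}$. Using $2|\varphi_x\varphi_y|\le\varphi_x^2+\varphi_y^2$ for the cross term and $\varphi_x^2,\varphi_y^2\le|D\varphi|^2$ pointwise, each of these is bounded in absolute value by a constant multiple of $\int_\sO y\,|D\varphi|^2\,u^2\,\fw\,dxdy$, with the constant depending only on $\sigma$.

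For the remaining term $-\tfrac{\gamma}{2}\rho\sigma(u,y\varphi(\varphi_x+\varphi_y)\sign(x)u)_{L^2(\sO,\fw)}$ I would estimate $|\varphi(\varphi_x+\varphi_y)|\le|\varphi|(|\varphi_x|+|\varphi_y|)\le\sqrt2\,|\varphi|\,|D\varphi|$ and then, absorbing the bounded factor $|\varphi|$ (for the localization cutoffs to which the corollary is applied one has $0\le\varphi\le1$, so $|\varphi|\,|D\varphi|\le|D\varphi|$), bound this term by a constant multiple of $\int_\sO y\,|D\varphi|\,u^2\,\fw\,dxdy$, the constant depending only on $\gamma,\rho,\sigma$. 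Combining the two bounds gives $|([A,\varphi]u,\varphi u)_{L^2(\sO,\fw)}|\le C\int_\sO y\,(|D\varphi|^2+|D\varphi|)\,u^2\,\fw\,dxdy$, and the proof is finished by the elementary pointwise inequality $a^2+a\le(a+a^{1/2})^2$ for $a=|D\varphi|\ge0$, which rewrites the right-hand side as $C\,\|y^{1/2}(|D\varphi|+|D\varphi|^{1/2})u\|_{L^2(\sO,\fw)}^2$. I do not expect a serious obstacle here: the entire difficulty was packaged into the derivative-free identity \eqref{eq:CommutatorInnerProduct}, and the only point to watch is the first-order term coming from the Sobolev weight's $x$-decay, whose appearance is exactly what forces the half-power weight $|D\varphi|^{1/2}$ into the statement of \eqref{eq:CommutatorInnerProductEstimate}.
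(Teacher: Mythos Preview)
Your approach is exactly the paper's: the paper's entire proof is the single sentence ``The estimate follows immediately from \eqref{eq:CommutatorInnerProduct},'' and you have filled in the details of that immediate estimate. The treatment of the first three terms and the final algebraic step $a^2+a\le(a+a^{1/2})^2$ are correct.

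There is one small gap in your handling of the fourth term. Your parenthetical justification --- that the cutoffs to which the corollary is applied satisfy $0\le\varphi\le1$ --- is not correct: in the paper's actual applications (Proposition~\ref{prop:AuxiliaryWeightedH1Estimate} with $\varphi=\zeta_Ry^{1/2}$, Lemma~\ref{lem:ExistenceUniquenessEllipticHestonPenalizedProblemPowery} with $\varphi=\zeta_Ry^s$, Lemma~\ref{lem:VIPenalizationEstimateHestonPowery} with $\varphi=\zeta_R(1+y^{1/2})$) the function $\varphi$ is \emph{not} bounded by $1$. The clean fix is simply to allow $C$ to depend on $\|\varphi\|_{L^\infty}$, which is finite since $\varphi\in C^2_0(\RR^2)$; indeed, a scaling argument ($\varphi\mapsto N\varphi$) shows that no constant independent of $\|\varphi\|_{L^\infty}$ can work in general, so the corollary's assertion that $C$ depends only on the coefficients of $A$ is slightly imprecise. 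In each of the paper's applications the resulting $\|\varphi\|_{L^\infty}$-dependence is tracked explicitly and causes no difficulty.
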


\begin{proof}
The estimate follows immediately from \eqref{eq:CommutatorInnerProduct}.
\end{proof}

\section{Existence and uniqueness of solutions to the variational equation}
\label{sec:VariationalEquation}
In this section we establish existence and uniqueness of solutions to the variational equation for the elliptic Heston operator, Problem \ref{prob:HestonWeakMixedBVPHomogeneous}. In \S \ref{subsec:CoerciveVarInequality} we prove existence and uniqueness for the case of a Heston operator which is modified so that its associated bilinear form is coercive (Theorem \ref{thm:ExistenceUniquenessEllipticCoerciveHeston}) and in \S \ref{subsec:NoncoerciveVarInequality} we extend that result to the full non-coercive case (Theorem \ref{thm:ExistenceUniquenessEllipticHeston_Improved}).

\subsection{Existence and uniqueness of solutions to the coercive variational equation}
\label{subsec:CoerciveVarInequality}
The inequality \eqref{eq:HestonBilinearFormGarding} illustrates that the bilinear form \eqref{eq:HestonWithKillingBilinearForm} is not necessarily coercive in the sense of Theorem \ref{thm:LaxMilgram} but it motivates the following

\begin{defn}[Coercive Heston operator and associated bilinear form]
\label{defn:CoerciveHestonBilinearForm}
Let $A$ be as in \eqref{eq:OperatorHestonIntro} and let $a:V\times V\to\RR$ be given by \eqref{eq:HestonWithKillingBilinearForm}, where $V=H^1_0(\sO\cup\Gamma_0,\fw)$. Define the differential operator $A_\lambda$ by
\begin{equation}
\label{eq:CoerciveHestonOperator}
A_\lambda := A + \lambda(1+y),
\end{equation}
and define the bilinear form
$$
a_\lambda: V\times V \to \RR, \quad (u,v) \mapsto a_\lambda(u,v),
$$
by
\begin{equation}
\label{eq:BilinearFormCoerciveHeston}
a_\lambda(u,v) := a(u,v) + \lambda((1+y)u,v)_{L^2(\sO,\fw)}, \quad\forall u,v \in V.
\end{equation}
\end{defn}

The following lemma explains when the bilinear form \eqref{eq:BilinearFormCoerciveHeston} is coercive:

\begin{lem}[Energy estimates for the coercive bilinear form]
\label{lem:LargeEnoughLambdaToEnsureCoercivity}
There is a positive constant, $\lambda_0$, depending only on the constant coefficients of $A$, such that for all $\lambda\geq \lambda_0$, the bilinear form \eqref{eq:BilinearFormCoerciveHeston} is \emph{continuous} and \emph{coercive} in the sense that,
\begin{align}
\label{eq:ContinuousCoerciveHeston}
|a_\lambda(u,v)| &\leq C\|u\|_V\|v\|_V, \quad \forall u,v \in V,
\\
\label{eq:CoerciveHeston}
a_\lambda(v,v) &\geq \nu_1\|v\|_V^2, \quad \forall v \in V,
\end{align}
where $C$, $\lambda$, and $\nu_1$ are positive constants depending only on the constant coefficients of $A$.
\end{lem}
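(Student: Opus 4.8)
The plan is to read off both claims directly from the energy estimates already established, namely the G\r{a}rding inequality of Proposition \ref{prop:EnergyGardingHeston} and the continuity estimate of Proposition \ref{prop:FullEnergyHeston}, using the fact that Assumptions \ref{assump:HestonCoefficientb1} and \ref{assmp:gammmachoice} are in force (so $b_1=0$ and $\gamma=\gamma_0$), which places us in the sharper case of both propositions; recall $V=H^1_0(\sO\cup\Gamma_0,\fw)\subseteq H^1(\sO,\fw)$.

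First I would establish continuity \eqref{eq:ContinuousCoerciveHeston}. By \eqref{eq:BilinearFormCoerciveHeston}, $a_\lambda(u,v) = a(u,v) + \lambda((1+y)u,v)_{L^2(\sO,\fw)}$. The first term obeys $|a(u,v)| \leq C_5\|u\|_V\|v\|_V$ by \eqref{eq:StrongerHestonBilinearFormContinuity} (valid since $b_1=0$). For the second term, Cauchy--Schwarz gives $|((1+y)u,v)_{L^2(\sO,\fw)}| \leq \|(1+y)^{1/2}u\|_H\|(1+y)^{1/2}v\|_H$, and $\|(1+y)^{1/2}w\|_H \leq \|w\|_V$ for $w\in V$ directly from \eqref{eq:H1NormHeston}; this simultaneously shows that the extra term, hence $a_\lambda$, is well-defined on $V\times V$. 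Therefore $|a_\lambda(u,v)| \leq (C_5+\lambda)\|u\|_V\|v\|_V$, which is \eqref{eq:ContinuousCoerciveHeston} with $C := C_5 + \lambda$.

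Next, coercivity \eqref{eq:CoerciveHeston}. Taking $u=v$ in \eqref{eq:BilinearFormCoerciveHeston} and applying the G\r{a}rding inequality \eqref{eq:HestonBilinearFormGarding} (which, since $b_1=0$, holds for all $v\in H^1(\sO,\fw)$), one gets $a_\lambda(v,v) = a(v,v) + \lambda\|(1+y)^{1/2}v\|_H^2 \geq \tfrac12 C_2\|v\|_V^2 + (\lambda - C_2)\|(1+y)^{1/2}v\|_H^2$. Choosing $\lambda_0 := C_2$, the last term is nonnegative for every $\lambda\geq\lambda_0$, so $a_\lambda(v,v)\geq \tfrac12 C_2\|v\|_V^2$, i.e. \eqref{eq:CoerciveHeston} with $\nu_1 := C_2/2$. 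Since $C_2$ and $C_5$ depend only on the constant coefficients of $A$, so do $\lambda_0$, $\nu_1$, and $C$ (for any fixed admissible $\lambda$).

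There is no genuine obstacle here: the lemma is essentially bookkeeping on top of Propositions \ref{prop:EnergyGardingHeston} and \ref{prop:FullEnergyHeston}. The only points needing a moment's care are to verify that the added bilinear term $((1+y)u,v)_{L^2(\sO,\fw)}$ is finite on $V\times V$ — which holds because membership in $H^1(\sO,\fw)$ already forces $(1+y)^{1/2}u\in L^2(\sO,\fw)$ — and to observe that the choice $\lambda_0 = C_2$ is exactly what is needed to absorb the indefinite term $-C_2\|(1+y)^{1/2}v\|_H^2$ coming from the G\r{a}rding inequality into the coercive modification $\lambda\|(1+y)^{1/2}v\|_H^2$.
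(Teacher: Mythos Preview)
Your proposal is correct and matches the paper's proof essentially line for line: the paper uses \eqref{eq:StrongerHestonBilinearFormContinuity} together with $|(1+y)^{1/2}w|_H\leq\|w\|_V$ to get $C=C_5+\lambda$, and then the G\r{a}rding inequality \eqref{eq:HestonBilinearFormGarding} to obtain $\nu_1=\tfrac12 C_2$ and $\lambda_0=C_2$, exactly as you do.
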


\begin{proof}
The bilinear form $a_\lambda:V\times V\to\RR$ is continuous for any $\lambda\in\RR$ since
\begin{align*}
|a_\lambda(u,v)| &\leq |a(u,v)| + \lambda\left|\left((1+y)^{1/2}u,(1+y)^{1/2}v\right)_H\right|
\\
&\leq C_5\|u\|_V\|v\|_V + \lambda\left|(1+y)^{1/2}u\right|_H\left|(1+y)^{1/2}v\right|_H
\quad\hbox{(by \eqref{eq:StrongerHestonBilinearFormContinuity})}
\\
&\leq (C_5+\lambda)\|u\|_V\|v\|_V, \quad\hbox{(by Definition \ref{defn:H1WeightedSobolevSpaces}),}
\end{align*}
for all $u,v \in V$, yielding \eqref{eq:ContinuousCoerciveHeston} with $C = C_5+\lambda$. Moreover,
\begin{align*}
a_\lambda(v,v)| &= a(v,v) + \lambda|(1+y)^{1/2}v|_H^2
\\
&\geq \frac{1}{2}C_2\|v\|_V^2 - C_2|(1+y)^{1/2}v|_H^2 + \lambda|(1+y)^{1/2}v|_H^2 \quad\hbox{(by \eqref{eq:HestonBilinearFormGarding})}
\\
&\geq \nu_1\|v\|_V^2,
\end{align*}
where we choose
\begin{equation}
\label{eq:DefnNu1Lambda0}
\nu_1 := \frac{1}{2}C_2 \quad\hbox{and}\quad\lambda \geq \lambda_0 := C_2,
\end{equation}
and note that $C_2$ only depends on the constant coefficients of $A$ and so the same is true for $\nu_1$ and $\lambda_0$.
\end{proof}

The following assumption will be in effect for the remainder of this article.

\begin{assump}[Coercive Heston bilinear form]
\label{assump:CoerciveHeston}
In Definition \ref{defn:CoerciveHestonBilinearForm} we choose $\lambda$ to be the constant $\lambda_0$ given by Lemma \ref{lem:LargeEnoughLambdaToEnsureCoercivity} so that inequality \eqref{eq:CoerciveHeston} holds.
\end{assump}

We then have the following analogue of \cite[Theorem 2.5.1]{Bensoussan_Lions}.

\begin{thm}[Existence and uniqueness of solutions to the coercive variational equation]
\label{thm:ExistenceUniquenessEllipticCoerciveHeston}
For all $f\in H$, there exists a unique $u\in V$ which solves
\begin{equation}
\label{eq:VariationalEqualityCoercive}
a_\lambda(u,v) = (f,v)_H, \quad\forall v\in V.
\end{equation}
\end{thm}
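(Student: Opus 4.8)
The plan is to apply the Lax--Milgram theorem (Theorem~\ref{thm:LaxMilgram}) to the bilinear form $a_\lambda$ on the Hilbert space $V = H^1_0(\sO\cup\Gamma_0,\fw)$. First I would recall that $V$ is a Hilbert space (Definition~\ref{defn:H1WeightedSobolevSpaces}) and that, by Assumption~\ref{assump:CoerciveHeston}, the constant $\lambda$ in Definition~\ref{defn:CoerciveHestonBilinearForm} equals the constant $\lambda_0$ furnished by Lemma~\ref{lem:LargeEnoughLambdaToEnsureCoercivity}, so that $a_\lambda:V\times V\to\RR$ is both continuous and coercive: there are positive constants $C$ and $\nu_1$, depending only on the coefficients of $A$, with $|a_\lambda(u,v)|\leq C\|u\|_V\|v\|_V$ and $a_\lambda(v,v)\geq \nu_1\|v\|_V^2$ for all $u,v\in V$, by \eqref{eq:ContinuousCoerciveHeston} and \eqref{eq:CoerciveHeston}.

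Next I would check that the right-hand side of \eqref{eq:VariationalEqualityCoercive} defines a bounded linear functional on $V$. For fixed $f\in H = L^2(\sO,\fw)$, the map $v\mapsto (f,v)_H$ is linear, and by the Cauchy--Schwarz inequality together with the elementary bound $|v|_H \leq \|v\|_V$ — immediate from \eqref{eq:H1NormHeston}, since $y|Dv|^2\geq 0$ and $(1+y)\geq 1$ on $\HH$ — we obtain $|(f,v)_H| \leq |f|_H\,|v|_H \leq |f|_H\,\|v\|_V$ for all $v\in V$. Hence $v\mapsto (f,v)_H$ belongs to the dual space $V'$, with norm at most $|f|_H$.

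With these three facts in hand — $V$ a Hilbert space, $a_\lambda$ bounded and coercive on $V\times V$, and $v\mapsto (f,v)_H$ a bounded linear functional on $V$ — the Lax--Milgram theorem (in the form recorded in Appendix~\ref{sec:LaxMilgram}) produces a unique $u\in V$ satisfying $a_\lambda(u,v) = (f,v)_H$ for all $v\in V$, which is exactly \eqref{eq:VariationalEqualityCoercive}.

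This argument is entirely routine: the genuine analytic content has already been absorbed into the continuity and coercivity estimates for $a_\lambda$ (Lemma~\ref{lem:LargeEnoughLambdaToEnsureCoercivity}), which in turn rest on the G\r{a}rding inequality \eqref{eq:HestonBilinearFormGarding}. The only point deserving a word of care is that $a_\lambda$ is \emph{not} symmetric — owing to the first-order terms $-\int_\sO (a_1y+b_1)u_xv\,\fw\,dxdy$ and the $\gamma$-term in \eqref{eq:HestonWithKillingBilinearForm} — so one must invoke the general (non-symmetric) Lax--Milgram theorem, Theorem~\ref{thm:LaxMilgram}, rather than the Riesz representation theorem; this is precisely what is available and causes no difficulty.
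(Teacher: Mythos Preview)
Your proposal is correct and follows exactly the same route as the paper's proof, which simply invokes Lemma~\ref{lem:LargeEnoughLambdaToEnsureCoercivity} together with the Lax--Milgram Theorem~\ref{thm:LaxMilgram}. Your version is more detailed---verifying explicitly that $v\mapsto (f,v)_H$ lies in $V'$ and noting the need for the non-symmetric Lax--Milgram theorem---but the underlying argument is identical.
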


\begin{proof}
Existence and uniqueness follows from our energy estimates (Lemma \ref{lem:LargeEnoughLambdaToEnsureCoercivity}) for $a_\lambda(u,v)$ and the Lax-Milgram Theorem \ref{thm:LaxMilgram}.
\end{proof}

\begin{cor}[A priori estimate for solutions to the variational equation]
\label{cor:ExistenceUniquenessEllipticCoerciveHestonApriori}
Let $f\in H$. If $u\in V$ is a solution to \eqref{eq:VariationalEqualityCoercive}, then
\begin{equation}
\label{eq:VariationalEqualityCoerciveBoundfH}
\|u\|_V \leq \nu_1^{-1}|f|_H,
\end{equation}
where $\nu_1$ is the constant in \eqref{eq:CoerciveHeston}.
\end{cor}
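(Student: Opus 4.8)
The plan is to test the variational equation \eqref{eq:VariationalEqualityCoercive} against $v=u$ and combine the resulting identity with the coercivity bound \eqref{eq:CoerciveHeston} supplied by Lemma \ref{lem:LargeEnoughLambdaToEnsureCoercivity} (which is available since Assumption \ref{assump:CoerciveHeston} fixes $\lambda=\lambda_0$). First I would set $v=u$ in \eqref{eq:VariationalEqualityCoercive} to obtain $a_\lambda(u,u)=(f,u)_H$. The left-hand side is bounded below, by \eqref{eq:CoerciveHeston}, by $\nu_1\|u\|_V^2$, while the right-hand side is controlled by the Cauchy--Schwarz inequality in $H=L^2(\sO,\fw)$: $|(f,u)_H|\le |f|_H\,|u|_H$.

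Next I would note that the $H$-norm is dominated by the $V$-norm. Directly from the definition \eqref{eq:H1NormHeston} of the $H^1(\sO,\fw)$-norm in Definition \ref{defn:H1WeightedSobolevSpaces}, one has $\|u\|_V^2=\int_\sO\left(y|Du|^2+(1+y)u^2\right)\fw\,dxdy\ge \int_\sO u^2\,\fw\,dxdy=|u|_H^2$, hence $|u|_H\le\|u\|_V$. Therefore $|(f,u)_H|\le |f|_H\,\|u\|_V$, and combining with the lower bound gives $\nu_1\|u\|_V^2\le |f|_H\,\|u\|_V$.

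Finally, if $\|u\|_V=0$ the claimed estimate \eqref{eq:VariationalEqualityCoerciveBoundfH} holds trivially; otherwise I would divide by $\|u\|_V>0$ to conclude $\|u\|_V\le\nu_1^{-1}|f|_H$. I do not expect any genuine obstacle here: the only steps requiring a sentence of justification rather than pure symbol manipulation are the continuous embedding $|u|_H\le\|u\|_V$, which is immediate from the weighted norm definitions, and the positivity $\nu_1=\frac{1}{2}C_2>0$ (valid under Assumption \ref{assump:HestonCoefficients}), which legitimizes the division.
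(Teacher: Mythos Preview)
Your argument is correct and is essentially the same as the paper's. The paper simply cites the general Lax-Milgram a priori estimate \eqref{eq:LaxMilgramAPrioriEstimatefH} from Corollary~\ref{cor:LaxMilgram} in the appendix, but unpacking that corollary yields exactly your steps: test with $v=u$, apply coercivity \eqref{eq:CoerciveHeston}, use Cauchy--Schwarz together with $|u|_H\le\|u\|_V$, and divide.
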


\begin{proof}
The inequality \eqref{eq:VariationalEqualityCoerciveBoundfH} follows from \eqref{eq:LaxMilgramAPrioriEstimatefH}.
\end{proof}

The following comparison principle is an analogue of the weak maximum principle \cite[Theorems 3.3 \& 8.1]{GT}:

\begin{cor}[A priori comparison principle for solutions to the coercive variational equation]
\label{cor:ExistenceUniquenessEllipticCoerciveHeston}
Let $f_1,f_2\in H$. If $u_1,u_2\in V$ are solutions to
\eqref{eq:VariationalEqualityCoercive}, with $f$ replaced by $f_1,f_2$,
respectively, then $f_2\geq f_1\implies u_2\geq u_1$ a.e. on $\sO$.
\end{cor}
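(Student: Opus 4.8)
The plan is to reduce the comparison statement to a statement about the sign of $w := u_1 - u_2$, then test the variational equation for $w$ against its positive part $w^+$ and use coercivity. First I would subtract the two variational equations: since $u_i$ solves $a_\lambda(u_i,v) = (f_i,v)_H$ for all $v\in V$, linearity of $a_\lambda(\cdot,\cdot)$ gives $a_\lambda(w,v) = (f_1-f_2,v)_H$ for all $v\in V$. Because $f_2\geq f_1$ a.e.\ on $\sO$, the right-hand side satisfies $(f_1-f_2,v)_H \leq 0$ whenever $v\geq 0$ a.e.\ on $\sO$ (recall $H = L^2(\sO,\fw)$ with the positive weight $\fw$).

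Next I would take $v = w^+ \in V$ as a test function. The key subtlety here is that this requires $w^+ \in H^1_0(\sO\cup\Gamma_0,\fw)$, i.e.\ that the space $V$ is stable under the operation $u\mapsto u^+$; this is standard for Sobolev spaces and carries over to the weighted setting (one checks $\|w^+\|_V \leq \|w\|_V$ and that $C^\infty_0(\sO\cup\Gamma_0)$-approximations to $w$ yield, after composition with a smoothed positive-part function, approximations to $w^+$ — the relevant density statements are part of the weighted-Sobolev machinery in Appendix~\ref{sec:WeightedSobolevSpaces}). Granting this, one has $a_\lambda(w,w^+) = (f_1-f_2,w^+)_H \leq 0$, since $w^+\geq 0$.

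Then I would split $a_\lambda(w,w^+)$ using $w = w^+ - w^-$ and the pointwise identities $Dw^+ = Dw\cdot\chi_{\{w>0\}}$, $Dw^- = -Dw\cdot\chi_{\{w<0\}}$, so that $w^+ w^- = 0$ and $\langle Dw^+, Dw^-\rangle = 0$ a.e. Inspecting the four integral terms in the bilinear form \eqref{eq:HestonWithKillingBilinearForm} (together with the $\lambda((1+y)\cdot,\cdot)_H$ term), every cross term between $w^+$ and $w^-$ vanishes because each integrand is either a product of the two functions themselves, a product of their gradients, or a mixed product of one function and the other's gradient restricted to disjoint supports; hence $a_\lambda(w,w^+) = a_\lambda(w^+,w^+)$. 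Now coercivity \eqref{eq:CoerciveHeston} gives $\nu_1\|w^+\|_V^2 \leq a_\lambda(w^+,w^+) = a_\lambda(w,w^+) \leq 0$, forcing $\|w^+\|_V = 0$, hence $w^+ = 0$ a.e.\ on $\sO$, i.e.\ $u_1 \leq u_2$ a.e.\ on $\sO$.

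The main obstacle I anticipate is the verification that $V = H^1_0(\sO\cup\Gamma_0,\fw)$ is closed under $u\mapsto u^+$ and that the chain-rule/Stampacchia identities for $Du^\pm$ hold in this weighted-distributional setting — everything else is a short computation. This should follow from the density of $C^\infty_0(\sO\cup\Gamma_0)$ in $V$ (Definition~\ref{defn:H1WeightedSobolevSpaces}) together with the fact that the weight $\fw$ is locally bounded above and below away from $\{y=0\}$, so that locally the weighted and unweighted $H^1$ norms are comparable and the classical results of Gilbarg--Trudinger \cite[\S 7.4]{GT} or Kinderlehrer--Stampacchia \cite{Kinderlehrer_Stampacchia_1980} apply; near $\Gamma_0$ the degenerate weight $y^{\beta-1}$ only improves integrability of $u^+$ and its gradient relative to $u$, so no difficulty arises there. (Alternatively, one may invoke the fact that $V$ is a lattice under the natural ordering, which is implicitly used throughout \S\ref{sec:StatVarInequality} in the treatment of the penalized problem and variational inequalities.)
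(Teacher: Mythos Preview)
Your proof is correct and follows essentially the same approach as the paper's: subtract the two variational equations, test against the positive (equivalently, negative) part of the difference, use the disjoint-support identity $a_\lambda(v^+,v^-)=0$ to reduce to $a_\lambda(w^+,w^+)\leq 0$, and conclude via coercivity. The only cosmetic difference is that the paper sets the difference as $u_2-u_1$ and tests against $(u_2-u_1)^-$, whereas you set $w=u_1-u_2$ and test against $w^+$; since $(u_1-u_2)^+ = (u_2-u_1)^-$ these are literally the same test function, and the closure of $V$ under $u\mapsto u^\pm$ that you flag is exactly Lemma~\ref{lem:SobolevSpaceClosedUnderMaxPart}.
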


\begin{proof}
Suppose $f_2\geq f_1$. Since $a_\lambda(u_i,v) = (f_i,v)_H, \forall v\in V$, for $i=1,2$ by \eqref{eq:VariationalEqualityCoercive}, we have
$$
a_\lambda(u_2-u_1, v) + (f_2-f_1, v)_H = 0, \quad\forall v\in V.
$$
Taking $v=(u_2-u_1)^-$ in the preceding equation and noting that $v\in V$ by Lemma \ref{lem:SobolevSpaceClosedUnderMaxPart} and $a(v^+,v^-)=0$ for all $v\in V$, we must have
$$
a_\lambda((u_2-u_1)^-,(u_2-u_1)^-) + (f,(u_2-u_1)^-)_H = 0,
$$
so that, by \eqref{eq:CoerciveHeston} and the fact that $f_2-f_1\geq 0$ a.e. on
$\sO$,
$$
\nu_1\|(u_2-u_1)^-\|_V^2 \leq a_\lambda((u_2-u_1)^-,(u_2-u_1)^-) = -(f_2-f_1,(u_2-u_1)^-)_H \leq 0.
$$
Thus, $(u_2-u_1)^-=0$ a.e. on $\sO$ and hence $u_2-u_1\geq 0$ a.e. on $\sO$.
\end{proof}

\begin{rmk}[Non-negative solutions]
\label{rmk:ExistenceUniquenessEllipticCoerciveHeston}
We can take $f=f_2$, $u_2=u$ and $f_1=0$, $u_1=0$ in Corollary
\ref{cor:ExistenceUniquenessEllipticCoerciveHeston} to give $f\geq 0\implies
u\geq 0$ a.e. on $\sO$.
\end{rmk}

We may refine Corollary \ref{cor:ExistenceUniquenessEllipticCoerciveHeston} with the aid of

\begin{hyp}[Conditions on envelope functions]
\label{hyp:UpperLowerBoundsSolutionsCoercive}
There are $M, m \in H^2(\sO,\fw)$ such that
\begin{gather}
\label{eq:SourceFunctionTraceBounds}
m \leq 0 \leq M \quad\hbox{ on }\Gamma_1,
\\
\label{eq:mMIneqOnDomain}
m \leq M \quad\hbox{on }\sO,
\\
\label{eq:SourceFunctionBounds}
Am \leq AM \quad\hbox{a.e. on }\sO.
\end{gather}
\end{hyp}

Since $A_\lambda = A + \lambda(1+y)$ by \eqref{eq:CoerciveHestonOperator} and $m \leq M$ on $\sO$ by \eqref{eq:mMIneqOnDomain}, then \eqref{eq:SourceFunctionBounds} implies that
\begin{equation}
\label{eq:AlambdamMIneqOnDomain}
A_\lambda m \leq A_\lambda M \quad\hbox{a.e. on }\sO.
\end{equation}
We then obtain:

\begin{lem}[Refined a priori comparison principle for the coercive variational equation]
\label{lem:ComparisionPrincipleCoerciveHestonVarEquality}
Let $M, m \in H^2(\sO,\fw)$ obey \eqref{eq:SourceFunctionTraceBounds}, \eqref{eq:mMIneqOnDomain}, and \eqref{eq:SourceFunctionBounds}. Suppose $f\in L^2(\sO,\fw)$ and that $f$ obeys
\begin{equation}
\label{eq:fBoundsCoercive}
A_\lambda m \leq f \leq A_\lambda M \quad\hbox{a.e. on }\sO.
\end{equation}
If $u\in H^1_0(\sO\cup\Gamma_0)$ is a solution to \eqref{eq:VariationalEqualityCoercive}, then $u$ obeys
$$
m \leq u \leq M \quad\hbox{a.e on }\sO.
$$
\end{lem}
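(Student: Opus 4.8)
The plan is to mimic the proof of the comparison principle in Corollary~\ref{cor:ExistenceUniquenessEllipticCoerciveHeston}, but with one of the two variational solutions replaced by the envelope function $M$ (respectively $m$). The one real difficulty is that $M$ solves no variational equation, so the bridge between the pointwise inequality $f\le A_\lambda M$ and the bilinear form $a_\lambda(M,\cdot)$ must be supplied by the integration-by-parts identity of Lemma~\ref{lem:HestonIntegrationByParts}, whose $\Gamma_1$-boundary term will drop out because the natural test function has zero trace on $\Gamma_1$. I prove $u\le M$; the bound $m\le u$ is entirely symmetric. \emph{Step 1 (test function).} Put $w:=u-M$ and $w^+:=(u-M)^+$. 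Since $u\in H^1_0(\sO\cup\Gamma_0,\fw)$ and $M\in H^2(\sO,\fw)\subset H^1(\sO,\fw)$, both lie in $H^1(\sO,\fw)$, so $w^+\in H^1(\sO,\fw)$ by Lemma~\ref{lem:SobolevSpaceClosedUnderMaxPart}. The trace of $u$ on $\Gamma_1$ is $0$ (Lemma~\ref{lem:EvansGamma1TraceZero}) and the trace of $M$ is $\ge 0$ by \eqref{eq:SourceFunctionTraceBounds}, so the trace of $w$ is $\le 0$ and hence that of $w^+$ is $0$; therefore $w^+\in H^1_0(\sO\cup\Gamma_0,\fw)=V$.

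\emph{Step 2 (testing against $w^+$).} Since $u$ solves \eqref{eq:VariationalEqualityCoercive}, taking $v=w^+$ gives $a_\lambda(u,w^+)=(f,w^+)_H$. Applying Lemma~\ref{lem:HestonIntegrationByParts} to $M\in H^2(\sO,\fw)$ with test function $w^+$, the $\Gamma_1$-boundary integral vanishes (its integrand is a multiple of the trace of $w^+$, which is zero), so $(AM,w^+)_H=a(M,w^+)$; adding $\lambda((1+y)M,w^+)_H$ to both sides — a finite quantity, since $(1+y)^{1/2}M$ and $(1+y)^{1/2}w^+$ lie in $L^2(\sO,\fw)$ — and using \eqref{eq:CoerciveHestonOperator} and \eqref{eq:BilinearFormCoerciveHeston} yields $a_\lambda(M,w^+)=(A_\lambda M,w^+)_H$. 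Subtracting the two identities gives $a_\lambda(w,w^+)=(f-A_\lambda M,w^+)_H$.

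\emph{Step 3 (coercivity and conclusion).} Writing $w=w^+-w^-$ and noting that $w^+$ and $w^-$ have disjoint supports up to a null set, so that every product $w^+_x w^-_x$, $w^+ w^-$, $w^+_x w^-$, etc.\ vanishes almost everywhere, one reads off from \eqref{eq:HestonWithKillingBilinearForm} and \eqref{eq:BilinearFormCoerciveHeston} that $a_\lambda(w^-,w^+)=0$, whence $a_\lambda(w^+,w^+)=a_\lambda(w,w^+)=(f-A_\lambda M,w^+)_H$. The three pieces $(f,w^+)_H$, $(AM,w^+)_H$ and $\lambda((1+y)M,w^+)_H$ are each finite, so the combined integrand $(f-A_\lambda M)\,w^+\,\fw$ is integrable and is $\le 0$ a.e.\ by \eqref{eq:fBoundsCoercive} together with $w^+\ge 0$; hence $a_\lambda(w^+,w^+)\le 0$. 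On the other hand, coercivity \eqref{eq:CoerciveHeston} gives $a_\lambda(w^+,w^+)\ge\nu_1\|w^+\|_V^2$, so $w^+=0$ a.e., i.e.\ $u\le M$ a.e.\ on $\sO$. Running the same three steps with $z^+:=(m-u)^+$ — whose $\Gamma_1$-trace is $0$ because the trace of $m$ is $\le 0$ by \eqref{eq:SourceFunctionTraceBounds} — and using $a_\lambda(m,z^+)=(A_\lambda m,z^+)_H$ together with $A_\lambda m\le f$ from \eqref{eq:fBoundsCoercive}, gives $a_\lambda(z^+,z^+)=(A_\lambda m-f,z^+)_H\le 0$, hence $z^+=0$ and $m\le u$ a.e.

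The main obstacle is Step 2: it is the only point where one leaves the setting of variational solutions, and it forces the use of Lemma~\ref{lem:HestonIntegrationByParts} to express $a_\lambda(M,\cdot)$ in terms of $A_\lambda M$, crucially exploiting that the $\Gamma_1$-boundary contribution is annihilated by the vanishing trace of $(u-M)^+$ (so the domain hypotheses needed for Lemma~\ref{lem:HestonIntegrationByParts} are in force). The remaining bookkeeping — disjointness of the supports of positive and negative parts, and the integrability needed to split $(f-A_\lambda M,w^+)_H$ into finite pieces — is routine given $f,u\in L^2(\sO,\fw)$ and $(1+y)^{1/2}M\in L^2(\sO,\fw)$.
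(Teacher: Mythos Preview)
Your proof is correct and follows the same argument as the paper. The paper's proof simply defers to the proof of Lemma~\ref{lem:ComparisionPrinciplePenalizedHeston} (the comparison principle for the penalized equation) with the penalization term set to zero; that proof does exactly what you do here --- test the equation against $(u-M)^+$, use Lemma~\ref{lem:HestonIntegrationByParts} to write $a_\lambda(M,(u-M)^+)=(A_\lambda M,(u-M)^+)_H$ (the $\Gamma_1$-term vanishing by the trace condition), subtract, and apply coercivity --- and likewise for the lower bound with $(u-m)^-=(m-u)^+$.
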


\begin{proof}
Take $1/\eps = c$ in the definition \eqref{eq:PenalizationOperator} of $\beta_\eps$ and, setting $c=0$ and thus $\beta_\eps=0$, the conclusion in Lemma \ref{lem:ComparisionPrincipleCoerciveHestonVarEquality} follows from the proof of Lemma \ref{lem:ComparisionPrinciplePenalizedHeston}.
\end{proof}

We need to demonstrate that the hypotheses for the comparison results in Lemma \ref{lem:ComparisionPrincipleCoerciveHestonVarEquality} and elsewhere in this article are not vacuous and that, for suitable functions $f$, there exist functions $M, m \in H^2(\sO,\fw)$ obeying Hypothesis \ref{hyp:UpperLowerBoundsSolutionsCoercive} and such that \eqref{eq:fBoundsCoercive} holds.

\begin{lem}[Upper and lower pointwise envelopes for source functions]
\label{lem:PointwiseBoundForSolution}
Suppose $N, n \in C^\infty(\HH)$ obey
\begin{equation}
\label{eq:RawPointwiseBoundForSourceFunction}
n \leq N \quad\hbox{a.e. on } \sO,
\end{equation}
where
\begin{equation}
\label{eq:UpperLowerSourceFunctionBounds}
\begin{aligned}
n(x,y) &:= c_0 + c_2y + c_3(1+y)e^{\ell x} + c_4(1+y)e^{ky},
\\
N(x,y) &:= C_0 + C_2y + C_3(1+y)e^{Lx} + C_4(1+y)e^{Ky}, \quad (x,y) \in \HH,
\end{aligned}
\end{equation}
for constants $c_i, C_i \in \RR, i=0,\ldots,4$
and positive constants $k, K, \ell, L$ obeying
\begin{gather}
\label{eq:kleqKellleqL}
k \leq K, \quad \ell \leq L,
\\
\label{eq:cileqCi}
c_i \leq C_i, \quad i=0,\ldots,4,
\\
\label{eq:ExponentialIntegrabilityConstraints}
2k < \mu, \quad 2K < \mu, \quad 2\ell < \gamma, \quad 2L < \gamma.
\end{gather}
In addition, require that the $c_i, k, \ell$ obey
\begin{enumerate}
\item If $c_0\neq 0$, then $r>0$;
\item If $c_2\neq 0$, then $\min\{\kappa,r\}>0$;
\item If $c_3\neq 0$, then $r > \ell(r-q)^+$ and $0<\ell<1$;
\item If $c_4\neq 0$, then $0<k<\min\{2\kappa, r/\kappa\theta\}$;
\end{enumerate}
and similarly for the $C_i, K, L$. Choose constants $d_i\in \RR, i=0,\ldots,4$, depending only on $c_i, k, \ell$ and the constant coefficients of $A$, as in \eqref{eq:d0d1d2ratios}, \eqref{eq:d3c3ratio}, and \eqref{eq:d4c4ratio}; choose constants $D_i\in \RR, i=0,\ldots,4$, depending only on $C_i, K, L$ and the constant coefficients of $A$, as in \eqref{eq:D0D1D2ratios}, \eqref{eq:D3C3ratio}, and \eqref{eq:D4C4ratio}; and require that
\begin{equation}
\label{eq:dileqDi}
d_i \leq D_i, \quad i=0,\ldots,4.
\end{equation}
If we define
\begin{equation}
\label{eq:UpperLowerSolutionBounds}
\begin{aligned}
m(x,y) &:= d_0 + d_2y + d_3e^{\ell x} + d_4e^{k y}, \quad (x,y)\in\HH,
\\
M(x,y) &:= D_0 + D_2y + D_3e^{Lx} + D_4e^{Ky}, \quad (x,y)\in\HH,
\end{aligned}
\end{equation}
then $M, m \in H^2(\HH,\fw)$ and $M, m$ obey
\begin{gather}
\label{eq:mleqMonUpperHalfSpace}
m \leq M \quad\hbox{on } \HH,
\\
\label{eq:NleqAMonUpperHalfSpace}
Am \leq n \quad\hbox{and}\quad N \leq AM \quad\hbox{on } \HH.
\end{gather}
If $c_i\leq 0 \leq C_i, i=0,\ldots,4$, then $n\leq 0\leq N$ on $\HH$ and \eqref{eq:mleqMonUpperHalfSpace} may be strengthened to
\begin{equation}
\label{eq:mleqZeroleqMonUpperHalfSpace}
m \leq 0 \leq M \quad\hbox{on } \HH.
\end{equation}
\end{lem}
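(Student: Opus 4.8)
The plan is to verify all parts — $m,M\in H^2(\HH,\fw)$, $m\le M$ on $\HH$, $Am\le n$ and $N\le AM$ on $\HH$, and the strengthened conclusion under the sign hypothesis — by direct computation, exploiting that $m,M,n,N$ are linear combinations of the four ``model functions'' $1$, $y$, $e^{\ell x}$ (or $e^{Lx}$), $e^{ky}$ (or $e^{Ky}$), on which the Heston operator $A$ (with $b_1=0$ by Assumption \ref{assump:HestonCoefficientb1}) acts in a triangular way:
\[
A1=r,\quad Ay=-\kappa\theta+(\kappa+r)y,\quad Ae^{\ell x}=e^{\ell x}\bigl(r-\ell(r-q)+\tfrac12\ell(1-\ell)y\bigr),\quad Ae^{ky}=e^{ky}\bigl(r-\kappa\theta k+k(\kappa-\tfrac12\sigma^2k)y\bigr),
\]
so that $A$ of each model function equals that model function times an affine function of $y$. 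First I would record these identities. The role of hypotheses (1)--(4) (together with \eqref{eq:ExponentialIntegrabilityConstraints}, which already forces $k<\mu$, hence $\kappa-\tfrac12\sigma^2k>0$, and likewise for $K,L$) is precisely to guarantee that, whenever the corresponding coefficient is nonzero, the constant term and slope of these affine factors are strictly positive; for such a factor $\alpha+\beta y$ one has the elementary two-sided bound $\min\{\alpha,\beta\}\,(1+y)\le\alpha+\beta y\le\max\{\alpha,\beta\}\,(1+y)$ for $y\ge0$, which lets $A$ of a model function be squeezed between multiples of $(1+y)$ times that same model function.

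Next, by linearity $Am$ splits into three pieces matching the three pieces of $n$: an affine-in-$y$ piece from $d_0+d_2y$, the piece $d_3\,Ae^{\ell x}$, and the piece $d_4\,Ae^{ky}$. For the first, taking $d_2=c_2/(\kappa+r)$ and $d_0$ solving $rd_0-\kappa\theta d_2=c_0$ (these are the choices \eqref{eq:d0d1d2ratios}; solvable since (1)--(2) give $r>0$ whenever $c_0$ or $c_2$ is nonzero, and $d_0=d_2=0$ otherwise) makes this piece equal to $c_0+c_2y$; for the two exponential pieces, the ratio formulas \eqref{eq:d3c3ratio}, \eqref{eq:d4c4ratio} select $d_3,d_4$ so that $d_i\cdot(\text{affine factor})\le c_i(1+y)$ on $\{y\ge0\}$ by the displayed inequality and the sign of $c_i$ — in particular these formulas are arranged so that $d_3,d_4\le0$ — and summing the three pieces yields $Am\le n$ on $\HH$. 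The inequality $N\le AM$ follows symmetrically from \eqref{eq:D0D1D2ratios}, \eqref{eq:D3C3ratio}, \eqref{eq:D4C4ratio}, the analogous choices giving $D_3,D_4\ge0$. Membership $m,M\in H^2(\HH,\fw)$ is a routine integrability check: each term of $m$, $Dm$, $D^2m$, after squaring and multiplying by $\fw=y^{\beta-1}e^{-\gamma|x|-\mu y}$ and the $H^2$-weights, contributes a factor $e^{2\ell x}$ in $x$ and $y^{\beta-1+j}e^{-(\mu-2k)y}$ ($j\ge0$) in $y$, both integrable over $\HH$ precisely because $\beta>0$ and \eqref{eq:ExponentialIntegrabilityConstraints} holds (and likewise for $M$ with $L,K$).

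Finally, $m\le M$ on $\HH$ follows by comparing the matched pieces: $(D_0-d_0)+(D_2-d_2)y\ge0$ for $y\ge0$ by \eqref{eq:dileqDi}, while $D_3e^{Lx}-d_3e^{\ell x}\ge0$ and $D_4e^{Ky}-d_4e^{ky}\ge0$ because $d_3,d_4\le0\le D_3,D_4$; adding these gives the claim. Under the extra hypothesis $c_i\le0\le C_i$ the ratio formulas additionally force $d_i\le0\le D_i$ for every $i$ (for $i=0$ this uses $d_2\le0$, $c_0\le0$, and $r>0$), so each summand of $m$ is $\le0$ and each summand of $M$ is $\ge0$ on $\HH$, giving $m\le0\le M$; the same sign bookkeeping gives $n\le0\le N$. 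I expect the main obstacle to be exactly this bookkeeping for the exponential terms: checking carefully that (1)--(4) together with \eqref{eq:ExponentialIntegrabilityConstraints} really do make all four affine factors have strictly positive constant term and slope whenever the relevant coefficient is nonzero, so that the $\min/\max$ inequality applies and the constants $d_i,D_i$ from the cited formulas are well defined and carry the signs used in the comparison.
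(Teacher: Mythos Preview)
Your approach is essentially the paper's: compute $A$ on each of the four model functions, solve/bound for the coefficients $d_i,D_i$ so that the three matched pieces of $Am$ (resp.\ $AM$) sit below $n$ (resp.\ above $N$), and check integrability termwise. Your explicit formulas for $A1,Ay,Ae^{\ell x},Ae^{ky}$ are correct (indeed, your $Ae^{ky}$ has the factor $\kappa-\tfrac12\sigma^2 k$, which is what the computation actually gives; the paper's displayed $\kappa-K/2$ appears to drop a $\sigma^2$, though the needed positivity is still guaranteed by \eqref{eq:ExponentialIntegrabilityConstraints}).

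There is one genuine slip. You assert that the formulas \eqref{eq:d3c3ratio}, \eqref{eq:d4c4ratio} are ``arranged so that $d_3,d_4\le 0$'' and then use $d_3,d_4\le 0\le D_3,D_4$ to prove $m\le M$. That sign claim is false in general: $d_3=\min\{c_3/(r-\ell(r-q)),\,2c_3/(\ell(1-\ell))\}$ has the \emph{same} sign as $c_3$ (both denominators are positive), and likewise for $d_4,D_3,D_4$. The inequality $d_i\cdot(\alpha+\beta y)\le c_i(1+y)$ follows from $d_i\alpha\le c_i$ and $d_i\beta\le c_i$, which the $\min$ achieves regardless of the sign of $c_i$; no nonpositivity is needed for $Am\le n$. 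Consequently your termwise argument for $m\le M$ does not go through as written when, say, $c_3>0$: with $0<d_3\le D_3$ and $0<\ell<L$ one can have $d_3e^{\ell x}>D_3e^{Lx}$ for $x\ll 0$. The paper's own proof is terse here (it just invokes \eqref{eq:kleqKellleqL}, \eqref{eq:cileqCi}), and in every application in the paper the lemma is used with $c_i\le 0\le C_i$, under which your sign claim \emph{is} valid and both your argument and the paper's work. So your plan is right in substance; just drop the unjustified general sign claim and, for the unrestricted case, rely directly on the stated hypothesis \eqref{eq:dileqDi} together with a correct termwise comparison (or restrict attention to the signed case, which is all that is used).
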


\begin{rmk}
\label{rmk:ExponentialinxNotabsx}
The bounds in Lemma \ref{lem:PointwiseBoundForSolution} are expressed in terms of $e^{\ell x}, e^{Lx}$ and \emph{not} $e^{\ell |x|}, e^{L|x|}$.
\end{rmk}

Lemma \ref{lem:PointwiseBoundForSolution} immediately yields

\begin{cor}[Upper and lower pointwise envelopes for source functions]
\label{cor:PointwiseBoundForSolution}
Suppose $N, n \in H^2(\HH,\fw)$ are defined as in \eqref{eq:UpperLowerSourceFunctionBounds}. If a function $f\in L^2(\sO,\fw)$ obeys
\begin{equation}
\label{eq:PointwiseBoundForSourceFunction}
n \leq f \leq N \quad\hbox{a.e. on } \sO,
\end{equation}
and $M, m \in H^2(\HH,\fw)$ are defined as in \eqref{eq:UpperLowerSolutionBounds}, then $f$ obeys
\begin{equation}
\label{eq:MatchingPointwiseBoundForSourceFunction}
Am \leq f \leq AM \quad\hbox{a.e. on } \sO.
\end{equation}
\end{cor}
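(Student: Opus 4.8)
The plan is to deduce Corollary~\ref{cor:PointwiseBoundForSolution} directly from Lemma~\ref{lem:PointwiseBoundForSolution} by a one-line chain of pointwise inequalities, so the work reduces entirely to bookkeeping of hypotheses. First I would observe that the phrases ``$N,n\in H^2(\HH,\fw)$ are defined as in \eqref{eq:UpperLowerSourceFunctionBounds}'' and ``$M,m\in H^2(\HH,\fw)$ are defined as in \eqref{eq:UpperLowerSolutionBounds}'' presuppose precisely the hypotheses of Lemma~\ref{lem:PointwiseBoundForSolution}: the ordering constraints \eqref{eq:kleqKellleqL}--\eqref{eq:cileqCi}, the exponential integrability constraints \eqref{eq:ExponentialIntegrabilityConstraints} (which are exactly what place $n,N,m,M$ in the weighted Sobolev space), the structural conditions (1)--(4) on $k,K,\ell,L$ relative to the constant coefficients of $A$, and the prescribed choices of $d_i,D_i$ in \eqref{eq:d0d1d2ratios}--\eqref{eq:d4c4ratio} together with \eqref{eq:dileqDi}. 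Hence Lemma~\ref{lem:PointwiseBoundForSolution} applies verbatim and yields \eqref{eq:mleqMonUpperHalfSpace} and, crucially, \eqref{eq:NleqAMonUpperHalfSpace}, namely $Am\le n$ and $N\le AM$ on $\HH$.

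The main (and essentially only) step is then to restrict these two inequalities from $\HH$ to the subdomain $\sO\subseteq\HH$ and to concatenate them with the hypothesis \eqref{eq:PointwiseBoundForSourceFunction} on $f$: since $n\le f\le N$ a.e.\ on $\sO$, one obtains
\[
Am\le n\le f\le N\le AM \quad\hbox{a.e.\ on }\sO,
\]
which is exactly \eqref{eq:MatchingPointwiseBoundForSourceFunction}, completing the argument.

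I do not anticipate any real obstacle; the only point worth a sentence of justification is that every comparison in the display above is a genuine inequality between $L^2(\sO,\fw)$-functions. Indeed $m,M\in H^2(\HH,\fw)$ restrict to elements of $H^2(\sO,\fw)$, and, as noted immediately after Definition~\ref{defn:H2WeightedSobolevSpaces}, the Heston operator $A$ maps $H^2(\sO,\fw)$ boundedly into $L^2(\sO,\fw)$, so $Am,AM\in L^2(\sO,\fw)$; likewise the restrictions to $\sO$ of the explicit functions $n,N$ from \eqref{eq:UpperLowerSourceFunctionBounds} lie in $L^2(\sO,\fw)$ by \eqref{eq:ExponentialIntegrabilityConstraints}, so the ``a.e.'' statements are unambiguous. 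With that remark in place the proof is nothing more than the inequality chain above.
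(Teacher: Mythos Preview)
Your proposal is correct and follows exactly the same approach as the paper: the paper's proof is the single sentence ``The inequalities \eqref{eq:MatchingPointwiseBoundForSourceFunction} follow from \eqref{eq:RawPointwiseBoundForSourceFunction} and \eqref{eq:NleqAMonUpperHalfSpace},'' which is precisely your chain $Am\le n\le f\le N\le AM$ obtained by combining the hypothesis on $f$ with the conclusion of Lemma~\ref{lem:PointwiseBoundForSolution}. Your additional remarks on why $Am,AM\in L^2(\sO,\fw)$ are accurate but not strictly necessary here.
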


\begin{proof}
The inequalities \eqref{eq:MatchingPointwiseBoundForSourceFunction} follow from \eqref{eq:RawPointwiseBoundForSourceFunction} and \eqref{eq:NleqAMonUpperHalfSpace}.
\end{proof}

\begin{exmp}[Affine upper and lower bounds for $f$ and $u$]
Suppose that there exist non-negative constants $k_i,m_i, i=0,1$ such that
$$
-rk_0 - k_1(r+\kappa)y \leq f(x,y) \leq rm_0 + m_1(r+\kappa)y \quad\hbox{a.e. } (x,y)\in \sO,
$$
Then Lemma \ref{lem:PointwiseBoundForSolution} (see \eqref{eq:d0d1d2ratios} and \eqref{eq:D0D1D2ratios}) and Theorem \ref{thm:ExistenceUniquenessEllipticHeston_Improved} imply that
$$
-(k_0+k_1\kappa\theta/r) - k_1y \leq u(x,y) \leq (m_0 + m_1\kappa\theta/r) + m_1y \quad\hbox{a.e. } (x,y) \in \sO.
$$
This observation is often useful in applications. \qed
\end{exmp}

\begin{proof}[Proof of Lemma \ref{lem:PointwiseBoundForSolution}]
From the Definition \ref{defn:H2WeightedSobolevSpaces} of $H^2(\sO,\fw)$ and the definition \eqref{eq:HestonWeight} of $\fw$, we see that $d_0+d_2y \in H^2(\sO,\fw)$ while $e^{Lx} \in H^2(\sO,\fw)$ when $2L < \gamma$, and, recalling from \eqref{eq:DefnBetaMu} that $\mu = 2\kappa/\sigma^2$, we have $e^{Ky} \in H^2(\sO,\fw)$ when $2K < 2\kappa/\sigma^2$; similarly, for the terms comprising $m$. Hence $M, m\in H^2(\sO,\fw)$.

From \eqref{eq:OperatorHestonIntro}, we recall that
$$
Au = -\frac{y}{2}\left(u_{xx} + 2\rho\sigma u_{xy} + \sigma^2 u_{yy}\right) - (r-q-y/2)u_x - \kappa(\theta-y)u_y + ru, \quad u \in C^\infty(\HH).
$$
If $m(x,y) = d_0+d_2y$, then
$$
Am(x,y) = - \kappa(\theta-y)d_2 + r(d_0 + d_2y),
$$
and so
$$
Am(x,y) = (rd_0 - d_2\kappa\theta) + (\kappa+r)d_2y.
$$
Setting $Am(x,y) = c_0 + c_2y$, we obtain
\begin{align*}
rd_0 - d_2\kappa\theta &= c_0,
\\
(\kappa+r)d_2 &= c_2,
\end{align*}
and thus,
\begin{equation}
\label{eq:d0d1d2ratios}
\begin{aligned}
d_2 &:= \frac{c_2}{\kappa+r},
\\
d_0 &:= \frac{1}{r}\left(c_0 + \frac{c_2\kappa\theta}{\kappa+r}\right), \quad\hbox{if } r>0.
\end{aligned}
\end{equation}
Similarly, if $M(x,y) = D_0 + D_2y$ and setting $AM(x,y) = C_0 + C_2y$, we obtain, if $r > 0$,
\begin{equation}
\label{eq:D0D1D2ratios}
\begin{aligned}
D_2 &:= \frac{C_2}{\kappa+r},
\\
D_0 &:= \frac{1}{r}\left(C_0 + \frac{C_2\kappa\theta}{\kappa+r}\right), \quad\hbox{if } r>0.
\end{aligned}
\end{equation}
This completes the derivation of the affine bounds.

If $M(x,y) = D_3e^{Lx}, x\in\RR$ then
\begin{align*}
AM(x,y) &= -\frac{y}{2}D_3L^2e^{Lx} - D_3(r-q-y/2)Le^{Lx} + D_3re^{Lx}
\\
&= D_3(r - L(r-q))e^{Lx} + \frac{y}{2}D_3L(1-L)e^{Lx}.
\end{align*}
If $r > L(r-q)$ and $0<L<1$, then $AM(x,y) \geq C_3(1+y)e^{Lx}, \forall(x,y)\in\HH$, provided
$$
D_3 \geq \frac{C_3}{r - L(r-q)} \hbox{ and } D_3 \geq \frac{2C_3}{L(1-L)}
$$
and thus we may choose
\begin{equation}
\label{eq:D3C3ratio}
D_3 := \max\left\{\frac{C_3}{r - L(r-q)}, \frac{2C_3}{L(1-L)}\right\}.
\end{equation}
This yields the upper bound in $e^{Lx}$. Similarly, $Am(x,y) \leq c_3(1+y)e^{\ell x}, \forall(x,y)\in\HH$, provided
\begin{equation}
\label{eq:d3c3ratio}
d_3 := \min\left\{\frac{c_3}{r - \ell(r-q)}, \frac{2c_3}{\ell(1-\ell)}\right\},
\end{equation}
where $r > \ell(r-q)$ and $0<\ell<1$.

If $M(x,y) = D_4e^{Ky}$ then
\begin{align*}
AM &= -\frac{y}{2}D_4K^2e^{Ky} - D_4\kappa(\theta-y)Ke^{Ky} + D_4re^{Ky}
\\
&= D_4(r - \kappa\theta K)e^{Ky} + yD_4K(\kappa-K/2)e^{Ky}.
\end{align*}
If $0<K<\min\{2\kappa, r/\kappa\theta\}$, then $AM(x,y) \geq C_4(1+y)e^{Ky}, \forall(x,y)\in\HH$, provided
$$
D_4 \geq \frac{C_4}{r - \kappa\theta K} \hbox{ and } D_4 \geq \frac{C_4}{K(\kappa-K/2)},
$$
and thus we may choose
\begin{equation}
\label{eq:D4C4ratio}
D_4 := \max\left\{\frac{C_4}{r - \kappa\theta K}, \frac{C_4}{K(\kappa-K/2)}\right\}.
\end{equation}
This yields the upper bound in $e^{Ky}$. Similarly, $Am(x,y) \leq c_4(1+y)e^{ky}, \forall(x,y)\in\HH$, provided
\begin{equation}
\label{eq:d4c4ratio}
d_4 := \min\left\{\frac{c_4}{r - \kappa\theta k}, \frac{c_4}{k(\kappa-k/2)}\right\},
\end{equation}
where $0<k<\min\{2\kappa, r/\kappa\theta\}$. This completes the derivation of the exponential bounds.

By adding the preceding inequalities, we see that $Am \leq n$ and $N \leq AM$ on $\HH$. The conditions \eqref{eq:kleqKellleqL} and \eqref{eq:cileqCi} ensure that $M, m$ obey \eqref{eq:mleqMonUpperHalfSpace}. If in addition, $c_i\leq 0 \leq C_i, i=0,\ldots,4$, then $n\leq 0\leq N$ on $\HH$, $d_i\leq 0 \leq D_i, i=0,\ldots,4$, and $M, m$ obey \eqref{eq:mleqZeroleqMonUpperHalfSpace}. This completes the proof of the lemma.
\end{proof}

\subsection{Existence and uniqueness of solutions to the non-coercive variational equation}
\label{subsec:NoncoerciveVarInequality}
Because the bilinear form \eqref{eq:HestonWithKillingBilinearForm} is not necessarily coercive, we shall require that the associated operator $A$ obeys the weaker ``non-coercive'' condition (compare \cite[Equation (3.1.6)]{Bensoussan_Lions}) in order to establish existence and uniqueness of solutions to non-coercive variational equalities and inequalities.

\begin{hyp}[Non-coercive condition]
\label{hyp:NoncoerciveHeston}
The coefficient $r$ in the definition \eqref{eq:OperatorHestonIntro} of $A$ obeys
\begin{equation}
\label{eq:NoncoerciveHeston}
r > 0.
\end{equation}
\end{hyp}

We need additional conditions to ensure uniqueness of a solution to the non-coercive variational equation.

\begin{hyp}[Auxiliary condition for uniqueness]
\label{hyp:AuxBoundUniquenessSolutionsNoncoerciveEquation}
There exists a $\varphi \in H^2(\sO,\fw)$ obeying
\begin{gather}
\label{eq:ShiftFunctionBound}
A\varphi \geq 0 \quad \hbox{ a.e. on }\sO,
\\
\label{eq:PositiveShiftedSourceFunction}
A(m+\varphi) > 0 \quad\hbox{ a.e. on }\sO,
\\
\label{eq:varphiIneqOnDomain}
\varphi \geq 0 \quad\hbox{on }\sO,
\\
\label{eq:OneplusyVarphiInL2}
(1+y)\varphi \in L^2(\sO,\fw),
\\
\label{eq:Sqrt1plusyAvarphiL2}
(1+y)^{1/2}A\varphi \in L^2(\sO,\fw),
\\
\label{eq:SupSourceSolutionRatioBoundRaw}
\esssup_{(x,y)\in\sO}\frac{(1+y)(M+\varphi)(x,y)}{A(m+\varphi)(x,y)} < \infty,
\end{gather}
where the functions $M, m \in H^2(\sO,\fw)$ are as in Hypothesis \ref{hyp:UpperLowerBoundsSolutionsCoercive}
\end{hyp}

Observe that if $\varphi$ is as in in Hypothesis \ref{hyp:AuxBoundUniquenessSolutionsNoncoerciveEquation}, then $\varphi \in C_{\textrm{loc}}(\sO\cup\Gamma_1)$ by Lemma \ref{lem:H2SobolevEmbedding} and thus \eqref{eq:varphiIneqOnDomain} yields
\begin{equation}
\label{eq:ShiftFunctionTraceBound}
\varphi \geq 0 \quad\hbox{on }\Gamma_1.
\end{equation}

By analogy with \cite[Theorem 2.5.2]{Bensoussan_Lions}, we have

\begin{thm}[Existence and uniqueness for solutions to the non-coercive variational equation]
\label{thm:ExistenceUniquenessEllipticHeston_Improved}
Assume Hypothesis \ref{hyp:NoncoerciveHeston} holds. Suppose there are functions $M, m \in H^2(\sO,\fw)$ obeying \eqref{eq:SourceFunctionTraceBounds}, \eqref{eq:mMIneqOnDomain}, and \eqref{eq:SourceFunctionBounds}. If $f\in H$ obeys
\begin{equation}
\label{eq:fBounds}
Am \leq f \leq AM \quad\hbox{a.e. on }\sO,
\end{equation}
then there exists a solution $u\in V$ to Problem \ref{prob:HestonWeakMixedBVPHomogeneous} and $u$ obeys
\begin{equation}
\label{eq:uBounds}
m \leq u \leq M \quad\hbox{a.e. on }\sO.
\end{equation}
Moreover, if there is a function $\varphi \in H^2(\sO,\fw)$ obeying Hypothesis \ref{hyp:AuxBoundUniquenessSolutionsNoncoerciveEquation} and
the domain, $\sO$, obeys Hypothesis \ref{hyp:DomainCombinedCondition}, then the solution $u$ is unique.
\end{thm}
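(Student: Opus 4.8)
The plan is to prove Theorem~\ref{thm:ExistenceUniquenessEllipticHeston_Improved} in two halves: existence (with the pointwise bounds \eqref{eq:uBounds}) and uniqueness. For \textbf{existence}, I would pass through the coercive operator $A_\lambda = A + \lambda(1+y)$ of Definition~\ref{defn:CoerciveHestonBilinearForm} and a fixed-point/iteration scheme in $\lambda$. Concretely, define a sequence $\{u_n\}\subset V$ by $u_0 := m$ (or any convenient starter in the interval) and $u_{n+1}$ the unique solution, guaranteed by Theorem~\ref{thm:ExistenceUniquenessEllipticCoerciveHeston}, of
\begin{equation*}
a_\lambda(u_{n+1}, v) = (f + \lambda(1+y)u_n, v)_H, \quad \forall v \in V.
\end{equation*}
Using the comparison principle (Corollary~\ref{cor:ExistenceUniquenessEllipticCoerciveHeston} and Lemma~\ref{lem:ComparisionPrincipleCoerciveHestonVarEquality}) together with the hypotheses \eqref{eq:SourceFunctionBounds} and \eqref{eq:fBounds}, I would show by induction that $m \le u_n \le u_{n+1} \le M$ a.e.\ on $\sO$ (monotone increasing and bounded above): the key is that $f + \lambda(1+y)u_n \le f + \lambda(1+y)M \le A_\lambda M$ when $u_n\le M$, and symmetrically at the bottom, with monotonicity coming from $u_n\le u_{n+1}\implies f+\lambda(1+y)u_n \le f+\lambda(1+y)u_{n+1}$. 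Hence $u_n \uparrow u$ pointwise a.e.\ with $m\le u\le M$; since $|u_n|\le |m|+|M|\in L^2(\sO,\fw)$ (finite volume, Remark~\ref{rmk:FiniteVolumeDomain}, is not even needed here because $m,M\in H^2\subset L^2$), dominated convergence gives $u_n\to u$ in $H$, and the a priori bound \eqref{eq:VariationalEqualityCoerciveBoundfH} applied to $u_{n+1}-u_n$ (source $\lambda(1+y)(u_n-u_{n-1})$) shows $\{u_n\}$ is $V$-bounded, so a subsequence converges weakly in $V$ to a limit which must be $u$; passing to the limit in the variational identity (the right side converges because $\lambda(1+y)u_n\to\lambda(1+y)u$ in $H$ — here one does need $(1+y)M,(1+y)m$, or at least $(1+y)u_n$, controlled, which follows since $m\le u_n\le M$ forces $(1+y)|u_n|\le (1+y)(|m|+|M|)$, and this is in $L^2(\sO,\fw)$ provided $(1+y)M,(1+y)m\in L^2$ — note this integrability is part of the envelope hypotheses in the main theorems, so I would either invoke it or restrict to that case) yields $a_\lambda(u,v) - \lambda((1+y)u,v)_H = (f,v)_H$, i.e.\ $a(u,v) = (f,v)_H$ for all $v\in V$, which is exactly Problem~\ref{prob:HestonWeakMixedBVPHomogeneous}.

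For \textbf{uniqueness}, suppose $u_1, u_2 \in V$ both solve the variational equation with bounds $m\le u_i\le M$. Set $w := u_1 - u_2$, so $a(w,v) = 0$ for all $v\in V$ and $m - M \le w \le M - m$. The obstruction is that $a$ is not coercive, so $a(w,w)=0$ does not immediately give $w=0$. This is where the barrier function $\varphi$ of Hypothesis~\ref{hyp:AuxBoundUniquenessSolutionsNoncoerciveEquation} enters, following the Bensoussan--Lions device (their Theorem~2.5.2). The idea is to compare $w$ against a multiple $\eps(m+\varphi - \text{something})$ of the super/subsolution: because $A(m+\varphi) > 0$ a.e.\ and the ratio $(1+y)(M+\varphi)/A(m+\varphi)$ is essentially bounded by $K$, for any $\eps>0$ the function $\eps K^{-1}(\cdots)$, no — more precisely one shows that for every $\eps>0$, $|w| \le \eps(M + \varphi)$ a.e.: test the equation $a(w, v)=0$ with $v = (w - \eps(M+\varphi))^+ \in V$ (note $v\ge 0$, and $v\in V$ by Lemma~\ref{lem:SobolevSpaceClosedUnderMaxPart}, and $v$ vanishes on $\Gamma_1$ in the trace sense since $w\in V$ and $M+\varphi\ge 0$ there), obtaining after adding and subtracting $a(\eps(M+\varphi), v)$ an inequality of the form $a(v,v) \le a(\eps(M+\varphi) - w, v) + \dots$; using the G\r arding inequality \eqref{eq:HestonBilinearFormGarding}, the bound $(1+y)(M+\varphi)\le K\, A(m+\varphi)$ from \eqref{eq:SupSourceSolutionRatioBoundRaw}, and the fact that $A\varphi\ge 0$, $A(m+\varphi)>0$, one absorbs the bad $\|(1+y)^{1/2}v\|_H^2$ term into a term involving $A(m+\varphi)$ which has a favorable sign on the set $\{v>0\}$, forcing $\|v\|_V = 0$, hence $w\le \eps(M+\varphi)$ a.e.; letting $\eps\downarrow 0$ gives $w\le 0$, and by symmetry (using that $m+\varphi$ plays the analogous role on the other side, with $m\le u_i\le M$ and $\varphi\ge 0$) $w\ge 0$, so $w=0$.

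The \textbf{main obstacle} is the uniqueness argument: getting the test-function computation with $v=(w-\eps(M+\varphi))^+$ to close requires carefully pairing the non-coercive (zeroth-order, $(1+y)$-weighted) defect in the G\r arding estimate against the strictly positive quantity $A(m+\varphi)$, and this is precisely what Hypothesis~\ref{hyp:AuxBoundUniquenessSolutionsNoncoerciveEquation} is engineered to permit — so the real work is bookkeeping the integration-by-parts identity $a(w,\eps(M+\varphi)) = (\text{boundary terms}) + (A(\eps(M+\varphi)), w)$ via Lemma~\ref{lem:HestonIntegrationByParts} (where the $\Gamma_0$-boundary term vanishes by the weighted-Neumann property, Lemma~\ref{lem:HestonWeightedNeumannBoundaryProperty}, which needs Hypothesis~\ref{hyp:HestonDomainNearGammaZero}, and the $\Gamma_1$-term vanishes because $v$ has zero trace there, which needs Hypothesis~\ref{hyp:GammeOneExtensionProperty}; together these are Hypothesis~\ref{hyp:DomainCombinedCondition} invoked in the theorem). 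Once the identity is in place, the integrability conditions \eqref{eq:OneplusyVarphiInL2} and \eqref{eq:Sqrt1plusyAvarphiL2} guarantee every integral appearing is finite, and the strict inequality in \eqref{eq:PositiveShiftedSourceFunction} is what upgrades "$\le 0$" to "$= 0$" after the $\eps\downarrow 0$ limit. I would present existence first (self-contained modulo the coercive theorem and comparison principle), then uniqueness as a separate lemma-style argument citing Bensoussan--Lions for the structure.
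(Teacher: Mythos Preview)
Your existence argument is essentially the paper's: both iterate through the coercive problem $a_\lambda(u_{n+1},v)=(f+\lambda(1+y)u_n,v)_H$, prove monotonicity and the envelope bounds by induction via the coercive comparison principle, extract a weak $V$-limit, and pass to the limit. The paper starts at $u_0=0$ rather than $u_0=m$, and handles the limit on the right-hand side via weak $V$-convergence alone (so the extra integrability $(1+y)M,(1+y)m\in L^2$ you worry about is not actually needed for existence here---$(1+y)^{1/2}M,(1+y)^{1/2}m\in L^2$ already follows from $M,m\in H^2(\sO,\fw)$).

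Your uniqueness argument, however, has a genuine gap. You test $a(w,v)=0$ with $v=(w-\eps(M+\varphi))^+$, obtain $a(v,v)=-\eps(A(M+\varphi),v)_H\le 0$, and then invoke G\r arding: $\tfrac{C_2}{2}\|v\|_V^2\le C_2\int(1+y)v^2\,\fw-\eps\int A(M+\varphi)\,v\,\fw$. To force $v=0$ you need the right-hand side nonpositive, i.e.\ $C_2(1+y)v\le \eps\,A(M+\varphi)$ on $\{v>0\}$. The ratio bound \eqref{eq:SupSourceSolutionRatioBoundRaw} gives $(1+y)(M+\varphi)\le K\,A(m+\varphi)\le K\,A(M+\varphi)$, so even granting $v\le M+\varphi$ you only get $C_2(1+y)v\le C_2K\,A(M+\varphi)$, which forces $\eps\ge C_2K$---the opposite of ``$\eps$ arbitrarily small.'' The absorption you describe does not close. (You also assume both $u_i$ satisfy $m\le u_i\le M$, which the paper explicitly flags as an \emph{a posteriori} bound not available for arbitrary solutions; see the Remark following the uniqueness proof.)

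The paper's uniqueness argument is structurally different and is the actual Bensoussan--Lions device you cite: after reducing (via the auxiliary function $u_\varphi$ solving $Au_\varphi=A\varphi$) to the case $f>0$, $u\ge 0$, one supposes $u_1\not\le u_2$, sets $\alpha_0:=\min\{1,u_2/u_1\}$, $\bar\alpha_0:=\essinf\alpha_0<1$, and uses the ratio bound $\esssup(1+y)u_1/f<\infty$ (which follows from \eqref{eq:SupSourceSolutionRatioBoundRaw} via $u_1\le\tilde M$, $f\ge A\tilde m$) to find a \emph{constant} $\beta_0\in(\bar\alpha_0,1)$ with $\beta_0(f+\lambda(1+y)u_1)\le f+\lambda(1+y)u_2$. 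Then $\beta_0 u_1$ and $u_2$ solve coercive equations with ordered right-hand sides, so Corollary~\ref{cor:ExistenceUniquenessEllipticCoerciveHeston} gives $\beta_0 u_1\le u_2$, contradicting the definition of $\bar\alpha_0$. The role of the ratio hypothesis is to produce this scaling constant $\beta_0$, not to absorb a G\r arding defect.
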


\begin{rmk}[Sufficient conditions for existence and uniqueness in Theorem \ref{thm:ExistenceUniquenessEllipticHeston_Improved}]
\label{rmk:ExistenceUniquenessEllipticHestonSufficientConditions}
Lemma \ref{lem:PointwiseBoundForSolution} may be used to provide non-trivial examples of $M, m \in H^2(\sO,\fw)$ such that \eqref{eq:SourceFunctionTraceBounds}, \eqref{eq:mMIneqOnDomain}, and \eqref{eq:SourceFunctionBounds} hold and non-trivial examples of $f \in L^2(\sO,\fw)$ such that \eqref{eq:fBounds} holds; see the proof of Lemma \ref{lem:VIUniquenessEllipticHestonPostivefuSufficientConditions} for details. Lemma \ref{lem:VIUniquenessEllipticHestonPostivefuSufficientConditions} may be used to provide non-trivial examples of $\varphi \in H^2(\sO,\fw)$ obeying Hypothesis \ref{hyp:AuxBoundUniquenessSolutionsNoncoerciveEquation}.
\end{rmk}

\begin{rmk}[Point-wise bounds obeyed by the unique solution]
The pointwise bounds \eqref{eq:uBounds} for the solution are a posteriori bounds because they are consequence of the proof of existence for Theorem \ref{thm:ExistenceUniquenessEllipticHeston_Improved} rather than a priori bounds satisfied by any solution to Problem \ref{prob:HestonWeakMixedBVPHomogeneous}.
\end{rmk}

\begin{cor}[A posteriori comparison principle for solutions to the variational equation]
\label{cor:ExistenceUniquenessEllipticHeston}
Assume the hypotheses of Theorem
\ref{thm:ExistenceUniquenessEllipticHeston_Improved}. If $f_1,f_2\in H$ obey
\eqref{eq:fBounds} and $u_1,u_2\in V$ are the unique solutions to Problem
\ref{prob:HestonWeakMixedBVPHomogeneous} with $f$ replaced by $f_1,f_2$,
respectively, then $f_2\geq f_1\implies u_2\geq u_1$ a.e. on $\sO$.
\end{cor}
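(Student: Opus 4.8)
The plan is to run the monotone iteration through coercive problems that underlies the proof of Theorem \ref{thm:ExistenceUniquenessEllipticHeston_Improved} and to observe that this iteration depends monotonically on the source. A direct argument --- setting $w := u_2-u_1 \in V$, so that $a(w,v) = (f_2-f_1,v)_H$ for all $v\in V$, and testing with $v = w^-$ --- will not work, since $a(\cdot,\cdot)$ is not coercive and the term $\|(1+y)^{1/2}w^-\|_H^2$ produced by the G\r{a}rding inequality \eqref{eq:HestonBilinearFormGarding} cannot be absorbed; so I would instead pass through the coercive bilinear form $a_\lambda(\cdot,\cdot)$, for which the comparison results Corollary \ref{cor:ExistenceUniquenessEllipticCoerciveHeston} and Lemma \ref{lem:ComparisionPrincipleCoerciveHestonVarEquality} are available.

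For $i = 1,2$, following the proof of Theorem \ref{thm:ExistenceUniquenessEllipticHeston_Improved}, I would set $u_i^{(0)} := m$ and let $u_i^{(n+1)} \in V$ be the solution (Theorem \ref{thm:ExistenceUniquenessEllipticCoerciveHeston}) of the coercive variational equation
\[
a_\lambda\bigl(u_i^{(n+1)},v\bigr) = \bigl(f_i + \lambda(1+y)u_i^{(n)},\,v\bigr)_H, \quad \forall v \in V.
\]
Using \eqref{eq:fBounds}, \eqref{eq:SourceFunctionBounds}, and $m \leq M$, the refined comparison principle Lemma \ref{lem:ComparisionPrincipleCoerciveHestonVarEquality} gives $m = u_i^{(0)} \leq u_i^{(1)} \leq M$, and then, inductively via Corollary \ref{cor:ExistenceUniquenessEllipticCoerciveHeston}, $u_i^{(n)} \leq u_i^{(n+1)} \leq M$ for all $n$. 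Since $m, M \in L^2(\sO,\fw)$, monotone convergence yields $u_i^{(n)} \uparrow u_i^\ast$ in $L^2(\sO,\fw)$ with $m \leq u_i^\ast \leq M$; the G\r{a}rding estimate bounds $\{u_i^{(n)}\}$ in $V$, and passing to the limit in the variational identities shows $u_i^\ast$ solves Problem \ref{prob:HestonWeakMixedBVPHomogeneous} with source $f_i$. By the uniqueness part of Theorem \ref{thm:ExistenceUniquenessEllipticHeston_Improved}, in force under the standing hypotheses, $u_i^\ast = u_i$.

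Then I would compare the two iterations. They share the initial term $u_1^{(0)} = m = u_2^{(0)}$. Assuming $u_1^{(n)} \leq u_2^{(n)}$ a.e. on $\sO$, and combining with $f_1 \leq f_2$ a.e., one gets $f_1 + \lambda(1+y)u_1^{(n)} \leq f_2 + \lambda(1+y)u_2^{(n)}$ a.e. on $\sO$, so Corollary \ref{cor:ExistenceUniquenessEllipticCoerciveHeston} applied to the coercive form $a_\lambda$ (its proof applying verbatim to the right-hand sides occurring here) yields $u_1^{(n+1)} \leq u_2^{(n+1)}$ a.e. on $\sO$. Hence $u_1^{(n)} \leq u_2^{(n)}$ a.e. on $\sO$ for every $n$, and letting $n \to \infty$ in $L^2(\sO,\fw)$ (along a subsequence converging a.e. if needed) gives $u_1 = u_1^\ast \leq u_2^\ast = u_2$ a.e. on $\sO$.

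The main obstacle will be bookkeeping rather than a genuine difficulty: one must check that each iteration step is well-posed in $V$ (the source $f_i + \lambda(1+y)u_i^{(n)}$ defines a bounded functional on $V$ because $(1+y)^{1/2}u_i^{(n)} \in L^2(\sO,\fw)$) and that the monotone limit $u_i^\ast$ really is the \emph{unique} solution $u_i$ named in the statement; both are settled exactly as in the proof of Theorem \ref{thm:ExistenceUniquenessEllipticHeston_Improved}. Everything else is a routine induction built on the coercive comparison principle.
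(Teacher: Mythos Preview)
Your approach is correct but differs from the paper's. The paper exploits linearity directly: setting $w := u_2 - u_1 \in V$ and $f := f_2 - f_1$, one has $a(w,v) = (f,v)_H$ for all $v \in V$, with $0 \leq f \leq A(M-m)$ a.e.\ on $\sO$. Theorem \ref{thm:ExistenceUniquenessEllipticHeston_Improved}, applied with envelope functions $0$ and $M-m$ in place of $m$ and $M$, yields a solution obeying $0 \leq \tilde w \leq M-m$; uniqueness then forces $w = \tilde w \geq 0$, i.e.\ $u_2 \geq u_1$. This is a three-line argument once Theorem \ref{thm:ExistenceUniquenessEllipticHeston_Improved} is in hand.

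Your route --- running two parallel monotone iterations and comparing them level by level via the coercive comparison principle (Corollary \ref{cor:ExistenceUniquenessEllipticCoerciveHeston}) --- is more hands-on but equally valid. It has the modest advantage that it invokes uniqueness only for the original problems with sources $f_1$, $f_2$ (exactly what the corollary hypothesizes), rather than needing to verify that Hypothesis \ref{hyp:AuxBoundUniquenessSolutionsNoncoerciveEquation} transfers to the difference problem with the new envelope pair $(0, M-m)$. One minor remark: the iteration in the paper's proof of Theorem \ref{thm:ExistenceUniquenessEllipticHeston_Improved} starts from $u_0 = 0$, not from $m$; your variant with $u_i^{(0)} = m$ works as well (since $m$ enters only through the source term $f_i + \lambda(1+y)m$ at the first step, and Lemma \ref{lem:ComparisionPrincipleCoerciveHestonVarEquality} still gives $m \leq u_i^{(1)} \leq M$), but it is a modification rather than a literal reproduction of that proof.
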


\begin{proof}
We see that $f := f_2-f_1 \in H$ obeys
$$
0 \leq f \leq A(M-m) \quad\hbox{a.e. on }\sO,
$$
and $u := u_2-u_1$ solves $a(u,v)=(f,v)_H$, $\forall v\in V$. Since $M-m \geq 0
\hbox{ on }\Gamma_1$, Theorem
\ref{thm:ExistenceUniquenessEllipticHeston_Improved} (with $m$ replaced by $0$
and $M$ replaced by $M-m$) implies that $u$ is the unique solution to
$a(u,v)=(f,v)_H$, $\forall v\in V$, and thus obeys
$$
0 \leq u \leq M-m \quad\hbox{a.e. on }\sO,
$$
and hence $u_2\geq u_1$ a.e. on $\sO$.
\end{proof}

\begin{lem}[A priori estimate for solutions to the variational equation]
\label{lem:ExistenceUniquenessEllipticHestonAprioriEstimate}
If $f\in H$ and $u\in V$ is a solution to Problem \ref{prob:HestonWeakMixedBVPHomogeneous}, then
\begin{equation}
\label{eq:VariationalEqualityHestonBoundH}
\|u\|_V \leq C\left(|f|_H + |(1+y)u|_H\right),
\end{equation}
where $C=\nu_1^{-1}+\lambda_0$ and $\nu_1, \lambda$ are the constants in Lemma \ref{lem:LargeEnoughLambdaToEnsureCoercivity}.
\end{lem}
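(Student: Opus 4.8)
The plan is to reduce the non-coercive variational equation to the coercive one treated by Corollary \ref{cor:ExistenceUniquenessEllipticCoerciveHestonApriori}. First I would dispose of a trivial case: if $(1+y)u \notin L^2(\sO,\fw)$, then the right-hand side of \eqref{eq:VariationalEqualityHestonBoundH} is infinite and there is nothing to prove, so I may assume $(1+y)u \in H$. Starting from the hypothesis that $u \in V$ solves Problem \ref{prob:HestonWeakMixedBVPHomogeneous}, i.e.\ $a(u,v) = (f,v)_H$ for all $v \in V$, I would add $\lambda_0\big((1+y)u,v\big)_H$ to both sides and invoke the definition \eqref{eq:BilinearFormCoerciveHeston} of $a_\lambda$ together with Assumption \ref{assump:CoerciveHeston} ($\lambda = \lambda_0$) to obtain
\[
a_\lambda(u,v) = \big(f + \lambda_0(1+y)u,\, v\big)_H, \quad \forall v \in V.
\]
Thus $u \in V$ is a solution to \eqref{eq:VariationalEqualityCoercive} with source $\tilde f := f + \lambda_0(1+y)u \in H$.

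Next I would apply the a priori estimate Corollary \ref{cor:ExistenceUniquenessEllipticCoerciveHestonApriori} to this solution, giving $\|u\|_V \le \nu_1^{-1}|\tilde f|_H$, and then use the triangle inequality $|\tilde f|_H \le |f|_H + \lambda_0|(1+y)u|_H$ to conclude
\[
\|u\|_V \le \nu_1^{-1}|f|_H + \nu_1^{-1}\lambda_0\,|(1+y)u|_H.
\]
To match the stated constant, note from \eqref{eq:DefnNu1Lambda0} that $\nu_1^{-1}\lambda_0 = 2$, and $2 \le \nu_1^{-1}+\lambda_0$ since $(\nu_1^{-1}-1)(\lambda_0-1) = 3 - (\nu_1^{-1}+\lambda_0) \le 3 - 2\sqrt{\nu_1^{-1}\lambda_0} = 3 - 2\sqrt{2} < 1$; hence both coefficients on the right are at most $C := \nu_1^{-1}+\lambda_0$, which yields \eqref{eq:VariationalEqualityHestonBoundH}. (If one does not wish to track the arithmetic, the cruder choice $C = \max\{\nu_1^{-1},\, \nu_1^{-1}\lambda_0\}$ works immediately.)

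I do not expect any genuine obstacle here: the lemma is essentially a corollary of the coercive a priori estimate once the zeroth-order term $\lambda_0(1+y)u$ is transferred to the source side. The only points needing (routine) attention are the square-integrability of $(1+y)u$ — handled by observing that the asserted bound is vacuous otherwise — and the minor bookkeeping for the precise value of $C$.
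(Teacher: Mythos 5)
Your proposal is correct and follows essentially the same route as the paper: rewrite the non-coercive equation as $a_\lambda(u,v)=(f+\lambda(1+y)u,v)_H$ and apply the coercive a priori estimate \eqref{eq:VariationalEqualityCoerciveBoundfH}. The extra arithmetic verifying that $\nu_1^{-1}$ and $\nu_1^{-1}\lambda_0=2$ are both bounded by $C=\nu_1^{-1}+\lambda_0$ is a harmless addition the paper leaves implicit.
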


\begin{proof}
Since $u$ solves \eqref{eq:IntroHestonWeakMixedProblemHomogeneous}, then
$$
a_{\lambda}(u,v) = a(u,v) + \lambda((1+y)u,v)_H = (f+\lambda(1+y)u,v)_H, \quad\forall v\in V,
$$
and \eqref{eq:VariationalEqualityHestonBoundH} follows from \eqref{eq:VariationalEqualityCoerciveBoundfH}.
\end{proof}

\begin{proof}[Proof of existence in Theorem \ref{thm:ExistenceUniquenessEllipticHeston_Improved}]
We adapt the proof of existence of \cite[Theorem 2.5.2]{Bensoussan_Lions}. We shall construct a solution $u$ as the limit, in a suitable sense, of a sequence $\{u_n\}_{n\geq 0}$. Set $u_0=0$ and use Theorem \ref{thm:ExistenceUniquenessEllipticCoerciveHeston} to define a sequence $\{u_n\}_{n\geq 0}$ by
\begin{equation}
\label{eq:IncreasingSolutionSequenceConstruction}
a(u_n,v) + \lambda((1+y)u_n,v)_H = (f+\lambda(1+y)u_{n-1},v)_H, \quad \forall v\in V, n\geq 1.
\end{equation}
Setting $u_0=0$ and $n=1$ in \eqref{eq:IncreasingSolutionSequenceConstruction} implies that $u_1$ obeys
$$
a(u_1,v) + \lambda((1+y)u_1,v)_H = a_\lambda(u_1,v) = (f,v)_H, \quad \forall v\in V.
$$
Theorem \ref{thm:ExistenceUniquenessEllipticCoerciveHeston} and Lemma \ref{lem:ComparisionPrincipleCoerciveHestonVarEquality} imply that there exists a solution $u_1\in V$ obeying
$$
m \leq u_1 \leq M \quad\hbox{a.e. on }\sO.
$$
We shall now show that
\begin{equation}
\label{eq:IncreasingSolutionSequence}
m \leq u_1 \leq \cdots \leq u_n \leq \cdots \leq  M \quad\hbox{a.e. on } \sO.
\end{equation}
We suppose $u_{n-1}\geq u_{n-2}$ a.e on $\sO$ and show that $u_n\geq u_{n-1}$ a.e. on $\sO$. By taking the difference of the equations \eqref{eq:IncreasingSolutionSequenceConstruction} defining $u_n$ and $u_{n-1}$, we obtain
$$
a_\lambda(u_n-u_{n-1},v) = \lambda((1+y)(u_{n-1}-u_{n-2}),v)_H, \quad n\geq 2.
$$
We then take $v=(u_n-u_{n-1})^-$ in the preceding identity to give
\begin{align*}
{}&-a_\lambda((u_n-u_{n-1})^-,(u_n-u_{n-1})^-)
\\
&\quad = \lambda((1+y)(u_{n-1}-u_{n-2}),(u_n-u_{n-1})^-)_H
\\
&\quad\geq 0,
\end{align*}
so that
$$
\nu_1\|(u_n-u_{n-1})^-\|_V^2 \leq a_\lambda((u_n-u_{n-1})^-,(u_n-u_{n-1})^-) \leq 0.
$$
Hence, $(u_n-u_{n-1})^-=0$ a.e. on $\sO$ and thus $u_n\geq u_{n-1}$ a.e. on $\sO$. Therefore, the sequence $\{u_n\}_{n\geq 0}$ is increasing, as asserted in \eqref{eq:IncreasingSolutionSequence}.

Next we show that
\begin{equation}
\label{eq:UpperBoundSolutionSequence}
u_n \leq M \quad\hbox{a.e. on } \sO, \quad \forall n\geq 0.
\end{equation}
Since $M \in H^2(\sO,\fw)$, we have by Lemma \eqref{lem:HestonIntegrationByParts},
\begin{equation}
\label{eq:BilinearFormEvaluateOnLineary}
a(M,v) = (AM,v)_H.
\end{equation}
Since $M \geq 0$ on $\Gamma_1$ by the hypothesis \eqref{eq:SourceFunctionTraceBounds} and $u_n\in V = H^1(\sO\cup\Gamma_0$, so $u_n=0$ on $\Gamma_1$ (trace sense), then $(u_n-M)^+ = 0$ on $\Gamma_1$ (trace sense) and $(u_n-M)^+ \in V$ by the method of proof of Lemma \ref{lem:SobolevSpaceClosedUnderMaxPart}. We take $v=(u_n-M)^+$ in the  equation \eqref{eq:IncreasingSolutionSequenceConstruction} defining $u_n$ to give
\begin{align*}
{}&a(u_n-M, (u_n-M)^+) + a(M,(u_n-M)^+)
\\
{}&\quad + \lambda((1+y)(u_n-u_{n-1}), (u_n-M)^+)_H
\\
&= (f, (u_n-M)^+)_H,
\end{align*}
or
\begin{align*}
{}& a_\lambda((u_n-M)^+, (u_n-M)^+)_H + a(M,(u_n-M)^+)
\\
&\quad - \lambda((1+y)(u_n-M), (u_n-M)^+)_H + \lambda((1+y)(u_n-u_{n-1}), (u_n-M)^+)_H
\\
&=  (f, (u_n-M)^+)_H.
\end{align*}
Simplifying the preceding identity, gives
\begin{align*}
{}&a_\lambda((u_n-M)^+, (u_n-M)^+) + (AM -f ,(u_n-M)^+)_H
\\
{}&\quad + \lambda((1+y)(M-u_{n-1}), (u_n-M)^+)_H = 0.
\end{align*}
But $AM - f\geq 0$ a.e. on $\sO$ by \eqref{eq:fBounds} while $M-u_{n-1}\geq 0$ a.e on $\sO$ by the induction hypothesis, so
$$
a_\lambda((u_n-M)^+, (u_n-M)^+) \leq 0,
$$
and therefore $(u_n-M)^+ = 0$ a.e. on $\sO$, since
$$
\nu_1\|(u_n-M)^+\|_V^2 \leq a_\lambda((u_n-M)^+, (u_n-M)^+)
$$
by \eqref{eq:CoerciveHeston}. This proves the upper bound in \eqref{eq:IncreasingSolutionSequence}.

We deduce from \eqref{eq:IncreasingSolutionSequence} that there is a Borel measurable function $u:\sO\to\RR$ defined by
$$
u := \lim_{n\to\infty}u_n \quad\hbox{a.e. on }\sO,
$$
and
$$
u_n \nearrow u \hbox{ monotonically a.e.  on } \sO.
$$
In particular, $u$ obeys \eqref{eq:uBounds}.

The inequalities \eqref{eq:CoerciveHeston} and \eqref{eq:IncreasingSolutionSequenceConstruction} (with $v=u_n$) give
$$
\nu_1\|u_n\|_V^2 \leq \lambda((1+y)u_{n-1},u_n)_H + (f,u_n)_H, \quad\forall n\geq 0.
$$
Therefore,
\begin{equation}
\label{eq:CoerciveUnBoundPrelim}
\begin{aligned}
\nu_1\|u_n\|_V^2 &\leq \lambda\|(1+y)^{1/2}u_{n-1}\|_{L^2(\sO,\fw)}\|(1+y)^{1/2}u_n\|_{L^2(\sO,\fw)}
\\
&\quad + \|f\|_{L^2(\sO,\fw)}\|u_n\|_{L^2(\sO,\fw)}, \quad\forall n\geq 0.
\end{aligned}
\end{equation}
Since $M, m\in H^2(\sO,\fw)$ by hypothesis, we have $\|(1+y)^{1/2}\max\{|M|,|m|\}\|_{L^2(\sO,\fw)}< \infty$ and \eqref{eq:IncreasingSolutionSequence} gives
$$
\|(1+y)^{1/2}u_n\|_{L^2(\sO,\fw)} \leq \|(1+y)^{1/2}\max\{|M|,|m|\}\|_{L^2(\sO,\fw)} < \infty, \quad\forall n\geq 0.
$$
Combining the preceding estimate, the $L^2(\sO,\fw)$ bounds for $u_n$ implied by \eqref{eq:IncreasingSolutionSequence}, and \eqref{eq:CoerciveUnBoundPrelim} yields
\begin{equation}
\label{eq:SolutionSequenceVBounded}
\|u_n\|_V \leq C\left(\|(1+y)^{1/2}\max\{|M|,|m|\}\|_{L^2(\sO,\fw)} + \|f\|_{L^2(\sO,\fw)}\right) < \infty, \quad\forall n\geq 0.
\end{equation}
for some positive constant $C$ depending only on the constant coefficients of $A$. Given \eqref{eq:SolutionSequenceVBounded}, we may pass to a subsequence and obtain
$$
u_n \rightharpoonup u \hbox{ weakly in } V.
$$
We can therefore take limits in \eqref{eq:IncreasingSolutionSequenceConstruction} to conclude that $u$ is a solution to \eqref{eq:IntroHestonWeakMixedProblemHomogeneous}.
\end{proof}

Before we turn to the proof of uniqueness in Theorem \ref{thm:ExistenceUniquenessEllipticHeston_Improved}, we require some auxiliary lemmas.

\begin{lem}[Existence of an auxiliary function for the proof of uniqueness in Theorem \ref{thm:ExistenceUniquenessEllipticHeston_Improved}]
\label{lem:EllipticHestonUniquenessAuxiliaryFunction}
Assume Hypothesis \ref{hyp:NoncoerciveHeston} holds and require that the domain, $\sO$, obeys Hypothesis \ref{hyp:DomainCombinedCondition}. Let $\varphi$ be as in Hypothesis \ref{hyp:AuxBoundUniquenessSolutionsNoncoerciveEquation}. Then there exists a function $u_\varphi \in H_0^1(\sO\cup\Gamma_0)$ which solves
\begin{equation}
\label{eq:Defnuvarphi}
a(u_\varphi,v) = (A\varphi,v)_H, \quad\forall v\in H_0^1(\sO\cup\Gamma_0).                                                                                                                                                                                                                        \end{equation}
Moreover, $u \in H^2(\sO,\fw)$ and $u$ obeys
\begin{align}
\label{eq:Auvarphi}
Au_\varphi &= A\varphi \quad\hbox{a.e. on }\sO,
\\
\label{eq:uDirichletGammaOne}
u_\varphi &= 0 \quad\hbox{on } \Gamma_1,
\end{align}
and
\begin{equation}
\label{eq:uvarphiBounds}
0\leq u_\varphi \leq \varphi \quad\hbox{on }\sO.
\end{equation}
\end{lem}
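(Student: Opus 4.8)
The plan is to solve the variational equation \eqref{eq:Defnuvarphi} as a non-coercive variational equation by appealing to Theorem \ref{thm:ExistenceUniquenessEllipticHeston_Improved}, with source function $f_\varphi := A\varphi$. First I would verify that $f_\varphi \in H = L^2(\sO,\fw)$: since $\varphi \in H^2(\sO,\fw)$ and $A: H^2(\sO,\fw)\to L^2(\sO,\fw)$ is bounded (as noted after Definition \ref{defn:H2WeightedSobolevSpaces}), this is automatic. Next I must exhibit envelope functions $M', m' \in H^2(\sO,\fw)$ obeying \eqref{eq:SourceFunctionTraceBounds}, \eqref{eq:mMIneqOnDomain}, \eqref{eq:SourceFunctionBounds} together with the bound \eqref{eq:fBounds} for $f_\varphi$. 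The natural choice is $m' := 0$ and $M' := \varphi$. Indeed: on $\Gamma_1$ we have $m' = 0 \leq 0$ and $M' = \varphi \geq 0$ by \eqref{eq:ShiftFunctionTraceBound}, so \eqref{eq:SourceFunctionTraceBounds} holds; on $\sO$, $m' = 0 \leq \varphi = M'$ by \eqref{eq:varphiIneqOnDomain}, giving \eqref{eq:mMIneqOnDomain}; and $Am' = A(0) = 0 \leq A\varphi = AM'$ a.e.\ on $\sO$ by \eqref{eq:ShiftFunctionBound}, giving \eqref{eq:SourceFunctionBounds}. Finally $Am' = 0 \leq A\varphi = f_\varphi \leq A\varphi = AM'$ a.e.\ on $\sO$ is \eqref{eq:fBounds} trivially (with equality on the right). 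Hence Theorem \ref{thm:ExistenceUniquenessEllipticHeston_Improved} applies and yields a solution $u_\varphi \in V = H^1_0(\sO\cup\Gamma_0,\fw)$ to \eqref{eq:Defnuvarphi} satisfying $m' \leq u_\varphi \leq M'$ a.e.\ on $\sO$, i.e.\ $0 \leq u_\varphi \leq \varphi$ a.e.\ on $\sO$, which is \eqref{eq:uvarphiBounds}.

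The remaining assertions are $H^2(\sO,\fw)$-regularity of $u_\varphi$ together with the strong-form statements \eqref{eq:Auvarphi} and \eqref{eq:uDirichletGammaOne}. For these I would invoke the $H^2$-regularity theorem for the variational equation referenced in the guide (Theorem \ref{thm:GlobalRegularityEllipticHestonSpecial}): the source $f_\varphi = A\varphi \in L^2(\sO,\fw)$ satisfies the growth hypothesis $(1+y)^{1/2}f_\varphi = (1+y)^{1/2}A\varphi \in L^2(\sO,\fw)$ by \eqref{eq:Sqrt1plusyAvarphiL2}, so $u_\varphi \in H^2(\sO,\fw)$. Given $u_\varphi \in H^2(\sO,\fw) \cap H^1_0(\sO\cup\Gamma_0,\fw)$ solving the variational equation with source $A\varphi$, Lemma \ref{lem:HestonWeightedNeumannBVPHomogeneous}(1) (with $f$ there equal to $A\varphi$) gives that $u_\varphi$ is a strong solution, i.e.\ $Au_\varphi = A\varphi$ a.e.\ on $\sO$, which is \eqref{eq:Auvarphi}, and $u_\varphi = 0$ on $\Gamma_1$ in the trace sense, which is \eqref{eq:uDirichletGammaOne}.

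The main obstacle is a bookkeeping one rather than a conceptual one: I must confirm that the hypotheses of Theorem \ref{thm:ExistenceUniquenessEllipticHeston_Improved} and of the $H^2$-regularity theorem are genuinely all available. The existence half of Theorem \ref{thm:ExistenceUniquenessEllipticHeston_Improved} needs only Hypothesis \ref{hyp:NoncoerciveHeston} ($r>0$, assumed here) plus the envelope functions $M',m'$ constructed above, so existence of $u_\varphi$ together with the bounds \eqref{eq:uvarphiBounds} is unconditional once $A\varphi$ and $\varphi$ are in the right spaces. One subtlety worth flagging explicitly: the full conclusion $u_\varphi \in H^2(\sO,\fw)$ and the passage to the strong formulation require the domain hypotheses (Hypothesis \ref{hyp:DomainCombinedCondition}, hence Hypotheses \ref{hyp:HestonDomainNearGammaZero} and \ref{hyp:GammeOneExtensionProperty}), which are precisely what is assumed in the statement of this lemma, so no extra hypotheses are incurred. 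I would close by remarking that \eqref{eq:uvarphiBounds} combined with $\varphi \in H^2(\sO,\fw)$ and $(1+y)\varphi \in L^2(\sO,\fw)$ (from \eqref{eq:OneplusyVarphiInL2}) also yields $(1+y)u_\varphi \in L^2(\sO,\fw)$, which will be the natural input for the subsequent uniqueness argument.
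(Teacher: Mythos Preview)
Your proposal is correct and follows essentially the same approach as the paper: apply the existence half of Theorem \ref{thm:ExistenceUniquenessEllipticHeston_Improved} with envelope functions $m'=0$, $M'=\varphi$ and source $A\varphi$, then invoke the $H^2$-regularity theory to pass to the strong formulation. The paper compresses your second step by citing the packaged Theorem \ref{thm:ExistenceUniquenessH2RegularEllipticHeston} (which bundles existence and $H^2$-regularity), whereas you unpack it into Theorem \ref{thm:GlobalRegularityEllipticHestonSpecial} plus Lemma \ref{lem:HestonWeightedNeumannBVPHomogeneous}; note only that the hypothesis $yu_\varphi\in L^2(\sO,\fw)$ required by Theorem \ref{thm:GlobalRegularityEllipticHestonSpecial} should be recorded \emph{before} invoking that theorem, using $0\le u_\varphi\le\varphi$ and \eqref{eq:OneplusyVarphiInL2}, rather than as a closing remark.
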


\begin{proof}
Existence of a function $u_\varphi \in H_0^1(\sO\cup\Gamma_0)$ solving \eqref{eq:Defnuvarphi} is provided by Theorem \ref{thm:ExistenceUniquenessEllipticHeston_Improved} (existence), while the facts that $u \in H^2(\sO,\fw)$ and solves \eqref{eq:Auvarphi} follow from Theorem \ref{thm:ExistenceUniquenessH2RegularEllipticHeston} (existence, with $M,m,f$ replaced by $\varphi,0,A\varphi$). Because of \eqref{eq:ShiftFunctionBound} and \eqref{eq:ShiftFunctionTraceBound}, we obtain \eqref{eq:uvarphiBounds} from \eqref{eq:uBounds} in Theorem \ref{thm:ExistenceUniquenessEllipticHeston_Improved} (existence) with $M, m$ replaced by $\varphi, 0$.
\end{proof}

\begin{lem}[Reduction to the case of existence when the source function is positive and the solution non-negative]
\label{lem:EllipticHestonReductionNonnegativeSourceFunction}
Assume the hypotheses of Theorem \ref{thm:ExistenceUniquenessEllipticHeston_Improved} for existence and uniqueness and let $u_\varphi \in H_0^1(\sO\cup\Gamma_0)\cap H^2(\sO,\fw)$ be as in Lemma \ref{lem:EllipticHestonUniquenessAuxiliaryFunction}. Define
\begin{equation}
\label{eq:DefineTildeMm}
\tilde m : = m+u_\varphi \quad\hbox{and}\quad \tilde M := M+u_\varphi,
\end{equation}
where $m,M$ are as in the hypotheses of Theorem \ref{thm:ExistenceUniquenessEllipticHeston_Improved}. Then $\tilde m, \tilde M \in H^2(\sO,\fw)$ obey
\begin{align}
\label{eq:tildeSourceFunctionTraceBounds}
\tilde m &\leq 0 \leq \tilde M \quad \hbox{on }\Gamma_1,
\\
\label{eq:PositiveTildeSourceFunction}
A\tilde m &> 0 \quad\hbox{ a.e. on }\sO,
\\
\label{eq:tildeSourceFunctionBounds}
A\tilde m &\leq A\tilde M \quad\hbox{ a.e. on }\sO.
\end{align}
Let $f$ be as in the hypotheses of Theorem \ref{thm:ExistenceUniquenessEllipticHeston_Improved}. Then
\begin{equation}
\label{eq:definetildef}
\tilde f := f+A\varphi = f+Au_\varphi
\end{equation}
obeys
\begin{equation}
\label{eq:tildefBounds}
A\tilde m \leq \tilde f \leq A\tilde M \quad\hbox{a.e. on }\sO.
\end{equation}
Then existence in Theorem \ref{thm:ExistenceUniquenessEllipticHeston_Improved} of a solution, $u$, to Problem \ref{prob:HestonWeakMixedBVPHomogeneous} defined by $f$ and obeying the bounds \eqref{eq:uBounds} is equivalent to existence of a solution, $\tilde u$, to Problem \ref{prob:HestonWeakMixedBVPHomogeneous} defined by $\tilde f$ and obeying
\begin{equation}
\label{eq:tildeuBounds}
\tilde m \leq \tilde u \leq \tilde M \quad \hbox{ a.e. on }\sO.
\end{equation}
\end{lem}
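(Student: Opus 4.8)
The plan is to verify each of the asserted properties of $\tilde m$, $\tilde M$, and $\tilde f$ by direct computation, using nothing beyond the linearity of the operator $A$ and of the bilinear form $a(\cdot,\cdot)$ together with the properties of $u_\varphi$ recorded in Lemma \ref{lem:EllipticHestonUniquenessAuxiliaryFunction}: that $u_\varphi\in H_0^1(\sO\cup\Gamma_0)\cap H^2(\sO,\fw)$, that it solves \eqref{eq:Defnuvarphi}, that $Au_\varphi=A\varphi$ a.e.\ on $\sO$ by \eqref{eq:Auvarphi}, that $u_\varphi=0$ on $\Gamma_1$ by \eqref{eq:uDirichletGammaOne}, and that $0\leq u_\varphi\leq\varphi$ on $\sO$ by \eqref{eq:uvarphiBounds}. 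I do not anticipate a genuine obstacle here: the whole argument is bookkeeping organized around linearity, and the only point warranting care is that shifting by $u_\varphi$ preserves membership in $H_0^1(\sO\cup\Gamma_0)$ and the homogeneous Dirichlet condition on $\Gamma_1$, both of which are guaranteed by $u_\varphi\in H_0^1(\sO\cup\Gamma_0)$ and \eqref{eq:uDirichletGammaOne}.

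First I would dispose of the statements about $\tilde m$ and $\tilde M$. Since $m,M\in H^2(\sO,\fw)$ and $u_\varphi\in H^2(\sO,\fw)$, the sums $\tilde m=m+u_\varphi$ and $\tilde M=M+u_\varphi$ lie in $H^2(\sO,\fw)$. Taking traces on $\Gamma_1$ and using $u_\varphi=0$ there, together with \eqref{eq:SourceFunctionTraceBounds}, gives $\tilde m=m\leq 0$ and $\tilde M=M\geq 0$ on $\Gamma_1$, which is \eqref{eq:tildeSourceFunctionTraceBounds}. Applying $A$ and substituting $Au_\varphi=A\varphi$ yields $A\tilde m=Am+A\varphi=A(m+\varphi)$ and $A\tilde M=AM+A\varphi$; then \eqref{eq:PositiveShiftedSourceFunction} gives $A\tilde m>0$ a.e.\ on $\sO$, which is \eqref{eq:PositiveTildeSourceFunction}, and \eqref{eq:SourceFunctionBounds} gives $A\tilde M-A\tilde m=AM-Am\geq 0$ a.e.\ on $\sO$, which is \eqref{eq:tildeSourceFunctionBounds}. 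For the source function, the identity $\tilde f=f+A\varphi=f+Au_\varphi$ in \eqref{eq:definetildef} is immediate from \eqref{eq:Auvarphi}, and subtracting the common term $A\varphi$ from each member of \eqref{eq:tildefBounds} reduces those bounds to the hypothesis \eqref{eq:fBounds}, namely $Am\leq f\leq AM$.

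Finally, I would establish the equivalence of existence by exhibiting the affine bijection $u\mapsto\tilde u:=u+u_\varphi$ between solution sets. If $u\in H_0^1(\sO\cup\Gamma_0)$ solves Problem \ref{prob:HestonWeakMixedBVPHomogeneous} for the source $f$ and obeys \eqref{eq:uBounds}, then $\tilde u\in H_0^1(\sO\cup\Gamma_0)$ because $u_\varphi\in H_0^1(\sO\cup\Gamma_0)$, and by \eqref{eq:Defnuvarphi},
\[
a(\tilde u,v)=a(u,v)+a(u_\varphi,v)=(f,v)_H+(A\varphi,v)_H=(\tilde f,v)_H, \quad \forall v\in H_0^1(\sO\cup\Gamma_0),
\]
so $\tilde u$ solves Problem \ref{prob:HestonWeakMixedBVPHomogeneous} for the source $\tilde f$; adding $u_\varphi$ to every member of \eqref{eq:uBounds} yields \eqref{eq:tildeuBounds}. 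The converse implication is identical with $u:=\tilde u-u_\varphi$, using $a(u,v)=a(\tilde u,v)-a(u_\varphi,v)=(\tilde f,v)_H-(A\varphi,v)_H=(f,v)_H$ for all $v\in H_0^1(\sO\cup\Gamma_0)$ and subtracting $u_\varphi$ from \eqref{eq:tildeuBounds} to recover \eqref{eq:uBounds}. This completes the reduction, whose purpose is to let the subsequent uniqueness argument proceed under the simplifying hypotheses \eqref{eq:tildeSourceFunctionTraceBounds}--\eqref{eq:tildeSourceFunctionBounds}.
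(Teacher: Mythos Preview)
Your proof is correct and follows essentially the same approach as the paper's own proof: verify the preamble properties of $\tilde m,\tilde M,\tilde f$ by direct computation from linearity and the properties \eqref{eq:Auvarphi}, \eqref{eq:uDirichletGammaOne}, \eqref{eq:uvarphiBounds} of $u_\varphi$, then establish the equivalence of existence via the affine bijection $u\leftrightarrow\tilde u=u+u_\varphi$ using \eqref{eq:Defnuvarphi}. The only cosmetic difference is that the paper presents the two directions of the equivalence in the opposite order.
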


\begin{proof}
We first verify the assertions in the preamble to the statement of equivalence of existence of solutions. We see that \eqref{eq:tildeSourceFunctionTraceBounds} holds because of \eqref{eq:SourceFunctionTraceBounds}, \eqref{eq:DefineTildeMm}, and the fact that $u_\varphi=0$ on $\Gamma_1$. Moreover, \eqref{eq:tildeSourceFunctionBounds} follows from \eqref{eq:DefineTildeMm} and \eqref{eq:SourceFunctionBounds}, while \eqref{eq:PositiveTildeSourceFunction} follows from \eqref{eq:Auvarphi} and \eqref{eq:PositiveShiftedSourceFunction}. The inequalities \eqref{eq:tildefBounds} for $\tilde f$ are immediate from \eqref{eq:fBounds} and \eqref{eq:definetildef}.

\emph{Existence of $\tilde u$ implies existence of $u$.} By assumption, there exists a function $\tilde u \in H_0^1(\sO\cup\Gamma_0)$ obeying
\begin{equation}
\label{eq:TildeProblemHestonHomgeneous}
a(\tilde u,v) = (\tilde f,v)_H, \quad \forall v\in H_0^1(\sO\cup\Gamma_0),
\end{equation}
and  \eqref{eq:tildeuBounds}. By \eqref{eq:tildeuBounds}, we have
\begin{equation}
\label{eq:ExplicitTildeuBounds}
m+u_\varphi\leq \tilde u \leq M+u_\varphi \quad\hbox{a.e. on } \sO.
\end{equation}
Therefore, setting
\begin{equation}
\label{eq:DefnUintermsUvarphi}
u = \tilde u - u_\varphi \in H_0^1(\sO\cup\Gamma_0),
\end{equation}
we see that $u$ obeys \eqref{eq:uBounds}. Next,
\begin{align*}
a(u, v) &= a(\tilde u-u_\varphi, v)
\\
&= a(\tilde u, v) - a(u_\varphi, v)
\\
&= (\tilde f, v)_H - (A\varphi, v)_H \quad\hbox{(by \eqref{eq:TildeProblemHestonHomgeneous} and \eqref{eq:Defnuvarphi})}
\\
&= (f, v)_H, \quad \forall v\in V  \quad\hbox{(by \eqref{eq:definetildef}).}
\end{align*}
Hence, $u$ obeys \eqref{eq:IntroHestonWeakMixedProblemHomogeneous} and is a solution to Problem \ref{prob:HestonWeakMixedBVPHomogeneous} defined by the source function, $f$.

\emph{Existence of $u$ implies existence of $\tilde u$.} By assumption, there is a solution $u\in H_0^1(\sO\cup\Gamma_0)$ to Problem \ref{prob:HestonWeakMixedBVPHomogeneous} defined by the source function, $f$, and $u$ obeys \eqref{eq:uBounds}. Use \eqref{eq:DefnUintermsUvarphi} to define $\tilde u = u + u_\varphi$, so \eqref{eq:uBounds} implies that $\tilde u$ obeys \eqref{eq:ExplicitTildeuBounds} and thus $\tilde u$ obeys \eqref{eq:tildeuBounds}. Then,
\begin{align*}
a(\tilde u, v) &= a(u + u_\varphi, v)
\\
&= a(u, v) + a(u_\varphi, v)
\\
&= (f, v)_H + (A\varphi, v)_H \quad\hbox{(by \eqref{eq:IntroHestonWeakMixedProblemHomogeneous} and \eqref{eq:Defnuvarphi})}
\\
&= (\tilde f, v)_H, \quad \forall \tilde v \in H_0^1(\sO\cup\Gamma_0) \quad\hbox{(by \eqref{eq:definetildef}).}
\end{align*}
Hence, $\tilde u$ obeys \eqref{eq:TildeProblemHestonHomgeneous} and is a solution to Problem \ref{prob:HestonWeakMixedBVPHomogeneous} defined by the source function, $\tilde f$.
\end{proof}

\begin{lem}[Non-negative solutions]
\label{lem:utildeNonnegative}
Assume the hypotheses of Theorem \ref{thm:ExistenceUniquenessEllipticHeston_Improved} for existence and uniqueness. Let $\tilde u \in H_0^1(\sO\cup\Gamma_0)$ be a solution to Problem \ref{prob:HestonWeakMixedBVPHomogeneous} defined by $\tilde f$ as in \eqref{eq:definetildef}. Then $\tilde u$ obeys
\begin{equation}
\label{eq:utildeNonnegative}
\tilde u \geq 0 \quad\hbox{a.e. on }\sO.
\end{equation}
\end{lem}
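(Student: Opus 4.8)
The plan is to prove \eqref{eq:utildeNonnegative} by re-running the monotone construction used in the proof of existence in Theorem~\ref{thm:ExistenceUniquenessEllipticHeston_Improved}, but now with the source $f$ and envelope functions $m,M$ replaced by $\tilde f$, $\tilde m$, and $\tilde M$. The crucial preliminary point is that the shifted source is strictly positive: combining \eqref{eq:tildefBounds}, \eqref{eq:PositiveTildeSourceFunction}, and the identity $A\tilde m = A(m+u_\varphi) = Am + Au_\varphi = A(m+\varphi)$ (which uses \eqref{eq:Auvarphi} together with \eqref{eq:definetildef} and \eqref{eq:ShiftFunctionBound}, \eqref{eq:PositiveShiftedSourceFunction}), one obtains $\tilde f \geq A\tilde m = A(m+\varphi) > 0$ a.e. on $\sO$, and moreover $\tilde f \in H$ because $f\in H$ and $A\varphi \in L^2(\sO,\fw)$ (as $\varphi \in H^2(\sO,\fw)$ and $A$ maps $H^2(\sO,\fw)$ into $L^2(\sO,\fw)$). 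By Lemma~\ref{lem:EllipticHestonReductionNonnegativeSourceFunction}, the triple $(\tilde f,\tilde m,\tilde M)$ satisfies the hypotheses of Theorem~\ref{thm:ExistenceUniquenessEllipticHeston_Improved} for existence, i.e. \eqref{eq:tildeSourceFunctionTraceBounds}, \eqref{eq:tildeSourceFunctionBounds}, and \eqref{eq:tildefBounds}, together with Hypothesis~\ref{hyp:NoncoerciveHeston} ($r>0$).

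Next I would invoke the iterative scheme \eqref{eq:IncreasingSolutionSequenceConstruction}: set $u_0=0$ and define $u_n\in V$ by $a_\lambda(u_n,v) = (\tilde f + \lambda(1+y)u_{n-1},v)_H$ for all $v\in V$, which is solvable by Theorem~\ref{thm:ExistenceUniquenessEllipticCoerciveHeston}. Exactly as in the proof of existence, $\{u_n\}$ is nondecreasing, is bounded in $V$ (the analogue of \eqref{eq:SolutionSequenceVBounded} with $\max\{|\tilde M|,|\tilde m|\}$), and converges monotonically a.e. on $\sO$ and weakly in $V$ to a solution $\tilde u$ of Problem~\ref{prob:HestonWeakMixedBVPHomogeneous} with source $\tilde f$, which in addition obeys $\tilde m \leq \tilde u \leq \tilde M$ a.e. on $\sO$, i.e. \eqref{eq:tildeuBounds}. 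The new input is the lower bound $\tilde u \geq 0$: since $u_0=0$, the first iterate $u_1$ solves the \emph{coercive} variational equation $a_\lambda(u_1,v)=(\tilde f,v)_H$, and $\tilde f \geq 0$ a.e. on $\sO$, so Corollary~\ref{cor:ExistenceUniquenessEllipticCoerciveHeston} (cf. Remark~\ref{rmk:ExistenceUniquenessEllipticCoerciveHeston}) yields $u_1 \geq 0$ a.e. on $\sO$. Monotonicity of the sequence then gives $\tilde u = \lim_{n\to\infty}u_n \geq u_1 \geq 0$ a.e. on $\sO$, which is \eqref{eq:utildeNonnegative}.

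The point that requires care — and the reason the proof must pass through the coercive iterates rather than argue directly — is the non-coercivity of $a(\cdot,\cdot)$: testing the variational equation $a(\tilde u,v)=(\tilde f,v)_H$ with $v=\tilde u^-$ gives only $a(\tilde u^-,\tilde u^-)\le 0$, and the G\r{a}rding inequality \eqref{eq:HestonBilinearFormGarding} is too weak to force $\tilde u^-=0$; the strict positivity $\tilde f>0$, engineered via the auxiliary function $\varphi$ of Hypothesis~\ref{hyp:AuxBoundUniquenessSolutionsNoncoerciveEquation} and Lemma~\ref{lem:EllipticHestonUniquenessAuxiliaryFunction}, is precisely what lets the non-negativity propagate through the coercive comparison principle. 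Finally, one should note that the solution $\tilde u$ appearing in the statement is the one supplied by Lemma~\ref{lem:EllipticHestonReductionNonnegativeSourceFunction} (namely $\tilde u = u + u_\varphi$ with $u$ the monotone weak limit from the existence proof), so it carries the monotone description above after the substitution $(f,m,M)\mapsto(\tilde f,\tilde m,\tilde M)$; I expect no bilinear-form estimates beyond those already established, since every remaining step reduces to the coercive comparison principle or to the monotonicity verified in the proof of existence.
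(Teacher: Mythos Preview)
Your proposal is correct and lands on the same idea as the paper, but it is considerably more elaborate than needed. The paper's proof is one line: since \eqref{eq:PositiveTildeSourceFunction} and \eqref{eq:tildefBounds} give $0 < \tilde f \leq A\tilde M$ a.e.\ on $\sO$, the function $0$ is an admissible lower envelope for $\tilde f$ (obeying \eqref{eq:SourceFunctionTraceBounds}, \eqref{eq:mMIneqOnDomain}, \eqref{eq:SourceFunctionBounds}, and \eqref{eq:fBounds} with $m$ replaced by $0$), so the existence part of Theorem~\ref{thm:ExistenceUniquenessEllipticHeston_Improved} applied to $(\tilde f,0,\tilde M)$ yields a solution obeying $0 \leq \tilde u \leq \tilde M$. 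Your argument reproves this by unrolling the monotone iteration and observing $u_1\geq 0$ via the coercive comparison principle; that is exactly the mechanism hidden inside the proof of existence, so you are effectively re-deriving the a posteriori bound \eqref{eq:uBounds} in the special case $m=0$ rather than simply quoting it. Both approaches prove nonnegativity only for the \emph{constructed} solution, which---as you correctly note in your last paragraph---is the intended reading of the statement.
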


\begin{proof}
Observe that \eqref{eq:PositiveTildeSourceFunction} and \eqref{eq:tildefBounds} imply that $\tilde f$ obeys $0 < \tilde f \leq A\tilde M$ a.e. on $\sO$ and hence, replacing $\tilde m$ by zero in \eqref{eq:tildeuBounds}, we obtain \eqref{eq:utildeNonnegative}.
\end{proof}

\begin{lem}[Reduction to the case of uniqueness when the source function is positive and the solution non-negative]
\label{lem:EllipticHestonReductionNonnegativeSourceFunctionUniqueness}
Assume the hypotheses of Theorem \ref{thm:ExistenceUniquenessEllipticHeston_Improved} for existence and uniqueness and let $\tilde f$ be as in \eqref{eq:definetildef}. Then uniqueness of a solution, $u$, to Problem \ref{prob:HestonWeakMixedBVPHomogeneous} defined by $f$ is equivalent to uniqueness of a solution, $\tilde u$, to Problem \ref{prob:HestonWeakMixedBVPHomogeneous} defined by $\tilde f$.
\end{lem}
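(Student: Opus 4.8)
The plan is to exploit the linearity of the variational equation: passing from the source $f$ to the source $\tilde f = f + A\varphi = f + Au_\varphi$ corresponds, at the level of solutions, to the translation $u\mapsto u+u_\varphi$, which is an affine bijection of $H_0^1(\sO\cup\Gamma_0)$. Consequently the two solution sets are in bijective correspondence, so one is a singleton precisely when the other is; combined with the equivalence of \emph{existence} already furnished by Lemma \ref{lem:EllipticHestonReductionNonnegativeSourceFunction}, this gives the equivalence of uniqueness.

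First I would fix, once and for all, a function $u_\varphi\in H_0^1(\sO\cup\Gamma_0)$ as provided by Lemma \ref{lem:EllipticHestonUniquenessAuxiliaryFunction}, so that $a(u_\varphi,v)=(A\varphi,v)_H$ for all $v\in H_0^1(\sO\cup\Gamma_0)$, and recall from \eqref{eq:Auvarphi} that $Au_\varphi=A\varphi$ a.e.\ on $\sO$; since $\varphi\in H^2(\sO,\fw)$, this makes $\tilde f:=f+A\varphi=f+Au_\varphi$ a well-defined element of $L^2(\sO,\fw)$. Next I would introduce the affine map $\Phi:H_0^1(\sO\cup\Gamma_0)\to H_0^1(\sO\cup\Gamma_0)$, $\Phi(u):=u+u_\varphi$, which is a bijection with inverse $\tilde u\mapsto\tilde u-u_\varphi$.

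The key step is then to verify, using the bilinearity of $a(\cdot,\cdot)$ together with the defining identity \eqref{eq:Defnuvarphi} for $u_\varphi$, that $u$ solves Problem \ref{prob:HestonWeakMixedBVPHomogeneous} with source $f$ if and only if $\Phi(u)$ solves Problem \ref{prob:HestonWeakMixedBVPHomogeneous} with source $\tilde f$: indeed $a(u+u_\varphi,v)=a(u,v)+a(u_\varphi,v)=(f,v)_H+(A\varphi,v)_H=(\tilde f,v)_H$ for all test functions $v$, and running the same computation backwards handles the converse. This is precisely the calculation already carried out in the proof of Lemma \ref{lem:EllipticHestonReductionNonnegativeSourceFunction}, so no new work is needed. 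Hence $\Phi$ restricts to a bijection from the solution set of the $f$-problem onto that of the $\tilde f$-problem, and since a set and its bijective image have the same cardinality, one set is a singleton exactly when the other is. Together with the equivalence of existence from Lemma \ref{lem:EllipticHestonReductionNonnegativeSourceFunction}, this completes the proof.

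I do not anticipate a genuine obstacle here: the statement is a formal consequence of linearity once $u_\varphi$ is fixed. The only minor point deserving a remark is that Lemma \ref{lem:EllipticHestonUniquenessAuxiliaryFunction} asserts existence but not a priori uniqueness of $u_\varphi$, so the bijection $\Phi$ depends on the chosen $u_\varphi$; however, because $\tilde f$ depends only on $\varphi$ (through $A\varphi=Au_\varphi$ a.e.) and not on which $u_\varphi$ is selected, any fixed choice yields a valid bijection, so the conclusion is unaffected.
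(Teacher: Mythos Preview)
Your proposal is correct and takes essentially the same approach as the paper: both arguments fix $u_\varphi$ from Lemma \ref{lem:EllipticHestonUniquenessAuxiliaryFunction} and observe that the translation $u\mapsto u+u_\varphi$ is a bijection between the solution sets of the $f$-problem and the $\tilde f$-problem, from which the equivalence of uniqueness is immediate. The paper simply cites Lemma \ref{lem:EllipticHestonReductionNonnegativeSourceFunction} (whose proof already contains the relevant bilinear-form calculation) rather than repeating it, but the content is the same.
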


\begin{proof}
Let $u_\varphi$ be as in Lemma \ref{lem:EllipticHestonUniquenessAuxiliaryFunction}. Lemma \ref{lem:EllipticHestonReductionNonnegativeSourceFunction} implies that $u_i\in H_0^1(\sO\cup\Gamma_0), i=1,2$ are two solutions to Problem \ref{prob:HestonWeakMixedBVPHomogeneous} defined by $f$ if and only if $\tilde u_i := u_i + u_\varphi\in H_0^1(\sO\cup\Gamma_0), i=1,2$ are two solutions to Problem \ref{prob:HestonWeakMixedBVPHomogeneous} defined by $\tilde f$. Therefore, $u_1 = u_2$ if and only if $\tilde u_1 = \tilde u_2$ and this yields the conclusion.
\end{proof}

The technical lemma below shows that the hypotheses of Theorem \ref{thm:ExistenceUniquenessEllipticHeston_Improved} on $f, M, m, \varphi$ are not vacuous and, furthermore, may be strengthened so that in addition \eqref{eq:SupSourceSolutionRatioBoundRaw} holds.

\begin{lem}[Auxiliary bound for uniqueness]
\label{lem:VIUniquenessEllipticHestonPostivefuSufficientConditions}
There exist non-trivial functions $f \in L^2(\sO,\fw)$ and $M,m,\varphi \in H^2(\sO,\fw)$ which obey  \eqref{eq:SourceFunctionTraceBounds}, \eqref{eq:mMIneqOnDomain}, \eqref{eq:SourceFunctionBounds}, \eqref{eq:fBounds}, and the conditions of Hypothesis \ref{hyp:AuxBoundUniquenessSolutionsNoncoerciveEquation}.
\end{lem}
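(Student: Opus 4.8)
The plan is to exhibit explicit functions built from the model solutions of Lemma~\ref{lem:PointwiseBoundForSolution}. The constraint making everything fit is Hypothesis~\ref{hyp:NoncoerciveHeston}: since $r>0$ and $\kappa,\theta,\sigma^2>0$ (Assumption~\ref{assump:HestonCoefficients}), we may fix once and for all a constant
$$
0<K<\min\left\{2\kappa,\ \frac{r}{\kappa\theta},\ \frac{\kappa}{\sigma^2}\right\},
$$
so that simultaneously $\kappa\theta K<r$, $K<2\kappa$, and $2K<\mu=2\kappa/\sigma^2$; these are the three exponent inequalities needed below, and they are satisfiable precisely because $r>0$.

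\emph{Step 1: the envelopes $m,M$ and the source $f$.} I would apply Lemma~\ref{lem:PointwiseBoundForSolution} (and Corollary~\ref{cor:PointwiseBoundForSolution}) with purely affine model source functions $n(x,y):=c_0+c_2y$ and $N(x,y):=C_0+C_2y$, choosing $c_0\le -1$, $c_2\le 0\le C_2$, $C_0\ge 1$; with $c_3=c_4=0$ and $C_3=C_4=0$ the hypotheses of that lemma reduce to $r>0$ and $\min\{\kappa,r\}>0$, both in force, and $d_i\le D_i$ holds since $c_i\le 0\le C_i$ and $r,\kappa+r>0$. The lemma returns $m(x,y)=d_0+d_2y$ and $M(x,y)=D_0+D_2y$ in $H^2(\HH,\fw)$ with $d_0,d_2\le 0\le D_0,D_2$ (so $m,M$ are non-trivial), with $Am=c_0+c_2y$ and $AM=C_0+C_2y$ on $\HH$, and with $m\le 0\le M$ on $\HH$. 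Restricting to $\sO$ and using $\Gamma_1\subset\HH$ gives \eqref{eq:SourceFunctionTraceBounds}, \eqref{eq:mMIneqOnDomain}, and \eqref{eq:SourceFunctionBounds} (the last because $Am=c_0+c_2y\le 0\le C_0+C_2y=AM$). Since $c_2\le 0\le C_2$, the non-trivial function $f\equiv 1$ obeys $c_0+c_2y\le 1\le C_0+C_2y$ pointwise on $\sO$, and $f\in L^2(\sO,\fw)$ because $\sO$ has finite $\fw$-volume (Remark~\ref{rmk:FiniteVolumeDomain}); hence \eqref{eq:fBounds} holds.

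\emph{Step 2: the barrier $\varphi$.} I would take $\varphi(x,y):=De^{Ky}$ with $D>0$ chosen large below. By the computation in the proof of Lemma~\ref{lem:PointwiseBoundForSolution}, $e^{Ky}\in H^2(\sO,\fw)$ because $2K<\mu$, so $\varphi\in H^2(\sO,\fw)$, $\varphi\ge 0$ on $\sO$ gives \eqref{eq:varphiIneqOnDomain}, and a direct computation from \eqref{eq:OperatorHestonIntro} yields
$$
A\varphi=D(r-\kappa\theta K)e^{Ky}+DK\big(\kappa-\tfrac{K}{2}\big)\,y\,e^{Ky},
$$
whose two coefficients are nonnegative by the choice of $K$, so $A\varphi\ge 0$ on $\sO$, which is \eqref{eq:ShiftFunctionBound}; moreover $A\varphi$ and $(1+y)\varphi$ are bounded in absolute value by a constant times $(1+y)^2e^{Ky}$, and $2K<\mu$, so \eqref{eq:OneplusyVarphiInL2} and \eqref{eq:Sqrt1plusyAvarphiL2} hold. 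Since $A(m+\varphi)=(c_0+c_2y)+A\varphi$ and $e^{Ky}\ge 1$, choosing $D$ large enough (depending only on $c_0,c_2$ and the coefficients of $A$) forces $A(m+\varphi)\ge \varepsilon_0+\varepsilon_1 y>0$ on $\sO$ for constants $\varepsilon_0,\varepsilon_1>0$, as well as the supplementary lower bound $A(m+\varphi)\ge DK(\kappa-\tfrac{K}{2})\,y\,e^{Ky}$ on $\sO$ (obtained by discarding the nonnegative bracket $c_0+c_2y+D(r-\kappa\theta K)e^{Ky}$, which is nonnegative for $D$ large because $e^{Ky}\ge 1+Ky$). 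The first inequality is \eqref{eq:PositiveShiftedSourceFunction}.

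\emph{Step 3: the ratio bound \eqref{eq:SupSourceSolutionRatioBoundRaw}.} Since $m,M,\varphi$ are independent of $x$, the essential supremum over $\sO\subseteq\HH$ is a supremum over $y\in(0,\infty)$. One has $(1+y)(M+\varphi)\le (D_0+D_2)(1+y)^2+D(1+y)e^{Ky}\le C'(1+y)e^{Ky}$, using $\sup_{y\ge 0}(1+y)^2e^{-Ky}<\infty$. For $0<y\le 1$ the ratio is then at most $C'(1+y)e^{Ky}/\varepsilon_0\le 2C'e^{K}/\varepsilon_0$; for $y\ge 1$ it is at most $C'(1+y)e^{Ky}/(DK(\kappa-\tfrac{K}{2})\,y\,e^{Ky})\le 2C'/(DK(\kappa-\tfrac{K}{2}))$. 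Hence \eqref{eq:SupSourceSolutionRatioBoundRaw} holds, and $(f,M,m,\varphi)$ satisfy every claimed condition. I expect this last step to be the main obstacle: it forces the numerator $(1+y)(M+\varphi)$ and the denominator $A(m+\varphi)$ to have matching order of growth as $y\to\infty$, which is exactly why $\varphi$ must be taken with an exponential profile $e^{Ky}$ (so that $A\varphi$ grows like $y\,e^{Ky}$, the same order as $(1+y)\varphi$) and why the three exponent inequalities on $K$ must hold at once—possible only because $r>0$.
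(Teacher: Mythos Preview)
Your proof is correct and follows essentially the same strategy as the paper: both build $m,M,\varphi$ from the explicit model functions of Lemma~\ref{lem:PointwiseBoundForSolution} and verify the ratio bound \eqref{eq:SupSourceSolutionRatioBoundRaw} directly. The paper's version is slightly more general---it allows $M,m$ to contain $e^{Lx}$ terms and accordingly includes an $e^{Lx}$ component in $\varphi$ so that $A(m+\varphi)\ge\zeta(1+y)(e^{Lx}+e^{Ky})$---whereas you simplify to purely affine $m,M$, $f\equiv 1$, and $\varphi=De^{Ky}$; since the lemma only asserts existence, your streamlined choice is entirely sufficient and arguably cleaner.
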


\begin{proof}
We may suppose that $f\in L^2(\sO,\fw)$ obeys bounds $n \leq f \leq N$ a.e. on $\sO$, where $n, N \in L^2(\HH,\fw)$ are defined by \eqref{eq:UpperLowerSourceFunctionBounds} with constants $c_i \leq 0, C_i \geq 0$, for all $i$.

Choose $M, m$ as prescribed in Lemma \ref{lem:PointwiseBoundForSolution} to ensure that $Am \leq n, AM \geq N$ on $\HH$ and so $M,m,f$ obey the bounds \eqref{eq:SourceFunctionBounds} and \eqref{eq:fBounds}. From their definitions in Lemma \ref{lem:PointwiseBoundForSolution}, we have $D_i \geq 0, d_i \leq 0$, for all $i$, and so the bounds \eqref{eq:SourceFunctionTraceBounds} and \eqref{eq:mMIneqOnDomain} for $M, m$ hold because, a fortiori, $m \leq 0 \leq M$ on $\HH$.

Now choose $\varphi$ with non-negative coefficients, $D_i'\geq 0$, using the formula \eqref{eq:UpperLowerSolutionBounds} with $D_i$ replaced by $D_i'$ such that $A\varphi \geq 0$ and $\varphi \geq 0$ on $\HH$ and thus \eqref{eq:ShiftFunctionBound} and \eqref{eq:varphiIneqOnDomain} hold. When the coefficients, $D_i'$, are chosen as described in the statement of Lemma \ref{lem:PointwiseBoundForSolution}, then one sees by inspection that \eqref{eq:OneplusyVarphiInL2} and \eqref{eq:Sqrt1plusyAvarphiL2} hold. Moreover, $\varphi \in H^2(\HH,\fw)$ and, in addition, the coefficients $D_i'$ in the definition of $\varphi$ may be chosen large enough that
\begin{equation}
\label{eq:ChoiceOfFunctionp}
(m+\varphi)(x,y) \geq p_3e^{Lx} + p_4e^{Ky}, \quad (x,y)\in\HH,
\end{equation}
where $p_3>0, p_4>0$ are constants. The proof of Lemma \ref{lem:PointwiseBoundForSolution} shows that there is a constant $\zeta>0$, depending only on $p_3,p_4$ and the constant coefficients of $A$ such that
$$
A(m+\varphi)(x,y) \geq \zeta(1+y)(e^{Lx} + e^{Ky}) > 0, \quad (x,y) \in \HH,
$$
and thus \eqref{eq:PositiveShiftedSourceFunction} holds since, a fortiori, $A(m+\varphi) > 0$ on $\HH$. Furthermore, \eqref{eq:ChoiceOfFunctionp} and our definitions of $M, \varphi$ via \eqref{eq:UpperLowerSolutionBounds} ensure that there is a constant $C>0$ such that
$$
(M+\varphi)(x,y) \leq C(1 + e^{Lx} + e^{Ky}), \quad (x,y) \in \HH.
$$
Hence,
$$
\frac{(1+y)(M+\varphi)(x,y)}{A(m+\varphi)(x,y)} \leq \frac{2C}{\zeta}, \quad (x,y) \in \HH,
$$
and this yields \eqref{eq:SupSourceSolutionRatioBoundRaw}. This completes the proof of Lemma \ref{lem:VIUniquenessEllipticHestonPostivefuSufficientConditions}.
\end{proof}

\begin{rmk}[Additional growth conditions on the envelope functions] 
\label{rmk:Sqrt1plusyMmVarphiInLq}
Moreover, the $M, m, \varphi \in H^2(\HH,\fw)$ constructed in the proof of Lemma \ref{lem:VIUniquenessEllipticHestonPostivefuSufficientConditions} also obey $(1+y)^{1/2}M, (1+y)^{1/2}m, (1+y)^{1/2}\varphi \in L^q(\HH,\fw)$ for $q>2$ as required by \eqref{eq:Sqrt1plusyMmInLq} and \eqref{eq:Sqrt1plusyVarphiInLq}.
\end{rmk}

\begin{proof}[Proof of uniqueness in Theorem \ref{thm:ExistenceUniquenessEllipticHeston_Improved}] We adapt the proof of uniqueness in \cite[Theorem 2.5.2]{Bensoussan_Lions}. We assume the reduction embodied in Lemma \ref{lem:EllipticHestonReductionNonnegativeSourceFunction} but, to simplify notation, we shall omit the ``tildes'' and write $f,u$ for $\tilde f, \tilde u$. For clarity of exposition, we denote equations and inequalities involving Borel measurable functions as holding everywhere when they hold almost everywhere.

Suppose $u_1, u_2$ are two solutions to \eqref{eq:IntroHestonWeakMixedProblemHomogeneous}, assumed non-negative by \eqref{eq:utildeNonnegative}. If $u_1\neq u_2$ on $\sO$ we may suppose that
\begin{equation}
\label{eq:u1notlessthanu2}
u_1 \not\leq u_2 \quad\hbox{on }\sO.
\end{equation}
Otherwise, we could interchange the roles of $u_1$ and $u_2$ in the remainder of the proof of uniqueness. We introduce $\alpha_0 \in L^2(\sO,\fw)$, where $0\leq \alpha_0\leq 1$ on $\sO$, such that
\begin{equation}
\label{eq:VIalphaSoln1LessSoln2}
\alpha_0 u_1\leq u_2 \quad\hbox{on }\sO,
\end{equation}
by setting $\alpha_0(x,y) := 1$ if $u_1(x,y) \leq u_2(x,y)$ and $\alpha_0(x,y) := u_2(x,y)/u_1(x,y)$ if $u_1(x,y) \geq u_2(x,y)$, for each $(x,y)\in\sO$, so that
\begin{equation}
\label{eq:DefnAlpha0}
\alpha_0(x,y) := \min\left\{1, \frac{u_2(x,y)}{u_1(x,y)}\right\}, \quad (x,y) \in \sO,
\end{equation}
where we define $0/0 := 0$ and $1/0 := +\infty$. Note that $\alpha_0 u_1  = \min\{u_1,u_2\}$ and $\min\{u_1,u_2\} \in H^1_0(\sO\cup\Gamma_0,\fw)$ by Lemma \ref{lem:SobolevSpaceClosedUnderMaxPart}. Since $u_1\neq u_2$ on $\sO$, then
$$
0\leq \bar\alpha_0 < 1,
$$
where
\begin{equation}
\label{eq:DefnBarAlpha0}
\bar\alpha_0 := \essinf_{(x,y) \in \sO}\alpha_0(x,y).
\end{equation}
Otherwise, if $\bar\alpha_0 = 1$, we would have $\alpha_0  = 1$ on $\sO$ and $u_1 = \alpha_0 u_1 \leq u_2$ on $\sO$, contradicting our assumption \eqref{eq:u1notlessthanu2}. We introduce a \emph{constant} $\beta_0$ such that
\begin{equation}
\label{eq:barAlpha0lessthanBeta0}
\bar\alpha_0<\beta_0<1,
\end{equation}
and
\begin{equation}
\label{eq:VIBetaLambdafuInequality}
\begin{aligned}
f_1 &:= \beta_0\left(f + \lambda(1+y)u_1\right)
\\
&\leq f + \lambda (1+y)u_2
\\
&=: f_2 \quad\hbox{on }\sO.
\end{aligned}
\end{equation}
Indeed, such a $\beta_0$ exists since, because of \eqref{eq:VIalphaSoln1LessSoln2}, the inequality \eqref{eq:VIBetaLambdafuInequality} will hold if
$$
\beta_0f + \lambda(1+y)\beta_0 u_1 \leq f + \lambda(1+y)\bar\alpha_0 u_1,
$$
that is, if
$$
\lambda(1+y)(\beta_0-\bar\alpha_0)u_1 \leq (1-\beta_0)f,
$$
and hence if
\begin{equation}
\label{eq:EnoughToMakeSupu1LessThanOrEqualToInff}
\esssup_{(x,y)\in\sO}\frac{(1+y)u_1(x,y)}{f(x,y)} \leq \frac{(1-\beta_0)}{\lambda(\beta_0-\bar\alpha_0)}.
\end{equation}
Now $A(m+\varphi) = A(m+u_\varphi) = A\tilde m$ on $\sO$ by \eqref{eq:Auvarphi} and $\tilde M = M + u_\varphi \leq M+\varphi$ on $\sO$ by \eqref{eq:uvarphiBounds}. Therefore
$$
\frac{(1+y)\tilde M(x,y)}{A\tilde m(x,y)} \leq \frac{(1+y)(M+\varphi)(x,y)}{A(m+\varphi)(x,y)}, \quad (x,y) \in \sO,
$$
and so \eqref{eq:SupSourceSolutionRatioBoundRaw} yields
\begin{equation}
\label{eq:SupSourceSolutionRatioBound}
\esssup_{(x,y)\in\sO}\frac{(1+y)\tilde M(x,y)}{A\tilde m(x,y)} < \infty.
\end{equation}
By the upper bound for $u_1$ in \eqref{eq:tildeuBounds} (with $\tilde u$ replaced by $u_1$) and the lower bound for $f$ in \eqref{eq:tildefBounds} (with $\tilde f$ replaced by $f$), we have
\begin{align*}
\esssup_{(x,y)\in\sO}\frac{(1+y)u_1(x,y)}{f(x,y)} &\leq \esssup_{(x,y)\in\sO}\frac{(1+y)\tilde M(x,y)}{A\tilde m(x,y)}
\\
&< \infty \quad\hbox{(by \eqref{eq:SupSourceSolutionRatioBound})}.
\end{align*}
Therefore, we can find $\beta_0$ (sufficiently close to $\bar\alpha_0$) obeying \eqref{eq:barAlpha0lessthanBeta0} such that the inequalities \eqref{eq:EnoughToMakeSupu1LessThanOrEqualToInff} and thus \eqref{eq:VIBetaLambdafuInequality} hold.

We note that $\beta_0 u_1$ satisfies the variational equation
\begin{align*}
a_\lambda(\beta_0 u_1, \beta_0 v) &= a(\beta_0 u_1, \beta_0 v) + \lambda(\beta_0 (1+y)u_1, \beta_0 v)_H
\quad\hbox{(by \eqref{eq:BilinearFormCoerciveHeston})}
\\
&= (\beta_0 f, \beta_0 v)_H + \lambda(\beta_0 (1+y)u_1, \beta_0 v)_H \quad\hbox{(by \eqref{eq:IntroHestonWeakMixedProblemHomogeneous})}
\\
&= (\beta_0 f + \lambda\beta_0 (1+y)u_1, \beta_0 v)_H
\\
&= (f_1,\beta_0 v)_H
\quad\hbox{(by definition \eqref{eq:VIBetaLambdafuInequality} of $f_1$)},
\end{align*}
for all $\beta_0 v \in V$, and so
$$
a_\lambda(\beta_0 u_1, v) = (f_1, v)_H, \quad \forall v \in V.
$$
Moreover, $u_2$ satisfies the variational equation
\begin{align*}
a_\lambda(u_2,v) &= a(u_2,v) + \lambda((1+y)u_2,v)_H
\\
&= (f,v)_H + \lambda((1+y)u_2,v)_H
\quad\hbox{(by \eqref{eq:IntroHestonWeakMixedProblemHomogeneous})}
\\
&= (f+\lambda (1+y)u_2,v)_H
\\
&= (f_2,v)_H
\quad\hbox{(by definition \eqref{eq:VIBetaLambdafuInequality} of $f_2$)}, \quad \forall v \in V.
\end{align*}
We are now within the setting of Corollary \ref{cor:ExistenceUniquenessEllipticCoerciveHeston} since \eqref{eq:CoerciveHeston} holds and
$$
f_1 \leq f_2 \quad\hbox{on $\sO$ by \eqref{eq:VIBetaLambdafuInequality}}.
$$
Therefore, Corollary \ref{cor:ExistenceUniquenessEllipticCoerciveHeston} implies that
\begin{equation}
\label{eq:betazerou1lequ2}
\beta_0 u_1 \leq u_2 \quad\hbox{on } \sO.
\end{equation}
But $\beta_0>\bar\alpha_0$ by \eqref{eq:barAlpha0lessthanBeta0} and so we obtain a contradiction to the choice of $\alpha_0$ in \eqref{eq:VIalphaSoln1LessSoln2}. To see this, observe that for any $0<\eps<\beta_0-\bar\alpha_0$, the definition \eqref{eq:DefnBarAlpha0} of $\bar\alpha_0$ implies that there exists a Borel measurable subset $\sU\subset\sO$ with positive measure such that $\alpha_0 \leq \bar\alpha_0 + \eps$ on $\sU$ and so, because $\bar\alpha_0 + \eps < \beta_0$, we obtain
\begin{equation}
\label{eq:alpha0lessthanbeta0}
\alpha_0 < \beta_0 \quad\hbox{on }\sU.
\end{equation}
But $\beta_0 < 1$ by \eqref{eq:barAlpha0lessthanBeta0} and therefore \eqref{eq:alpha0lessthanbeta0} yields $\alpha_0 < 1$ on $\sU$ and so the definition of $\alpha_0$ in \eqref{eq:DefnAlpha0} implies that $u_1>u_2$ on $\sU$ and $u_2 = \alpha_0u_1$ on $\sU$. Therefore,
\begin{align*}
u_2 &= \alpha_0u_1
\\
&< \beta_0 u_1 \quad\hbox{on } \sU \quad\hbox{(by \eqref{eq:alpha0lessthanbeta0})}
\\
&\leq u_2 \quad\hbox{on } \sO \quad\hbox{(by \eqref{eq:betazerou1lequ2})},
\end{align*}
a contradiction. Thus, we must have $u_1=u_2$ on $\sO$. This concludes the proof of uniqueness in Theorem \ref{thm:ExistenceUniquenessEllipticHeston_Improved}.
\end{proof}

\begin{rmk}[Comments on the proof of uniqueness in Theorem \ref{thm:ExistenceUniquenessEllipticHeston_Improved}]
Because the comparison principle bounds \eqref{eq:uBounds} are a posteriori rather than a priori, we cannot appeal to \eqref{eq:uBounds} to provide a simple proof of uniqueness: the proof of existence in Theorem \ref{thm:ExistenceUniquenessEllipticHeston_Improved} merely shows that some, but not necessarily all, solutions obey \eqref{eq:uBounds}.
\end{rmk}

\begin{rmk}[Unique solutions and the Rellich-Kondrachov theorem]
\label{rmk:NoRellichAndZeroEigenvalue}
Because the continuous embedding $H^1(\sO,\fw)\to L^2(\sO,\fw)$ is not known to be compact (see \S \ref{subsec:WeightedSobolevContinuousCompactEmbeddings}), unlike the case of usual the Rellich-Kondrachov compact embedding theorem for standard Sobolev spaces and bounded domains, it is not known whether an analogue of \cite[Theorem 6.2.6]{Evans} holds for our weighted Sobolev spaces. Therefore, we avoid arguments in this article which might rely on a Rellich-Kondrachov compact embedding theorem for weighted Sobolev spaces.
\end{rmk}

\section{Existence and uniqueness of solutions to the variational inequality}
\label{sec:StatVarInequality}
We shall adapt the framework of \cite[\S 3]{Bensoussan_Lions} to the case of the degenerate Heston operator and describe the modifications required for the proofs of existence and uniqueness for a variational inequality. We begin in \S \ref{subsec:StationaryFrameworkHeston} with a formulation of the obstacle and variational inequality problems and provide conditions for when they are equivalent (Lemma \ref{lem:VIStrongFormHeston}). In \S \ref{subsec:PenalizedEquation}, we prove existence and uniqueness of solutions to a non-linear penalized equation (Theorem \ref{thm:ExistenceUniquenessEllipticHestonPenalizedProblem}). In \S \ref{subsec:CoerciveStatVarInequality}, we show that solutions to the penalized equation and their a priori estimates provide stepping-stones to existence of solutions of a coercive variational inequality (Theorem \ref{thm:VIExistenceUniquenessEllipticCoerciveHeston}). Finally, in \S \ref{subsec:NoncoerciveStatVarInequality}, we show that the existence of solutions to a coercive variational inequality leads in turn to existence of solutions to a non-coercive variational inequality (Theorem \ref{thm:VIExistenceUniquenessEllipticHeston_Improved}).

\subsection{Formulation of the variational inequality and obstacle problem}
\label{subsec:StationaryFrameworkHeston}
We begin with

\begin{prob}[Classical solution to an obstacle problem with inhomogeneous Dirichlet boundary condition]
\label{prob:HestonObstacleInhomogeneousClassical}
Given functions $f\in C^\alpha(\sO)$, for some $0<\alpha<1$, $g\in C^{2,\alpha}(\sO)\cap C_{\textrm{loc}}(\sO\cup\Gamma_1)$, and $\psi\in C_{\textrm{loc}}(\sO\cup\Gamma_1)$ with
\begin{equation}
\label{eq:ObstacleFunctionLessThanBoundaryConditionFunction}
\psi \leq g \quad\hbox{on }\Gamma_1,
\end{equation}
we call $u\in C^{1,1}(\sO)\cap C_{\textrm{loc}}(\sO\cup\Gamma_1)$ a \emph{classical solution} to an obstacle problem for the elliptic Heston operator with inhomogeneous Dirichlet condition along $\Gamma_1$ if
\begin{align}
\label{eq:ObstacleProblemHestonClassical}
\min\{Au-f,u-\psi\} &= 0 \quad \hbox{on }\sO,
\\
\label{eq:ObstacleHestonInhomogeneousBC}
u &= g \quad\hbox{on } \Gamma_1,
\\
\label{eq:ObstacleWeightedNeumannHomogeneousBCLimit}
\lim_{y\downarrow 0}y^\beta(\rho u_x + \sigma u_y) &= 0 \quad\hbox{on } \Gamma_0.
\end{align}
\end{prob}

See Remark \ref{rmk:WellPosedClassicalBVP} for a discussion of the hypotheses on $f$ and the boundary condition \eqref{eq:ObstacleWeightedNeumannHomogeneousBCLimit}.

\begin{prob}[Strong solution to an obstacle problem with inhomogeneous Dirichlet boundary condition]
\label{prob:HestonObstacleInhomogeneousStrong}
Given functions $f\in L^2(\sO,\fw)$, $g\in H^2(\sO,\fw)$, and $\psi\in H^2(\sO,\fw)$ obeying \eqref{eq:ObstacleFunctionLessThanBoundaryConditionFunction}, we call $u\in H^2(\sO,\fw)$ a \emph{strong solution} to an an obstacle problem for the elliptic Heston operator with inhomogeneous Dirichlet boundary condition along $\Gamma_1$ if $u$ obeys \eqref{eq:ObstacleProblemHestonClassical} (a.e. on $\sO$) and \eqref{eq:ObstacleHestonInhomogeneousBC}.
\end{prob}

We state our variational inequality problem for the Heston operator in the case of inhomogeneous Dirichlet boundary conditions:

\begin{prob}[Variational inequality with inhomogeneous Dirichlet boundary condition]
\label{prob:NonhomogeneousHestonVIProblem}
Given functions $f\in L^2(\sO,\fw)$, $g\in H^1(\sO,\fw)$, and $\psi\in H^1(\sO,\fw)$ obeying \eqref{eq:ObstacleFunctionLessThanBoundaryConditionFunction} in the sense that
$$
(\psi-g)^+ \in H^1_0(\sO\cup \Gamma_0,\fw),
$$
we call $u\in H^1(\sO,\fw)$ a solution to the variational inequality for the Heston operator with inhomogeneous Dirichlet boundary condition along $\Gamma_1$ if
\begin{equation}
\label{eq:VIProblemHeston}
\begin{gathered}
a(u,v-u) \geq (f,v-u)_{L^2(\sO,\fw)},
\\
u\geq\psi \hbox{ a.e. on }\sO \hbox{ and } u-g \in H^1_0(\sO\cup\Gamma_0,\fw),
\\
\quad \forall v\in H^1(\sO,\fw) \hbox{ with } v\geq\psi \hbox{ a.e. on }\sO  \hbox{ and } v-g \in H^1_0(\sO\cup\Gamma_0,\fw).
\end{gathered}
\end{equation}
\end{prob}

\begin{rmk}[Reduction to a variational inequality with homogeneous Dirichlet boundary condition]
\label{rmk:ReductionInhomogeneousVariationalInequality}
When $\Gamma_1\neq \emptyset$, we can reduce to the case of a \emph{homogeneous} Dirichlet boundary condition along $\Gamma_1$, without loss of generality, by noting that $u\in H^1(\sO,\fw)$ is a solution to Problem \ref{prob:NonhomogeneousHestonVIProblem} if and only if $\tilde u := u-g \in H^1_0(\sO\cup\Gamma_0,\fw)$ is a solution to Problem \ref{prob:NonhomogeneousHestonVIProblem} with test functions $\tilde v := v-g \in H^1_0(\sO\cup\Gamma_0,\fw)$, source function $\tilde f := f-g \in L^2(\sO,\fw)$, and obstacle function $\tilde\psi := \psi-g \in H^1(\sO,\fw)$ obeying $\tilde\psi \leq 0$ on $\Gamma_1$ in the sense that $\tilde\psi^+ \in H^1_0(\sO\cup\Gamma_0,\fw)$.
\end{rmk}

Therefore, given Remark \ref{rmk:ReductionInhomogeneousVariationalInequality}, for the remainder of the article, we may assume without loss of generality reductions to variational inequalities and obstacle problems with homogeneous Dirichlet boundary condition on $Gamma_1$.

\begin{prob}[Variational inequality with homogeneous Dirichlet boundary condition]
\label{prob:HomogeneousHestonVIProblem}
Given functions $f\in L^2(\sO,\fw)$ and $\psi\in H^1(\sO,\fw)$ such that
\begin{equation}
\label{eq:ObstacleFunctionLessThanZero}
\psi \leq 0 \hbox{ on }\Gamma_1,
\end{equation}
in the sense that
$$
\psi^+ \in H^1_0(\sO\cup \Gamma_0,\fw),
$$
we call $u\in H^1_0(\sO\cup\Gamma_0,\fw)$ a solution to the variational inequality for the Heston operator with homogeneous Dirichlet boundary condition along $\Gamma_1$ if
\begin{equation}
\label{eq:VIProblemHestonHomgeneous}
\begin{gathered}
a(u,v-u) \geq (f,v-u)_{L^2(\sO,\fw)}
\\
u\geq\psi \hbox{ a.e. on }\sO,
\\
\quad \forall v\in H^1_0(\sO\cup\Gamma_0,\fw) \hbox{ with } v\geq\psi \hbox{ a.e. on }\sO.
\end{gathered}
\end{equation}
\end{prob}

\begin{rmk}[Specialization to a variational equation]
The obstacle condition $u\geq \psi$ on $\sO$ becomes vacuous when $\psi=-\infty$ and, for this reason, we see that it is sufficient to require $\psi^+ \in H^1(\sO,\fw)$ (or even $L^2(\sO,\fw)$) rather than $\psi \in H^1(\sO,\fw)$ as in Problem \ref{prob:HomogeneousHestonVIProblem}.
\end{rmk}

\begin{notn}[Function spaces]
For brevity, we shall often denote
$$
H := L^2(\sO,\fw), \quad V := H^1_0(\sO\cup\Gamma_0,\fw), \quad\hbox{and}\quad \KK := \{v\in V| v\geq \psi \hbox{ a.e. on } \sO\}.
$$
\end{notn}

\begin{rmk}[Convexity of $\KK$]
\label{rmk:ConvexityObstacleTestFunctionSpace}
Note that $\KK\subset V$ is a \emph{convex} subset, since if $u,v \in \KK$, then $zu + (1-z)v \in V$ for all $z\in\RR$  while, for all $z\in [0,1]$, we have $zu + (1-z)v \geq z\psi + (1-z)\psi = \psi$.
\end{rmk}

\begin{rmk}[Choice of source function]
Rather than require $f\in H$ in Problem \ref{prob:HomogeneousHestonVIProblem}, it is enough to assume $f\in V'$ when considering questions of existence and uniqueness.
\end{rmk}

\begin{rmk}[Lower rather than upper obstacle functions]
The directions of our obstacle function inequalities are the opposite of those in \cite{Bensoussan_Lions}, so that instead of \cite[Equations (3.1.11) \& (3.1.14)]{Bensoussan_Lions} we define $\KK$ using $v\geq \psi$ and require $\psi\leq 0$ on $\Gamma_1$.
\end{rmk}

\begin{exmp}[Examples of obstacle functions]
\label{exmp:ObstacleFunction}
For the problem of determining the price of a perpetual American-style put option, we would choose
\begin{equation}
\label{eq:ObstacleHeston}
\psi(x,y) := (E-e^x)^+, \quad (x,y) \in \sO,
\end{equation}
for a constant $E>0$ (the strike) and $f = 0$ on $\sO$. Note that this choice of function $\psi$ is \emph{Lipschitz} and lies in $H^1(\sO,\fw)$ but not in $H^2(\sO,\fw)$. In the case of the corresponding \emph{evolutionary} variational inequality, the terminal condition would also be given by
\begin{equation}
\label{eq:TerminalConditionHeston}
h(x,y) := (E-e^x)^+, \quad (x,y) \in \sO.
\end{equation}
See \cite{Haug} for additional examples of obstacle functions arising in mathematical finance. \qed
\end{exmp}

Given a function $\psi \in H^1(\sO,\fw)$ as in Problem \ref{prob:HomogeneousHestonVIProblem}, we have $\psi^+ \in H^1_0(\sO\cup\Gamma_0,\fw)$ and $\psi^+ = \psi + \psi^- \geq \psi$ a.e. on $\sO$ and therefore, by analogy with \cite[Equation (3.1.13)]{Bensoussan_Lions}, the following universal assumption is automatically satisfied by choosing $v_0 = \psi^+$.

\begin{assump}[Non-empty convex subset $\KK \subset V$]
\label{assump:NonemptyConvexSetHeston}
The subset $\KK \subset V$ is non-empty and contains some element $v_0 \in\KK$.
\end{assump}

By analogy with \cite[Equation (3.1.20)]{Bensoussan_Lions}, we have

\begin{lem}[Equivalence of variational and strong solutions]
\label{lem:VIStrongFormHeston}
Let $f\in L^2(\sO,\fw)$, $\psi\in H^2(\sO,\fw)$ be functions such that \eqref{eq:ObstacleFunctionLessThanZero} holds, and suppose $u\in H^2(\sO,\fw)$. Then the following hold:
\begin{enumerate}
\item If $u \in H^1_0(\sO\cup\Gamma_0,\fw)$ obeys \eqref{eq:VIProblemHestonHomgeneous}, then $u$ obeys \eqref{eq:ObstacleProblemHestonClassical} (a.e. on $\sO$) and \eqref{eq:ObstacleHestonInhomogeneousBC} (with $g = 0$).
\item If $u$ obeys \eqref{eq:ObstacleProblemHestonClassical} (a.e. on $\sO$) and \eqref{eq:ObstacleHestonInhomogeneousBC} (with $g = 0$), then $u \in H^1_0(\sO\cup\Gamma_0,\fw)$ and $u$ obeys \eqref{eq:VIProblemHestonHomgeneous}.
\end{enumerate}
\end{lem}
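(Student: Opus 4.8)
The plan is to prove the equivalence in the standard way, using the integration by parts formula (Lemma~\ref{lem:HestonIntegrationByParts}) to convert the variational inequality into a pointwise statement and back. Throughout, the hypothesis $u\in H^2(\sO,\fw)$ together with the structure of $\sO$ (Hypotheses~\ref{hyp:HestonDomainNearGammaZero} and~\ref{hyp:GammeOneExtensionProperty}) makes Lemma~\ref{lem:HestonIntegrationByParts} available, so that $Au\in L^2(\sO,\fw)$ and $(Au,v)_{L^2(\sO,\fw)}=a(u,v)$ for all $v\in H^1_0(\sO\cup\Gamma_0,\fw)$ (the $\Gamma_1$-boundary integral in~\eqref{eq:HestonIntegrationByPartsFormula} vanishes because such $v$ have zero trace on $\Gamma_1$, by Lemma~\ref{lem:EvansGamma1TraceZero}). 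I will also use Lemma~\ref{lem:HestonWeightedNeumannBoundaryProperty}, which guarantees that any $u\in H^2(\sO,\fw)$ automatically satisfies the weighted-Neumann property~\eqref{eq:WeightedNeumannHomogeneousBCProb}, so condition~\eqref{eq:ObstacleWeightedNeumannHomogeneousBCLimit} (in its trace form) is never an issue in either direction; and Lemma~\ref{lem:H2SobolevEmbedding} / Lemma~\ref{lem:HestonWeightedNeumannBVPHomogeneous}'s preamble to know that $u\in C^\alpha_{\textrm{loc}}(\sO\cup\Gamma_1)$, which lets us interpret the Dirichlet condition $u=0$ on $\Gamma_1$ in the classical sense wherever needed.

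For part~(1): assume $u\in V:=H^1_0(\sO\cup\Gamma_0,\fw)$ solves~\eqref{eq:VIProblemHestonHomgeneous}. The membership $u\in V$ gives $u=0$ on $\Gamma_1$ (trace sense), which is~\eqref{eq:ObstacleHestonInhomogeneousBC} with $g=0$, and $u\ge\psi$ a.e. on $\sO$ is part of the hypothesis. It remains to show $\min\{Au-f,u-\psi\}=0$ a.e., i.e. $Au\ge f$ a.e. on $\sO$ and $Au=f$ a.e. on $\{u>\psi\}$. First take $v=u\pm w$ with $w\in C^\infty_0(\sO)$, $w\ge 0$; then $v\in V$ and $v\ge\psi$ (since $u\ge\psi$ and $w\ge 0$ for the $+$ sign, and for the $-$ sign use $v=u$... more carefully: take $v=u+w$ with $w\ge0$, $w\in C^\infty_0(\sO)$), giving $a(u,w)\ge(f,w)_{L^2(\sO,\fw)}$, hence by Lemma~\ref{lem:HestonIntegrationByParts} $(Au-f,w)_{L^2(\sO,\fw)}\ge0$ for all such $w$, so $Au\ge f$ a.e. Next, on the open set $\sC:=\{u>\psi\}$ (which is relatively open since $u,\psi$ are continuous on $\sO$, or at least one works with a measurable-set exhaustion): for $w\in C^\infty_0(\sC)$ and $\eps>0$ small, $v=u\pm\eps w\ge\psi$ still holds on a neighborhood of $\supp w$ (using $u-\psi\ge\delta>0$ there), so both $v=u+\eps w$ and $v=u-\eps w$ are admissible, yielding $a(u,w)=(f,w)_{L^2(\sO,\fw)}$, hence $Au=f$ a.e. on $\sC$. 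Combining, $\min\{Au-f,u-\psi\}=0$ a.e. on $\sO$.

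For part~(2): assume $u\in H^2(\sO,\fw)$ obeys~\eqref{eq:ObstacleProblemHestonClassical} a.e. and $u=0$ on $\Gamma_1$. Then $u\in V$ by Lemma~\ref{lem:EvansGamma1TraceZero}, and $u\ge\psi$ a.e. by~\eqref{eq:ObstacleProblemHestonClassical}. For any $v\in V$ with $v\ge\psi$ a.e., write $v-u=(v-\psi)-(u-\psi)$ where both bracketed quantities are $\ge0$ a.e. Using $Au\ge f$ a.e. (from~\eqref{eq:ObstacleProblemHestonClassical}) and Lemma~\ref{lem:HestonIntegrationByParts},
\[
a(u,v-u)=(Au,v-u)_{L^2(\sO,\fw)}=(Au-f,v-u)_{L^2(\sO,\fw)}+(f,v-u)_{L^2(\sO,\fw)}.
\]
It suffices to show $(Au-f,v-u)_{L^2(\sO,\fw)}\ge0$. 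Since $Au-f\ge0$ a.e., and since $Au-f=0$ a.e. on $\{u>\psi\}$ while on $\{u=\psi\}$ we have $v-u=v-\psi\ge0$ a.e., the integrand $(Au-f)(v-u)$ is $\ge0$ a.e. on $\sO$, so its integral against $\fw$ is nonnegative. Hence $a(u,v-u)\ge(f,v-u)_{L^2(\sO,\fw)}$, which is~\eqref{eq:VIProblemHestonHomgeneous}.

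I expect the main technical obstacle to be the local-perturbation argument in part~(1) establishing $Au=f$ on the noncontact set $\{u>\psi\}$: one must check that $u\pm\eps w$ is genuinely an admissible competitor, which requires knowing $\{u>\psi\}$ is (essentially) open so that $u-\psi$ is bounded below by a positive constant on $\supp w$. This uses continuity of $u-\psi$ on $\sO$ — $u\in C^\alpha_{\textrm{loc}}(\sO)$ by Lemma~\ref{lem:H2SobolevEmbedding} and $\psi\in H^2(\sO,\fw)\subset C^\alpha_{\textrm{loc}}(\sO)$ likewise — so the set is honestly open and the argument goes through. The rest is a routine transcription of the classical equivalence between obstacle problems and variational inequalities (cf.~\cite[Equation~(3.1.20)]{Bensoussan_Lions}), with the only nonstandard ingredient being the replacement of the usual integration by parts by Lemma~\ref{lem:HestonIntegrationByParts} and the observation that the $\Gamma_0$-boundary term vanishes for $H^2(\sO,\fw)$ functions by Lemma~\ref{lem:HestonWeightedNeumannBoundaryProperty}.
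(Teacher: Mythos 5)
Your proof is correct, but in both parts it takes a mildly different route from the paper's. For the complementarity in part~(1), the paper simply substitutes $v=\psi$ into $(Au-f,v-u)_{L^2(\sO,\fw)}\ge 0$ and combines the two sign conditions $Au-f\ge 0$, $\psi-u\le 0$ to conclude $\int_\sO(Au-f)(u-\psi)\,\fw\,dxdy=0$ and hence $(Au-f)(u-\psi)=0$ a.e.; you instead run a local perturbation argument $v=u\pm\eps w$ on the open noncontact set $\{u>\psi\}$, which needs the $H^2(\sO,\fw)\hookrightarrow C^\alpha_{\textrm{loc}}$ embedding (Lemma~\ref{lem:H2SobolevEmbedding}) to know that set is genuinely open. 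The paper's route is shorter and does not need continuity of $u-\psi$, though it tacitly requires $\psi$ to be an admissible competitor (i.e.\ $\psi\in\KK$, so in particular $\psi\in H^1_0(\sO\cup\Gamma_0,\fw)$), a point your perturbation argument sidesteps entirely since your competitors $u\pm\eps w$ are manifestly in $V$. In part~(2) the paper restricts to $v=u+\varphi$ with $\varphi\in C^\infty_0(\sO\cup\Gamma_0)$, splits $\sO$ into $\{\varphi\ge 0\}$ and $\{\varphi<0\}$, and finishes with a density argument; your direct pointwise sign analysis of $(Au-f)(v-u)$ for arbitrary admissible $v\in\KK$ is cleaner and avoids the density step. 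Both arguments rest on the same key ingredient, the integration-by-parts formula of Lemma~\ref{lem:HestonIntegrationByParts} with vanishing $\Gamma_1$-boundary term, so the proposals are equivalent in substance.
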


\begin{proof}
Consider (1) and suppose $u \in H^1_0(\sO\cup\Gamma_0,\fw)$ obeys \eqref{eq:VIProblemHestonHomgeneous}. We wish to show first that
$$
Au-f \geq 0, \quad u-\psi \geq 0, \quad (Au-f)(u-\psi) = 0 \hbox{ a.e. on } \sO.
$$
To see this, observe that \eqref{eq:VIProblemHestonHomgeneous} and integration by parts (Lemma \ref{lem:HestonIntegrationByParts}) yields
\begin{equation}
\label{eq:VIProblemHestonStrong}
(Au-f,v-u)_H \geq 0, \quad \forall v \in \KK.
\end{equation}
Take $v=u+\varphi$, $\varphi\in C^\infty_0(\sO)$, with $\varphi\geq 0$. Therefore, we have $v\in \KK$, given that $u\in\KK$, and so
$$
(Au-f,\varphi)_H \geq 0, \quad \forall \varphi \in C^\infty_0(\sO),
$$
and hence
$$
Au-f \geq 0 \quad \hbox{a.e. on }\sO.
$$
Since $\psi\in L^2(\sO,\fw)$ by hypothesis, we may choose $v=\psi$ in \eqref{eq:VIProblemHestonStrong} to give
$$
(Au-f,\psi-u)_H \geq 0,
$$
Since $Au-f \geq 0$ and $\psi-u\leq 0$ a.e. on $\sO$, we also have
$$
(Au-f,\psi-u)_H \leq 0,
$$
and hence $(Au-f,\psi-u)_H = 0$, which gives
$$
\int_\sO(Au-f)(u-\psi)\fw\,dxdy = 0.
$$
Because $(Au-f)(u-\psi) \geq 0$ a.e. on $\sO$, we obtain $(Au-f)(\psi-u) = 0$ a.e. on $\sO$. In addition, $u=0$ on $\Gamma_1$ by Lemma \ref{lem:EvansGamma1TraceZero}. Thus, $u$ obeys \eqref{eq:ObstacleProblemHestonClassical} (a.e. on $\sO$) and \eqref{eq:ObstacleHestonInhomogeneousBC} (with $g = 0$).

Consider (2) and suppose $u$ obeys \eqref{eq:ObstacleProblemHestonClassical} (a.e. on $\sO$) and \eqref{eq:ObstacleHestonInhomogeneousBC} (with $g = 0$). We then obtain $u \in H^1_0(\sO\cup\Gamma_0,\fw)$ by Lemma \ref{lem:EvansGamma1TraceZero}. Suppose $v-u = \varphi \in C^\infty_0(\sO\cup\Gamma_0)$, with $v\geq \psi$ a.e. on $\sO$. Then $\{\varphi < 0\} = \{v < u\} \subset \{\psi < u\}$ and therefore we must have $Au-f = 0$ a.e. on $\{\varphi < 0\} \subset \sO$ by \eqref{eq:ObstacleProblemHestonClassical}. Thus,
\begin{align*}
0 &\leq \int_{\sO\cap\{\varphi \geq 0\}} (Au-f)\varphi\fw\,dxdy \quad\hbox{(by \eqref{eq:ObstacleProblemHestonClassical})}
\\
&= \int_{\sO\cap\{\varphi \geq 0\}} (Au-f)\varphi\fw\,dxdy + \int_{\sO\cap\{\varphi < 0\}} (Au-f)\varphi\fw\,dxdy
\\
&= \int_\sO (Au-f)\varphi\fw\,dxdy
\\
&= \int_\sO Au\varphi\fw\,dxdy - \int_\sO f\varphi\fw\,dxdy
\\
&= a(u,\varphi) - (f,\varphi)_H \quad\hbox{(by Lemma \ref{lem:HestonIntegrationByParts})},
\end{align*}
and thus \eqref{eq:VIProblemHestonHomgeneous} holds for all $v = u+\varphi$ with $v\geq\psi$ a.e on $\sO$ and $\varphi \in C^\infty_0(\sO\cup\Gamma_0)$. Since $C^\infty_0(\sO\cup\Gamma_0,\fw)$ is dense in $H^1_0(\sO\cup\Gamma_0,\fw)$, then \eqref{eq:VIProblemHestonHomgeneous} holds for all $v \in H^1_0(\sO\cup\Gamma_0,\fw)$ with $v\geq\psi$.
\end{proof}

\subsection{Existence and uniqueness of solutions to the penalized equation}
\label{subsec:PenalizedEquation}
Existence of solutions to the coercive variational inequality (see Theorem \ref{thm:VIExistenceUniquenessEllipticCoerciveHeston}) is proved by first establishing existence, given $\eps>0$, of $u_\eps \in V$ which solves the \emph{penalized problem} associated with Problem \ref{prob:HomogeneousHestonVIProblem}. We define the \emph{penalization operator} by
\begin{equation}
\label{eq:PenalizationOperator}
\beta_\eps(w) := -\frac{1}{\eps}(\psi-w)^+, \quad w\in H^1_0(\sO\cup\Gamma_0).
\end{equation}
Recall that $a_\lambda$ is the coercive bilinear form \eqref{eq:BilinearFormCoerciveHeston}.

\begin{prob}[Penalized equation for the coercive Heston bilinear form]
\label{prob:HestonPenalizedEquation}
Given a function $f\in L^2(\sO,\fw)$, we call a function $u\in H^1_0(\sO\cup\Gamma_0)$ a solution to the penalized equation for the coercive Heston bilinear form if
\begin{equation}
\label{eq:PenalizedProblem}
a_\lambda(u_\eps,v) + (\beta_\eps(u_\eps),v)_H = (f,v)_H, \quad \forall v\in H^1_0(\sO\cup\Gamma_0).
\end{equation}
\end{prob}

We may also write $\beta_\eps(w) = -\frac{1}{\eps}(w - \psi)^-$. Note that our definition of $\beta_\eps$ uses a sign opposite to that of \cite[Equation (1.24)]{Bensoussan_Lions} since we seek functions $u$ such that $u\geq\psi$ a.e. on $\sO$ (and not $u\leq\psi$ on $\sO$) and thus need to ``penalize" functions $u$ such that $u<\psi$ on subsets of $\sO$ with positive measure.

\begin{lem}[Monotonicity of the penalization operator]
\label{lem:Monotone}
The penalization operator, $\beta_\eps$ in \eqref{eq:PenalizationOperator}, is \emph{monotone} in the sense that
\begin{equation}
\label{eq:MonotonePenalizationOperator}
(\beta_\eps(u)-\beta_\eps(\tilde u), u-\tilde u)_H \geq 0, \quad \forall u, \tilde u \in V.
\end{equation}
\end{lem}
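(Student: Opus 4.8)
The plan is to reduce \eqref{eq:MonotonePenalizationOperator} to the elementary fact that the positive-part map $t\mapsto t^{+}$ is nondecreasing on $\RR$, and then to integrate a nonnegative pointwise quantity against the positive weight $\fw$. First I would note that the inner product in \eqref{eq:MonotonePenalizationOperator} is a well-defined finite real number: since $\psi\in H^{1}(\sO,\fw)$ and $u,\tilde u\in V\subset L^{2}(\sO,\fw)$, the bound $0\le(\psi-w)^{+}\le|\psi|+|w|$ for $w\in\{u,\tilde u\}$ shows that $\beta_\eps(u),\beta_\eps(\tilde u)\in L^{2}(\sO,\fw)=H$ (alternatively, $w\mapsto w^{+}$ maps $H^{1}(\sO,\fw)$ into itself by Lemma \ref{lem:SobolevSpaceClosedUnderMaxPart}), and $u-\tilde u\in L^{2}(\sO,\fw)$ as well.

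Next, from the definition \eqref{eq:PenalizationOperator} one has
\[
\beta_\eps(u)-\beta_\eps(\tilde u)=\frac{1}{\eps}\Bigl((\psi-\tilde u)^{+}-(\psi-u)^{+}\Bigr),
\]
so, since $\eps>0$, establishing \eqref{eq:MonotonePenalizationOperator} is equivalent to establishing
\[
\int_{\sO}\Bigl((\psi-\tilde u)^{+}-(\psi-u)^{+}\Bigr)(u-\tilde u)\,\fw\,dxdy\ \ge\ 0 .
\]
I would then argue pointwise: for a.e.\ $(x,y)\in\sO$, put $s:=(\psi-u)(x,y)$ and $t:=(\psi-\tilde u)(x,y)$, so $(u-\tilde u)(x,y)=t-s$ and the integrand (apart from the factor $\fw(x,y)>0$) equals $\bigl(t^{+}-s^{+}\bigr)(t-s)$. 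Because $r\mapsto r^{+}$ is nondecreasing, $t^{+}-s^{+}$ and $t-s$ carry the same sign (or $t^{+}-s^{+}=0$), whence $\bigl(t^{+}-s^{+}\bigr)(t-s)\ge 0$. Multiplying by $\fw(x,y)>0$ and integrating over $\sO$ gives the displayed inequality, hence \eqref{eq:MonotonePenalizationOperator}.

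I do not anticipate any genuine obstacle in this argument; the only point requiring even minor care is measurability and integrability, so that the integral is meaningful, and that is routine once one recalls that the positive part of an $L^2(\sO,\fw)$ function is again in $L^2(\sO,\fw)$. (Even without invoking integrability of the product, the nonnegativity of the integrand already forces the integral to lie in $[0,+\infty]$, which suffices for the asserted inequality.)
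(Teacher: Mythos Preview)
Your argument is correct. It differs from the paper's in presentation: the paper decomposes $u-\tilde u$ via $w=w^{+}-w^{-}$ applied to $\psi-u$ and $\psi-\tilde u$, expands the inner product into five terms, uses $w^{+}w^{-}=0$ pointwise, and then bounds the pieces by $2ab\le a^{2}+b^{2}$ together with nonnegativity of inner products of nonnegative functions. Your route is shorter and more transparent: you go straight to the pointwise inequality $(t^{+}-s^{+})(t-s)\ge 0$, which is exactly the monotonicity of $r\mapsto r^{+}$, and integrate. Both approaches are of course proving the same pointwise fact, but yours isolates it cleanly rather than rediscovering it through an expansion.
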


\begin{proof}
Write
\begin{align*}
\eps\beta_\eps(u)-\eps\beta_\eps(\tilde u) &= -(\psi-u)^+ + (\psi-\tilde u)^+,
\\
u-\tilde u &= -(\psi-u)^+ + (\psi-u)^- + (\psi-\tilde u)^+ - (\psi-\tilde u)^-,
\end{align*}
and observe that
\begin{align*}
(\eps\beta_\eps(u)-\eps\beta_\eps(\tilde u), u-\tilde u)_H &= \|(\psi-u)^+\|_H^2 + \|(\psi-\tilde u)^+\|_H^2 - 2((\psi-u)^+, (\psi-\tilde u)^+)_H
\\
&\quad + ((\psi-\tilde u)^+,(\psi-u)^-)_H + ((\psi-u)^+,(\psi-\tilde u)^-)_H
\\
&\geq  ((\psi-\tilde u)^+,(\psi-u)^-)_H + ((\psi-u)^+,(\psi-\tilde u)^-)_H
\\
&\qquad \quad\hbox{(using $2ab \leq a^2+b^2, a,b, \in\RR$)}
\\
&\geq 0,
\end{align*}
as desired, using the fact in the last inequality that each of the terms $(\psi-\tilde u)^+$, $(\psi-u)^-)_H$, $((\psi-u)^+$, $(\psi-\tilde u)^-$ is non-negative.
\end{proof}

We have the following analogue of the a priori estimate \cite[Equation (3.1.44)]{Bensoussan_Lions} for solutions to the penalized equation.

\begin{lem}[A priori estimate for a solution to the penalized equation]
\label{lem:PenalizedEquationAPrioriEstimate}
If $u_\eps\in V$ is a solution to Problem \ref{prob:HestonPenalizedEquation}, then
\begin{equation}
\label{eq:PenalizedEquationAPrioriEstimate}
\|u_\eps\|_V \leq C\left(\|f\|_H + \|\psi^+\|_V\right), \quad \forall\eps > 0,
\end{equation}
where $C$ depends only on the constant coefficients of $A$.
\end{lem}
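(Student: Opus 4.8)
The plan is to test the penalized equation \eqref{eq:PenalizedProblem} against $v = u_\eps - \psi^+$, exploiting coercivity of $a_\lambda$ together with the monotonicity and sign of the penalization term. Write $v_0 := \psi^+ \in V = H^1_0(\sO\cup\Gamma_0,\fw)$; recall $v_0 \geq \psi$ a.e.\ on $\sO$, so $\beta_\eps(v_0) = -\frac{1}{\eps}(\psi - \psi^+)^+ = 0$ since $\psi - \psi^+ = -\psi^- \leq 0$. Taking $v = u_\eps - v_0$ in \eqref{eq:PenalizedProblem} gives
\begin{equation*}
a_\lambda(u_\eps, u_\eps - v_0) + (\beta_\eps(u_\eps), u_\eps - v_0)_H = (f, u_\eps - v_0)_H.
\end{equation*}
For the penalization term, since $\beta_\eps(v_0) = 0$, monotonicity \eqref{eq:MonotonePenalizationOperator} yields
\begin{equation*}
(\beta_\eps(u_\eps), u_\eps - v_0)_H = (\beta_\eps(u_\eps) - \beta_\eps(v_0), u_\eps - v_0)_H \geq 0,
\end{equation*}
so that
\begin{equation*}
a_\lambda(u_\eps, u_\eps - v_0) \leq (f, u_\eps - v_0)_H.
\end{equation*}

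Next I would expand $a_\lambda(u_\eps, u_\eps - v_0) = a_\lambda(u_\eps, u_\eps) - a_\lambda(u_\eps, v_0)$ and invoke the coercivity estimate \eqref{eq:CoerciveHeston}, $a_\lambda(u_\eps,u_\eps) \geq \nu_1\|u_\eps\|_V^2$, and the continuity estimate \eqref{eq:ContinuousCoerciveHeston}, $|a_\lambda(u_\eps,v_0)| \leq C\|u_\eps\|_V\|v_0\|_V$. Combining,
\begin{equation*}
\nu_1\|u_\eps\|_V^2 \leq a_\lambda(u_\eps,v_0) + (f,u_\eps)_H - (f,v_0)_H \leq C\|u_\eps\|_V\|v_0\|_V + |f|_H|u_\eps|_H + |f|_H|v_0|_H.
\end{equation*}
Since $|u_\eps|_H \leq \|u_\eps\|_V$ and $|v_0|_H \leq \|v_0\|_V$ by Definition \ref{defn:H1WeightedSobolevSpaces}, the right side is bounded by $(C\|v_0\|_V + |f|_H)\|u_\eps\|_V + |f|_H\|v_0\|_V$. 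Applying Young's inequality $ab \leq \tfrac{\nu_1}{2}a^2 + \tfrac{1}{2\nu_1}b^2$ to absorb the $\|u_\eps\|_V$ factor on the left, and using $2|f|_H\|v_0\|_V \leq |f|_H^2 + \|v_0\|_V^2 \leq (|f|_H + \|v_0\|_V)^2$ for the remaining term, gives
\begin{equation*}
\tfrac{\nu_1}{2}\|u_\eps\|_V^2 \leq \tfrac{1}{\nu_1}(C\|v_0\|_V + |f|_H)^2 + \tfrac12(|f|_H + \|v_0\|_V)^2 \leq C'(|f|_H + \|v_0\|_V)^2,
\end{equation*}
with $C'$ depending only on $\nu_1$ and $C$, hence only on the constant coefficients of $A$. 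Taking square roots and recalling $v_0 = \psi^+$ yields \eqref{eq:PenalizedEquationAPrioriEstimate}, with the constant independent of $\eps$.

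There is no serious obstacle here; the only point requiring a little care is the bookkeeping in the test-function choice — one must check that $u_\eps - \psi^+$ is a legitimate element of $V$ (it is, as a difference of elements of $V$) and that $\psi^+ \geq \psi$ so the penalization term at $v_0$ vanishes, which is exactly why we subtract $\psi^+$ rather than $\psi$ (which need not lie in $V$). The rest is the standard coercivity-plus-continuity estimate with Young's inequality, and the constant's dependence only on the coefficients of $A$ follows because $\nu_1$ and the continuity constant in Lemma \ref{lem:LargeEnoughLambdaToEnsureCoercivity} depend only on those coefficients.
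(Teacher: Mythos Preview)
Your proof is correct and follows essentially the same approach as the paper: test \eqref{eq:PenalizedProblem} with $v = u_\eps - \psi^+$, use monotonicity and $\beta_\eps(\psi^+)=0$ to drop the penalization term, then apply coercivity and continuity of $a_\lambda$. The only cosmetic difference is that the paper symmetrizes to estimate $\|u_\eps - \psi^+\|_V$ directly (dividing through rather than using Young's inequality) and then applies the triangle inequality, whereas you expand $a_\lambda(u_\eps,u_\eps - \psi^+)$ and absorb via Young; both routes give the same bound with constants depending only on the coefficients of $A$.
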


\begin{proof}
We have $\psi^+ \in H^1_0(\sO\cup\Gamma_0,\fw)$ by \eqref{eq:ObstacleFunctionLessThanZero} and so we may choose $v=u_\eps-\psi^+ \in H^1_0(\sO\cup\Gamma_0,\fw)$ in \eqref{eq:PenalizedProblem}. Noting that $\beta_\eps(\psi^+)=0$ since $\psi^+ = \psi + \psi^- \geq \psi$ a.e. on $\sO$, we obtain
\begin{equation}
\label{eq:PenalizedProblem_IncEqn}
a_\lambda(u_\eps,u_\eps-\psi^+) + (\beta_\eps(u_\eps)-\beta_\eps(\psi^+),u_\eps-\psi^+)_H = (f, u_\eps-\psi^+)_H.
\end{equation}
Since $\beta_\eps$ is monotone, applying \eqref{eq:MonotonePenalizationOperator} to the preceding identity yields
$$
a_\lambda(u_\eps,u_\eps-\psi^+) \leq (f, u_\eps-\psi^+)_H,
$$
and so
$$
a_\lambda(u_\eps-\psi^+,u_\eps-\psi^+) \leq (f, u_\eps-\psi^+)_H - a_\lambda(\psi^+,u_\eps-\psi^+).
$$
Combining the preceding inequality with the G\r{a}rding inequality \eqref{eq:CoerciveHeston} and the continuity estimate \eqref{eq:ContinuousCoerciveHeston} for $a_\lambda(u,v)$ gives
$$
\nu_1\|u_\eps-\psi^+\|_V^2 \leq C\left(\|f\|_H + \|\psi^+\|_V\right)\|u_\eps-\psi^+\|_V,
$$
and therefore we obtain \eqref{eq:PenalizedEquationAPrioriEstimate}.
\end{proof}

We also have the following analogue of the a priori estimate for the penalization term \cite[Equation (3.1.36)]{Bensoussan_Lions}.

\begin{lem}[A priori estimate for the penalization term]
\label{lem:PenalizationTermAPrioriEstimate}
If $u_\eps\in V$ is a solution to Problem \ref{prob:HestonPenalizedEquation}, then
\begin{equation}
\label{eq:PenalizationTermAPrioriEstimate}
\|(\psi-u_\eps)^+\|_H \leq C\sqrt{\eps}\left(\|f\|_H + \|\psi^+\|_V\right), \quad \forall\eps > 0,
\end{equation}
where $C$ depends only on the constant coefficients of $A$.
\end{lem}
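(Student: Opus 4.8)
The plan is to extract the penalization-term bound directly from the identity \eqref{eq:PenalizedProblem_IncEqn} that already appears in the proof of Lemma \ref{lem:PenalizedEquationAPrioriEstimate}, rather than starting afresh. First I would take the test function $v = u_\eps - \psi^+ \in V$ in \eqref{eq:PenalizedProblem} (legitimate since $\psi^+ \in H^1_0(\sO\cup\Gamma_0,\fw)$ by \eqref{eq:ObstacleFunctionLessThanZero}) and record
\[
a_\lambda(u_\eps, u_\eps - \psi^+) + (\beta_\eps(u_\eps), u_\eps - \psi^+)_H = (f, u_\eps - \psi^+)_H,
\]
using $\beta_\eps(\psi^+) = 0$. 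The point is to isolate the penalization inner product: $(\beta_\eps(u_\eps), u_\eps - \psi^+)_H = (f, u_\eps-\psi^+)_H - a_\lambda(u_\eps, u_\eps - \psi^+)$.

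Next I would show that $(\beta_\eps(u_\eps), u_\eps - \psi^+)_H$ dominates $\tfrac{1}{\eps}\|(\psi - u_\eps)^+\|_H^2$ from below. Writing $w := u_\eps$, we have $\beta_\eps(w) = -\tfrac1\eps(\psi - w)^+$, so
\[
(\beta_\eps(w), w - \psi^+)_H = -\frac{1}{\eps}\left((\psi-w)^+, w - \psi^+\right)_H.
\]
On the set $\{\psi > w\}$ we have $(\psi-w)^+ = \psi - w > 0$ and $w - \psi^+ = w - \psi - \psi^- \le w - \psi < 0$ (since $\psi^- \ge 0$ and $\psi^+=\psi$ there when $\psi>w$... one must be careful: on $\{\psi>w\}$, $\psi^+ \ge \psi > w$, hence $w-\psi^+ < 0$, and in fact $w - \psi^+ \le w - \psi = -(\psi-w)^+$). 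Therefore $-(\psi-w)^+(w-\psi^+) \ge ((\psi-w)^+)^2$ pointwise, giving
\[
(\beta_\eps(u_\eps), u_\eps - \psi^+)_H \ge \frac{1}{\eps}\|(\psi - u_\eps)^+\|_H^2.
\]
Combining this with the identity above yields $\tfrac1\eps\|(\psi-u_\eps)^+\|_H^2 \le (f, u_\eps-\psi^+)_H - a_\lambda(u_\eps, u_\eps-\psi^+)$.

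To bound the right-hand side I would use the continuity estimate \eqref{eq:ContinuousCoerciveHeston} for $a_\lambda$ together with Cauchy--Schwarz, giving
\[
\frac{1}{\eps}\|(\psi - u_\eps)^+\|_H^2 \le \|f\|_H \|u_\eps - \psi^+\|_H + C\|u_\eps\|_V \|u_\eps - \psi^+\|_V \le C\left(\|f\|_H + \|u_\eps\|_V\right)\left(\|u_\eps\|_V + \|\psi^+\|_V\right),
\]
and then invoke the a priori estimate \eqref{eq:PenalizedEquationAPrioriEstimate} from Lemma \ref{lem:PenalizedEquationAPrioriEstimate} to replace $\|u_\eps\|_V$ by $C(\|f\|_H + \|\psi^+\|_V)$. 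This gives
\[
\frac{1}{\eps}\|(\psi - u_\eps)^+\|_H^2 \le C\left(\|f\|_H + \|\psi^+\|_V\right)^2,
\]
and multiplying through by $\eps$ and taking square roots produces \eqref{eq:PenalizationTermAPrioriEstimate}. The only point requiring genuine care --- and what I would flag as the main obstacle --- is the pointwise inequality $-(\psi-u_\eps)^+(u_\eps-\psi^+) \ge ((\psi-u_\eps)^+)^2$ a.e.\ on $\sO$; this needs the elementary observation that wherever $(\psi - u_\eps)^+ > 0$ (i.e.\ $\psi > u_\eps$) one has $\psi^+ \ge \psi$ and hence $u_\eps - \psi^+ \le u_\eps - \psi = -(\psi - u_\eps)^+$, while wherever $(\psi-u_\eps)^+ = 0$ both sides vanish. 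Everything else is a routine chain of the already-established estimates, so I would keep the writeup short.
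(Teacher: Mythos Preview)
Your proof is correct and follows essentially the same approach as the paper: both take $v = u_\eps - \psi^+$ in \eqref{eq:PenalizedProblem}, bound the resulting expression using the continuity estimate \eqref{eq:ContinuousCoerciveHeston} together with \eqref{eq:PenalizedEquationAPrioriEstimate}, and then extract $\tfrac{1}{\eps}\|(\psi-u_\eps)^+\|_H^2$ from the penalization term via a sign argument. The only cosmetic difference is that the paper obtains the key inequality by splitting $\psi - u_\eps = (\psi^+ - u_\eps) - \psi^-$ and using $\beta_\eps(u_\eps)\le 0$, $\psi^-\ge 0$, whereas you verify the equivalent pointwise bound $-(\psi-u_\eps)^+(u_\eps-\psi^+) \ge ((\psi-u_\eps)^+)^2$ directly; both amount to the observation $\psi^+ \ge \psi$.
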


\begin{proof}
As in the proof of Lemma \ref{lem:PenalizedEquationAPrioriEstimate}, choose $v = u_\eps - \psi^+$ in \eqref{eq:PenalizedProblem} to give
\begin{align*}
\left|(\beta_\eps(u_\eps),u_\eps-\psi^+)_H\right| &= \left|-a_\lambda(u_\eps,u_\eps-\psi^+) + (f, u_\eps-\psi^+)_H\right|
\\
&\leq C\|u_\eps\|_V\|u_\eps-\psi^+\|_V + \|f\|_H\|u_\eps-\psi^+\|_H \quad\hbox{(by \eqref{eq:ContinuousCoerciveHeston}),}
\end{align*}
and thus
\begin{equation}
\label{eq:PenalizationTermAPrioriEstimate_prefinal}
\left|(\beta_\eps(u_\eps),u_\eps-\psi^+)_H\right| \leq C\left(\|f\|_H + \|\psi^+\|_V\right)^2 \quad\hbox{(by \eqref{eq:PenalizedEquationAPrioriEstimate})},
\end{equation}
where $C>0$ depends only on the constant coefficients of $A$ and is independent of $\eps$. But
\begin{align*}
(\beta_\eps(u_\eps),\psi-u_\eps)_H &= (\beta_\eps(u_\eps),\psi^+-u_\eps)_H + (\beta_\eps(u_\eps),\psi-\psi^+)_H
\\
&\geq (\beta_\eps(u_\eps),\psi^+-u_\eps)_H \quad\hbox{(by \eqref{eq:PenalizationOperator} and fact that $\psi-\psi^+ = -\psi^- \leq 0$)}
\\
&\geq -C\left(\|f\|_H + \|\psi^+\|_V\right)^2 \quad\hbox{(by \eqref{eq:PenalizationTermAPrioriEstimate_prefinal})}.
\end{align*}
Hence, because $\beta_\eps(u_\eps) = -\eps^{-1}(\psi-u_\eps)^+$ by \eqref{eq:PenalizationOperator}, we obtain
\begin{align*}
-\eps^{-1}((\psi-u_\eps)^+,(\psi-u_\eps)^+)_H &= -\eps^{-1}((\psi-u_\eps)^+,\psi-u_\eps)_H
\\
&\geq -C\left(\|f\|_H + \|\psi^+\|_V\right)^2,
\end{align*}
and this yields \eqref{eq:PenalizationTermAPrioriEstimate}.
\end{proof}

By analogy with \cite[Theorem 3.1.2]{Bensoussan_Lions} we have

\begin{thm}[Existence and uniqueness of solutions to the penalized equation]
\label{thm:ExistenceUniquenessEllipticHestonPenalizedProblem}
There exists a unique solution to Problem \ref{prob:HestonPenalizedEquation}.
\end{thm}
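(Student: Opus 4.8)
The plan is to recast Problem~\ref{prob:HestonPenalizedEquation} as a single operator equation in the dual space and solve it with the theory of monotone operators. Write $V := H^1_0(\sO\cup\Gamma_0,\fw)$, $H := L^2(\sO,\fw)$, and $V' := H^{-1}(\sO,\fw)$, and recall the continuous dense embedding $V\hookrightarrow H$ with $\|u\|_H\leq\|u\|_V$ (Definition~\ref{defn:H1WeightedSobolevSpaces}). By Lemma~\ref{lem:LargeEnoughLambdaToEnsureCoercivity} the coercive form $a_\lambda$ induces a bounded linear operator $\mathcal{A}_\lambda:V\to V'$, which by the Lax--Milgram Theorem~\ref{thm:LaxMilgram} (equivalently Theorem~\ref{thm:ExistenceUniquenessEllipticCoerciveHeston}) is an isomorphism. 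From $\beta_\eps(w)=-\eps^{-1}(\psi-w)^+$ and the elementary pointwise bound $\bigl|(\psi-w)^+-(\psi-\tilde w)^+\bigr|\leq|w-\tilde w|$ one checks that $\beta_\eps:H\to H$ is Lipschitz with constant $\eps^{-1}$, that $\beta_\eps(\psi^+)=0$ (since $\psi^+\geq\psi$ a.e.\ on $\sO$), and hence, composing with $V\hookrightarrow H\hookrightarrow V'$, that $w\mapsto\beta_\eps(w)$ is a bounded Lipschitz map $V\to V'$. Setting $\mathcal{A}_\eps u := \mathcal{A}_\lambda u+\beta_\eps(u)$ and viewing $f\in H\subset V'$, a function $u_\eps\in V$ solves \eqref{eq:PenalizedProblem} if and only if $\mathcal{A}_\eps u_\eps=f$ in $V'$.

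Next I would verify that $\mathcal{A}_\eps:V\to V'$ meets the hypotheses of the Browder--Minty surjectivity theorem for monotone, coercive, hemicontinuous operators. \emph{Boundedness and hemicontinuity} follow from the continuity estimate \eqref{eq:ContinuousCoerciveHeston} for $a_\lambda$ and the Lipschitz continuity of $\beta_\eps:H\to H$ together with $\|\cdot\|_H\leq\|\cdot\|_V$. \emph{Strong monotonicity}: for $u,\tilde u\in V$,
\[
\langle\mathcal{A}_\eps u-\mathcal{A}_\eps\tilde u,\,u-\tilde u\rangle = a_\lambda(u-\tilde u,u-\tilde u)+(\beta_\eps(u)-\beta_\eps(\tilde u),u-\tilde u)_H \geq \nu_1\|u-\tilde u\|_V^2,
\]
using the coercivity \eqref{eq:CoerciveHeston} of $a_\lambda$ and the monotonicity \eqref{eq:MonotonePenalizationOperator} of $\beta_\eps$. \emph{Coercivity}: writing $(\beta_\eps(u),u)_H=(\beta_\eps(u)-\beta_\eps(\psi^+),u-\psi^+)_H+(\beta_\eps(u),\psi^+)_H$, the first term is $\geq0$ by \eqref{eq:MonotonePenalizationOperator}, while the pointwise inequality $(\psi-u)^+\leq\psi^++u^-$ gives $\|(\psi-u)^+\|_H\leq\|\psi^+\|_H+\|u\|_V$ and hence
\[
(\beta_\eps(u),u)_H \geq -\eps^{-1}\|\psi^+\|_H\bigl(\|\psi^+\|_H+\|u\|_V\bigr);
\]
combined with $a_\lambda(u,u)\geq\nu_1\|u\|_V^2$ this yields $\langle\mathcal{A}_\eps u,u\rangle\geq\nu_1\|u\|_V^2-\eps^{-1}\|\psi^+\|_H\|u\|_V-\eps^{-1}\|\psi^+\|_H^2\to\infty$ as $\|u\|_V\to\infty$ (note $\psi^+\in V$ by \eqref{eq:ObstacleFunctionLessThanZero}).

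By the Browder--Minty theorem, $\mathcal{A}_\eps$ is onto $V'$, so in particular there is $u_\eps\in V$ with $\mathcal{A}_\eps u_\eps=f$, i.e.\ $u_\eps$ solves Problem~\ref{prob:HestonPenalizedEquation}; its norm is then controlled by Lemmas~\ref{lem:PenalizedEquationAPrioriEstimate} and \ref{lem:PenalizationTermAPrioriEstimate}. Uniqueness is immediate from strong monotonicity: two solutions $u_\eps,\tilde u_\eps$ satisfy $\nu_1\|u_\eps-\tilde u_\eps\|_V^2\leq\langle\mathcal{A}_\eps u_\eps-\mathcal{A}_\eps\tilde u_\eps,u_\eps-\tilde u_\eps\rangle=0$, so $u_\eps=\tilde u_\eps$.

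The step I expect to be the main obstacle is establishing coercivity of $\mathcal{A}_\eps$: the penalization pairing $(\beta_\eps(u),u)_H$ is a priori sign-indefinite, and one must exploit simultaneously that $\beta_\eps$ vanishes at the admissible function $\psi^+$ and that it is monotone in order to absorb it into the term $\nu_1\|u\|_V^2$ supplied by \eqref{eq:CoerciveHeston}. As an alternative to quoting Browder--Minty, the same proof can be carried out concretely by a Galerkin scheme: solve the finite-dimensional nonlinear problems by Brouwer's fixed point theorem using precisely this coercivity bound, extract a weakly convergent subsequence in $V$ via the uniform a priori estimates of Lemmas~\ref{lem:PenalizedEquationAPrioriEstimate} and \ref{lem:PenalizationTermAPrioriEstimate}, and identify the weak limit of the nonlinear term by Minty's monotonicity trick; no compact embedding is used anywhere, consistent with Remark~\ref{rmk:NoRellichAndZeroEigenvalue}.
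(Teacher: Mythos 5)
Your proposal is correct, but it is packaged differently from the paper's proof. The paper follows Bensoussan--Lions: uniqueness exactly as you do (coercivity of $a_\lambda$ plus monotonicity of $\beta_\eps$), and existence by a concrete Galerkin scheme --- finite-dimensional subspaces $V_m$, solvability of each finite-dimensional problem via the Brouwer-type fixed point Lemma \ref{lem:Brouwer}, the uniform bounds of Lemmas \ref{lem:PenalizedEquationAPrioriEstimate} and \ref{lem:PenalizationTermAPrioriEstimate}, weak extraction, and Minty's monotonicity trick to identify the weak limit of $\beta_\eps(u_m)$ precisely because the embedding $V\hookrightarrow H$ is not known to be compact. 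Your main route instead quotes the Browder--Minty surjectivity theorem for the operator $\mathcal{A}_\eps=\mathcal{A}_\lambda+\beta_\eps:V\to V'$, after verifying boundedness, hemicontinuity, strong monotonicity, and coercivity; all of these verifications are sound (in fact strong monotonicity alone already gives coercivity via $\langle\mathcal{A}_\eps u,u\rangle\geq\nu_1\|u\|_V^2-\|\mathcal{A}_\eps 0\|_{V'}\|u\|_V$, so your more explicit estimate using $\beta_\eps(\psi^+)=0$ is a safe but optional refinement). Since Browder--Minty is itself proved by exactly the Galerkin-plus-Minty argument, and since $V$ is a separable Hilbert space so the theorem applies without reservation, the two proofs are mathematically the same; yours buys brevity at the cost of invoking an external theorem, while the paper's buys self-containedness and makes explicit (via the $X_m\geq 0$ computation) where monotonicity substitutes for the missing Rellich--Kondrachov compactness. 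Your closing paragraph sketching the Galerkin alternative is essentially the paper's proof verbatim.
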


\begin{proof}[Proof of uniqueness in Theorem \ref{thm:ExistenceUniquenessEllipticHestonPenalizedProblem}]
The proof of uniqueness is almost identical to that of the proof of \cite[Theorem 3.1.2]{Bensoussan_Lions}. Let $u,\tilde u$ be two solutions of \eqref{eq:PenalizedProblem}. Then, substituting $u,\tilde u$ in \eqref{eq:PenalizedProblem}, subtracting the resulting equations, and choosing $v=u-\tilde u$ yields
$$
a_\lambda(u-\tilde u,u-\tilde u) + (\beta_\eps(u)-\beta_\eps(\tilde u), u-\tilde u)_H = 0.
$$
The operator $\beta_\eps$ is monotone by Lemma \ref{lem:Monotone}. Hence,
$$
a_\lambda(u-\tilde u,u-\tilde u) \leq 0,
$$
and \eqref{eq:CoerciveHeston} ensures that $\|u-\tilde u\|_V=0$, so $u = \tilde u$ a.e. on $\sO$.
\end{proof}

\begin{lem}[Fixed point lemma]
\label{lem:Brouwer}
\cite[Lemma 1.4.3]{Lions_1969}
Let $m\geq 1$ and let $F:\RR^m\to\RR^m$ be a continuous map such that for a suitable $\varrho>0$ one has
$$
(F(\xi),\xi) \geq 0, \quad \forall \xi\in\RR^m, |\xi|=\varrho.
$$
Then there exists $\xi_0$, $|\xi_0|\leq\varrho$, such that $F(\xi_0)=0$.
\end{lem}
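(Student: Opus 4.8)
The plan is to deduce the statement from Brouwer's fixed point theorem by the standard normalization-and-contradiction argument; the result is, after all, quoted from \cite[Lemma 1.4.3]{Lions_1969}, so one could simply invoke it, but I record the short proof for completeness. Write $\bar B_\varrho := \{\xi\in\RR^m : |\xi|\leq\varrho\}$ for the closed ball of radius $\varrho$ and $\partial B_\varrho$ for its boundary sphere. Assume, for contradiction, that $F(\xi)\neq 0$ for every $\xi\in\bar B_\varrho$. Then I would introduce the auxiliary map
$$
G:\bar B_\varrho\to\bar B_\varrho, \quad G(\xi) := -\varrho\,\frac{F(\xi)}{|F(\xi)|},
$$
which is well-defined and continuous precisely because $F$ is continuous and, by the contradiction hypothesis, nowhere zero on the compact set $\bar B_\varrho$; moreover $|G(\xi)|=\varrho$, so $G(\bar B_\varrho)\subset\partial B_\varrho\subset\bar B_\varrho$. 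Since $\bar B_\varrho$ is a nonempty, compact, convex subset of $\RR^m$, Brouwer's fixed point theorem furnishes a point $\xi_0\in\bar B_\varrho$ with $G(\xi_0)=\xi_0$.

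The contradiction then comes from the sign hypothesis. From $G(\xi_0)=\xi_0$ we get $|\xi_0|=|G(\xi_0)|=\varrho$, so the assumed inequality $(F(\xi),\xi)\geq 0$ applies at $\xi=\xi_0$. On the other hand, substituting $\xi_0 = -\varrho F(\xi_0)/|F(\xi_0)|$ yields
$$
(F(\xi_0),\xi_0) = \left(F(\xi_0),\,-\varrho\,\frac{F(\xi_0)}{|F(\xi_0)|}\right) = -\frac{\varrho}{|F(\xi_0)|}\,|F(\xi_0)|^2 = -\varrho\,|F(\xi_0)| < 0,
$$
the strict inequality using $\varrho>0$ and $F(\xi_0)\neq 0$. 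This contradicts $(F(\xi_0),\xi_0)\geq 0$. Hence the assumption that $F$ never vanishes on $\bar B_\varrho$ is untenable, and there exists $\xi_0$ with $|\xi_0|\leq\varrho$ and $F(\xi_0)=0$, as claimed.

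There is no real obstacle here: the one substantive ingredient is Brouwer's theorem, and the rest is bookkeeping. The only point worth stating with care is the continuity (indeed, the very definability) of $G$ on all of $\bar B_\varrho$ — this is exactly where the contradiction hypothesis ``$F\neq 0$ on $\bar B_\varrho$'' enters, and it is what makes the argument go through. If one preferred to avoid invoking Brouwer's theorem directly, an equivalent route would be a degree-theoretic argument (the homotopy $H_t(\xi) = tF(\xi) + (1-t)\xi$ has no zero on $\partial B_\varrho$ by the sign condition, since $(H_t(\xi),\xi) = t(F(\xi),\xi) + (1-t)\varrho^2 > 0$ there, so $\deg(F,B_\varrho,0)=\deg(\mathrm{id},B_\varrho,0)=1\neq 0$), but the Brouwer argument above is the most economical.
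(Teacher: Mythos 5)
Your proof is correct. The paper does not prove this lemma at all — it simply cites \cite[Lemma 1.4.3]{Lions_1969} — and your argument (assume $F$ is nowhere zero on $\bar B_\varrho$, apply Brouwer to $G(\xi)=-\varrho F(\xi)/|F(\xi)|$, and contradict the sign condition at the resulting fixed point on the sphere) is precisely the standard proof given in that reference, with all the key points (well-definedness and continuity of $G$, the fixed point lying on $\partial B_\varrho$) handled correctly.
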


\begin{proof}[Proof of existence in Theorem \ref{thm:ExistenceUniquenessEllipticHestonPenalizedProblem}]
The proof of existence is similar to that of the proof of \cite[Theorem 3.1.2]{Bensoussan_Lions}. We introduce a family of $V_m\subset V$, $m=1,2,3,\ldots$, of $m$-dimensional subspaces such that
\begin{itemize}
\item For each $v\in V$, there is a $v_m\in V_m$, for each $m\geq 1$, such that
\begin{equation}
\label{eq:StrongConvergenceFiniteDimSubspaceSequence}
\|v-v_m\|_V\to 0, \quad m\to\infty.
\end{equation}
\item There exists $v_0\in V_m\cap \KK$, for all $m\geq 1$.
\end{itemize}
We may choose $v_0 = \psi^+ \in \KK$ and let $v_0',v_1',\ldots,v_k',\ldots$ be an orthonormal basis for the Hilbert space $V$, where $v_0' := v_0/\|v_0\|_V$ if $v_0 \neq 0$ and if $v_0 = 0$, then choose any $v_0' \in V$ with $\|v_0'\|_V = 1$. Let $V_m := \hbox{span}\{v_0',\ldots,v_{m-1}'\}$, $m\geq 1$.

We now consider the finite-dimensional problem, which is to find $u_m\in V_m$ such that
\begin{equation}
\label{eq:PenalizedProblemFiniteDim}
a_\lambda(u_m,v) + (\beta_\eps(u_m),v)_H = (f,v)_H, \quad \forall v\in V_m.
\end{equation}
Such a $u_m$ exists by Lemma \ref{lem:Brouwer} and by \eqref{eq:PenalizedEquationAPrioriEstimate}, we have
\begin{equation}
\label{eq:BoundForSequence}
\|u_m\|_V \leq C\left(\|f\|_H + \|\psi^+\|_V\right),
\end{equation}
where $C>0$ depends only on the constant coefficients of $A$ and is independent of $m$ and $\eps$. We can therefore extract a weakly convergent subsequence, again denoted $u_m$, such that
\begin{equation}
\label{eq:WeakConvergence_u_m}
u_m \rightharpoonup u_\eps\quad\hbox{weakly in }V, \quad m\to\infty.
\end{equation}
Moreover, \eqref{eq:PenalizationTermAPrioriEstimate} yields
\begin{equation}\label{eq:SequenceObstacleDiffBound}
\|(\psi-u_m)^+\|_H \leq C\sqrt{\eps}\left(\|f\|_H + \|\psi^+\|_V\right),
\end{equation}
where $C>0$ depends only on the constant coefficients of $A$ and is independent of $m$ and $\eps$.  From \eqref{eq:SequenceObstacleDiffBound}, by passing to a weakly convergent subsequence, we can assume that as $m\to\infty$,
\begin{equation}
\label{eq:WeakConvergence_u_mDiffPsi}
(\psi-u_m)^+ \rightharpoonup \chi\quad\hbox{weakly in }H,
\end{equation}
for some $\chi \in H$.

For an arbitrary $v\in V$, we may choose $\{v_m\}_{m\geq 0}\in V$ satisfying \eqref{eq:StrongConvergenceFiniteDimSubspaceSequence} and replace $v$ in \eqref{eq:PenalizedProblemFiniteDim} by $v_m$ to give
$$
a_\lambda(u_m,v_m) + (\beta_\eps(u_m),v_m)_H = (f,v_m)_H, \quad \forall m \geq 0.
$$
As $m\to\infty$, we have $v_m$ converges strongly to $v\in V$ by \eqref{eq:StrongConvergenceFiniteDimSubspaceSequence} and $u_m$ converges weakly to $u_\eps\in V$ by \eqref{eq:WeakConvergence_u_m}, and $\beta_\eps(u_m)$ converges weakly to $\eps^{-1}\chi\in H$ by \eqref{eq:WeakConvergence_u_mDiffPsi}. Therefore, by Lemma \ref{lem:BilinearFormWeakLimit}, we can take limits in the preceding identity as $m\to\infty$ and obtain
\begin{equation}
\label{eq:PenalizedProblemLimit}
a_\lambda(u_\eps,v) + (\eps^{-1}\chi,v)_H = (f,v)_H, \quad \forall v\in V.
\end{equation}
From \eqref{eq:PenalizedProblemLimit} we see that $u_\eps$ will be the desired solution to \eqref{eq:PenalizedProblem} provided we can show
\begin{equation}
\label{eq:LimitBetaSequenceIsBetaLimit}
\eps^{-1}\chi = \beta_\eps(u_\eps),
\end{equation}
that is, provided we can show
$$
\chi = (\psi - u_\eps)^+.
$$
Because the embedding $V\to H$ is not necessarily compact, we adapt the alternative monotonicity proof of \cite[Theorem 1.1.2]{Bensoussan_Lions} to prove \eqref{eq:LimitBetaSequenceIsBetaLimit}. Define
$$
X_m := a_\lambda(u_m-v_m,u_m-v_m) + (\beta_\eps(u_m)-\beta_\eps(v_m),u_m-v_m)_H,
$$
where $v_m$ obeys \eqref{eq:StrongConvergenceFiniteDimSubspaceSequence}. From \eqref{eq:CoerciveHeston} and \eqref{eq:MonotonePenalizationOperator} we have
$$
X_m \geq 0, \quad m\geq 1.
$$
Moreover, using \eqref{eq:PenalizedProblemFiniteDim}, we have
$$
X_m =  (f,u_m-v_m)_H - a_\lambda(v_m,u_m-v_m) - (\beta_\eps(v_m),u_m-v_m)_H,
$$
from which we deduce that
\begin{equation}
\label{eq:PenalizedExistIntermedEq1}
(f,u_\eps-v)_H - a_\lambda(v,u_\eps-v) - (\beta_\eps(v),u_\eps-v)_H \geq 0,
\end{equation}
by taking limits as $m\to\infty$ and applying Lemma \ref{lem:BilinearFormWeakLimit}. But replacing $v\in V$ in \eqref{eq:PenalizedProblemLimit} with $v-u_\eps\in V$ gives
\begin{equation}
\label{eq:PenalizedExistIntermedEq2}
a_\lambda(u_\eps,v-u_\eps) + (\eps^{-1}\chi,v-u_\eps)_H -(f,u_\eps-v)_H = 0.
\end{equation}
By adding \eqref{eq:PenalizedExistIntermedEq1} and \eqref{eq:PenalizedExistIntermedEq2}, we deduce that
$$
a_\lambda(u_\eps-v,u_\eps-v) + (\eps^{-1}\chi-\beta_\eps(v),u_\eps-v)_H \geq 0, \quad \forall v\in V.
$$
Taking $v = u_\eps - \delta\varphi$ with arbitrary $\delta>0$ and $\varphi\in V$ we then deduce that
$$
\delta^2 a_\lambda(\varphi,\varphi) + \delta(\eps^{-1}\chi-\beta_\eps(u_\eps - \delta\varphi),\varphi)_H \geq 0.
$$
Dividing by $\delta$ and letting $\delta\to 0$, we therefore obtain
$$
(\eps^{-1}\chi-\beta_\eps(u_\eps),\varphi)_H \geq 0, \quad\forall \varphi \in V,
$$
so that
$$
\eps^{-1}\chi = \beta_\eps(u_\eps).
$$
This proves \eqref{eq:LimitBetaSequenceIsBetaLimit} and completes the proof of existence.
\end{proof}

Before proceeding to the statement and proof of another useful a priori estimate, we will need

\begin{lem}[Strong monotonicity of the penalization operator]
\label{lem:StronglyMonotone}
The penalization operator, $\beta_\eps$ in \eqref{eq:PenalizationOperator}, is \emph{strongly monotone} in the sense that
\begin{equation}
\label{eq:StronglyMonotonePenalizationOperator}
(\beta_\eps(u)-\beta_\eps(\tilde u), \varphi^2(u-\tilde u))_H \geq 0, \quad \forall u, \tilde u \in V,
\end{equation}
if $\varphi \in C^\infty_0(\RR^2)$.
\end{lem}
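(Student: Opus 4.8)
The plan is to reduce \eqref{eq:StronglyMonotonePenalizationOperator} to a pointwise inequality between the integrands and then exploit the non-negativity of the weight $\fw$ and of $\varphi^2$; the argument is essentially the pointwise version of the one used for Lemma \ref{lem:Monotone}, but kept at the level of integrands so that multiplication by $\varphi^2$ is harmless.

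First I would recall that $\beta_\eps(w) = -\eps^{-1}(\psi-w)^+$ for $w\in V$, so that for any $u,\tilde u\in V$,
$$
\beta_\eps(u)-\beta_\eps(\tilde u) = -\frac{1}{\eps}\left((\psi-u)^+ - (\psi-\tilde u)^+\right)\quad\hbox{a.e. on }\sO.
$$
Setting, pointwise a.e. on $\sO$, $a:=\psi-u$ and $b:=\psi-\tilde u$, one has $u-\tilde u = b-a$, so that
$$
\left(\beta_\eps(u)-\beta_\eps(\tilde u)\right)(u-\tilde u) = \frac{1}{\eps}\left(a^+-b^+\right)(a-b)\quad\hbox{a.e. on }\sO.
$$
The only real point is the elementary inequality $(s^+-t^+)(s-t)\geq 0$ for all $s,t\in\RR$ (immediate from the cases $s\geq t$ and $s<t$, i.e.\ the monotonicity of $t\mapsto t^+$); applied with $s=a(x,y)$, $t=b(x,y)$ it yields $\left(\beta_\eps(u)-\beta_\eps(\tilde u)\right)(u-\tilde u)\geq 0$ a.e. on $\sO$.

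Next I would multiply this pointwise inequality by $\varphi^2\fw\geq 0$ (non-negative since $\varphi^2\geq 0$ and $\fw>0$ on $\HH$) and integrate over $\sO$. Before integrating, I would note that the integrand lies in $L^1(\sO)$: since $\varphi\in C^\infty_0(\RR^2)$ is bounded with compact support, while $u-\tilde u\in L^2(\sO,\fw)$ and $\beta_\eps(u)-\beta_\eps(\tilde u)\in L^2(\sO,\fw)$ — because $(\psi-u)^+\leq|\psi|+|u|$ with $\psi\in H^1(\sO,\fw)\subset L^2(\sO,\fw)$, and similarly for $\tilde u$ — so $\varphi^2(u-\tilde u)\in H$ and the product against $\beta_\eps(u)-\beta_\eps(\tilde u)\in H$ is integrable with respect to $\fw\,dxdy$ by Cauchy--Schwarz. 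Hence
$$
\left(\beta_\eps(u)-\beta_\eps(\tilde u),\varphi^2(u-\tilde u)\right)_H = \int_\sO\left(\beta_\eps(u)-\beta_\eps(\tilde u)\right)\varphi^2(u-\tilde u)\,\fw\,dxdy \geq 0,
$$
which is \eqref{eq:StronglyMonotonePenalizationOperator}.

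I do not anticipate a serious obstacle: the statement is the pointwise monotonicity of the positive part combined with the sign of $\varphi^2\fw$, and the only line requiring care is the integrability of the integrand, handled as above via $\psi\in H^1(\sO,\fw)$ and the boundedness of $\varphi$. (Taking $\varphi\equiv 1$ in this pointwise argument also reproves Lemma \ref{lem:Monotone}, so an alternative phrasing is simply that the integrand inequality underlying Lemma \ref{lem:Monotone} survives multiplication by the non-negative factor $\varphi^2$.)
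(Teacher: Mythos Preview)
Your argument is correct and, if anything, a bit cleaner than the paper's. The paper proceeds by expanding the inner product algebraically: writing $u-\tilde u = -(\psi-u)^+ + (\psi-u)^- + (\psi-\tilde u)^+ - (\psi-\tilde u)^-$, it computes
\[
(\eps\beta_\eps(u)-\eps\beta_\eps(\tilde u),\varphi^2(u-\tilde u))_H
= \|\varphi(\psi-u)^+\|_H^2 + \|\varphi(\psi-\tilde u)^+\|_H^2 - 2(\varphi(\psi-u)^+,\varphi(\psi-\tilde u)^+)_H + \text{(non-negative terms)},
\]
and then notes that the first three terms are $\geq 0$ by $2ab\leq a^2+b^2$. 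Your route instead isolates the scalar inequality $(s^+-t^+)(s-t)\geq 0$ pointwise and multiplies by the non-negative factor $\varphi^2\fw$; this is more direct and makes transparent why the cutoff $\varphi^2$ causes no difficulty. Both approaches are equivalent in content, but yours avoids the inner-product expansion and the appeal to $2ab\leq a^2+b^2$.
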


\begin{proof}
We adapt the proof of Lemma \ref{lem:Monotone} and observe that
\begin{align*}
(\eps\beta_\eps(u)-\eps\beta_\eps(\tilde u),  \varphi^2(u-\tilde u))_H
&= \|\varphi(\psi-u)^+\|_H^2 + \|\varphi(\psi-\tilde u)^+\|_H^2 - 2(\varphi(\psi-u)^+, \varphi(\psi-\tilde u)^+)_H
\\
&\quad + (\varphi(\psi-\tilde u)^+,\varphi(\psi-u)^-)_H + (\varphi(\psi-u)^+, \varphi(\psi-\tilde u)^-)_H
\\
&\geq 0,
\end{align*}
as desired.
\end{proof}

For the proof of the next lemma and at several later points in this article, we will need to use a cutoff function with certain properties, so we fix a choice below.

\begin{defn}[Cutoff function]
\label{defn:RadialCutoffFunction}
Let $\eta \in C^\infty(\RR)$ be a cutoff function such that $0\leq\eta\leq 1$, $\eta = 1$ on $(-\infty,1)$, $\eta = 0$ on $(2,\infty)$, while $|\eta'|\leq 2$ and $|\eta''|\leq 4$ on $\RR$. For $R\geq 1$, let $\zeta_R := \eta(\hbox{dist}(\cdot, O)/R) \in C^\infty_0(\RR^2)$ be the corresponding cutoff function such that $0\leq\zeta_R\leq 1$, $\zeta_R = 1$ on $B(R)$ and $\zeta_R = 0$ on $\RR^2\less B(2R)$, where $B(R) := \{(x,y)\in\RR^2: x^2+y^2 < R^2\}$, and, for all $R\geq 1$,
\begin{align}
\label{eq:AuxiliarySpecialWeightedH1EstimateCutoffFunction}
|D\zeta_R| &\leq 10 \quad\hbox{on $\RR^2$},
\\
\label{eq:CutoffFunctionWithUniformC2Bound}
|D^2\zeta_R| &\leq 100 \quad\hbox{on $\RR^2$}.
\end{align}
\end{defn}

A straightforward calculation yields

\begin{lem}
\label{lem:RadialCutoffFunction}
For $R\geq 1$ and $\zeta_R$ as in Definition \ref{defn:RadialCutoffFunction}, $D\zeta_R$ and $\supp D^2\zeta_R$ have support in $B(2R)\less \bar B(R)$, and
\begin{align}
\label{eq:RadialCutoffFunctionFirstDerivative}
|D\zeta_R| &\leq 10R^{-1} \quad\hbox{on $\RR^2$},
\\
\label{eq:RadialCutoffFunctionSecondDerivative}
|D^2\zeta_R| &\leq 100R^{-2} \quad\hbox{on $\RR^2$}.
\end{align}
\end{lem}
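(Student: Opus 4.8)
The plan is to reduce the statement to the one-variable chain rule applied to $\zeta_R(P) = \eta(\dist(P,O)/R)$. Write $r = r(P) := \dist(P,O) = |P|$, so that $\zeta_R = \eta(r/R)$ by Definition \ref{defn:RadialCutoffFunction}. Since $\eta \equiv 1$ on $(-\infty,1)$, the function $\zeta_R$ equals $1$ on the open ball $B(R)$ and is in particular smooth there, despite the failure of $r$ to be differentiable at $O$. Away from $O$, the Euclidean norm $r$ is smooth, with $Dr = P/|P|$, $|Dr| = 1$, and Hessian $D^2 r$ whose entries are $\delta_{ij}/r - x_ix_j/r^3$; consequently $|D^2 r| \leq 2/r$ for $P \neq O$ (with ample room under any of the usual matrix norms).

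First I would dispose of the support claim. Because $\eta$ is smooth and constant on $(-\infty,1)$ and on $(2,\infty)$, continuity of $\eta'$ and $\eta''$ forces $\eta' = \eta'' = 0$ on $(-\infty,1] \cup [2,\infty)$, so $\eta'$ and $\eta''$ are supported in the open interval $(1,2)$. By the chain rule, $D\zeta_R$ and $D^2\zeta_R$ are built out of $\eta'(r/R)$, $\eta''(r/R)$ and the tensors $Dr$, $D^2 r$ (smooth for $P \neq O$); hence they vanish at every point $P$ with $r(P)/R \leq 1$ or $r(P)/R \geq 2$, i.e. outside $\{R < |P| < 2R\} = B(2R) \setminus \bar B(R)$, which is the asserted support property. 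In particular, wherever $D\zeta_R$ or $D^2\zeta_R$ is nonzero one has $r > R \geq 1$, so $1/r < 1/R \leq 1$.

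Next I would read off the quantitative bounds. The chain rule gives $D\zeta_R = R^{-1}\eta'(r/R)\,Dr$, whence $|D\zeta_R| \leq R^{-1}|\eta'(r/R)| \leq 2R^{-1} \leq 10R^{-1}$, using $|\eta'| \leq 2$ and $|Dr| = 1$; this is \eqref{eq:RadialCutoffFunctionFirstDerivative}. Differentiating once more,
\[
D^2\zeta_R = R^{-2}\,\eta''(r/R)\, Dr \otimes Dr + R^{-1}\,\eta'(r/R)\, D^2 r,
\]
and since the right-hand side is supported where $R < |P| < 2R$, there $|Dr \otimes Dr| \leq 1$ and $|D^2 r| \leq 2/r \leq 2/R$, so $|D^2\zeta_R| \leq 4R^{-2} + 2 \cdot 2R^{-2} = 8R^{-2} \leq 100R^{-2}$, using $|\eta''| \leq 4$ and $|\eta'| \leq 2$; this is \eqref{eq:RadialCutoffFunctionSecondDerivative}. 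The argument poses no genuine obstacle --- it is a routine chain-rule computation --- and the only point meriting care is the non-differentiability of $r$ at the origin, which is harmless because $\zeta_R$ is constant near $O$ (so all its derivatives vanish there) and every derivative term occurring above is supported where $r > R$; the constants $10$ and $100$ are deliberately loose, and in fact $|D\zeta_R| \leq 2R^{-1}$ and $|D^2\zeta_R| \leq 8R^{-2}$ hold.
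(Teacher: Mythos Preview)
Your proof is correct and is exactly the straightforward chain-rule computation the paper has in mind; the paper itself omits the argument entirely, stating only that ``a straightforward calculation yields'' the lemma. One minor quibble: the support of $\eta'$ is a priori contained in the closed interval $[1,2]$ rather than the open $(1,2)$, but since you correctly observe that $\eta'$ vanishes at the endpoints by continuity, this does not affect the conclusion that $D\zeta_R$ and $D^2\zeta_R$ vanish whenever $r/R \leq 1$ or $r/R \geq 2$.
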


\begin{lem}[A priori estimate for a solution to the penalized equation]
\label{lem:ExistenceUniquenessEllipticHestonPenalizedProblemPowery}
Assume the hypotheses of Theorem \ref{thm:ExistenceUniquenessEllipticHestonPenalizedProblem}. If $s\geq 1/2$, $y^sf \in L^2(\sO,\fw)$, $y^{2s-1/2}\psi \in H^1(\sO\cup\Gamma_0,\fw)$, $u_\eps\in H^1(\sO\cup\Gamma_0,\fw)$ is a solution to Problem \ref{prob:HestonPenalizedEquation}, and $y^su_\eps \in L^2(\sO,\fw)$, then $y^su_\eps \in H^1(\sO,\fw)$, and
\begin{equation}
\label{eq:PenalizedEquationAPosterioriEstimatePoweryCorrected}
\|y^su_\eps\|_{H^1(\sO,\fw)} \leq C\left(\|y^sf\|_{L^2(\sO,\fw)} + \|(1+y^s)u_\eps\|_{L^2(\sO,\fw)} + \|(1+y^{2s-1/2})\psi^+\|_{H^1(\sO,\fw)} \right),
\end{equation}
where the positive constant $C$ depends only on $s, \gamma$, and the constant coefficients of $A$.
\end{lem}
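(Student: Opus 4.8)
The plan is to test the penalized equation \eqref{eq:PenalizedProblem} against $v := \zeta_R^2 y^{2s}(u_\eps - \psi^+)$, where $\zeta_R$ is the radial cutoff of Definition~\ref{defn:RadialCutoffFunction}, to extract a bound on $\|\zeta_R y^{s} u_\eps\|_V$ that is uniform in $R$ (and in $\eps$), and then to let $R\to\infty$. First I would check that $v\in V$: since $2s\geq 1$, the factor $\zeta_R^2 y^{2s}$ is Lipschitz on $\bar\HH$ with bounded first derivatives and compact support, so it maps $V=H^1_0(\sO\cup\Gamma_0,\fw)$ to itself, and $\psi^+\in V$ by the standing assumption in Problem~\ref{prob:HomogeneousHestonVIProblem} (a smoothing of $y\mapsto y^{s}$ near $y=0$ lets one apply the $C^2_0$ forms of the identities below). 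The point of subtracting $\psi^+$ is that $(\beta_\eps(u_\eps),\zeta_R^2 y^{2s}(u_\eps-\psi^+))_H = (\beta_\eps(u_\eps)-\beta_\eps(\psi^+),(\zeta_R y^{s})^2(u_\eps-\psi^+))_H \geq 0$ by the strong monotonicity of $\beta_\eps$ (Lemma~\ref{lem:StronglyMonotone}, using $\beta_\eps(\psi^+)=0$), so the penalization term is simply discarded, leaving $a_\lambda(u_\eps,\zeta_R^2 y^{2s}u_\eps) \leq a_\lambda(u_\eps,\zeta_R^2 y^{2s}\psi^+) + (f,\zeta_R^2 y^{2s}(u_\eps-\psi^+))_H$; this is what keeps the final constant independent of $\eps$.

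Writing $\phi := \zeta_R y^{s}$, I would lower-bound the left side via the commutator/energy identity $a(u_\eps,\phi^2 u_\eps)=a(\phi u_\eps,\phi u_\eps)-([A,\phi]u_\eps,\phi u_\eps)_H$ (Corollary~\ref{cor:CommutatorInnerProduct}) together with the G\r{a}rding inequality (Proposition~\ref{prop:EnergyGardingHeston}), $a(\phi u_\eps,\phi u_\eps)\geq \frac12 C_2\|\phi u_\eps\|_V^2 - C_2\|(1+y)^{1/2}\phi u_\eps\|_H^2$. Since $\lambda=\lambda_0=C_2$ by Assumption~\ref{assump:CoerciveHeston}, the $\|(1+y)^{1/2}\phi u_\eps\|_H^2$ term is exactly absorbed by the $\lambda(1+y)$-part of $a_\lambda$, so $a_\lambda(u_\eps,\phi^2 u_\eps)\geq \frac12 C_2\|\phi u_\eps\|_V^2 - ([A,\phi]u_\eps,\phi u_\eps)_H$. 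The commutator term is estimated by Corollary~\ref{cor:CommutatorInnerProductEstimate} in terms of $\|y^{1/2}(|D\phi|+|D\phi|^{1/2})u_\eps\|_H^2$: the pieces of $D\phi$ carrying $D\zeta_R$ are supported in $B(2R)\setminus B(R)$ and, by Lemma~\ref{lem:RadialCutoffFunction} and $y\leq 2R$ there, are $O(R^{-1})\|(1+y^{s})u_\eps\|_H^2$, hence $\to 0$ (or are dominated, via dominated convergence, using $(1+y^{2s})u_\eps^2\in L^1(\fw)$), while the piece carrying $\partial_y y^{s}=sy^{s-1}$ contributes the weight $y^{s-1/2}$ and is bounded by $C(s)\|(1+y^{s})u_\eps\|_H^2$ using $y^{2s-1}\leq (1+y^{s})^2$ for $s\geq\frac12$. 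The source term is handled by $(f,\zeta_R^2 y^{2s}u_\eps)_H=(y^{s}f,\zeta_R^2 y^{s}u_\eps)_H$ and Young's inequality, absorbing a small multiple of $\|\phi u_\eps\|_V^2$ into the left side.

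It remains to bound $a_\lambda(u_\eps,\zeta_R^2 y^{2s}\psi^+)$ and $(f,\zeta_R^2 y^{2s}\psi^+)_H$, which is the main obstacle. Here one writes $a(u_\eps,\phi^2\psi^+)=a(\phi u_\eps,\phi\psi^+)-([A,\phi]u_\eps,\phi\psi^+)_H$ (identity \eqref{eq:BilinearFormSquaredFunction}), bounds $a(\phi u_\eps,\phi\psi^+)$ by the continuity estimate \eqref{eq:StrongerHestonBilinearFormContinuity} (valid since $b_1=0$ by Assumption~\ref{assump:HestonCoefficientb1}) followed by Young, absorbs the $\|\phi u_\eps\|_V$-factor, and is left with $\|\zeta_R y^{s}\psi^+\|_{H^1(\sO,\fw)}^2$, the commutator $([A,\phi]u_\eps,\phi\psi^+)_H$, and the $\lambda(1+y)$- and $f$-cross-terms with $\psi^+$. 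The delicate book-keeping is to verify, using the elementary weight inequalities valid for $s\geq\frac12$ (such as $y^{2s}\leq 1+y^{4s-1}$ and $y^{2s-1}\leq(1+y^{s})^2$), that all of these are controlled by $\|(1+y^{2s-1/2})\psi^+\|_{H^1(\sO,\fw)}^2$, $\|y^{s}f\|_{L^2(\sO,\fw)}^2$, and $\|(1+y^{s})u_\eps\|_{L^2(\sO,\fw)}^2$, with any residual $\|\phi u_\eps\|_V$-terms absorbed into the left side. Combining everything gives, for each $R\geq 1$, $\|\zeta_R y^{s}u_\eps\|_V^2\leq C\big(\|y^{s}f\|_{L^2(\sO,\fw)}^2+\|(1+y^{s})u_\eps\|_{L^2(\sO,\fw)}^2+\|(1+y^{2s-1/2})\psi^+\|_{H^1(\sO,\fw)}^2\big)$ with $C$ depending only on $s,\gamma$, and the coefficients of $A$; letting $R\to\infty$ and invoking Fatou's lemma (together with the uniform bound) shows $y^{s}u_\eps\in H^1(\sO,\fw)$ and yields \eqref{eq:PenalizedEquationAPosterioriEstimatePoweryCorrected} (the $\psi^+$-norm being dominated by the $\psi$-norm on the right-hand side, since taking positive parts is norm-nonincreasing). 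The two genuinely delicate points are the $\eps$-independence of the constant — secured precisely by testing against $u_\eps-\psi^+$, which annihilates the penalization term — and the matching of the many cutoff/weight commutator and $\psi^+$ cross-terms to the exact weights $(1+y^{s})$ and $(1+y^{2s-1/2})$ appearing on the right-hand side.
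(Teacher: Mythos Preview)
Your proposal is correct and follows essentially the same strategy as the paper: test the penalized equation with $\varphi^2(u_\eps-\psi^+)$ for $\varphi=\zeta_R y^s$, discard the penalization term via strong monotonicity (Lemma~\ref{lem:StronglyMonotone}), pass from $a_\lambda(\cdot,\varphi^2\cdot)$ to $a_\lambda(\varphi\cdot,\varphi\cdot)$ via the commutator identity \eqref{eq:BilinearFormSquaredFunction}, apply coercivity \eqref{eq:CoerciveHeston} and continuity \eqref{eq:StrongerHestonBilinearFormContinuity}, bound the commutator remainder with \eqref{eq:CommutatorInnerProductEstimate} and Lemma~\ref{lem:RadialCutoffFunction}, and let $R\to\infty$. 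The only organizational difference is that the paper keeps $u_\eps-\psi^+$ grouped throughout and bounds $\|\varphi(u_\eps-\psi^+)\|_V$ first (so the commutator inner product is self-paired and \eqref{eq:CommutatorInnerProductEstimate} applies directly), whereas you split $u_\eps$ and $\psi^+$ and must separately handle the mixed commutator $([A,\varphi]u_\eps,\varphi\psi^+)_H$ via \eqref{eq:ACommutator}; this costs a little extra bookkeeping but yields the same estimate.
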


\begin{proof}
We proceed as in the proof of Lemma \ref{lem:PenalizedEquationAPrioriEstimate}, but now choose $\varphi \in C^\infty_0(\RR^2)$ and $v = \varphi^2(u_\eps-\psi^+)$ to give
\begin{equation}
\label{eq:PenalizedProblem_StrongIncEqn}
a_\lambda(u_\eps,\varphi^2(u_\eps-\psi^+)) + (\beta_\eps(u_\eps)-\beta_\eps(\psi^+), \varphi^2(u_\eps-\psi^+))_H = (f, \varphi^2(u_\eps-\psi^+))_H.
\end{equation}
Since $\beta_\eps$ is strongly monotone by Lemma \ref{lem:StronglyMonotone}, applying \eqref{eq:StronglyMonotonePenalizationOperator} to the preceding identity yields
$$
a_\lambda(u_\eps, \varphi^2(u_\eps-\psi^+)) \leq (f, \varphi^2(u_\eps-\psi^+))_H,
$$
and so
$$
a_\lambda(u_\eps-\psi^+, \varphi^2(u_\eps-\psi^+)) \leq (\varphi f, \varphi(u_\eps-\psi^+))_H - a_\lambda(\psi^+,\varphi^2(u_\eps-\psi^+)).
$$
Applying the commutator energy identity \eqref{eq:BilinearFormSquaredFunction} to the preceding inequality yields
\begin{align*}
{}&a_\lambda(\varphi(u_\eps-\psi^+), \varphi(u_\eps-\psi^+))
\\
&= a_\lambda(u_\eps-\psi^+, \varphi^2(u_\eps-\psi^+)) + ([A,\varphi](u_\eps-\psi^+), \varphi(u_\eps-\psi^+))_H
\\
&\leq (\varphi f, \varphi(u_\eps-\psi^+))_H - a_\lambda(\psi^+,\varphi^2(u_\eps-\psi^+)) + ([A,\varphi](u_\eps-\psi^+), \varphi(u_\eps-\psi^+))_H
\\
&= (\varphi f, \varphi(u_\eps-\psi^+))_H - a_\lambda(\varphi\psi^+,\varphi(u_\eps-\psi^+)) + ([A,\varphi]\psi^+, \varphi(u_\eps-\psi^+))_H
\\
&\quad + ([A,\varphi](u_\eps-\psi^+), \varphi(u_\eps-\psi^+))_H.
\end{align*}
Combining the preceding inequality with the G\r{a}rding inequality \eqref{eq:CoerciveHeston} and the continuity estimate \eqref{eq:ContinuousCoerciveHeston} for $a_\lambda(\cdot,\cdot)$ gives
\begin{align*}
\|\varphi(u_\eps-\psi^+)\|_V^2 &\leq C\left(|\varphi f|_H|\varphi(u_\eps-\psi^+)|_H + \|\varphi \psi^+\|_V\|\varphi(u_\eps-\psi^+)\|_V\right.
\\
&\quad + \left. |\varphi[A,\varphi]\psi^+|_H|(u_\eps-\psi^+)|_H + |([A,\varphi](u_\eps-\psi^+), \varphi(u_\eps-\psi^+))_H|\right),
\end{align*}
where the constant $C$ depends only on the constant coefficients of $A$. Applying the commutator identity \eqref{eq:ACommutator} and the commutator estimate \eqref{eq:CommutatorInnerProductEstimate} in the preceding inequality yields
\begin{align*}
\|\varphi(u_\eps-\psi^+)\|_V^2 &\leq C\left(|\varphi f|_H|\varphi(u_\eps-\psi^+)|_H + \|\varphi \psi^+\|_V\|\varphi(u_\eps-\psi^+)\|_V \right)
\\
&\quad + C\left(|y\varphi|D\varphi||D\psi^+||_H + |y\varphi|D^2\varphi|\psi^+|_H + |(1+y)\varphi|D\varphi|\psi^+|_H\right)|(u_\eps-\psi^+)|_H
\\
&\quad + C|y^{1/2}(|D\varphi| + |D\varphi|^{1/2})(u_\eps-\psi^+)|_H^2,
\end{align*}
where the constant $C$ depends only on $\gamma$ and the constant coefficients of $A$. Let $\zeta_R \in C^\infty_0(\RR^2)$ be the cutoff function in Definition \ref{defn:RadialCutoffFunction} and choose $\varphi = \zeta_Ry^s$, so
\begin{align*}
|D\varphi| &\leq y^s|D\zeta_R| + s\zeta_Ry^{s-1},
\\
|D^2\varphi| &\leq y^s|D^2\zeta_R| + 2s|D\zeta_R| y^{s-1} + s|s-1|\zeta_Ry^{s-2},
\end{align*}
noting that $D\zeta_R$ and $D^2\zeta_R$ are supported in $\bar B(2R)\less B(R)$. Substituting these pointwise inequalities into the preceding estimate for $\|\varphi(u_\eps-\psi^+)\|_V$, using
$$
\|\varphi \psi^+\|_V^2 = |y^{1/2}D(\varphi \psi^+)|_H^2 + |(1+y)^{1/2}\varphi \psi^+|_H^2 \quad\hbox{(by Definition \ref{defn:H1WeightedSobolevSpaces})},
$$
and $D(\varphi \psi^+) = \varphi D\psi^+ + (D\varphi) \psi^+$, and
$$
|D(\varphi \psi^+)| \leq \zeta_Ry^sD\psi^+ + (y^s|D\zeta_R| + s\zeta_Ry^{s-1})\psi^+,
$$
gives
\begin{align*}
\|\varphi(u_\eps-\psi^+)\|_V^2 &\leq C|y^s f|_H|y^s(u_\eps-\psi^+)|_H
\\
&\quad + C\left(|y^{s-1/2}\psi^+|_H + |y^{s+1/2}D\psi^+|_H + |(1+y)^{1/2}y^s\psi^+|_H \right)\|\varphi(u_\eps-\psi^+)\|_V
\\
&\quad + C\left(|(y^{2s+1}|D\zeta_R| + y^{2s})|D\psi^+||_H + |(y^{2s+1}|D^2\zeta_R| + y^{2s}|D\zeta_R| + y^{2s-1})\psi^+|_H \right.
\\
&\qquad + \left. |(1+y)(y^{2s}|D\zeta_R| + y^{2s-1})\psi^+|_H\right)|(u_\eps-\psi^+)|_H
\\
&\quad + C|y^{1/2}(|(y^s|D\zeta_R| + y^{s-1})| + |(y^s|D\zeta_R| + y^{s-1})|^{1/2})(u_\eps-\psi^+)|_H^2,
\end{align*}
where the positive constant $C$ depends only on $s, \gamma$, and the constant coefficients of $A$. By using rearrangement and taking square roots, we obtain
\begin{align*}
\|\varphi(u_\eps-\psi^+)\|_V &\leq C\left(|y^s f|_H + |y^s(u_\eps-\psi^+)|_H \right.
\\
&\quad + |y^{s-1/2}\psi^+|_H + |y^{s+1/2}D\psi^+|_H + |(1+y)^{1/2}y^s\psi^+|_H
\\
&\quad + |(y^{2s+1}|D\zeta_R| + y^{2s})|D\psi^+||_H + |(y^{2s+1}|D^2\zeta_R| + y^{2s}|D\zeta_R| + y^{2s-1})\psi^+|_H
\\
&\quad + |(1+y)(y^{2s}|D\zeta_R| + y^{2s-1})\psi^+|_H + |(u_\eps-\psi^+)|_H
\\
&\quad + \left. |y^{1/2}(|(y^s|D\zeta_R| + y^{s-1})| + |(y^s|D\zeta_R| + y^{s-1})|^{1/2})(u_\eps-\psi^+)|_H \right).
\end{align*}
Applying Lemma \ref{lem:RadialCutoffFunction} to estimate $y|D\zeta_R| \leq 10$ and $y^2|D^2\zeta_R| \leq 100$ yields
\begin{align*}
\|\zeta_Ry^s(u_\eps-\psi^+)\|_V &\leq C\left(|y^s f|_H + |y^s(u_\eps-\psi^+)|_H \right.
\\
&\quad + |y^{s-1/2}\psi^+|_H + |y^{s+1/2}D\psi^+|_H + |(1+y)^{1/2}y^s\psi^+|_H
\\
&\quad + |y^{2s}D\psi^+|_H + |y^{2s-1}\psi^+|_H + |(1+y)y^{2s-1}\psi^+|_H
\\
&\quad + |(u_\eps-\psi^+)|_H + \left. |y^{1/2}(y^{s-1} + y^{(s-1)/2})(u_\eps-\psi^+)|_H \right).
\end{align*}
But
\begin{align*}
\|\zeta_Ry^s(u_\eps-\psi^+)\|_V^2 &= |y^{1/2}D(\zeta_Ry^s(u_\eps-\psi^+))|_H^2 + |(1+y)^{1/2}\zeta_Ry^s(u_\eps-\psi^+)|_H^2,
\\
D(\zeta_Ry^s(u_\eps-\psi^+)) &= \zeta_RD(y^s(u_\eps-\psi^+)) + (D\zeta_R)y^s(u_\eps-\psi^+),
\end{align*}
and so
\begin{align*}
{}&|y^{1/2}\zeta_RD(y^s(u_\eps-\psi^+))|_H + |(1+y)^{1/2}\zeta_Ry^s(u_\eps-\psi^+)|_H
\\
&\leq |y^{1/2}D(\zeta_Ry^s(u_\eps-\psi^+))|_H + |y^{s+1/2}|D\zeta_R|(u_\eps-\psi^+)|_H + |(1+y)^{1/2}\zeta_Ry^s(u_\eps-\psi^+)|_H
\\
&\leq |y^{1/2}D(\zeta_Ry^s(u_\eps-\psi^+))|_H + |(1+y)^{1/2}\zeta_Ry^s(u_\eps-\psi^+)|_H + 10|y^{s-1/2}(u_\eps-\psi^+)|_H
\\
&\quad\hbox{(by \eqref{eq:RadialCutoffFunctionFirstDerivative})}
\\
&\leq \|\zeta_Ry^s(u_\eps-\psi^+)\|_V + 10|y^{s-1/2}(u_\eps-\psi^+)|_H.
\end{align*}
Combining the preceding inequality with the preceding estimate for $\|\zeta_Ry^s(u_\eps-\psi^+)\|_V$, taking the limit as $R\to\infty$, and applying the dominated convergence theorem and the Definition \ref{defn:H1WeightedSobolevSpaces} of $\|\cdot\|_V$, yields
\begin{align*}
\|y^s(u_\eps-\psi^+)\|_V &\leq C\left(|y^s f|_H + |y^s(u_\eps-\psi^+)|_H \right.
\\
&\quad + |y^{s-1/2}\psi^+|_H + |y^{s+1/2}D\psi^+|_H + |(1+y)^{1/2}y^s\psi^+|_H
\\
&\quad + |y^{2s}D\psi^+|_H + |y^{2s-1}\psi^+|_H + |(1+y)y^{2s-1}\psi^+|_H
\\
&\quad + |(u_\eps-\psi^+)|_H + \left. |y^{1/2}(y^{s-1} + y^{(s-1)/2})(u_\eps-\psi^+)|_H \right).
\end{align*}
Finally, since
\begin{align*}
{}&|y^{s-1/2}\psi^+|_H + |y^{s+1/2}D\psi^+|_H + |(1+y)^{1/2}y^s\psi^+|_H
\\
&\quad + |y^{2s}D\psi^+|_H + |y^{2s-1}\psi^+|_H + |(1+y)y^{2s-1}\psi^+|_H
\\
&\leq C\left(|y^{1/2}D((1 + y^{2s-1/2})\psi^+)|_H + |(1+y)^{1/2}(1 + y^{2s-1/2})\psi^+|_H\right)
\\
&\leq C\|(1+y^{2s-1/2})\psi^+\|_V,
\end{align*}
when $s\geq 1/2$ and where $C$ depends only on $s$, we obtain \eqref{eq:PenalizedEquationAPosterioriEstimatePoweryCorrected} from the preceding estimate for $\|y^s(u_\eps-\psi^+)\|_V$.
\end{proof}

To obtain a comparison principle for solutions to the Heston penalized equation, we require

\begin{hyp}[Condition on the upper envelope and obstacle functions]
\label{hyp:UpperBoundObstacleFunction}
Suppose $\psi$ is as in Problem \ref{prob:HomogeneousHestonVIProblem}. Require that there is a function $M\in H^2(\sO,\fw)$ which obeys
\begin{equation}
\label{eq:MgeqPsi}
\psi \leq M \quad\hbox{a.e. on }\sO.
\end{equation}
\end{hyp}

We then obtain

\begin{lem}[A priori comparison principle for a solution to the penalized equation]
\label{lem:ComparisionPrinciplePenalizedHeston}
Suppose there are $M, m \in H^2(\sO,\fw)$ obeying  \eqref{eq:SourceFunctionTraceBounds}, \eqref{eq:mMIneqOnDomain}, \eqref{eq:SourceFunctionBounds}, and \eqref{eq:MgeqPsi}. Let $f\in L^2(\sO,\fw)$ and require that $f$ obeys \eqref{eq:fBoundsCoercive}. If $u_\eps\in H^1_0(\sO\cup\Gamma_0,\fw)$ is a solution to Problem \ref{prob:HestonPenalizedEquation}, then $M, m$, and $u_\eps$ obey
\begin{equation}
\label{eq:uepsBounds}
m \leq u_\eps \leq M \quad\hbox{a.e. on }\sO.
\end{equation}
\end{lem}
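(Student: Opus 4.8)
The plan is to run the classical comparison argument that already appears in the proof of Corollary~\ref{cor:ExistenceUniquenessEllipticCoerciveHeston} and in the existence half of Theorem~\ref{thm:ExistenceUniquenessEllipticHeston_Improved}: I would test the penalized equation \eqref{eq:PenalizedProblem} against $(u_\eps-M)^+$ and against $(m-u_\eps)^+$, convert the resulting identities into inequalities using the bounds \eqref{eq:fBoundsCoercive} and the sign of the penalization term, and then invoke the coercivity \eqref{eq:CoerciveHeston} of $a_\lambda$ to annihilate the truncations. Two elementary facts will be used repeatedly: first, $a_\lambda(w^+,w^-)=a_\lambda(w^-,w^+)=0$ for every $w\in V$ (which follows, exactly as in Corollary~\ref{cor:ExistenceUniquenessEllipticCoerciveHeston}, from $a(w^+,w^-)=a(w^-,w^+)=0$ --- the integrands vanish since $w^+$, $w^-$ and their gradients have disjoint supports --- together with $((1+y)w^+,w^-)_H=0$); second, for $N\in H^2(\sO,\fw)$ and $v\in V$ one has $a_\lambda(N,v)=(A_\lambda N,v)_H$ by Lemma~\ref{lem:HestonIntegrationByParts} (cf.\ \eqref{eq:BilinearFormEvaluateOnLineary}), the $\Gamma_1$-boundary term vanishing because $v=0$ on $\Gamma_1$ in the trace sense.

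\emph{Upper bound.} Since $M\ge 0$ on $\Gamma_1$ by \eqref{eq:SourceFunctionTraceBounds} and $u_\eps=0$ on $\Gamma_1$ (trace sense), the function $v:=(u_\eps-M)^+$ vanishes on $\Gamma_1$ and hence $v\in V$ by the method of proof of Lemma~\ref{lem:SobolevSpaceClosedUnderMaxPart}. Taking this $v$ in \eqref{eq:PenalizedProblem} and subtracting $a_\lambda(M,v)=(A_\lambda M,v)_H$ gives
\[
a_\lambda\big((u_\eps-M)^+,(u_\eps-M)^+\big)=(f-A_\lambda M,v)_H-(\beta_\eps(u_\eps),v)_H .
\]
The first term on the right is $\le 0$ because $f\le A_\lambda M$ a.e.\ by \eqref{eq:fBoundsCoercive} and $v\ge 0$. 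For the second term, on the set $\{v>0\}=\{u_\eps>M\}$ one has $u_\eps>M\ge\psi$ by \eqref{eq:MgeqPsi}, so $(\psi-u_\eps)^+=0$ and thus $\beta_\eps(u_\eps)=0$ there; hence $(\beta_\eps(u_\eps),v)_H=0$. Therefore $a_\lambda(v,v)\le 0$, and \eqref{eq:CoerciveHeston} forces $\|v\|_V=0$, i.e.\ $u_\eps\le M$ a.e.\ on $\sO$.

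\emph{Lower bound.} This is analogous and, notably, does not involve the obstacle at all. By \eqref{eq:SourceFunctionTraceBounds}, $m\le 0$ on $\Gamma_1$, so $v:=(m-u_\eps)^+$ again lies in $V$; testing \eqref{eq:PenalizedProblem} against it and using $a_\lambda(m,v)=(A_\lambda m,v)_H$ gives
\[
a_\lambda\big((m-u_\eps)^+,(m-u_\eps)^+\big)=(A_\lambda m-f,v)_H+(\beta_\eps(u_\eps),v)_H .
\]
Here $(A_\lambda m-f,v)_H\le 0$ by \eqref{eq:fBoundsCoercive} and $v\ge 0$, while $(\beta_\eps(u_\eps),v)_H\le 0$ automatically because $\beta_\eps(u_\eps)=-\eps^{-1}(\psi-u_\eps)^+\le 0$. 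Hence $a_\lambda(v,v)\le 0$ and \eqref{eq:CoerciveHeston} gives $u_\eps\ge m$ a.e.\ on $\sO$. Together with the upper bound this yields \eqref{eq:uepsBounds}.

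The only genuinely delicate step is the treatment of the penalization term in the upper-bound estimate: against $v\ge 0$ the term $(\beta_\eps(u_\eps),v)_H$ has the ``wrong'' sign, and it is precisely hypothesis \eqref{eq:MgeqPsi} (namely $\psi\le M$) that forces $\beta_\eps(u_\eps)$ to vanish on $\{u_\eps>M\}$ so that this term drops out. The remaining ingredients --- membership of $(u_\eps-M)^+$ and $(m-u_\eps)^+$ in $V$, the integration-by-parts identities for $M$ and $m$, the vanishing of $a_\lambda$ on products of positive and negative parts, and the sign bookkeeping in \eqref{eq:fBoundsCoercive} --- are routine and parallel arguments already carried out earlier in the paper; observe also that \eqref{eq:mMIneqOnDomain} and \eqref{eq:SourceFunctionBounds} enter only through their consequence \eqref{eq:AlambdamMIneqOnDomain} and are not needed separately once \eqref{eq:fBoundsCoercive} is assumed.
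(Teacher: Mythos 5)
Your proposal is correct and follows essentially the same route as the paper's proof: test \eqref{eq:PenalizedProblem} against $(u_\eps-M)^+$ and $(u_\eps-m)^-=(m-u_\eps)^+$, use Lemma~\ref{lem:HestonIntegrationByParts} for $M$ and $m$, kill the penalty term on $\{u_\eps>M\}$ via \eqref{eq:MgeqPsi}, note its favorable sign in the lower-bound estimate, and conclude by coercivity \eqref{eq:CoerciveHeston}. The differences (writing $(m-u_\eps)^+$ for $(u_\eps-m)^-$, and observing $\beta_\eps(u_\eps)\le 0$ rather than moving the penalty term to the left-hand side) are purely cosmetic.
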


\begin{proof}
We first show that $u_\eps\leq M$ on $\sO$. We have $(u_\eps - M)^+ \in H^1(\sO,\fw)$ by Lemma \ref{lem:SobolevSpaceClosedUnderMaxPart}. Since $M\geq 0$ on $\Gamma_1$ by \eqref{eq:SourceFunctionTraceBounds} and $u_\eps = 0$ on $\Gamma_1$ (trace sense) by Lemma \ref{lem:PartialEvansTraceZero}, we have $u_\eps-M\leq 0$ on $\Gamma_1$ (trace sense) and thus $(u_\eps-M)^+=0$ on $\Gamma_1$ (trace sense). Therefore $(u_\eps-M)^+ \in H^1_0(\sO\cup\Gamma_0,\fw)$ and we can substitute $v := (u_\eps - M)^+$ in \eqref{eq:PenalizedProblem} to give
$$
a_\lambda(u_\eps,(u_\eps-M)^+) + (\beta_\eps(u_\eps),(u_\eps-M)^+)_H = (f,(u_\eps-M)^+)_H.
$$
If $u_\eps(P)> M(P)$, so $(u_\eps(P)-M(P))^+ > 0$ for some $P\in\sO$, then $u_\eps(P)>\psi(P)$ since $M\geq \psi$ a.e. on $\sO$ by \eqref{eq:MgeqPsi}, and thus $(\psi(P)-u_\eps(P))^+=0$; on the other hand, if $u_\eps(P)\leq M(P)$ for some $P\in\sO$, then $(u_\eps(P)-M(P))^+ = 0$. Therefore, \eqref{eq:PenalizationOperator} gives
$$
(\beta_\eps(u_\eps),(u_\eps-M)^+)_H = -\frac{1}{\eps}((\psi-u_\eps)^+, (u_\eps-M)^+)_H = 0.
$$
Next, integration by parts (Lemma \ref{lem:HestonIntegrationByParts}) yields
$$
a_\lambda(M,(u_\eps-M)^+) = (A_\lambda M,(u_\eps-M)^+)_H.
$$
By subtracting the preceding two equations we obtain
$$
a_\lambda(u_\eps-M,(u_\eps-M)^+) + (\beta_\eps(u_\eps),(u_\eps-M)^+)_H = (f-A_\lambda M,(u_\eps-M)^+)_H,
$$
and thus
$$
a_\lambda((u_\eps-M)^+,(u_\eps-M)^+) = (f-A_\lambda M,(u_\eps-M)^+)_H.
$$
Hence,
\begin{align*}
\nu_1\|(u_\eps-M)^+\|_V^2 &\leq a_\lambda((u_\eps-M)^+,(u_\eps-M)^+) \quad\hbox{(by \eqref{eq:CoerciveHeston})}
\\
&= (f-A_\lambda M,(u_\eps-M)^+)_H
\\
&\leq 0 \quad\hbox{(by \eqref{eq:fBoundsCoercive})}.
\end{align*}
Therefore, $(u_\eps-M)^+ = 0$ a.e. on $\sO$ and so $u_\eps \leq M$ a.e. on $\sO$. (Note that since $M, m$ obey \eqref{eq:mMIneqOnDomain} and \eqref{eq:SourceFunctionBounds}, then $M, m$ obey \eqref{eq:AlambdamMIneqOnDomain}.)

Next we show that $u_\eps\geq m$ on $\sO$. We have $(u_\eps-m)^- \in H^1(\sO,\fw)$ by Lemma \ref{lem:SobolevSpaceClosedUnderMaxPart}. Since $m\leq 0$ on $\Gamma_1$ by \eqref{eq:SourceFunctionTraceBounds} and $u_\eps = 0$ on $\Gamma_1$ (trace sense) by Lemma \ref{lem:PartialEvansTraceZero}, we have $u_\eps-m\geq 0$ on $\Gamma_1$ (trace sense) and thus $(u_\eps-m)^-=0$ on $\Gamma_1$ (trace sense). Therefore $(u_\eps-m)^- \in H^1_0(\sO\cup\Gamma_0,\fw)$ and we can substitute $v := (u_\eps-m)^-$ in \eqref{eq:PenalizedProblem} to give
$$
a_\lambda(u_\eps,(u_\eps-m)^-) + (\beta_\eps(u_\eps),(u_\eps-m)^-)_H = (f,(u_\eps-m)^-)_H,
$$
while integration by parts (Lemma \ref{lem:HestonIntegrationByParts}) yields
$$
a_\lambda(m,(u_\eps-m)^-) = (A_\lambda m,(u_\eps-m)^-)_H.
$$
Subtracting,
$$
a_\lambda(u_\eps-m,(u_\eps-m)^-) + (\beta_\eps(u_\eps),(u_\eps-m)^-)_H = (f-A_\lambda m,(u_\eps-m)^-)_H.
$$
Thus,
$$
-a_\lambda((u_\eps-m)^-,(u_\eps-m)^-) + (\beta_\eps(u_\eps),(u_\eps-m)^-)_H = (f-A_\lambda m,(u_\eps-m)^-)_H.
$$
Hence, \eqref{eq:PenalizationOperator} gives
$$
a_\lambda((u_\eps-m)^-,(u_\eps-m)^-) + \frac{1}{\eps}((\psi-u_\eps)^+, (u_\eps-m)^-)_H = (A_\lambda m - f,(u_\eps-m)^-)_H.
$$
By \eqref{eq:CoerciveHeston} and the facts that $(\psi-u_\eps)^+\geq 0$, $(u_\eps-m)^-\geq 0$, and $A_\lambda m - f \leq 0$ a.e. on $\sO$ by \eqref{eq:fBoundsCoercive},
\begin{align*}
\nu_1\|(u_\eps-m)^-\|_V &\leq a_\lambda((u_\eps-m)^-,(u_\eps-m)^-)
\\
&\leq (A_\lambda m - f,(u_\eps-m)^-)_H
\\
&\leq 0.
\end{align*}
Therefore, $(u_\eps-m)^- = 0$ a.e. on $\sO$ and so $u_\eps \geq m$ a.e. on $\sO$.
\end{proof}

\subsection{Existence and uniqueness of solutions to the coercive variational inequality}
\label{subsec:CoerciveStatVarInequality}
We will need to consider a coercive version of Problem \ref{prob:HomogeneousHestonVIProblem}

\begin{prob}[Coercive variational inequality with homogeneous Dirichlet boundary condition]
\label{prob:HomogeneousHestonVIProblemCoercive}
We call $u\in H^1_0(\sO\cup\Gamma_0,\fw)$ a solution to the coercive variational inequality for the Heston operator with homogeneous Dirichlet boundary condition along $\Gamma_1$ if $u$ is a solution to Problem \ref{prob:HomogeneousHestonVIProblem} when \eqref{eq:VIProblemHestonHomgeneous} is replaced by
\begin{equation}
\label{eq:VIProblemHestonHomogeneousCoercive}
\begin{gathered}
a_\lambda(u,v-u) \geq (f,v-u)_{L^2(\sO,\fw)} \quad\hbox{with } u\geq\psi \hbox{ a.e. on }\sO,
\\
\quad \forall v\in H^1_0(\sO\cup\Gamma_0,\fw) \hbox{ with } v\geq\psi \hbox{ a.e. on }\sO.
\end{gathered}
\end{equation}
\end{prob}

\begin{lem}[A priori estimate for solutions to the coercive variational inequality]
\label{lem:VIExistenceUniquenessEllipticCoerciveHeston}
If $u\in H^1(\sO\cup\Gamma_0,\fw)$ is the solution to Problem \ref{prob:HomogeneousHestonVIProblemCoercive}, then
\begin{equation}
\label{eq:EllipticCoerciveHestonAPosterioriEstimate}
\|u\|_{H^1(\sO,\fw)} \leq C\left(\|f\|_{L^2(\sO,\fw)} + \|\psi^+\|_{H^1(\sO,\fw)}\right),
\end{equation}
where $C$ depends only on the constant coefficients of $A$.
\end{lem}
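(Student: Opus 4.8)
The plan is to mimic the proof of the penalized-equation estimate (Lemma \ref{lem:PenalizedEquationAPrioriEstimate}), but now without a penalization term: test the coercive variational inequality against the admissible competitor $v=\psi^+$ and then invoke the coercivity and continuity of $a_\lambda$ from Lemma \ref{lem:LargeEnoughLambdaToEnsureCoercivity}.

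First I would note that $\psi^+ \in V = H^1_0(\sO\cup\Gamma_0,\fw)$ by the hypothesis \eqref{eq:ObstacleFunctionLessThanZero} on $\psi$ in Problem \ref{prob:HomogeneousHestonVIProblem}, and that $\psi^+ = \psi+\psi^- \geq \psi$ a.e.\ on $\sO$, so $\psi^+\in\KK$ and is a legitimate test function in \eqref{eq:VIProblemHestonHomogeneousCoercive}. Substituting $v=\psi^+$ into \eqref{eq:VIProblemHestonHomogeneousCoercive} and using linearity of $a_\lambda(u,\cdot)$ gives $a_\lambda(u,u-\psi^+) \leq (f,u-\psi^+)_H$. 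Writing
$$
a_\lambda(u-\psi^+,u-\psi^+) = a_\lambda(u,u-\psi^+) - a_\lambda(\psi^+,u-\psi^+)
$$
and combining with the preceding inequality yields
$$
a_\lambda(u-\psi^+,u-\psi^+) \leq (f,u-\psi^+)_H - a_\lambda(\psi^+,u-\psi^+).
$$

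Next I would apply the coercivity bound \eqref{eq:CoerciveHeston} to the left-hand side, the continuity bound \eqref{eq:ContinuousCoerciveHeston} to estimate $|a_\lambda(\psi^+,u-\psi^+)| \leq C\|\psi^+\|_V\|u-\psi^+\|_V$, and the Cauchy--Schwarz inequality together with the elementary embedding $\|w\|_H \leq \|w\|_V$ (valid since $\fw \leq (1+y)\fw$ pointwise) to estimate $|(f,u-\psi^+)_H| \leq \|f\|_H\|u-\psi^+\|_V$. This produces $\nu_1\|u-\psi^+\|_V^2 \leq (\|f\|_H + C\|\psi^+\|_V)\|u-\psi^+\|_V$, hence $\|u-\psi^+\|_V \leq \nu_1^{-1}(\|f\|_H + C\|\psi^+\|_V)$, and the triangle inequality $\|u\|_V \leq \|u-\psi^+\|_V + \|\psi^+\|_V$ then gives \eqref{eq:EllipticCoerciveHestonAPosterioriEstimate} with a constant depending only on $\nu_1$ and the continuity constant of $a_\lambda$, which in turn depend only on the constant coefficients of $A$.

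There is no substantive obstacle here: the only points requiring (minor) care are verifying that $\psi^+$ is an admissible test function and keeping straight which norm ($H$ versus $V$) appears in each term, the mismatch being absorbed by the embedding $\|\cdot\|_H \leq \|\cdot\|_V$. The argument is essentially identical to that of Lemma \ref{lem:PenalizedEquationAPrioriEstimate} with the penalization term dropped (note $\psi^+$ plays here the role played there by $\psi^+$, and the monotonicity of $\beta_\eps$ is no longer needed).
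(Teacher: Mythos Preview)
Your proposal is correct and follows essentially the same approach as the paper: both test the coercive variational inequality with $v=\psi^+$ and then apply the coercivity \eqref{eq:CoerciveHeston} and continuity \eqref{eq:ContinuousCoerciveHeston} of $a_\lambda$. The only cosmetic difference is that you first bound $\|u-\psi^+\|_V$ and then use the triangle inequality (exactly paralleling the proof of Lemma \ref{lem:PenalizedEquationAPrioriEstimate}, as you note), whereas the paper rearranges to bound $a_\lambda(u,u)$ directly; the content is the same.
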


\begin{proof}
Substituting $v=\psi^+$ in \eqref{eq:VIProblemHestonHomogeneousCoercive} and simplifying yields
$$
-a_\lambda(u,u) \geq (f,\psi)_{L^2(\sO,\fw)} - (f,u)_{L^2(\sO,\fw)} - a_\lambda(u,\psi^+).
$$
Applying \eqref{eq:ContinuousCoerciveHeston} and \eqref{eq:CoerciveHeston}, we obtain
$$
\nu_1\|u\|_{H^1(\sO,\fw)} \leq \|f\|_{L^2(\sO,\fw)}\|\psi\|_{L^2(\sO,\fw)} + \|f\|_{L^2(\sO,\fw)}\|u\|_{L^2(\sO,\fw)} + C\|u\|_{H^1(\sO,\fw)}\|\psi^+\|_{H^1(\sO,\fw)},
$$
and therefore, using rearrangement and taking square roots, we obtain \eqref{eq:EllipticCoerciveHestonAPosterioriEstimate}.
\end{proof}

Recall that $a_\lambda(\cdot,\cdot)$ is defined by \eqref{eq:BilinearFormCoerciveHeston}. By analogy with \cite[Theorem 3.1.1]{Bensoussan_Lions} we have:

\begin{thm}[Existence and uniqueness for the coercive variational inequality]
\label{thm:VIExistenceUniquenessEllipticCoerciveHeston}
There exists a unique solution $u\in\KK$ to Problem \ref{prob:HomogeneousHestonVIProblemCoercive}.
\end{thm}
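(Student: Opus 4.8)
The plan is to obtain existence and uniqueness for the coercive variational inequality (Problem \ref{prob:HomogeneousHestonVIProblemCoercive}) as the limit of the penalized solutions $u_\eps$ from Theorem \ref{thm:ExistenceUniquenessEllipticHestonPenalizedProblem}, following the scheme of \cite[Theorem 3.1.1]{Bensoussan_Lions}. First I would record uniqueness, which is the easy half: if $u,\tilde u\in\KK$ both solve \eqref{eq:VIProblemHestonHomogeneousCoercive}, substitute $v=\tilde u$ into the inequality for $u$ and $v=u$ into the inequality for $\tilde u$, add the two, and get $a_\lambda(u-\tilde u,u-\tilde u)\leq 0$; the coercivity estimate \eqref{eq:CoerciveHeston} then forces $\|u-\tilde u\|_V=0$, so $u=\tilde u$ a.e. on $\sO$.

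For existence, I would start from the penalized solutions $u_\eps\in V$ solving \eqref{eq:PenalizedProblem}, which exist and are unique by Theorem \ref{thm:ExistenceUniquenessEllipticHestonPenalizedProblem}. The uniform a priori bound \eqref{eq:PenalizedEquationAPrioriEstimate} gives $\|u_\eps\|_V\leq C(\|f\|_H+\|\psi^+\|_V)$ independently of $\eps$, so along a subsequence $u_\eps\rightharpoonup u$ weakly in $V$ as $\eps\downarrow 0$. The penalization bound \eqref{eq:PenalizationTermAPrioriEstimate} gives $\|(\psi-u_\eps)^+\|_H\leq C\sqrt\eps(\|f\|_H+\|\psi^+\|_V)\to 0$; combined with weak lower semicontinuity of the $H$-norm (applied to the continuous, hence weakly lower semicontinuous, convex map $w\mapsto\|(\psi-w)^+\|_H$, or more simply by extracting a further subsequence converging a.e.\ on bounded subdomains via a local compactness argument) this yields $(\psi-u)^+=0$, i.e.\ $u\geq\psi$ a.e.\ on $\sO$, so $u\in\KK$. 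One must also verify $u\in V$ still has the homogeneous $\Gamma_1$-trace, which is immediate since $V=H^1_0(\sO\cup\Gamma_0,\fw)$ is weakly closed.

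It remains to pass to the limit in the variational inequality itself. The standard device: for fixed $v\in\KK$, plug $v-u_\eps$ into \eqref{eq:PenalizedProblem} to get
\begin{equation*}
a_\lambda(u_\eps,v-u_\eps)+(\beta_\eps(u_\eps),v-u_\eps)_H=(f,v-u_\eps)_H.
\end{equation*}
Since $v\geq\psi$ a.e., we have $(\psi-v)^+=0$, and a direct computation with the definition \eqref{eq:PenalizationOperator} shows $(\beta_\eps(u_\eps),v-u_\eps)_H=-\eps^{-1}((\psi-u_\eps)^+,v-u_\eps)_H=\eps^{-1}((\psi-u_\eps)^+,u_\eps-v)_H\geq \eps^{-1}((\psi-u_\eps)^+,\psi-v)_H\geq 0$, using $(\psi-u_\eps)^+\geq 0$ and $u_\eps-v\geq(\psi-u_\eps)-(\psi-v)=(\psi-u_\eps)$ on $\{(\psi-u_\eps)^+>0\}$ together with $\psi-v\leq 0$. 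Dropping this nonnegative term gives $a_\lambda(u_\eps,v-u_\eps)\leq(f,v-u_\eps)_H$, i.e. $a_\lambda(u_\eps,v)-a_\lambda(u_\eps,u_\eps)\leq(f,v-u_\eps)_H$. Now take $\liminf_{\eps\downarrow 0}$: the terms $a_\lambda(u_\eps,v)$ and $(f,v-u_\eps)_H$ pass to the limit by weak convergence $u_\eps\rightharpoonup u$ (continuity of $a_\lambda(\cdot,v)$ and of $(f,\cdot)_H$ on $V$ and $H$), while $\liminf a_\lambda(u_\eps,u_\eps)\geq a_\lambda(u,u)$ by weak lower semicontinuity of the coercive quadratic form $a_\lambda$ — this is the one genuinely delicate point, and it follows because $a_\lambda(w,w)=a_\lambda^{\mathrm{sym}}(w,w)$ where $a_\lambda^{\mathrm{sym}}$ is a continuous, coercive, symmetric bilinear form (the antisymmetric part contributes zero on the diagonal), hence an equivalent inner product norm squared, and norms are weakly lower semicontinuous. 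Combining, $a_\lambda(u,v)-a_\lambda(u,u)\leq(f,v-u)_H$ for all $v\in\KK$, which is exactly \eqref{eq:VIProblemHestonHomogeneousCoercive}. I expect the main obstacle to be precisely this lower-semicontinuity step under only weak $V$-convergence — one cannot invoke Rellich–Kondrachov here (Remark \ref{rmk:NoRellichAndZeroEigenvalue}) — but isolating the symmetric part of $a_\lambda$ and treating it as a norm resolves it cleanly; a secondary technical point is confirming $u\in\KK$ from the vanishing of $\|(\psi-u_\eps)^+\|_H$, which I would handle by passing to a further subsequence converging a.e.\ on each $\sO'\Subset\sO$.
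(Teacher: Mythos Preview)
Your overall strategy matches the paper's proof exactly: uniqueness by coercivity, existence via the penalized solutions $u_\eps$, a uniform $V$-bound, weak compactness, and passage to the limit using weak lower semicontinuity of $w\mapsto a_\lambda(w,w)$. The paper also justifies $u\in\KK$ and the lower-semicontinuity step without Rellich--Kondrachov, just as you anticipate.

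However, there is a sign error in your computation of $(\beta_\eps(u_\eps),v-u_\eps)_H$ that reverses the inequality and breaks the limit argument as written. On the set $\{(\psi-u_\eps)^+>0\}$ you have $u_\eps<\psi\leq v$, so $v-u_\eps>0$ and hence $((\psi-u_\eps)^+,v-u_\eps)_H\geq 0$; therefore $(\beta_\eps(u_\eps),v-u_\eps)_H=-\eps^{-1}((\psi-u_\eps)^+,v-u_\eps)_H\leq 0$, not $\geq 0$. (Your intermediate algebra also slips: $(\psi-u_\eps)-(\psi-v)=v-u_\eps$, not $\psi-u_\eps$, and in the last step a nonnegative times a nonpositive is nonpositive.) The paper obtains this sign cleanly from monotonicity of $\beta_\eps$ together with $\beta_\eps(v)=0$ for $v\in\KK$, giving $(\beta_\eps(v)-\beta_\eps(u_\eps),v-u_\eps)_H\geq 0$.

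With the correct sign you get $a_\lambda(u_\eps,v-u_\eps)\geq(f,v-u_\eps)_H$, i.e.\ $a_\lambda(u_\eps,v)-(f,v-u_\eps)_H\geq a_\lambda(u_\eps,u_\eps)$. Now the $\liminf$ argument works: the left side converges by weak convergence, and $\liminf a_\lambda(u_\eps,u_\eps)\geq a_\lambda(u,u)$ gives the desired $a_\lambda(u,v-u)\geq(f,v-u)_H$. With your reversed inequality $a_\lambda(u_\eps,v)-(f,v-u_\eps)_H\leq a_\lambda(u_\eps,u_\eps)$, the lower semicontinuity $\liminf a_\lambda(u_\eps,u_\eps)\geq a_\lambda(u,u)$ points the wrong way and yields nothing. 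So the fix is a one-line sign flip, after which your argument coincides with the paper's.
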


\begin{proof}[Proof of uniqueness in Theorem \ref{thm:VIExistenceUniquenessEllipticCoerciveHeston}]
The proof of uniqueness is almost identical to that of the proof of \cite[Theorem 3.1.1]{Bensoussan_Lions}. Indeed, suppose $u_1,u_2$ are possible solutions. We take $v=u_2$ (respectively, $v=u_1$) in the inequality \eqref{eq:VIProblemHestonHomogeneousCoercive} relating to $u_1$ (respectively, $u_2$), to give
\begin{align*}
a_\lambda(u_1,u_2-u_1) &\geq (f,u_2-u_1)_H,
\\
a_\lambda(u_2,u_1-u_2) &\geq (f,u_1-u_2)_H,
\end{align*}
and by adding, we obtain
$$
-a_\lambda(u_1-u_2,u_1-u_2) \geq 0,
$$
and hence
$$
\nu_1\|u_1-u_2\|_V^2 \leq a_\lambda(u_1-u_2,u_1-u_2) \leq 0,
$$
and so $u_1  = u_2$ a.e. on $\sO$.
\end{proof}

\begin{proof}[Proof of existence in Theorem \ref{thm:VIExistenceUniquenessEllipticCoerciveHeston}]
Given $\eps>0$, let $u_\eps$ be the unique solution to \eqref{eq:PenalizedProblem} produced by Theorem \ref{thm:ExistenceUniquenessEllipticHestonPenalizedProblem}. By \eqref{eq:PenalizedEquationAPrioriEstimate}, the sequence $\{u_\eps\}_{\eps\in(0,1]} \subset H^1_0(\sO\cup\Gamma_0,\fw)$ is uniformly bounded in $H^1(\sO,\fw)$. Therefore, we can extract a subsequence, also denoted by $\{u_\eps\}_{\eps\in(0,1]}$, such that
\begin{equation}
\label{eq:uepsWeaklyH1Convergent}
u_\eps \rightharpoonup u \quad\hbox{weakly in $V$ as } \eps\to 0,
\end{equation}
for some $u \in V$. We deduce from \eqref{eq:PenalizationTermAPrioriEstimate} that
\begin{equation}
\label{eq:PenalizationTermAPrioriEstimate_ProofCoerciveVIExistence}
(\psi-u_\eps)^+ \to 0 \quad\hbox{strongly in $H$ as } \eps\to 0.
\end{equation}
The proof of \eqref{eq:LimitBetaSequenceIsBetaLimit}, replacing $u_m\rightharpoonup u_\eps$ as $m\to\infty$ by $u_\eps\rightharpoonup u$ as $\eps\to 0$, yields
$$
(\psi - u_\eps)^+ \rightharpoonup (\psi - u)^+ \quad\hbox{weakly in $H$ as } \eps\to 0.
$$
Therefore, $(\psi - u)^+=0$ by \eqref{eq:PenalizationTermAPrioriEstimate_ProofCoerciveVIExistence}. Hence,
$$
u \in \KK.
$$
Equation \eqref{eq:PenalizedProblem}, for $v\in H^1_0(\sO\cup\Gamma_0,\fw)$ and $v=u_\eps$, yields
\begin{align*}
a_\lambda(u_\eps,v) + (\beta_\eps(u_\eps),v)_H = (f,v),
\\
a_\lambda(u_\eps,u_\eps) + (\beta_\eps(u_\eps),u_\eps)_H = (f,u_\eps).
\end{align*}
When $v\in \KK$ we have $\beta_\eps(v)=0$ and so, subtracting the second equation from the first, we obtain
$$
a_\lambda(u_\eps,v-u_\eps) - (f,v-u_\eps)_H = (\beta_\eps(v)-\beta_\eps(u_\eps),v-u_\eps)_H \geq 0,
$$
where the inequality follows from \eqref{eq:MonotonePenalizationOperator}. Therefore,
$$
a_\lambda(u_\eps,v) - (f,v-u_\eps)_H \geq a_\lambda(u_\eps,u_\eps),
$$
and hence, taking the limit as $\eps\to 0$ and applying \eqref{eq:uepsWeaklyH1Convergent} and Lemma \ref{lem:BilinearFormWeakLimit}, we find that
$$
a_\lambda(u,v) - (f,v-u)_H \geq \liminf_{\eps\to 0}a_\lambda(u_\eps,u_\eps) \geq a_\lambda(u,u).
$$
Therefore,
$$
a_\lambda(u,v-u) \geq (f,v-u)_H, \quad\forall v\in \KK,
$$
and $u$ is a solution to the variational inequality \eqref{eq:VIProblemHestonHomogeneousCoercive}, as desired.
\end{proof}

\begin{cor}[A posteriori estimate for a solution to the coercive variational inequality]
\label{cor:VIExistenceUniquenessEllipticCoerciveHestonPowery}
Suppose $s\geq 1/2$ and that there are functions $M, m \in H^2(\sO,\fw)$ obeying \eqref{eq:SourceFunctionTraceBounds}, \eqref{eq:mMIneqOnDomain},\eqref{eq:SourceFunctionBounds}, \eqref{eq:MgeqPsi}, and
\begin{equation}
\label{eq:1pluspowerymMLq}
(1+y^s)M, (1+y^s)m \in L^q(\sO,\fw) \quad\hbox{for some } q>2.
\end{equation}
Require that $f \in L^2(\sO,\fw)$ obeys \eqref{eq:fBoundsCoercive} and
\begin{equation}
\label{eq:poweryfL2}
y^sf \in L^2(\sO,\fw),
\end{equation}
while $\psi \in H^1(\sO,\fw)$ obeys
\begin{equation}
\label{eq:1pluspoweryfpsiH1}
(1+y^{2s-1/2})\psi^+ \in H^1(\sO,\fw).
\end{equation}
If $u\in H^1(\sO\cup\Gamma_0,\fw)$ is the unique solution to Problem \ref{prob:HomogeneousHestonVIProblemCoercive}, then $y^su \in H^1(\sO,\fw)$, and
\begin{equation}
\label{eq:EllipticCoerciveHestonAPosterioriEstimatePoweryCorrected}
\|y^su\|_{H^1(\sO,\fw)} \leq C\left(\|y^sf\|_{L^2(\sO,\fw)} + \|(1+y^s)u\|_{L^2(\sO,\fw)} + \|(1+y^{2s-1/2})\psi^+\|_{H^1(\sO,\fw)} \right),
\end{equation}
where $C$ depends only on $s$ and the constant coefficients of $A$.
\end{cor}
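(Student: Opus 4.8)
The plan is to derive \eqref{eq:EllipticCoerciveHestonAPosterioriEstimatePoweryCorrected} by passing to the limit in the a priori estimate of Lemma \ref{lem:ExistenceUniquenessEllipticHestonPenalizedProblemPowery}, applied to the penalized approximations of $u$. Recall from the proof of Theorem \ref{thm:VIExistenceUniquenessEllipticCoerciveHeston} that $u$ is the weak limit in $V$ of a sequence $\{u_\eps\}$ of solutions to the penalized equation \eqref{eq:PenalizedProblem}. Under the present hypotheses Lemma \ref{lem:ComparisionPrinciplePenalizedHeston} applies and yields $m\leq u_\eps\leq M$ a.e.\ on $\sO$, hence $|u_\eps|\leq\max\{|M|,|m|\}$ a.e., uniformly in $\eps$; combining this with \eqref{eq:1pluspowerymMLq} and the finiteness of $\Vol(\sO,\fw)$ (Remark \ref{rmk:FiniteVolumeDomain}) gives $(1+y^s)u_\eps\in L^q(\sO,\fw)\subset L^2(\sO,\fw)$ with $L^2$-norm bounded independently of $\eps$. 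In particular $y^su_\eps\in L^2(\sO,\fw)$, so Lemma \ref{lem:ExistenceUniquenessEllipticHestonPenalizedProblemPowery} is applicable --- its remaining hypotheses being supplied by \eqref{eq:poweryfL2} and \eqref{eq:1pluspoweryfpsiH1}, using $y^s\leq 1+y^{2s-1/2}$ for $s\geq 1/2$ to match the powers of $y$ in the $\psi^+$-terms --- and gives $y^su_\eps\in H^1(\sO,\fw)$ together with
$$
\|y^su_\eps\|_{H^1(\sO,\fw)} \leq C\left(\|y^sf\|_{L^2(\sO,\fw)} + \|(1+y^s)u_\eps\|_{L^2(\sO,\fw)} + \|(1+y^{2s-1/2})\psi^+\|_{H^1(\sO,\fw)}\right),
$$
whose right-hand side is bounded uniformly in $\eps$ by the preceding observations. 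Thus $\{y^su_\eps\}$ is bounded in $H^1(\sO,\fw)$.

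Next I would pass to the limit as $\eps\to 0$. From the uniform $H^1(\sO,\fw)$ bound, a subsequence satisfies $y^su_\eps\rightharpoonup w$ weakly in $H^1(\sO,\fw)$; pairing against $C^\infty_0(\sO)$ and using $u_\eps\rightharpoonup u$ in $V$ identifies $w=y^su$, so $y^su\in H^1(\sO,\fw)$. To recover the stated estimate I need $\|(1+y^s)u_\eps\|_{L^2(\sO,\fw)}\to\|(1+y^s)u\|_{L^2(\sO,\fw)}$; this follows once $u_\eps\to u$ a.e.\ on $\sO$ along a further subsequence, since then $(1+y^s)u_\eps\to(1+y^s)u$ in $L^2(\sO,\fw)$ by dominated convergence with dominating function $(1+y^s)\max\{|M|,|m|\}\in L^2(\sO,\fw)$. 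The a.e.\ convergence is obtained locally: on any $\sO'\Subset\sO$ with $\bar\sO'\subset\sO$ the weight $\fw$ is bounded between positive constants, so the $H^1(\sO,\fw)$-bound on $\{y^su_\eps\}$ forces $\{u_\eps\}$ to be bounded in the ordinary $H^1(\sO')$, and the classical Rellich--Kondrachov theorem gives strong $L^2(\sO')$-convergence, hence a.e.\ convergence along a subsequence; exhausting $\sO$ by such $\sO'$ and passing to a diagonal subsequence yields a.e.\ convergence on all of $\sO$. Taking $\liminf$ in the displayed estimate along the subsequence for which $y^su_\eps\rightharpoonup y^su$ in $H^1(\sO,\fw)$, and using weak lower semicontinuity of $\|\cdot\|_{H^1(\sO,\fw)}$, yields \eqref{eq:EllipticCoerciveHestonAPosterioriEstimatePoweryCorrected}.

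The step I expect to be the main obstacle is precisely this local-to-global a.e.\ convergence: one must extract a.e.\ convergence of $\{u_\eps\}$ from a uniform weighted $H^1$ bound without a global Rellich--Kondrachov theorem, which Remark \ref{rmk:NoRellichAndZeroEigenvalue} cautions is unavailable for the weighted spaces and unbounded domain here. The device above --- reducing to the ordinary Sobolev space away from $\Gamma_0$ and $\partial\sO$, where Rellich applies, and then exhausting --- circumvents this, but the exhaustion/diagonalization and the verification that the locally-extracted limit agrees with the global weak limit $u$ need care. A cleaner alternative that sidesteps the penalized sequence: once $m\leq u\leq M$ a.e.\ (hence $y^su\in L^2(\sO,\fw)$) is known --- which still uses Lemma \ref{lem:ComparisionPrinciplePenalizedHeston} and the weak limit --- one can test \eqref{eq:VIProblemHestonHomogeneousCoercive} with $v := u - c\,\zeta_R^2 y^{2s}(u-\psi^+)\in\KK$, valid whenever $c>0$ is small enough that $c\zeta_R^2y^{2s}\leq 1$ on $\RR^2$ (here $\zeta_R$ is the cutoff of Definition \ref{defn:RadialCutoffFunction}), since then $v$ is a convex combination of $u\geq\psi$ and $\psi^+\geq\psi$; dividing the resulting inequality by $c$ gives $a_\lambda(u,\zeta_R^2y^{2s}(u-\psi^+))\leq(f,\zeta_R^2y^{2s}(u-\psi^+))_{L^2(\sO,\fw)}$, which is exactly the inequality reached in the proof of Lemma \ref{lem:ExistenceUniquenessEllipticHestonPenalizedProblemPowery} after the penalization term is dropped by strong monotonicity, and the rest of that proof --- commutator energy identity \eqref{eq:BilinearFormSquaredFunction}, the bounds \eqref{eq:CoerciveHeston} and \eqref{eq:ContinuousCoerciveHeston}, the commutator estimate \eqref{eq:CommutatorInnerProductEstimate}, Lemma \ref{lem:RadialCutoffFunction}, and the limit $R\to\infty$ --- then carries over verbatim with $u_\eps$ replaced by $u$.
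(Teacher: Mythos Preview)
Your main approach is essentially the paper's: apply Lemma \ref{lem:ExistenceUniquenessEllipticHestonPenalizedProblemPowery} to the penalized approximants $u_\eps$, obtain a uniform $H^1$ bound on $y^su_\eps$, identify the weak limit as $y^su$, and use lower semicontinuity of the norm. The only substantive difference is in how you obtain convergence of $\|(1+y^s)u_\eps\|_{L^2(\sO,\fw)}$. You use local Rellich on precompact subdomains followed by exhaustion and dominated convergence; the paper instead invokes Corollary \ref{cor:WeakUpperLowerBoundsIsStrongLp}, which says that a sequence pointwise dominated by an $L^q$ function ($q>2$) on a finite-measure space has a strongly $L^2$-convergent subsequence. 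This is exactly why hypothesis \eqref{eq:1pluspowerymMLq} requires $q>2$ rather than just $q=2$, and it delivers $(1+y^s)u_\eps\to(1+y^s)u$ strongly in $L^2(\sO,\fw)$ in one stroke, avoiding any appeal to Rellich (in keeping with the paper's stated policy in Remark \ref{rmk:NoRellichAndZeroEigenvalue}). Your route is correct but more laborious; the paper's tool is purpose-built for this situation.

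Your alternative at the end --- testing \eqref{eq:VIProblemHestonHomogeneousCoercive} directly with $v=u-c\,\zeta_R^2y^{2s}(u-\psi^+)$ --- is a genuinely different argument that the paper does not use. It cleanly sidesteps the penalized sequence altogether, reproducing the key inequality from the proof of Lemma \ref{lem:ExistenceUniquenessEllipticHestonPenalizedProblemPowery} immediately for $u$ itself. The price is that one must first know $y^su\in L^2(\sO,\fw)$ to make the test function admissible, which still requires the comparison bounds $m\leq u\leq M$ (and hence, implicitly, the penalized construction via Lemma \ref{lem:ComparisionPrincipleCoerciveHestonVI}); but once that is in hand, your direct argument is more economical than either the paper's or your main approach.
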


\begin{proof}
Let $\{u_\eps\}_{\eps\in(0,1]} \subset H^1_0(\sO\cup\Gamma_0,\fw)$ be the sequence defined in the proof of existence in Theorem \ref{thm:VIExistenceUniquenessEllipticCoerciveHeston}, with $u_\eps \rightharpoonup u$ weakly in $H^1(\sO,\fw)$ as $\eps\to 0$ by \eqref{eq:uepsWeaklyH1Convergent}.
Lemma \ref{lem:ComparisionPrinciplePenalizedHeston} implies that $M, m$, and the $u_\eps$ obey the pointwise bound \eqref{eq:uepsBounds}. Therefore, by \eqref{eq:uepsBounds} and \eqref{eq:1pluspowerymMLq}, we have
\begin{equation}
\label{eq:1pluspoweryuBoundbyMm}
(1+y^s)|u_\eps| \leq (1+y^s)(|M|+|m|) \quad\hbox{a.e. on }\sO.
\end{equation}
Hence, Corollary \ref{cor:WeakUpperLowerBoundsIsStrongLp} implies that, after passing to a subsequence, we may suppose
\begin{equation}
\label{eq:1pluspoweryuepsL2StrongLimit}
(1+y^s)u_\eps \to (1+y^s)u \quad\hbox{strongly in } L^2(\sO,\fw) \hbox{ as }\eps\to 0.
\end{equation}
Taking limits as $\eps\to 0$ in the inequality \eqref{eq:PenalizedEquationAPosterioriEstimatePoweryCorrected} yields
$$
\liminf_{\eps\to 0}\|y^su_\eps\|_{H^1(\sO,\fw)} \leq C\left(\|y^sf\|_{L^2(\sO,\fw)} + \|(1+y^s)u\|_{L^2(\sO,\fw)} + \|(1+y^{2s-1/2})\psi^+\|_{H^1(\sO,\fw)} \right).
$$
Moreover, \eqref{eq:PenalizedEquationAPosterioriEstimatePoweryCorrected} and \eqref{eq:1pluspoweryuBoundbyMm} imply that the sequence $\{y^su_\eps\}_{\eps\in(0,1]}$ is uniformly bounded in $H^1(\sO,\fw)$ and so, after passing to a subsequence again, we may assume that $y^su_\eps\rightharpoonup v$ weakly in $H^1(\sO,\fw)$ as $\eps\to 0$, for some $v \in H^1_0(\sO\cup\Gamma_),\fw)$. Since $y^su_\eps \to y^su$ strongly in $L^2(\sO,\fw)$ as $\eps\to 0$ by \eqref{eq:1pluspoweryuepsL2StrongLimit},  we must have $v  = y^su$ a.e. on $\sO$. Because $\|y^su\|_{H^1(\sO,\fw)} \leq \liminf_{\eps\to 0}\|y^su_\eps\|_{H^1(\sO,\fw)}$, we obtain the inequality \eqref{eq:EllipticCoerciveHestonAPosterioriEstimatePoweryCorrected} from the preceding estimate for $\liminf_{\eps\to 0}\|y^su_\eps\|_{H^1(\sO,\fw)}$.
\end{proof}

\begin{rmk}[A posteriori estimate]
The estimate in \ref{cor:VIExistenceUniquenessEllipticCoerciveHestonPowery} is only a posteriori because we obtain it by taking limits of the solutions to the penalized equation and rely on the fact that the solution to the coercive variational inequality is unique.
\end{rmk}

By analogy with \cite[Theorem 3.1.4]{Bensoussan_Lions}, which assumes that $\sO$ is bounded and $A$ is uniformly elliptic with bounded coefficients, we have:

\begin{thm}[A priori comparison principle for the coercive variational inequality]
\label{thm:VIExistenceUniquenessEllipticCoerciveHestonIncreasingF}
Assume the hypotheses of Theorem \ref{thm:VIExistenceUniquenessEllipticCoerciveHeston}. If
\begin{align}
\label{eq:f2geqf1}
f_2 \geq f_1 \quad \hbox{a.e. on }\sO,
\\
\label{eq:psi2geqpsi1}
\psi_2 \geq \psi_1\quad \hbox{a.e. on }\sO,
\end{align}
and $u_i$ solve Problem \ref{prob:HomogeneousHestonVIProblemCoercive} with $f,\psi$ replaced by $f_i,\psi_i$, $i=1,2$, then $u_2\geq u_1$ a.e. on
\end{thm}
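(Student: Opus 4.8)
The plan is to adapt the proof of \cite[Theorem 3.1.4]{Bensoussan_Lions} to the degenerate, weighted setting, using the lattice structure of $V = H^1_0(\sO\cup\Gamma_0,\fw)$ together with the coercivity \eqref{eq:CoerciveHeston} of the bilinear form $a_\lambda(\cdot,\cdot)$. The two variational inequalities \eqref{eq:VIProblemHestonHomogeneousCoercive} satisfied by $u_1$ and $u_2$ will be combined after testing each against a carefully chosen element of the respective admissible convex set.

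First I would verify admissibility of the test functions. Set $v_1 := \min\{u_1,u_2\}$ and $v_2 := \max\{u_1,u_2\}$; both lie in $V$ by Lemma \ref{lem:SobolevSpaceClosedUnderMaxPart}. Since $u_1\geq\psi_1$ and $u_2\geq\psi_2\geq\psi_1$ a.e.\ on $\sO$ by \eqref{eq:psi2geqpsi1}, we have $v_1\geq\psi_1$ a.e.\ on $\sO$, so $v_1$ is admissible in the inequality for $u_1$; similarly $v_2\geq u_2\geq\psi_2$ a.e.\ on $\sO$, so $v_2$ is admissible in the inequality for $u_2$. Writing $w := (u_1-u_2)^+ \in V$, one has $v_1-u_1 = -w$ and $v_2-u_2 = w$. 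Substituting into \eqref{eq:VIProblemHestonHomogeneousCoercive} gives $-a_\lambda(u_1,w)\geq -(f_1,w)_H$ and $a_\lambda(u_2,w)\geq (f_2,w)_H$, and adding these yields $a_\lambda(u_2-u_1,w)\geq (f_2-f_1,w)_H\geq 0$, where the last inequality uses $f_2\geq f_1$ a.e.\ by \eqref{eq:f2geqf1} and $w\geq 0$ a.e.\ on $\sO$.

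Next, put $z := u_2-u_1$, so that $w = z^-$. The identity $a_\lambda(z^+,z^-) = a(z^+,z^-) + \lambda((1+y)z^+,z^-)_H = 0$ holds because every integrand in $a(z^+,z^-)$ and in $((1+y)z^+,z^-)_H$ vanishes a.e.\ on $\sO$ (as noted in the proof of Corollary \ref{cor:ExistenceUniquenessEllipticCoerciveHeston}, since $z^+$ and $Dz^+$ are supported a.e.\ on $\{z>0\}$ while $z^-$ and $Dz^-$ are supported a.e.\ on $\{z<0\}$). Hence $a_\lambda(z,z^-) = a_\lambda(z^+,z^-) - a_\lambda(z^-,z^-) = -a_\lambda(z^-,z^-)$, so the inequality $a_\lambda(z,z^-)\geq 0$ becomes $a_\lambda(z^-,z^-)\leq 0$. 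Applying the G\r{a}rding/coercivity bound \eqref{eq:CoerciveHeston} gives $\nu_1\|z^-\|_V^2 \leq a_\lambda(z^-,z^-)\leq 0$, whence $z^- = 0$ a.e.\ on $\sO$, i.e.\ $u_2\geq u_1$ a.e.\ on $\sO$.

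The argument is largely routine; the only points demanding care — and therefore the mild ``obstacle'' here — are the stability of the truncations $\min\{u_1,u_2\}$, $\max\{u_1,u_2\}$, and $(u_1-u_2)^+$ in the \emph{weighted} space $V$ (supplied by Lemma \ref{lem:SobolevSpaceClosedUnderMaxPart}) and the orthogonality relation $a(v^+,v^-)=0$ for the \emph{non-symmetric, degenerate} form $a$, which is precisely what lets us convert $a_\lambda(z,z^-)\geq 0$ into a coercive estimate for $z^-$. Notably, no Rellich--Kondrachov compactness and no regularity beyond $u_1,u_2\in H^1(\sO,\fw)$ is required, in keeping with Remark \ref{rmk:NoRellichAndZeroEigenvalue}.
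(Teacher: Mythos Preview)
Your proof is correct and follows essentially the same route as the paper: the test functions $v_1=\min\{u_1,u_2\}$ and $v_2=\max\{u_1,u_2\}$ are exactly the choices $v-u_1=-(u_2-u_1)^-$ and $v-u_2=(u_2-u_1)^-$ that the paper makes, and the remaining steps (adding, using $a_\lambda(z^+,z^-)=0$, and invoking coercivity \eqref{eq:CoerciveHeston}) coincide.
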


\begin{proof}
In the variational inequality \eqref{eq:VIProblemHestonHomogeneousCoercive} for $u_1$,
$$
a_\lambda(u_1,v-u_1) \geq (f_1,v-u_1)_H,
$$
we can take $v-u_1 = -(u_2-u_1)^-$, provided $v\geq\psi_1$, to give
\begin{equation}
\label{eq:AprioriIneqOne}
-a_\lambda(u_1,(u_2-u_1)^-) \geq -(f_1,(u_2-u_1)^-)_H.
\end{equation}
Recall that we denote $w=w^+-w^-$, where $w^+ := \max\{w,0\}$ and $w^- := -\min\{w,0\}$. When $u_1(P)\leq u_2(P)$, then $(u_2(P)-u_1(P))^-=0$ and $v(P)=u_1(P)\geq \psi_1(P)$, while if $u_1(P)\geq u_2(P)$, then
\begin{align*}
v(P) &= u_1(P) - (u_2(P)-u_1(P))^-
\\
&= u_1(P) + u_2(P)-u_1(P)
\\
&= u_2(P) \geq \psi_2(P)
\\
&\geq \psi_1(P) \quad\hbox{(by \eqref{eq:psi2geqpsi1}},
\end{align*}
and thus $v \geq \psi_1$ a.e. on $\sO$, as needed for \eqref{eq:AprioriIneqOne}.

In the variational inequality \eqref{eq:VIProblemHestonHomogeneousCoercive} for $u_2$,
$$
a_\lambda(u_2,v-u_2) \geq (f_2,v-u_2)_H,
$$
take $v-u_2 = (u_2-u_1)^-\geq 0$ to give $v\geq u_2 \geq \psi_2$ and thus
\begin{equation}
\label{eq:AprioriIneqTwo}
a_\lambda(u_2,(u_2-u_1)^-) \geq (f_2,(u_2-u_1)^-)_H.
\end{equation}
Adding inequalities \eqref{eq:AprioriIneqOne} and \eqref{eq:AprioriIneqTwo} gives
$$
a_\lambda(u_2-u_1,(u_2-u_1)^-) \geq (f_2-f_1,(u_2-u_1)^-)_H \geq 0 \quad\hbox{(by \eqref{eq:f2geqf1})}.
$$
Consequently, using $a(v^+,v^-)=0$ for all $v\in V$ and applying \eqref{eq:CoerciveHeston} yields
$$
\nu_1\|(u_2-u_1)^-\|_V^2 \leq a_\lambda((u_2-u_1)^-,(u_2-u_1)^-) \leq 0,
$$
so that $(u_2-u_1)^-=0$ and thus $u_2 \geq u_1$ a.e. on $\sO$, as desired.
\end{proof}

To obtain a comparison principle for solutions to the coercive Heston variational inequality, we require

\begin{hyp}[Conditions on envelope functions]
\label{hyp:UpperLowerBoundsSolutionsCoerciveInequality}
There are $M, m \in H^2(\sO,\fw)$ obeying \eqref{eq:SourceFunctionTraceBounds}, \eqref{eq:mMIneqOnDomain}, \eqref{eq:SourceFunctionBounds}, \eqref{eq:MgeqPsi}, and
\begin{equation}
\label{eq:mMInLq}
M, m \in L^q(\sO,\fw) \quad\hbox{for some } q>2.
\end{equation}
\end{hyp}

We then have

\begin{lem}[A posteriori comparison principle for the variational inequality]
\label{lem:ComparisionPrincipleCoerciveHestonVI}
Suppose there are functions $M, m \in H^2(\sO,\fw)$ obeying \eqref{eq:SourceFunctionTraceBounds}, \eqref{eq:mMIneqOnDomain}, \eqref{eq:SourceFunctionBounds}, \eqref{eq:MgeqPsi}, and \eqref{eq:mMInLq}. Let $f\in L^2(\sO,\fw)$ and require that $f$ obeys \eqref{eq:fBoundsCoercive}. If $u\in H^1_0(\sO\cup\Gamma_0,\fw)$ is the unique solution to Problem \ref{prob:HomogeneousHestonVIProblemCoercive}, then $M, m$, and $u$ obey
\begin{equation}
\label{eq:uBoundedObstacleCoercive}
\max\{m,\psi\} \leq u \leq M \quad\hbox{a.e. on }\sO.
\end{equation}
\end{lem}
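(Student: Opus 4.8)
The plan is to mimic the proof of Lemma \ref{lem:ComparisionPrinciplePenalizedHeston}, dropping the penalization term and using the variational inequality \eqref{eq:VIProblemHestonHomogeneousCoercive} in place of the penalized equation. Since $u$ is a solution to Problem \ref{prob:HomogeneousHestonVIProblemCoercive} it lies in $\KK$, so $u\geq\psi$ a.e.\ on $\sO$ is automatic, and it remains only to establish $m\leq u\leq M$ a.e.\ on $\sO$; the hypothesis \eqref{eq:mMInLq} plays no role here and is retained solely for the companion Corollary \ref{cor:VIExistenceUniquenessEllipticCoerciveHestonPowery}.

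For the upper bound I would test \eqref{eq:VIProblemHestonHomogeneousCoercive} with $v:=\min\{u,M\}=u-(u-M)^+$. One checks $v\in\KK$: indeed $v\geq\psi$ because $u\geq\psi$ and $M\geq\psi$ by \eqref{eq:MgeqPsi}; and $(u-M)^+\in H^1(\sO,\fw)$ by Lemma \ref{lem:SobolevSpaceClosedUnderMaxPart}, while $(u-M)^+=0$ on $\Gamma_1$ in the trace sense since $u=0$ there (as $u\in V$) and $M\geq 0$ there by \eqref{eq:SourceFunctionTraceBounds}, so $(u-M)^+\in H^1_0(\sO\cup\Gamma_0,\fw)$ by Lemma \ref{lem:PartialEvansTraceZero} and hence $v\in V$. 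Substituting this $v$ gives $a_\lambda(u,(u-M)^+)\leq (f,(u-M)^+)_H$, while integration by parts (Lemma \ref{lem:HestonIntegrationByParts}, whose $\Gamma_1$-boundary term vanishes because $(u-M)^+$ has zero trace on $\Gamma_1$) gives $a_\lambda(M,(u-M)^+)=(A_\lambda M,(u-M)^+)_H$ since $M\in H^2(\sO,\fw)$. Subtracting and using $a_\lambda(w,w^+)=a_\lambda(w^+,w^+)$ for $w=u-M$ (the term $a(w^-,w^+)$ vanishes and $(1+y)ww^+=(1+y)(w^+)^2$), one arrives at $\nu_1\|(u-M)^+\|_V^2\leq a_\lambda((u-M)^+,(u-M)^+)=(f-A_\lambda M,(u-M)^+)_H\leq 0$ by \eqref{eq:CoerciveHeston} and \eqref{eq:fBoundsCoercive}, so $(u-M)^+=0$, i.e.\ $u\leq M$ a.e.\ on $\sO$.

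The lower bound is entirely symmetric, testing with $v:=\max\{u,m\}=u+(u-m)^-$: this lies in $\KK$ because $v\geq u\geq\psi$ and $(u-m)^-=0$ on $\Gamma_1$ (since $u=0$ and $m\leq 0$ there by \eqref{eq:SourceFunctionTraceBounds}), and integrating by parts with $m\in H^2(\sO,\fw)$ and using $a_\lambda(w,w^-)=-a_\lambda(w^-,w^-)$ for $w=u-m$ yields $\nu_1\|(u-m)^-\|_V^2\leq (A_\lambda m-f,(u-m)^-)_H\leq 0$ by \eqref{eq:fBoundsCoercive}, hence $u\geq m$ a.e.\ on $\sO$. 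Combining the two one-sided bounds with $u\geq\psi$ gives \eqref{eq:uBoundedObstacleCoercive}. (A shorter alternative: $u$ is the weak-$H^1(\sO,\fw)$ limit of the penalized solutions $u_\eps$ constructed in the proof of Theorem \ref{thm:VIExistenceUniquenessEllipticCoerciveHeston}, which obey $m\leq u_\eps\leq M$ by Lemma \ref{lem:ComparisionPrinciplePenalizedHeston}, and the order interval $\{v\in L^2(\sO,\fw):m\leq v\leq M\}$ is convex and strongly, hence weakly, closed.) The only point requiring care is confirming that the truncated test functions belong to $\KK$, in particular their membership in $H^1_0(\sO\cup\Gamma_0,\fw)$ via the zero-trace property along $\Gamma_1$, together with the integration-by-parts identity for $a_\lambda$ applied to these truncations; all of this is routine given the cited lemmas, exactly as in the proof of Lemma \ref{lem:ComparisionPrinciplePenalizedHeston}.
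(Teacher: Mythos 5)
Your proof is correct, but it takes a genuinely different route from the paper's. The paper proves this lemma \emph{a posteriori}: it recalls the penalized solutions $u_\eps$ from the existence proof of Theorem \ref{thm:VIExistenceUniquenessEllipticCoerciveHeston}, invokes Lemma \ref{lem:ComparisionPrinciplePenalizedHeston} to get $m\leq u_\eps\leq M$, uses \eqref{eq:mMInLq} together with Corollary \ref{cor:WeakUpperLowerBoundsIsStrongLp} to upgrade weak $H^1$ convergence to strong $L^2$ convergence, extracts an a.e.\ convergent subsequence via Corollary \ref{cor:Billingsley}, and passes to the limit in the pointwise bounds; uniqueness of $u$ is needed to identify the limit with the given solution. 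Your primary argument instead tests the variational inequality \eqref{eq:VIProblemHestonHomogeneousCoercive} directly with the truncations $\min\{u,M\}$ and $\max\{u,m\}$ (exactly the device the paper itself uses for \eqref{eq:LowerBoundForSolutionSequence} and in Theorem \ref{thm:VIExistenceUniquenessEllipticCoerciveHestonIncreasingF}), and all the verifications you flag — membership of the truncations in $\KK$ via the zero $\Gamma_1$-trace of $(u-M)^+$ and $(u-m)^-$, the integration-by-parts identity $a_\lambda(M,(u-M)^+)=(A_\lambda M,(u-M)^+)_H$, and the identities $a_\lambda(w,w^\pm)=\pm a_\lambda(w^\pm,w^\pm)$ — go through as claimed. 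What your approach buys is that it is an \emph{a priori} comparison principle: it applies to any solution without invoking uniqueness or the penalization machinery, and, as you correctly observe, it dispenses with \eqref{eq:mMInLq} entirely (your parenthetical alternative, weak closedness of the convex order interval in $L^2(\sO,\fw)$, would also avoid that hypothesis while staying closer to the paper's limiting argument). One trivial remark: the zero-trace criterion on $\Gamma_1$ is Lemma \ref{lem:EvansGamma1TraceZero} rather than Lemma \ref{lem:PartialEvansTraceZero} (which concerns $\Gamma_0$), though the paper makes the same citation slip in the proof of Lemma \ref{lem:ComparisionPrinciplePenalizedHeston}.
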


\begin{proof}
Let $\{u_\eps\}_{\eps\in(0,1]} \subset H^1_0(\sO\cup\Gamma_0,\fw)$ be the sequence defined in the proof of existence in Theorem \ref{thm:VIExistenceUniquenessEllipticCoerciveHeston}, with $u_\eps \rightharpoonup u$ weakly in $H^1(\sO,\fw)$ by \eqref{eq:uepsWeaklyH1Convergent}. Lemma \ref{lem:ComparisionPrinciplePenalizedHeston} implies that $M, m$, and the $u_\eps$ obey the pointwise bound \eqref{eq:uepsBounds}. Therefore, by \eqref{eq:uepsBounds} and \eqref{eq:mMInLq}, Corollary \ref{cor:WeakUpperLowerBoundsIsStrongLp} implies that, after passing to a subsequence, we may suppose $u_\eps \to u$ strongly in $L^2(\sO,\fw)$ and thus, again after passing to a subsequence, pointwise a.e. on $\sO$ by Corollary \ref{cor:Billingsley}. Therefore the conclusion follows by taking pointwise limits in \eqref{eq:uepsBounds}.
\end{proof}

\subsection{Existence and uniqueness of solutions to the non-coercive variational inequality}
\label{subsec:NoncoerciveStatVarInequality}
We can now proceed to the proof of the general, ``non-coercive'' case with the aid of

\begin{hyp}[Conditions on envelope functions]
\label{hyp:UpperLowerBoundsSolutionsNoncoerciveInequality}
There are $M, m \in H^2(\sO,\fw)$ obeying
\begin{align}
\label{eq:Sqrt1plusyMmInLq}
(1+y)^{1/2}M, (1+y)^{1/2}m &\in L^q(\sO,\fw) \quad\hbox{for some } q>2,
\\
\label{eq:OneplusyMmInL2}
(1+y)M, (1+y)m &\in L^2(\sO,\fw).
\end{align}
\end{hyp}

\begin{hyp}[Auxiliary condition for uniqueness of solutions to the non-coercive variational inequality]
\label{hyp:AuxBoundUniquenessSolutionsNoncoerciveInequality}
There is a $\varphi \in H^2(\sO,\fw)$ obeying
\begin{equation}
\label{eq:Sqrt1plusyVarphiInLq}
(1+y)^{1/2}\varphi \in L^q(\sO,\fw) \quad\hbox{for some } q>2.
\end{equation}
\end{hyp}

By analogy with \cite[Theorem 3.1.5]{Bensoussan_Lions}, which assumes that $\sO$ is bounded and $A$ is uniformly elliptic with bounded coefficients, we have

\begin{thm}[Existence and uniqueness of a solution to the non-coercive variational inequality]
\label{thm:VIExistenceUniquenessEllipticHeston_Improved}
Assume \eqref{eq:NoncoerciveHeston} holds. Suppose there are $M, m\in H^2(\sO,\fw)$ obeying \eqref{eq:SourceFunctionTraceBounds}, \eqref{eq:mMIneqOnDomain}, \eqref{eq:SourceFunctionBounds}, \eqref{eq:MgeqPsi}, \eqref{eq:Sqrt1plusyMmInLq}, and \eqref{eq:OneplusyMmInL2}. Given $f \in L^2(\sO,\fw)$ obeying \eqref{eq:fBounds} and $\psi \in H^1(\sO,\fw)$ obeying \eqref{eq:ObstacleFunctionLessThanZero}, there exists a solution, $u\in H^1(\sO\cup\Gamma_0,\fw)$, to Problem \ref{prob:HomogeneousHestonVIProblem} and $u$ obeys
\begin{equation}
\label{eq:uBoundedObstacle}
\max\{m,\psi\} \leq u \leq M \quad\hbox{a.e. on }\sO.
\end{equation}
Moreover, if there is a $\varphi \in H^2(\sO,\fw)$ obeying Hypotheses \ref{hyp:AuxBoundUniquenessSolutionsNoncoerciveEquation} and \ref{hyp:AuxBoundUniquenessSolutionsNoncoerciveInequality}, then the solution, $u$, is unique.
\end{thm}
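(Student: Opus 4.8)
The plan is to mirror the proof of Theorem~\ref{thm:ExistenceUniquenessEllipticHeston_Improved}, with the coercive variational \emph{inequality} (Theorem~\ref{thm:VIExistenceUniquenessEllipticCoerciveHeston}) playing the role previously played by the coercive variational \emph{equation}. By Remark~\ref{rmk:ReductionInhomogeneousVariationalInequality} I first reduce to the homogeneous Dirichlet condition, so that solutions and test functions lie in $V=H^1_0(\sO\cup\Gamma_0,\fw)$ and $\KK=\{v\in V:v\ge\psi\}$; recall $\KK$ is convex (Remark~\ref{rmk:ConvexityObstacleTestFunctionSpace}), hence weakly closed. For existence I would construct a monotone sequence: set $u_0=0$ and, given $u_{n-1}$, let $u_n\in\KK$ be the unique solution furnished by Theorem~\ref{thm:VIExistenceUniquenessEllipticCoerciveHeston} of the coercive variational inequality with source $f+\lambda(1+y)u_{n-1}$ and obstacle $\psi$, i.e.\ $a_\lambda(u_n,v-u_n)\ge(f+\lambda(1+y)u_{n-1},v-u_n)_H$ for all $v\in\KK$. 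Using \eqref{eq:SourceFunctionBounds}, \eqref{eq:MgeqPsi} and \eqref{eq:fBounds} together with the a priori comparison principles for the coercive variational inequality --- Lemma~\ref{lem:ComparisionPrincipleCoerciveHestonVI} for the bracket $\max\{m,\psi\}\le u_n\le M$ and Theorem~\ref{thm:VIExistenceUniquenessEllipticCoerciveHestonIncreasingF} for monotonicity in the data at fixed obstacle --- an induction exactly as in the proof of Theorem~\ref{thm:ExistenceUniquenessEllipticHeston_Improved} yields $\max\{m,\psi\}\le u_1\le u_2\le\cdots\le M$ a.e.\ on $\sO$. The a priori estimate of Lemma~\ref{lem:VIExistenceUniquenessEllipticCoerciveHeston} then bounds $\{u_n\}$ uniformly in $H^1(\sO,\fw)$.

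Next I would pass to the limit. Since the embedding $H^1(\sO,\fw)\hookrightarrow L^2(\sO,\fw)$ is not known to be compact (Remark~\ref{rmk:NoRellichAndZeroEigenvalue}), I would use the monotone envelope bound, the hypotheses \eqref{eq:Sqrt1plusyMmInLq}--\eqref{eq:OneplusyMmInL2}, and Corollary~\ref{cor:WeakUpperLowerBoundsIsStrongLp} to upgrade weak $H^1$-convergence of a subsequence of $\{u_n\}$ to $u$ into \emph{strong} convergence of $u_n$ and of $(1+y)u_n$ in $L^2(\sO,\fw)$; monotonicity makes the full sequence converge. Rewriting the inequality as $a(u_n,v-u_n)\ge(f+\lambda(1+y)(u_{n-1}-u_n),v-u_n)_H$, the correction $\lambda(1+y)(u_{n-1}-u_n)\to0$ strongly in $L^2$, while $a(u_n,v)\to a(u,v)$ by Lemma~\ref{lem:BilinearFormWeakLimit} and $\liminf_n a(u_n,u_n)\ge a(u,u)$ on writing $a=a_\lambda-\lambda(1+y)(\cdot,\cdot)$ and combining weak lower semicontinuity of $w\mapsto a_\lambda(w,w)$ (its symmetric part being continuous and coercive) with strong $L^2$-convergence of $(1+y)^{1/2}u_n$. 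Hence $a(u,v-u)\ge(f,v-u)_H$ for all $v\in\KK$ with $u\in\KK$, so $u$ solves Problem~\ref{prob:HomogeneousHestonVIProblem}, and the bound \eqref{eq:uBoundedObstacle} passes to the limit.

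For uniqueness I would adapt the argument in the uniqueness part of Theorem~\ref{thm:ExistenceUniquenessEllipticHeston_Improved}. Using the auxiliary function $u_\varphi\in H^2(\sO,\fw)$ of Lemma~\ref{lem:EllipticHestonUniquenessAuxiliaryFunction} (available under Hypotheses~\ref{hyp:NoncoerciveHeston}, \ref{hyp:AuxBoundUniquenessSolutionsNoncoerciveEquation} and \ref{hyp:DomainCombinedCondition}), I would establish variational-inequality analogues of Lemmas~\ref{lem:EllipticHestonReductionNonnegativeSourceFunction}, \ref{lem:utildeNonnegative} and \ref{lem:EllipticHestonReductionNonnegativeSourceFunctionUniqueness}: replacing $(f,\psi,M,m)$ by $(\tilde f,\tilde\psi,\tilde M,\tilde m)=(f+A\varphi,\ \psi+u_\varphi,\ M+u_\varphi,\ m+u_\varphi)$ turns the problem into one with $A\tilde m>0$, hence $\tilde f>0$ and every solution $\tilde u$ of the shifted problem is nonnegative; since $\tilde u\ge\tilde\psi$ and $\tilde u\ge0$ force $\tilde u\ge\tilde\psi^+$, it suffices to prove uniqueness for the shifted problem with the \emph{nonnegative} obstacle $\tilde\psi^+$. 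Dropping tildes, suppose $u_1,u_2$ are two such solutions with $u_1\not\le u_2$ on $\sO$; set $\alpha_0:=\min\{1,u_2/u_1\}$ and $\bar\alpha_0:=\essinf_{\sO}\alpha_0<1$, and, using the ratio bound \eqref{eq:SupSourceSolutionRatioBoundRaw} via $u_1\le M$ and $f\ge Am$ exactly as in the equation case, choose a constant $\beta_0\in(\bar\alpha_0,1)$ so close to $\bar\alpha_0$ that $\beta_0(f+\lambda(1+y)u_1)\le f+\lambda(1+y)u_2$ a.e.\ on $\sO$. Then $\beta_0u_1$ solves the coercive variational inequality with source $\beta_0(f+\lambda(1+y)u_1)$ and obstacle $\beta_0\psi^+$, while $u_2$ solves it with source $f+\lambda(1+y)u_2$ and obstacle $\psi^+$; since $\beta_0\psi^+\le\psi^+$ and the sources are ordered, Theorem~\ref{thm:VIExistenceUniquenessEllipticCoerciveHestonIncreasingF} gives $\beta_0u_1\le u_2$ on $\sO$. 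Exactly as in the proof of Theorem~\ref{thm:ExistenceUniquenessEllipticHeston_Improved}, this contradicts the definition of $\bar\alpha_0$ (on a positive-measure set where $\alpha_0<\beta_0<1$ one obtains $u_2=\alpha_0u_1<\beta_0u_1\le u_2$), so $u_1=u_2$. Lemma~\ref{lem:VIUniquenessEllipticHestonPostivefuSufficientConditions} together with Remark~\ref{rmk:Sqrt1plusyMmVarphiInLq} shows the hypotheses --- including Hypotheses~\ref{hyp:AuxBoundUniquenessSolutionsNoncoerciveEquation} and \ref{hyp:AuxBoundUniquenessSolutionsNoncoerciveInequality} --- are not vacuous.

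I expect two main obstacles. The first is the absence of a Rellich--Kondrachov theorem: it is only the $L^q$- and $(1+y)$-weighted $L^2$-envelope hypotheses on $M,m$ (and on $\varphi$) that, via Corollary~\ref{cor:WeakUpperLowerBoundsIsStrongLp}, supply the strong $L^2$-convergence needed both to pass to the limit in the variational inequality and to transmit the pointwise comparison bounds through the limit. The second is adapting the Bensoussan--Lions scaling trick to an inequality: rescaling $u_1\mapsto\beta_0u_1$ simultaneously rescales the obstacle, which is harmless only because the preliminary reduction produces a \emph{nonnegative} obstacle $\tilde\psi^+$, so that $\beta_0\tilde\psi^+\le\tilde\psi^+$ and the data-and-obstacle-monotone comparison principle for the coercive variational inequality (Theorem~\ref{thm:VIExistenceUniquenessEllipticCoerciveHestonIncreasingF}) still closes the contradiction.
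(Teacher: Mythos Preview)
Your approach is essentially the paper's: iterate the coercive variational inequality, pass to a limit via the $L^q$ envelope hypotheses and Corollary~\ref{cor:WeakUpperLowerBoundsIsStrongLp} in lieu of Rellich--Kondrachov, then run the Bensoussan--Lions scaling argument for uniqueness with Theorem~\ref{thm:VIExistenceUniquenessEllipticCoerciveHestonIncreasingF} as the comparison tool. There is, however, a gap at the start of your iteration. The paper iterates \emph{downward} from $u_0=M$, proving $M\ge u_1\ge\cdots\ge m$ by direct test-function choices; your choice $u_0=0$ does not launch the induction via the comparison lemmas alone. For the monotonicity step $u_2\ge u_1$ via Theorem~\ref{thm:VIExistenceUniquenessEllipticCoerciveHestonIncreasingF} you would need $u_1\ge u_0=0$, which is not guaranteed (only $u_1\ge\psi$ is), and Lemma~\ref{lem:ComparisionPrincipleCoerciveHestonVI} at $n=1$ with source $f$ requires $A_\lambda m\le f\le A_\lambda M$, i.e.\ $m\le 0\le M$ on $\sO$, which is not among the hypotheses. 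Starting instead from $u_0=m$ (so the first source $f+\lambda(1+y)m$ does satisfy $A_\lambda m\le\cdot\le A_\lambda M$, and then $u_1\ge m=u_0$ follows from the lemma), or following the paper's decreasing scheme, repairs this.

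Your uniqueness argument is the paper's: the variational-inequality analogues you describe are exactly Lemmas~\ref{lem:VIEllipticHestonReductionNonnegativeSourceFunction}--\ref{lem:VIEllipticHestonReductionNonnegativeSourceFunctionUniqueness}. Your extra step of replacing the shifted obstacle $\tilde\psi$ by $\tilde\psi^+$ before scaling is a genuine clarification: the paper's assertion ``$\psi_1=\beta_0\psi<\psi=\psi_2$'' tacitly needs $\psi\ge 0$ after the shift, and your reduction secures this explicitly.
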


\begin{rmk}[Role of the hypotheses in Theorem \ref{thm:VIExistenceUniquenessEllipticHeston_Improved}]
\label{rmk:RoleHypothesesVIExistenceUniuqnessTheorem}
The pointwise growth and integral bounds involving $m, M$ are required in the proofs of existence and uniqueness because the
\begin{enumerate}
\item Sobolev embedding theorems may not hold for weighted Sobolev spaces;
\item Rellich-Kondrachov embedding theorems may not hold for weighted Sobolev spaces or unbounded domains;
\item Bilinear form \eqref{eq:HestonBilinearFormGarding} is non-coercive;
\item Domain $\sO$ is unbounded;
\item Coefficients of $A$ are unbounded on $\sO$; and
\item Functions $f$ and $\psi$ may be unbounded.
\end{enumerate}
See \S \ref{subsec:WeightedSobolevContinuousCompactEmbeddings} for additional comments on weighted Sobolev spaces and embedding theorems. The pointwise growth and integral bounds involving $\varphi$ are required in the proof of uniqueness.
\end{rmk}

\begin{lem}[A priori estimate for solutions to the non-coercive variational inequality]
\label{lem:VIExistenceUniquenessEllipticHeston}
Assume the hypotheses of Theorem \ref{thm:VIExistenceUniquenessEllipticHeston_Improved}. If $u\in V$ is a solution to Problem \ref{prob:HomogeneousHestonVIProblem} and $yu \in L^2(\sO,\fw)$, then
\begin{equation}
\label{eq:EllipticHestonAprioriEstimate}
\|u\|_V \leq C\left(\|f\|_H + \|(1+y)u\|_H + \|\psi^+\|_V\right),
\end{equation}
\end{lem}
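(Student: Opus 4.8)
The plan is to reduce the non-coercive variational inequality to the coercive one, for which the analogous a priori estimate is already in hand (Lemma \ref{lem:VIExistenceUniquenessEllipticCoerciveHeston}); this parallels the reduction used for the variational \emph{equation} in Lemma \ref{lem:ExistenceUniquenessEllipticHestonAprioriEstimate}. Suppose $u\in V$ solves Problem \ref{prob:HomogeneousHestonVIProblem}, so $a(u,v-u)\geq (f,v-u)_H$ for all $v\in\KK$. Adding $\lambda((1+y)u,v-u)_H$ to both sides and invoking the definition \eqref{eq:BilinearFormCoerciveHeston} of $a_\lambda$ gives
\begin{equation*}
a_\lambda(u,v-u) \geq (\tilde f,v-u)_H, \quad \forall v\in\KK, \qquad \tilde f := f + \lambda(1+y)u .
\end{equation*}
The first thing to check is that $\tilde f \in H = L^2(\sO,\fw)$: since $u\in V\subset H$ and $yu\in L^2(\sO,\fw)$ by hypothesis, we have $(1+y)u\in L^2(\sO,\fw)$ and hence $\tilde f\in L^2(\sO,\fw)$. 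Thus $u$ is a solution to the coercive variational inequality Problem \ref{prob:HomogeneousHestonVIProblemCoercive} associated with the source function $\tilde f$.

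Next I would run the argument of the proof of Lemma \ref{lem:VIExistenceUniquenessEllipticCoerciveHeston} directly on $u$: substitute the admissible test function $v=\psi^+\in\KK$ (note $\psi^+\in V$ by \eqref{eq:ObstacleFunctionLessThanZero}) into the coercive variational inequality, apply the continuity estimate \eqref{eq:ContinuousCoerciveHeston} and the coercivity estimate \eqref{eq:CoerciveHeston} for $a_\lambda(\cdot,\cdot)$, rearrange, and take square roots, to obtain
\begin{equation*}
\|u\|_V \leq C\left(\|\tilde f\|_H + \|\psi^+\|_V\right),
\end{equation*}
with $C$ depending only on the constant coefficients of $A$. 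Finally, using $\|\tilde f\|_H \leq \|f\|_H + \lambda\|(1+y)u\|_H$ and the fact that $\lambda=\lambda_0$ depends only on the constant coefficients of $A$ (Lemma \ref{lem:LargeEnoughLambdaToEnsureCoercivity}), I would conclude \eqref{eq:EllipticHestonAprioriEstimate} after absorbing $\lambda$ into a possibly enlarged constant $C$ of the same dependence.

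There is essentially no serious obstacle here: the content is a short reduction plus bookkeeping. The only place the extra hypothesis $yu\in L^2(\sO,\fw)$ is genuinely needed is the verification that $\tilde f\in H$, which is exactly what legitimizes viewing $u$ as a solution of the coercive problem and thereby applying Lemma \ref{lem:VIExistenceUniquenessEllipticCoerciveHeston}. As in Lemma \ref{lem:ExistenceUniquenessEllipticHestonAprioriEstimate}, the resulting bound is only effective once $\|(1+y)u\|_H$ is controlled separately, which is carried out elsewhere in the article.
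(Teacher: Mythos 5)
Your proposal is correct and follows essentially the same route as the paper: the paper's proof consists precisely of setting $f_\lambda := f+\lambda(1+y)u$, rewriting \eqref{eq:VIProblemHestonHomgeneous} as the coercive inequality $a_\lambda(u,v-u)\geq (f_\lambda,v-u)_H$ for all $v\in\KK$, and invoking \eqref{eq:EllipticCoerciveHestonAPosterioriEstimate}. The only cosmetic difference is that you re-run the test-function argument with $v=\psi^+$ instead of citing Lemma \ref{lem:VIExistenceUniquenessEllipticCoerciveHeston} directly, and you correctly identify that the hypothesis $yu\in L^2(\sO,\fw)$ is what makes the shifted source function lie in $H$.
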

where $C$ depends only on the constant coefficients of $A$.

\begin{proof}
Setting $f_\lambda := f+\lambda(1+y)u$ and using the definition \eqref{eq:BilinearFormCoerciveHeston} of $a_\lambda$ to write the variational inequality \eqref{eq:VIProblemHestonHomgeneous} as
$$
a_\lambda(u,v-u) \geq (f_\lambda,v-u)_H, \quad \forall v\in \KK,
$$
we see that \eqref{eq:EllipticHestonAprioriEstimate} follows from \eqref{eq:EllipticCoerciveHestonAPosterioriEstimate}.
\end{proof}

We first consider the question of existence in Theorem \ref{thm:VIExistenceUniquenessEllipticHeston_Improved}.

\begin{proof}[Proof of existence in Theorem \ref{thm:VIExistenceUniquenessEllipticHeston_Improved}] We adapt the proof of existence in \cite[Theorem 3.1.5]{Bensoussan_Lions}. By \eqref{eq:fBounds}, we have $AM \geq f$ a.e. on $\sO$ and so $(AM,v)_H \geq (f,v)_H$ for all $v \in V$, $v\geq 0$. Lemma \ref{lem:HestonIntegrationByParts} (which does not require $M=0$ on $\Gamma_1$) implies that $(AM,v)_H = a(M,v)$ and thus
\begin{equation}
\label{eq:StartSolutionSequenceConstruction}
a(M,v) \geq (f,v)_H, \quad\forall v\in V, v\geq 0.
\end{equation}
We shall use \eqref{eq:StartSolutionSequenceConstruction} to help establish the

\begin{claim}[Existence of a monotonically decreasing sequence solving a sequence of coercive variational inequalities]
\label{claim:MonotoneDecreasingSequenceSolvingCoerciveVarIneq}
If $u_0 = M$, then there exists a sequence $\{u_n\}_{n\geq 1} \subset V$ such that
\begin{gather}
 \label{eq:ungeqObstacle}
u_n \geq \psi \quad\hbox{a.e. on }\sO, \quad\forall n \geq 1,
\\
\label{eq:DecreasingSolutionSequenceConstruction}
a(u_n,v-u_n) + \lambda((1+y)u_n,v-u_n)_H \geq (f+\lambda (1+y)u_{n-1},v-u_n)_H,
\\
\notag
\forall v\in V, v\geq \psi \hbox{ a.e. on }\sO, n \geq 1,
\\
\label{eq:DecreasingSolutionSequence}
M \geq u_1 \geq \cdots \geq u_{n-1} \geq u_n \geq \cdots \geq m \quad\hbox{a.e on }\sO.
\end{gather}
\end{claim}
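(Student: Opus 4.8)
The plan is to construct the sequence $\{u_n\}_{n\geq 1}$ by induction, solving at each stage a \emph{coercive} variational inequality via Theorem~\ref{thm:VIExistenceUniquenessEllipticCoerciveHeston}, and then to prove monotonicity and the two-sided bounds \eqref{eq:DecreasingSolutionSequence} using the a~priori comparison principle for the coercive variational inequality, Theorem~\ref{thm:VIExistenceUniquenessEllipticCoerciveHestonIncreasingF}. First I would set $u_0 := M \in H^2(\sO,\fw) \subset H^1(\sO,\fw)$; note $u_0 \geq \psi$ a.e.\ on $\sO$ by \eqref{eq:MgeqPsi}, and $M \geq 0$ on $\Gamma_1$ by \eqref{eq:SourceFunctionTraceBounds}. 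Given $u_{n-1} \in V = H^1_0(\sO\cup\Gamma_0,\fw)$ for $n\geq 2$ (or $u_0 = M$, whose trace on $\Gamma_1$ is nonnegative rather than zero --- this asymmetry only helps, since we compare against it from below), the source function $f_n := f + \lambda(1+y)u_{n-1}$ lies in $H = L^2(\sO,\fw)$ because $(1+y)u_{n-1} \in L^2(\sO,\fw)$: for $n=1$ this is \eqref{eq:OneplusyMmInL2}, and for $n\geq 2$ it follows inductively from the bound $m \leq u_{n-1} \leq M$ together with \eqref{eq:OneplusyMmInL2}. Then Theorem~\ref{thm:VIExistenceUniquenessEllipticCoerciveHeston} produces a unique $u_n \in \KK$ solving \eqref{eq:DecreasingSolutionSequenceConstruction}, which is precisely Problem~\ref{prob:HomogeneousHestonVIProblemCoercive} with source $f_n$ and obstacle $\psi$; in particular $u_n \geq \psi$ a.e.\ on $\sO$, giving \eqref{eq:ungeqObstacle}.

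Next I would establish the monotonicity chain \eqref{eq:DecreasingSolutionSequence} by induction on $n$. For the base case $u_1 \leq M = u_0$: the function $M$ (or more precisely, comparing $u_1$ against $M$) satisfies the pointwise inequality coming from \eqref{eq:StartSolutionSequenceConstruction}, which says $a(M, v) \geq (f, v)_H$ for all $v \geq 0$ in $V$; choosing $v = (u_1 - M)^+ \in V$ (which lies in $H^1_0(\sO\cup\Gamma_0,\fw)$ since $M \geq 0 \geq u_1$ in the trace sense on $\Gamma_1$, using that $u_1 = 0$ on $\Gamma_1$), and combining with \eqref{eq:DecreasingSolutionSequenceConstruction} for $n=1$ tested at $v = u_1 - (u_1 - M)^+ = \min\{u_1, M\}$ (which is $\geq \psi$ since both $u_1, M \geq \psi$), one derives $a_\lambda((u_1-M)^+, (u_1-M)^+) \leq 0$ after adding inequalities and using $\lambda(1+y)u_0 = \lambda(1+y)M$; coercivity \eqref{eq:CoerciveHeston} then forces $(u_1 - M)^+ = 0$. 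For the inductive step, assuming $u_{n-1} \leq u_{n-2}$ a.e.\ on $\sO$: since $u_n$ and $u_{n-1}$ solve \eqref{eq:DecreasingSolutionSequenceConstruction} with the \emph{same} obstacle $\psi$ and source functions $f + \lambda(1+y)u_{n-1}$ and $f + \lambda(1+y)u_{n-2}$ respectively, and $f + \lambda(1+y)u_{n-1} \leq f + \lambda(1+y)u_{n-2}$ a.e.\ on $\sO$ by the induction hypothesis and $\lambda > 0$, Theorem~\ref{thm:VIExistenceUniquenessEllipticCoerciveHestonIncreasingF} (with $f_1 = f + \lambda(1+y)u_{n-1}$, $f_2 = f + \lambda(1+y)u_{n-2}$, $\psi_1 = \psi_2 = \psi$) gives $u_n \leq u_{n-1}$ a.e.\ on $\sO$. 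Finally, the lower bound $u_n \geq m$: this follows by testing \eqref{eq:DecreasingSolutionSequenceConstruction} at $v = \max\{u_n, m\} = u_n + (m - u_n)^+$ (which is $\geq \psi$, since $u_n \geq \psi$, and $(m-u_n)^+ \in V$ with zero trace on $\Gamma_1$ because $m \leq 0 \leq$ \dots\ actually $u_n = 0 \geq m$ on $\Gamma_1$), using the integration-by-parts identity $a(m, w) = (Am, w)_H$ from Lemma~\ref{lem:HestonIntegrationByParts}, the hypothesis $A_\lambda m \leq f \leq f + \lambda(1+y)u_{n-1}$ valid inductively since $u_{n-1} \geq m$ implies $\lambda(1+y)u_{n-1} \geq \lambda(1+y)m$, hence $f + \lambda(1+y)u_{n-1} \geq f + \lambda(1+y)m \geq A_\lambda m$ wait --- more carefully, one wants $A_\lambda m = Am + \lambda(1+y)m \leq f + \lambda(1+y)u_{n-1}$, which holds because $Am \leq f$ (from \eqref{eq:fBoundsCoercive} via $A_\lambda m \leq f$, i.e.\ $Am + \lambda(1+y)m \leq f$) and $\lambda(1+y)m \leq \lambda(1+y)u_{n-1}$ --- and then coercivity \eqref{eq:CoerciveHeston} forces $(m - u_n)^+ = 0$.

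The main obstacle I anticipate is handling the trace and membership bookkeeping cleanly at the ``seam'' $n=1$, where $u_0 = M$ does \emph{not} vanish on $\Gamma_1$ but only satisfies $M \geq 0$ there: one must verify at each comparison that the test function (a truncation like $(u_1 - M)^+$ or $(m - u_1)^+$) genuinely lies in $H^1_0(\sO\cup\Gamma_0,\fw)$, which rests on Lemma~\ref{lem:SobolevSpaceClosedUnderMaxPart}, Lemma~\ref{lem:EvansGamma1TraceZero}, and the sign conditions \eqref{eq:SourceFunctionTraceBounds} combined with $u_1 = 0$ on $\Gamma_1$. A secondary point requiring care is confirming, at every inductive stage, that the integrability hypotheses needed to invoke Theorem~\ref{thm:VIExistenceUniquenessEllipticCoerciveHeston} and Theorem~\ref{thm:VIExistenceUniquenessEllipticCoerciveHestonIncreasingF} --- chiefly $f_n = f + \lambda(1+y)u_{n-1} \in L^2(\sO,\fw)$ and $\psi^+ \in H^1_0(\sO\cup\Gamma_0,\fw)$ --- remain valid; the former is propagated precisely by the a~priori bound $m \leq u_{n-1} \leq M$ established in the previous step together with \eqref{eq:OneplusyMmInL2}, so the induction is self-contained. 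Once \eqref{eq:ungeqObstacle}, \eqref{eq:DecreasingSolutionSequenceConstruction}, and \eqref{eq:DecreasingSolutionSequence} are in hand the claim is proved; the passage to the limit $u := \lim_n u_n$ and verification that it solves Problem~\ref{prob:HomogeneousHestonVIProblem} with the bounds \eqref{eq:uBoundedObstacle} will be carried out in the subsequent part of the proof of Theorem~\ref{thm:VIExistenceUniquenessEllipticHeston_Improved}.
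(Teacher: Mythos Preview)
Your proposal is correct and follows essentially the same approach as the paper. The one genuine (minor) difference: for the inductive monotonicity step $u_n \leq u_{n-1}$, you invoke Theorem~\ref{thm:VIExistenceUniquenessEllipticCoerciveHestonIncreasingF} as a black box, whereas the paper carries out the direct test-function computation (testing the VI for $u_n$ at $v = u_n - (u_{n-1}-u_n)^-$ and the VI for $u_{n-1}$ at $v = u_{n-1} + (u_{n-1}-u_n)^-$, then adding). Your route is cleaner since that theorem is already available; the paper's inline computation is just the proof of that comparison principle specialized to this case. For the base case $u_1 \leq M$ and the lower bound $u_n \geq m$ your arguments match the paper's almost exactly. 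One small slip: in the lower-bound argument the relevant hypothesis is \eqref{eq:fBounds} ($Am \leq f$), not \eqref{eq:fBoundsCoercive}; the inequality you actually need, $Am + \lambda(1+y)m \leq f + \lambda(1+y)u_{n-1}$, follows directly from $Am \leq f$ together with the inductive $m \leq u_{n-1}$, without passing through $A_\lambda m \leq f$.
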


\begin{proof}[Proof of Claim \ref{claim:MonotoneDecreasingSequenceSolvingCoerciveVarIneq}]
Observe that if $u_{n-1}$ obeys $m\leq u_{n-1} \leq M$, as implied by \eqref{eq:DecreasingSolutionSequence}, then \eqref{eq:OneplusyMmInL2} will ensure that $(1+y)u_{n-1} \in L^2(\sO,\fw)$ and Theorem \ref{thm:VIExistenceUniquenessEllipticCoerciveHeston}, with source function
$$
f+\lambda (1+y)u_{n-1} \in L^2(\sO,\fw),
$$
will yield a (unique) solution, $u_n\in V, u_n\geq\psi$, to \eqref{eq:DecreasingSolutionSequenceConstruction}.

We now proceed by induction. To establish \eqref{eq:DecreasingSolutionSequence}, let $n=1$ and choose $v$ in \eqref{eq:DecreasingSolutionSequenceConstruction} such that
$$
v - u_1 = -(u_0-u_1)^-,
$$
that is,
$$
v = \min\{u_0,u_1\} \geq \psi \hbox{ a.e. on }\sO,
$$
using the facts that $u_0\geq \psi$, $u_1\geq \psi$  a.e on $\sO$, $\min\{u_0,u_1\} = 0$ on $\Gamma_1$ (trace sense), and $\min\{u_0,u_1\} \in V$ by Lemma \ref{lem:SobolevSpaceClosedUnderMaxPart}, so that $v\in \KK$. The choice $u_0=M$ is admissible in \eqref{eq:DecreasingSolutionSequenceConstruction} since it is only used to define the source function, $f+\lambda (1+y)u_0$, which does not require $u_0=0$ on $\Gamma_1$. Therefore, \eqref{eq:DecreasingSolutionSequenceConstruction} with $n=1$ gives
$$
-a(u_1,(u_0-u_1)^-) - \lambda((1+y)u_1,(u_0-u_1)^-)_H \geq -(f+\lambda (1+y)u_0,(u_0-u_1)^-)_H.
$$
Choosing $v=(u_0-u_1)^-$ in \eqref{eq:StartSolutionSequenceConstruction} and recalling that $u_0=M$ by hypothesis of Claim \ref{claim:MonotoneDecreasingSequenceSolvingCoerciveVarIneq} yields
$$
a(u_0,(u_0-u_1)^-) \geq (f,(u_0-u_1)^-)_H.
$$
By adding the preceding two inequalities we obtain
$$
a(u_0-u_1,(u_0-u_1)^-) + \lambda((1+y)(u_0-u_1),(u_0-u_1)^-)_H \geq 0,
$$
and thus
$$
a((u_0-u_1)^-,(u_0-u_1)^-) + \lambda\|(1+y)^{1/2}(u_0-u_1)^-\|_H^2 \leq 0.
$$
The preceding inequality and \eqref{eq:CoerciveHeston} gives
$$
\nu_1\|(u_0-u_1)^-\|_V^2 \leq a_\lambda((u_0-u_1)^-,(u_0-u_1)^-) \leq 0,
$$
so $(u_0-u_1)^-=0$ a.e. on $\sO$ and thus
$$
u_0\geq u_1 \quad\hbox{a.e. on }\sO.
$$
We now assume \eqref{eq:DecreasingSolutionSequence} is established for $\{u_1,\ldots,u_{n-1}\}$ and show that $u_{n-1}\geq u_n$ a.e. on $\sO$. We choose $v\in V$ to be defined in \eqref{eq:DecreasingSolutionSequenceConstruction} with $u_n$ and $u_{n-1}$ respectively by
\begin{align*}
v-u_n &= -(u_{n-1}-u_n)^- \quad\hbox{in \eqref{eq:DecreasingSolutionSequenceConstruction} for $u_n$},
\\
v-u_{n-1} &= (u_{n-1}-u_n)^- \quad\hbox{in \eqref{eq:DecreasingSolutionSequenceConstruction} for $u_{n-1}$},
\end{align*}
to give
\begin{align*}
{}&-a(u_n,(u_{n-1}-u_n)^-) - \lambda((1+y)u_n,(u_{n-1}-u_n)^-)_H
\\
&\quad \geq -(f+\lambda (1+y)u_{n-1},(u_{n-1}-u_n)^-)_H,
\\
{}&a(u_{n-1},(u_{n-1}-u_n)^-) + \lambda((1+y)u_{n-1},(u_{n-1}-u_n)^-)_H
\\
&\quad\geq (f+\lambda (1+y)u_{n-2},(u_{n-1}-u_n)^-)_H.
\end{align*}
Adding these inequalities yields
\begin{align*}
{}&a(u_{n-1}-u_n,(u_{n-1}-u_n)^-) + \lambda((1+y)(u_{n-1}-u_n),(u_{n-1}-u_n)^-)_H
\\
&\quad \geq \lambda((1+y)(u_{n-2}-u_{n-1},(u_{n-1}-u_n)^-)_H,
\end{align*}
and thus, by \eqref{eq:BilinearFormCoerciveHeston},
$$
a_\lambda((u_{n-1}-u_n)^-,(u_{n-1}-u_n)^-) \leq \lambda((1+y)(u_{n-1}-u_{n-2},(u_{n-1}-u_n)^-)_H \leq 0,
$$
where we use $u_{n-2}\geq u_{n-1}$ a.e. on $\sO$ to obtain the last inequality.  We conclude that
$$
u_{n-1}\geq u_n \quad\hbox{ a.e. on }\sO, \quad\forall n\geq 1,
$$
just as in the argument that $u_0\geq u_1$ a.e. on $\sO$. This establishes the monotonicity in \eqref{eq:DecreasingSolutionSequence}.

We may simultaneously establish the lower bound in \eqref{eq:DecreasingSolutionSequence}, that is
\begin{equation}
\label{eq:LowerBoundForSolutionSequence}
u_n \geq m \quad\hbox{a.e. on } \sO, \quad\forall n \geq 1.
\end{equation}
Again, we shall assume \eqref{eq:DecreasingSolutionSequence} is established for $\{u_1,\ldots,u_{n-1}\}$ and show that $u_n$ obeys \eqref{eq:LowerBoundForSolutionSequence}; we omit the initial step of assuming $n=1$ and showing that $u_1$ obeys \eqref{eq:LowerBoundForSolutionSequence} since the proof is virtually identical (one just replaces $u_n$ by $u_1$ and $u_{n-1}$ by $u_0=M$). We choose $v$ in \eqref{eq:DecreasingSolutionSequenceConstruction} by setting
$$
v-u_n = (u_n-m)^-,
$$
that is,
$$
v = \max\{u_n, m\}.
$$
We have $v\geq\psi$ a.e. on $\sO$ since $u_n\geq \psi$ a.e. on $\sO$, because $u_n\in \KK$ by its definition in \eqref{eq:DecreasingSolutionSequenceConstruction}. Moreover, since $u_n \in \KK$, we have $u_n\in V$ and $m\leq 0$ on $\Gamma_1$ by \eqref{eq:SourceFunctionTraceBounds}, so it follows from (the proof of) Lemma \ref{lem:SobolevSpaceClosedUnderMaxPart} that
$$
v = \max\{u_n, m\} \in V.
$$
Therefore, $v\in \KK$. From \eqref{eq:DecreasingSolutionSequenceConstruction}, we obtain
$$
a(u_n,(u_n-m)^-) + \lambda((1+y)u_n, (u_n-m)^-)_H \geq (f+\lambda (1+y)u_{n-1}, (u_n-m)^-)_H,
$$
which gives
\begin{align*}
{}& a(u_n-m,(u_n-m)^-) + \lambda((1+y)(u_n-m), (u_n-m)^-)_H
\\
{}&\qquad + a(m,(u_n-m)^-), (u_n-m)^-)_H
\\
{}&\quad = a(u_n-m,(u_n-m)^-) + \lambda((1+y)(u_n-m), (u_n-m)^-)_H
\\
{}&\qquad + (Am,(u_n-m)^-), (u_n-m)^-)_H \quad\hbox{(by Lemma \ref{lem:HestonIntegrationByParts})}
\\
&\quad\geq (f+\lambda (1+y)u_{n-1}, (u_n-m)^-)_H.
\end{align*}
Hence,
\begin{align*}
{}& a(u_n-m,(u_n-m)^-) + \lambda((1+y)(u_n-m), (u_n-m)^-)_H
\\
&\quad \geq (f-Am +\lambda (1+y)u_{n-1}, (u_n-m)^-)_H,
\end{align*}
and so, because $a(u,u^-) = -a(u^-,u^-)$ using $u = u^+ - u^-$ and the Definition \eqref{defn:HestonWithKillingBilinearForm} of $a(u,v)$,
\begin{align*}
{}&a((u_n-m)^-,(u_n-m)^-) + \lambda((1+y)(u_n-m)^-, (u_n-m)^-)_H
\\
&\quad\leq -(f-Am+\lambda (1+y)(u_{n-1}-m), (u_n-m)^-)_H.
\end{align*}
By virtue of \eqref{eq:CoerciveHeston} we then deduce
$$
\nu_1\|(u_n-m)^-\|_V^2 + (f-Am+\lambda (1+y)(u_{n-1}-m), (u_n-m)^-))_H \leq 0.
$$
By the induction hypothesis, $u_{n-1}-m\geq 0$ a.e. on $\sO$ and therefore, using $f-Am \geq 0$ a.e. on $\sO$ from \eqref{eq:fBounds},
$$
(f-Am+\lambda (1+y)(u_{n-1}-m), (u_n-m)^-)_H \geq 0.
$$
Hence, $\|(u_n-m)^-\|_V^2 \leq 0$ and so $(u_n-m)^-=0$ a.e. on $\sO$. Therefore, $u_n$ obeys \eqref{eq:LowerBoundForSolutionSequence}. By induction, the sequence $\{u_n\}_{n\geq 1}$ obeys \eqref{eq:DecreasingSolutionSequence} and this completes the proof of Claim \ref{claim:MonotoneDecreasingSequenceSolvingCoerciveVarIneq}.
\end{proof}

By taking $v=v_0\in\KK$ (see Assumption \ref{assump:NonemptyConvexSetHeston}) in \eqref{eq:DecreasingSolutionSequenceConstruction} and using \eqref{eq:BilinearFormCoerciveHeston} we obtain
\begin{align*}
{}\nu_1\|u_n\|_V^2 &\leq a(u_n,u_n) + \lambda((1+y)u_n,u_n)_H
\\
&\leq a(u_n,v_0) + \lambda((1+y)u_n,v_0)_H - (f+\lambda (1+y)u_{n-1},v_0-u_n)_H.
\end{align*}
Thus,
\begin{align*}
\nu_1\|u_n\|_V^2 &\leq C\|u_n\|_V\|v_0\|_V + \lambda\|(1+y)^{1/2}u_n\|_H\|(1+y)^{1/2}v_0\|_H
\\
&\quad + \|f\|_H\|u_n\|_H + \lambda\|(1+y)^{1/2}u_{n-1}\|_H\|(1+y)^{1/2}u_n\|_H
\\
&\quad + \|f\|_H\|v_0\|_H + \lambda\|(1+y)^{1/2}u_{n-1}\|_H\|(1+y)^{1/2}v_0\|_H, \quad \forall n\geq 1.
\end{align*}
But $(1+y)^{1/2}u_n$ is uniformly $L^2(\sO,\fw)$ bounded for all $n\geq 1$ by \eqref{eq:DecreasingSolutionSequence}, the fact that $M, m \in H^2(\sO,\fw)$, and the Definition \ref{defn:H2WeightedSobolevSpaces} of $H^2(\sO,\fw)$. Therefore, the preceding inequality gives
$$
\nu_1\|u_n\|_V^2 \leq C_1\|u_n\|_V + C_2, \quad \forall n\geq 1,
$$
for some $0<C_1,C_2<\infty$ and thus
\begin{equation}
\label{eq:BoundedDecreasingSequence}
\|u_n\|_V \leq C, \quad \forall n\geq 1,
\end{equation}
for some constant $C$ independent of $n\geq 1$. Also, \eqref{eq:DecreasingSolutionSequence} implies that
$$
(1+y)^{1/2}|u_n| \leq (1+y)^{1/2}(1+|m|+|M|) \quad\hbox{a.e. on } \sO, \quad\forall n\geq 1.
$$
Therefore, since $(1+y)^{1/2}(1+|m|+|M|) \in L^q(\sO,\fw)$ by \eqref{eq:Sqrt1plusyMmInLq}, Corollary \ref{cor:WeakUpperLowerBoundsIsStrongLp} (with $r=2$) implies that, after passing to a subsequence,
\begin{equation}
\label{eq:StrongL2ConvergenceOfDecreasingSequence}
(1+y)^{1/2}u_n \to (1+y)^{1/2}u \quad\hbox{strongly in $L^2(\sO,\fw)$ as $n\to\infty$.}
\end{equation}
We deduce from \eqref{eq:BoundedDecreasingSequence} that, after passing to a subsequence,
\begin{equation}
\label{eq:WeakConvergenceOfDecreasingSequence}
u_n \rightharpoonup u \quad\hbox{weakly in $V$ as $n\to\infty$.}
\end{equation}
Moreover, by passing to a subsequence, $u_n\to u$ pointwise a.e. on $\sO$ as $n\to\infty$ by \eqref{eq:StrongL2ConvergenceOfDecreasingSequence} and Corollary \ref{cor:Billingsley}. Thus, taking pointwise limits in \eqref{eq:ungeqObstacle} and \eqref{eq:DecreasingSolutionSequence}, we obtain
$$
\max\{m,\psi\} \leq u \leq M \quad \hbox{a.e. on }\sO,
$$
and therefore $u$ obeys the desired bounds \eqref{eq:uBoundedObstacle}. Moreover, for all $v\in \KK$,
\begin{align*}
{}&a(u_n,v) + \lambda((1+y)u_n,v)_H - (f,v-u_n)_H
\\
&= a(u_n,v-u_n) + \lambda((1+y)u_n,v-u_n)_H + \lambda((1+y)u_n,v)_H - (f,v-u_n)_H
\\
&\quad + a(u_n,u_n) - \lambda((1+y)u_n,v-u_n)_H
\\
&\geq (f+\lambda (1+y)u_{n-1},v-u_n)_H + \lambda((1+y)u_n,v)_H - (f,v-u_n)_H
\\
&\quad + a(u_n,u_n) - \lambda((1+y)u_n,v-u_n)_H
\quad\hbox{(by \eqref{eq:DecreasingSolutionSequenceConstruction})}
\\
&= \lambda((1+y)u_{n-1},v-u_n)_H + \lambda((1+y)u_n,v)_H + a(u_n,u_n)
\\
&\quad - \lambda((1+y)u_n,v-u_n)_H
\\
&= a(u_n,u_n) + \lambda((1+y)u_n,u_n)_H + \lambda((1+y)u_{n-1},v-u_n)_H.
\end{align*}
Therefore,
\begin{align*}
{}&a(u_n,v) + \lambda((1+y)u_n,v)_H - (f,v-u_n)_H
\\
&\quad \geq a_\lambda(u_n,u_n) + \lambda((1+y)u_{n-1},v-u_n)_H.
\end{align*}
Taking limits of both sides of the preceding inequality as $n\to\infty$,
\begin{align*}
{}&a(u,v) + \lambda((1+y)u,v)_H - (f,v-u)_H
\\
&\geq \liminf_{n\to\infty}a_\lambda(u_n,u_n) + \lim_{n\to\infty}\lambda((1+y)u_{n-1},v)_H  - \lim_{n\to\infty}\lambda((1+y)u_{n-1},u_n)_H
\\
&\geq a_\lambda(u,u) + \lambda((1+y)u,v)_H - \lambda((1+y)u,u)_H \quad\hbox{(by Lemma \ref{lem:BilinearFormWeakLimit})}
\\
&= a(u,u) + \lambda((1+y)u,v)_H,
\end{align*}
so that
$$
a(u,v-u) - (f,v-u)_H \geq 0, \quad \forall v\in \KK,
$$
and $u$ is the desired solution.
\end{proof}

\begin{rmk}[Alternative approach to proof of existence]
In the proof of existence in Theorem \ref{thm:VIExistenceUniquenessEllipticHeston_Improved} we could alternatively have chosen an increasing sequence, $u_n$, $n\geq 0$, starting from $u_0=0$, as suggested in \cite[p. 201]{Bensoussan_Lions}.
\end{rmk}


We next consider the question of uniqueness. We follow the broad outline of the proof of uniqueness in \cite[Theorem 3.1.5]{Bensoussan_Lions}, but adapted to take account of the complications described in Remark \ref{rmk:RoleHypothesesVIExistenceUniuqnessTheorem}. First, we shall need a preliminary reduction to the case of uniqueness when $f$ is positive and the solution $u$ is non-negative analogous to Lemmas \ref{lem:EllipticHestonReductionNonnegativeSourceFunction} and \ref{lem:EllipticHestonReductionNonnegativeSourceFunctionUniqueness}.

\begin{lem}[Reduction to the case of existence when the source function is positive and the solution non-negative]
\label{lem:VIEllipticHestonReductionNonnegativeSourceFunction}
Assume the hypotheses of Theorem \ref{thm:VIExistenceUniquenessEllipticHeston_Improved} for existence and uniqueness and let $u_\varphi \in H_0^1(\sO\cup\Gamma_0)\cap H^2(\sO,\fw)$ be as in Lemma \ref{lem:EllipticHestonUniquenessAuxiliaryFunction}. Define $\tilde M, \tilde m$ as in \eqref{eq:DefineTildeMm} and $\tilde f$ as in \eqref{eq:definetildef} and define
\begin{equation}
\label{eq:VIDefineTildepsi}
\tilde\psi := \psi + u_\varphi.
\end{equation}
Then, in addition to the conclusions of Lemma \ref{lem:EllipticHestonReductionNonnegativeSourceFunction}, we have $\tilde\psi \in H^1(\sO,\fw)$ and
\begin{equation}
\label{eq:VItildeObstacleFunctionTraceBound}
\tilde\psi \leq 0 \quad \hbox{on $\Gamma_1$ (trace sense)},
\end{equation}
while $\tilde M, \tilde m \in H^2(\sO,\fw)$ obey
\begin{equation}
\label{eq:L4TildeFunctionBounds}
(1+y)^{1/2}\tilde M, (1+y)^{1/2}\tilde m \in L^q(\sO,\fw),\quad\hbox{for some } q>2.
\end{equation}
Moreover, existence in Theorem \ref{thm:VIExistenceUniquenessEllipticHeston_Improved} of a solution, $u$, to Problem \ref{prob:HomogeneousHestonVIProblem} defined by $f$ and $\psi$ and obeying the bounds \eqref{eq:uBoundedObstacle} is equivalent to existence of a solution, $\tilde u$, to Problem \ref{prob:HomogeneousHestonVIProblem} defined by $\tilde f$ and $\tilde\psi$ and obeying
\begin{equation}
\label{eq:VItildeuBounds}
\max\{\tilde\psi,\tilde m\} \leq \tilde u \leq \tilde M \quad \hbox{ a.e. on }\sO.
\end{equation}
\end{lem}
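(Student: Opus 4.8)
The plan is to mimic the argument of Lemma \ref{lem:EllipticHestonReductionNonnegativeSourceFunction} as closely as possible: that lemma already delivers all the assertions concerning $\tilde M$, $\tilde m$, and $\tilde f$ (namely $\tilde M,\tilde m\in H^2(\sO,\fw)$, the trace bounds, $A\tilde m>0$, $A\tilde m\le A\tilde M$, and $A\tilde m\le\tilde f\le A\tilde M$ a.e.\ on $\sO$), so only the new ingredients need to be supplied, and these split into two parts: the properties of the shifted obstacle $\tilde\psi=\psi+u_\varphi$ and the extra $L^q$-integrability of $(1+y)^{1/2}\tilde M$ and $(1+y)^{1/2}\tilde m$. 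Throughout I would invoke the properties of $u_\varphi$ from Lemma \ref{lem:EllipticHestonUniquenessAuxiliaryFunction}, namely $u_\varphi\in H^1_0(\sO\cup\Gamma_0,\fw)\cap H^2(\sO,\fw)$, $Au_\varphi=A\varphi$ a.e.\ on $\sO$ by \eqref{eq:Auvarphi}, $u_\varphi=0$ on $\Gamma_1$ in the trace sense by \eqref{eq:uDirichletGammaOne}, and $0\le u_\varphi\le\varphi$ on $\sO$ by \eqref{eq:uvarphiBounds}.

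For the new pointwise/integrability statements: since $\psi\in H^1(\sO,\fw)$ and $u_\varphi\in H^2(\sO,\fw)\subset H^1(\sO,\fw)$, we get $\tilde\psi\in H^1(\sO,\fw)$; and because $\psi^+\in H^1_0(\sO\cup\Gamma_0,\fw)$ by \eqref{eq:ObstacleFunctionLessThanZero} and $u_\varphi=0$ on $\Gamma_1$, we have $\tilde\psi\le 0$ on $\Gamma_1$ in the trace sense, so $\tilde\psi^+\in H^1(\sO,\fw)$ by Lemma \ref{lem:SobolevSpaceClosedUnderMaxPart} with zero trace on $\Gamma_1$, whence $\tilde\psi^+\in H^1_0(\sO\cup\Gamma_0,\fw)$ by Lemma \ref{lem:EvansGamma1TraceZero}; this is \eqref{eq:VItildeObstacleFunctionTraceBound}. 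For \eqref{eq:L4TildeFunctionBounds}, \eqref{eq:uvarphiBounds} gives $0\le (1+y)^{1/2}u_\varphi\le (1+y)^{1/2}\varphi$ pointwise, and the right side lies in $L^q(\sO,\fw)$ by Hypothesis \ref{hyp:AuxBoundUniquenessSolutionsNoncoerciveInequality}; combined with \eqref{eq:Sqrt1plusyMmInLq} and the triangle inequality this yields $(1+y)^{1/2}\tilde M,(1+y)^{1/2}\tilde m\in L^q(\sO,\fw)$.

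The heart of the proof is the equivalence of the two existence statements, which I would establish through the bijection $u\mapsto\tilde u:=u+u_\varphi$. The essential observations are that $v\mapsto v+u_\varphi$ maps $\{v\in V: v\ge\psi \text{ a.e.\ on }\sO\}$ bijectively onto $\{v\in V: v\ge\tilde\psi \text{ a.e.\ on }\sO\}$ (using $u_\varphi\in V$), and that for any $w\in V$ one has $a(u_\varphi,w)=(A\varphi,w)_H=(Au_\varphi,w)_H$ by \eqref{eq:Defnuvarphi} and \eqref{eq:Auvarphi}. Suppose first $\tilde u\in V$ solves Problem \ref{prob:HomogeneousHestonVIProblem} with data $\tilde f,\tilde\psi$ and obeys \eqref{eq:VItildeuBounds}. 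For arbitrary $w\in V$ with $w\ge\psi$, substitute the admissible test function $v=w+u_\varphi$ into the variational inequality for $\tilde u$; since $v-\tilde u=w-u$ where $u:=\tilde u-u_\varphi\in V$, and $\tilde f=f+Au_\varphi$ by \eqref{eq:definetildef}, this reads $a(\tilde u,w-u)\ge (f+Au_\varphi,w-u)_H$, and subtracting $a(u_\varphi,w-u)=(Au_\varphi,w-u)_H$ gives $a(u,w-u)\ge(f,w-u)_H$; moreover $\tilde u\ge\tilde\psi$ forces $u\ge\psi$, so $u$ solves Problem \ref{prob:HomogeneousHestonVIProblem} with data $f,\psi$, and \eqref{eq:VItildeuBounds} becomes \eqref{eq:uBoundedObstacle} after subtracting $u_\varphi$. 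The reverse implication is the same computation read in the opposite direction, setting $\tilde u=u+u_\varphi$ and, for a test function $v\ge\tilde\psi$, using $w=v-u_\varphi\ge\psi$.

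The computation is essentially routine, being a direct transcription of Lemma \ref{lem:EllipticHestonReductionNonnegativeSourceFunction} with the obstacle carried along; the only step I expect to require genuine care is confirming that the shift $v\mapsto v+u_\varphi$ identifies the admissible test-function classes of the two variational inequalities and that $\tilde\psi^+\in H^1_0(\sO\cup\Gamma_0,\fw)$, so that Problem \ref{prob:HomogeneousHestonVIProblem} is even well-posed for the data $\tilde f,\tilde\psi$. I would therefore spell out that verification and keep the remaining manipulations terse.
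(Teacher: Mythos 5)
Your proposal is correct and follows essentially the same route as the paper's proof: the same verification of \eqref{eq:VItildeObstacleFunctionTraceBound} via $u_\varphi=0$ on $\Gamma_1$, the same derivation of \eqref{eq:L4TildeFunctionBounds} from \eqref{eq:Sqrt1plusyMmInLq}, \eqref{eq:Sqrt1plusyVarphiInLq} and \eqref{eq:uvarphiBounds}, and the same shift $u\mapsto u+u_\varphi$ of solutions and test functions together with the identity $a(u_\varphi,\cdot)=(A\varphi,\cdot)_H$ from \eqref{eq:Defnuvarphi} to pass between the two variational inequalities.
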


\begin{proof} We first verify the conclusions in the preamble. Observe that \eqref{eq:L4TildeFunctionBounds} follows from \eqref{eq:Sqrt1plusyMmInLq} and \eqref{eq:Sqrt1plusyVarphiInLq}, and \eqref{eq:uvarphiBounds}. Since $u_\varphi=0$ on $\Gamma_1$ then clearly \eqref{eq:VItildeObstacleFunctionTraceBound} holds because of the condition $\psi\leq 0$ on $\Gamma_1$ (trace sense) in Problem \ref{prob:HomogeneousHestonVIProblem}.

\emph{Existence of $\tilde u$ implies existence of $u$.} By assumption, there exists a function $\tilde u \in V$ obeying \eqref{eq:VItildeuBounds} and
\begin{equation}
\label{eq:TildeVIProblemHestonHomgeneous}
a(\tilde u,\tilde v-\tilde u) \geq (\tilde f,\tilde v-\tilde u)_H, \quad \forall \tilde v\in V, \tilde v \geq \tilde\psi.
\end{equation}
By \eqref{eq:DefineTildeMm} and \eqref{eq:VItildeuBounds}, we have
\begin{equation}
\label{eq:VIExplicitTildeuBounds}
m+u_\varphi\leq \tilde u \leq M+u_\varphi \quad\hbox{a.e. on } \sO.
\end{equation}
Therefore, setting $u := \tilde u - u_\varphi \in V$ yields
$$
m\leq u \leq M \quad\hbox{a.e. on } \sO.
$$
Moreover,
$$
u  = \tilde u - u_\varphi \geq \tilde\psi - u_\varphi = \psi \quad\hbox{a.e. on }\sO.
$$
Consequently, $u$ obeys \eqref{eq:uBoundedObstacle} by combining the preceding two inequalities. For $v\in V$, write $v := \tilde v - u_\varphi \in V$ and note that $v \geq \psi$ a.e. on $\sO$ if and only if $\tilde v \geq \tilde\psi$ a.e. on $\sO$. Then,
\begin{align*}
a(u,v-u) &= a(\tilde u-u_\varphi, \tilde v-u_\varphi - (\tilde u-u_\varphi))
\\
&= a(\tilde u-u_\varphi, \tilde v-\tilde u)
\\
&= a(\tilde u, \tilde v-\tilde u) - a(u_\varphi, \tilde v-\tilde u)
\\
&\geq (\tilde f,\tilde v-\tilde u)_H - (A\varphi,\tilde v-\tilde u)_H \quad\hbox{(by \eqref{eq:Defnuvarphi} and \eqref{eq:TildeVIProblemHestonHomgeneous})}
\\
&= (f,\tilde v-\tilde u)_H  \quad\hbox{(by \eqref{eq:definetildef})}
\\
&= (f,v-u)_H, \quad \forall v\in V, v\geq \psi.
\end{align*}
Hence, $u$ is a solution to Problem \ref{prob:HomogeneousHestonVIProblem}  defined by the obstacle function, $\psi$, and source function, $f$.

\emph{Existence of $u$ implies existence of $\tilde u$.} By assumption, there is a solution $u\in V$ to Problem \ref{prob:HomogeneousHestonVIProblem} defined by the obstacle function, $\psi$, and source function, $f$, which obeys \eqref{eq:uBoundedObstacle}. Set $\tilde u := u + u_\varphi$, so \eqref{eq:uBoundedObstacle} implies that $\tilde u$ obeys \eqref{eq:VIExplicitTildeuBounds}, while
$$
\tilde u = u + u_\varphi \geq \psi + u_\varphi = \tilde\psi \quad\hbox{a.e. on }\sO,
$$
and thus $\tilde u$ obeys \eqref{eq:VItildeuBounds}. For $\tilde v\in V$, write $\tilde v := v + u_\varphi \in V$ and recall that $v \geq \psi$ a.e. on $\sO$ if and only if $\tilde v \geq \tilde\psi$ a.e. on $\sO$. Then,
\begin{align*}
a(\tilde u, \tilde v-\tilde u) &= a(u + u_\varphi, v + u_\varphi - (u + u_\varphi))
\\
&= a(u + u_\varphi, v - u)
\\
&= a(u, v - u) + a(u_\varphi, v - u)
\\
&\geq (f,v-u)_H + (A\varphi, v-u)_H \quad\hbox{(by \eqref{eq:Defnuvarphi} and \eqref{eq:VIProblemHestonHomgeneous})}
\\
&= (\tilde f, v-u)_H \quad\hbox{(by \eqref{eq:definetildef})}
\\
&= (\tilde f, \tilde v-\tilde u)_H, \quad \forall \tilde v \in V, \tilde v \geq\tilde\psi.
\end{align*}
Hence, $\tilde u$ obeys \eqref{eq:TildeVIProblemHestonHomgeneous} and thus is a solution to Problem \ref{prob:HomogeneousHestonVIProblem} defined by the obstacle function, $\tilde\psi$, and source function, $\tilde f$.
\end{proof}

\begin{lem}[Non-negative solutions]
\label{lem:VIuNonnegative}
Assume the hypotheses of Theorem \ref{thm:VIExistenceUniquenessEllipticHeston_Improved} for existence and uniqueness. Let $\tilde u \in H_0^1(\sO\cup\Gamma_0)$ be a solution to Problem \ref{prob:HomogeneousHestonVIProblem} defined by $\tilde f$ as in \eqref{eq:definetildef} and $\tilde \psi$ as in \eqref{eq:VIDefineTildepsi}. Then $\tilde u$ obeys
\begin{equation}
\label{eq:VIuNonnegative}
\tilde u \geq 0 \quad\hbox{a.e. on }\sO.
\end{equation}
\end{lem}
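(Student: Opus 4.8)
The plan is to follow the proof of the variational-equation analogue, Lemma \ref{lem:utildeNonnegative}, almost verbatim. The starting point is the observation that, by Lemma \ref{lem:VIEllipticHestonReductionNonnegativeSourceFunction} (which reproduces the conclusions of Lemma \ref{lem:EllipticHestonReductionNonnegativeSourceFunction}), the source $\tilde f$ obeys $A\tilde m \leq \tilde f \leq A\tilde M$ a.e. on $\sO$ by \eqref{eq:tildefBounds}, while $A\tilde m > 0$ a.e. on $\sO$ by \eqref{eq:PositiveTildeSourceFunction}; hence
$$
0 < \tilde f \leq A\tilde M \quad\hbox{a.e. on }\sO .
$$
Consequently $0$ and $\tilde M$ form an admissible pair of envelope functions: using \eqref{eq:tildeSourceFunctionTraceBounds} we have $0 \leq 0 \leq \tilde M$ on $\Gamma_1$; clearly $0 \leq \tilde M$ on $\sO$ and $A(0) = 0 \leq \tilde f \leq A\tilde M$ a.e. on $\sO$; and $\tilde\psi \leq \tilde M$ on $\sO$ because $\psi \leq M$ by \eqref{eq:MgeqPsi} gives $\tilde\psi = \psi + u_\varphi \leq M + u_\varphi = \tilde M$, while $\tilde\psi \leq 0$ on $\Gamma_1$ by \eqref{eq:VItildeObstacleFunctionTraceBound}.

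Next I would check the growth and integrability hypotheses of Theorem \ref{thm:VIExistenceUniquenessEllipticHeston_Improved} for the pair $(0,\tilde M)$: conditions \eqref{eq:Sqrt1plusyMmInLq} and \eqref{eq:OneplusyMmInL2} are vacuous for the lower envelope $0$ and hold for $\tilde M$ because $(1+y)^{1/2}\tilde M \in L^q(\sO,\fw)$ by \eqref{eq:L4TildeFunctionBounds} and $(1+y)\tilde M = (1+y)(M+u_\varphi) \in L^2(\sO,\fw)$, the latter following from \eqref{eq:OneplusyMmInL2}, the bound $0 \leq u_\varphi \leq \varphi$ of \eqref{eq:uvarphiBounds}, and \eqref{eq:OneplusyVarphiInL2}. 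With these verifications in hand, the a posteriori comparison bounds \eqref{eq:uBoundedObstacle} furnished by the existence part of Theorem \ref{thm:VIExistenceUniquenessEllipticHeston_Improved}, now applied with $\tilde m$ replaced by $0$, yield $\max\{0,\tilde\psi\} \leq \tilde u \leq \tilde M$ a.e. on $\sO$, and in particular $\tilde u \geq 0$ a.e. on $\sO$, which is \eqref{eq:VIuNonnegative}.

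I expect the genuinely delicate point — exactly as in Lemma \ref{lem:utildeNonnegative} — to be that the lower bound must be extracted from the comparison principle built into the existence construction (the monotone decreasing sequence $\{u_n\}$ solving the coercive variational inequalities \eqref{eq:DecreasingSolutionSequenceConstruction} with $u_0 = \tilde M$, whose lower envelope $\tilde m$ may be replaced by $0$ throughout once $\tilde f > 0$), rather than from a direct test-function argument: testing the variational inequality \eqref{eq:VIProblemHestonHomgeneous} for $\tilde u$ with the admissible function $v = \tilde u^+$ only gives $a(\tilde u^-,\tilde u^-) = -(\tilde f,\tilde u^-)_H \leq 0$, which does not by itself force $\tilde u^- = 0$ because the Heston bilinear form is non-coercive. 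The remaining bookkeeping — that $\tilde f \in L^2(\sO,\fw)$ and $\tilde\psi \in H^1(\sO,\fw)$, already recorded in Lemma \ref{lem:VIEllipticHestonReductionNonnegativeSourceFunction} — is routine.
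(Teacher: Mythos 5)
Your proposal is correct and follows exactly the paper's route: the paper's proof consists of the single observation that \eqref{eq:PositiveTildeSourceFunction} and \eqref{eq:tildefBounds} give $0 < \tilde f \leq A\tilde M$ a.e.\ on $\sO$, so that replacing $\tilde m$ by zero in the a posteriori comparison bound \eqref{eq:VItildeuBounds} yields \eqref{eq:VIuNonnegative}. Your additional verifications of the envelope hypotheses for the pair $(0,\tilde M)$ and your remark on why a direct test with $v=\tilde u^+$ fails under non-coercivity are sound elaborations of the same argument, not a different one.
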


\begin{proof}
Observe that \eqref{eq:PositiveTildeSourceFunction} and \eqref{eq:tildefBounds} imply that $\tilde f$ obeys $0 < \tilde f \leq A\tilde M$ a.e. on $\sO$ and hence, replacing $\tilde m$ by zero in \eqref{eq:VItildeuBounds}, we obtain \eqref{eq:VIuNonnegative}.
\end{proof}

\begin{lem}[Reduction to the case of uniqueness when the source function is positive and the solution non-negative]
\label{lem:VIEllipticHestonReductionNonnegativeSourceFunctionUniqueness}
Assume the hypotheses of Theorem \ref{thm:VIExistenceUniquenessEllipticHeston_Improved} for existence and uniqueness and let $\tilde f$ be as in \eqref{eq:definetildef} and $\tilde \psi$ be as in \eqref{eq:VIDefineTildepsi}. Then uniqueness of a solution, $u$, to Problem \ref{prob:HomogeneousHestonVIProblem} defined by $f, \psi$ is equivalent to uniqueness of a solution, $\tilde u$, to Problem \ref{prob:HomogeneousHestonVIProblem} defined by $\tilde f, \tilde\psi$.
\end{lem}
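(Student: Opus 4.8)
The plan is to deduce the equivalence of uniqueness directly from the bijective correspondence between solution sets contained in Lemma \ref{lem:VIEllipticHestonReductionNonnegativeSourceFunction}, exactly as Lemma \ref{lem:EllipticHestonReductionNonnegativeSourceFunctionUniqueness} was deduced from Lemma \ref{lem:EllipticHestonReductionNonnegativeSourceFunction} in the variational-equation case.

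First I would fix once and for all the auxiliary function $u_\varphi \in H^1_0(\sO\cup\Gamma_0,\fw)\cap H^2(\sO,\fw)$ produced by Lemma \ref{lem:EllipticHestonUniquenessAuxiliaryFunction}, so that $\tilde f$ and $\tilde\psi$ in \eqref{eq:definetildef} and \eqref{eq:VIDefineTildepsi} are determined. Next I would observe that the proof of Lemma \ref{lem:VIEllipticHestonReductionNonnegativeSourceFunction} in fact establishes more than the stated equivalence of \emph{existence}: the affine translation $u \mapsto u + u_\varphi$ maps every solution of Problem \ref{prob:HomogeneousHestonVIProblem} defined by $(f,\psi)$ to a solution of Problem \ref{prob:HomogeneousHestonVIProblem} defined by $(\tilde f,\tilde\psi)$ (it lands in $H^1_0(\sO\cup\Gamma_0,\fw)$ and satisfies the obstacle constraint $\tilde u\geq\tilde\psi$), while the inverse translation $\tilde u\mapsto \tilde u - u_\varphi$ maps solutions of the $(\tilde f,\tilde\psi)$-problem to solutions of the $(f,\psi)$-problem; these two constructions are mutually inverse and therefore give a bijection between the two solution sets.

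Given this, the conclusion is immediate: if $u_1,u_2 \in H^1_0(\sO\cup\Gamma_0,\fw)$ are two solutions to the $(f,\psi)$-problem, then $\tilde u_i := u_i + u_\varphi$, $i=1,2$, are two solutions to the $(\tilde f,\tilde\psi)$-problem, and conversely every pair of solutions $\tilde u_1,\tilde u_2$ of the $(\tilde f,\tilde\psi)$-problem arises in this way from $u_i := \tilde u_i - u_\varphi$. Since translation by the fixed function $u_\varphi$ is injective, $u_1=u_2$ a.e.\ on $\sO$ if and only if $\tilde u_1=\tilde u_2$ a.e.\ on $\sO$. Hence the $(f,\psi)$-problem has at most one solution precisely when the $(\tilde f,\tilde\psi)$-problem does, which is the asserted equivalence.

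I do not expect a genuine obstacle here; the only point requiring (minor) care is to confirm that the two constructions $u=\tilde u - u_\varphi$ and $\tilde u = u + u_\varphi$ appearing in the proof of Lemma \ref{lem:VIEllipticHestonReductionNonnegativeSourceFunction} really are inverse to one another and that both respect membership in $H^1_0(\sO\cup\Gamma_0,\fw)$ together with the relevant obstacle inequalities --- all of which is already present in that proof, so only a short remark is needed. This lemma, together with Lemmas \ref{lem:VIEllipticHestonReductionNonnegativeSourceFunction} and \ref{lem:VIuNonnegative}, then permits the proof of uniqueness in Theorem \ref{thm:VIExistenceUniquenessEllipticHeston_Improved} to be carried out under the simplifying assumptions that $\tilde f>0$ a.e.\ on $\sO$ and that the solution $\tilde u$ is non-negative.
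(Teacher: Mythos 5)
Your proposal is correct and is essentially the paper's own argument: the paper likewise invokes the bijective correspondence $u \mapsto \tilde u = u + u_\varphi$ furnished by (the proof of) Lemma \ref{lem:VIEllipticHestonReductionNonnegativeSourceFunction} and concludes that $u_1 = u_2$ if and only if $\tilde u_1 = \tilde u_2$. No gap; your remark that the translation by the fixed $u_\varphi$ is injective and preserves membership in $H^1_0(\sO\cup\Gamma_0,\fw)$ and the obstacle constraints is exactly the point the paper relies on.
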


\begin{proof}
Let $u_\varphi$ be as in Lemma \ref{lem:EllipticHestonUniquenessAuxiliaryFunction}. Lemma \ref{lem:VIEllipticHestonReductionNonnegativeSourceFunction} implies that $u_i\in H_0^1(\sO\cup\Gamma_0), i=1,2$ are two solutions to Problem \ref{prob:HomogeneousHestonVIProblem} defined by $f, \psi$ if and only if $\tilde u_i := u_i + u_\varphi\in H_0^1(\sO\cup\Gamma_0), i=1,2$ are two solutions to Problem \ref{prob:HomogeneousHestonVIProblem} defined by $\tilde f, \tilde\psi$. Therefore, $u_1 = u_2$ if and only if $\tilde u_1 = \tilde u_2$ and this yields the conclusion.
\end{proof}

\begin{proof}[Proof of uniqueness in Theorem \ref{thm:VIExistenceUniquenessEllipticHeston_Improved}] We assume the reduction embodied in Lemma \ref{lem:VIEllipticHestonReductionNonnegativeSourceFunction}. To simplify notation \emph{we shall omit the ``tildes'' and write $f,\psi,u$ for $\tilde f,\tilde\psi,\tilde u$.}

Suppose $u_1, u_2$ are two solutions to \eqref{eq:VIProblemHestonHomgeneous}, assumed non-negative by \eqref{eq:VIuNonnegative}. The proof of uniqueness is identical to the proof of uniqueness in Theorem \ref{thm:ExistenceUniquenessEllipticHeston_Improved} until we reach the point where we need to consider variational inequalities rather than variational equations. Therefore, keeping in mind that $u_1, u_2$ now solve \eqref{eq:VIProblemHestonHomgeneous} rather than \eqref{eq:IntroHestonWeakMixedProblemHomogeneous}, we note that $\beta_0 u_1$ satisfies the variational inequality
\begin{align*}
a_\lambda(\beta_0 u_1, \beta_0 v-\beta_0 u_1) &= a(\beta_0 u_1, \beta_0 v-\beta_0 u_1) + \lambda(\beta_0 (1+y)u_1, \beta_0 v-\beta_0 u_1)_H
\quad\hbox{(by \eqref{eq:BilinearFormCoerciveHeston})}
\\
&\geq (\beta_0 f, \beta_0 v-\beta_0 u_1)_H + \lambda(\beta_0 (1+y)u_1, \beta_0 v-\beta_0 u_1)_H \quad\hbox{(by \eqref{eq:VIProblemHestonHomgeneous})}
\\
&= (\beta_0 f + \lambda\beta_0 (1+y)u_1, \beta_0 v-\beta_0 u_1)_H
\\
&= (f_1,\beta_0 v-\beta_0 u_1)_H
\quad\hbox{(by definition \eqref{eq:VIBetaLambdafuInequality} of $f_1$)},
\end{align*}
for all $\beta_0 v \geq \beta_0\psi$, $\beta_0 v\in V$, and so
$$
a_\lambda(\beta_0 u_1, v-\beta_0 u_1) \geq (f_1, v-\beta_0 u_1)_H, \quad \forall v \in V, v\geq \beta_0\psi =: \psi_1.
$$
Moreover, $u_2$ satisfies the variational inequality
\begin{align*}
a_\lambda(u_2,v-u_2) &= a(u_2,v-u_2) + \lambda((1+y)u_2,v-u_2)_H
\\
&\geq (f,v-u_2)_H + \lambda((1+y)u_2,v-u_2)_H
\quad\hbox{(by \eqref{eq:VIProblemHestonHomgeneous})}
\\
&= (f+\lambda (1+y)u_2,v-u_2)_H
\\
&= (f_2,v-u_2)_H
\quad\hbox{(by definition \eqref{eq:VIBetaLambdafuInequality} of $f_2$)}, \quad \forall v \in V, v \geq \psi := \psi_2.
\end{align*}
We are now within the setting of Theorem \ref{thm:VIExistenceUniquenessEllipticCoerciveHestonIncreasingF} since \eqref{eq:CoerciveHeston} holds and
$$
f_1 \leq f_2 \hbox{ on $\sO$ by \eqref{eq:VIBetaLambdafuInequality} and } \psi_1 = \beta_0\psi < \psi = \psi_2\hbox{ on $\sO$}.
$$
Therefore, Theorem \ref{thm:VIExistenceUniquenessEllipticCoerciveHestonIncreasingF} implies that $\beta_0 u_1 \leq u_2$, analogous to \eqref{eq:betazerou1lequ2} in the proof of uniqueness in Theorem \ref{thm:ExistenceUniquenessEllipticHeston_Improved} and the conclusion follows exactly as in the proof of uniqueness in Theorem \ref{thm:ExistenceUniquenessEllipticHeston_Improved}.
\end{proof}

\begin{cor}[A posteriori estimate for solutions to the non-coercive variational inequality]
\label{cor:VIExistenceUniquenessEllipticHestonPowery}
Assume the hypotheses of Theorem \ref{thm:VIExistenceUniquenessEllipticHeston_Improved}. In addition, require that $M, m \in H^2(\sO,\fw)$ obey
\begin{equation}
\label{eq:1pluspoweryplus1mMLq}
(1+y^{s+1})M, (1+y^{s+1})m \in L^q(\sO,\fw) \quad\hbox{for some } q>2,
\end{equation}
and that $f \in L^2(\sO,\fw)$ obeys \eqref{eq:poweryfL2} and that $\psi \in H^1(\sO,\fw)$ obeys \eqref{eq:1pluspoweryfpsiH1}. If $u\in H^1_0(\sO\cup\Gamma_0,\fw)$ is the unique solution to Problem \ref{prob:HomogeneousHestonVIProblem}, then $y^su \in H^1(\sO,\fw)$, $y^{s+1}u \in L^2(\sO,\fw)$, and
\begin{equation}
\label{eq:EllipticHestonAPosterioriEstimatePoweryCorrected}
\|y^su\|_{H^1(\sO,\fw)} \leq C\left(\|y^sf\|_{L^2(\sO,\fw)} + \|(1+y^{s+1})u\|_{L^2(\sO,\fw)} + \|(1+y^{2s-1/2})\psi^+\|_{H^1(\sO,\fw)} \right),
\end{equation}
where $C$ depends only on $s$ and the constant coefficients of $A$.
\end{cor}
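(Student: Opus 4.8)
The plan is to transfer the a posteriori estimate from the \emph{coercive} variational inequality, Corollary \ref{cor:VIExistenceUniquenessEllipticCoerciveHestonPowery}, to the non-coercive case by applying it to each term of the monotone decreasing sequence $\{u_n\}$ constructed in the proof of existence in Theorem \ref{thm:VIExistenceUniquenessEllipticHeston_Improved}, and then passing to the limit, exactly as the proof of Corollary \ref{cor:VIExistenceUniquenessEllipticCoerciveHestonPowery} passes to the limit over the penalized solutions $u_\eps$.

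First, recall from the proof of existence in Theorem \ref{thm:VIExistenceUniquenessEllipticHeston_Improved} and Claim \ref{claim:MonotoneDecreasingSequenceSolvingCoerciveVarIneq} that there is a sequence $\{u_n\}_{n\geq 1}\subset V$ obeying \eqref{eq:ungeqObstacle}, \eqref{eq:DecreasingSolutionSequenceConstruction}, and \eqref{eq:DecreasingSolutionSequence}, with $u_n\rightharpoonup u$ weakly in $V$ and, after passing to a subsequence, $u_n\to u$ pointwise a.e. on $\sO$. Rewriting \eqref{eq:DecreasingSolutionSequenceConstruction} by means of \eqref{eq:BilinearFormCoerciveHeston}, we see that $u_n$ is the unique solution of the coercive variational inequality of Problem \ref{prob:HomogeneousHestonVIProblemCoercive} with obstacle $\psi$ and source function $f_n := f+\lambda(1+y)u_{n-1}$. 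I would then verify that the hypotheses of Corollary \ref{cor:VIExistenceUniquenessEllipticCoerciveHestonPowery} hold for this coercive problem with the \emph{same} envelope functions $M, m$: the conditions \eqref{eq:SourceFunctionTraceBounds}, \eqref{eq:mMIneqOnDomain}, \eqref{eq:SourceFunctionBounds}, and \eqref{eq:MgeqPsi} are among the hypotheses of Theorem \ref{thm:VIExistenceUniquenessEllipticHeston_Improved}, while \eqref{eq:1pluspowerymMLq} follows from \eqref{eq:1pluspoweryplus1mMLq} because $1+y^s\leq 2(1+y^{s+1})$ on $(0,\infty)$. The bound \eqref{eq:fBoundsCoercive} for $f_n$ follows from \eqref{eq:fBounds} together with the pointwise bounds $m\leq u_{n-1}\leq M$ from \eqref{eq:DecreasingSolutionSequence}, since $f_n-A_\lambda m=(f-Am)+\lambda(1+y)(u_{n-1}-m)\geq 0$ and $A_\lambda M-f_n=(AM-f)+\lambda(1+y)(M-u_{n-1})\geq 0$ a.e. on $\sO$; and $y^sf_n\in L^2(\sO,\fw)$ because $y^sf\in L^2(\sO,\fw)$ by \eqref{eq:poweryfL2} and $y^s(1+y)|u_{n-1}|\leq 2(1+y^{s+1})(|M|+|m|)\in L^q(\sO,\fw)\subset L^2(\sO,\fw)$, using that $\sO$ has finite $\fw\,dxdy$-measure (Remark \ref{rmk:FiniteVolumeDomain}). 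The condition \eqref{eq:1pluspoweryfpsiH1} on $\psi$ is assumed directly.

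Applying Corollary \ref{cor:VIExistenceUniquenessEllipticCoerciveHestonPowery} to $u_n$, and using $\|y^sf_n\|_{L^2(\sO,\fw)}\leq \|y^sf\|_{L^2(\sO,\fw)}+2\lambda\|(1+y^{s+1})u_{n-1}\|_{L^2(\sO,\fw)}$ together with $\|(1+y^s)u_n\|_{L^2(\sO,\fw)}\leq 2\|(1+y^{s+1})u_n\|_{L^2(\sO,\fw)}$, yields
$$
\|y^su_n\|_{H^1(\sO,\fw)}\leq C\left(\|y^sf\|_{L^2(\sO,\fw)}+\|(1+y^{s+1})u_{n-1}\|_{L^2(\sO,\fw)}+\|(1+y^{s+1})u_n\|_{L^2(\sO,\fw)}+\|(1+y^{2s-1/2})\psi^+\|_{H^1(\sO,\fw)}\right),
$$
with $C$ depending only on $s$ and the constant coefficients of $A$, hence uniform in $n$. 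By \eqref{eq:DecreasingSolutionSequence} and \eqref{eq:1pluspoweryplus1mMLq}, $(1+y^{s+1})|u_n|\leq(1+y^{s+1})(|M|+|m|)\in L^q(\sO,\fw)$ for all $n$; in particular the analogous bound for $u$ from \eqref{eq:uBoundedObstacle} gives $y^{s+1}u\in L^2(\sO,\fw)$, and the dominated convergence theorem (or Corollary \ref{cor:WeakUpperLowerBoundsIsStrongLp}) gives $(1+y^{s+1})u_n\to(1+y^{s+1})u$ strongly in $L^2(\sO,\fw)$, whence also $y^su_n\to y^su$ in $L^2(\sO,\fw)$. Consequently the right-hand side of the displayed inequality is bounded uniformly in $n$, so $\{y^su_n\}$ is bounded in $H^1(\sO,\fw)$; passing to a further subsequence we get $y^su_n\rightharpoonup w$ weakly in $H^1(\sO,\fw)$, and $w=y^su$ by the $L^2(\sO,\fw)$ convergence, so $y^su\in H^1(\sO,\fw)$. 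Finally, weak lower semicontinuity of $\|\cdot\|_{H^1(\sO,\fw)}$ and passage to the limit $n\to\infty$ in the displayed inequality, using $\|(1+y^s)u\|_{L^2(\sO,\fw)}\leq 2\|(1+y^{s+1})u\|_{L^2(\sO,\fw)}$, yield \eqref{eq:EllipticHestonAPosterioriEstimatePoweryCorrected}.

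I expect the one genuinely delicate point to be the uniform-in-$n$ verification that the source functions $f_n$ satisfy both \eqref{eq:fBoundsCoercive} and \eqref{eq:poweryfL2}: this is precisely where the strengthened growth hypothesis \eqref{eq:1pluspoweryplus1mMLq} (with the extra power $y^{s+1}$ rather than $y^s$) is needed, in order to absorb the zeroth-order perturbation $\lambda(1+y)u_{n-1}$ hidden inside $f_n$, and it is also what forces the term $\|(1+y^{s+1})u\|_{L^2(\sO,\fw)}$ — rather than $\|(1+y^s)u\|_{L^2(\sO,\fw)}$ — to appear on the right-hand side of \eqref{eq:EllipticHestonAPosterioriEstimatePoweryCorrected}. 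Everything else is a routine repetition of the weak-compactness and limiting argument already carried out in the proof of Corollary \ref{cor:VIExistenceUniquenessEllipticCoerciveHestonPowery}.
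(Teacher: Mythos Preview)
Your argument is correct, but it takes a substantially more laborious route than the paper does. The paper's proof is one paragraph: it simply observes that since $u$ solves the non-coercive variational inequality $a(u,v-u)\geq(f,v-u)_H$, adding $\lambda((1+y)u,v-u)_H$ to both sides shows that $u$ itself is the unique solution of the \emph{coercive} variational inequality with source $f_\lambda:=f+\lambda(1+y)u$. It then checks that $f_\lambda$ obeys \eqref{eq:fBoundsCoercive} (using \eqref{eq:fBounds} and the a posteriori bound $m\leq u\leq M$ from \eqref{eq:uBoundedObstacle}) and that $y^sf_\lambda\in L^2(\sO,\fw)$ (using \eqref{eq:1pluspoweryplus1mMLq}), and applies Corollary~\ref{cor:VIExistenceUniquenessEllipticCoerciveHestonPowery} directly to $u$. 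No sequence, no weak limits, no lower semicontinuity.

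Your approach instead revisits the construction sequence $\{u_n\}$, applies Corollary~\ref{cor:VIExistenceUniquenessEllipticCoerciveHestonPowery} to each $u_n$ with source $f_n=f+\lambda(1+y)u_{n-1}$, and then passes to the limit. This works --- every verification you sketch is sound, and the extra power in \eqref{eq:1pluspoweryplus1mMLq} is used for exactly the reason you identify --- but it duplicates the limiting machinery already internal to Corollary~\ref{cor:VIExistenceUniquenessEllipticCoerciveHestonPowery}. The paper's insight is that once uniqueness is in hand, $u$ can be treated as the solution of a single coercive problem rather than the limit of a family of them, so the a posteriori estimate transfers in one step.
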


\begin{proof}
Setting $f_\lambda := f+\lambda(1+y)u$ and using the definition \eqref{eq:BilinearFormCoerciveHeston} of $a_\lambda$, we may view $u\in\KK$ as the unique solution to
$$
a_\lambda(u,v-u) \geq (f_\lambda,v-u)_H, \quad \forall v\in \KK.
$$
The source function $f_\lambda = f + \lambda(1+y)u$ obeys \eqref{eq:fBoundsCoercive} since
\begin{align*}
A_\lambda m &= Am + \lambda(1+y)m \quad\hbox{(by \eqref{eq:CoerciveHestonOperator})}
\\
&\leq f + \lambda(1+y)u \quad\hbox{(by \eqref{eq:fBounds} and \eqref{eq:uBoundedObstacle})}
\\
&\leq AM + \lambda(1+y)M \quad\hbox{(by \eqref{eq:fBounds} and \eqref{eq:uBoundedObstacle})}
\\
&= A_\lambda M \quad\hbox{(by \eqref{eq:uBoundedObstacle})}.
\end{align*}
Since $u$ obeys \eqref{eq:uBoundedObstacle} and $M, m$ obey \eqref{eq:1pluspoweryplus1mMLq}, then $y^s(1+y)u \in L^2(\sO,\fw)$ and so $y^sf_\lambda \in L^2(\sO,\fw)$. We can now apply Corollary \ref{cor:VIExistenceUniquenessEllipticCoerciveHestonPowery} to conclude that \eqref{eq:EllipticCoerciveHestonAPosterioriEstimatePoweryCorrected} holds with $f$ replaced by $f_\lambda$ and thus \eqref{eq:EllipticHestonAPosterioriEstimatePoweryCorrected} follows.
\end{proof}

For completeness, we can now state and prove a posteriori comparison estimates for solutions to the \emph{coercive} variational inequality:

\begin{cor}[A posteriori comparison principle for the coercive variational inequality]
\label{cor:ComparisionPrincipleCoerciveEllipticHestonVI}
Given $f, \psi$ as in Problem \ref{prob:HomogeneousHestonVIProblem}, let $u\in V, u\geq\psi$ be the unique solution to Problem \ref{prob:HomogeneousHestonVIProblemCoercive}. Then
$$
\max\{m,\psi\} \leq u \leq M \hbox{ a.e. on }\sO.
$$
\end{cor}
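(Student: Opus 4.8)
The plan is to prove the two one-sided bounds $u\le M$ and $u\ge m$ separately by inserting suitable truncated test functions into the coercive variational inequality \eqref{eq:VIProblemHestonHomogeneousCoercive}, in the spirit of the proof of Theorem \ref{thm:VIExistenceUniquenessEllipticCoerciveHestonIncreasingF}, and then to combine $u\ge m$ with the constraint $u\ge\psi$ (which holds because $u\in\KK$) to get $\max\{m,\psi\}\le u$. This route is a bit cleaner than the penalization-limit argument used for Lemma \ref{lem:ComparisionPrincipleCoerciveHestonVI} and does not need the $L^q$ integrability hypothesis \eqref{eq:mMInLq}; the sign hypothesis it uses is \eqref{eq:fBoundsCoercive}, $A_\lambda m\le f\le A_\lambda M$, together with \eqref{eq:SourceFunctionTraceBounds}, \eqref{eq:mMIneqOnDomain}, \eqref{eq:SourceFunctionBounds}, \eqref{eq:MgeqPsi} for $M,m$ (as in Lemma \ref{lem:ComparisionPrincipleCoerciveHestonVI}). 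Alternatively, under those same hypotheses the statement is immediate from Lemma \ref{lem:ComparisionPrincipleCoerciveHestonVI}; I would present the direct argument to make the corollary self-contained and to avoid the auxiliary integrability condition.

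For the upper bound I would test with $v:=\min\{u,M\}=u-(u-M)^+$. To see $v\in\KK$: since $M\ge\psi$ a.e.\ on $\sO$ by \eqref{eq:MgeqPsi} and $u\ge\psi$ a.e.\ on $\sO$, we have $v=\min\{u,M\}\ge\psi$; and since $u=0$ on $\Gamma_1$ in the trace sense while $M\ge 0$ on $\Gamma_1$ by \eqref{eq:SourceFunctionTraceBounds}, the function $(u-M)^+$ vanishes on $\Gamma_1$ in the trace sense, so $(u-M)^+\in H^1(\sO,\fw)$ by Lemma \ref{lem:SobolevSpaceClosedUnderMaxPart} and, by the trace results of the appendix, lies in $H^1_0(\sO\cup\Gamma_0,\fw)=V$, whence $v\in V$. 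Plugging $v-u=-(u-M)^+$ into \eqref{eq:VIProblemHestonHomogeneousCoercive} gives $a_\lambda(u,(u-M)^+)\le (f,(u-M)^+)_H$. Because $M\in H^2(\sO,\fw)$ and $(u-M)^+$ vanishes on $\Gamma_1$, Lemma \ref{lem:HestonIntegrationByParts} (whose $\Gamma_1$-boundary term then drops) yields $a_\lambda(M,(u-M)^+)=(A_\lambda M,(u-M)^+)_H$. Subtracting, and using the identity $a_\lambda(w,w^+)=a_\lambda(w^+,w^+)$ with $w=u-M$ (valid because $a(w^+,w^-)=0$ for $w\in V$, as already used in the proof of Corollary \ref{cor:ExistenceUniquenessEllipticCoerciveHeston}), we obtain $a_\lambda((u-M)^+,(u-M)^+)\le (f-A_\lambda M,(u-M)^+)_H\le 0$ by \eqref{eq:fBoundsCoercive}. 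The coercivity bound \eqref{eq:CoerciveHeston} then forces $\|(u-M)^+\|_V=0$, i.e.\ $u\le M$ a.e.\ on $\sO$.

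For the lower bound I would test with $v:=\max\{u,m\}=u+(u-m)^-$. Here $v\ge u\ge\psi$ a.e.\ on $\sO$, and $(u-m)^-$ vanishes on $\Gamma_1$ in the trace sense since $u=0$ there and $m\le 0$ there by \eqref{eq:SourceFunctionTraceBounds}, so again $(u-m)^-\in V$ and $v\in\KK$. Inserting $v-u=(u-m)^-$ into \eqref{eq:VIProblemHestonHomogeneousCoercive}, applying Lemma \ref{lem:HestonIntegrationByParts} to write $a_\lambda(m,(u-m)^-)=(A_\lambda m,(u-m)^-)_H$, and using $a_\lambda(w,w^-)=-a_\lambda(w^-,w^-)$ with $w=u-m$, we get $-a_\lambda((u-m)^-,(u-m)^-)\ge (f-A_\lambda m,(u-m)^-)_H\ge 0$ by \eqref{eq:fBoundsCoercive}. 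Then \eqref{eq:CoerciveHeston} gives $(u-m)^-=0$ a.e.\ on $\sO$, so $u\ge m$; combining with $u\ge\psi$ yields $\max\{m,\psi\}\le u\le M$.

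The main obstacle is not conceptual but bookkeeping: one must check carefully that the truncations $\min\{u,M\}$ and $\max\{u,m\}$ genuinely belong to $\KK$ — in particular that $(u-M)^+$ and $(u-m)^-$ lie in $V=H^1_0(\sO\cup\Gamma_0,\fw)$, which combines the trace-vanishing on $\Gamma_1$ (using $u=0$ on $\Gamma_1$ and the signs of $M,m$ on $\Gamma_1$) with the closure-under-truncation property of Lemma \ref{lem:SobolevSpaceClosedUnderMaxPart} — and that the $\Gamma_1$-boundary term in Lemma \ref{lem:HestonIntegrationByParts} is annihilated precisely because these test functions vanish on $\Gamma_1$. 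It is also worth recording explicitly that the relation $a(v^+,v^-)=0$ on $V$ (and hence $a_\lambda(w,w^\pm)=\pm a_\lambda(w^\pm,w^\pm)$) is the algebraic identity doing the work, and that $A_\lambda m\le f\le A_\lambda M$ is the sign input being used.
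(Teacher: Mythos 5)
Your argument is correct, but it is a genuinely different route from the paper's. The paper proves this corollary \emph{a posteriori}: it observes that the bound \eqref{eq:uBoundedObstacle} was already established for the particular solution produced by the existence construction (the penalization limit, respectively the monotone iteration in the proof of Theorem \ref{thm:VIExistenceUniquenessEllipticHeston_Improved}), and then uses uniqueness to identify that constructed solution with the given one; the closely related Lemma \ref{lem:ComparisionPrincipleCoerciveHestonVI} runs the same way and needs the extra integrability hypothesis \eqref{eq:mMInLq} on $M,m$ precisely in order to extract a pointwise a.e.\ convergent subsequence of the approximating solutions. You instead give an \emph{a priori} argument, testing the coercive inequality \eqref{eq:VIProblemHestonHomogeneousCoercive} directly with the truncations $\min\{u,M\}$ and $\max\{u,m\}$; this is in effect a transplant of the proof of Lemma \ref{lem:ComparisionPrinciplePenalizedHeston} from the penalized equation to the variational inequality itself, and it is also the mechanism behind Theorem \ref{thm:VIExistenceUniquenessEllipticCoerciveHestonIncreasingF}. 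Your admissibility checks are the right ones ($M\ge\psi$ and $u\ge\psi$ give $\min\{u,M\}\ge\psi$; the signs of $M,m$ on $\Gamma_1$ together with $u=0$ there and Lemmas \ref{lem:SobolevSpaceClosedUnderMaxPart} and \ref{lem:EvansGamma1TraceZero} put $(u-M)^+,(u-m)^-$ in $V$), the use of Lemma \ref{lem:HestonIntegrationByParts} with the vanishing $\Gamma_1$ term is legitimate since $M,m\in H^2(\sO,\fw)$, and the identities $a_\lambda(w,w^\pm)=\pm a_\lambda(w^\pm,w^\pm)$ are exactly the ones the paper already invokes. What your approach buys is that the bound holds for \emph{any} solution of the coercive inequality without the $L^q$ condition and without appealing to the existence machinery; what the paper's approach buys is brevity, since the work was already done in the existence proof. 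One small caveat: the corollary's statement leaves the standing hypotheses on $f,M,m$ implicit, and you should state explicitly (as you do) that \eqref{eq:fBoundsCoercive} together with \eqref{eq:SourceFunctionTraceBounds}, \eqref{eq:mMIneqOnDomain}, \eqref{eq:MgeqPsi}, and $M,m\in H^2(\sO,\fw)$ are the inputs your proof consumes.
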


\begin{proof}
The conclusion follows from \eqref{eq:uBoundedObstacle}, noting that this inequality was established a fortiori by the proof of Theorem \ref{thm:VIExistenceUniquenessEllipticHeston_Improved} for the non-coercive bilinear form $a(u,v)$; the hypotheses \eqref{eq:Sqrt1plusyMmInLq} on $m, M$ are not required for existence of $u$, while the hypotheses on $\varphi$ were only required for uniqueness in the non-coercive case and may be omitted in the coercive case.
\end{proof}

\section{Regularity of solutions to the variational equation}
\label{sec:H2RegularityEquality}
We establish higher regularity results for solutions to the variational equation for the elliptic Heston operator, Problem \ref{prob:HestonWeakMixedBVPHomogeneous}. In \S \ref{subsec:IntermediateFirstOrderEstimatesVarEquality}, we prove an intermediate a priori estimate for first-order derivatives of these solutions (Proposition \ref{prop:AuxiliaryWeightedH1Estimate}) while in \S \ref{subsec:RefinedFirstOrderEstimatesVarEquality} we derive a refined a priori estimate for first-order derivatives (Proposition \ref{prop:AuxiliarySpecialWeightedH1EstimateSisZero}). In \S \ref{subsec:SecondOrderEstimatesVarEquality} we derive a priori estimates for second-order derivatives (Proposition \ref{prop:AuxiliaryWeightedH2EstimateSisZero}). In \S \ref{subsec:H2RegularityEquality} we show that solutions to the variational equation are in $H^2(\sO,\fw)$ and obtain an a priori $H^2(\sO,\fw)$ estimate for these solutions (Theorem \ref{thm:GlobalRegularityEllipticHestonSpecial}) together with an existence and uniqueness result for strong solutions (Theorem \ref{thm:ExistenceUniquenessH2RegularEllipticHeston}). We conclude in \S \ref{subsec:HolderRegularityEquality} by showing that solutions are H\"older continuous (Theorem \ref{thm:HolderContinuityHestonStatVarEquality}).

\subsection{Preliminary a priori estimate for first-order derivatives of a solution to the variational equation}
\label{subsec:IntermediateFirstOrderEstimatesVarEquality}
We obtain a preliminary a priori first-derivative estimate for a solution to Problem \ref{prob:HestonWeakMixedBVPHomogeneous}, weighted by a power of $y$.

\begin{prop}[A priori first-derivative estimate for a solution to the variational equation]
\label{prop:AuxiliaryWeightedH1Estimate}
There is a positive constant $C$ depending only on $\gamma$ and the constant coefficients of $A$ such that, if $u\in V$ is a solution to Problem \ref{prob:HestonWeakMixedBVPHomogeneous} and $y^{1/2}f, (1+y)u \in H$, then $y^{1/2} u \in V$ and
\begin{align}
\label{eq:AuxiliaryWeightedH1Estimate}
|yDu|_H \leq C\left(|y^{1/2}f|_H + |(1+y)u|_H\right),
\\
\label{eq:AuxiliaryWeightedH1NormEstimate}
\|y^{1/2}u\|_V \leq C\left(|y^{1/2}f|_H + |(1+y)u|_H\right).
\end{align}
\end{prop}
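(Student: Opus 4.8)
The plan is to test the variational equation \eqref{eq:IntroHestonWeakMixedProblemHomogeneous} against $v=\varphi^2 u$ with $\varphi = \zeta_R y^{1/2}$, where $\zeta_R\in C^\infty_0(\RR^2)$ is the cutoff function of Definition \ref{defn:RadialCutoffFunction}, then use the commutator energy identity to transfer the cutoff-weight onto $u$ itself, apply the G\aa rding inequality \eqref{eq:HestonBilinearFormGarding} (or its coercive variant \eqref{eq:CoerciveHeston}) and the continuity estimate, and finally let $R\to\infty$. Since $u\in V=H^1_0(\sO\cup\Gamma_0,\fw)$ and $\varphi\in C^2_0(\RR^2)$, the product $\varphi^2 u$ is a legitimate test function in $V$ (it vanishes on $\Gamma_1$ in the trace sense and lies in $H^1(\sO,\fw)$ since $\varphi$ and $D\varphi$ are bounded on the support of $\varphi$). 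The first step is therefore to write, using \eqref{eq:BilinearFormSquaredFunction} from Corollary \ref{cor:CommutatorInnerProduct},
\begin{align*}
a(\varphi u,\varphi u) &= a(u,\varphi^2 u) + ([A,\varphi]u,\varphi u)_H
\\
&= (f,\varphi^2 u)_H + ([A,\varphi]u,\varphi u)_H,
\end{align*}
so that, by \eqref{eq:HestonBilinearFormGarding},
$$
\tfrac12 C_2\|\varphi u\|_V^2 \le (f,\varphi^2 u)_H + ([A,\varphi]u,\varphi u)_H + C_2\|(1+y)^{1/2}\varphi u\|_H^2.
$$

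The second step is to bound the three terms on the right. For the source term, $|(f,\varphi^2 u)_H| = |(\varphi f,\varphi u)_H|$; since $\varphi = \zeta_R y^{1/2}$ we have $\varphi f = \zeta_R y^{1/2} f$, which is controlled by $|y^{1/2}f|_H$, and we pair it against $|\varphi u|_H \le |y^{1/2}u|_H \le |(1+y)u|_H$ (or simply absorb $\|\varphi u\|_V$ with a Cauchy--Schwarz-with-$\eps$). For the commutator term, Corollary \ref{cor:CommutatorInnerProductEstimate} gives
$$
|([A,\varphi]u,\varphi u)_H| \le C\|y^{1/2}(|D\varphi| + |D\varphi|^{1/2})u\|_H^2,
$$
and with $\varphi=\zeta_R y^{1/2}$ one has $|D\varphi| \le y^{1/2}|D\zeta_R| + \tfrac12\zeta_R y^{-1/2}$, hence $y^{1/2}|D\varphi| \le y|D\zeta_R| + \tfrac12\zeta_R \le 10 + \tfrac12$ on the support of $\varphi$ by Lemma \ref{lem:RadialCutoffFunction} (the estimate $y|D\zeta_R|\le10$), so $y|D\varphi|^2$ and $y|D\varphi|$ are both bounded by $C(1 + \zeta_R^2/y)\le C(1+y)$ up to the harmless factor coming from $D\zeta_R$; the upshot is a bound of the form $C(|(1+y)u|_H^2 + |y^{1/2}u|_H^2 + \text{(terms supported where }D\zeta_R\neq0))$. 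The term $\|(1+y)^{1/2}\varphi u\|_H^2 = \|(1+y)^{1/2}\zeta_R y^{1/2} u\|_H^2 \le \|(1+y)u\|_H^2$ directly. Combining and rearranging yields
$$
\|\zeta_R y^{1/2} u\|_V \le C\left(|y^{1/2}f|_H + |(1+y)u|_H\right)
$$
with $C$ independent of $R$.

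The third step is the passage $R\to\infty$: since $\zeta_R\uparrow 1$ pointwise and $|D\zeta_R|\le 10/R\to0$ with support in $B(2R)\setminus\bar B(R)$, the contributions of $D\zeta_R$ vanish in the limit (the relevant integrands are dominated by $(1+y)^2u^2\fw \in L^1$ and $y|Du|^2\fw\in L^1$), and by the monotone/dominated convergence theorem and the definition of $\|\cdot\|_V$ in Definition \ref{defn:H1WeightedSobolevSpaces} one concludes $y^{1/2}u\in V$ with $\|y^{1/2}u\|_V \le C(|y^{1/2}f|_H + |(1+y)u|_H)$, which is \eqref{eq:AuxiliaryWeightedH1NormEstimate}; then \eqref{eq:AuxiliaryWeightedH1Estimate} follows because $|yDu|_H \le |y^{1/2}D(y^{1/2}u)|_H + \tfrac12|y^{-1/2}\cdot y^{1/2}u\cdot y^{1/2}|_H$ — more precisely $D(y^{1/2}u) = y^{1/2}Du + \tfrac12 y^{-1/2}u\, e_2$, so $y^{1/2}|Du| \le |D(y^{1/2}u)| + \tfrac12 y^{-1/2}|u|$ and hence $|yDu|_H \le |y^{1/2}D(y^{1/2}u)|_H + \tfrac12|y^{1/2}u|_H \le C\|y^{1/2}u\|_V$. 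The main obstacle I expect is the bookkeeping in the commutator estimate: one must check carefully that every term produced by $[A,\varphi]$ with $\varphi=\zeta_R y^{1/2}$ — in particular the ostensibly singular piece $y^{-1/2}$ coming from differentiating $y^{1/2}$ — is in fact controlled, using $y^{1/2}\cdot y^{-1/2}=1$ and the $(1+y)$-weighted $L^2$ bound on $u$, so that no uncontrolled $|y^{-1/2}u|_H$ term survives; the $R$-dependent terms are routine once Lemma \ref{lem:RadialCutoffFunction} is invoked.
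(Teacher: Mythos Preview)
Your proposal is correct and follows essentially the same route as the paper: test against $\varphi^2 u$ with $\varphi=\zeta_R y^{1/2}$, use the commutator identity \eqref{eq:BilinearFormSquaredFunction} and estimate \eqref{eq:CommutatorInnerProductEstimate}, apply G\aa rding, and let $R\to\infty$; the paper merely reverses your order (it extracts $|yDu|_H$ first from $\|\zeta_R y^{1/2}u\|_V$ and then deduces \eqref{eq:AuxiliaryWeightedH1NormEstimate}). Two small bookkeeping slips to clean up: the intermediate claim ``$C(1+\zeta_R^2/y)\le C(1+y)$'' is false as $y\downarrow 0$, but what you actually need---namely $y|D\varphi|^2\le C$ and $y|D\varphi|\le C(1+y)$, hence $\|y^{1/2}(|D\varphi|+|D\varphi|^{1/2})u\|_H^2\le C|(1+y)u|_H^2$---follows directly from $|D\varphi|\le 10(y^{-1/2}+y^{1/2})$ as in the paper; and in your last line the remainder is $\tfrac12|u|_H$ (not $\tfrac12|y^{1/2}u|_H$), which you then bound by $|(1+y)u|_H$ rather than by $\|y^{1/2}u\|_V$.
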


\begin{proof}
Let $\zeta_R$ be the cutoff function in Definition \ref{defn:RadialCutoffFunction}, set $\varphi := \zeta_R y^{1/2}$,
and observe that $y\zeta_Ru \in H^1_0(\sO\cup\Gamma_0,\fw)$ and
$$
|D\varphi| \leq 10(y^{-1/2} + y^{1/2}) \quad\hbox{on }\RR^2, \quad \forall R\geq 2.
$$
From \eqref{eq:BilinearFormSquaredFunction} and \eqref{eq:CommutatorInnerProductEstimate}, with the preceding choice of $\varphi$, we obtain
$$
|a(\zeta_Ry^{1/2} u, \zeta_Ry^{1/2} u) - a(u, y\zeta_R^2 u)| \leq C|y^{1/2}((y^{-1/2} + y^{1/2})^{1/2} + (y^{-1/2} + y^{1/2}))u|_H
$$
and thus
\begin{equation}
\label{eq:BilinearFormPoweryCommutator}
|a(\zeta_Ry^{1/2} u,\zeta_Ry^{1/2} u) - a(u, y\zeta_R^2 u)| \leq C|(1 + y)u|_H^2.
\end{equation}
From \eqref{eq:CoerciveHeston} we have
$$
\nu_1\|\zeta_Ry^{1/2} u\|_V^2 \leq a(\zeta_Ry^{1/2}u, \zeta_Ry^{1/2}u) + \lambda((1+y)\zeta_Ry^{1/2}u, \zeta_Ry^{1/2}u)_H,
$$
and as $a(u, \zeta_R^2yu) = (f, \zeta_R^2yu)_H$ by \eqref{eq:IntroHestonWeakMixedProblemHomogeneous}, then \eqref{eq:BilinearFormPoweryCommutator} yields
\begin{align*}
\|\zeta_Ry^{1/2} u\|_V^2 &\leq C\left(|(\zeta_Ry^{1/2}f, \zeta_Ry^{1/2}u)_H| + \lambda|((1+y)\zeta_Ry^{1/2}u, \zeta_Ry^{1/2}u)_H|\right) + C|(1 + y)u|_H^2
\\
&\leq C\left(|y^{1/2}f|_H|y^{1/2}u|_H + |(1+y)u|_H^2\right)
\\
&\leq C\left(|y^{1/2}f|_H^2 + |(1+y)u|_H^2\right),
\end{align*}
and thus
\begin{equation}
\label{eq:AuxiliaryWeightedH1NormEstimate_prefinal}
\|\zeta_Ry^{1/2} u\|_V \leq C\left(|y^{1/2}f|_H + |(1+y)u|_H\right),
\end{equation}
with constant $C$ depending only on the constant coefficients of $A$ and $\gamma$. Using
\begin{align*}
y^{1/2}D(\zeta_Ry^{1/2}u) &= y^{1/2}\left(\zeta_Ry^{1/2}Du + (D\zeta_R)y^{1/2}u + \frac{1}{2}(0, \zeta_Ry^{-1/2}u)\right)
\\
&= \zeta_RyDu + (D\zeta_R)yu + \frac{1}{2}(0, \zeta_Ru),
\end{align*}
and recalling the Definition \ref{defn:H1WeightedSobolevSpaces} of the norm for $H^1(\sO,\fw)$, we obtain
$$
|\zeta_RyDu|_H \leq \|\zeta_Ry^{1/2} u\|_V + |(1+y)u|_H, \quad\forall R\geq 2,
$$
with constant $C$ depending only on the constant coefficients of $A$ and $\gamma$. Combining the preceding inequality with \eqref{eq:AuxiliaryWeightedH1NormEstimate_prefinal} yields
$$
|\zeta_RyDu|_H \leq C\left(|y^{1/2}f|_H + |(1+y)u|_H\right).
$$
Taking limits as $R\to\infty$ and applying the dominated convergence theorem yields \eqref{eq:AuxiliaryWeightedH1Estimate}. The estimate \eqref{eq:AuxiliaryWeightedH1NormEstimate} follows from \eqref{eq:AuxiliaryWeightedH1Estimate}, Definition \ref{defn:H1WeightedSobolevSpaces}, and the identity $y^{1/2}D(y^{1/2}u) = yDu + \frac{1}{2}(0,u)$.
\end{proof}

\subsection{Refined a priori estimate for first-order derivatives for solutions to the variational equation}
\label{subsec:RefinedFirstOrderEstimatesVarEquality}
By considering an elliptic version, $Lu := y^{1-\beta}((y^\beta u_x)_x + (y^\beta u_y)_y)$, of the parabolic model operator, considered by Koch in \cite[Equation (4.43)]{Koch}, and the map $w \mapsto Tw := u$ defined by the solution to the equation $Lu = w$, one would anticipate estimates \eqref{eq:AuxiliarySpecialWeightedH1EstimateSisZero} and \eqref{eq:HEstimateyD2uSpecial} analogous to the first and second-order derivative estimates in \cite[Lemma 4.6.1]{Koch} and its formal proof; compare \cite[Theorems 1 and $1'$]{Kohn_Nirenberg_1967}. The first-order estimate \eqref{eq:AuxiliarySpecialWeightedH1EstimateSisZero} is sharper than \eqref{eq:VariationalEqualityHestonBoundH}.

Because the argument used by Koch in \cite[Proof of Lemma 4.6.1]{Koch} is formal (as Koch himself underlines \cite[p. 88]{Koch}), one of our goals in this subsection --- aside from extending his result to case of the Heston and similar degenerate, second-order elliptic operators with lower-order terms on unbounded domains --- is to provide a rigorous proof of \cite[Lemma 4.6.1]{Koch} and our extensions. In order to avoid technical difficulties which would arise if we used cutoff functions or finite differences (compare the proofs of \cite[Theorems 8.8 \& 8.12]{GT}), we shall instead appeal to

\begin{thm}[Existence and uniqueness of classical solutions when the source function is smooth with compact support]
\cite{Feehan_Pop_regularityweaksoln}, \cite{PopThesis}
\label{thm:SmoothnessUpToBdrySolution}
Suppose that $f\in C^\infty_0(\sO)$ and $\Gamma_1$ is $C^\infty$. Then there exists a solution $u\in C^\infty(\bar\sO)$ to Problem \ref{prob:HestonMixedBVPHomogeneousClassical}.
\end{thm}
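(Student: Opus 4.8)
The plan is to produce $u$ as a weak solution by the variational theory of \S\ref{sec:VariationalEquation} and then upgrade its regularity in two stages: on the non-degenerate part of the domain by classical uniformly elliptic theory, and up to the degenerate boundary portion $\Gamma_0$ by weighted Schauder estimates for $A$ of the type developed by Daskalopoulos and Hamilton for the porous medium equation.

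Since $f\in C^\infty_0(\sO)$ is bounded with compact support, Lemma \ref{lem:PointwiseBoundForSolution} supplies affine envelope functions $M:=c(1+y)$, $m:=-c(1+y)$ with $c>0$ large obeying \eqref{eq:SourceFunctionTraceBounds}, \eqref{eq:mMIneqOnDomain}, \eqref{eq:SourceFunctionBounds} and \eqref{eq:fBounds}, so Theorem \ref{thm:ExistenceUniquenessEllipticHeston_Improved} (when $r>0$; if $r=0$ one first solves $(A+\varepsilon(1+y))u_\varepsilon=f$ and passes to the weak limit, using that $(1+y)^2\in L^2(\sO,\fw)$) yields a weak solution $u\in H^1_0(\sO\cup\Gamma_0,\fw)$ of Problem \ref{prob:HestonWeakMixedBVPHomogeneous}; by Lemmas \ref{lem:HestonWeightedNeumannBVPHomogeneous} and \ref{lem:HestonWeightedNeumannBoundaryProperty} it already satisfies $u=0$ on $\Gamma_1$ and $y^\beta(\rho u_x+\sigma u_y)=0$ on $\Gamma_0$ in the trace sense, and it remains only to prove $u\in C^\infty(\bar\sO)$. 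On any $\sO'\Subset\sO\cup\Gamma_1$ the height $y$ is bounded below, hence $\fw$ is smooth and bounded between positive constants and $A$ has uniformly elliptic principal part with affine (so $C^\infty$) coefficients there; thus, locally, $u$ is an ordinary weak solution of $Au=f$, and interior and boundary $L^p$- and Schauder estimates \cite[Chs.~6, 9]{GT}, using $f\in C^\infty$ and $u=0$ on the $C^\infty$ portion $\Gamma_1$, bootstrap to $u\in C^\infty(\sO\cup\Gamma_1)$.

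The principal step is regularity up to $\Gamma_0$. Since $\supp f$ is a compact subset of $\sO$, there is $\delta\in(0,\delta_0]$ with $Au=0$ on $\sO^0_\delta=\Gamma_0\times(0,\delta)$, so only the homogeneous equation need be analyzed near $\Gamma_0$. The core is an interior Schauder-type a priori estimate for $A$ in the weighted H\"older spaces defined by the cycloidal metric $ds^2=y^{-1}(dx^2+dy^2)$ of \cite[\S I.1]{DaskalHamilton1998}: on half-discs $B_r^+$ centered on $\{y=0\}$ one proves $\|u\|_{C^{2,\alpha}_s(B_r^+)}\le C\big(\|Au\|_{C^\alpha_s(B_{2r}^+)}+\|u\|_{C^0(B_{2r}^+)}\big)$ by freezing coefficients, a dilation argument adapted to the cycloidal metric, and perturbation off the constant-coefficient model operator $y^{1-\beta}\big((y^\beta v_x+\rho\sigma y^\beta v_y)_x+(\rho\sigma y^\beta v_x+\sigma^2 y^\beta v_y)_y\big)$ from \eqref{eq:CleanedUpBasicHestonDivergenceFormula}, whose solvability and smoothing properties follow, after a partial Fourier transform in $x$, from the theory of Kummer's confluent hypergeometric equation, the first parameter being $\beta=2\kappa\theta/\sigma^2>0$. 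Iterating this estimate with $Au=0$, differentiating the equation in $x$ and using it to trade the degenerate normal derivative, places $u$ in $C^{k,\alpha}_s$ near $\Gamma_0$ for every $k$, which forces $u\in C^\infty$ up to $\Gamma_0$ in the ordinary sense; the corner points where $\bar\Gamma_0$ meets $\Gamma_1$ are covered as well, because by Hypothesis \ref{hyp:HestonDomainNearGammaZero} the domain is a product near $\Gamma_0$ and the odd reflection of $u$ across $\Gamma_1$ reduces them to the boundaryless case. Patching the local $C^\infty$ statements yields $u\in C^\infty(\bar\sO)$, whence $Au=f$ classically on $\sO$, $u=0$ on $\Gamma_1$, and, since $\beta>0$ and $u\in C^1$ up to $\Gamma_0$, $y^\beta(\rho u_x+\sigma u_y)\to 0$ as $y\downarrow 0$ (Remark \ref{rmk:WellPosedClassicalBVP}), so \eqref{eq:WeightedNeumannHomogeneousBCLimit} holds and $u$ solves Problem \ref{prob:HestonMixedBVPHomogeneousClassical}. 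The main obstacle is the weighted Schauder estimate up to $\Gamma_0$ --- equivalently, the uniform, rapidly decaying-in-$\xi$ control of the confluent hypergeometric solutions of the transformed model equation; the rest is routine elliptic bootstrapping, and the complete argument is carried out in \cite{Feehan_Pop_regularityweaksoln, PopThesis}.
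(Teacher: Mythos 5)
The paper does not prove this theorem: it is imported verbatim from \cite{Feehan_Pop_regularityweaksoln} and \cite{PopThesis}, and Remark \ref{rmk:SmoothnessUpToBdrySolution} merely indicates that the proof there adapts the weighted H\"older (cycloidal-metric) Schauder theory of \cite[Theorems I.1.1 \& II.1.1]{DaskalHamilton1998}. Your proposal follows exactly that strategy, so it is aligned with the intended argument; but it does not close the gap the citation leaves open. The entire content of the theorem is the regularity up to the degenerate boundary portion $\Gamma_0$, and your treatment of it consists of asserting the key a priori estimate $\|u\|_{C^{2,\alpha}_s(B_r^+)}\le C\bigl(\|Au\|_{C^\alpha_s(B_{2r}^+)}+\|u\|_{C^0(B_{2r}^+)}\bigr)$ and then writing that ``the complete argument is carried out in \cite{Feehan_Pop_regularityweaksoln, PopThesis}.'' That is a restatement of the citation, not a proof; the solvability and decay estimates for the constant-coefficient model operator (the uniform control of the confluent hypergeometric solutions in the dual variable $\xi$), the perturbation/freezing argument, and the passage from $C^{k,\alpha}_s$ regularity in the cycloidal metric to ordinary $C^\infty$ regularity up to $\{y=0\}$ are all nontrivial and all omitted.

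Two further points would fail as written. First, your corner argument via odd reflection across $\Gamma_1$ does not work: near $\Gamma_0$ the boundary $\Gamma_1$ is vertical, and the operator $A$ in \eqref{eq:OperatorHestonIntro} is not invariant under $x\mapsto 2x_0-x$ combined with $u\mapsto -u$ because of the cross term $-\rho\sigma y\,u_{xy}$ and the drift $-(r-q-y/2)u_x$; the reflected function solves a different equation, so the corner cannot be reduced to the boundaryless case this way (the cited works instead prove boundary and corner Schauder estimates directly). Second, your construction of the initial weak solution when $r=0$ is unjustified: passing to the limit in $(A+\eps(1+y))u_\eps=f$ requires bounds on $u_\eps$ uniform in $\eps$, which the coercive theory does not supply (the G\r{a}rding constant and the envelope functions of Lemma \ref{lem:PointwiseBoundForSolution} degenerate as $r\downarrow 0$), and Theorem \ref{thm:ExistenceUniquenessEllipticHeston_Improved} explicitly assumes $r>0$. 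In the paper's actual use of the theorem (Theorem \ref{thm:SmoothnessUpToBdryWeakSolution} and Remark \ref{rmk:SmoothnessUpToBdrySolutionCoercive}) the operator is $A_\lambda$ with strictly positive zeroth-order coefficient, so this issue is avoided there, but your proposed route does not cover the statement as you have set it up.
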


\begin{rmk}[H\"older regularity analogue of Theorem \ref{thm:SmoothnessUpToBdrySolution}]
\label{rmk:SmoothnessUpToBdrySolution}
See Definition \ref{defn:ContinuousFunctions} for $C^\infty(\bar\sO)$. Theorem \ref{thm:SmoothnessUpToBdrySolution} is a special case of more general existence, uniqueness, regularity results, and Schauder estimates for classical solutions to the elliptic Heston equation in \cite{Feehan_Pop_regularityweaksoln}, \cite{PopThesis}, where we have a $C^{k+2,\alpha}$ boundary portion, $\Gamma_1$, and source function, $f\in C^{k,\alpha}_0(\sO)$ ($k\geq 0, \alpha\in(0,1)$), yielding a solution, $u\in C^{k+2,\alpha}(\bar\sO)$; these results are obtained by adapting the proofs of \cite[Theorems I.1.1 \& II.1.1]{DaskalHamilton1998} for a linearization of the (parabolic) porous medium equation.
\end{rmk}

\begin{rmk}[Existence and uniqueness of classical solutions to the coercive equation when the source function is smooth with compact support]
\label{rmk:SmoothnessUpToBdrySolutionCoercive}
Theorem \ref{thm:SmoothnessUpToBdrySolution} extends to the case where $A$ is replaced by $A_\lambda = A + \lambda(1+y)$ in Problem \ref{prob:HestonMixedBVPHomogeneousClassical}.
\end{rmk}

\begin{thm}[Regularity up to the boundary of a solution to the coercive variational equation when the source function is smooth with compact support]
\label{thm:SmoothnessUpToBdryWeakSolution}
Suppose that $u\in H^1_0(\sO\cup\Gamma_0,\fw)$ is a solution to \eqref{eq:VariationalEqualityCoercive} with source function $f\in L^2(\sO,\fw)$. If $f\in C^\infty_0(\sO)$ and $\Gamma_1$ is $C^\infty$, then $u\in C^\infty(\bar\sO)$ and is a solution to Problem \ref{prob:HestonMixedBVPHomogeneousClassical} with $A$ replaced by $A_\lambda$.
\end{thm}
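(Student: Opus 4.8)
The plan is to combine the classical existence result (Theorem~\ref{thm:SmoothnessUpToBdrySolution}, or rather its coercive analogue noted in Remark~\ref{rmk:SmoothnessUpToBdrySolutionCoercive}) with the uniqueness part of the Lax--Milgram theory for the coercive bilinear form $a_\lambda(\cdot,\cdot)$. First I would invoke Remark~\ref{rmk:SmoothnessUpToBdrySolutionCoercive}: since $f\in C^\infty_0(\sO)$ and $\Gamma_1$ is $C^\infty$, there exists a classical solution $\tilde u\in C^\infty(\bar\sO)$ to Problem~\ref{prob:HestonMixedBVPHomogeneousClassical} with $A$ replaced by $A_\lambda$, that is, $A_\lambda\tilde u = f$ on $\sO$, $\tilde u = 0$ on $\Gamma_1$, and $\lim_{y\downarrow 0}y^\beta(\rho\tilde u_x + \sigma\tilde u_y) = 0$ on $\Gamma_0$. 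The next step is to check that $\tilde u\in H^2(\sO,\fw)$ and hence $\tilde u\in H^1_0(\sO\cup\Gamma_0,\fw)$. This is where one must be careful: smoothness up to $\bar\sO$ does not by itself force membership in the weighted Sobolev space because $\sO$ is unbounded and the weight $\fw$ decays. However, Theorem~\ref{thm:SmoothnessUpToBdrySolution} (via the Schauder estimates of \cite{Feehan_Pop_regularityweaksoln}, \cite{PopThesis}) gives control on the growth of $\tilde u$ and its derivatives: since $f$ has compact support, $\tilde u$ and its derivatives up to second order grow at most polynomially (in fact the classical theory yields bounds consistent with the pointwise envelope functions of Lemma~\ref{lem:PointwiseBoundForSolution}), and the exponential decay factors $e^{-\gamma|x|-\mu y}$ in $\fw$ together with the power weight $y^{\beta-1}$ (with $\beta>0$) then ensure $y^2|D^2\tilde u|^2, (1+y)^2|D\tilde u|^2, (1+y)\tilde u^2 \in L^1(\sO,\fw)$, i.e. $\tilde u\in H^2(\sO,\fw)$. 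Because $\tilde u = 0$ on $\Gamma_1$ in the classical (hence trace) sense, Lemma~\ref{lem:EvansGamma1TraceZero} gives $\tilde u\in H^1_0(\sO\cup\Gamma_0,\fw)$.

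Once $\tilde u\in H^1_0(\sO\cup\Gamma_0,\fw)\cap H^2(\sO,\fw)$ solves $A_\lambda\tilde u = f$ a.e.\ on $\sO$, the integration-by-parts formula of Lemma~\ref{lem:HestonIntegrationByParts} (applied to $A_\lambda$, using the remark that \eqref{eq:HestonIntegrationByPartsFormula} is unchanged when $A$ is replaced by $A_\lambda$) shows that for all $v\in H^1_0(\sO\cup\Gamma_0,\fw)$ one has $a_\lambda(\tilde u, v) = (A_\lambda\tilde u, v)_H - \tfrac12\int_{\Gamma_1}(\cdots)v\,y\fw\,dS = (f,v)_H$, since the $\Gamma_1$-boundary integral vanishes because $v = 0$ on $\Gamma_1$ in the trace sense (Lemma~\ref{lem:EvansGamma1TraceZero}), exactly as in the proof of Lemma~\ref{lem:HestonWeightedNeumannBVPHomogeneous}(2). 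Therefore $\tilde u$ is a solution to the coercive variational equation \eqref{eq:VariationalEqualityCoercive}. By Theorem~\ref{thm:ExistenceUniquenessEllipticCoerciveHeston}, the solution to \eqref{eq:VariationalEqualityCoercive} in $V = H^1_0(\sO\cup\Gamma_0,\fw)$ is unique, so $u = \tilde u$ a.e.\ on $\sO$. Hence $u$ agrees a.e.\ with a function in $C^\infty(\bar\sO)$, which gives $u\in C^\infty(\bar\sO)$ and that $u$ solves Problem~\ref{prob:HestonMixedBVPHomogeneousClassical} with $A$ replaced by $A_\lambda$ (the $\Gamma_0$-boundary condition \eqref{eq:WeightedNeumannHomogeneousBCLimit} holds since $u\in C^1_{\mathrm{loc}}(\sO\cup\Gamma_0)$, as noted in Remark~\ref{rmk:WellPosedClassicalBVP}).

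The main obstacle is the verification that the classical solution $\tilde u$ produced by Theorem~\ref{thm:SmoothnessUpToBdrySolution} actually lies in $H^2(\sO,\fw)$ — i.e.\ controlling the growth of $\tilde u$ at infinity. Smoothness on $\bar\sO$ is a purely local statement, so this step genuinely requires the growth information packaged in the Schauder estimates of \cite{Feehan_Pop_regularityweaksoln}, \cite{PopThesis}: because $f$ is compactly supported, outside a large ball $\tilde u$ solves the homogeneous equation $A_\lambda\tilde u = 0$, and the comparison principle together with the explicit exponential/affine envelopes of Lemma~\ref{lem:PointwiseBoundForSolution} (or Corollary~\ref{cor:PointwiseBoundForSolution}, with $n = N = 0$ away from $\supp f$) bounds $\tilde u$ and $A_\lambda$-relevant derivative combinations by functions whose squares, against the weight $\fw$, are integrable on $\sO$. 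Alternatively, and perhaps more cleanly, one can argue that \emph{any} distributional solution of $A_\lambda\tilde u = f\in L^2(\sO,\fw)$ that is smooth up to $\bar\sO$ and vanishes on $\Gamma_1$ must coincide with the $H^1_0(\sO\cup\Gamma_0,\fw)$-solution by an energy argument on exhausting bounded subdomains combined with the coercivity estimate \eqref{eq:CoerciveHeston}; this sidesteps a delicate a~priori growth bound by instead propagating the $V$-membership of the Lax--Milgram solution. Either route reduces the theorem to an application of uniqueness in Theorem~\ref{thm:ExistenceUniquenessEllipticCoerciveHeston}, and all remaining steps are routine.
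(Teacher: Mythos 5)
Your proposal is correct and follows essentially the same route as the paper: produce the classical solution $\tilde u\in C^\infty(\bar\sO)$ via Remark \ref{rmk:SmoothnessUpToBdrySolutionCoercive}, verify $\tilde u\in H^2(\sO,\fw)\cap H^1_0(\sO\cup\Gamma_0,\fw)$ and that it solves \eqref{eq:VariationalEqualityCoercive} (the paper packages this as Lemma \ref{lem:HestonWeightedNeumannBVPHomogeneous}), and conclude $u=\tilde u$ by uniqueness in Theorem \ref{thm:ExistenceUniquenessEllipticCoerciveHeston}. The one place you over-engineer is the ``main obstacle'': in this paper $C^\infty(\bar\sO)$ is defined (Definition \ref{defn:ContinuousFunctions}, following Adams) as the space of functions all of whose derivatives are \emph{bounded} and uniformly continuous on $\sO$, so the inclusion $C^\infty(\bar\sO)\subset H^2(\sO,\fw)$ is immediate from the exponential decay of $\fw$ (finite moments of $(1+y)^2$ against $\fw$), and no separate growth estimate, comparison argument, or exhaustion argument is needed; your concern would only be genuine if Theorem \ref{thm:SmoothnessUpToBdrySolution} delivered merely $C^\infty_{\textrm{loc}}(\bar\sO)$.
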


\begin{proof}
Theorem \ref{thm:SmoothnessUpToBdrySolution} and Remark \ref{rmk:SmoothnessUpToBdrySolutionCoercive} provide a solution $\tilde u \in C^\infty(\bar\sO)$ to Problem \ref{prob:HestonMixedBVPHomogeneousClassical} with $A$ replaced by $A_\lambda$ and source function $f \in C^\infty_0(\sO)$, so $A_\lambda\tilde u = f$ on $\sO$ and $\tilde u=0$ on $\Gamma_1$. Since $C^\infty(\bar\sO) \subset H^2(\sO,\fw)$, then $\tilde u \in H^2(\sO,\fw)$ and Lemma \ref{lem:HestonWeightedNeumannBVPHomogeneous} implies that $\tilde u \in H^1_0(\sO\cup\Gamma_0,\fw)$ and that $\tilde u$ is a solution to \eqref{eq:VariationalEqualityCoercive}.

By hypothesis $u\in H^1_0(\sO\cup\Gamma_0,\fw)$ is a solution to \eqref{eq:VariationalEqualityCoercive} with source function $f \in L^2(\sO,\fw)$ and this solution must be unique by Theorem \ref{thm:ExistenceUniquenessEllipticCoerciveHeston}. Hence, $\tilde u = u$ a.e. on $\sO$, and thus $u\in C^\infty(\bar\sO)$, as desired.
\end{proof}

\begin{rmk}[H\"older regularity analogue of Theorem \ref{thm:SmoothnessUpToBdryWeakSolution}]
\label{rmk:SmoothnessUpToBdryWeakSolution}
Theorem \ref{thm:SmoothnessUpToBdryWeakSolution} extends to the more general case of $C^{k+2,\alpha}$ boundary portion, $\Gamma_1$, and source function, $f\in C^{k,\alpha}_0(\sO)$ ($k\geq 0, \alpha\in(0,1)$), yielding a solution, $u\in C^{k+2,\alpha}(\bar\sO)$, and is considered in \cite{Feehan_Pop_regularityweaksoln}, \cite{PopThesis}.
\end{rmk}

\begin{rmk}[Regularity up to the boundary of a solution to the non-coercive variational equation when the source function is smooth with compact support]
Although not used in this article, if we are given the additional hypotheses in Theorem \ref{thm:ExistenceUniquenessEllipticHeston_Improved} required to ensure uniqueness of solutions, Theorem \ref{thm:SmoothnessUpToBdryWeakSolution} extends to the case of a solution, $u$, to Problem \ref{prob:HestonWeakMixedBVPHomogeneous}.
\end{rmk}

We now prove an analogue of the first-order estimate obtained in the formal proof of \cite[Lemma 4.6.1]{Koch}.

\begin{prop}[A refined a priori first-order derivative estimate for solutions to the variational equation]
\label{prop:AuxiliarySpecialWeightedH1EstimateSisZero}
Assume the hypotheses of Theorem \ref{thm:ExistenceUniquenessEllipticHeston_Improved} for existence and require, in addition, that $\sO$ obey Hypothesis \ref{hyp:HestonDomainNearGammaZero} and that the boundary portion, $\Gamma_1$, is $C^{2,\alpha}$. If $u\in V$ is a solution to Problem \ref{prob:HestonWeakMixedBVPHomogeneous} and $yu \in L^2(\sO,\fw)$, then
\begin{equation}
\label{eq:AuxiliarySpecialWeightedH1EstimateSisZero}
|Du|_H \leq C\left(|f|_H + |(1+y)u|_H\right),
\end{equation}
where $C$ is a positive constant depending only on the constant coefficients of $A$.
\end{prop}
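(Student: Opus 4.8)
The plan is to prove the estimate \eqref{eq:AuxiliarySpecialWeightedH1EstimateSisZero} first for the smooth, compactly-supported-source case provided by Theorems \ref{thm:SmoothnessUpToBdrySolution} and \ref{thm:SmoothnessUpToBdryWeakSolution}, and then pass to the general case by a density/approximation argument. More precisely, I would first reduce to the coercive operator: writing the variational equation \eqref{eq:IntroHestonWeakMixedProblemHomogeneous} as $a_\lambda(u,v) = (f+\lambda(1+y)u,v)_H$ for all $v\in V$, we see $u$ solves the coercive problem \eqref{eq:VariationalEqualityCoercive} with modified source $f_\lambda := f+\lambda(1+y)u \in H$ (finite since $(1+y)u\in H$ by hypothesis), so it suffices to prove \eqref{eq:AuxiliarySpecialWeightedH1EstimateSisZero} with $A$ replaced by $A_\lambda$, $f$ replaced by $f_\lambda$, and then observe $|f_\lambda|_H \leq |f|_H + \lambda|(1+y)u|_H$ absorbs into the right-hand side.

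The core of the argument is the a priori estimate for a \emph{classical} solution. I would take $f\in C^\infty_0(\sO)$, $\Gamma_1$ of class $C^\infty$ (which can be arranged since $\Gamma_1$ is $C^{2,\alpha}$; one approximates and uses interior regularity, or more directly one simply works with $C^{2,\alpha}$ data using Remark \ref{rmk:SmoothnessUpToBdryWeakSolution}), and let $u\in C^\infty(\bar\sO)\cap H^2(\sO,\fw)$ be the solution furnished by Theorem \ref{thm:SmoothnessUpToBdryWeakSolution}. The idea mimicking Koch's formal computation is to test the equation $A_\lambda u = f$ against a multiple of $u$ (not weighted by $y$) and integrate by parts. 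Concretely I would use the divergence form \eqref{eq:CleanedUpBasicHestonDivergenceFormula}: multiplying $A_\lambda u = f$ by $u e^{-\gamma|x|-\mu y}$ and integrating over $\sO$, the principal term produces $\tfrac12\int_\sO(|u_x|^2 + 2\rho\sigma u_xu_y + \sigma^2|u_y|^2)y^\beta e^{-\gamma|x|-\mu y}\,dxdy$ after moving one $y^\beta$-derivative onto $u$; but this only controls $|y^{1/2}Du|_H$, which is the weak estimate \eqref{eq:VariationalEqualityHestonBoundH}, not $|Du|_H$. To upgrade the $y$-weight, the key observation — this is the sharp point — is that the boundary term along $\Gamma_0$ vanishes for $u\in H^2(\sO,\fw)$ by the weighted-Neumann property \eqref{eq:WeightedNeumannHomogeneousBCProb} (Lemma \ref{lem:HestonWeightedNeumannBoundaryProperty}), and one can integrate by parts a \emph{second} time, transferring the remaining $y^{\beta}$ weight, i.e. testing against $y^{-1}u$ type quantities, or equivalently exploiting that $y^{1-\beta}(y^\beta u_y)_y$ combined with the $\beta>0$ structure gives a Hardy-type gain. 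I would instead follow Koch more literally: differentiate the equation formally in the degenerate direction / use the second-order estimate \eqref{eq:HEstimateyD2uSpecial}-type bound together with interpolation, writing $\int_\sO |Du|^2\fw = \int_\sO |Du|^2 y^{\beta-1}e^{-\gamma|x|-\mu y}$ and controlling it by $\int y|D^2u|^2\fw$ and $\int |u|^2 y^{-1}\fw$-type terms — but since $\beta$ may be $<1$ the latter needs the trace $y^\beta Du|_{\Gamma_0}=0$ to make sense of the boundary integrations. The clean route: test $A_\lambda u=f$ against $v:= \zeta_R^2 u$ after an integration by parts that puts the equation in the form where the $y^\beta$ in the leading coefficient is differentiated to produce a zeroth-order $\beta u_y$ term; carefully collecting, the $\tfrac12\beta\sigma^2\int u_y u\, y^{\beta-1}e^{\cdots}$ term integrates by parts in $y$ (using $(y^{\beta-1})' $ cancellation against $\mu$) to yield $\int |Du|^2\fw$ with a good sign, the boundary term at $\Gamma_0$ being zero by \eqref{eq:WeightedNeumannHomogeneousBCProb} since $u\in C^\infty(\bar\sO)$; everything else is estimated by $|f|_H|u|_H$, $|(1+y)u|_H^2$, and the already-established $|y^{1/2}Du|_H$ bound via Cauchy--Schwarz and Young.

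Having obtained \eqref{eq:AuxiliarySpecialWeightedH1EstimateSisZero} (with constant depending only on the coefficients of $A$) for smooth compactly-supported $f$, I would conclude by density. Given the general $f_\lambda\in H = L^2(\sO,\fw)$, choose $f_n\in C^\infty_0(\sO)$ with $f_n\to f_\lambda$ in $H$; let $u_n\in C^\infty(\bar\sO)\cap V$ be the corresponding classical solutions to the coercive problem (Theorem \ref{thm:SmoothnessUpToBdryWeakSolution}). By Corollary \ref{cor:ExistenceUniquenessEllipticCoerciveHestonApriori}, $u_n\to u$ in $V$; in particular $y^{1/2}Du_n\to y^{1/2}Du$ and $(1+y)^{1/2}u_n\to(1+y)^{1/2}u$ in $H$. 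The smooth-case estimate gives $|Du_n|_H \leq C(|f_n|_H + |(1+y)u_n|_H)$; the right side converges to $C(|f_\lambda|_H + |(1+y)u|_H)$, using that $(1+y)u_n\to(1+y)u$ in $H$ — this last convergence needs the hypothesis $(1+y)u\in H$ together with the pointwise comparison bounds $m\le u_n\le M$ from Lemma \ref{lem:ComparisionPrincipleCoerciveHestonVarEquality} and a dominated-convergence / weak-lower-semicontinuity argument exactly as in the proof of Corollary \ref{cor:VIExistenceUniquenessEllipticCoerciveHestonPowery}. By weak lower semicontinuity of the $H$-norm applied to a weakly convergent subsequence of $Du_n$ (bounded in $H$ by the displayed estimate, hence weakly convergent, necessarily to $Du$), we get $|Du|_H \leq \liminf |Du_n|_H \leq C(|f_\lambda|_H + |(1+y)u|_H)$, and unwinding $f_\lambda = f+\lambda(1+y)u$ yields \eqref{eq:AuxiliarySpecialWeightedH1EstimateSisZero}. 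The main obstacle I anticipate is making the second integration by parts rigorous — i.e. justifying that no $\Gamma_0$ boundary contribution survives and that all the resulting integrals converge — which is precisely why the reduction to $u\in C^\infty(\bar\sO)$ via Theorem \ref{thm:SmoothnessUpToBdryWeakSolution} before doing any calculation is essential, and why the cutoff $\zeta_R$ with Lemma \ref{lem:RadialCutoffFunction} must be carried through the whole computation before letting $R\to\infty$.
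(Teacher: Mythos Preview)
Your overall architecture is right: reduce to the coercive problem with source $f_\lambda = f + \lambda(1+y)u$, approximate $f_\lambda$ by smooth compactly supported data so that the solution is $C^\infty(\bar\sO)$ via Theorem \ref{thm:SmoothnessUpToBdryWeakSolution}, prove the estimate for such solutions, and pass to the limit. That is exactly what the paper does.

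The gap is in the core computation. Testing against $v = \zeta_R^2 u$ produces, as you yourself note, only control of $|y^{1/2}Du|_H$; your subsequent suggestion that ``the $\tfrac12\beta\sigma^2\int u_y u\, y^{\beta-1}e^{\cdots}$ term integrates by parts in $y$ \ldots\ to yield $\int |Du|^2\fw$ with a good sign'' is simply false: $\int u_y u\, y^{\beta-1}e^{-\mu y}\,dy = \tfrac12\int (u^2)_y\, y^{\beta-1}e^{-\mu y}\,dy$ integrates by parts to terms in $\int u^2 y^{\beta-2}$ and $\int u^2 y^{\beta-1}$, never $\int |Du|^2 y^{\beta-1}$. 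No manipulation starting from the test function $u$ will produce the unweighted gradient norm.

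The correct test function is $v = \zeta^2 u_y$. Substituting into $a_\lambda(u,v)$, the leading term becomes $\tfrac12\int y(u_x u_{xy} + 2\rho\sigma u_y u_{xy} + \rho\sigma u_x u_{yy} + \sigma^2 u_y u_{yy})\zeta^2\fw$. Now $u_x u_{xy} = \tfrac12(u_x^2)_y$ and $u_y u_{yy} = \tfrac12(u_y^2)_y$, so integrating by parts in $y$ transfers the derivative onto $y\cdot y^{\beta-1} = y^\beta$, producing $-\tfrac{\beta}{4}\int (u_x^2 + \sigma^2 u_y^2)\zeta^2\fw$ (plus lower-order terms with extra powers of $y$ that go into the already-controlled $|y^{1/2}Du|_H$). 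The cross terms $\rho\sigma(u_x u_y)_y$ integrate similarly. After moving this principal term to the other side and using $2|\rho\sigma u_x u_y| \le |\rho|(u_x^2 + \sigma^2 u_y^2)$, you get $\tfrac{\beta(1-|\rho|)}{4}\int |\zeta Du|^2\fw$ bounded by $|(\tilde f,\zeta^2 u_y)_H|$ plus terms controlled by $|y^{1/2}Du|_H$ and $|(1+y)u|_H$; Cauchy--Schwarz and the basic energy bound \eqref{eq:VariationalEqualityCoerciveBoundfH} finish. The $\Gamma_0$ boundary term from the $y$-integration by parts vanishes because $u\in C^\infty(\bar\sO)$ and $\beta>0$; the $\Gamma_1$ boundary term vanishes because $u=0$ on $\Gamma_1$ forces $u_y=0$ on the vertical segments of $\Gamma_1$ near $\Gamma_0$ (this is why the paper first reduces to the strip domain $\sO = \Gamma_0\times\RR_+$). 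So the mechanism is: testing against $u_y$ makes the principal bilinear term a total $y$-derivative, and the integration by parts consumes the extra factor of $y$ in $y\fw$.

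A secondary issue: in your limiting argument you invoke $m\le u_n\le M$ for the approximating sequence, but there is no reason the smooth approximants $f_n$ to $f_\lambda$ satisfy the pointwise bounds needed for Lemma \ref{lem:ComparisionPrincipleCoerciveHestonVarEquality}. The paper avoids this by proving the estimate with the cutoff $\zeta_R$ in place, so that $(1+y)\zeta_R$ is bounded and $(1+y)\zeta_R u_n \to (1+y)\zeta_R u$ follows from $u_n\to u$ in $L^2(\sO,\fw)$; only after passing to the limit in $n$ does one let $R\to\infty$ and use $(1+y)u\in L^2(\sO,\fw)$.
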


\begin{proof}
The non-coercive variational equation \eqref{eq:IntroHestonWeakMixedProblemHomogeneous} in Problem \ref{prob:HestonWeakMixedBVPHomogeneous} may be written as an equivalent coercive variational equation \eqref{eq:VariationalEqualityCoercive}, that is
\begin{equation}
\label{eq:VariationalEqualityCoerciveSourcef1plusyu}
a_\lambda(u,v)=(f_\lambda,v)_H, \quad\forall v\in H^1(\sO\cup\Gamma_0,\fw),
\end{equation}
where $a_\lambda(u,v)$ is defined by \eqref{eq:BilinearFormCoerciveHeston} and
\begin{equation}
\label{eq:Defnflambda}
f_\lambda := f + \lambda(1+y)u \in L^2(\sO,\fw).
\end{equation}

\begin{rmk}[Illustration of the proof when $\Gamma_1$ is $C^\infty$]
\label{rmk:ProofWhenGammaOneSmooth}
For the sake of clarity and notational simplicity in the proof of Proposition \ref{prop:AuxiliarySpecialWeightedH1EstimateSisZero}, we shall assume that $\Gamma_1$ is $C^\infty$ and apply the reduction to $u\in C^\infty(\bar\sO)$ when $f\in C^\infty_0(\sO)$ enabled by Theorem \ref{thm:SmoothnessUpToBdryWeakSolution}; however, at the cost of more cumbersome notation, one could just as easily assume $\Gamma_1$ is $C^{2,\alpha}$ ($\alpha\in(0,1)$) and apply the reduction in Remark \ref{rmk:SmoothnessUpToBdryWeakSolution} to the case $f\in C^\alpha_0(\sO)$, yielding a solution, $u\in C^{2,\alpha}(\bar\sO)$.
\end{rmk}

\noindent\textbf{Step 1.} \emph{Reduction to the case of a smooth source function with compact support and a smooth solution.} From Lemma \ref{lem:L2ApproximationLemma}, we may choose a sequence $\{f_{\lambda, n}\}_{n\geq 1} \subset C^\infty_0(\sO)$ such that
\begin{equation}
\label{eq:SmoothfnL2convergestoflambda}
f_{\lambda, n} \to f_\lambda \quad\hbox{strongly in $L^2(\sO,\fw)$ as $n\to\infty$}.
\end{equation}
Let $\{u_n\}_{n\geq 1} \subset C^\infty(\bar\sO)\cap H^1_0(\sO\cup\Gamma_0,\fw)$ be the corresponding sequence of solutions to \eqref{eq:VariationalEqualityCoerciveSourcef1plusyu} produced by Theorems \ref{thm:ExistenceUniquenessEllipticCoerciveHeston} and \ref{thm:SmoothnessUpToBdryWeakSolution}. Now $u_n-u_{n'}$ solves \eqref{eq:VariationalEqualityCoerciveSourcef1plusyu} with source function $f_{\lambda, n}-f_{\lambda,n'}$, for all $n,n'\geq 1$, and by \eqref{eq:VariationalEqualityCoerciveBoundfH} obeys,
$$
\|u_n-u_{n'}\|_{H^1(\sO,\fw)} \leq C\|f_{\lambda, n}-f_{\lambda,n'}\|_{L^2(\sO,\fw)}, \quad \forall n,n'\geq 1.
$$
Hence, the sequence $\{u_n\}_{n\geq 1}$ is Cauchy in $H^1_0(\sO\cup\Gamma_0,\fw)$ and
$$
u_n \to \tilde u \quad\hbox{strongly in $H^1(\sO,\fw)$ as $n\to\infty$},
$$
for some $\tilde u \in H^1_0(\sO\cup\Gamma_0,\fw)$ which necessarily solves \eqref{eq:VariationalEqualityCoerciveSourcef1plusyu}. Because the solution to \eqref{eq:VariationalEqualityCoerciveSourcef1plusyu} is unique by Theorem \ref{thm:ExistenceUniquenessEllipticCoerciveHeston}, we must have $\tilde u = u$ a.e. on $\sO$. Therefore,
\begin{equation}
\label{eq:SmoothunH1convergestou}
u_n \to u \quad\hbox{strongly in $H^1(\sO,\fw)$ as $n\to\infty$},
\end{equation}
and $u_n=0$ on $\Gamma_1$, for all $n\geq 1$, by Lemma \ref{lem:Traces}.

Suppose we have shown that
\begin{equation}
\label{eq:AuxiliarySpecialWeightedH1EstimateSisZero_nWithCutoff}
\|\zeta_RDu_n\|_{L^2(\sO,\fw)} \leq C\left(\|f_{\lambda, n}\|_{L^2(\sO,\fw)} + \|(1+y)\zeta_Ru_n\|_{L^2(\sO,\fw)}\right), \quad \forall n\geq 1,
\end{equation}
where $C$ depends at most on the constant coefficients of $A$ and $\zeta_R$ is the cutoff function in Definition \ref{defn:RadialCutoffFunction}. By \eqref{eq:AuxiliarySpecialWeightedH1EstimateSisZero_nWithCutoff} and the fact that $u_n-u_{n'}$ solves \eqref{eq:VariationalEqualityCoerciveSourcef1plusyu} with source function $f_{\lambda, n}-f_{\lambda,n'}$, for all $n,n'\geq 1$, we must also have
\begin{equation}
\label{eq:cutoffDuL2Cauchy}
\begin{aligned}
\|\zeta_RD(u_n-u_{n'})\|_{L^2(\sO,\fw)} &\leq C\left(\|f_{\lambda, n}-f_{\lambda,n'}\|_{L^2(\sO,\fw)} \right.
\\
&\quad + \left. \|(1+y)\zeta_R(u_n-u_{n'})\|_{L^2(\sO,\fw)}\right), \quad \forall n,n'\geq 1.
\end{aligned}
\end{equation}
Since $\{u_n\}_{n\geq 1}$ is Cauchy in $L^2(\sO,\fw)$ and, by Definition \ref{defn:RadialCutoffFunction} of the cutoff function, $\zeta_R$, obeys
$$
\|(1+y)\zeta_R(u_n-u_{n'})\|_{L^2(\sO,\fw)} \leq C(1+2R)\|u_n-u_{n'}\|_{L^2(\sO,\fw)},
$$
then $\{(1+y)\zeta_Ru_n\}_{n\geq 1}$ must be Cauchy in $L^2(\sO,\fw)$. Since $\{f_{\lambda, n}\}_{n\geq 1}$ is also Cauchy in $L^2(\sO,\fw)$ by \eqref{eq:SmoothfnL2convergestoflambda}, the sequence $\{\zeta_R Du_n\}_{n\geq 1}$ is Cauchy in $L^2(\sO,\fw)$ by \eqref{eq:cutoffDuL2Cauchy} and so
\begin{equation}
\label{eq:CutoffDunConvergenceToSomeLimit}
\zeta_R Du_n \to w \quad\hbox{strongly in $L^2(\sO,\fw)$ as $n\to\infty$},
\end{equation}
for some $w\in L^2(\sO,\fw)$, and thus,
\begin{equation}
\label{eq:SqRtyCutoffDunConvergenceToSomeLimit}
y^{1/2}\zeta_R Du_n \to y^{1/2}w \quad\hbox{in the sense of $L^2_{\textrm{loc}}(\sO)$ as $n\to\infty$}.
\end{equation}
Because $u_n \to u$ strongly in $H^1(\sO,\fw)$, we have
$$
y^{1/2}Du_n \to y^{1/2}Du \quad\hbox{strongly in $L^2(\sO,\fw)$ as $n\to\infty$},
$$
and therefore,
\begin{equation}
\label{eq:SqRtyCutoffDunConvergenceToTheLimit}
y^{1/2}\zeta_R Du_n \to y^{1/2}\zeta_RDu \quad\hbox{strongly in $L^2(\sO,\fw)$ as $n\to\infty$}.
\end{equation}
Consequently, $y^{1/2}w=y^{1/2}\zeta_RDu$ a.e. on $\sO$ by \eqref{eq:SqRtyCutoffDunConvergenceToSomeLimit} and \eqref{eq:SqRtyCutoffDunConvergenceToTheLimit}, and thus $w=\zeta_RDu$ a.e. on $\sO$ and \eqref{eq:CutoffDunConvergenceToSomeLimit} yields\footnote{In order to prove Proposition \ref{prop:AuxiliarySpecialWeightedH1EstimateSisZero} it would have been sufficient to use $\zeta_R Du_n \to \zeta_R Du$ weakly in $L^2(\sO,\fw)$, which follows immediately from the estimate \eqref{eq:AuxiliarySpecialWeightedH1EstimateSisZero_nWithCutoff} and the convergence results \eqref{eq:SmoothfnL2convergestoflambda} and \eqref{eq:SmoothunH1convergestou}; however, the strong convergence result \eqref{eq:CutoffDunConvergenceToTheLimit} is used in the proof of Proposition \ref{prop:AuxiliaryWeightedH2EstimateSisZero}.}
\begin{equation}
\label{eq:CutoffDunConvergenceToTheLimit}
\zeta_R Du_n \to \zeta_R Du \quad\hbox{strongly in } L^2(\sO,\fw).
\end{equation}
By taking limits in \eqref{eq:AuxiliarySpecialWeightedH1EstimateSisZero_nWithCutoff} as $n\to\infty$, we obtain
\begin{equation}
\label{eq:AuxiliarySpecialWeightedH1EstimateWithCutoff}
\|\zeta_R Du\|_{L^2(\sO,\fw)} \leq C\left(\|f_\lambda\|_{L^2(\sO,\fw)} + \|(1+y)\zeta_R u\|_{L^2(\sO,\fw)}\right), \quad\forall R\geq 2,
\end{equation}
where $C$ depends only on the constant coefficients of $A$. Finally, using $\|(1+y)\zeta_R u\|_{L^2(\sO,\fw)} \leq \|(1+y)u\|_{L^2(\sO,\fw)}$ and taking limits as $R\to\infty$ in \eqref{eq:AuxiliarySpecialWeightedH1EstimateWithCutoff} and applying the dominated convergence theorem yields
$$
\|Du\|_{L^2(\sO,\fw)} \leq C\left(\|f_\lambda\|_{L^2(\sO,\fw)} + \|(1+y)u\|_{L^2(\sO,\fw)}\right),
$$
and \eqref{eq:AuxiliarySpecialWeightedH1EstimateSisZero} follows from the preceding estimate and \eqref{eq:Defnflambda}.

\medskip
\noindent\textbf{Step 2.} \emph{Verification of the a priori estimate when the source function and solution are smooth.}
The preceding argument shows that it is enough to prove \eqref{eq:AuxiliarySpecialWeightedH1EstimateWithCutoff} when $f_\lambda$ is replaced by $\tilde f\in C^\infty(\bar\sO)$ and $u \in C^\infty(\bar\sO)$, with $u=0$ on $\Gamma_1$, solves\footnote{Since $\hbox{Vol}(\sO,\fw)<\infty$, we have $C^\infty(\bar\sO)\subset H^1(\sO,\fw)$, so $\tilde f\in L^2(\sO,\fw)$ and, because $u=0$ on $\Gamma_1$, then $u \in H^1_0(\sO\cup\Gamma_0,\fw)$.}
\begin{equation}
\label{eq:VariationalEqualityCoerciveSmoothSource}
a_\lambda(u,v)=(\tilde f,v)_H, \quad\forall v\in H^1(\sO\cup\Gamma_0,\fw).
\end{equation}
For $\delta_0>0$ as in Hypothesis \ref{hyp:HestonDomainNearGammaZero}, we have
$$
\|Du\|_{L^2(\sO,\fw)} \leq \|Du\|_{L^2(\sO\cap(\RR\times(0,\delta_0)),\fw)} + \|Du\|_{L^2(\sO\cap(\RR\times(\delta_0,\infty)),\fw)}.
$$
But
\begin{align*}
\|Du\|_{L^2(\sO\cap(\RR\times(\delta_0,\infty)),\fw)} &\leq \delta_0^{-1/2}\|y^{1/2}Du\|_{L^2(\sO\cap(\RR\times(\delta_0,\infty)),\fw)}
\\
&\leq C\|u\|_{H^1(\sO,\fw)}
\\
&\leq C\|\tilde f\|_{L^2(\sO,\fw)} \quad\hbox{(by \eqref{eq:VariationalEqualityCoerciveBoundfH})}.
\end{align*}
Consequently, to prove \eqref{eq:AuxiliarySpecialWeightedH1EstimateSisZero}, it is sufficient to consider the case\footnote{For this segment of the proof we only need to consider the case where $\Gamma_1$ is $C^\infty$, as assumed in Theorem \ref{thm:SmoothnessUpToBdrySolution}, though analogues of Theorem \ref{thm:SmoothnessUpToBdrySolution} hold for boundaries which are less regular by Remark \ref{rmk:SmoothnessUpToBdrySolution}.}
$$
\sO = \Gamma_0 \times \RR_+ \quad\hbox{and}\quad \Gamma_1 = \partial \Gamma_0\times \RR_+.
$$
Let $\zeta_R$ be the cutoff function in Definition \ref{defn:RadialCutoffFunction} and set
$$
v = \zeta^2 u_y \quad\hbox{on }\sO.
$$
Because $u = 0$ on $\Gamma_1$, we have $\zeta^2 u_y  = 0$ on $\Gamma_1$ and as $\zeta^2 u_y \in C^\infty_0(\bar\sO)$, since $u\in C^\infty(\bar\sO)$, we must have
$$
v \in H^1_0(\sO\cup\Gamma_0,\fw).
$$
Substituting $v=\zeta^2 u_y$ in the expression for $a_\lambda(u,v)$ given by \eqref{eq:HestonWithKillingBilinearForm} and \eqref{eq:BilinearFormCoerciveHeston} and recalling that by Assumption \ref{assump:HestonCoefficientb1} we may assume $b_1=0$, yields,
\begin{align*}
a_\lambda(u,v) &= \frac{1}{2}\int_\sO\left(u_xu_{xy} + \rho\sigma u_yu_{xy}
+ \rho\sigma u_xu_{yy} + \sigma^2u_yu_{yy}\right)\zeta^2 y\,\fw\,dxdy
\\
&\quad - \int_\sO\left(a_1yu_x - ru\right)u_y\zeta^2\,\fw\,dxdy
\\
&\quad - \frac{\gamma}{2}\int_\sO\left(u_x + \rho\sigma u_y\right)u_y\sign(x)\zeta^2 y\,\fw\,dxdy
\\
&\quad + \int_\sO\left(u_x + \rho\sigma u_y\right)u_y\zeta\zeta_x y\,\fw\,dxdy
\\
&\quad + \int_\sO\left(\rho\sigma u_x + \sigma^2u_y\right)u_y \zeta\zeta_y y\,\fw\,dxdy
\\
&\quad + \lambda\int_\sO(1+y)u\zeta^2\,\fw\,dxdy
\\
&=: K_1+K_2+K_3+K_4+K_5+K_6.
\end{align*}
Using $u_xu_{xy} = \frac{1}{2}(u_x^2)_y$, $u_yu_{yy} = \frac{1}{2}(u_y^2)_y$, and the expression for $\fw$ in \eqref{eq:HestonWeight}, we obtain
\begin{align*}
K_1 &= \frac{1}{2}\int_\sO y^\beta\left(\frac{1}{2}((u_x^2)_y + \rho\sigma u_yu_{xy}
+ \rho\sigma u_xu_{yy} + \frac{\sigma^2}{2}(u_y^2)_y\right) \zeta^2 e^{-\gamma|x|-\mu y}\,dxdy
\\
&= \frac{1}{4}\int_\sO y^\beta\left((u_x^2)_y + \sigma^2(u_y^2)_y\right) \zeta^2 e^{-\gamma|x|-\mu y}\,dxdy
\\
&\quad + \frac{1}{2}\int_\sO y^\beta\rho\sigma(u_xu_y)_y \zeta^2 e^{-\gamma|x|-\mu y}\,dxdy
\\
&=: K_{11} + K_{12}.
\end{align*}
Integration by parts with respect to $y$ in the term $K_{11}$, using the assumption in this step that $\sO = \Gamma_0\times\RR^+$, gives
\begin{align*}
K_{11} &= \frac{1}{4}\int_\sO y^\beta\left((u_x^2)_y + \sigma^2(u_y^2)_y\right) \zeta^2 e^{-\gamma|x|-\mu y}\,dxdy
\\
&= -\frac{\beta}{4}\int_\sO y^{\beta-1}\left(u_x^2 +\sigma^2 u_y^2\right) \zeta^2 e^{-\gamma|x|-\mu y}\,dxdy
\\
&\quad + \frac{\mu}{4}\int_\sO y^\beta\left(u_x^2 +\sigma^2 u_y^2\right) \zeta^2 e^{-\gamma|x|-\mu y}\,dxdy
\\
&\quad - \frac{1}{2}\int_\sO y^{\beta}\left(u_x^2 +\sigma^2 u_y^2\right)\zeta\zeta_y e^{-\gamma|x|-\mu y}\,dxdy
\\
&=: K_{111} + K_{112} + K_{113}.
\end{align*}
Integration by parts with respect to $y$ in the term $K_{12}$, using the assumption in this step that $\sO = \Gamma_0\times\RR^+$, gives
\begin{align*}
K_{12} &= \frac{1}{2}\int_\sO y^\beta\rho\sigma(u_xu_y)_y \zeta^2 e^{-\gamma|x|-\mu y}\,dxdy
\\
&= -\frac{\beta}{2}\int_\sO y^{\beta-1}\rho\sigma u_xu_y \zeta^2 e^{-\gamma|x|-\mu y}\,dxdy
\\
&\quad + \frac{\mu}{2}\int_\sO y^\beta\rho\sigma u_xu_y \zeta^2 e^{-\gamma|x|-\mu y}\,dxdy
\\
&\quad - \int_\sO y^{\beta}\rho\sigma u_xu_y \zeta\zeta_y e^{-\gamma|x|-\mu y}\,dxdy
\\
&=: K_{121} + K_{122} + K_{123}.
\end{align*}
Write
\begin{align*}
K_2 &= -\int_\sO a_1u_xu_y \zeta^2 y\,\fw\,dxdy + \int_\sO ruu_y \zeta^2 \,\fw\,dxdy
\\
&:= K_{21} + K_{22}.
\end{align*}
The identity \eqref{eq:VariationalEqualityCoerciveSmoothSource} with $v=\zeta^2 u_y$ gives
\begin{equation}
\label{eq:H1SpecialIntegralIdentity}
\begin{aligned}
{}&(K_{111} + K_{112}+ K_{113}) + (K_{121} + K_{122} + K_{123}) + K_{21} + K_{22}
\\
&\quad + K_3 + K_4 + K_5 + K_6 = (\tilde f,\zeta^2 u_y)_H,
\end{aligned}
\end{equation}
and it remains to bound $K_{111} + K_{121}$ using the other good terms. Observe that
\begin{align*}
-(K_{111} + K_{121}) &= \frac{\beta}{4}\int_\sO \left(u_x^2 +\sigma^2 u_y^2\right)\zeta^2\fw\,dxdy
\\
&\quad + \frac{\beta}{2}\int_\sO \rho\sigma u_xu_y\zeta^2\fw\,dxdy
\\
&\geq \frac{\beta(1-|\rho|)}{4}\int_\sO \left(u_x^2 +\sigma^2 u_y^2\right)\zeta^2\fw\,dxdy,
\end{align*}
using $2|\rho\sigma u_xu_y| \leq |\rho|(u_x^2+\sigma^2u_y^2)$. Hence, writing the identity \eqref{eq:H1SpecialIntegralIdentity} as
\begin{align*}
-(K_{111} + K_{121}) &= -(\tilde f,\zeta^2 u_y)_H + K_{112}+ K_{113} + K_{122} + K_{123} + K_{21} + K_{22}
\\
&\quad + K_3 + K_4 + K_5 + K_6,
\end{align*}
and using the universal estimate \eqref{eq:AuxiliarySpecialWeightedH1EstimateCutoffFunction} for $D\zeta = D\zeta_R$, yields
\begin{align*}
|\zeta Du|_H^2 \leq C\left(|\zeta \tilde f|_H|\zeta Du|_H + |y^{1/2}\zeta Du|_H^2 + |\zeta u|_H|\zeta Du|_H + |(1+y)\zeta u|_H^2\right).
\end{align*}
Rearrangement and taking square roots gives
\begin{align*}
|\zeta Du|_H &\leq C\left(|\zeta \tilde f|_H + |y^{1/2}\zeta Du|_H + |(1+y)\zeta u|_H\right)
\\
&\leq C\left(|\tilde f|_H + |y^{1/2}Du|_H + |(1+y)\zeta u|_H\right)
\\
&\leq C\left(|\tilde f|_H + |(1+y)\zeta u|_H\right) \quad\hbox{(by \eqref{eq:VariationalEqualityCoerciveBoundfH})},
\end{align*}
and thus, recalling that $\zeta = \zeta_R$, this gives \eqref{eq:AuxiliarySpecialWeightedH1EstimateWithCutoff}. This completes the proof.
\end{proof}

\subsection{A priori estimate for second-order derivatives of a solution to the variational equation}
\label{subsec:SecondOrderEstimatesVarEquality}
We have an analogue of \cite[Theorem 8.8]{GT}, \cite[Theorem 6.3.1]{Evans}.

\begin{thm}[Interior $H^2$ regularity]
\label{thm:LocalRegularityEllipticHeston}
If $u\in H^1_0(\sO\cup\Gamma_0,\fw)$ is a solution to Problem \ref{prob:HestonWeakMixedBVPHomogeneous}, then $u\in H^2_{\textrm{loc}}(\sO)$ and, for any pair of subdomains $\sO''\Subset\sO'\subset\sO$,
\begin{equation}
\label{eq:H2LocalHestonEstimate}
\|u\|_{H^2(\sO'')} \leq C\left(\|f\|_{L^2(\sO',\fw)} + \|(1+y)u\|_{L^2(\sO',\fw)}\right),
\end{equation}
where the constant $C$ depends only on $\sO'',\sO'$ and the constant coefficients of $A$.
\end{thm}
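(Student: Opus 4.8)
The plan is to treat the interior $H^2$ regularity as an essentially local and non-degenerate question: away from $\Gamma_0=\{y=0\}$, the operator $y^{-1}A_\lambda$ is uniformly elliptic with smooth (indeed affine) coefficients, so the classical interior estimate of \cite[Theorem 8.8]{GT} applies once we interpret the variational equation \eqref{eq:IntroHestonWeakMixedProblemHomogeneous} correctly on subdomains. First I would fix subdomains $\sO''\Subset\sO'\subset\sO$ and observe that, since $\sO'\Subset\sO$ and $\sO$ is an open subset of $\HH$, there is a constant $y_0>0$ with $y\geq y_0$ on $\sO'$, and also $y$ is bounded above on $\sO'$ if $\sO'$ is bounded (if $\sO'$ is merely $\Subset\sO$ in the sense of having compact closure in $\sO$, it is bounded). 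On $\sO'$ the weight $\fw(x,y)=y^{\beta-1}e^{-\gamma|x|-\mu y}$ is bounded above and below by positive constants depending on $\sO'$, so $L^2(\sO',\fw)$ and $H^k(\sO')$ have equivalent norms, with constants depending only on $\sO'$ and the coefficients $\beta,\gamma,\mu$ (hence only on the constant coefficients of $A$).

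Next I would unwind what the variational equation says locally. Rewriting \eqref{eq:IntroHestonWeakMixedProblemHomogeneous} in coercive form, $a_\lambda(u,v)=(f_\lambda,v)_H$ with $f_\lambda:=f+\lambda(1+y)u\in L^2(\sO,\fw)$ as in \eqref{eq:Defnflambda}, and restricting test functions $v\in C^\infty_0(\sO')$, the boundary terms in Lemma \ref{lem:HestonIntegrationByParts} vanish and $a_\lambda(u,v)$ is exactly the Dirichlet-form action of the uniformly elliptic operator $A_\lambda$ (with the divergence-structure principal part $-\tfrac12 y^{1-\beta}((y^\beta u_x+\rho\sigma y^\beta u_y)_x+(\rho\sigma y^\beta u_x+\sigma^2 y^\beta u_y)_y)$ visible in \eqref{eq:CleanedUpBasicHestonDivergenceFormula}) paired against $v$. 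Thus $u\in H^1(\sO')$ is a weak solution of the uniformly elliptic, divergence-form equation $A_\lambda u=f_\lambda$ on $\sO'$, with principal coefficients $y\cdot(\text{constant matrix})$ that are smooth and, on $\sO'$, bounded with bounded derivatives and uniformly elliptic (ellipticity constant $\geq\nu_0 y_0$ by \eqref{eq:HestonModulusEllipticity}), and lower-order coefficients that are affine in $(x,y)$, hence bounded on $\sO'$. Here $f_\lambda\in L^2(\sO')$ with $\|f_\lambda\|_{L^2(\sO')}\leq C(\|f\|_{L^2(\sO',\fw)}+\|(1+y)u\|_{L^2(\sO',\fw)})$ by the weight equivalence.

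Then I would simply quote \cite[Theorem 8.8]{GT} (interior $H^2$ estimate for weak solutions of uniformly elliptic divergence-form equations with $C^1$ principal coefficients and $L^\infty$ lower-order coefficients): for $\sO''\Subset\sO'$,
$$
\|u\|_{H^2(\sO'')}\leq C\left(\|u\|_{H^1(\sO')}+\|f_\lambda\|_{L^2(\sO')}\right),
$$
with $C$ depending on $\sO'',\sO'$, the ellipticity and coefficient bounds on $\sO'$ — all of which are controlled by $\sO'$, $y_0$, and the constant coefficients of $A$ (and $\lambda=\lambda_0$, itself determined by those coefficients). Finally I would absorb $\|u\|_{H^1(\sO')}$: by the weight equivalence on $\sO'$, $\|u\|_{H^1(\sO')}\leq C\|u\|_{H^1(\sO',\fw)}$, and this in turn is bounded by $C(\|f\|_{L^2(\sO',\fw)}+\|(1+y)u\|_{L^2(\sO',\fw)})$ — either directly from the a priori estimate Lemma \ref{lem:ExistenceUniquenessEllipticHestonAprioriEstimate}, or, to keep everything genuinely local, by using the G\r{a}rding inequality \eqref{eq:HestonBilinearFormGarding}/\eqref{eq:CoerciveHeston} together with the equation tested against $u$ (which gives $\nu_1\|u\|_V^2\le |f_\lambda|_H|u|_H$, hence $\|u\|_V\le\nu_1^{-1}|f_\lambda|_H$), noting $\|u\|_{H^1(\sO')}\le\|u\|_V$. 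Combining these inequalities yields \eqref{eq:H2LocalHestonEstimate}.

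The only genuine subtlety — the "main obstacle" — is bookkeeping the dependence of the constant: one must make sure $C$ depends only on $\sO'',\sO'$ and the constant coefficients of $A$ and not, say, on an arbitrary choice of cutoff or on $\|u\|_{H^1(\sO)}$. This is handled by the two observations above: (i) on the fixed subdomain $\sO'$ the distance $y_0$ to $\{y=0\}$ and the upper bound on $y$ and $|x|$ are determined by $\sO'$ alone, so all weight-equivalence constants and all coefficient/ellipticity bounds entering \cite[Theorem 8.8]{GT} are functions of $\sO'$ and the coefficients of $A$; and (ii) the $H^1$-norm on the right of the Gilbarg–Trudinger estimate is disposed of via the coercivity estimate, which also depends only on the coefficients of $A$. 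No new analysis of the degenerate boundary $\Gamma_0$ is needed, because $\sO'$ stays a positive distance from it; that is precisely why this is an \emph{interior} regularity statement and is comparatively routine given the uniform ellipticity of $y^{-1}A$ recorded in Remark \ref{rmk:HestonCoefficients}.
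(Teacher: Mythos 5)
Your proof is correct and is essentially the paper's own argument: the paper disposes of this theorem in one line by citing \cite[Theorem 8.8]{GT} (or \cite[Theorem 6.3.1]{Evans}) together with the a priori $H^1$ bound \eqref{eq:VariationalEqualityHestonBoundH}, which is exactly your combination of local uniform ellipticity away from $\Gamma_0$, weight equivalence on the subdomain, and absorption of the $H^1$ term via coercivity. The only caveat is that the statement allows $\sO'\subset\sO$ rather than $\sO'\Subset\sO$, so $\sO'$ need not be bounded or stay a positive distance from $\Gamma_0$; this is repaired by interposing a bounded intermediate domain $\sO''\Subset\sO'''\Subset\sO'$ (chosen canonically from $\sO''$ and $\sO'$) and running your argument with $\sO'''$ in place of $\sO'$.
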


\begin{proof}
This follows from \cite[Theorem 8.8]{GT} or \cite[Theorem 6.3.1]{Evans}, and \eqref{eq:VariationalEqualityHestonBoundH}.
\end{proof}

We have an analogue of \cite[Theorem 8.12]{GT}, \cite[Theorem 6.3.4]{Evans}:

\begin{lem}[Local $H^2$ estimate near $\Gamma_1$]
\label{lem:GammaOneRegularityEllipticHeston}
Require that the domain, $\sO$, obey Hypotheses \ref{hyp:HestonDomainNearGammaZero} and \ref{hyp:HestonDomainNearGammaOne} with $k=2$. Let $\delta_0$ be as in Hypothesis \ref{hyp:HestonDomainNearGammaOne}. Suppose $f\in C^\infty_0(\sO)$ and that $u\in C^\infty(\bar\sO)$ is a solution to Problem \ref{prob:HestonStrongMixedBVPHomogeneous}. Then, for the \emph{bounded} subdomains $U'  = U_j' \subset U_j  = U \subset \RR^2$ defined in Hypothesis \ref{hyp:HestonDomainNearGammaOne}, we have
\begin{equation}
\label{eq:H2GammaOneLocalHestonEstimateWeights}
\|yD^2u\|_{L^2(U'\cap\sO,\fw)} \leq C\left(\|f\|_{L^2(U\cap\sO,\fw)} + \|(1+y)Du\|_{L^2(U\cap\sO,\fw)} + \|(1+y)u\|_{L^2(U\cap\sO,\fw)}\right),
\end{equation}
where the constant $C$ depends only on the constant coefficients of $A$ and the constants $\delta_0,\delta_1,M_1,R_1$ of Hypothesis \ref{hyp:HestonDomainNearGammaOne}.
\end{lem}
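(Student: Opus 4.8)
\textbf{Proof proposal for Lemma \ref{lem:GammaOneRegularityEllipticHeston}.}

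The plan is to reduce the estimate to a standard boundary $H^2$ estimate for the uniformly elliptic operator $y^{-1}A$ by ``straightening'' the boundary portion $\Gamma_1$ near $U_j$ using the $C^2$-diffeomorphisms $\Phi_j$ supplied by Hypothesis \ref{hyp:HestonDomainNearGammaOne}, exactly as in the proof of \cite[Theorem 8.12]{GT} or \cite[Theorem 6.3.4]{Evans}, and then to track the weights $y$ and $\fw$ carefully. First I would observe that, by Hypothesis \ref{hyp:HestonDomainNearGammaOne}(1) and the fact that $U_j\subset\RR\times(\delta_0/4,\infty)$, on the set $U_j\cap\sO$ the variable $y$ is bounded above and below by positive constants (depending only on $\delta_0,\delta_1$ and the diameter of $U_j$, which is controlled by $M_1$), so on $U_j\cap\sO$ the weight $\fw$ is comparable to a constant and $y$ is comparable to $1$. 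Consequently the assertion \eqref{eq:H2GammaOneLocalHestonEstimateWeights} is equivalent, up to constants of the allowed type, to the unweighted estimate
$$
\|D^2u\|_{L^2(U'\cap\sO)} \leq C\left(\|f\|_{L^2(U\cap\sO)} + \|Du\|_{L^2(U\cap\sO)} + \|u\|_{L^2(U\cap\sO)}\right).
$$

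Next I would rewrite the PDE $Au=f$ (which holds classically on $\sO$ since $u\in C^\infty(\bar\sO)$ by hypothesis) as $(y^{-1}A)u = y^{-1}f$, and note that by Remark \ref{rmk:HestonCoefficients}, in particular \eqref{eq:HestonModulusEllipticity} and \eqref{eq:DefnNuZero}, the operator $y^{-1}A$ is uniformly elliptic on $U_j\cap\sO$ with ellipticity constant $\nu_0>0$, bounded coefficients (the first-order coefficients of $y^{-1}A$ are $O(1) + O(y^{-1})$, which is $O(1)$ on $U_j\cap\sO$ since $y\geq\delta_0/4$ there), and $C^{0,1}$-regular principal part; moreover $u=0$ on $\Gamma_1\cap U_j$ and $\Phi_j$ maps $U_j\cap\sO$ onto the half-ball $\{x>0\}\cap B(1)$ with $\Phi_j(\Gamma_1\cap U_j)\subset\{x=0\}$, with $C^2$ bounds on $\Phi_j,\Psi_j$ uniform in $j$ by Hypothesis \ref{hyp:HestonDomainNearGammaOne}(4). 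Transplanting via $\Phi_j$ turns $u$ into a function $\tilde u$ on the half-ball vanishing on the flat part of the boundary and solving a uniformly elliptic equation with $C^{0,1}$ principal part and $L^2$ right-hand side, with all structural constants controlled by $\nu_0,M_1$ and the coefficients of $A$. The standard boundary $H^2$ estimate (\cite[Theorem 8.12]{GT} applied on $B(1/2)$ versus $B(1)$, or the difference-quotient argument of \cite[Theorem 6.3.4]{Evans}) then gives $\|D^2\tilde u\|_{L^2(B(1/2)\cap\{x>0\})} \leq C(\|\tilde f\|_{L^2} + \|D\tilde u\|_{L^2} + \|\tilde u\|_{L^2})$ over $B(1)\cap\{x>0\}$; transporting back via $\Psi_j=\Phi_j^{-1}$ (whose $C^2$ norm is bounded by $M_1$, so second derivatives transform with bounded factors plus lower-order contributions absorbed into the $Du$ and $u$ terms) and using $U_j' = \Psi_j(B(1/2))$ yields the displayed unweighted estimate on $U_j'\cap\sO$. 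Re-inserting the weights, which are all comparable to constants on $U_j\cap\sO$ as noted above, produces \eqref{eq:H2GammaOneLocalHestonEstimateWeights} with constant depending only on $\nu_0$ (hence on the coefficients of $A$) and on $\delta_0,\delta_1,M_1,R_1$.

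The main obstacle I anticipate is purely bookkeeping rather than conceptual: one must be careful that the quantity $y^{-1}f$ appearing as the right-hand side of the rescaled equation, and the first-order terms of $y^{-1}A$ of size $O(y^{-1})$, are genuinely controlled on $U_j\cap\sO$ — this is where the lower bound $y\geq\delta_0/4$ on $U_j$ (from $U_j\subset\RR\times(\delta_0/4,\infty)$ in Hypothesis \ref{hyp:HestonDomainNearGammaOne}) is essential and must be invoked explicitly, and it is why the estimate is stated with the factor $y$ multiplying $D^2u$ on the left (so that it matches the form of the global $H^2(\sO,\fw)$ norm used later) even though locally that factor is harmless. A secondary point is that Hypothesis \ref{hyp:HestonDomainNearGammaOne}(3) guarantees $\Phi_j(U_j\cap\sO)$ is exactly a half-ball, so no auxiliary cutoff localized to the curved part of $\partial(U_j\cap\sO)$ is needed beyond the standard interior-cutoff built into \cite[Theorem 8.12]{GT}; one only needs to make sure the cutoff between $B(1/2)$ and $B(1)$ transplants to a legitimate cutoff between $U_j'$ and $U_j$, which it does since $\Psi_j$ is a $C^2$-diffeomorphism with uniformly bounded derivatives. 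Finiteness of every norm on the right-hand side is automatic here because $u\in C^\infty(\bar\sO)$ and $f\in C^\infty_0(\sO)$, and $U_j$ is bounded.
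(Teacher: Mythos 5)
Your proposal is correct and follows essentially the same route as the paper: both arguments rewrite $Au=f$ as $(y^{-1}A)u=y^{-1}f$, use $y\geq\delta_0/4$ on $U_j$ to make $y^{-1}A$ uniformly elliptic with bounded coefficients and to control $y^{-1}f$, invoke the standard unweighted boundary $H^2$ estimate (via the cutoff $\eta\circ\Phi_j$ and the straightening maps of Hypothesis \ref{hyp:HestonDomainNearGammaOne}), and then pass back to the weighted norms by noting that $y$ and $\fw$ are comparable to constants on $U_j\cap\sO$. The only cosmetic difference is that the paper works with the commutator $[\bar A,\zeta]u$ directly on $U\cap\sO$ and cites \cite[Theorems 6.3.1 \& 6.3.4]{Evans}, rather than spelling out the transplant to the half-ball and back.
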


\begin{proof}
From \eqref{eq:IntroHestonMixedProblemHomogeneous} and \eqref{eq:IntroHestonMixedProblemHomogeneousBC}, we have
$$
\bar Au=y^{-1}f \quad\hbox{on }\sO\quad\hbox{and}\quad u=0 \quad\hbox{on }\Gamma_1,
$$
where, by  \eqref{eq:OperatorHestonIntro},
$$
\bar Au := -\frac{1}{2}\left(u_{xx} + 2\rho\sigma u_{xy} + \sigma^2 u_{yy}\right) - ((r-q)/y-1/2)u_x - \kappa(\theta/y-1)u_y + (r/y)u.
$$
Let $\eta\in C^\infty_0(\RR^2)$ be a cutoff function with $0\leq\eta\leq 1$, $\eta=1$ on $B(1/2)$, $\eta = 0$ on $\RR^2\less B(1)$, and $|D^\alpha\eta| \leq 100$ for multi-indices $|\alpha|\leq 2$. Let $\zeta = \eta\circ\Phi\in C^\infty_0(\RR^2)$, where $\Phi =\Phi_j$ as in Hypothesis \ref{hyp:HestonDomainNearGammaOne}, so $\zeta=1$ on $U'$ and $\zeta = 0$ on $\RR^2\less U$. Consequently,
$$
\bar A(\zeta u) = \bar f \quad\hbox{on }U\quad\hbox{and}\quad \zeta u=0 \quad\hbox{on }\partial(U\cap\sO),
$$
where
\begin{align*}
\bar f &:= \zeta y^{-1}f - [\bar A,\zeta]u
\\
&= \zeta y^{-1}f + \frac{1}{2}\left(\zeta_{xy}u + 2\zeta_x u_x + 2\rho\sigma(\zeta_{xx} u + \zeta_x u_y + \zeta_y u_x) + \sigma^2(\zeta_{yy} u + 2\zeta_y u_y)\right)
\\
&\quad +  ((r-q)/y-1/2)\zeta_x u + \kappa(\theta/y-1)\zeta_y u \quad\hbox{(by \eqref{eq:ACommutator})}.
\end{align*}
We have $\zeta u \in H^1_0(U\cap\sO)$ since $u\in C^\infty(\bar\sO)$ by hypothesis and $\zeta u=0$ on $\partial(U\cap\sO)$. Thus, $\zeta u \in H^1_0(U\cap\sO)$ and $\zeta u$ is a weak solution to $\bar A(\zeta u) = \bar f$ on $U\cap\sO$ in the sense of \cite[\S 6.1]{Evans}. From \cite[Equation (6.3.42)]{Evans} and examination of the proof of \cite[Theorems 6.3.1 \& 6.3.4]{Evans} to determine the dependencies of the constant $C$, we obtain
\begin{equation}
\label{eq:H2GammaOneLocalHestonEstimateWeights_prelim}
\begin{aligned}
\|\zeta u\|_{H^2(U\cap\sO)} &\leq C\left(\|\bar f\|_{L^2(U\cap\sO)} + \|\zeta u\|_{L^2(U\cap\sO)}\right)
\\
&\leq C\left(\|y^{-1}f\|_{L^2(U\cap\sO)} + \|Du\|_{L^2(U\cap\sO)} + \|u\|_{L^2(U\cap\sO)}\right),
\end{aligned}
\end{equation}
where $C$ depends only on the constant coefficients of $A$ and the constants $\delta_0,\delta_1,M_1,R_1$ of Hypotheses \ref{hyp:HestonDomainNearGammaZero} and \ref{hyp:HestonDomainNearGammaOne}. Let $\bar y = \max\{y:(x,y)\in U\}$ and $\underline{y} = \min\{y:(x,y)\in U\}$. Then
$$
\bar y \leq \underline{y} + \hbox{diam}(U) = \underline{y}(1 + \underline{y}^{-1}\hbox{diam}(U)),
$$
and so, because $\underline{y} \geq \delta_0/4$ since $U\subset \RR\times(\delta_0/4,\infty)$ by Hypothesis \ref{hyp:HestonDomainNearGammaOne}, we have
$$
\bar y \leq \underline{y}(1 + 4\delta_0^{-1}\hbox{diam}(U)).
$$
Hence,
\begin{align*}
\|yD^2u\|_{L^2(U'\cap\sO)} &\leq \bar y \|D^2u\|_{L^2(U'\cap\sO)} \leq C\underline{y}\|D^2u\|_{L^2(U'\cap\sO)}
\\
&\leq C\underline{y}\left(\|y^{-1}f\|_{L^2(U\cap\sO)} + \|Du\|_{L^2(U\cap\sO)} + \|u\|_{L^2(U\cap\sO)}\right)
\quad\hbox{(by \eqref{eq:H2GammaOneLocalHestonEstimateWeights_prelim})}
\\
&\leq C\left(\|f\|_{L^2(U\cap\sO)} + \|(1+y)Du\|_{L^2(U\cap\sO)} + \|(1+y)u\|_{L^2(U\cap\sO)}\right).
\end{align*}
Let $\bar x = \max\{|x|:(x,y)\in U\}$ and $\underline{x} = \min\{|x|:(x,y)\in U\}$. For $\beta\geq 1$,
\begin{align*}
\sup_{(x,y)\in U}y^{\beta-1}e^{-\gamma|x|-\mu y} &\leq \bar y^{\beta-1}e^{-\gamma|\underline{x}|-\mu \underline{y}}
\leq C_1\underline{y}^{\beta-1}e^{-\gamma|\bar{x}|-\mu \bar{y}}
\\
&\leq C_1\inf_{(x,y)\in U}y^{\beta-1}e^{-\gamma|x|-\mu y},
\end{align*}
where $C_1$ depends only on $\beta,\gamma,\mu,\delta_0,\hbox{diam}(U)$; when $0<\beta<1$, the same estimate holds using $\underline{y}^{\beta-1} \leq C_2\bar y^{\beta-1}$, where $C_2 = (1 + 4\delta_0^{-1}\hbox{diam}(U))^{1-\beta}$. Therefore, the estimate \eqref{eq:H2GammaOneLocalHestonEstimateWeights} follows, since $\fw = y^{\beta-1}e^{-\gamma|x|-\mu y}$ by \eqref{eq:HestonWeight}.
\end{proof}

Next, we have the following analogue of the second-derivative estimate in \cite[Lemma 4.6.1]{Koch}.

\begin{prop}[A priori second-derivative estimate for a solution to the variational equation]
\label{prop:AuxiliaryWeightedH2EstimateSisZero}
Require that the domain, $\sO$, obeys Hypotheses \ref{hyp:HestonDomainNearGammaZero} and \ref{hyp:HestonDomainNearGammaOne} with $k=2$ and $\alpha\in(0,1)$. Then there is a positive constant $C$, depending only on the constant coefficients of $A$ and the constants $\delta_0,\delta_1,M_1,R_1$ of Hypothesis \ref{hyp:HestonDomainNearGammaZero}, such that, if $f\in L^2(\sO,\fw)$ and $u\in H^1_0(\sO\cup\Gamma_0,\fw)$ is a solution to Problem \ref{prob:HestonWeakMixedBVPHomogeneous} and $f, yu, (1+y)Du\in L^2(\sO,\fw)$, then $u\in H^2_{\textrm{loc}}(\sO)$ and $yD^2u \in L^2(\sO,\fw)$ and
\begin{equation}
\label{eq:AuxiliaryWeightedH2EstimateSisZero}
\|yD^2u\|_{L^2(\sO,\fw)} \leq C\left(\|f\|_{L^2(\sO,\fw)} + \|(1+y)Du\|_{L^2(\sO,\fw)} + \|(1+y)u\|_{L^2(\sO,\fw)}\right).
\end{equation}
\end{prop}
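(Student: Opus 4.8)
The plan is to follow the same two-step reduction used in the proof of Proposition \ref{prop:AuxiliarySpecialWeightedH1EstimateSisZero}: first reduce to the case of a smooth source function with compact support and a smooth solution $u\in C^\infty(\bar\sO)$, then derive the a priori second-derivative estimate by a suitable choice of test function combined with the local estimates near $\Gamma_0$ and $\Gamma_1$. The reduction in Step 1 is essentially identical to that in Proposition \ref{prop:AuxiliarySpecialWeightedH1EstimateSisZero}: write the equation in the equivalent coercive form $a_\lambda(u,v)=(f_\lambda,v)_H$ with $f_\lambda=f+\lambda(1+y)u$, approximate $f_\lambda$ by $f_{\lambda,n}\in C^\infty_0(\sO)$ in $L^2(\sO,\fw)$ via Lemma \ref{lem:L2ApproximationLemma}, invoke Theorem \ref{thm:SmoothnessUpToBdryWeakSolution} to get $u_n\in C^\infty(\bar\sO)$ solving the coercive equation, use the a priori bound \eqref{eq:VariationalEqualityCoerciveBoundfH} and Proposition \ref{prop:AuxiliarySpecialWeightedH1EstimateSisZero} to see that $\{u_n\}$ and $\{(1+y)Du_n\}$ are Cauchy (so the right-hand side of \eqref{eq:AuxiliaryWeightedH2EstimateSisZero} is controlled in the limit), and then use the claimed cut-off estimate to conclude that $\{\zeta_R y D^2 u_n\}$ is Cauchy in $L^2(\sO,\fw)$, identify its limit with $\zeta_R y D^2 u$ using interior $H^2_{\textrm{loc}}$ convergence (Theorem \ref{thm:LocalRegularityEllipticHeston}), and take $R\to\infty$ by dominated convergence. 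As in that proof, the strong convergence $\zeta_R Du_n\to\zeta_R Du$ in $L^2(\sO,\fw)$ established in \eqref{eq:CutoffDunConvergenceToTheLimit} is exactly what is needed here to pass to the limit on the right side of the estimate.

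For Step 2, the a priori estimate with $f$ smooth and $u\in C^\infty(\bar\sO)$, I would split $\sO$ into the region near $\Gamma_1$ (but bounded away from $\Gamma_0$), the region near $\Gamma_0$, and the interior. Near $\Gamma_1$, away from $\Gamma_0$: cover $\Gamma_1\cap(\RR\times(\delta_0/2,\infty))$ by the sets $U_j'$ of Hypothesis \ref{hyp:HestonDomainNearGammaOne} and apply Lemma \ref{lem:GammaOneRegularityEllipticHeston} on each, then sum using the finite-overlap property (constant $R_1$) to obtain $\|y D^2 u\|_{L^2(\sO^1_{\delta_1}\cap(\RR\times(\delta_0/2,\infty)),\fw)}$ bounded by the right side of \eqref{eq:AuxiliaryWeightedH2EstimateSisZero}. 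In the remaining region near $\Gamma_1$ that is also away from $\Gamma_0$ but outside $\sO^1_{\delta_1}$, together with the truly interior region $\{y\geq\delta_0\}$ at positive distance from $\Gamma_1$, apply the interior estimate Theorem \ref{thm:LocalRegularityEllipticHeston}; here $y$ is bounded below so weights cause no trouble, and one uses $\|Du\|_{L^2}\le \delta_0^{-1/2}\|y^{1/2}Du\|_{L^2}\le C\|u\|_{H^1(\sO,\fw)}\le C\|f\|_H$ via \eqref{eq:VariationalEqualityCoerciveBoundfH}.

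The genuinely new work — and the main obstacle — is the region near $\Gamma_0$, i.e. $\sO^0_{\delta_0}=\Gamma_0\times(0,\delta_0)$ by Hypothesis \ref{hyp:HestonDomainNearGammaZero}, where the operator degenerates and one cannot use the standard flat-boundary $H^2$ estimates. Here I would adapt Koch's formal argument from \cite[Lemma 4.6.1]{Koch}: for a cutoff $\zeta=\zeta_R$ localizing in $x$, differentiate the equation and test against combinations such as $\zeta^2\partial_x^2 u$, $\zeta^2\partial_x\partial_y u$, and $\zeta^2 y\partial_y^2 u$ (the weight $y$ on the $yy$-term being dictated by the degeneracy), integrate by parts in $y$ exploiting $\sO=\Gamma_0\times\RR_+$ so that no $\Gamma_0$-boundary terms appear (the $y^\beta$-factor in $\fw$ kills them since $\beta>0$, exactly as in the derivation of \eqref{eq:Gamma0BdryTermIsZero}), and absorb the ``bad'' terms $\||y^{1/2}D^2u\|$-type quantities using the ellipticity \eqref{eq:HestonModulusEllipticity} and the Cauchy–Schwarz/Peter–Paul trick, with the first-order terms controlled by Proposition \ref{prop:AuxiliarySpecialWeightedH1EstimateSisZero} and the zeroth-order term by \eqref{eq:VariationalEqualityCoerciveBoundfH}. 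One must be careful that $\zeta^2 D^2 u$ is an admissible test function in $H^1_0(\sO\cup\Gamma_0,\fw)$ — it is, because $u\in C^\infty(\bar\sO)$, $u=0$ on $\Gamma_1$, and near $\Gamma_0$ there is no Dirichlet condition — and that the $x$-derivatives of $\zeta_R$ produce only lower-order errors bounded via \eqref{eq:AuxiliarySpecialWeightedH1EstimateCutoffFunction}; taking $R\to\infty$ at the end with dominated convergence removes the cutoff. Patching the three regional estimates together with a partition of unity subordinate to $\{y<\delta_0\}$, $\{y>\delta_0/2,\ \text{near }\Gamma_1\}$, $\{y>\delta_0/2,\ \text{interior}\}$ yields \eqref{eq:AuxiliaryWeightedH2EstimateSisZero}; the commutator terms between the partition functions and $A$ are again first order and absorbed by Proposition \ref{prop:AuxiliarySpecialWeightedH1EstimateSisZero}. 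The delicate bookkeeping is in the near-$\Gamma_0$ integration-by-parts identities — tracking the signs of the many terms $K_{ijk}$ as in the proof of Proposition \ref{prop:AuxiliarySpecialWeightedH1EstimateSisZero} and verifying that the ``good'' terms from the $-\partial_y$ of the $y^\beta$ weight dominate — so I expect that to consume most of the write-up.
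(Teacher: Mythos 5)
Your overall architecture coincides with the paper's: the reduction to $f_{\lambda}\in C^\infty_0(\sO)$ and $u\in C^\infty(\bar\sO)$ via Theorem \ref{thm:SmoothnessUpToBdryWeakSolution}, the identification of the limit of $y\zeta_R D^2u_n$ through Theorem \ref{thm:LocalRegularityEllipticHeston}, the near-$\Gamma_1$ estimate by summing Lemma \ref{lem:GammaOneRegularityEllipticHeston} over the cover $\{U_j'\}$ with the finite-overlap constant $R_1$, and the final removal of the support restriction by a cutoff $\chi$ with commutator errors absorbed via Proposition \ref{prop:AuxiliarySpecialWeightedH1EstimateSisZero} are all exactly Steps 1, 2 and 4 of the paper's proof.

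The divergence — and the gap — is in the step you yourself identify as the main obstacle, the weighted estimate on $\sO^0_{\delta_0}=\Gamma_0\times(0,\delta_0)$. You propose to differentiate the equation and test against $\zeta^2\partial_x^2u$, $\zeta^2\partial_x\partial_yu$ and $\zeta^2 y\partial_y^2u$, i.e. essentially to make Koch's formal computation rigorous. This is precisely the route the paper deliberately avoids, for reasons that are obstacles to your sketch as written: (i) differentiating the equation produces $f_x$ or $f_y$, and the target estimate \eqref{eq:AuxiliaryWeightedH2EstimateSisZero} only involves $\|f\|_{L^2(\sO,\fw)}$, so every derivative must be integrated back onto the test function, and in the $y$-direction this reintroduces derivatives of the weight producing $y^{\beta-1}$ and $y^{\beta-2}$ factors whose signs must be checked term by term; (ii) inserting $v=\zeta^2 yu_{yy}$ into the bilinear form \eqref{eq:HestonWithKillingBilinearForm} generates third derivatives $u_{xyy},u_{yyy}$ which must again be removed by further integrations by parts, and it is not established that the resulting second-order analogues of the terms $K_{111},K_{121}$ have the favorable signs that made the first-order argument close. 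None of this bookkeeping is carried out, so the central estimate is asserted rather than proved. The paper's Step 3 instead never differentiates the equation: it starts from the pointwise algebraic inequality \eqref{eq:SecondOrderEllipticInequality}, which bounds $\nu_0 y|D^2u|^2/2$ by the two quadratic forms in $(u_{xx},u_{xy})$ and $(u_{xy},u_{yy})$; it then uses a single pair of integrations by parts (in $y$, then in $x$, with the $\Gamma_0$ term killed by $\beta>0$ and the $\Gamma_1$ term by the support assumption) to establish the identity \eqref{eq:MixedDerivativeIntegral} exchanging $\int y^2u_{xy}^2\,\fw$ for $\int y^2u_{xx}u_{yy}\,\fw$ modulo first-order terms; after this exchange the right-hand side collapses to $\frac12\int y^2(u_{xx}+u_{yy})(u_{xx}+2\rho\sigma u_{xy}+\sigma^2u_{yy})\,\fw$, and the second factor is replaced \emph{pointwise} by $-2(A_1u+A_0u-f)$ using $Au=f$, so that Cauchy--Schwarz and absorption give \eqref{eq:AuxiliaryWeightedH2EstimateSisZero_prefinal} directly. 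If you want to complete your write-up, either carry out your testing scheme in full (verifying the signs of all weight-derivative terms and disposing of the third derivatives), or switch to the paper's Ladyzhenskaya-type identity, which requires only the one integration by parts and no regularity of $f$ beyond $L^2$.
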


\begin{rmk}[Finite differences and Proposition \ref{prop:AuxiliaryWeightedH2EstimateSisZero} when the domain is a half-plane]
When $\sO=\HH$, the a priori estimate and regularity result in Proposition \ref{prop:AuxiliaryWeightedH2EstimateSisZero} can be proved by adapting the finite difference arguments employed in the proofs of \cite[Theorems 6.3.1 \& 6.3.4]{Evans} or \cite[Theorems 8.8 \& 8.12]{GT}, without appealing to Theorem \ref{thm:SmoothnessUpToBdryWeakSolution}.
\end{rmk}

\begin{proof}[Proof of Proposition \ref{prop:AuxiliaryWeightedH2EstimateSisZero}]
Lemma \ref{lem:GammaOneRegularityEllipticHeston} takes care of the estimate for $yD^2u$ near $\Gamma_1$ so we now focus on the $L^2$ estimate for $yD^2u$ near $\Gamma_0$ and in the interior of $\sO$, and then combine these bounds to obtain the desired $L^2$ estimate for $yD^2u$ over $\sO$. In our proof of Proposition \ref{prop:AuxiliaryWeightedH2EstimateSisZero} we shall again appeal to Remark \ref{rmk:ProofWhenGammaOneSmooth} and assume $\Gamma_1$ is $C^\infty$ for the sake of clarity of exposition.

\medskip
\noindent\textbf{Step 1.} \emph{Reduction to the case of $u \in C^\infty(\bar\sO)$.} Let $\{f_{\lambda, n}\}_{n\geq 1} \subset C^\infty_0(\sO)$ and $\{u_n\}_{n\geq 1} \subset C^\infty(\bar\sO)$ be as in the proof of Proposition \ref{prop:AuxiliarySpecialWeightedH1EstimateSisZero}. Suppose we have shown that
\begin{equation}
\label{eq:AuxiliaryWeightedH2EstimateSisZero_n}
\begin{aligned}
\|y\zeta_RD^2u_n\|_{L^2(\sO,\fw)} &\leq C\left(\|f_{\lambda, n}\|_{L^2(\sO,\fw)} + \|(1+y)\zeta_{2R}Du_n\|_{L^2(\sO,\fw)} \right.
\\
&\quad + \left. \|(1+y)\zeta_{2R}u_n\|_{L^2(\sO,\fw)}\right), \quad \forall n\geq 1, R\geq 2,
\end{aligned}
\end{equation}
where $C$ is as in Proposition \ref{prop:AuxiliaryWeightedH2EstimateSisZero} and $\zeta_R$ is the cutoff function in Definition  \ref{defn:RadialCutoffFunction}. Since $u_n\to u$ and $\zeta_{2R}Du_n \to \zeta_{2R}Du$ strongly in $L^2(\sO,\fw)$ by \eqref{eq:SmoothunH1convergestou} and \eqref{eq:CutoffDunConvergenceToTheLimit} (with $R$ replaced by $2R$), respectively, we have
\begin{align*}
\zeta_{2R}(1+y)u_n &\to \zeta_{2R}(1+y)u \quad\hbox{strongly in $L^2(\sO,\fw)$ as $n\to\infty$},
\\
\zeta_{2R}(1+y)Du_n &\to \zeta_{2R}(1+y)Du \quad\hbox{strongly in $L^2(\sO,\fw)$ as $n\to\infty$}.
\end{align*}
Using \eqref{eq:SmoothfnL2convergestoflambda} and an argument similar to that used in the proof of Proposition \ref{prop:AuxiliaryWeightedH2EstimateSisZero}, we see that \eqref{eq:AuxiliaryWeightedH2EstimateSisZero_n} implies that the sequence $\{y\zeta_RD^2u_n\}_{n\geq 1}$ is Cauchy in $L^2(\sO,\fw)$ and so \footnote{It would suffice for the proof to use \eqref{eq:AuxiliaryWeightedH2EstimateSisZero_n} to show that $y\zeta_RD^2u_n \rightharpoonup w$ weakly in $L^2(\sO,\fw)$ as $n\to\infty$.}
\begin{equation}
\label{eq:yCutoffD2unConvergesSomeLimit}
y\zeta_RD^2u_n \to w \quad\hbox{strongly in $L^2(\sO,\fw)$ as $n\to\infty$},
\end{equation}
for some limit $w\in L^2(\sO,\fw)$. By Theorem \ref{thm:LocalRegularityEllipticHeston} we have $D^2u \in L^2_{\textrm{loc}}(\sO)$ and \eqref{eq:H2LocalHestonEstimate} implies that
\begin{align*}
\|D^2(u_n-u_{n'})\|_{L^2(\sO'')} &\leq C'\left(\|f_{\lambda, n}-f_{\lambda, n'}\|_{L^2(\sO',\fw)} + \|u_n-u_{n'}\|_{L^2(\sO',\fw)}\right),
\\
&\qquad \forall n,n'\geq 1, R\geq 2,
\end{align*}
for any $\sO''\Subset\sO'\Subset\sO$ and a constant $C'$ depending only on the constant coefficients of $A$, $\sO'',\sO'$, and $R$. Therefore,
\begin{equation}
\label{eq:yCutoffD2unConvergesLocallyTheLimit}
y\zeta_RD^2u_n \to y\zeta_RD^2u \quad\hbox{in the sense of $L^2_{\textrm{loc}}(\sO)$ as $n\to\infty$}.
\end{equation}
Consequently, $w = y\zeta_RD^2u$ a.e. on $\sO$ by \eqref{eq:yCutoffD2unConvergesSomeLimit} and \eqref{eq:yCutoffD2unConvergesLocallyTheLimit}, and so by \eqref{eq:yCutoffD2unConvergesSomeLimit}.
\begin{equation}
\label{eq:yCutoffD2unConvergesTheLimit}
y\zeta_RD^2u_n \to y\zeta_RD^2u \quad\hbox{strongly in $L^2(\sO,\fw)$ as $n\to\infty$}.
\end{equation}
Taking limits in \eqref{eq:AuxiliaryWeightedH2EstimateSisZero_n} as $n\to\infty$ gives
\begin{equation}
\label{eq:AuxiliaryWeightedH2EstimateSisZero_Cutoff}
\|y\zeta_RD^2u\|_{L^2(\sO,\fw)} \leq C\left(\|f_\lambda\|_{L^2(\sO,\fw)} + \|(1+y)\zeta_{2R}Du\|_{L^2(\sO,\fw)} + \|(1+y)\zeta_{2R}u\|_{L^2(\sO,\fw)}\right),
\end{equation}
for all $R\geq 2$, where $C$ is as in the hypotheses of Proposition \ref{prop:AuxiliaryWeightedH2EstimateSisZero}. Finally, using $|\zeta_{2R}|\leq 1$ in the right-hand side of \eqref{eq:AuxiliaryWeightedH2EstimateSisZero_Cutoff} and taking limits as $R\to\infty$ in \eqref{eq:AuxiliaryWeightedH2EstimateSisZero_Cutoff} and applying the dominated convergence theorem to the left-hand term yields
$$
\|yD^2u\|_{L^2(\sO,\fw)} \leq C\left(\|f_\lambda\|_{L^2(\sO,\fw)} + \|(1+y)Du\|_{L^2(\sO,\fw)} + \|(1+y)u\|_{L^2(\sO,\fw)}\right),
$$
and \eqref{eq:AuxiliaryWeightedH2EstimateSisZero} follows from the preceding estimate and \eqref{eq:Defnflambda}.

Assuming the reduction in Step 1 to $u\in C^\infty(\bar\sO)$ for the remainder of the proof of Proposition \ref{prop:AuxiliaryWeightedH2EstimateSisZero}, we shall derive the $L^2(\sO,\fw)$ estimate \eqref{eq:AuxiliaryWeightedH2EstimateSisZero} for $yD^2u$ using the following steps:
\begin{enumerate}
\item[(2)]\setcounter{enumi}{2} $L^2$ estimate for $yD^2u$ over $\sO^1_{\delta_1}\less\sO^0_{\delta_0/2}$;
\item $L^2$ estimate for $yD^2u$ over $\sO$ assuming $u=0$ on $\sO^1_{\delta_1/2}\less\sO^0_{\delta_0}$;
\item $L^2$ estimate for $yD^2u$ over $\sO$ without assuming $u=0$ on $\sO^1_{\delta_1/2}\less\sO^0_{\delta_0}$.
\end{enumerate}

\medskip
\noindent\textbf{Step 2.} \emph{$L^2$ estimate for $yD^2u$ over $\sO^1_{\delta_1}\less\sO^0_{\delta_0/2}$.} Let $\{U_j'\}$ and $\{U_j\}$ be as in Hypothesis \ref{hyp:HestonDomainNearGammaOne}. Then,
\begin{align*}
{}&\|yD^2u\|_{L^2(\sO^1_{\delta_1}\less\sO^0_{\delta_0/2},\fw)}^2
\\
&\leq \sum_j \int_{U_j'\cap\sO}y^2|D^2u|^2 \fw\,dxdy
\\
&\leq C\sum_j \int_{U_j\cap\sO}\left(|f|^2 + (1+y)^2|Du|^2 + (1+y)^2|u|^2\right)\fw\,dxdy
\quad\hbox{(by \eqref{eq:H2GammaOneLocalHestonEstimateWeights})}
\\
&\leq C(R_1+1)\int_\sO\left(|f|^2 + (1+y)^2|Du|^2 + (1+y)^2|u|^2\right)\fw\,dxdy
\quad\hbox{(by Hypothesis \ref{hyp:HestonDomainNearGammaOne}).}
\end{align*}
Thus,
\begin{equation}
\label{eq:AuxiliaryWeightedH2EstimateSisZero_GammaOne}
\begin{aligned}
\|yD^2u\|_{L^2(\sO^1_{\delta_1}\less\sO^0_{\delta_0/2},\fw)}
&\leq C\left(\|f\|_{L^2(\sO,\fw)} + \|(1+y)Du\|_{L^2(\sO,\fw)} \right.
\\
&\quad + \left. \|(1+y)u\|_{L^2(\sO,\fw)}\right),
\end{aligned}
\end{equation}
where the constant $C$ depends only on the constant coefficients of $A$ and $\gamma,\delta_0,\delta_1,M_1,R_1$.

\medskip
\noindent\textbf{Step 3.} \emph{$L^2$ estimate for $yD^2u$ over $\sO$ under the assumption that}
\begin{equation}
\label{eq:uZeroOnNeighborhoodGammaOne}
u = 0 \hbox{ on }\sO^1_{\delta_1/2}\less\sO^0_{\delta_0}.
\end{equation}
Because we shall need to integrate by parts with respect to $x$ or $y$ alone, we use \eqref{eq:uZeroOnNeighborhoodGammaOne} to extend $u$ by zero,
\begin{equation}
\label{eq:uExtendedByZero}
\bar u := \begin{cases} u &\hbox{on }\sO, \\ 0 &\hbox{on }(\RR\times[\delta_0,\infty))\less\sO, \end{cases}
\end{equation}
and observe that $\bar u \in C^\infty(\RR\times[\delta_0,\infty))$; we relabel $\bar u$ to $u$ for the remainder of this step. Moreover,
\begin{equation}
\label{eq:DomainNearGammaZero}
\sO\cap(\RR\times(0,\delta_0)) = \Gamma_0\times(0,\delta_0),
\end{equation}
by Hypothesis \ref{hyp:HestonDomainNearGammaZero}. Note that \eqref{eq:HestonModulusEllipticity} gives
\begin{equation}
\label{eq:SecondOrderEllipticInequality}
\begin{aligned}
\frac{\nu_0}{2}y\left(u_{xx}^2 + 2u_{xy}^2 + u_{yy}^2\right) &= \frac{\nu_0}{2}y\left(u_{xx}^2 + u_{xy}^2\right)
+ \frac{\nu_0}{2}y\left(u_{xy}^2 + u_{yy}^2\right)
\\
&\leq \frac{y}{2}\left(u_{xx}^2 + 2\rho\sigma u_{xx}u_{xy} + \sigma^2u_{xy}^2\right)
\\
&\quad + \frac{y}{2}\left(u_{xy}^2 + 2\rho\sigma u_{xy}u_{yy} + \sigma^2u_{yy}^2\right) \quad\hbox{on $\sO$}.
\end{aligned}
\end{equation}
Integrating by parts with respect to $y$, using \eqref{eq:uExtendedByZero} and \eqref{eq:DomainNearGammaZero}, gives
\begin{align*}
\int_\sO y^2u_{xy}^2\fw\,dxdy &= \int_\sO y^{\beta+1}u_{xy}u_{xy} e^{-\mu y-\gamma|x|}\,dxdy
\quad\hbox{(by \eqref{eq:HestonWeight})}
\\
&= -\int_\sO y^{\beta+1}u_xu_{xyy} e^{-\mu y-\gamma|x|}\,dxdy - \int_\sO y^{\beta}((\beta+1) - \mu y)u_xu_{xy} e^{-\mu y-\gamma|x|}\,dxdy
\\
&\quad + \int_{\Gamma_0} y^{\beta+1}u_xu_{xy} e^{-\gamma|x|}\,dx.
\end{align*}
But the integral over $\Gamma_0$ is zero since $u \in C^\infty(\bar\sO)$ and $\beta>0$. Then, integrating by parts with respect to $x$, using \eqref{eq:uExtendedByZero}, in the preceding equation gives
\begin{align*}
\int_\sO y^2u_{xy}^2\fw\,dxdy
&= \int_\sO y^{\beta+1}u_{xx}u_{yy} e^{-\mu y-\gamma|x|}\,dxdy - \int_\sO y^{\beta}((\beta+1) - \mu y)u_xu_{xy} e^{-\mu y-\gamma|x|}\,dxdy
\\
&\quad -\gamma\int_\sO y^{\beta+1}u_xu_{yy}\sign(x) e^{-\mu y-\gamma|x|}\,dxdy - \int_{\Gamma_1} y^{\beta+1}u_xu_{yy} e^{-\mu y-\gamma|x|}\,dxdy.
\end{align*}
But the integral over $\Gamma_1$ is zero since $u=0$ along $\Gamma_1$ and therefore $u_{yy}=0$ on $\Gamma_1\cap\sO^0_{\delta_0}$, while $u=0$ on $\sO^1_{\delta_1/2}\less\sO^0_{\delta_0}$ by the assumption \eqref{eq:uZeroOnNeighborhoodGammaOne}. Thus, by \eqref{eq:HestonModulusEllipticity} we obtain
\begin{equation}
\label{eq:MixedDerivativeIntegral}
\begin{aligned}
\int_\sO y^2u_{xy}^2\fw\,dxdy
&= \int_\sO y^2u_{xx}u_{yy} \fw\,dxdy - \int_\sO y((\beta+1) - \mu y)u_xu_{xy}\fw\,dxdy
\\
&\quad -\gamma\int_\sO y^2u_xu_{yy}\sign(x) \fw\,dxdy.
\end{aligned}
\end{equation}
Multiplying both sides of \eqref{eq:SecondOrderEllipticInequality} by $y\fw(x,y)$ and applying the preceding identity yields
\begin{align*}
\frac{\nu_0}{2}\int_\sO y^2|D^2u|^2\fw\,dxdy
&\leq
\frac{1}{2}\int_\sO y^2\left\{u_{xx}(u_{xx} + 2\rho\sigma u_{xy}) + \sigma^2u_{xy}^2\right\}\fw\,dxdy
\\
&\quad + \frac{1}{2}\int_\sO y^2\left\{u_{xy}^2 + u_{yy}(2\rho\sigma u_{xy} + \sigma^2u_{yy})\right\}\fw\,dxdy
\\
&= \frac{1}{2}\int_\sO y^2u_{xx}(u_{xx} + 2\rho\sigma u_{xy} + \sigma^2u_{yy})\fw\,dxdy \quad\hbox{(by \eqref{eq:MixedDerivativeIntegral})}
\\
&\quad + \frac{1}{2}\int_\sO y^2 u_{yy}(u_{xx} + 2\rho\sigma u_{xy} + \sigma^2u_{yy})\fw\,dxdy
\\
&\quad - \frac{1+\sigma^2}{2}\int_\sO y((\beta+1) - \mu y)u_xu_{xy}\fw\,dxdy
\\
&\quad - \frac{1+\sigma^2}{2}\gamma\int_\sO y^2u_xu_{yy}\sign(x) \fw\,dxdy,
\end{align*}
and thus
\begin{align*}
{}&\frac{\nu_0}{2}\int_\sO y^2|D^2u|^2\fw\,dxdy
\\
&\leq \frac{1}{2}\int_\sO y^2(u_{xx} + u_{yy})(u_{xx} + 2\rho\sigma u_{xy} + \sigma^2u_{yy})\fw\,dxdy
\\
&\quad - \frac{1+\sigma^2}{2}\int_\sO y((\beta+1) - \mu y)u_xu_{xy}\fw\,dxdy - \frac{1+\sigma^2}{2}\gamma\int_\sO y^2u_xu_{yy}\sign(x) \fw\,dxdy.
\end{align*}
We express our operator $A$ as $A=A_2+A_1+A_0$, where $A_i$ denotes the \emph{i}-th order part of $A$, and note that by \eqref{eq:OperatorHestonIntro} we have $-A_2u := \frac{1}{2}y(u_{xx} + 2\rho\sigma u_{xy} + \sigma^2u_{yy})$, $A_1u := -(r-q-\frac{y}{2})u_x - \kappa(\theta-y)u_y$, and $A_0u := ru$. Lemma \ref{lem:HestonWeightedNeumannBVPHomogeneous} implies that $u$ solves Problem \ref{prob:HestonMixedBVPHomogeneousClassical} since $C^\infty(\bar\sO) \subset H^2(\sO,\fw)$ and so $Au=f$ on $\sO$ by \eqref{eq:IntroHestonMixedProblemHomogeneous}. Therefore, because $-Au_2 = A_1u + A_0u - f$ on $\sO$, we obtain
\begin{align*}
\frac{\nu_0}{2}\int_\sO y^2|D^2u|^2\fw\,dxdy
&\leq \int_\sO y(u_{xx} + u_{yy})(A_1u+A_0u-f)\fw\,dxdy
\\
&\quad - \frac{1+\sigma^2}{2}\int_\sO ((\beta+1)yu_xu_{xy} - \mu y^2u_xu_{xy})\fw\,dxdy
\\
&\quad - \frac{1+\sigma^2}{2}\gamma\int_\sO y^2u_xu_{yy}\sign(x) \fw\,dxdy
\\
&\leq \frac{1}{2}|y(u_{xx} + u_{yy})|_H|A_1u+A_0u-f|_H
\\
&\quad + C\left(|(u_x, yu_{xy})_H| + |(yu_x, yu_{xy})_H| + |(yu_x, yu_{yy}\sign(x))_H|\right)
\\
&\leq C|yD^2u|_H\left(|(1+y)Du|_H  + |u|_H + |f|_H\right)
\\
&\quad + C\left(|u_x|_H|yu_{xy}|_H + |yu_x|_H|yu_{xy}|_H + |yu_x|_H|yu_{yy}|_H\right).
\end{align*}
Therefore,
$$
|yD^2u|_H^2 \leq C\left(|(1+y)Du|_H + |u|_H + |f|_H\right)|yD^2u|_H,
$$
and so
\begin{equation}
\label{eq:AuxiliaryWeightedH2EstimateSisZero_prefinal}
|yD^2u|_H \leq C\left(|(1+y)Du|_H + |u|_H + |f|_H\right).
\end{equation}
This is \eqref{eq:AuxiliaryWeightedH2EstimateSisZero}, except for the term $|u|_H \leq |(1+y)u|_H$ on the right-hand side, but obtained with the additional assumption \eqref{eq:uZeroOnNeighborhoodGammaOne}.

\medskip
\noindent\textbf{Step 4.} \emph{$L^2$ estimate for $yD^2u$ over $\sO$ without the assumption \eqref{eq:uZeroOnNeighborhoodGammaOne}.} We now remove the assumption \eqref{eq:uZeroOnNeighborhoodGammaOne} using a cutoff function argument. Let $\chi\in C^\infty(\bar\RR^2)$ be such that $0\leq\chi\leq 1$ on $\RR^2$ with
$$
\chi = \begin{cases}0 &\hbox{on }\sO^1_{\delta_1/2}\less\sO^0_{\delta_0}, \\ 1 &\hbox{ on }\sO\less(\sO^1_{\delta_1}\less\sO^0_{\delta_0/2}), \end{cases}
$$
and, for a positive constant $C$ depending only on the constants $\alpha,\delta_0,\delta_1,k,M_1$ in Hypothesis \ref{hyp:HestonDomainNearGammaOne},
\begin{equation}
\label{eq:FirstSecondDerivativeChiLinftyEstimates}
|D\chi| \leq C \quad\hbox{and}\quad |D^2\chi| \leq C \quad\hbox{on }\RR^2.
\end{equation}
Observe that the definition of $\chi$ implies
\begin{equation}
\label{eq:SuportOneMinusChi}
\supp(1-\chi) \subset \bar\sO^1_{\delta_1}\less\sO^0_{\delta_0/2}.
\end{equation}
From \eqref{eq:OperatorHestonIntro} and the fact that $Au = f$ on $\sO$ by \eqref{eq:IntroHestonMixedProblemHomogeneous}, we use \eqref{eq:ACommutator} to obtain
\begin{align*}
[A,\chi]u &:= A(\chi u) - \chi Au
\\
&= - \frac{y}{2}\left(\chi_{xx}u + 2\chi_xu_x + 2\rho\sigma(\chi_{xy} + \chi_yu_x + \chi_xu_y) + 2\sigma^2\chi_y u_y + \sigma^2\chi_{yy}u\right)
\\
&\quad - (r-q-y/2)\chi_xu - \kappa(\theta-y)\chi_yu,
\end{align*}
and hence $\chi u \in C^\infty(\bar\sO)$ solves
\begin{equation}
\label{eq:Achicutoffu}
A(\chi u) = f^\chi \quad\hbox{on }\sO, \quad \chi u = 0\quad\hbox{on }\Gamma_1,
\end{equation}
where
\begin{equation}
\label{eq:fchi}
f^\chi := \chi f + [A,\chi]u = \chi Au + [A,\chi]u.
\end{equation}
Moreover, $\chi u=0$ on $\sO^1_{\delta_1/2}\less\sO^0_{\delta_0}$, so we can apply \eqref{eq:AuxiliaryWeightedH2EstimateSisZero_prefinal} to the solution $\chi u \in C^\infty(\bar\sO)$ to \eqref{eq:Achicutoffu} to give
$$
|yD^2(\chi u)|_H \leq C\left(|(1+y)D(\chi u)|_H + |\chi u|_H + |f^\chi|_H\right),
$$
and thus, by \eqref{eq:Achicutoffu}, \eqref{eq:fchi}, and \eqref{eq:FirstSecondDerivativeChiLinftyEstimates}
\begin{equation}
\label{eq:AuxiliaryWeightedH2EstimateGammaZeroCutoff}
|yD^2(\chi u)|_H \leq C\left(|(1+y)Du|_H + |(1+y)u|_H + |f|_H\right),
\end{equation}
Hence, writing $u = \chi u + (1-\chi)u$ and using $\sO\cap\supp (1-\chi) \subset \sO^1_{\delta_1}\less\sO^0_{\delta_0/2}$ and
$D^2((1-\chi)u) = (1-\chi)D^2u - [D^2,\chi]u$, we obtain
\begin{align*}
|yD^2u|_H &\leq |yD^2(\chi u)|_H + |yD^2((1-\chi)u)|_H
\\
&\leq |yD^2(\chi u)|_H + |y(1-\chi)D^2u|_H + |y[D^2,\chi]u|_H
\\
&\leq |yD^2(\chi u)|_H + \|yD^2u\|_{L^2(\sO^1_{\delta_1}\less\sO^0_{\delta_0/2})} + C\left(|yDu|_H + |yu|_H\right)
\quad\hbox{(by \eqref{eq:FirstSecondDerivativeChiLinftyEstimates} and \eqref{eq:SuportOneMinusChi})}.
\end{align*}
Therefore, by applying \eqref{eq:AuxiliaryWeightedH2EstimateSisZero_GammaOne} and \eqref{eq:AuxiliaryWeightedH2EstimateGammaZeroCutoff} in the preceding inequality, we obtain the desired bound for a solution $u$ to Problem \ref{prob:HestonWeakMixedBVPHomogeneous},
\begin{equation}
\label{eq:AuxiliaryWeightedH2EstimateSmoothBoundedSolution}
|yD^2u|_H \leq C\left(|(1+y)Du|_H + |(1+y)u|_H + |f|_H\right),
\end{equation}
when $u\in C^\infty(\bar\sO)$. This completes the proof of Proposition \ref{prop:AuxiliaryWeightedH2EstimateSisZero}.
\end{proof}

\subsection[Global $H^2$ regularity of solutions to the variational equation]{Global $\mathbf{H^2}$ regularity of solutions to the variational equation}
\label{subsec:H2RegularityEquality}
We shall need

\begin{hyp}[Conditions on the source function]
\label{hyp:Sqrt1plusyfL2}
Require that $f\in L^2(\sO,\fw)$ obey
\begin{equation}
\label{eq:Sqrt1plusyfL2}
(1+y)^{1/2}f \in L^2(\sO,\fw).
\end{equation}
\end{hyp}

By combining the conclusions of Propositions \ref{prop:AuxiliarySpecialWeightedH1EstimateSisZero} and \ref{prop:AuxiliaryWeightedH2EstimateSisZero}, we obtain

\begin{cor}[A priori second-derivative estimate for a solution to the variational equation]
\label{cor:HEstimateyD2uSpecial}
Require that $\sO$ obey Hypotheses \ref{hyp:HestonDomainNearGammaZero} and \ref{hyp:HestonDomainNearGammaOne} with $k=2$ and $\alpha\in(0,1)$. Then there is a positive constant $C$, depending only on the constant coefficients of $A$ and the constants $\delta_0,\delta_1,M_1,R_1$ of Hypothesis \ref{hyp:HestonDomainNearGammaOne}, such that, if $f\in L^2(\sO,\fw)$ obeys \eqref{eq:Sqrt1plusyfL2} and $u\in H^1_0(\sO\cup\Gamma_0,\fw)$ is a solution to Problem \ref{prob:HestonWeakMixedBVPHomogeneous} and $yu\in L^2(\sO,\fw)$, then $yD^2u \in L^2(\sO,\fw)$ and
\begin{equation}
\label{eq:HEstimateyD2uSpecial}
\|yD^2u\|_{L^2(\sO,\fw)} \leq C\left(\|(1+y^{1/2})f\|_{L^2(\sO,\fw)}  + \|(1+y)u\|_{L^2(\sO,\fw)}\right).
\end{equation}
\end{cor}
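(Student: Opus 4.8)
The plan is to combine the two a priori estimates just established, namely the refined first-order estimate of Proposition \ref{prop:AuxiliarySpecialWeightedH1EstimateSisZero} and the second-order estimate of Proposition \ref{prop:AuxiliaryWeightedH2EstimateSisZero}, by using the former to control the term $\|(1+y)Du\|_{L^2(\sO,\fw)}$ appearing on the right-hand side of \eqref{eq:AuxiliaryWeightedH2EstimateSisZero}. The only technical point requiring care is that Proposition \ref{prop:AuxiliarySpecialWeightedH1EstimateSisZero} bounds $\|Du\|_{L^2(\sO,\fw)}$, whereas we need a bound on $\|(1+y)Du\|_{L^2(\sO,\fw)}$, which weights the derivatives by the unbounded factor $1+y$; the bridge between these is Proposition \ref{prop:AuxiliaryWeightedH1Estimate}, which provides the weighted-by-$y$ first-derivative bound \eqref{eq:AuxiliaryWeightedH1Estimate}.

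First I would check that the hypotheses of all three propositions are met: the domain hypotheses are subsumed in the hypotheses of this corollary (Hypotheses \ref{hyp:HestonDomainNearGammaZero} and \ref{hyp:HestonDomainNearGammaOne} with $k=2$, $\alpha\in(0,1)$, which in particular force $\Gamma_1$ to be $C^{2,\alpha}$), and the integrability hypotheses $f\in L^2(\sO,\fw)$, $yu\in L^2(\sO,\fw)$ hold, together with $(1+y)^{1/2}f\in L^2(\sO,\fw)$ from \eqref{eq:Sqrt1plusyfL2}. Note $(1+y)^{1/2}f\in L^2(\sO,\fw)$ and $f\in L^2(\sO,\fw)$ together give $y^{1/2}f\in L^2(\sO,\fw)$, which is the hypothesis needed for Proposition \ref{prop:AuxiliaryWeightedH1Estimate}, and $(1+y)u\in L^2(\sO,\fw)$ follows from $yu\in L^2(\sO,\fw)$ (and $u\in V\subset L^2(\sO,\fw)$). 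Then Proposition \ref{prop:AuxiliarySpecialWeightedH1EstimateSisZero} yields
\begin{equation*}
\|Du\|_{L^2(\sO,\fw)} \leq C\left(\|f\|_{L^2(\sO,\fw)} + \|(1+y)u\|_{L^2(\sO,\fw)}\right),
\end{equation*}
while Proposition \ref{prop:AuxiliaryWeightedH1Estimate} yields
\begin{equation*}
\|yDu\|_{L^2(\sO,\fw)} \leq C\left(\|y^{1/2}f\|_{L^2(\sO,\fw)} + \|(1+y)u\|_{L^2(\sO,\fw)}\right).
\end{equation*}
Adding these and using $|(1+y)Du| \leq |Du| + |yDu|$ pointwise gives
\begin{equation*}
\|(1+y)Du\|_{L^2(\sO,\fw)} \leq C\left(\|(1+y^{1/2})f\|_{L^2(\sO,\fw)} + \|(1+y)u\|_{L^2(\sO,\fw)}\right).
\end{equation*}

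Finally I would substitute this bound for $\|(1+y)Du\|_{L^2(\sO,\fw)}$ into the right-hand side of \eqref{eq:AuxiliaryWeightedH2EstimateSisZero}, which also gives $yD^2u\in L^2(\sO,\fw)$ as asserted, and absorb the remaining $\|f\|_{L^2(\sO,\fw)}$ and $\|(1+y)u\|_{L^2(\sO,\fw)}$ terms into $\|(1+y^{1/2})f\|_{L^2(\sO,\fw)}$ and $\|(1+y)u\|_{L^2(\sO,\fw)}$ respectively, producing \eqref{eq:HEstimateyD2uSpecial}. There is essentially no obstacle here beyond bookkeeping: the work has all been done in the preceding three propositions, and the corollary is a clean assembly. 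The one thing to be slightly careful about is tracking the dependence of the constant $C$ — it depends only on the constant coefficients of $A$ and on $\delta_0,\delta_1,M_1,R_1$ (from the use of Proposition \ref{prop:AuxiliaryWeightedH2EstimateSisZero}), since the constants in Propositions \ref{prop:AuxiliaryWeightedH1Estimate} and \ref{prop:AuxiliarySpecialWeightedH1EstimateSisZero} depend only on $\gamma$ (itself fixed in terms of the coefficients of $A$ by Assumption \ref{assmp:gammmachoice}) and the coefficients of $A$.
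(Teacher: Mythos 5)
Your proposal is correct and follows exactly the same route as the paper's own proof: combine \eqref{eq:AuxiliaryWeightedH1Estimate} and \eqref{eq:AuxiliarySpecialWeightedH1EstimateSisZero} to bound $\|(1+y)Du\|_{L^2(\sO,\fw)}$ by $C(\|(1+y^{1/2})f\|_{L^2(\sO,\fw)} + \|(1+y)u\|_{L^2(\sO,\fw)})$, then substitute into \eqref{eq:AuxiliaryWeightedH2EstimateSisZero}. Your hypothesis-checking and tracking of the constant's dependence are also consistent with the paper.
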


\begin{proof}
Inequality \eqref{eq:AuxiliaryWeightedH1Estimate} gives
$$
|yDu|_H \leq C\left(|y^{1/2}f|_H + |(1+y)u|_H\right),
$$
while inequality \eqref{eq:AuxiliarySpecialWeightedH1EstimateSisZero} yields
$$
|Du|_H \leq C\left(|f|_H + |(1+y)u|_H\right).
$$
Thus, combining the preceding two estimates yields
\begin{equation}
\label{eq:1plusyDu}
|(1+y)Du|_H \leq C\left(|(1+y^{1/2})f|_H + |(1+y)u|_H\right).
\end{equation}
Now inequality \eqref{eq:AuxiliaryWeightedH2EstimateSisZero} yields
$$
|yD^2u|_H \leq C\left(|(1+y)Du|_H + |(1+y)u|_H + |f|_H\right),
$$
and combining the preceding bound with \eqref{eq:1plusyDu} yields the conclusion.
\end{proof}

\begin{hyp}[Combined conditions on the domain]
\label{hyp:DomainCombinedCondition}
Require that the domain, $\sO$, obeys Hypotheses \ref{hyp:HestonDomainNearGammaZero}, \ref{hyp:HestonDomainNearGammaOne} with $k=2$ and $\alpha\in(0,1)$, and \ref{hyp:GammeOneExtensionProperty} with $k=1$.
\end{hyp}

By combining the conclusions of Corollaries \ref{cor:HEstimateyD2uSpecial} and Proposition \ref{prop:AuxiliarySpecialWeightedH1EstimateSisZero} we obtain an analogue of \cite[Theorem 8.12]{GT}:

\begin{thm}[A priori global $H^2$ estimate for a solution to the variational equation]
\label{thm:GlobalRegularityEllipticHestonSpecial}
Require that the domain $\sO$ obeys Hypothesis \ref{hyp:DomainCombinedCondition}. If $f\in L^2(\sO,\fw)$ obeys \eqref{eq:Sqrt1plusyfL2} and $u\in H^1_0(\sO\cup\Gamma_0,\fw)$ is a solution to Problem \ref{prob:HestonWeakMixedBVPHomogeneous} and $yu \in L^2(\sO,\fw)$, then $u\in H^2(\sO,\fw)$ and $u$ solves Problem \ref{prob:HestonStrongMixedBVPHomogeneous}, and there is a positive constant $C$, depending only on the constant coefficients of $A$ and the constants $\delta_0,\delta_1,M_1,R_1$ of Hypothesis \ref{hyp:HestonDomainNearGammaOne}, such that
\begin{equation}
\label{eq:SecondDerivativeGlobalHestonEstimateSpecial}
\|u\|_{H^2(\sO,\fw)} \leq C\left(\|(1+y)^{1/2}f\|_{L^2(\sO,\fw)} + \|(1+y)u\|_{L^2(\sO,\fw)}\right).
\end{equation}
\end{thm}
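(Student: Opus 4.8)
The plan is to assemble Theorem~\ref{thm:GlobalRegularityEllipticHestonSpecial} directly from the a priori estimates already established in this section, together with the equivalence between variational and strong solutions. The argument has two parts: (i) proving that $u \in H^2(\sO,\fw)$ with the norm bound \eqref{eq:SecondDerivativeGlobalHestonEstimateSpecial}, and (ii) showing that $u$ then solves Problem~\ref{prob:HestonStrongMixedBVPHomogeneous}.

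For part (i), I would first observe that since $u\in H^1_0(\sO\cup\Gamma_0,\fw)$ solves Problem~\ref{prob:HestonWeakMixedBVPHomogeneous} and $yu\in L^2(\sO,\fw)$, the hypotheses of Corollary~\ref{cor:HEstimateyD2uSpecial} are met (using Hypothesis~\ref{hyp:DomainCombinedCondition}, which contains Hypotheses~\ref{hyp:HestonDomainNearGammaZero} and \ref{hyp:HestonDomainNearGammaOne} with $k=2$). Hence $yD^2u\in L^2(\sO,\fw)$ and \eqref{eq:HEstimateyD2uSpecial} holds. Next, the hypotheses of Proposition~\ref{prop:AuxiliarySpecialWeightedH1EstimateSisZero} are also satisfied (again via Hypothesis~\ref{hyp:DomainCombinedCondition}), so $|Du|_H \leq C(|f|_H + |(1+y)u|_H)$, and Proposition~\ref{prop:AuxiliaryWeightedH1Estimate} gives $|yDu|_H \leq C(|y^{1/2}f|_H + |(1+y)u|_H)$; adding these yields the estimate \eqref{eq:1plusyDu} for $|(1+y)Du|_H$. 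Finally, one must control the zeroth-order term $|(1+y)^{1/2}u|_H$: this is immediate from the hypothesis $yu\in L^2(\sO,\fw)$ together with $u\in H^1(\sO,\fw)\subset L^2(\sO,\fw)$, since $(1+y)^{1/2} \leq 1+y$ pointwise, so $|(1+y)^{1/2}u|_H \leq |(1+y)u|_H \le |u|_H + |yu|_H < \infty$. Combining the bound for $|yD^2u|_H$ from \eqref{eq:HEstimateyD2uSpecial}, the bound for $|(1+y)Du|_H$ from \eqref{eq:1plusyDu}, and the bound for $|(1+y)^{1/2}u|_H$, and recalling the Definition~\ref{defn:H2WeightedSobolevSpaces} of the $H^2(\sO,\fw)$-norm, we obtain $u\in H^2(\sO,\fw)$ together with \eqref{eq:SecondDerivativeGlobalHestonEstimateSpecial}.

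For part (ii), once we know $u\in H^2(\sO,\fw)$ and $u\in H^1_0(\sO\cup\Gamma_0,\fw)$ solves the variational equation \eqref{eq:IntroHestonWeakMixedProblemHomogeneous}, Lemma~\ref{lem:HestonWeightedNeumannBVPHomogeneous}(1) (whose hypotheses --- Hypotheses~\ref{hyp:HestonDomainNearGammaZero} and \ref{hyp:GammeOneExtensionProperty} with $k=1$ --- are contained in Hypothesis~\ref{hyp:DomainCombinedCondition}) applies directly and shows that $u$ solves Problem~\ref{prob:HestonStrongMixedBVPHomogeneous}, i.e.\ $Au=f$ a.e.\ on $\sO$ and $u=0$ on $\Gamma_1$.

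I do not expect a genuine obstacle here: the theorem is essentially a bookkeeping combination of Propositions~\ref{prop:AuxiliarySpecialWeightedH1EstimateSisZero} and \ref{prop:AuxiliaryWeightedH2EstimateSisZero}, Corollary~\ref{cor:HEstimateyD2uSpecial}, and Lemma~\ref{lem:HestonWeightedNeumannBVPHomogeneous}. The only point requiring a small amount of care is verifying that the finiteness hypothesis $yu\in L^2(\sO,\fw)$ genuinely suffices to feed all three a priori estimates (each of them is stated with exactly this hypothesis, so this is automatic) and that the zeroth-order contribution to the $H^2$-norm is absorbed into $\|(1+y)u\|_{L^2(\sO,\fw)}$ rather than needing a stronger weight --- which it is, by the trivial pointwise inequality $(1+y)^{1/2}\leq (1+y)$. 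The constant $C$ tracks through as a product/maximum of the constants appearing in those earlier results, all of which depend only on the constant coefficients of $A$ and on $\delta_0,\delta_1,M_1,R_1$.
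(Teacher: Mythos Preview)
Your proposal is correct and follows essentially the same route as the paper's proof: the paper simply combines \eqref{eq:1plusyDu} (the bound on $|(1+y)Du|_H$, itself obtained exactly as you describe from Propositions~\ref{prop:AuxiliaryWeightedH1Estimate} and~\ref{prop:AuxiliarySpecialWeightedH1EstimateSisZero}) with \eqref{eq:HEstimateyD2uSpecial}, invokes Definition~\ref{defn:H2WeightedSobolevSpaces}, and then applies Lemma~\ref{lem:HestonWeightedNeumannBVPHomogeneous} for the strong-solution conclusion. Your additional remark on absorbing the zeroth-order term $|(1+y)^{1/2}u|_H$ into $\|(1+y)u\|_{L^2(\sO,\fw)}$ is a harmless elaboration that the paper leaves implicit.
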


\begin{proof}
Combining inequalities \eqref{eq:1plusyDu} and \eqref{eq:HEstimateyD2uSpecial} yields
$$
|yD^2u|_H + |(1+y)Du|_H \leq C\left(|(1+y^{1/2})f|_H + |(1+y)u|_H\right).
$$
The preceding estimate and the Definition \ref{defn:H2WeightedSobolevSpaces} of $H^2(\sO,\fw)$ yields \eqref{eq:SecondDerivativeGlobalHestonEstimateSpecial}. We see that $u$ solves Problem \ref{prob:HestonStrongMixedBVPHomogeneous} by applying Lemma \ref{lem:HestonWeightedNeumannBVPHomogeneous}.
\end{proof}

By combining Theorems \ref{thm:ExistenceUniquenessEllipticHeston_Improved} and \ref{thm:GlobalRegularityEllipticHestonSpecial} we obtain the following existence and uniqueness result for solutions to Problem \ref{prob:HestonStrongMixedBVPHomogeneous} with the aid of

\begin{hyp}[Conditions on envelope functions]
\label{hyp:UpperLowerBoundsSolutionsNoncoerciveEquation}
Require that there exist $M, m \in H^2(\sO,\fw)$ obeying \eqref{eq:SourceFunctionTraceBounds}, \eqref{eq:mMIneqOnDomain}, \eqref{eq:SourceFunctionBounds}, and \eqref{eq:OneplusyMmInL2}.
\end{hyp}

\begin{thm}[Existence and uniqueness for strong solutions to the non-coercive variational equation]
\label{thm:ExistenceUniquenessH2RegularEllipticHeston}
Assume the Hypothesis \ref{hyp:NoncoerciveHeston} on the coefficient, $r$, holds and that the Hypothesis \ref{hyp:DomainCombinedCondition} on the domain, $\sO$, holds. Suppose there are functions $M, m \in H^2(\sO,\fw)$ obeying \eqref{eq:SourceFunctionTraceBounds}, \eqref{eq:mMIneqOnDomain}, \eqref{eq:SourceFunctionBounds}, and \eqref{eq:OneplusyMmInL2}. Given a function $f \in L^2(\sO,\fw)$ obeying \eqref{eq:fBounds} and \eqref{eq:Sqrt1plusyfL2}, then there exists a solution, $u\in H^2(\sO,\fw)$, to Problem \ref{prob:HestonStrongMixedBVPHomogeneous} and $u$
\begin{enumerate}
\item Obeys the pointwise bounds \eqref{eq:uBounds},
\item Has the boundary property \eqref{eq:WeightedNeumannHomogeneousBCProb}, and
\item Obeys the estimate \eqref{eq:SecondDerivativeGlobalHestonEstimateSpecial}.
\end{enumerate}
Moreover, if there is a $\varphi \in H^2(\sO,\fw)$ obeying Hypothesis \ref{hyp:AuxBoundUniquenessSolutionsNoncoerciveEquation}, then the solution, $u$, is unique.
\end{thm}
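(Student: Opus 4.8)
\textbf{Proof of Theorem \ref{thm:ExistenceUniquenessH2RegularEllipticHeston}.}

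The plan is to assemble the conclusion from three results already established: the existence-and-uniqueness theorem for the non-coercive \emph{variational} equation (Theorem \ref{thm:ExistenceUniquenessEllipticHeston_Improved}), the a priori global $H^2$ estimate (Theorem \ref{thm:GlobalRegularityEllipticHestonSpecial}), and the equivalence between variational and strong solutions (Lemma \ref{lem:HestonWeightedNeumannBVPHomogeneous}). The only genuinely new point is to check that the hypotheses of the present theorem feed correctly into each of these inputs, in particular that the solution produced by Theorem \ref{thm:ExistenceUniquenessEllipticHeston_Improved} satisfies the extra integrability condition $yu\in L^2(\sO,\fw)$ required to invoke Theorem \ref{thm:GlobalRegularityEllipticHestonSpecial}.

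\emph{Step 1 (existence of a variational solution and pointwise bounds).} Since $r>0$ by Hypothesis \ref{hyp:NoncoerciveHeston}, the domain obeys Hypothesis \ref{hyp:DomainCombinedCondition} (hence a fortiori Hypotheses \ref{hyp:HestonDomainNearGammaZero} and \ref{hyp:GammeOneExtensionProperty} with $k=1$), and $M,m\in H^2(\sO,\fw)$ obey \eqref{eq:SourceFunctionTraceBounds}, \eqref{eq:mMIneqOnDomain}, \eqref{eq:SourceFunctionBounds} while $f\in L^2(\sO,\fw)$ obeys \eqref{eq:fBounds}, Theorem \ref{thm:ExistenceUniquenessEllipticHeston_Improved} (existence part) provides a solution $u\in V=H^1_0(\sO\cup\Gamma_0,\fw)$ to Problem \ref{prob:HestonWeakMixedBVPHomogeneous} satisfying the pointwise bounds \eqref{eq:uBounds}, namely $m\le u\le M$ a.e. on $\sO$. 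This is conclusion (1).

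\emph{Step 2 (the extra integrability $yu\in L^2(\sO,\fw)$).} From $m\le u\le M$ a.e. on $\sO$ we get $(1+y)|u|\le (1+y)(|m|+|M|)$ a.e. on $\sO$, and hypothesis \eqref{eq:OneplusyMmInL2} gives $(1+y)M,(1+y)m\in L^2(\sO,\fw)$; hence $(1+y)u\in L^2(\sO,\fw)$, and in particular $yu\in L^2(\sO,\fw)$. This is exactly the side condition needed to apply Theorem \ref{thm:GlobalRegularityEllipticHestonSpecial}.

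\emph{Step 3 ($H^2$ regularity, the estimate, and the strong formulation).} Hypothesis \ref{hyp:Sqrt1plusyfL2}, i.e. \eqref{eq:Sqrt1plusyfL2}, is part of the hypotheses here, so $(1+y)^{1/2}f\in L^2(\sO,\fw)$; combined with $u\in V$ solving Problem \ref{prob:HestonWeakMixedBVPHomogeneous}, $yu\in L^2(\sO,\fw)$ from Step 2, and the domain obeying Hypothesis \ref{hyp:DomainCombinedCondition}, Theorem \ref{thm:GlobalRegularityEllipticHestonSpecial} yields $u\in H^2(\sO,\fw)$, the a priori estimate \eqref{eq:SecondDerivativeGlobalHestonEstimateSpecial} (conclusion (3)), and the fact that $u$ solves Problem \ref{prob:HestonStrongMixedBVPHomogeneous}. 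Since $u\in H^2(\sO,\fw)$, Lemma \ref{lem:HestonWeightedNeumannBoundaryProperty} gives the weighted-Neumann boundary property \eqref{eq:WeightedNeumannHomogeneousBCProb} (conclusion (2)); equivalently this also follows from Lemma \ref{lem:HestonWeightedNeumannBVPHomogeneous}(1).

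\emph{Step 4 (uniqueness).} Assume in addition that there is a $\varphi\in H^2(\sO,\fw)$ obeying Hypothesis \ref{hyp:AuxBoundUniquenessSolutionsNoncoerciveEquation}. Any two strong solutions $u_1,u_2\in H^2(\sO,\fw)$ to Problem \ref{prob:HestonStrongMixedBVPHomogeneous} lie in $H^1_0(\sO\cup\Gamma_0,\fw)$ by Lemma \ref{lem:HestonWeightedNeumannBVPHomogeneous}(2) and solve Problem \ref{prob:HestonWeakMixedBVPHomogeneous} with the same source $f$; since the domain obeys Hypothesis \ref{hyp:DomainCombinedCondition} and $\varphi$ obeys Hypothesis \ref{hyp:AuxBoundUniquenessSolutionsNoncoerciveEquation}, the uniqueness part of Theorem \ref{thm:ExistenceUniquenessEllipticHeston_Improved} forces $u_1=u_2$ a.e. on $\sO$. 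This completes the proof.

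\emph{Remark on the main difficulty.} There is essentially no new analytic difficulty: the work is bookkeeping to confirm that every hypothesis invoked in Theorems \ref{thm:ExistenceUniquenessEllipticHeston_Improved} and \ref{thm:GlobalRegularityEllipticHestonSpecial} and Lemma \ref{lem:HestonWeightedNeumannBVPHomogeneous} is available. The one step that is not purely formal is Step 2 --- upgrading the comparison-principle bounds \eqref{eq:uBounds} to the weighted $L^2$ bound $yu\in L^2(\sO,\fw)$ --- and this is where hypothesis \eqref{eq:OneplusyMmInL2} in Hypothesis \ref{hyp:UpperLowerBoundsSolutionsNoncoerciveEquation} is used; without it one cannot bridge from the variational solution of Theorem \ref{thm:ExistenceUniquenessEllipticHeston_Improved} to the $H^2$ regularity theory.
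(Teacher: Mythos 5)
Your proposal is correct and follows essentially the same route as the paper's own proof: existence and the pointwise bounds from Theorem \ref{thm:ExistenceUniquenessEllipticHeston_Improved}, the upgrade to $(1+y)u\in L^2(\sO,\fw)$ via \eqref{eq:uBounds} and \eqref{eq:OneplusyMmInL2}, then Theorem \ref{thm:GlobalRegularityEllipticHestonSpecial} for the $H^2$ regularity, estimate and strong formulation, Lemma \ref{lem:HestonWeightedNeumannBoundaryProperty} for the boundary property, and uniqueness inherited from the variational problem. Your Step 4 merely spells out the reduction of strong-solution uniqueness to variational-solution uniqueness a little more explicitly than the paper does.
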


\begin{proof}
Theorem \ref{thm:ExistenceUniquenessEllipticHeston_Improved} implies that there exists a $u \in H^1(\sO\cup\Gamma_0,\fw)$ which solves Problem \ref{prob:HestonWeakMixedBVPHomogeneous} and that $u$ obeys \eqref{eq:uBounds}. Because $M,m$ obey \eqref{eq:OneplusyMmInL2} and $u$ obeys \eqref{eq:uBounds}, we obtain $(1+y)u \in L^2(\sO,\fw)$. Consequently, Theorem \ref{thm:GlobalRegularityEllipticHestonSpecial} implies that $u\in H^2(\sO,\fw)$ and that $u$ solves Problem \ref{prob:HestonStrongMixedBVPHomogeneous} and obeys \eqref{eq:SecondDerivativeGlobalHestonEstimateSpecial}. Lemma \ref{lem:HestonWeightedNeumannBoundaryProperty} implies that $u$ has the boundary property \eqref{eq:WeightedNeumannHomogeneousBCProb}. Given $\varphi \in H^2(\sO,\fw)$ obeying Hypothesis \ref{hyp:AuxBoundUniquenessSolutionsNoncoerciveEquation}, the solution, $u \in H^1(\sO\cup\Gamma_0,\fw)$, to Problem \ref{prob:HestonWeakMixedBVPHomogeneous} is unique and therefore the solution, $u\in H^2(\sO,\fw)$, to Problem \ref{prob:HestonStrongMixedBVPHomogeneous} is unique.
\end{proof}

It is now straightforward to assemble the results we need to conclude the

\begin{proof}[Proof of Theorem \ref{thm:MainExistenceUniquenessBoundaryValueProblem}]
The hypotheses of Theorem \ref{thm:MainExistenceUniquenessBoundaryValueProblem} collect and summarize those of Theorem \ref{thm:ExistenceUniquenessH2RegularEllipticHeston} and so the result is a restatement of Theorem \ref{thm:ExistenceUniquenessH2RegularEllipticHeston} in the case $g=0$.

When $g\neq 0$, set $\tilde M := M-g$, $\tilde m := m-g$, and $\tilde f := f-Ag$. We obtain the inequalities \eqref{eq:SourceFunctionTraceBounds}, \eqref{eq:mMIneqOnDomain}, \eqref{eq:SourceFunctionBounds} for $\tilde M, \tilde m$ from the hypotheses on $M, m$ and the inequality \eqref{eq:fBounds} for $\tilde f$ from the hypotheses on $f$. The hypothesis $(1+y)^{1/2}g \in H^2(\sO,\fw)$ and the Definition \ref{defn:H2WeightedSobolevSpaces} imply that $(1+y)g \in L^2(\sO,\fw)$ and ensures that $\tilde M, \tilde m$ obey \eqref{eq:OneplusyMmInL2}. The hypothesis $(1+y)^{1/2}g \in H^2(\sO,\fw)$ and the Definition \ref{defn:H2WeightedSobolevSpaces} also imply that $(1+y)^{1/2}Ag \in L^2(\sO,\fw)$ and so the hypotheses on $f$ ensure that $\tilde f \in L^2(\sO,\fw)$ obeys \eqref{eq:Sqrt1plusyfL2}. Theorem \ref{thm:ExistenceUniquenessH2RegularEllipticHeston} now implies that there exists a solution, $\tilde u\in H^2(\sO,\fw)$, to Problem \ref{prob:HestonStrongMixedBVPHomogeneous} defined by the source function, $\tilde f$, the solution, $\tilde u$, obeys $\tilde m \leq \tilde u \leq \tilde M$, the solution, $\tilde u$, has the boundary property \eqref{eq:WeightedNeumannHomogeneousBCProb}, and $\tilde u$ obeys the estimate
$$
\|\tilde u\|_{H^2(\sO,\fw)} \leq C\left(\|(1+y)^{1/2}\tilde f\|_{L^2(\sO,\fw)} + \|(1+y)\tilde u\|_{L^2(\sO,\fw)}\right).
$$
Therefore, setting $u := \tilde u + g$, we see that $u$ is a solution to Problem \ref{prob:HestonStrongMixedBVPInhomogeneous} and obeys
\begin{align*}
\|u\|_{H^2(\sO,\fw)} &\leq C\left(\|(1+y)^{1/2}f\|_{L^2(\sO,\fw)} + \|(1+y)^{1/2}Ag\|_{L^2(\sO,\fw)} + \|(1+y)g\|_{L^2(\sO,\fw)} \right.
\\
&\quad + \left. \|(1+y)u\|_{L^2(\sO,\fw)}\right)
\\
&\leq C\left(\|(1+y)^{1/2}f\|_{L^2(\sO,\fw)} + \|(1+y)^{1/2}g\|_{H^2(\sO,\fw)} + \|(1+y)u\|_{L^2(\sO,\fw)}\right),
\end{align*}
as required. Finally, setting $\tilde \varphi := \varphi - g$, the hypotheses on $\varphi$ ensure that $\tilde\varphi$ obeys Hypothesis \ref{hyp:AuxBoundUniquenessSolutionsNoncoerciveEquation} (with $m$ replaced by $\tilde m$), and so the solution, $\tilde u$, to Problem \ref{prob:HestonStrongMixedBVPHomogeneous} is unique and hence the solution, $u$, to Problem \ref{prob:HestonStrongMixedBVPInhomogeneous} is unique.
\end{proof}

\subsection{Global H\"older regularity of solutions to the variational equation}
\label{subsec:HolderRegularityEquality}

\begin{thm}[H\"older continuity of solutions to the variational equation]
\label{thm:HolderContinuityHestonStatVarEquality}
Assume the hypotheses of Theorem \ref{thm:GlobalRegularityEllipticHestonSpecial}. If in addition, $f$ obeys
\begin{equation}
\label{eq:fLq}
f\in L^{q'}_{\textrm{loc}}(\sO\cup\Gamma_0), \quad\hbox{for some }q'>2+\beta,
\end{equation}
then $u\in C^\alpha_{\textrm{loc}}(\sO\cup\Gamma_1)\cap C_{\textrm{loc}}(\bar\sO)$, for all $\alpha\in [0,1)$. If in addition, $f$ obeys
\begin{equation}
\label{eq:fCnu}
f\in C^\alpha(\sO), \quad\hbox{for some } 0<\alpha<1,
\end{equation}
then $u\in C^{2,\alpha}(\sO)\cap C^\alpha_{\textrm{loc}}(\sO\cup\Gamma_1)\cap C_{\textrm{loc}}(\bar\sO)$.
\end{thm}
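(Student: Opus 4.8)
The plan is to bootstrap from the global $H^2(\sO,\fw)$-regularity already established in Theorem~\ref{thm:GlobalRegularityEllipticHestonSpecial} --- which places $u$ in $H^2(\sO,\fw)$ and hence, by Lemma~\ref{lem:HestonWeightedNeumannBVPHomogeneous}, makes $u$ a strong solution of Problem~\ref{prob:HestonStrongMixedBVPHomogeneous} --- to local H\"older and $C^{2,\alpha}$ regularity, using the key observation that $A$ is \emph{uniformly} elliptic on compact subsets of $\sO\cup\Gamma_0$ away from $\Gamma_1$, and is a standard non-degenerate operator (with bounded coefficients on bounded sets) in the interior and near $\Gamma_1$. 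The degeneracy sits only along $\Gamma_0$, where no boundary condition is imposed, so the relevant estimates are genuinely interior estimates \emph{across} $\Gamma_0$.

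First I would localize near an arbitrary point $P_0\in\sO\cup\Gamma_1$ and near an arbitrary point $Q_0\in\Gamma_0$. For the interior/$\Gamma_1$ part, the operator $A$ in \eqref{eq:OperatorHestonIntro} is uniformly elliptic with bounded coefficients on any bounded subdomain $\sU\Subset\sO\cup\Gamma_1$ whose closure does not meet $\{y=0\}$; since $u\in H^2(\sO,\fw)$ solves $Au=f$ a.e.\ and $\fw$ is bounded above and below by positive constants on $\bar\sU$, we have $u\in W^{2,2}(\sU)$ with $Au=f\in L^{q'}_{\textrm{loc}}$. By the Sobolev embedding $W^{2,2}\embed C^{0,\alpha}$ in two dimensions this already gives $u\in C^\alpha_{\textrm{loc}}(\sO\cup\Gamma_1)$ for $\alpha\in(0,1)$; invoking \cite[Theorem~9.11 and Corollary~9.18]{GT} with $f\in L^{q'}$, $q'>2$, upgrades $u$ to $W^{2,q'}_{\textrm{loc}}$ and hence to $C^{1,\alpha}_{\textrm{loc}}(\sO\cup\Gamma_1)$ near $\Gamma_1$ one uses the homogeneous Dirichlet condition $u=0$ on $\Gamma_1$ together with the boundary $W^{2,q'}$-estimate \cite[Theorem~9.15]{GT}. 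When additionally $f\in C^\alpha(\sO)$, the interior Schauder estimate \cite[Theorem~6.2]{GT} gives $u\in C^{2,\alpha}(\sO)$.

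The main obstacle is regularity \emph{up to and across} $\Gamma_0$, where $y^{-1}A$ is uniformly elliptic by \eqref{eq:HestonModulusEllipticity} but the coefficient of $u_x$ in $y^{-1}A$ blows up like $1/y$. The strategy here is to exploit the weighted-Neumann property \eqref{eq:WeightedNeumannHomogeneousBCProb}, $y^\beta(\rho u_x+\sigma u_y)=0$ on $\Gamma_0$ (trace sense), established for all $u\in H^2(\sO,\fw)$ in Lemma~\ref{lem:HestonWeightedNeumannBoundaryProperty}, which says the natural conormal derivative of $u$ vanishes on $\Gamma_0$. Writing $Au=f$ in the divergence form \eqref{eq:CleanedUpBasicHestonDivergenceFormula} and testing against functions supported near $\Gamma_0$, one checks --- as in the proof of Lemma~\ref{lem:HestonIntegrationByParts}, where the $\Gamma_0$-boundary integral \eqref{eq:Gamma0BdryTermIsZero} vanishes because $\beta>0$ --- that $u$ is a weak solution, across $\Gamma_0$ and with no boundary term, of the equation $y^{1-\beta}\,\operatorname{div}(y^\beta a^{ij}D u) = \text{lower order} - f$ on $\Gamma_0\times(-\delta_0,\delta_0)$ after reflecting evenly in $y$; this is exactly of the form treated by the weighted-$L^p$ and weighted-Schauder theory of Daskalopoulos--Hamilton \cite{DaskalHamilton1998} and Koch \cite{Koch} for the linearized porous medium operator, whose model part coincides with the model part of $A$. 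Quoting the local $W^{2,p}$-estimate (with $p=q'>2+\beta$, the exponent chosen so that $y^{-1}\in L^{p/(p-?)}$ against the weight) and the local $C^{2,\alpha}$-estimate from that theory, one concludes $u\in C^\alpha_{\textrm{loc}}(\sO\cup\Gamma_0)$, and under \eqref{eq:fCnu} that $u\in C^{2,\alpha}$ in a neighborhood of $\Gamma_0$ as well. The hypothesis $q'>2+\beta$ in \eqref{eq:fLq} is precisely what makes $y^{-1-\text{?}}f$ locally integrable against $\fw=y^{\beta-1}e^{-\gamma|x|-\mu y}$ near $\{y=0\}$, so that the weighted embedding of $W^{2,q'}(\cdot,y^{\beta-1}dxdy)$ into $C^{0,\alpha}$ near $\Gamma_0$ applies.

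Finally I would patch: $u\in C_{\textrm{loc}}(\bar\sO)$ follows by combining $u\in C^\alpha_{\textrm{loc}}(\sO\cup\Gamma_1)$ (interior and up to $\Gamma_1$) with $u\in C^\alpha_{\textrm{loc}}(\sO\cup\Gamma_0)$ (up to and across $\Gamma_0$), and noting that the two overlap and agree on $\sO$; near the corner points $\partial\Gamma_0$ where $\Gamma_0$ meets $\Gamma_1$, Hypothesis~\ref{hyp:HestonDomainNearGammaZero} guarantees the product structure $\Gamma_0\times(0,\delta_0)$ with $\Gamma_1=\partial\Gamma_0\times(0,\delta_0)$, so the $\Gamma_1$-Dirichlet estimate and the $\Gamma_0$-weighted-Neumann estimate can be combined in a fixed neighborhood of each corner by a standard cutoff argument (as in Step~4 of the proof of Proposition~\ref{prop:AuxiliaryWeightedH2EstimateSisZero}) to yield continuity up to $\bar\sO$. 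This gives $u\in C^\alpha_{\textrm{loc}}(\sO\cup\Gamma_1)\cap C_{\textrm{loc}}(\bar\sO)$ under \eqref{eq:fLq}, and $u\in C^{2,\alpha}(\sO)\cap C^\alpha_{\textrm{loc}}(\sO\cup\Gamma_1)\cap C_{\textrm{loc}}(\bar\sO)$ under the additional hypothesis \eqref{eq:fCnu}, completing the proof.
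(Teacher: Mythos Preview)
Your overall architecture matches the paper's: obtain $u\in H^2(\sO,\fw)$ from Theorem~\ref{thm:GlobalRegularityEllipticHestonSpecial}, deduce $C^\alpha_{\textrm{loc}}(\sO\cup\Gamma_1)$ by Sobolev embedding in two dimensions, obtain $C^{2,\alpha}(\sO)$ by interior Schauder when $f\in C^\alpha(\sO)$, and handle continuity up to $\Gamma_0$ separately. For the interior and $\Gamma_1$ parts the paper is more economical than you are: it simply invokes Lemma~\ref{lem:H2SobolevEmbedding} (which is exactly $H^2(\sO,\fw)\subset C^\alpha_{\textrm{loc}}(\sO\cup\Gamma_1)$) and, for the $C^{2,\alpha}$ upgrade, \cite[Theorem~6.13]{GT} on balls $B\Subset\sO$. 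Your detour through $W^{2,q'}_{\textrm{loc}}$ and $C^{1,\alpha}_{\textrm{loc}}$ via \cite[Theorems~9.11,~9.15]{GT} is unnecessary for what is actually asserted.

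The substantive divergence is the continuity up to $\Gamma_0$. The paper does not attempt this in-house: it cites the companion work \cite{Feehan_Pop_regularityweaksoln}, where the conclusion $u\in C_{\textrm{loc}}(\sO\cup\bar\Gamma_0)$ is established for weak solutions with $f\in L^{q'}_{\textrm{loc}}(\sO\cup\Gamma_0)$, $q'>2+\beta$; the threshold $2+\beta$ comes from that analysis, not from anything proved in the present paper. Your proposal to argue directly via even reflection in $y$ and the Daskalopoulos--Hamilton/Koch theory is in the right spirit, but as written it is not a proof: the even-reflection step is not obviously compatible with the first-order terms $(r-q-y/2)u_x$ and $\kappa(\theta-y)u_y$ of $A$, the question marks you left in the exponents signal that the integrability bookkeeping against the weight $y^{\beta-1}$ is not worked out, and the results in \cite{DaskalHamilton1998,Koch} are stated for model operators rather than for $A$ itself. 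Filling those details is precisely the content of \cite{Feehan_Pop_regularityweaksoln}. So your route is a genuine alternative in principle, but in its current form it has a gap exactly where the paper defers to an external reference.
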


\begin{proof}
We have $u\in H^2(\sO,\fw)$ by Theorem \ref{thm:GlobalRegularityEllipticHestonSpecial} and so Lemma \ref{lem:H2SobolevEmbedding} implies that $u \in C^\alpha_{\textrm{loc}}(\sO\cup\Gamma_1)$, for $0\leq \alpha < 1$. Because $u\in H^1_0(\sO\cup\Gamma_0,\fw)$ is a solution to Problem \ref{prob:HestonWeakMixedBVPHomogeneous} and $f\in L^q_{\textrm{loc}}(\bar\sO)$, the fact that $u \in C_{\textrm{loc}}(\sO\cup\bar\Gamma_0)$ follows from \cite{Feehan_Pop_regularityweaksoln}. Hence, $u\in C^\alpha_{\textrm{loc}}(\sO\cup\Gamma_1)\cap C_{\textrm{loc}}(\bar\sO)$ since $\partial\sO = \bar\Gamma_0\cup\Gamma_1$. When $f\in C^\alpha(\sO)$, we obtain $u\in C^{2,\alpha}(\sO)$ by applying \cite[Theorem 6.13]{GT} to balls $B\Subset\sO$.
\end{proof}

\begin{rmk}[H\"older continuity up to $\Gamma_0$]
When $f\in L^{q'}_{\textrm{loc}}(\bar\sO)$, we expect that the result $u \in C_{\textrm{loc}}(\sO\cup\bar\Gamma_0)$ in \cite{Feehan_Pop_regularityweaksoln} can be extended to $u\in C^{\alpha_0}_{\textrm{loc}}(\sO\cup\bar\Gamma_0)$ for some $\alpha_0\in(0,1)$ depending only on the constant coefficients of $A$ and $f$, and so it would follow that $u\in C^{\alpha_0}_{\textrm{loc}}(\bar\sO)$.
\end{rmk}

\begin{rmk}[Comments on Theorems \ref{thm:GlobalRegularityEllipticHestonSpecial} and \ref{thm:HolderContinuityHestonStatVarEquality} and Problem \ref{prob:HestonMixedBVPHomogeneousClassical}]
The solution, $u$, provided by Theorems \ref{thm:GlobalRegularityEllipticHestonSpecial} and \ref{thm:HolderContinuityHestonStatVarEquality} almost matches our definition of a \emph{classical} solution in Problem \ref{prob:HestonMixedBVPHomogeneousClassical}, except that we have not shown that \eqref{eq:WeightedNeumannHomogeneousBCProb} holds everywhere pointwise along $\Gamma_0$ rather than in the trace sense. By adapting the methods of Daskalopoulos and Hamilton for the linearization of the porous medium equation \cite{DaskalHamilton1998}, we would expect that $u\in C^{2,\alpha}_s(\sO\cup\Gamma_0)$; see \cite[Theorems I.1.1, I.12.2, \& II.1.1]{DaskalHamilton1998} and \cite{PopThesis}.
\end{rmk}

\begin{proof}[Proof of Theorem \ref{thm:MainRegularityBoundaryValueProblem}]
When $g=0$, Theorem \ref{thm:MainRegularityBoundaryValueProblem} follows from Theorems \ref{thm:GlobalRegularityEllipticHestonSpecial} and \ref{thm:HolderContinuityHestonStatVarEquality}, with the remaining conclusion that $u \in C^{k+2,\alpha}_{\textrm{loc}}(\sO\cup\Gamma_1)$ obtained by applying \cite[Theorem 6.19]{GT} to $B\cap\sO$, where $B\subset\HH$ are balls centered at points in $\sO\cup \Gamma_1$, and using a cutoff function argument to localize the assertion of \cite[Theorem 6.19]{GT}; see the last paragraph of \cite[\S 6.4]{GT}.

When $g\neq 0$, we set $\tilde u := u-g$ and $\tilde f := f - Ag$, as in the proof of Theorem \ref{thm:MainExistenceUniquenessBoundaryValueProblem}. We have $Ag \in L^q_{\textrm{loc}}(\sO\cup\Gamma_0)$ by the hypothesis that $g \in W^{2,q}_{\textrm{loc}}(\sO\cup\Gamma_0)$ and so $\tilde f$ obeys \eqref{eq:fLq} (with $q$ in place of $q'$). Theorem \ref{thm:HolderContinuityHestonStatVarEquality} implies that $\tilde u\in C^\alpha_{\textrm{loc}}(\sO\cup\Gamma_1)\cap C_{\textrm{loc}}(\bar\sO)$. Because $g \in W^{2,2}_{\textrm{loc}}(\sO)$, we obtain $g \in C_{\textrm{loc}}^\alpha(\bar\sO)$ via the Sobolev embedding $W^{2,2}(\sO') \to C^\alpha(\bar\sO')$ for $\sO'\Subset\bar\sO$ \cite[Theorem 5.4, Part II ($\textrm{C}''$)]{Adams} and thus $u = \tilde u + g \in C^\alpha_{\textrm{loc}}(\sO\cup\Gamma_1)\cap C_{\textrm{loc}}(\bar\sO)$.

When $k\geq 0$, the hypothesis $g \in C^{k+2,\alpha}_{\textrm{loc}}(\sO\cup\Gamma_1)$ ensures that $Ag \in C^{k+\alpha}_{\textrm{loc}}(\sO\cup\Gamma_1)$ and hence $\tilde f \in C^{k+\alpha}_{\textrm{loc}}(\sO\cup\Gamma_1)$. We obtain $\tilde u \in C^{k+2,\alpha}_{\textrm{loc}}(\sO\cup\Gamma_1)$ from the case $g=0$ and thus $u = \tilde u + g \in C^{k+2,\alpha}_{\textrm{loc}}(\sO\cup\Gamma_1)$, as desired.
\end{proof}

\section{Regularity of solutions to the variational inequality}
\label{sec:RegularityInequality}
In this section we establish higher regularity results for solutions to the variational inequality for the elliptic Heston operator, Problem \ref{prob:HomogeneousHestonVIProblem}. In \S \ref{subsec:H2RegularityCoerciveInequality} we show that solutions to the coercive variational inequality (Theorem \ref{thm:VIRegularityEllipticHeston}) are in $H^2(\sO,\fw)$, while in \S \ref{subsec:H2RegularityInequality} we extend that regularity result to the case of the non-coercive variational inequality (Theorem \ref{thm:VIRegularityEllipticHestonNoncoercive}) and establish an existence and uniqueness result for strong solutions (\ref{thm:VIExistenceUniquenessH2RegularEllipticHeston}). Because the obstacle function is often not in $H^2(\sO,\fw)$, in \S \ref{subsec:LocalH2RegularityInequality} we extend Theorem \ref{thm:VIRegularityEllipticHestonNoncoercive} to the case where the obstacle function is only in $H^2(\sU,\fw)$ for some open subset $\sU\subseteqq\sO$ (Theorem \ref{thm:H2BoundSolutionHestonVarIneqLipschitz}). With the aid of additional hypotheses on the source and obstacle functions, we obtain local $W^{2,p}$, $C^{1,\alpha}$, and $C^{1,1}$ regularity results in \S \ref{subsec:LocalW2pC1AlphaRegularityInequality} (Theorem \ref{thm:LocalW2pRegularityHestonVI} and Corollaries \ref{cor:LocalC1alphaRegularity} and \ref{cor:LocalC11Regularity}).

\subsection[Global $H^2$ regularity of solutions to the coercive variational inequality]{Global $\mathbf{H^2}$ regularity of solutions to the coercive variational inequality}
\label{subsec:H2RegularityCoerciveInequality}
Before proceeding to the question of regularity proper, we shall need the following analogue of \cite[Theorem 3.1.3]{Bensoussan_Lions}.

\begin{hyp}[Conditions on the obstacle function]
\label{hyp:OstacleFunctionAprioriEstimates}
Require that the obstacle function $\psi \in H^1(\sO,\fw)$ obey \eqref{eq:ObstacleFunctionLessThanZero} and
\begin{equation}
\label{eq:1plusyObstacleFunctionL2}
(1+y)\psi \in L^2(\sO,\fw).
\end{equation}
\end{hyp}

\begin{thm}[A priori estimates for solutions to the penalized equation and coercive variational inequality]
\label{thm:VIPenalizationEstimateHeston}
Require that the domain $\sO$ obeys Hypotheses \ref{hyp:HestonDomainNearGammaZero} and \ref{hyp:GammeOneExtensionProperty} with $k=1$. Require that the obstacle function $\psi$ obeys \eqref{eq:ObstacleFunctionLessThanZero}, \eqref{eq:1plusyObstacleFunctionL2}, and $\psi \in H^2(\sO,\fw)$. If $u_\eps\in V$ is a solution to Problem \ref{prob:HestonPenalizedEquation}, then
\begin{align}
\label{eq:L2PenaltyFunctionBound}
|(\psi-u_\eps)^+|_H &\leq \eps\left(|f|_H + |A\psi|_H + \lambda|(1+y)\psi|_H\right),
\\
\label{eq:VIPenalizationEstimateHeston_prefinal}
\|(\psi - u_\eps)^+\|_V &\leq \sqrt{\eps}\sqrt{1/\nu_1}\left(|f|_H + |A\psi|_H + \lambda|(1+y)\psi|_H\right),
\end{align}
where $\nu_1$ is the constant in \eqref{eq:CoerciveHeston}, and, in addition, if respectively, $u\in V$ is a solution to Problem \ref{prob:HomogeneousHestonVIProblemCoercive}, then
\begin{equation}
\label{eq:VIPenalizationEstimateHeston}
\|u_\eps - u\|_V \leq C\sqrt{\eps}\left(|f|_H + |A\psi|_H + \lambda|(1+y)\psi|_H\right),
\end{equation}
and $C$ depends only on the constant coefficients of $A$.
\end{thm}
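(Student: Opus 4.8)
\textbf{Proof plan for Theorem \ref{thm:VIPenalizationEstimateHeston}.}

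The plan is to follow the outline of \cite[Theorem 3.1.3]{Bensoussan_Lions}, testing the penalized equation against $(\psi-u_\eps)^+$ and exploiting the fact that $\psi\in H^2(\sO,\fw)$ so that $a_\lambda(\psi,\cdot)$ may be rewritten via the integration-by-parts formula of Lemma \ref{lem:HestonIntegrationByParts}. First I would observe that since $\psi\in H^2(\sO,\fw)$ and $\psi\leq 0$ on $\Gamma_1$ in the sense that $\psi^+\in H^1_0(\sO\cup\Gamma_0,\fw)$, we have $(\psi-u_\eps)^+\in H^1_0(\sO\cup\Gamma_0,\fw)$ by Lemma \ref{lem:SobolevSpaceClosedUnderMaxPart}: indeed $(\psi-u_\eps)^+\leq\psi^+$ on $\Gamma_1$ (trace sense) since $u_\eps=0$ there by Lemma \ref{lem:PartialEvansTraceZero}. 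Then I substitute $v=(\psi-u_\eps)^+$ in \eqref{eq:PenalizedProblem} and, separately, use \eqref{eq:HestonIntegrationByPartsFormula} (whose $\Gamma_1$-boundary term vanishes because the test function is zero on $\Gamma_1$) to write $a_\lambda(\psi,(\psi-u_\eps)^+)=(A_\lambda\psi,(\psi-u_\eps)^+)_H$ with $A_\lambda\psi=A\psi+\lambda(1+y)\psi$. Subtracting, and using $\beta_\eps(u_\eps)=-\eps^{-1}(\psi-u_\eps)^+$, gives the identity
\begin{align*}
a_\lambda((\psi-u_\eps)^+,(\psi-u_\eps)^+) &- a_\lambda((\psi-u_\eps)^-,(\psi-u_\eps)^+) + \eps^{-1}|(\psi-u_\eps)^+|_H^2
\\
&= (A_\lambda\psi - f,(\psi-u_\eps)^+)_H,
\end{align*}
where I have split $\psi-u_\eps = (\psi-u_\eps)^+-(\psi-u_\eps)^-$. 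Using $a_\lambda(w^+,w^-)=0$ for $w\in V$ and the G\r{a}rding lower bound \eqref{eq:CoerciveHeston}, the left side is bounded below by $\nu_1\|(\psi-u_\eps)^+\|_V^2+\eps^{-1}|(\psi-u_\eps)^+|_H^2$, hence in particular by $\eps^{-1}|(\psi-u_\eps)^+|_H^2$, while the right side is bounded by $\left(|f|_H+|A\psi|_H+\lambda|(1+y)\psi|_H\right)|(\psi-u_\eps)^+|_H$ via Cauchy--Schwarz. Dividing through yields \eqref{eq:L2PenaltyFunctionBound}. Feeding this $L^2$ bound back into the same identity, the right side is at most $\eps^{-1}\left(|f|_H+|A\psi|_H+\lambda|(1+y)\psi|_H\right)^2\eps = \left(\cdots\right)^2$, so $\nu_1\|(\psi-u_\eps)^+\|_V^2 \leq \left(\cdots\right)^2$, giving \eqref{eq:VIPenalizationEstimateHeston_prefinal} after taking square roots; actually one gets $\nu_1\|(\psi-u_\eps)^+\|_V^2 \le \eps^{-1}|(\psi-u_\eps)^+|_H^2 \cdot(\text{something})$, so a slightly cleaner route is to combine the two lower-bound terms at once, bounding $\nu_1\|(\psi-u_\eps)^+\|_V^2 \leq (A_\lambda\psi-f,(\psi-u_\eps)^+)_H \leq \left(\cdots\right)|(\psi-u_\eps)^+|_H$ and then inserting \eqref{eq:L2PenaltyFunctionBound}.

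For \eqref{eq:VIPenalizationEstimateHeston}, the plan is to estimate $u_\eps-u$ where $u$ solves the coercive variational inequality \eqref{eq:VIProblemHestonHomogeneousCoercive}. I would take $v=u_\eps$ in \eqref{eq:VIProblemHestonHomogeneousCoercive} (legitimate since $u_\eps\geq\psi-(\psi-u_\eps)^+$, so I actually test with $v=u_\eps+(\psi-u_\eps)^+=\max\{u_\eps,\psi\}\in\KK$) to get $a_\lambda(u,\max\{u_\eps,\psi\}-u)\geq(f,\max\{u_\eps,\psi\}-u)_H$, and take $v=\max\{u,\psi\}=u$ (since $u\in\KK$ already) --- more efficiently, test \eqref{eq:PenalizedProblem} with $v=u_\eps-u\in V$ and \eqref{eq:VIProblemHestonHomogeneousCoercive} with $v=\max\{u,u_\eps\}$ or $v=\min\{u,u_\eps\}$ as appropriate, subtract, and use monotonicity of $\beta_\eps$ together with the sign of $(\beta_\eps(u_\eps),u_\eps-u)_H$; the penalization term contributes $-\eps^{-1}((\psi-u_\eps)^+,u_\eps-u)_H$, which on the set $\{u_\eps<\psi\}\subseteq\{u_\eps<u\}$ (since $u\geq\psi$) has a favorable sign, leaving an error controlled by $|(\psi-u_\eps)^+|_H\|u_\eps-u\|_V$ or similar. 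After applying \eqref{eq:CoerciveHeston} and \eqref{eq:L2PenaltyFunctionBound}, \eqref{eq:VIPenalizationEstimateHeston_prefinal} one obtains $\nu_1\|u_\eps-u\|_V^2 \leq C\sqrt{\eps}\left(|f|_H+|A\psi|_H+\lambda|(1+y)\psi|_H\right)\|u_\eps-u\|_V$, hence \eqref{eq:VIPenalizationEstimateHeston}.

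The main obstacle I anticipate is the bookkeeping for the comparison estimate \eqref{eq:VIPenalizationEstimateHeston}: one must choose admissible test functions in both the penalized equation (where any $v\in V$ is allowed) and the variational inequality (where only $v\in\KK$ is allowed), and track the sign of the penalization cross-term carefully on the coincidence-type set $\{u_\eps<\psi\}$. The standard trick --- pairing $v=u$ against the penalized equation and $v=u_\eps^{\ge\psi}:=\max\{u_\eps,\psi\}$ against the inequality --- requires verifying $\max\{u_\eps,\psi\}\in\KK$, which follows from Lemma \ref{lem:SobolevSpaceClosedUnderMaxPart} together with $\psi\leq 0$ on $\Gamma_1$ and $u_\eps=0$ on $\Gamma_1$ (trace sense). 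Everything else is a routine application of the G\r{a}rding inequality \eqref{eq:CoerciveHeston}, Cauchy--Schwarz, and the already-established $L^2$ bound \eqref{eq:L2PenaltyFunctionBound} on the penalty term. Note that, unlike the corresponding result in \cite{Bensoussan_Lions}, no compactness is invoked, so the argument goes through verbatim for the unbounded domain $\sO$ and the degenerate operator $A$ once Lemma \ref{lem:HestonIntegrationByParts} is available.
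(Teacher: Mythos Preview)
Your argument for \eqref{eq:L2PenaltyFunctionBound} and \eqref{eq:VIPenalizationEstimateHeston_prefinal} is correct and matches the paper's proof: test \eqref{eq:PenalizedProblem} with $(\psi-u_\eps)^+$, use Lemma~\ref{lem:HestonIntegrationByParts} to rewrite $a_\lambda(\psi,(\psi-u_\eps)^+)=(A_\lambda\psi,(\psi-u_\eps)^+)_H$, use $a_\lambda(w^-,w^+)=0$, and apply coercivity.

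Your plan for \eqref{eq:VIPenalizationEstimateHeston} has a genuine gap. With the test-function pair you describe --- $v=u_\eps-u$ (or $v=u$) in \eqref{eq:PenalizedProblem} and $v=\max\{u_\eps,\psi\}$ in \eqref{eq:VIProblemHestonHomogeneousCoercive} --- after dropping the favorably-signed penalty term you are left with
\[
\nu_1\|u_\eps-u\|_V^2 \;\le\; a_\lambda(u,(\psi-u_\eps)^+)\;-\;(f,(\psi-u_\eps)^+)_H.
\]
The right-hand side is bounded by $C(\|u\|_V+|f|_H)\|(\psi-u_\eps)^+\|_V$, which carries no factor of $\|u_\eps-u\|_V$; feeding in \eqref{eq:VIPenalizationEstimateHeston_prefinal} gives only $\|u_\eps-u\|_V\le C\eps^{1/4}(\cdots)$, not the claimed $\sqrt{\eps}$ rate. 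Your assertion that the error is ``controlled by $|(\psi-u_\eps)^+|_H\|u_\eps-u\|_V$ or similar'' is exactly the step that fails.

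The paper's fix is to decompose $u-u_\eps=r_\eps+(\psi-u_\eps)^+$ with $r_\eps:=u-\psi-(\psi-u_\eps)^-=u-\max\{u_\eps,\psi\}$, then test \eqref{eq:PenalizedProblem} with $v=r_\eps$ (not $u-u_\eps$) and \eqref{eq:VIProblemHestonHomogeneousCoercive} with $v=u-r_\eps=\max\{u_\eps,\psi\}\in\KK$. Adding, the penalty cross-term $\eps^{-1}((\psi-u_\eps)^+,r_\eps)_H=\eps^{-1}((\psi-u_\eps)^+,u-\psi)_H\ge 0$ can be dropped, leaving $a_\lambda(u_\eps-u,r_\eps)\ge 0$; writing $u_\eps-u=-r_\eps-(\psi-u_\eps)^+$ then yields $\nu_1\|r_\eps\|_V^2\le a_\lambda(r_\eps,r_\eps)\le -a_\lambda((\psi-u_\eps)^+,r_\eps)\le C\|(\psi-u_\eps)^+\|_V\|r_\eps\|_V$, hence $\|r_\eps\|_V\le C\|(\psi-u_\eps)^+\|_V$ and \eqref{eq:VIPenalizationEstimateHeston} follows from \eqref{eq:VIPenalizationEstimateHeston_prefinal}. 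Equivalently, you can recover this from your computation by additionally testing \eqref{eq:PenalizedProblem} with $v=(\psi-u_\eps)^+$ and subtracting, which converts the problematic $a_\lambda(u,(\psi-u_\eps)^+)-(f,(\psi-u_\eps)^+)_H$ into $a_\lambda(u-u_\eps,(\psi-u_\eps)^+)$ plus a nonpositive remainder; that term does carry $\|u-u_\eps\|_V$ and the $\sqrt{\eps}$ rate follows.
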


\begin{proof}
We adapt the argument of \cite[Theorem 3.1.3]{Bensoussan_Lions}. Since $\psi\leq 0$ on $\Gamma_1$ and $u_\eps = 0$ on $\Gamma_1$, then $\psi -u_\eps\leq 0$ on $\Gamma_1$ and $(\psi - u_\eps)^+ = 0$ on $\Gamma_1$, all in the trace sense, and thus $(\psi - u_\eps)^+ \in H^1_0(\sO\cup\Gamma_0$ by Lemmas \ref{lem:EvansGamma1TraceZero} and Lemma \ref{lem:SobolevSpaceClosedUnderMaxPart}. Therefore, we may choose $v = -(\psi - u_\eps)^+ \in V=H^1_0(\sO\cup\Gamma_0$ in \eqref{eq:PenalizedProblem} to give
\begin{equation}
\label{eq:MinusAlambdaUeps}
-a_\lambda(u_\eps,(\psi-u_\eps)^+) + \frac{1}{\eps}|(\psi-u_\eps)^+|_H^2 = -(f, (\psi-u_\eps)^+)_H,
\end{equation}
where we recall from \eqref{eq:PenalizationOperator} that $\beta_\eps(w) = -\frac{1}{\eps}(\psi - w)^+, \forall w\in V$. Since
\begin{align*}
a_\lambda(\psi,(\psi - u_\eps)^+) &= a(\psi,(\psi - u_\eps)^+) + \lambda((1+y)\psi,(\psi - u_\eps)^+)_H
\\
&= (A\psi, (\psi - u_\eps)^+)_H + \lambda((1+y)\psi,(\psi - u_\eps)^+)_H \quad\hbox{(by Lemma \ref{lem:HestonIntegrationByParts})}
\\
&= (A\psi + \lambda(1+y)\psi, (\psi - u_\eps)^+)_H,
\end{align*}
and $a_\lambda(v,v^+) = a_\lambda(v^+,v^+), \forall v \in V,$ we have
\begin{align*}
{}&a_\lambda((\psi - u_\eps)^+, (\psi - u_\eps)^+)  + \frac{1}{\eps}|(\psi-u_\eps)^+|_H^2
\\
&= -a_\lambda(u_\eps, (\psi - u_\eps)^+)  + \frac{1}{\eps}|(\psi-u_\eps)^+|_H^2 + a_\lambda(\psi, (\psi - u_\eps)^+)
\\
&= (-f, (\psi-u_\eps)^+)_H + a_\lambda(\psi, (\psi - u_\eps)^+)
\quad\hbox{(by \eqref{eq:MinusAlambdaUeps})}
\\
&= (-f+A\psi+\lambda(1+y)\psi, (\psi-u_\eps)^+)_H.
\end{align*}
Hence, \eqref{eq:CoerciveHeston} and the preceding equation yields
\begin{align}
\notag
\nu_1\|(\psi - u_\eps)^+\|_V^2 &\leq a_\lambda((\psi - u_\eps)^+, (\psi - u_\eps)^+)
\\
\label{eq:VIPenalizationEstimateHeston_intermed}
&\leq \left(|f|_H + |A\psi|_H + \lambda|(1+y)\psi|_H\right)|(\psi-u_\eps)^+|_H,
\\
\label{eq:L2PenaltyFunctionBound_prefinal}
\frac{1}{\eps}|(\psi-u_\eps)^+|_H^2 &\leq \left(|f|_H + |A\psi|_H + \lambda|(1+y)\psi|_H\right)|(\psi-u_\eps)^+|_H,
\end{align}
where $\nu_1$ depends only on the constant coefficients of $A$. The estimate \eqref{eq:L2PenaltyFunctionBound_prefinal}, after dividing by $|(\psi-u_\eps)^+|_H$, yields
$$
|(\psi-u_\eps)^+|_H \leq \eps\left(|f|_H + |A\psi|_H + \lambda|(1+y)\psi|_H\right),
$$
which is \eqref{eq:L2PenaltyFunctionBound}. Combining the estimate \eqref{eq:VIPenalizationEstimateHeston_intermed} with \eqref{eq:L2PenaltyFunctionBound} to bound the factor $|(\psi-u_\eps)^+|_H$ on the right-hand side of \eqref{eq:VIPenalizationEstimateHeston_intermed} yields
$$
\|(\psi - u_\eps)^+\|_V \leq \sqrt{\eps}\nu_1^{-1/2}\left(|f|_H + |A\psi|_H + \lambda|(1+y)\psi|_H\right),
$$
which is \eqref{eq:VIPenalizationEstimateHeston_prefinal}.

It remains to prove \eqref{eq:VIPenalizationEstimateHeston}. Writing
\begin{align*}
u - u_\eps &= u - \psi + (\psi - u_\eps)
\\
&= u - \psi + (\psi - u_\eps)^+ - (\psi - u_\eps)^-
\\
&= r_\eps + (\psi - u_\eps)^+,
\end{align*}
where we define
\begin{equation}
\label{eq:reps}
r_\eps := u - \psi - (\psi - u_\eps)^-,
\end{equation}
we see that proof of \eqref{eq:VIPenalizationEstimateHeston} reduces to finding a suitable bound for $\|r_\eps\|_V$, since
\begin{equation}
\label{eq:DiffUandUepsBasic}
\|u - u_\eps\|_V \leq \|r_\eps\|_V + \|(\psi - u_\eps)^+\|_V.
\end{equation}
Because $u - u_\eps \in V$ and $(\psi - u_\eps)^+ \in V$, we see that $r_\eps = u - u_\eps - (\psi - u_\eps)^+ \in V$. We now choose $v=r_\eps$ in \eqref{eq:PenalizedProblem} to give
\begin{equation}
\label{eq:repsVarEqn}
a_\lambda(u_\eps,r_\eps) - \frac{1}{\eps}((\psi-u_\eps)^+, r_\eps)_H = (f, r_\eps)_H,
\end{equation}
where we recall from \eqref{eq:PenalizationOperator} that $\beta_\eps(w) = -\frac{1}{\eps}(\psi - w)^+, \forall w\in V$. Next, taking $v-u = -r_\eps$ in \eqref{eq:VIProblemHestonHomgeneous}, that is, $v = u-r_\eps \in V$ and, by the expression \eqref{eq:reps} for $r_\eps$,
$$
v = \psi + (\psi - u_\eps)^- \geq \psi,
$$
and so $v\in \KK$, we obtain
$$
a_\lambda(u,-r_\eps) \geq (f, -r_\eps)_H.
$$
Adding the preceding inequality and \eqref{eq:repsVarEqn}, we obtain
$$
a_\lambda(u_\eps-u,r_\eps) - \frac{1}{\eps}((\psi-u_\eps)^+, r_\eps)_H \geq 0.
$$
Substituting the expression \eqref{eq:reps} for $r_\eps$ in the second term in the preceding inequality and noting that $u\geq\psi$ a.e. on $\sO$, we obtain
$$
a_\lambda(u_\eps-u,r_\eps) \geq a_\lambda(u_\eps-u,r_\eps) - \frac{1}{\eps}((\psi-u_\eps)^+, u - \psi)_H \geq 0.
$$
Substituting $u_\eps-u = -r_\eps - (\psi - u_\eps)^+$ gives
$$
a_\lambda(r_\eps + (\psi - u_\eps)^+,r_\eps) \leq 0,
$$
and thus
\begin{align*}
\nu_1\|r_\eps\|_V^2 &\leq a_\lambda(r_\eps,r_\eps) \quad\hbox{(by \eqref{eq:CoerciveHeston})}
\\
&\leq a_\lambda((\psi - u_\eps)^+,-r_\eps) \quad\hbox{(by preceding inequality)}
\\
&\leq C\|(\psi - u_\eps)^+\|_V\|r_\eps\|_V, \quad\hbox{(by \eqref{eq:ContinuousCoerciveHeston})},
\end{align*}
where $C$ depends only on the constant coefficients of $A$, to give
\begin{equation}
\label{eq:reps_Vbound}
\|r_\eps\|_V \leq C\|(\psi - u_\eps)^+\|_V.
\end{equation}
Combining the estimates \eqref{eq:VIPenalizationEstimateHeston_prefinal}, \eqref{eq:DiffUandUepsBasic}, and \eqref{eq:reps_Vbound} gives the desired bound \eqref{eq:VIPenalizationEstimateHeston}.
\end{proof}

We will need an extension of the estimates \eqref{eq:L2PenaltyFunctionBound} and \eqref{eq:VIPenalizationEstimateHeston_prefinal} in the statement of Theorem \ref{thm:VIPenalizationEstimateHeston}.

\begin{hyp}[Conditions on the obstacle function]
\label{hyp:sqrtyApsiL2andy32psiL2}
Require that the obstacle function $\psi \in H^1(\sO,\fw)$ obeys
\begin{equation}
\label{eq:sqrtyApsiL2}
(1+y)^{1/2}A\psi \in L^2(\sO,\fw).
\end{equation}
\end{hyp}

\begin{lem}[A priori estimates for a solution to the penalized equation]
\label{lem:VIPenalizationEstimateHestonPowery}
Require that the domain $\sO$ obeys Hypotheses \ref{hyp:HestonDomainNearGammaZero} and \ref{hyp:GammeOneExtensionProperty} with $k=1$. Require that $f\in L^2(\sO,\fw)$ obeys \eqref{eq:Sqrt1plusyfL2}, that $\psi \in H^2(\sO,\fw)$, and that $\psi$ obeys \eqref{eq:sqrtyApsiL2} and
\begin{equation}
\label{eq:y32psiL2}
(1+y)^{3/2}\psi \in L^2(\sO,\fw).
\end{equation}
If $u_\eps\in V$ is a solution to Problem \ref{prob:HestonPenalizedEquation}, then $(1+y^{1/2})(\psi-u_\eps)^+ \in H^1(\sO\cup\Gamma_0,\fw)$ and there are positive constants $C$ and $\eps_0$, depending only the constant coefficients of $A$ and $\gamma$, such that for all $0<\eps\leq\eps_0$,
\begin{align}
\label{eq:yWeightedL2PenaltyFunctionBound}
|(1+y^{1/2})(\psi-u_\eps)^+|_H &\leq \eps C\left(|(1+y)^{1/2}f|_H + |(1+y)^{1/2}A\psi|_H + |(1+y)^{3/2}\psi|_H\right),
\\
\label{eq:yWeightedH1PenaltyFunctionBound}
\|(1+y^{1/2})(\psi-u_\eps)^+\|_V &\leq \sqrt{\eps}C\left(|(1+y)^{1/2}f|_H + |(1+y)^{1/2}A\psi|_H + |(1+y)^{3/2}\psi|_H\right).
\end{align}
\end{lem}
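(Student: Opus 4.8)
\textbf{Proof plan for Lemma \ref{lem:VIPenalizationEstimateHestonPowery}.} The strategy is to mimic the proof of Theorem \ref{thm:VIPenalizationEstimateHeston}, but now testing the penalized equation \eqref{eq:PenalizedProblem} against a weighted analogue of $-(\psi-u_\eps)^+$, namely $v := -\varphi^2(\psi-u_\eps)^+$ where $\varphi = \zeta_R(1+y^{1/2})$ with $\zeta_R$ the cutoff of Definition \ref{defn:RadialCutoffFunction}; this $v$ lies in $V = H^1_0(\sO\cup\Gamma_0,\fw)$ because $(\psi-u_\eps)^+ = 0$ on $\Gamma_1$ (trace sense, by Lemmas \ref{lem:EvansGamma1TraceZero} and \ref{lem:SobolevSpaceClosedUnderMaxPart}) and $\varphi \in C^\infty_0(\RR^2)$ for each $R$. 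First I would record the pointwise bounds $|D\varphi| \leq 10(1 + y^{-1/2} + y^{1/2})$ on the support of $\zeta_R$, exactly as in the proof of Proposition \ref{prop:AuxiliaryWeightedH1Estimate}, so that $y^{1/2}|D\varphi| \leq C(1+y)^{1/2}$ and $y^{1/2}|D\varphi|^{1/2} \leq C(1+y)^{1/2}$.

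The computational core runs parallel to Theorem \ref{thm:VIPenalizationEstimateHeston}: using the strong monotonicity of $\beta_\eps$ (Lemma \ref{lem:StronglyMonotone}) with test weight $\varphi^2$, and the fact that $\beta_\eps(\psi^+) = 0$ (since $\psi^+ \geq \psi$), one gets from \eqref{eq:PenalizedProblem} an inequality of the shape
\begin{equation*}
a_\lambda(\varphi(\psi-u_\eps)^+, \varphi(\psi-u_\eps)^+) + \tfrac{1}{\eps}|\varphi(\psi-u_\eps)^+|_H^2 \leq \big(\text{source terms}\big) + \big([A,\varphi]\text{-terms}\big),
\end{equation*}
after inserting the commutator energy identity \eqref{eq:BilinearFormSquaredFunction} to pass from $a_\lambda(u_\eps-\psi^+, \varphi^2(\cdots))$ to $a_\lambda(\varphi(u_\eps-\psi^+), \varphi(\cdots))$. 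Here the ``source terms'' involve $(\varphi f, \varphi(\psi-u_\eps)^+)_H$, $a_\lambda(\varphi\psi^+, \varphi(\psi-u_\eps)^+)$ and $([A,\varphi]\psi^+, \varphi(\psi-u_\eps)^+)_H$; applying Lemma \ref{lem:HestonIntegrationByParts} to rewrite $a(\psi^+, \cdot)$ via $A\psi^+$, then bounding everything with Cauchy-Schwarz and the commutator estimate \eqref{eq:CommutatorInnerProductEstimate}, produces on the right a factor $|\varphi(\psi-u_\eps)^+|_H$ (for the $L^2$-estimate) or $\|\varphi(\psi-u_\eps)^+\|_V$ (for the $V$-estimate) multiplying a combination of $|(1+y)^{1/2}f|_H$, $|(1+y)^{1/2}A\psi^+|_H$, $|(1+y)^{3/2}\psi^+|_H$ and $\|(1+y^{1/2})\psi^+\|_V$, plus a ``bad'' term $C|y^{1/2}(|D\varphi| + |D\varphi|^{1/2})(\psi-u_\eps)^+|_H^2 \leq C|(1+y)^{1/2}(\psi-u_\eps)^+|_H^2$ coming from the commutator. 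This bad term is exactly where the coercivity constant $\nu_1$ and the smallness of $\eps$ enter: since $\|\varphi(\psi-u_\eps)^+\|_V^2$ controls $|(1+y)^{1/2}\varphi(\psi-u_\eps)^+|_H^2$, and the $\tfrac{1}{\eps}|\varphi(\psi-u_\eps)^+|_H^2$ term is available on the left, for $\eps \leq \eps_0$ (with $\eps_0$ depending only on $\nu_1$, hence only on the coefficients of $A$ and $\gamma$) the bad term is absorbed. After absorption one divides by $|\varphi(\psi-u_\eps)^+|_H$ (resp. $\|\varphi(\psi-u_\eps)^+\|_V$), obtaining $\zeta_R$-truncated versions of \eqref{eq:yWeightedL2PenaltyFunctionBound} and \eqref{eq:yWeightedH1PenaltyFunctionBound}; finally one lets $R\to\infty$ and invokes the dominated convergence theorem together with Definition \ref{defn:H1WeightedSobolevSpaces}, after observing via an identity like $y^{1/2}D(\varphi(\psi-u_\eps)^+) = \zeta_R y^{1/2}D((1+y^{1/2})(\psi-u_\eps)^+) + (D\zeta_R)(\cdots)$ and Lemma \ref{lem:RadialCutoffFunction} that the cutoff-derivative remainders are bounded uniformly and disappear in the limit. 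One must also note that $(1+y)^{1/2}A\psi \in L^2$ with $\psi$ replaced by $\psi^+$ follows from \eqref{eq:sqrtyApsiL2} together with $\psi \in H^2(\sO,\fw)$ (so that $A\psi^-$ is controlled on $\{\psi<0\}$), and similarly $(1+y)^{3/2}\psi^+ \in L^2$ from \eqref{eq:y32psiL2}; these reductions to $\psi^+$ are routine given that $\psi^+ \in H^1_0(\sO\cup\Gamma_0,\fw)$.

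\textbf{Main obstacle.} The delicate point is the handling of the commutator term $([A,\varphi](\psi-u_\eps)^+, \varphi(\psi-u_\eps)^+)_H$ when $\varphi$ carries the singular weight $y^{-1/2}$ near $\Gamma_0$: a priori $(\psi-u_\eps)^+$ is only in $H^1_0(\sO\cup\Gamma_0,\fw)$, so one cannot directly apply Corollary \ref{cor:CommutatorInnerProductEstimate} with this $\varphi$ (which is not $C^2_0$ because of $y^{-1/2}$). The fix is to keep $\varphi = \zeta_R(1+y^{1/2})$ — whose only problematic derivative factor is $|D\varphi| \lesssim 1 + y^{-1/2}$, giving $y^{1/2}|D\varphi| \lesssim (1+y)^{1/2}$ which is \emph{not} singular — so that the right-hand side of \eqref{eq:CommutatorInnerProductEstimate} becomes $C|(1+y)^{1/2}(\psi-u_\eps)^+|_H^2$ as needed, and then justify the whole identity by the same density/approximation argument used in Corollary \ref{cor:CommutatorInnerProduct} (approximating $(\psi-u_\eps)^+$ in $V$ by $C^\infty_0(\sO\cup\Gamma_0)$ functions), for which the boundedness of $\varphi$, $y^{1/2}D\varphi$, and $\varphi^2$-weighted norms is exactly what is required. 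Once this is set up cleanly, the rest is the bookkeeping already carried out in the proofs of Theorem \ref{thm:VIPenalizationEstimateHeston} and Lemma \ref{lem:ExistenceUniquenessEllipticHestonPenalizedProblemPowery}.
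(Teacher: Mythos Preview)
Your overall architecture is right --- test with $v=-\varphi^2(\psi-u_\eps)^+$, take $\varphi=\zeta_R(1+y^{1/2})$, apply the commutator identity \eqref{eq:BilinearFormSquaredFunction} and estimate \eqref{eq:CommutatorInnerProductEstimate}, absorb the ``bad'' commutator term using $\eps\leq\eps_0$, then let $R\to\infty$ --- and this is exactly what the paper does. But there is a genuine gap in the middle step, where you invoke strong monotonicity with $\psi^+$ and then try to integrate by parts to produce $A\psi^+$.

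The problem is that $\psi^+$ is not in $H^2(\sO,\fw)$ in general (even though $\psi$ is), so Lemma~\ref{lem:HestonIntegrationByParts} does not apply and the expression $A\psi^+$ does not exist as an element of $L^2(\sO,\fw)$; your remark that ``these reductions to $\psi^+$ are routine'' is not correct here. Moreover, strong monotonicity (Lemma~\ref{lem:StronglyMonotone}) applied with $\tilde u=\psi^+$ would give information about $a_\lambda(u_\eps-\psi^+,\varphi^2(u_\eps-\psi^+))$, which involves $u_\eps-\psi^+$, not $(\psi-u_\eps)^+$; these are different functions, so you cannot arrive at the left-hand side you claim. You have conflated the mechanism of Lemma~\ref{lem:ExistenceUniquenessEllipticHestonPenalizedProblemPowery} (which bounds $u_\eps-\psi^+$) with that of Theorem~\ref{thm:VIPenalizationEstimateHeston} (which bounds $(\psi-u_\eps)^+$).

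The fix is simple: work with $\psi$ itself, not $\psi^+$. After testing \eqref{eq:PenalizedProblem} with $-\varphi^2(\psi-u_\eps)^+$ you get an \emph{identity} (no monotonicity needed, since $(\beta_\eps(u_\eps),-\varphi^2(\psi-u_\eps)^+)_H=\tfrac1\eps|\varphi(\psi-u_\eps)^+|_H^2$ directly). Then add the identity $a_\lambda(\psi,\varphi^2(\psi-u_\eps)^+)=(\varphi A_\lambda\psi,\varphi(\psi-u_\eps)^+)_H$, valid by Lemma~\ref{lem:HestonIntegrationByParts} precisely because $\psi\in H^2(\sO,\fw)$. This produces $a_\lambda(\psi-u_\eps,\varphi^2(\psi-u_\eps)^+)$ on the left, and you pass to $a_\lambda((\psi-u_\eps)^+,\varphi^2(\psi-u_\eps)^+)$ using $a_\lambda(w^-,\varphi^2w^+)=0$ (disjoint supports of $w^-$ and $w^+$). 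From there your commutator/absorption/limit argument goes through unchanged, and the right-hand side carries $A\psi$ rather than the ill-defined $A\psi^+$, matching the statement.
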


\begin{rmk}[A priori $H^1$ estimate for a solution to the penalized equation]
The estimate \eqref{eq:yWeightedH1PenaltyFunctionBound} is not used elsewhere in this article, but is included for completeness as it is an easy consequence of the proof of  \eqref{eq:yWeightedL2PenaltyFunctionBound}.
\end{rmk}

\begin{proof}[Proof of Lemma \ref{lem:VIPenalizationEstimateHestonPowery}]
We adapt the derivations of \eqref{eq:L2PenaltyFunctionBound} and \eqref{eq:VIPenalizationEstimateHeston_prefinal} in the statement of Theorem \ref{thm:VIPenalizationEstimateHeston}. From the proof of Theorem \ref{thm:VIPenalizationEstimateHeston} we have $(\psi - u_\eps)^+ \in V$. Let $\varphi \in C^\infty_0(\RR^2)$. Clearly $\varphi(\psi - u_\eps)^+ = 0$ and $\varphi^2(\psi - u_\eps)^+ = 0$ on $\Gamma_1$ in the trace sense, as this is true for $(\psi - u_\eps)^+$. We also see that $\varphi(\psi - u_\eps)^+$ and $\varphi^2(\psi - u_\eps)^+$ are in $V$. Substituting $v = -\varphi^2(\psi - u_\eps)^+$ in \eqref{eq:PenalizedProblem} gives
$$
-a_\lambda(u_\eps,\varphi^2(\psi-u_\eps)^+) + \frac{1}{\eps}|\varphi(\psi-u_\eps)^+|_H^2 = (-\varphi f, \varphi(\psi-u_\eps)^+)_H.
$$
Since $\psi \in H^2(\sO,\fw)$, we also have
\begin{align*}
{}&a_\lambda(\psi,\varphi^2(\psi - u_\eps)^+)
\\
&\quad = a(\psi,\varphi^2(\psi - u_\eps)^+) + \lambda((1+y)\psi,\varphi^2(\psi - u_\eps)^+)_H
\\
&\quad = (A\psi, \varphi^2(\psi - u_\eps)^+)_H + \lambda((1+y)\psi,\varphi^2(\psi - u_\eps)^+)_H
\quad\hbox{(by Lemma \ref{lem:HestonIntegrationByParts})}
\\
&\quad = (\varphi A\psi + \lambda(1+y)\varphi\psi, \varphi(\psi - u_\eps)^+)_H
\\
&\quad = (\varphi A_\lambda\psi, \varphi(\psi - u_\eps)^+)_H \quad\hbox{(by \eqref{eq:CoerciveHestonOperator})}.
\end{align*}
Because $a_\lambda(v,v^+) = a_\lambda(v^+,v^+), \forall v \in V$, adding the preceding two identities yields
\begin{equation}
\label{eq:PenalizedOperatorWith1PlusYSqFactor}
\begin{aligned}
{}&a_\lambda((\psi - u_\eps)^+, \varphi^2(\psi - u_\eps)^+) + \frac{1}{\eps}|\varphi(\psi-u_\eps)^+|_H^2
\\
&\quad = (-\varphi f+\varphi A_\lambda\psi, \varphi(\psi-u_\eps)^+)_H.
\end{aligned}
\end{equation}
From \eqref{eq:BilinearFormSquaredFunction}, we have
$$
a_\lambda(\varphi (\psi - u_\eps)^+, \varphi (\psi - u_\eps)^+)
= a((\psi - u_\eps)^+,\varphi^2 (\psi - u_\eps)^+) + ([A,\varphi](\psi - u_\eps)^+, \varphi (\psi - u_\eps)^+)_H
$$
and thus, adding the two preceding identities yields
\begin{align*}
{}&a_\lambda(\varphi (\psi - u_\eps)^+, \varphi (\psi - u_\eps)^+) + \frac{1}{\eps}|\varphi(\psi-u_\eps)^+|_H^2
\\
&\quad = (-\varphi f+\varphi A_\lambda\psi, \varphi(\psi-u_\eps)^+)_H + ([A,\varphi](\psi - u_\eps)^+, \varphi (\psi - u_\eps)^+)_H.
\end{align*}
Applying the estimates \eqref{eq:CoerciveHeston} and \eqref{eq:CommutatorInnerProductEstimate} yields
\begin{equation}
\label{eq:yWeightedPenaltyFunctionBound_prefinal2}
\begin{aligned}
{}&\nu_1\|\varphi(\psi - u_\eps)^+\|_V^2  + \frac{1}{\eps}|\varphi(\psi-u_\eps)^+|_H^2
\\
&\quad\leq \left(|\varphi f|_H + |\varphi A_\lambda\psi|_H\right)|\varphi(\psi-u_\eps)^+|_H + C|y^{1/2}(|D\varphi| + |D\varphi|^{1/2})(\psi-u_\eps)^+|_H^2,
\end{aligned}
\end{equation}
where $C$ is a positive constant depending only on $\gamma$ and the constant coefficients of $A$. We now choose $\varphi = \zeta_R(1+y^{1/2})$, where $\zeta_R$ is given by Definition \ref{defn:RadialCutoffFunction}, so $D\varphi = (1+y^{1/2})D\zeta_R + (0, y^{-1/2}\zeta_R)$, and use \eqref{eq:yWeightedPenaltyFunctionBound_prefinal2} to estimate
\begin{align*}
{}&\frac{1}{\eps}|\zeta_R(1+y^{1/2})(\psi-u_\eps)^+|_H^2
\\
&\leq \left(|\zeta_R(1+y^{1/2})f|_H + |\zeta_R(1+y^{1/2})A_\lambda\psi|_H\right)|\zeta_R(1+y^{1/2})(\psi-u_\eps)^+|_H
\\
&\quad + C\left|y^{1/2}\left(\left((y^{-1/2} + (1+y^{1/2})|D\zeta_R|\right) + \left(y^{-1/2} + (1+y^{1/2})|D\zeta_R|\right)^{1/2}\right)(\psi-u_\eps)^+\right|_H^2
\\
&\leq \left(|(1+y^{1/2})f|_H + |(1+y^{1/2})A_\lambda\psi|_H\right)|(1+y^{1/2})(\psi-u_\eps)^+|_H
\\
&\quad + C|(1+y^{1/4})(\psi-u_\eps)^+|_H^2 \quad\hbox{(by \eqref{eq:RadialCutoffFunctionFirstDerivative}).}
\end{align*}
By taking limits as $R\to\infty$ in the preceding inequality and applying the dominated convergence theorem, noting that $1 + y^{1/4} \leq 2 + y^{1/2}, \forall y>0$, we obtain
$$
\frac{1}{\eps}|(1+y^{1/2})(\psi-u_\eps)^+|_H \leq |(1+y^{1/2})f|_H + |(1+y^{1/2})A_\lambda\psi|_H + C|(1+y^{1/2})(\psi-u_\eps)^+|_H.
$$
If $1/\eps \geq 2C$, that is, $0 <\eps \leq 1/2C$, the preceding inequality gives \eqref{eq:yWeightedL2PenaltyFunctionBound}, noting that $A_\lambda = A + \lambda(1+y)$ by \eqref{eq:CoerciveHestonOperator}.

We now complete the estimate of $\|\varphi(\psi - u_\eps)^+\|_V$. By Definition \ref{defn:H1WeightedSobolevSpaces} and recalling that $\varphi = \zeta_R(1 + y)^{1/2}$, we have
\begin{align*}
{}&\|\zeta_R(1 + y)^{1/2}(\psi - u_\eps)^+\|_V^2
\\
&= \left|y^{1/2}\left(\zeta_RD\left((1 + y^{1/2})(\psi - u_\eps)^+\right) + (D\zeta_R)(1+y^{1/2})(\psi - u_\eps)^+\right)\right|_H^2
\\
&\quad + \left|(1 + y)^{1/2}\zeta_R(1 + y^{1/2})(\psi - u_\eps)^+\right|_H^2,
\end{align*}
and thus
\begin{align*}
{}&|y^{1/2}\zeta_RD((1 + y^{1/2})(\psi - u_\eps)^+)|_H + |(1 + y)^{1/2}\zeta_R(1 + y^{1/2})(\psi - u_\eps)^+|_H
\\
&\leq \|(1 + y^{1/2})\zeta_R(\psi - u_\eps)^+\|_V + |y^{1/2}((1 + y^{1/2})(D\zeta_R)(\psi - u_\eps)^+)|_H
\\
&\leq \|(1 + y^{1/2})\zeta_R(\psi - u_\eps)^+\|_V + |(1 + y^{1/2})(\psi - u_\eps)^+)|_H
\quad\hbox{(by \eqref{eq:RadialCutoffFunctionFirstDerivative}).}
\end{align*}
Combining the preceding inequality with \eqref{eq:yWeightedPenaltyFunctionBound_prefinal2} and taking the limit as $R\to\infty$ and applying the dominated convergence theorem,as in the derivation of \eqref{eq:yWeightedL2PenaltyFunctionBound}, gives
\begin{align*}
\nu_1\|(1 + y^{1/2})(\psi - u_\eps)^+\|_V^2
&\leq \left(|(1 + y^{1/2})f|_H + |(1 + y^{1/2})A_\lambda\psi|_H\right)|(1 + y^{1/2})(\psi-u_\eps)^+|_H
\\
&\quad + C|(1+y^{1/2})(\psi-u_\eps)^+|_H^2.
\end{align*}
We now use \eqref{eq:yWeightedL2PenaltyFunctionBound} to bound the factor $|(1+y^{1/2})(\psi-u_\eps)^+|_H$ on the right-hand side of the preceding identity and take square roots to obtain \eqref{eq:yWeightedH1PenaltyFunctionBound}.
\end{proof}

Turning to the question of regularity, from \cite[\S 3.1.9 \& 3.1.15]{Bensoussan_Lions}, we have the following analogue of \cite[Theorem 3.1.8 \& Corollary 3.1.1]{Bensoussan_Lions} and $H^2$ analogue of the $W^{2,p}$ regularity results \cite[Lemma 1.3.1 \& Theorem 1.3.2]{Friedman_1982} and \cite[Theorem 3.1.21 \& Corollary 3.1.6]{Bensoussan_Lions}.

\begin{hyp}[Conditions on the obstacle function]
\label{hyp:1plusyPower3over2H1psi}
Require that the obstacle function $\psi \in H^1(\sO,\fw)$ obeys
\begin{equation}
\label{eq:1plusyPower3over2H1psi}
(1+y)^{3/2}\psi \in H^1(\sO,\fw).
\end{equation}
\end{hyp}

\begin{hyp}[Conditions on the source function]
\label{hyp:1plusyfL2}
Require that the source function $f \in L^2(\sO,\fw)$ obeys
\begin{equation}
\label{eq:1plusyfL2}
(1+y)f \in L^2(\sO,\fw).
\end{equation}
\end{hyp}

\begin{thm}[Global $H^2$ regularity and a posteriori estimate for the solution to the coercive variational inequality]
\label{thm:VIRegularityEllipticHeston}
Require that the domain $\sO$ obeys Hypothesis \ref{hyp:DomainCombinedCondition} and that there are $M, m \in H^2(\sO,\fw)$ obeying \eqref{eq:SourceFunctionTraceBounds}, \eqref{eq:mMIneqOnDomain}, \eqref{eq:SourceFunctionBounds}, \eqref{eq:MgeqPsi}, and \eqref{eq:OneplusyMmInL2}. Require that $\psi \in H^2(\sO,\fw)$ and that $\psi$ obeys \eqref{eq:sqrtyApsiL2} and \eqref{eq:1plusyPower3over2H1psi}. Require that $f\in L^2(\sO,\fw)$ obeys \eqref{eq:fBoundsCoercive} and \eqref{eq:1plusyfL2}. If $u$ is the unique solution to Problem \ref{prob:HomogeneousHestonVIProblemCoercive}, then $u \in H^2(\sO,\fw)$, $yu \in L^2(\sO,\fw)$, and
\begin{equation}
\label{eq:H2BoundSolutionCoerciveHestonSatVarIneqCorrected}
\begin{aligned}
\|u\|_{H^2(\sO,\fw)} &\leq C\left(\|(1+y)f\|_{L^2(\sO,\fw)} + \|(1+y)^{1/2}A\psi\|_{L^2(\sO,\fw)}\right.
\\
&\quad + \left. \|(1+y^{3/2})\psi\|_{H^1(\sO,\fw)} + \|(1+y)u\|_{L^2(\sO,\fw)}\right).
\end{aligned}
\end{equation}
where $C$ depends only on the constant coefficients of $A$ and the constants $\delta_0,\delta_1,M_1,R_1$ of Hypothesis \ref{hyp:HestonDomainNearGammaOne}.
\end{thm}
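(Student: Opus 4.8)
The plan is to follow the Bensoussan--Lions scheme of deducing $H^2$ regularity for the variational inequality from the $H^2$ regularity theory for the coercive variational \emph{equation} of \S\ref{subsec:IntermediateFirstOrderEstimatesVarEquality}--\S\ref{subsec:H2RegularityEquality}, using the penalized solutions $u_\eps$ as the bridge and the a priori penalization estimates to keep the penalization term under control in the weighted source norm. Throughout, $\{u_\eps\}_{\eps\in(0,1]}\subset V$ denotes the family of solutions to Problem \ref{prob:HestonPenalizedEquation}, and $u_\eps\rightharpoonup u$ weakly in $V$ as $\eps\to 0$ (along a subsequence) by the proof of existence in Theorem \ref{thm:VIExistenceUniquenessEllipticCoerciveHeston}.

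First I would record the facts about the $u_\eps$ that we use. By Lemma \ref{lem:ComparisionPrinciplePenalizedHeston}, whose hypotheses \eqref{eq:SourceFunctionTraceBounds}, \eqref{eq:mMIneqOnDomain}, \eqref{eq:SourceFunctionBounds}, \eqref{eq:MgeqPsi}, \eqref{eq:fBoundsCoercive} are all in force here, we have $m\leq u_\eps\leq M$ a.e.\ on $\sO$, so \eqref{eq:OneplusyMmInL2} makes $\{(1+y)u_\eps\}$ uniformly bounded in $L^2(\sO,\fw)$; combining this with Corollary \ref{cor:WeakUpperLowerBoundsIsStrongLp} and $u_\eps\rightharpoonup u$ in $V$, after passing to a further subsequence $(1+y)u_\eps\to(1+y)u$ strongly in $L^2(\sO,\fw)$, so that $(1+y)u$, and hence $yu$, lies in $L^2(\sO,\fw)$. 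Next I would rewrite the penalized equation \eqref{eq:PenalizedProblem} as the coercive variational equation \eqref{eq:VariationalEqualityCoercive} with source
\[
f_\eps := f-\beta_\eps(u_\eps) = f + \frac{1}{\eps}(\psi-u_\eps)^+ \in L^2(\sO,\fw),
\]
and bound $(1+y)^{1/2}f_\eps$ uniformly in $\eps$. Since $\psi\in H^2(\sO,\fw)$, $f$ obeys \eqref{eq:Sqrt1plusyfL2} (a consequence of \eqref{eq:1plusyfL2}), $\psi$ obeys \eqref{eq:sqrtyApsiL2}, and $\psi$ obeys \eqref{eq:y32psiL2} (a consequence of \eqref{eq:1plusyPower3over2H1psi}), Lemma \ref{lem:VIPenalizationEstimateHestonPowery} applies and, using $(1+y)^{1/2}\leq 1+y^{1/2}$, gives for $0<\eps\leq\eps_0$
\[
\frac{1}{\eps}\|(1+y)^{1/2}(\psi-u_\eps)^+\|_{L^2(\sO,\fw)} \leq C\bigl(\|(1+y)^{1/2}f\|_{L^2(\sO,\fw)} + \|(1+y)^{1/2}A\psi\|_{L^2(\sO,\fw)} + \|(1+y)^{3/2}\psi\|_{L^2(\sO,\fw)}\bigr),
\]
so $\|(1+y)^{1/2}f_\eps\|_{L^2(\sO,\fw)}$ is bounded, uniformly in $\eps\in(0,\eps_0]$, by a constant times the sum of the first three terms on the right-hand side of \eqref{eq:H2BoundSolutionCoerciveHestonSatVarIneqCorrected}.

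Then I would invoke the global $H^2$ a priori estimate for the coercive variational equation --- the analogue of Theorem \ref{thm:GlobalRegularityEllipticHestonSpecial} (equivalently Corollary \ref{cor:HEstimateyD2uSpecial} together with Proposition \ref{prop:AuxiliarySpecialWeightedH1EstimateSisZero}) with $A$ replaced by $A_\lambda$; this is legitimate because the arguments of \S\ref{subsec:IntermediateFirstOrderEstimatesVarEquality}--\S\ref{subsec:H2RegularityEquality} already reduce the non-coercive equation to the coercive equation \eqref{eq:VariationalEqualityCoercive}, and the commutator identities \eqref{eq:ACommutator}, \eqref{eq:BilinearFormSquaredFunction} and the smoothing result Theorem \ref{thm:SmoothnessUpToBdryWeakSolution} all hold for $A_\lambda$. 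Since $u_\eps\in H^1_0(\sO\cup\Gamma_0,\fw)$ solves \eqref{eq:VariationalEqualityCoercive} with source $f_\eps$ obeying $(1+y)^{1/2}f_\eps\in L^2(\sO,\fw)$, and $yu_\eps\in L^2(\sO,\fw)$ by the previous step, we obtain $u_\eps\in H^2(\sO,\fw)$ with
\[
\|u_\eps\|_{H^2(\sO,\fw)} \leq C\bigl(\|(1+y)^{1/2}f_\eps\|_{L^2(\sO,\fw)} + \|(1+y)u_\eps\|_{L^2(\sO,\fw)}\bigr),
\]
and hence $\{u_\eps\}_{\eps\in(0,\eps_0]}$ is uniformly bounded in $H^2(\sO,\fw)$. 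Passing to a subsequence $u_\eps\rightharpoonup\bar u$ weakly in $H^2(\sO,\fw)$, identifying $\bar u=u$ by uniqueness of the weak $V$-limit, and using weak lower semicontinuity of $\|\cdot\|_{H^2(\sO,\fw)}$ together with $\|(1+y)u_\eps\|_{L^2(\sO,\fw)}\to\|(1+y)u\|_{L^2(\sO,\fw)}$ then yields $u\in H^2(\sO,\fw)$ and the estimate \eqref{eq:H2BoundSolutionCoerciveHestonSatVarIneqCorrected}.

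The main obstacle is this last application of the $H^2$ equation theory with the penalization term absorbed into the source: it succeeds only because Lemma \ref{lem:VIPenalizationEstimateHestonPowery} controls the \emph{weighted} penalization term $\eps^{-1}(1+y)^{1/2}(\psi-u_\eps)^+$ in $L^2(\sO,\fw)$ --- not merely $\eps^{-1}(\psi-u_\eps)^+$ as in Theorem \ref{thm:VIPenalizationEstimateHeston} --- which is precisely why the extra integrability hypotheses \eqref{eq:sqrtyApsiL2} and \eqref{eq:1plusyPower3over2H1psi} on $\psi$ and \eqref{eq:1plusyfL2} on $f$ are imposed; and one must check carefully that the $H^2$ regularity machinery of \S\ref{subsec:IntermediateFirstOrderEstimatesVarEquality}--\S\ref{subsec:H2RegularityEquality}, in particular the reduction to smooth solutions of the coercive equation, genuinely applies to $A_\lambda$ and not only to $A$.
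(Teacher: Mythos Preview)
Your approach is essentially the paper's: bound $u_\eps$ uniformly in $H^2(\sO,\fw)$ by viewing the penalized equation as a variational \emph{equation} with the penalization term absorbed into the source, control that term via Lemma~\ref{lem:VIPenalizationEstimateHestonPowery}, then pass to the weak $H^2$ limit. The one structural difference is that you keep the penalized equation in coercive form $a_\lambda(u_\eps,v)=(f-\beta_\eps(u_\eps),v)_H$ and invoke the $H^2$ theory for $A_\lambda$, whereas the paper rewrites it as the \emph{non-coercive} equation $a(u_\eps,v)=(f-\beta_\eps(u_\eps)-\lambda(1+y)u_\eps,v)_H$ and applies Theorem~\ref{thm:GlobalRegularityEllipticHestonSpecial} directly. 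Your route avoids the paper's detour through the $s=1$ case of \eqref{eq:PenalizedEquationAPosterioriEstimatePoweryCorrected} (needed there to bound $\|(1+y)^{3/2}u_\eps\|_{L^2}$), at the cost of checking that the \S\ref{sec:H2RegularityEquality} machinery transfers to $A_\lambda$; since those proofs already pass through the coercive equation \eqref{eq:VariationalEqualityCoerciveSourcef1plusyu}, this is legitimate, and you even get a slightly sharper intermediate estimate than \eqref{eq:H2BoundSolutionCoerciveHestonSatVarIneqCorrected}.

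One technical slip: your appeal to Corollary~\ref{cor:WeakUpperLowerBoundsIsStrongLp} to get $(1+y)u_\eps\to(1+y)u$ strongly in $L^2(\sO,\fw)$ requires the dominating function $(1+y)(|M|+|m|)$ to lie in $L^q(\sO,\fw)$ for some $q>2$, which is \emph{not} among the hypotheses of Theorem~\ref{thm:VIRegularityEllipticHeston} (only \eqref{eq:OneplusyMmInL2} is assumed). The paper instead uses Theorem~\ref{thm:VIPenalizationEstimateHeston} to get $u_\eps\to u$ strongly in $V$, hence pointwise a.e.\ along a subsequence, and then dominated convergence with $(1+y)|u_\eps|\leq(1+y)(|M|+|m|)\in L^2(\sO,\fw)$; you should make the same substitution.
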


\begin{proof}
Let $\{u_\eps\}_{\eps\in(0,1]}$ be the consequence constructed in the proof of existence in Theorem \ref{thm:VIExistenceUniquenessEllipticCoerciveHeston}. Lemma \ref{lem:ComparisionPrinciplePenalizedHeston} implies that the sequence $\{u_\eps\}_{\eps\in(0,1]}$ obeys \eqref{eq:uepsBounds}, so
\begin{equation}
\label{eq:1plusyuBoundbyMm}
|(1+y)u_\eps| \leq (1+y)(|M|+|m|) \quad\hbox{a.e. on }\sO,
\end{equation}
and thus \eqref{eq:OneplusyMmInL2} ensures that $yu_\eps \in L^2(\sO,\fw), \forall \eps>0$. Applying \eqref{eq:PenalizedEquationAPosterioriEstimatePoweryCorrected} with $s=1$ gives
$$
\|yu_\eps\|_V \leq C\left(|yf|_H + |(1+y)u_\eps|_H + \|(1 + y^{3/2})\psi\|_V\right),
$$
and thus $y^{3/2}u_\eps \in L^2(\sO,\fw), \forall \eps>0$, and
\begin{equation}
\label{eq:y3over2uepsL2}
|y^{3/2}u_\eps|_H \leq C\left(|yf|_H + |(1+y)u_\eps|_H + \|(1 + y^{3/2})\psi\|_V \right).
\end{equation}
Lemma \ref{lem:VIPenalizationEstimateHestonPowery} implies that $(1+y)^{1/2}\beta_\eps(u_\eps)\in L^2(\sO,\fw), \forall \eps \in (0,\eps_0]$. Consequently, setting
$$
f_\eps := f - \beta_\eps(u_\eps) - \lambda(1+y)u_\eps,
$$
we obtain $(1+y)^{1/2}f_\eps\in L^2(\sO,\fw), \forall \eps \in (0,\eps_0]$. As $u_\eps\in V$ is a solution to \eqref{eq:PenalizedProblem}, we may view $u_\eps$ as the solution to the equivalent non-coercive variational equation,
$$
a(u_\eps,v) = (f_\eps,v)_H, \quad \forall v\in V.
$$
Theorem \ref{thm:GlobalRegularityEllipticHestonSpecial} now implies that $u_\eps \in H^2(\sO,\fw), \forall \eps \in (0,\eps_0]$ and by \eqref{eq:SecondDerivativeGlobalHestonEstimateSpecial} obeys
$$
\|u_\eps\|_{H^2(\sO,\fw)} \leq C\left(\|(1+y)^{1/2}f_\eps\|_{L^2(\sO,\fw)} + \|(1+y)u_\eps\|_{L^2(\sO,\fw)}\right), \quad \forall \eps \in (0,\eps_0].
$$
Substituting the expression for $f_\eps$ into the preceding inequality gives, for all $\eps \in (0,\eps_0]$,
\begin{align*}
\|u_\eps\|_{H^2(\sO,\fw)} &\leq C\left(\|(1+y)^{1/2}f\|_{L^2(\sO,\fw)} + \|(1+y)^{1/2} \beta_\eps(u_\eps)\|_{L^2(\sO,\fw)} \right.
\\
&\quad + \left. \|(1+y)^{3/2}u_\eps\|_{L^2(\sO,\fw)}\right)
\\
&\leq C\left(\|(1+y)^{1/2}f\|_{L^2(\sO,\fw)} + |(1+y)^{1/2}A\psi|_H + |(1+y)^{3/2}\psi|_H \right.
\quad\hbox{(by \eqref{eq:yWeightedL2PenaltyFunctionBound})}
\\
&\quad + \left. \|(1+y)^{3/2}u_\eps\|_{L^2(\sO,\fw)}\right).
\end{align*}
Substituting the estimate \eqref{eq:y3over2uepsL2} for $|y^{3/2}u_\eps|_H$ into the preceding estimate for $\|u_\eps\|_{H^2(\sO,\fw)}$ and combining terms yields
\begin{equation}
\label{eq:H2BoundSolutionPenalizedEquation}
\begin{aligned}
\|u_\eps\|_{H^2(\sO,\fw)} &\leq C\left(\|(1+y)f\|_{L^2(\sO,\fw)} + |(1+y)^{1/2}A\psi|_H + \|(1 + y^{3/2})\psi\|_V \right.
\\
&\quad + \left. \|(1+y)u_\eps\|_{L^2(\sO,\fw)}\right), \forall \eps \in (0,\eps_0],
\end{aligned}
\end{equation}
where $C$ depends only on $\gamma$ and the constant coefficients of $A$. Therefore $\{u_\eps\}_{\eps\in(0,\eps_0]}$ has a subsequence which converges weakly in $H^2(\sO,\fw)$ to a limit in $H^2(\sO,\fw)$. This limit must be $u$, since Theorem \ref{thm:VIPenalizationEstimateHeston} implies that $\{u_\eps\}_{\eps\in(0,\eps_0]}$ converges strongly in $V$ to $u\in V$ as $\eps\to 0$ and again, after passing to a subsequence, $u_\eps\to u$ pointwise a.e. on $\sO$ by Corollary \ref{cor:Billingsley}. Because of \eqref{eq:1plusyuBoundbyMm}, we can apply the dominated convergence theorem to conclude that
$$
\lim_{\eps\downarrow 0}\|(1+y)u_\eps\|_{L^2(\sO,\fw)} = \|(1+y)u\|_{L^2(\sO,\fw)}.
$$
Since
$$
\|u\|_{H^2(\sO,\fw)} \leq \liminf_{\eps\to 0}\|u_\eps\|_{H^2(\sO,\fw)},
$$
by \cite[Appendix D]{Evans}, the estimate \eqref{eq:H2BoundSolutionCoerciveHestonSatVarIneqCorrected} follows from \eqref{eq:H2BoundSolutionPenalizedEquation} and the preceding application of the dominated convergence theorem.
\end{proof}

\begin{rmk}[A posteriori estimate in Theorem \ref{thm:VIRegularityEllipticHeston}]
\label{rmk:VIRegularityEllipticHeston}
The hypothesis in Theorem \ref{thm:VIRegularityEllipticHeston} that $u$ is unique is used to conclude that the desired bound applies to the given solution.
\end{rmk}

\subsection[Global $H^2$ regularity of solutions to the non-coercive variational inequality]{Global $\mathbf{H^2}$ regularity of solutions to the non-coercive variational inequality}
\label{subsec:H2RegularityInequality}
We extend the regularity results of \S \ref{subsec:H2RegularityCoerciveInequality} to the non-coercive operator, $A$, and its bilinear form, $a(\cdot,\cdot)$.

\begin{hyp}[Conditions on envelope functions]
\label{hyp:GlobalH2UpperLowerBoundsSolutionsNoncoerciveInequality}
There are $M, m \in H^2(\sO,\fw)$ obeying
\begin{equation}
\label{eq:OneplusysquaredMmInL2}
(1+y)^2M, (1+y)^2m \in L^2(\sO,\fw).
\end{equation}
\end{hyp}

\begin{thm}[Global $H^2$ regularity and a posteriori estimate for the solution to the non-coercive variational inequality]
\label{thm:VIRegularityEllipticHestonNoncoercive}
Assume the hypotheses of Theorem \ref{thm:VIExistenceUniquenessEllipticHeston_Improved} and, in addition, require that the domain $\sO$ obeys Hypothesis \ref{hyp:DomainCombinedCondition}; that $M, m \in H^2(\sO,\fw)$ obey \eqref{eq:OneplusysquaredMmInL2}; that $\psi \in H^2(\sO,\fw)$ and that $\psi$ obeys \eqref{eq:sqrtyApsiL2} and \eqref{eq:1plusyPower3over2H1psi}; and that $f\in L^2(\sO,\fw)$ obeys \eqref{eq:1plusyfL2}. If $u$ is the unique solution to Problem \ref{prob:HomogeneousHestonVIProblem}, then $u \in H^2(\sO,\fw)$, $y^2u \in L^2(\sO,\fw)$,  and
\begin{equation}
\label{eq:H2BoundSolutionHestonStatVarIneqCorrected}
\begin{aligned}
\|u\|_{H^2(\sO,\fw)} &\leq C\left(\|(1+y)f\|_{L^2(\sO,\fw)} + \|(1+y)^{1/2}A\psi\|_{L^2(\sO,\fw)}\right.
\\
&\quad + \left. \|(1+y^{3/2})\psi\|_{H^1(\sO,\fw)} + \|(1+y)^2u\|_{L^2(\sO,\fw)}\right),
\end{aligned}
\end{equation}
where $C$ depends only on the constant coefficients of $A$ and the constants $\delta_0,\delta_1,M_1,R_1$ of Hypothesis \ref{hyp:HestonDomainNearGammaOne}.
\end{thm}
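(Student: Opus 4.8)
The plan is to reduce Theorem \ref{thm:VIRegularityEllipticHestonNoncoercive} to its coercive counterpart, Theorem \ref{thm:VIRegularityEllipticHeston}, by absorbing the lower-order term $\lambda(1+y)u$ into the source function, exactly as in the proof of Corollary \ref{cor:VIExistenceUniquenessEllipticHestonPowery}. First I would invoke Theorem \ref{thm:VIExistenceUniquenessEllipticHeston_Improved} to obtain the unique solution $u \in H^1_0(\sO\cup\Gamma_0,\fw)$ to Problem \ref{prob:HomogeneousHestonVIProblem}, together with the a posteriori pointwise bounds $\max\{m,\psi\} \leq u \leq M$ a.e. on $\sO$ from \eqref{eq:uBoundedObstacle}. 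Since $M, m$ obey \eqref{eq:OneplusysquaredMmInL2}, the sandwich $m \leq u \leq M$ immediately gives $(1+y)^2u \in L^2(\sO,\fw)$, hence in particular $y^2u, (1+y)u \in L^2(\sO,\fw)$.

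Next I would set $f_\lambda := f + \lambda(1+y)u$ and recall, via the definition \eqref{eq:BilinearFormCoerciveHeston} of $a_\lambda$, that $u$ is the unique solution to Problem \ref{prob:HomogeneousHestonVIProblemCoercive} with source function $f_\lambda$, i.e., $a_\lambda(u,v-u) \geq (f_\lambda, v-u)_H$ for all $v \in \KK$. It then remains to verify that $f_\lambda$ and the data satisfy the hypotheses of Theorem \ref{thm:VIRegularityEllipticHeston}. The two-sided bound \eqref{eq:fBoundsCoercive}, $A_\lambda m \leq f_\lambda \leq A_\lambda M$ a.e. on $\sO$, follows from
\[
A_\lambda m = Am + \lambda(1+y)m \leq f + \lambda(1+y)u = f_\lambda \leq AM + \lambda(1+y)M = A_\lambda M,
\]
using $Am \leq f \leq AM$ from \eqref{eq:fBounds}, the bounds $m \leq u \leq M$ from \eqref{eq:uBoundedObstacle}, and \eqref{eq:CoerciveHestonOperator}; this is precisely the computation in the proof of Corollary \ref{cor:VIExistenceUniquenessEllipticHestonPowery}. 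For the weighted integrability condition \eqref{eq:1plusyfL2} applied to $f_\lambda$, I would write $(1+y)f_\lambda = (1+y)f + \lambda(1+y)^2u$ and observe that the first term lies in $L^2(\sO,\fw)$ by the hypothesis on $f$ and the second by the preceding paragraph. The remaining hypotheses of Theorem \ref{thm:VIRegularityEllipticHeston} --- Hypothesis \ref{hyp:DomainCombinedCondition} on the domain, the conditions \eqref{eq:SourceFunctionTraceBounds}, \eqref{eq:mMIneqOnDomain}, \eqref{eq:SourceFunctionBounds}, \eqref{eq:MgeqPsi}, and \eqref{eq:OneplusyMmInL2} on $M,m$, and $\psi \in H^2(\sO,\fw)$ obeying \eqref{eq:sqrtyApsiL2} and \eqref{eq:1plusyPower3over2H1psi} --- are all among the hypotheses of Theorem \ref{thm:VIRegularityEllipticHestonNoncoercive}, so they carry over verbatim.

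Applying Theorem \ref{thm:VIRegularityEllipticHeston} with $f$ replaced by $f_\lambda$ then yields $u \in H^2(\sO,\fw)$ together with \eqref{eq:H2BoundSolutionCoerciveHestonSatVarIneqCorrected} in the form
\[
\|u\|_{H^2(\sO,\fw)} \leq C\left(\|(1+y)f_\lambda\|_{L^2(\sO,\fw)} + \|(1+y)^{1/2}A\psi\|_{L^2(\sO,\fw)} + \|(1+y^{3/2})\psi\|_{H^1(\sO,\fw)} + \|(1+y)u\|_{L^2(\sO,\fw)}\right).
\]
I would then substitute $\|(1+y)f_\lambda\|_{L^2(\sO,\fw)} \leq \|(1+y)f\|_{L^2(\sO,\fw)} + \lambda\|(1+y)^2u\|_{L^2(\sO,\fw)}$ and $\|(1+y)u\|_{L^2(\sO,\fw)} \leq \|(1+y)^2u\|_{L^2(\sO,\fw)}$, absorbing the constant $\lambda$ into $C$, to obtain \eqref{eq:H2BoundSolutionHestonStatVarIneqCorrected}.

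The reduction is otherwise routine; the only genuine point to watch is the weight bookkeeping. The extra power of $(1+y)$ in hypothesis \eqref{eq:OneplusysquaredMmInL2}, compared with the coercive Theorem \ref{thm:VIRegularityEllipticHeston}, is exactly what makes the modified source $f_\lambda$ still satisfy $(1+y)f_\lambda \in L^2(\sO,\fw)$, and the a posteriori (rather than a priori) nature of the pointwise bounds on $u$ is why uniqueness of $u$ is needed: it is what allows the weak $H^2(\sO,\fw)$-limit of the penalized approximations produced inside the proof of Theorem \ref{thm:VIRegularityEllipticHeston} to be identified with the given solution $u$, so that the a posteriori estimate applies to it.
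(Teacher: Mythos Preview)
Your proposal is correct and follows essentially the same approach as the paper: reduce to the coercive Theorem \ref{thm:VIRegularityEllipticHeston} by setting $f_\lambda := f + \lambda(1+y)u$, use the pointwise bounds \eqref{eq:uBoundedObstacle} together with \eqref{eq:OneplusysquaredMmInL2} to obtain $(1+y)^2u \in L^2(\sO,\fw)$ and hence $(1+y)f_\lambda \in L^2(\sO,\fw)$, then apply \eqref{eq:H2BoundSolutionCoerciveHestonSatVarIneqCorrected} and substitute. Your version is in fact slightly more detailed than the paper's, since you explicitly verify \eqref{eq:fBoundsCoercive} for $f_\lambda$ and spell out the final substitution.
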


\begin{proof}
Because $u$ solves Problem \ref{prob:HomogeneousHestonVIProblem}, we have
$$
a(u,v-u) \geq (f,v-u)_{L^2(\sO,\fw)}, \quad\forall v\in \KK,
$$
and thus
$$
a_\lambda(u,v-u) \geq (f_\lambda,v-u)_{L^2(\sO,\fw)}, \quad\forall v\in \KK,
$$
where
$$
f_\lambda := f + \lambda(1+y)u.
$$
By \eqref{eq:1plusyfL2} we have $(1+y)f\in L^2(\sO,\fw)$, while \eqref{eq:uBoundedObstacle} and \eqref{eq:OneplusysquaredMmInL2} imply that $(1+y)^2u \in L^2(\sO,\fw)$, and therefore $(1+y)f_\lambda\in L^2(\sO,\fw)$. Theorem \ref{thm:VIRegularityEllipticHeston}, with $f$ replaced by $f_\lambda$, then ensures that $u\in H^2(\sO,\fw)$ and the estimate \eqref{eq:H2BoundSolutionHestonStatVarIneqCorrected} follows from \eqref{eq:H2BoundSolutionCoerciveHestonSatVarIneqCorrected}.
\end{proof}

\begin{rmk}
\label{rmk:VIRegularityEllipticHestonNoncoercive}
The hypothesis in Theorem \ref{thm:VIRegularityEllipticHestonNoncoercive} that $u$ is unique is used to conclude that the desired bound applies to the given solution.
\end{rmk}

By combining Theorems \ref{thm:VIExistenceUniquenessEllipticHeston_Improved} and \ref{thm:VIRegularityEllipticHestonNoncoercive} we obtain the following existence and uniqueness result.

\begin{thm}[Existence and uniqueness of a strong solution to the non-coercive variational inequality]
\label{thm:VIExistenceUniquenessH2RegularEllipticHeston}
Assume the hypotheses of Theorem \ref{thm:VIExistenceUniquenessEllipticHeston_Improved} and \ref{thm:VIRegularityEllipticHestonNoncoercive}. Then there exists a unique solution $u\in H^2(\sO,\fw)$ to Problem \ref{prob:HestonObstacleInhomogeneousStrong} (with $g = 0$), the solution $u$ obeys \eqref{eq:uBoundedObstacle}, has the boundary property \eqref{eq:WeightedNeumannHomogeneousBCProb}, and obeys the estimate \eqref{eq:H2BoundSolutionHestonStatVarIneqCorrected}.
\end{thm}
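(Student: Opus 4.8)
The plan is to assemble Theorem \ref{thm:VIExistenceUniquenessH2RegularEllipticHeston} from the two main inputs developed earlier: the existence and uniqueness theorem for the \emph{weak} solution to the non-coercive variational inequality (Theorem \ref{thm:VIExistenceUniquenessEllipticHeston_Improved}), and the global $H^2$ regularity and a posteriori estimate for that solution (Theorem \ref{thm:VIRegularityEllipticHestonNoncoercive}). Since the hypotheses of the present statement are precisely the union of the hypotheses of those two theorems, the proof is mainly a matter of quoting them in the correct order and then translating the conclusions about a solution to the variational inequality Problem \ref{prob:HomogeneousHestonVIProblem} into conclusions about a strong solution to the obstacle problem Problem \ref{prob:HestonObstacleInhomogeneousStrong} with $g=0$.

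First I would invoke Theorem \ref{thm:VIExistenceUniquenessEllipticHeston_Improved}: under Hypothesis \ref{hyp:NoncoerciveHeston} on $r$ and the stated hypotheses on $M,m,f,\psi,\varphi$, there exists a unique $u\in H^1(\sO\cup\Gamma_0,\fw)$ solving Problem \ref{prob:HomogeneousHestonVIProblem}, and $u$ obeys the pointwise bounds \eqref{eq:uBoundedObstacle}, namely $\max\{m,\psi\}\leq u\leq M$ a.e. on $\sO$. Next, because $M,m$ additionally obey \eqref{eq:OneplusysquaredMmInL2} and $u$ is squeezed between them, we get $(1+y)^2 u\in L^2(\sO,\fw)$; combined with the remaining hypotheses ($\psi\in H^2(\sO,\fw)$ obeying \eqref{eq:sqrtyApsiL2} and \eqref{eq:1plusyPower3over2H1psi}, $f$ obeying \eqref{eq:1plusyfL2}, and the domain obeying Hypothesis \ref{hyp:DomainCombinedCondition}) this puts us in the setting of Theorem \ref{thm:VIRegularityEllipticHestonNoncoercive}. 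Applying it to the unique solution $u$ yields $u\in H^2(\sO,\fw)$, $y^2 u\in L^2(\sO,\fw)$, and the a posteriori estimate \eqref{eq:H2BoundSolutionHestonStatVarIneqCorrected}, which is exactly the asserted bound.

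It then remains to identify $u$ as a strong solution to Problem \ref{prob:HestonObstacleInhomogeneousStrong} with $g=0$, to record the weighted-Neumann boundary property, and to note uniqueness in the strong class. For the first point, since $u\in H^2(\sO,\fw)$ and $u$ solves the variational inequality \eqref{eq:VIProblemHestonHomgeneous} with $\psi\in H^2(\sO,\fw)$, Lemma \ref{lem:VIStrongFormHeston}(1) applies and gives that $u$ obeys \eqref{eq:ObstacleProblemHestonClassical} a.e. on $\sO$ and $u=0$ on $\Gamma_1$; that is precisely the definition of a strong solution to Problem \ref{prob:HestonObstacleInhomogeneousStrong} with $g=0$. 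For the boundary property, since $u\in H^2(\sO,\fw)$ and $\sO$ obeys Hypothesis \ref{hyp:HestonDomainNearGammaZero} (part of Hypothesis \ref{hyp:DomainCombinedCondition}), Lemma \ref{lem:HestonWeightedNeumannBoundaryProperty} gives $y^\beta(\rho u_x+\sigma u_y)=0$ on $\Gamma_0$ in the trace sense, i.e. \eqref{eq:WeightedNeumannHomogeneousBCProb}. For uniqueness, any strong solution $u'\in H^2(\sO,\fw)\subset H^1(\sO,\fw)$ to Problem \ref{prob:HestonObstacleInhomogeneousStrong} with $g=0$ obeys \eqref{eq:ObstacleProblemHestonClassical} and $u'=0$ on $\Gamma_1$, hence $u'\in H^1_0(\sO\cup\Gamma_0,\fw)$ and, by Lemma \ref{lem:VIStrongFormHeston}(2), $u'$ solves Problem \ref{prob:HomogeneousHestonVIProblem}; since that solution is unique (again by Theorem \ref{thm:VIExistenceUniquenessEllipticHeston_Improved}, whose uniqueness hypotheses we have assumed), $u'=u$ a.e. on $\sO$. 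I do not anticipate a genuine obstacle here — everything is a citation of prior results — but the one point requiring a little care is checking that the hypotheses on $M,m$ in Theorem \ref{thm:VIRegularityEllipticHestonNoncoercive} (notably \eqref{eq:OneplusysquaredMmInL2}, which controls $(1+y)^2 u$) are genuinely in force, so that the regularity theorem may be applied to the particular $u$ produced by the existence theorem rather than to a hypothetical solution; this is exactly Remark \ref{rmk:VIRegularityEllipticHestonNoncoercive}, and it is why the uniqueness hypotheses must be assumed alongside the existence hypotheses.
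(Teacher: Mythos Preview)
Your proposal is correct and follows essentially the same approach as the paper: apply Theorem \ref{thm:VIExistenceUniquenessEllipticHeston_Improved} for existence and uniqueness of the weak solution, then Theorem \ref{thm:VIRegularityEllipticHestonNoncoercive} for $H^2$ regularity and the estimate, then Lemma \ref{lem:VIStrongFormHeston} to identify $u$ as a strong solution, and finally Lemma \ref{lem:HestonWeightedNeumannBoundaryProperty} for the weighted Neumann property. Your treatment is in fact slightly more careful than the paper's, since you explicitly close the loop on uniqueness in the strong class via Lemma \ref{lem:VIStrongFormHeston}(2), whereas the paper leaves this implicit.
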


\begin{proof}
Theorem \ref{thm:VIExistenceUniquenessEllipticHeston_Improved} implies that there exists a unique $u \in H^1(\sO\cup\Gamma_0,\fw)$ to Problem \ref{prob:HomogeneousHestonVIProblem}. Consequently, Theorem \ref{thm:VIRegularityEllipticHestonNoncoercive} implies that $u\in H^2(\sO,\fw)$ and obeys the estimate \eqref{eq:H2BoundSolutionHestonStatVarIneqCorrected}, while Lemma \ref{lem:VIStrongFormHeston} ensures that $u$ solves Problem \ref{prob:HestonObstacleInhomogeneousStrong} (with $g = 0$). Lemma \ref{lem:HestonWeightedNeumannBoundaryProperty} implies that $u$ obeys \eqref{eq:WeightedNeumannHomogeneousBCProb}.
\end{proof}

Again, it is straightforward to assemble the results we need to conclude the

\begin{proof}[Proof of Theorem \ref{thm:MainExistenceUniquenessObstacleProblem}]
When $g=0$, the hypotheses of Theorem \ref{thm:MainExistenceUniquenessObstacleProblem} collect and summarize those of Theorem \ref{thm:VIExistenceUniquenessH2RegularEllipticHeston} and so the result is a restatement of Theorem \ref{thm:VIExistenceUniquenessH2RegularEllipticHeston}. When $g\neq 0$, the result follows just as in the proof of Theorem \ref{thm:MainExistenceUniquenessBoundaryValueProblem}, with the additional choice of $\tilde\psi := \psi-g$; see also Remark \ref{rmk:ReductionInhomogeneousVariationalInequality}.
\end{proof}

\subsection[Local $H^2$ regularity of solutions to the variational inequality]{Local $\mathbf{H^2}$ regularity of solutions to the variational inequality}
\label{subsec:LocalH2RegularityInequality}
We have the following analogue of \cite[Theorem IV.8.6]{Kinderlehrer_Stampacchia_1980} (where $\psi$ is assumed Lipschitz) and which will be useful in situations where we do not know that $\psi$ is in $H^2(\sO,\fw)$.

\begin{thm}[Local $H^2$ regularity and estimate for solutions to the non-coercive variational inequality]
\label{thm:H2BoundSolutionHestonVarIneqLipschitz}
Assume the hypotheses of Theorem \ref{thm:VIExistenceUniquenessEllipticHeston_Improved} and, in addition, that
\begin{gather}
\label{eq:1plusyPower3over2f}
(1+y)^{3/2}f \in L^2(\sO,\fw),
\\
\label{eq:1plusyPower5over2Mm}
(1+y)^{5/2}m, (1+y)^{5/2}M \in L^2(\sO,\fw).
\end{gather}
Let $\sU \subseteqq \sO$ be an open subset. Require that $\sO$ obeys Hypothesis \ref{hyp:DomainCombinedCondition} and that $\sU$ obeys Hypothesis \ref{hyp:DomainCombinedCondition} with the role of $\Gamma_1$ replaced by $\HH\cap\partial\sU$. Require that $\psi \in H^1(\sO,\fw)$ obeys \eqref{eq:1plusyPower3over2H1psi} and
\begin{equation}
\label{eq:sqrtyApsiL2Interior}
(1+y)^{1/2}A\psi \in L^2(\sU,\fw).
\end{equation}
If $u \in H^1_0(\sO\cup\Gamma_0)$ is the unique solution to Problem \ref{prob:HomogeneousHestonVIProblem}, then $u \in H^2(\sU',\fw)$ for every open subset $\sU' \subset \sU$ with $\bar\sU'\less\partial\sO \subset \sU$,
\begin{equation}
\label{eq:DistanceUPrimeUPositive}
\hbox{dist}(\sO\cap\partial\sU',\sO\cap\partial\sU) > 0.
\end{equation}
Moreover, $y^{5/2}u \in L^2(\sO,\fw)$, and
\begin{equation}
\label{eq:H2BoundSolutionHestonVarIneqLipschitz}
\begin{aligned}
\|u\|_{H^2(\sU',\fw)} &\leq C\left(\|(1+y)^{3/2}f\|_{L^2(\sO,\fw)} + \|(1+y)^{5/2}u\|_{L^2(\sO,\fw)} \right.
\\
&\quad + \left. \|(1+y)^{1/2}A\psi\|_{L^2(\sU,\fw)} + \|(1+y)^{3/2}\psi\|_{H^1(\sO,\fw)}\right),
\end{aligned}
\end{equation}
where $C$ depends only on the constant coefficients of $A$ and the constants of Hypothesis \ref{hyp:HestonDomainNearGammaOne} prescribing the geometry of $\HH\cap\partial\sU$ and $\Gamma_1$, and $\hbox{dist}(\sO\cap\partial\sU',\sO\cap\partial\sU)$.
\end{thm}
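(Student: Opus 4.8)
The plan is to localize the global $H^2$ regularity result Theorem~\ref{thm:VIRegularityEllipticHestonNoncoercive} by means of a cutoff function that vanishes near $\sO\cap\partial\sU$ but equals one on $\sU'$, thereby converting the interior obstacle function $\psi\in H^1(\sO,\fw)$ (which is only known to satisfy $(1+y)^{1/2}A\psi\in L^2(\sU,\fw)$) into a globally admissible obstacle. First I would observe that, since the solution $u$ exists and is unique by Theorem~\ref{thm:VIExistenceUniquenessEllipticHeston_Improved}, it satisfies the pointwise bounds \eqref{eq:uBoundedObstacle}, namely $\max\{m,\psi\}\leq u\leq M$ a.e.\ on $\sO$; combined with \eqref{eq:1plusyPower5over2Mm} this yields $(1+y)^{5/2}u\in L^2(\sO,\fw)$, which is the source of the higher power of $y$ appearing on the right-hand side of \eqref{eq:H2BoundSolutionHestonVarIneqLipschitz}. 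Using Corollary~\ref{cor:VIExistenceUniquenessEllipticHestonPowery} with $s=3/2$ and the hypotheses \eqref{eq:1plusyPower3over2f}, \eqref{eq:1plusyPower5over2Mm}, \eqref{eq:1plusyPower3over2H1psi} I would also upgrade the a priori regularity of $u$ to $y^{3/2}u\in H^1(\sO,\fw)$, so that $(1+y)^{3/2}Du\in L^2(\sO,\fw)$ with the appropriate bound.

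Next I would introduce a cutoff function $\chi\in C^\infty_0(\RR^2)$, built from the cutoff of Definition~\ref{defn:RadialCutoffFunction} composed with charts straightening $\HH\cap\partial\sU$ (using Hypothesis~\ref{hyp:DomainCombinedCondition} for $\sU$, with $\Gamma_1$ replaced by $\HH\cap\partial\sU$), such that $\chi\equiv 1$ on $\sU'$, $\supp\chi\cap\sO\subset\sU$, and $\chi$ has $D\chi$, $D^2\chi$ bounded by constants depending only on $\hbox{dist}(\sO\cap\partial\sU',\sO\cap\partial\sU)$ and the geometry constants. The key computation is to test the variational inequality \eqref{eq:VIProblemHestonHomgeneous} in a way that exhibits $\chi u$ (or rather $\chi^2(u-\psi)$-type combinations) as solving a localized variational problem. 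Concretely, working through the penalized approximations $u_\eps$ (which solve the equation \eqref{eq:PenalizedProblem} and obey the comparison bounds \eqref{eq:uepsBounds} by Lemma~\ref{lem:ComparisionPrinciplePenalizedHeston}), I would substitute test functions of the form $v=\varphi^2(u_\eps-\psi)$ with $\varphi=\zeta_R\chi y^s$ and exploit strong monotonicity of $\beta_\eps$ (Lemma~\ref{lem:StronglyMonotone}) to kill the penalization term, then apply the commutator energy identity \eqref{eq:BilinearFormSquaredFunction}, the commutator estimate \eqref{eq:CommutatorInnerProductEstimate}, and the G\r{a}rding inequality \eqref{eq:CoerciveHeston} — exactly as in the proofs of Lemma~\ref{lem:ExistenceUniquenessEllipticHestonPenalizedProblemPowery} and Corollary~\ref{cor:VIExistenceUniquenessEllipticCoerciveHestonPowery}, but now with the extra factor $\chi$ so that all $A\psi$ terms appear only on $\supp\chi\cap\sO\subset\sU$ where $(1+y)^{1/2}A\psi\in L^2$ by \eqref{eq:sqrtyApsiL2Interior}. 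Taking $\eps\to 0$ and $R\to\infty$ as in Theorem~\ref{thm:VIRegularityEllipticHeston}, using the dominated convergence theorem and the uniform bounds from \eqref{eq:uepsBounds}, \eqref{eq:1plusyPower5over2Mm}, I would conclude $\chi u\in H^2(\sO,\fw)$ with the bound \eqref{eq:H2BoundSolutionHestonVarIneqLipschitz}, hence $u\in H^2(\sU',\fw)$.

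The main obstacle I anticipate is bookkeeping the powers of $y$ that propagate through the cutoff-and-commutator machinery: the second-derivative estimate \eqref{eq:AuxiliaryWeightedH2EstimateSisZero} and the chain $\eqref{eq:1plusyDu}\Rightarrow\eqref{eq:HEstimateyD2uSpecial}\Rightarrow\eqref{eq:SecondDerivativeGlobalHestonEstimateSpecial}$ each cost one extra power of $(1+y)$, while the interior localization via $\varphi=\zeta_R\chi y^{3/2}$ needed to absorb the $y$-weighted first derivatives (cf.\ the role of $s=1$ in Theorem~\ref{thm:VIRegularityEllipticHeston}) costs still more, which is precisely why the hypotheses \eqref{eq:1plusyPower3over2f} and \eqref{eq:1plusyPower5over2Mm} carry the powers $3/2$ and $5/2$ and the estimate \eqref{eq:H2BoundSolutionHestonVarIneqLipschitz} has $(1+y)^{5/2}u$ on the right. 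A secondary technical point is that the cutoff $\chi$ must be chosen to respect both $\Gamma_1$ and $\HH\cap\partial\sU$ simultaneously near any corner where these meet $\Gamma_0$; this is handled by Hypothesis~\ref{hyp:DomainCombinedCondition} applied to $\sU$, which guarantees the product structure $\sU\cap(\RR\times(0,\delta))=(\HH\cap\partial\sU$-base$)\times(0,\delta)$ near $\Gamma_0$, so the same integration-by-parts-in-$x$-or-$y$ arguments as in Step~3 of the proof of Proposition~\ref{prop:AuxiliaryWeightedH2EstimateSisZero} go through verbatim after the further localization by $\chi$. Everything else — existence, uniqueness, the comparison bounds, and the penalization estimates — is quoted directly from the earlier results.
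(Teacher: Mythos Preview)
Your overall strategy is sound and the bookkeeping of $y$-weights is accurate, but the paper takes a more modular route that avoids re-running the penalization machinery with cutoffs. Instead of going back to the penalized approximations $u_\eps$ and inserting $\chi$ into the test functions there, the paper proves as a separate claim (Claim~\ref{claim:ObstacleFunctionLocalization}) that $\zeta u$ itself solves the variational inequality \eqref{eq:VIProblemHestonHomgeneous} on the subdomain $\sU$, with obstacle $\zeta\psi$ and modified source $f_\zeta:=\zeta f+[A,\zeta]u$. The argument for this claim works directly at the level of the VI: one approximates $u$ by smooth functions, uses Lemma~\ref{lem:HestonIntegrationByParts} to write $a(\zeta u,\zeta v-\zeta u)=a(u,\zeta^2(v-u))+([A,\zeta]u,\zeta(v-u))_H$, and then observes that $\zeta^2 v+(1-\zeta^2)u\geq\psi$ whenever $v\geq\psi$ (convexity of the constraint set), so the original VI on $\sO$ can be invoked with this convex combination as test function. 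Once Claim~\ref{claim:ObstacleFunctionLocalization} is in hand, Theorem~\ref{thm:VIRegularityEllipticHestonNoncoercive} is applied as a black box to $\zeta u$ on $\sU$, and the commutator terms $[A,\zeta]u$ and $[A,\zeta]\psi$ in the resulting estimate are controlled via Corollary~\ref{cor:VIExistenceUniquenessEllipticHestonPowery} with $s=1/2$ and $s=3/2$, exactly as you anticipated.

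The practical difference is that the paper's route requires no new second-derivative computations at all---everything is reduced to the already-proved global result---whereas your route would have you reprove the content of Theorems~\ref{thm:VIRegularityEllipticHeston} and~\ref{thm:VIRegularityEllipticHestonNoncoercive} with an extra cutoff threaded through. Your approach should work, but isolating the localization as a claim at the VI level (rather than the penalized level) is both shorter and yields a reusable statement: the paper immediately recycles Claim~\ref{claim:ObstacleFunctionLocalization} in Corollary~\ref{cor:H2BoundSolutionHestonVarIneqLipschitz} and in the $W^{2,p}$ argument of Theorem~\ref{thm:LocalW2pRegularityHestonVI}.
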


\begin{rmk}
\label{rmk:H2BoundSolutionHestonVarIneqLipschitz}
We note that
\begin{enumerate}
\item Neither $\sU'$ nor $\sU$ are required to be bounded in the hypotheses of Theorem \ref{thm:H2BoundSolutionHestonVarIneqLipschitz}.
\item The difference between the powers of $1+y$ appearing on the right-hand sides of \eqref{eq:H2BoundSolutionHestonStatVarIneqCorrected} and \eqref{eq:H2BoundSolutionHestonVarIneqLipschitz} is an artifact of the method of proof of Theorem \ref{thm:H2BoundSolutionHestonVarIneqLipschitz}.
\end{enumerate}
\end{rmk}

\begin{proof}
Because $M, m \in H^2(\sO,\fw)$ obey \eqref{eq:1plusyPower5over2Mm} and $u$ obeys \eqref{eq:uBounds}, then $(1+y)^{5/2}u \in L^2(\sO,\fw)$. That observation and the condition \eqref{eq:1plusyPower3over2f} on $f$ and conditions \eqref{eq:1plusyPower3over2H1psi} and \eqref{eq:sqrtyApsiL2Interior} on $\psi$ ensure that the right-hand side of \eqref{eq:H2BoundSolutionHestonVarIneqLipschitz} is finite.

Let $\zeta \in C^\infty(\bar\HH)$ be a cutoff function such that $0\leq \zeta\leq 1$ on $\HH$, $\zeta = 1$ on $\sU'$, $\zeta>0$ on $\sU$, and $\zeta  = 0$ on $\sO\less \sU$. We shall use $\zeta$ to localize the variational inequality in Problem \ref{prob:HomogeneousHestonVIProblem}. By \eqref{eq:DistanceUPrimeUPositive} and construction of $\zeta$, there is a positive constant, $C_0$, depending only on $\hbox{dist}(\sO\cap\partial\sU',\sO\cap\partial\sU)$ such that
\begin{equation}
\label{eq:CutoffFunctionSecondDerivativeBounds}
\|\zeta\|_{C^2(\HH)} \leq C_0.
\end{equation}
We obtain $\zeta\psi \in H^1(\sU,\fw)$ by \eqref{eq:CutoffFunctionSecondDerivativeBounds} and the fact that $\psi \in H^1(\sO,\fw)$. Because $\zeta = 0$ on $\partial\sU\less\partial\sO$ and $\psi\leq 0$ on $\Gamma_1 = \partial\sO\less\bar\Gamma_0$ (trace sense), then $\zeta\psi\leq 0$ on $\partial\sU\less\bar\Gamma_0$ (trace sense). Similarly, as $\zeta=0$ on $\partial\sU\less\partial\sO$ and $u=0$ on $\partial\sO\less\bar\Gamma_0$ (trace sense), then $\zeta u = 0$ on $\partial\sU\less\bar\Gamma_0$ (trace sense) and so
\begin{equation}
\label{eq:CutoffuH1Zero}
\zeta u \in H^1_0(\sU\cup\Gamma_0,\fw)
\end{equation}
by Lemma \ref{lem:EvansGamma1TraceZero}.

\begin{claim}
\label{claim:ObstacleFunctionLocalization}
If $u$ is a solution to Problem \ref{prob:HomogeneousHestonVIProblem} on $\sO$ with obstacle function, $\psi \in H^1(\sO,\fw)$, and source function, $f\in L^2(\sO,\fw)$, then $\zeta u \in H^1_0(\sU\cup\Gamma_0,\fw)$ is a solution to Problem \ref{prob:HomogeneousHestonVIProblem} on $\sU$ with obstacle function, $\zeta\psi\in H^1(\sU,\fw)$, and source function,
\begin{equation}
\label{eq:fCutoff}
f_\zeta := \zeta f+[A,\zeta]u \in L^2(\sU,\fw).
\end{equation}
\end{claim}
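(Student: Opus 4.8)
The plan is to verify, term by term, the three defining ingredients of Problem~\ref{prob:HomogeneousHestonVIProblem} on $\sU$ for the triple $(\zeta u,\ \zeta\psi,\ f_\zeta)$. The membership $\zeta u\in H^1_0(\sU\cup\Gamma_0,\fw)$ is already \eqref{eq:CutoffuH1Zero}, and $\zeta\psi\in H^1(\sU,\fw)$ with $\zeta\psi\le 0$ on $\HH\cap\partial\sU$ (trace sense) was noted just above, so that $(\zeta\psi)^+\in H^1_0(\sU\cup\Gamma_0,\fw)$ by Lemmas~\ref{lem:EvansGamma1TraceZero} and~\ref{lem:SobolevSpaceClosedUnderMaxPart}; moreover the obstacle inequality $\zeta u\ge\zeta\psi$ a.e.\ on $\sU$ is immediate from $u\ge\psi$ a.e.\ on $\sO$ and $0\le\zeta\le 1$. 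Thus the two substantive points are (i) the variational inequality for $\zeta u$ on $\sU$, and (ii) the integrability $f_\zeta\in L^2(\sU,\fw)$.

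\textbf{The variational inequality.} Granting (ii) for the moment, I would proceed by lifting test functions from $\sU$ to $\sO$. Given $w\in H^1_0(\sU\cup\Gamma_0,\fw)$ with $w\ge\zeta\psi$ a.e.\ on $\sU$ (extended by zero to $\sO$), set $\eta:=w-\zeta u\in H^1_0(\sU\cup\Gamma_0,\fw)$ and
\[
v:=u+\zeta\eta=(1-\zeta^2)u+\zeta w .
\]
First I would check $v\in\KK$: the increment $v-u=\zeta\eta$ is supported in $\bar\sU$ and vanishes on $\HH\cap\partial\sU$ (hence on $\Gamma_1$) in the trace sense, so $v-u\in H^1_0(\sO\cup\Gamma_0,\fw)$ by Lemma~\ref{lem:EvansGamma1TraceZero} and $v\in H^1_0(\sO\cup\Gamma_0,\fw)$; and $v\ge\psi$ a.e.\ on $\sO$, since on $\sO\less\sU$ one has $\zeta=0$ and $v=u\ge\psi$, while on $\sU$ one has $v=(1-\zeta^2)u+\zeta w\ge(1-\zeta^2)\psi+\zeta(\zeta\psi)=\psi$. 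Inserting $v$ into \eqref{eq:VIProblemHestonHomgeneous} for $u$ on $\sO$ gives
\[
a(u,\zeta\eta)\ \ge\ (f,\zeta\eta)_{L^2(\sO,\fw)}=(f,\zeta\eta)_{L^2(\sU,\fw)},
\]
the last equality because $\zeta\eta$ is supported in $\bar\sU$. On the other hand, approximating $u$ by functions in $C^\infty_0(\sO\cup\Gamma_0)$ and $\eta$ first by functions in $C^\infty_0(\sU\cup\Gamma_0)$, the classical identity $A(\zeta u)=\zeta Au+[A,\zeta]u$ (with $[A,\zeta]u$ given by \eqref{eq:ACommutator}) together with the integration-by-parts formula of Lemma~\ref{lem:HestonIntegrationByParts} (whose $\Gamma_1$-boundary term vanishes since $\eta$ and $\zeta\eta$ vanish near $\Gamma_1$) yields, after passing to the limit,
\[
a_\sU(\zeta u,\eta)=a(u,\zeta\eta)+\bigl([A,\zeta]u,\eta\bigr)_{L^2(\sU,\fw)},\qquad\forall\,\eta\in H^1_0(\sU\cup\Gamma_0,\fw),
\]
where $a_\sU(\cdot,\cdot)$ denotes the form \eqref{eq:HestonWithKillingBilinearForm} with $\sO$ replaced by $\sU$, and where the final extension in $\eta$ from smooth to general uses the continuity estimate \eqref{eq:StrongerHestonBilinearFormContinuity} (Assumption~\ref{assump:HestonCoefficientb1}, $b_1=0$) together with (ii). Since $f_\zeta=\zeta f+[A,\zeta]u$ by \eqref{eq:fCutoff}, subtracting $(f_\zeta,\eta)_{L^2(\sU,\fw)}=(f,\zeta\eta)_{L^2(\sU,\fw)}+([A,\zeta]u,\eta)_{L^2(\sU,\fw)}$ from the displayed identity makes the commutator terms cancel, leaving
\[
a_\sU(\zeta u,\eta)-(f_\zeta,\eta)_{L^2(\sU,\fw)}=a(u,\zeta\eta)-(f,\zeta\eta)_{L^2(\sU,\fw)}\ \ge\ 0 ,
\]
which, since $\eta=w-\zeta u$, is exactly \eqref{eq:VIProblemHestonHomgeneous} for $\zeta u$ on $\sU$ with obstacle $\zeta\psi$ and source $f_\zeta$.

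\textbf{Integrability, and the main obstacle.} It remains to establish (ii). Since $|\zeta f|\le|f|\in L^2(\sO,\fw)$, this reduces to $[A,\zeta]u\in L^2(\sU,\fw)$, and by \eqref{eq:ACommutator} and \eqref{eq:CutoffFunctionSecondDerivativeBounds} one has pointwise on $\supp D\zeta$ the bound $|[A,\zeta]u|\le C\bigl(y\,|Du|+(1+y)|u|\bigr)$. Here $(1+y)u\in L^2(\sO,\fw)$ follows from the pointwise bound \eqref{eq:uBoundedObstacle} and \eqref{eq:OneplusyMmInL2}, whereas the term $y\,|Du|$ is the delicate one, because the bare membership $u\in H^1(\sO,\fw)$ supplies only $y^{1/2}|Du|\in L^2(\sO,\fw)$. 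This is precisely where the extra hypotheses of the theorem enter: applying Corollary~\ref{cor:VIExistenceUniquenessEllipticHestonPowery} with $s=\tfrac12$ --- whose hypotheses \eqref{eq:1pluspoweryplus1mMLq}, \eqref{eq:poweryfL2}, \eqref{eq:1pluspoweryfpsiH1} are guaranteed by \eqref{eq:1plusyPower3over2f}, \eqref{eq:1plusyPower3over2H1psi}, \eqref{eq:1plusyPower5over2Mm} and the hypotheses inherited from Theorem~\ref{thm:VIExistenceUniquenessEllipticHeston_Improved} (interpolating, where needed, between the $L^q$ bound \eqref{eq:Sqrt1plusyMmInLq} and the weighted $L^2$ bound on $M,m$) --- yields $y^{1/2}u\in H^1(\sO,\fw)$, hence $y\,Du\in L^2(\sO,\fw)$, so that $f_\zeta\in L^2(\sU,\fw)$. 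I expect this verification of $y\,Du\in L^2(\sO,\fw)$, and the attendant justification that the commutator manipulations with the \emph{non}-compactly-supported cutoff $\zeta$ are legitimate --- which I would carry out by inserting the radial cutoff $\zeta_R$ of Definition~\ref{defn:RadialCutoffFunction}, using $[A,\zeta\zeta_R]u=[A,\zeta](\zeta_R u)+\zeta[A,\zeta_R]u$, and letting $R\to\infty$ --- to be the main technical obstacle; once these are in place, the lifting argument for the variational inequality itself is routine.
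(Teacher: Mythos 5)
Your proof is correct and rests on the same two pillars as the paper's: the commutator identity $a(\zeta u,\eta)=a(u,\zeta\eta)+([A,\zeta]u,\eta)_{L^2}$ obtained from $A(\zeta u)=\zeta Au+[A,\zeta]u$ plus Lemma \ref{lem:HestonIntegrationByParts} and density, the convexity trick exploiting that a suitable combination of $u$ and the test function still lies above $\psi$, and the reduction of $f_\zeta\in L^2(\sU,\fw)$ to $yDu\in L^2(\sO,\fw)$ via the $s=1/2$ case of the weighted estimate \eqref{eq:EllipticHestonAPosterioriEstimatePoweryCorrected}. The one organizational difference is the direction of the test-function correspondence: the paper starts from $v\in H^1_0(\sO\cup\Gamma_0,\fw)$ with $v\geq\psi$, derives $a(\zeta u,\zeta v-\zeta u)\geq(f_\zeta,\zeta v-\zeta u)_H$ using the admissible comparison $\zeta^2v+(1-\zeta^2)u\geq\psi$, and then closes with a density assertion that test functions of the form $\zeta v$ exhaust the admissible set on $\sU$; you instead lift each admissible $w\geq\zeta\psi$ on $\sU$ to the admissible $v=(1-\zeta^2)u+\zeta w\geq\psi$ on $\sO$, which yields the inequality for every $w$ directly and dispenses with that final density step (the slightly delicate point in the paper's version, since density in $H^1_0(\sU\cup\Gamma_0,\fw)$ must be reconciled with the obstacle constraint). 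Since $\zeta\eta=\zeta^2(v-u)$ when $w=\zeta v$, the two computations are literally the same; your variant is marginally tighter at the closing step, and your identification of $yDu\in L^2$ as the genuine technical obstacle matches the paper exactly.
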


\begin{proof}[Proof of Claim \ref{claim:ObstacleFunctionLocalization}]
Recall that $C^\infty_0(\sO\cup\Gamma_0)$ is dense in $H^1_0(\sO\cup\Gamma_0,\fw)$ by Definition \ref{defn:H1WeightedSobolevSpaces} and so there is a sequence $\{u_n\}_{n\geq 1} \subset C^\infty_0(\sO\cup\Gamma_0)$ such that $u_n \to u$ strongly in $H^1_0(\sO\cup\Gamma_0,\fw)$. Then, for all $v\in C^\infty_0(\sO\cup\Gamma_0)$,
\begin{align*}
a(\zeta u_n,\zeta v - \zeta u_n) &= (A(\zeta u_n), \zeta v-\zeta u_n)_H \quad\hbox{(by Lemma \ref{lem:HestonIntegrationByParts})}
\\
&= (\zeta Au_n + [A,\zeta]u_n, \zeta v-\zeta u_n)_H
\\
&= (Au_n, \zeta^2 v-\zeta^2 u_n)_H + ([A,\zeta]u_n, \zeta v-\zeta u_n)_H
\\
&= a(u_n,\zeta^2(v-u_n))_H  + ([A,\zeta]u_n, \zeta v-\zeta u_n)_H  \quad\hbox{(by Lemma \ref{lem:HestonIntegrationByParts})}.
\end{align*}
Taking limits as $n\to\infty$,
$$
a(\zeta u,\zeta v - \zeta u) = a(u,\zeta^2(v-u))_H  + ([A,\zeta]u, \zeta v-\zeta u)_H,
$$
and because $C^\infty_0(\sO\cup\Gamma_0)$ is dense in $H^1_0(\sO\cup\Gamma_0,\fw)$, the identity continues to hold for all $v \in H^1_0(\sO\cup\Gamma_0,\fw)$. Now suppose $v\geq \psi$ and recall that $u\geq \psi$. But $\zeta^2(v-u) = \zeta^2v + (1-\zeta^2)u - u$ and $\zeta^2v + (1-\zeta^2)u \geq \psi$ (see Remark \ref{rmk:ConvexityObstacleTestFunctionSpace}) since $v, u \geq \psi$. Therefore,
\begin{align*}
a(u,\zeta^2(v-u))_H &= a(u, \zeta^2v + (1-\zeta^2)u - u)
\\
&\geq (f,\zeta^2v + (1-\zeta^2)u - u)_H
\\
&= (f,\zeta^2(v-u))_H, \quad\forall v\geq \psi,
\end{align*}
and so we have
\begin{align*}
a(\zeta u,\zeta v - \zeta u) &\geq (\zeta f,\zeta(v-u))_H  + ([A,\zeta]u, \zeta v-\zeta u)_H
\\
&= (f_\zeta,\zeta v-\zeta u)_H, \quad\forall v\geq \psi, v\in H^1_0(\sO\cup\Gamma_0,\fw).
\end{align*}
The image of the bounded linear map $H^1_0(\sO\cup\Gamma_0,\fw) \to H^1_0(\sU\cup\Gamma_0,\fw)$, $v\mapsto \zeta v$, is dense in $H^1_0(\sU\cup\Gamma_0,\fw)$ and thus
\begin{equation}
\label{eq:CutoffVI}
a(\zeta u, v - \zeta u) \geq (f_\zeta, v-\zeta u)_H, \quad\forall v\geq \zeta\psi, v\in H^1_0(\sU\cup\Gamma_0,\fw),
\end{equation}
as required. Since $[A,\zeta]u := A(\zeta u) - \zeta Au$, we have
\begin{equation}
\label{eq:CommutatorL2Estimate}
\|[A,\zeta]u\|_{L^2(\sO,\fw)} \leq C\left(\|yDu\|_{L^2(\sU,\fw)} + \|(1+y)u\|_{L^2(\sU,\fw)}\right).
\end{equation}
But \eqref{eq:uBoundedObstacle} implies that $(1+y)u \in L^2(\sO,\fw)$, since $(1+y)m, (1+y)M \in L^2(\sO,\fw)$ by \eqref{eq:OneplusyMmInL2}, while
\begin{align*}
\|yDu\|_{L^2(\sU,\fw)} &\leq \|y^{1/2}D(y^{1/2}u)\|_{L^2(\sO,\fw)} + \frac{1}{2}\|u\|_{L^2(\sO,\fw)}
\\
&\leq 2\|y^{1/2}u\|_{H^1(\sO,\fw)},
\end{align*}
and thus, applying \eqref{eq:EllipticHestonAPosterioriEstimatePoweryCorrected} with $s=1/2$,
\begin{equation}
\label{eq:yDuInteriorNonCoerciveL2Bound}
\|yDu\|_{L^2(\sU,\fw)} \leq C\left(\|y^{1/2}f\|_{L^2(\sO,\fw)} + \|(1+y^{3/2})u\|_{L^2(\sO,\fw)} + \|(1+y^{1/2})\psi^+\|_{H^1(\sO,\fw)} \right).
\end{equation}
Indeed, \eqref{eq:uBoundedObstacle} implies that $(1+y^{3/2})u \in L^2(\sO,\fw)$, since, a fortiori, $(1+y)^{5/2}M, (1+y)^{5/2}m \in L^2(\sO,\fw)$ by \eqref{eq:1plusyPower5over2Mm}; $y^{1/2}\psi^+ \in H^1(\sO,\fw)$ since, a fortiori, $(1+y^{3/2})\psi \in H^1(\sO,\fw)$ by \eqref{eq:1plusyPower3over2H1psi} and thus $y^{1/2}\psi^+ \in H^1(\sO,\fw)$ by Lemma \ref{lem:SobolevSpaceClosedUnderMaxPart}; and $y^{1/2}f\in L^2(\sO,\fw)$ since, a fortiori, $(1+y)^{3/2}f\in L^2(\sO,\fw)$ by \eqref{eq:1plusyPower3over2f}. Therefore, $f_\zeta \in L^2(\sO,\fw)$ by \eqref{eq:fCutoff}, \eqref{eq:CommutatorL2Estimate}, \eqref{eq:yDuInteriorNonCoerciveL2Bound}, and the fact that $(1+y)u \in L^2(\sO,\fw)$, and hence $f_\zeta \in L^2(\sU,\fw)$, as required. This competes the proof of Claim \ref{claim:ObstacleFunctionLocalization}.
\end{proof}

By Claim \ref{claim:ObstacleFunctionLocalization} and applying the estimate \eqref{eq:H2BoundSolutionHestonStatVarIneqCorrected} to $\zeta u$, we obtain
\begin{align*}
\|u\|_{H^2(\sU',\fw)} &\leq C\|\zeta u\|_{H^2(\sU,\fw)}
\\
&\leq C\left(\|(1+y)f_\zeta\|_{L^2(\sU,\fw)}  + \|(1+y)^{1/2}A(\zeta\psi)\|_{L^2(\sU,\fw)} \right.
\\
&\quad + \left. \|(1+y^{3/2})\zeta\psi\|_{H^1(\sU,\fw)} + \|(1+y)^2\zeta u\|_{L^2(\sU,\fw)}\right)
\\
&\leq C\left(\|(1+y)[A,\zeta]u\|_{L^2(\sU,\fw)}  + \|(1+y)^2\zeta u\|_{L^2(\sU,\fw)} \right.
\\
&\quad + \|(1+y)^{1/2}A(\zeta\psi)\|_{L^2(\sU,\fw)} + \|(1+y^{3/2})\zeta\psi\|_{H^1(\sU,\fw)}
\\
&\quad + \left. \|(1+y)\zeta f\|_{L^2(\sU,\fw)}\right).
\end{align*}
Thus, using the pointwise identity \eqref{eq:ACommutator} to bound the expression $[A,\zeta]u$ and writing $A(\zeta\psi) = \zeta A\psi + [A,\zeta]\psi$, we obtain
\begin{align*}
\|u\|_{H^2(\sU',\fw)} &\leq C\left(\|(1+y)yDu\|_{L^2(\sU,\fw)} + \|(1+y)^2u\|_{L^2(\sU,\fw)} \right.
\\
&\quad + \|(1+y)^{1/2}\zeta A\psi\|_{L^2(\sU,\fw)} + \|(1+y)^{1/2}[A,\zeta]\psi\|_{L^2(\sU,\fw)}
\\
&\quad + \left. \|(1+y^{3/2})\zeta\psi\|_{H^1(\sU,\fw)} + \|(1+y)f\|_{L^2(\sU,\fw)} \right).
\end{align*}
But $\|(1+y)yDu\|_{L^2(\sU,\fw)} \leq \|(1+y)yDu\|_{L^2(\sO,\fw)}$ and we can apply Corollary \ref{cor:VIExistenceUniquenessEllipticHestonPowery} with $s=3/2$, noting that $(1+y)^{5/2}M, (1+y)^{5/2}m \in L^2(\sO,\fw)$ by \eqref{eq:1plusyPower5over2Mm}; $y^{3/2}f\in L^2(\sO,\fw)$ by \eqref{eq:1plusyPower3over2f}; and $(1+y)\psi^+ \in H^1(\sO,\fw$ since, a fortiori, $(1+y)^{3/2}\psi\in H^1(\sO,\fw$ by \eqref{eq:1plusyPower3over2H1psi}. Therefore, \eqref{eq:EllipticHestonAPosterioriEstimatePoweryCorrected} gives
\begin{equation}
\label{eq:y32uNonCoerciveH1Bound}
\|y^{3/2}u\|_{H^1(\sO,\fw)} \leq C\left(\|y^{3/2}f\|_{L^2(\sO,\fw)} + \|(1+y^{5/2})u\|_{L^2(\sO,\fw)} + \|(1+y)\psi^+\|_{H^1(\sO,\fw)} \right),
\end{equation}
and thus, using $\|y^2Du\|_{L^2(\sU,\fw)} \leq \|y^2Du\|_{L^2(\sO,\fw)}$,
\begin{align*}
\|(1+y)yDu\|_{L^2(\sU,\fw)} &\leq \|yDu\|_{L^2(\sU,\fw)} + \|y^2Du\|_{L^2(\sO,\fw)}
\\
&\leq \|yDu\|_{L^2(\sU,\fw)} + \|y^{1/2}D(y^{3/2}u)\|_{L^2(\sO,\fw)} + (3/2)\|yu\|_{L^2(\sO,\fw)}
\\
&\leq \|yDu\|_{L^2(\sU,\fw)} + \|y^{3/2}u\|_{H^1(\sO,\fw)} + (3/2)\|yu\|_{L^2(\sO,\fw)}
\\
&\leq C\left(\|y^{1/2}f\|_{L^2(\sO,\fw)} + \|(1+y)^{3/2}u\|_{L^2(\sO,\fw)} + \|(1+y)^{1/2}\psi^+\|_{H^1(\sO,\fw)} \right.
\\
&\quad + \left. \|y^{3/2}f\|_{L^2(\sO,\fw)} + \|(1+y)^{5/2}u\|_{L^2(\sO,\fw)} + \|(1+y)\psi^+\|_{H^1(\sO,\fw)}\right)
\\
&\qquad\hbox{(by \eqref{eq:yDuInteriorNonCoerciveL2Bound} and \eqref{eq:y32uNonCoerciveH1Bound})}
\\
&\leq C\left(\|(1+y)^{3/2}f\|_{L^2(\sO,\fw)} + \|(1+y)^{5/2}u\|_{L^2(\sO,\fw)} + \|(1+y)\psi^+\|_{H^1(\sO,\fw)}\right).
\end{align*}
Substituting the preceding estimate into the preceding bound for $\|u\|_{H^2(\sU',\fw)}$ and using the commutator identity \eqref{eq:ACommutator} and \eqref{eq:CutoffFunctionSecondDerivativeBounds} to bound $\|(1+y)^{1/2}[A,\zeta]\psi\|_{L^2(\sU,\fw)}$ yields
\begin{equation}
\begin{aligned}
\label{eq:H2BoundSolutionHestonVarIneqLipschitz_prefinal}
\|u\|_{H^2(\sU',\fw)} &\leq C\left(\|(1+y)^{5/2}u\|_{L^2(\sO,\fw)} + \|(1+y)^{3/2}f\|_{L^2(\sO,\fw)} \right.
\\
&\quad + \|(1+y)^{1/2}A\psi\|_{L^2(\sU,\fw)} + \|(1+y)^{1/2}yD\psi\|_{L^2(\sO,\fw)}
\\
&\quad + \left. \|(1+y)^{3/2}\psi\|_{H^1(\sO,\fw)} + \|(1+y)\psi^+\|_{H^1(\sO,\fw)} \right),
\end{aligned}
\end{equation}
and thus \eqref{eq:H2BoundSolutionHestonVarIneqLipschitz} follows from \eqref{eq:H2BoundSolutionHestonVarIneqLipschitz_prefinal} by Definition \ref{defn:H1WeightedSobolevSpaces} of $H^1(\sO,\fw)$ and Lemma \ref{lem:SobolevSpaceClosedUnderMaxPart}. This completes the proof.
\end{proof}

The proof of Theorem \ref{thm:H2BoundSolutionHestonVarIneqLipschitz} yields the following useful

\begin{cor}[Existence and uniqueness of strong solutions to the localized obstacle problem]
\label{cor:H2BoundSolutionHestonVarIneqLipschitz}
Assume the hypotheses of Theorem \ref{thm:H2BoundSolutionHestonVarIneqLipschitz} and let $u\in H^1_0(\sO\cup\Gamma_0,\fw)$ and $\sU'\subset\sU\subset\sO$ be as in the statement of Theorem \ref{thm:H2BoundSolutionHestonVarIneqLipschitz}. Let $\zeta \in C^\infty(\bar\HH)$ be a cutoff function such that $0\leq \zeta\leq 1$ on $\HH$, $\zeta = 1$ on $\sU'$, $\zeta>0$ on $\sU$, and $\zeta  = 0$ on $\sO\less \sU$. Then $\zeta u \in H^2(\sU,\fw)$ is the unique solution to Problem \ref{prob:HestonObstacleInhomogeneousStrong} (with $g = 0$) with source function $f_\zeta \in L^2(\sU,\fw)$ given by \eqref{eq:fCutoff}, and obstacle function, $\zeta\psi \in H^1(\sU,\fw)$.
\end{cor}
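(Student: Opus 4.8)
The statement is essentially a repackaging of the proof of Theorem \ref{thm:H2BoundSolutionHestonVarIneqLipschitz}, extracting the intermediate fact that the localized function $\zeta u$ solves a full obstacle problem on $\sU$ and invoking uniqueness. The plan is therefore to harvest the conclusions of Claim \ref{claim:ObstacleFunctionLocalization} inside that proof rather than to argue from scratch. First I would recall from Claim \ref{claim:ObstacleFunctionLocalization} that $\zeta u \in H^1_0(\sU\cup\Gamma_0,\fw)$ satisfies the variational inequality \eqref{eq:CutoffVI} on $\sU$ with source function $f_\zeta = \zeta f + [A,\zeta]u \in L^2(\sU,\fw)$ (this membership was established in the course of proving Theorem \ref{thm:H2BoundSolutionHestonVarIneqLipschitz} via \eqref{eq:CommutatorL2Estimate}, \eqref{eq:yDuInteriorNonCoerciveL2Bound}, and the fact that $(1+y)u \in L^2(\sO,\fw)$) and obstacle function $\zeta\psi \in H^1(\sU,\fw)$, with $\zeta\psi \leq 0$ on $\HH\cap\partial\sU$ in the trace sense. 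Thus $\zeta u$ is a solution to Problem \ref{prob:HomogeneousHestonVIProblem} posed on $\sU$ with the role of $\Gamma_1$ played by $\HH\cap\partial\sU$. Next I would verify that the hypotheses of Theorem \ref{thm:VIExistenceUniquenessH2RegularEllipticHeston}, with $\sO$ replaced by $\sU$, hold: the envelope functions may be taken to be $\zeta M$ and $\zeta m$ (or $M$ and $m$ restricted to $\sU$, using $\zeta\psi \leq M$ on $\sU$ since $\psi\leq M$ and $0\leq\zeta\leq 1$ with $M\geq 0$ near $\Gamma_1$ — here one takes the same $M,m$ and notes the required integral and pointwise bounds are inherited), and the auxiliary barrier function $\varphi$ restricted to $\sU$; the conditions \eqref{eq:1plusyfL2}, \eqref{eq:sqrtyApsiL2}, \eqref{eq:1plusyPower3over2H1psi} transfer to $f_\zeta$, $\zeta\psi$ on $\sU$ using \eqref{eq:CutoffFunctionSecondDerivativeBounds} and the membership of $f_\zeta$ in $L^2(\sU,\fw)$ together with the interior regularity already proven. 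Then Theorem \ref{thm:VIExistenceUniquenessH2RegularEllipticHeston} applied on $\sU$ yields a unique solution in $H^2(\sU,\fw)$ to Problem \ref{prob:HestonObstacleInhomogeneousStrong} (with $g=0$) on $\sU$; by Theorem \ref{thm:H2BoundSolutionHestonVarIneqLipschitz} we already know $u \in H^2(\sU',\fw)$, and the cutoff argument combined with \eqref{eq:CutoffFunctionSecondDerivativeBounds} and the interior estimates gives $\zeta u \in H^2(\sU,\fw)$ directly; uniqueness of the solution to Problem \ref{prob:HomogeneousHestonVIProblem} on $\sU$ (which follows from the uniqueness part of Theorem \ref{thm:VIExistenceUniquenessEllipticHeston_Improved} applied on $\sU$, given the barrier function) forces $\zeta u$ to be \emph{the} solution, and Lemma \ref{lem:VIStrongFormHeston} upgrades it to a strong solution of Problem \ref{prob:HestonObstacleInhomogeneousStrong}.

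The remaining point is to confirm $\zeta u \in H^2(\sU,\fw)$ globally on $\sU$ (not merely on $\sU'$). Since $\zeta$ is supported in $\bar\sU$ and vanishes outside $\sU$, and $u \in H^2_{\textrm{loc}}(\sO)\cap H^1_0(\sO\cup\Gamma_0,\fw)$ with the weighted second-derivative bounds from Corollary \ref{cor:VIExistenceUniquenessEllipticHestonPowery} and the interior regularity Theorem \ref{thm:LocalRegularityEllipticHeston}, the product rule $D^2(\zeta u) = \zeta D^2 u + 2 D\zeta \otimes Du + u D^2\zeta$ together with \eqref{eq:CutoffFunctionSecondDerivativeBounds}, the bound \eqref{eq:yDuInteriorNonCoerciveL2Bound} on $\|yDu\|_{L^2(\sU,\fw)}$, and the global membership of $(1+y)u$ in $L^2(\sO,\fw)$ shows $y D^2(\zeta u), (1+y)D(\zeta u), (1+y)^{1/2}(\zeta u) \in L^2(\sU,\fw)$, i.e. $\zeta u \in H^2(\sU,\fw)$. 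Alternatively, one applies Theorem \ref{thm:VIRegularityEllipticHestonNoncoercive} on $\sU$ directly to the variational inequality \eqref{eq:CutoffVI}, whose hypotheses have been checked above; this also delivers the estimate \eqref{eq:H2BoundSolutionHestonVarIneqLipschitz} as a byproduct.

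I expect the main obstacle to be bookkeeping rather than anything conceptual: one must carefully match the weighted-integrability hypotheses required by Theorem \ref{thm:VIExistenceUniquenessH2RegularEllipticHeston} on $\sU$ against what is available for $f_\zeta$, $\zeta\psi$, and the envelope and barrier functions, using the cutoff bound \eqref{eq:CutoffFunctionSecondDerivativeBounds} and the already-established membership $f_\zeta \in L^2(\sU,\fw)$ from the proof of Claim \ref{claim:ObstacleFunctionLocalization}. In particular one should note that $f_\zeta$ need not obey the stronger growth conditions like \eqref{eq:1plusyfL2} globally, but since $\zeta$ has bounded support in the $y$-direction on any bounded piece and $[A,\zeta]u$ is supported where $D\zeta\neq 0$, the weighted bounds on $f_\zeta$ restricted to $\sU$ are governed entirely by the global $H^2(\sO,\fw)$-type estimates on $u$ already in hand. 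Once this matching is done, the uniqueness statement is immediate from Theorem \ref{thm:VIExistenceUniquenessEllipticHeston_Improved} (uniqueness) applied on $\sU$ with the restricted barrier function $\varphi|_{\sU}$.
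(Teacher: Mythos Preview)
Your proposal is correct and takes essentially the same approach as the paper: the paper's proof is a one-line appeal to either applying Theorem~\ref{thm:VIExistenceUniquenessH2RegularEllipticHeston} directly on $\sU$ or invoking Claim~\ref{claim:ObstacleFunctionLocalization} and then repeating the proof of Theorem~\ref{thm:VIExistenceUniquenessH2RegularEllipticHeston}, which is exactly the route you spell out in detail. Your extended hypothesis-checking is more than the paper supplies, but the strategy is identical.
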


\begin{proof}
One obtains the conclusion either by directly applying Theorem \ref{thm:VIExistenceUniquenessH2RegularEllipticHeston} or observing that
Claim \ref{claim:ObstacleFunctionLocalization} implies that $\zeta u \in H^1_0(\sO\cup\Gamma_0,\fw)$ is a solution to Problem \ref{prob:HomogeneousHestonVIProblem} and repeating the proof of Theorem \ref{thm:VIExistenceUniquenessH2RegularEllipticHeston}.
\end{proof}

\subsection[Local $W^{2,p}$ and $C^{1,\alpha}$ regularity of solutions to the obstacle problem]{Local $\mathbf{W^{2,p}}$ and $\mathbf{C^{1,\alpha}}$ regularity of solutions to the obstacle problem}
\label{subsec:LocalW2pC1AlphaRegularityInequality}
We have the following local analogue of \cite[Corollary 2.1.2]{Bensoussan_Lions}. The unweighted Sobolev spaces in the statement of Theorem \ref{thm:LocalW2pRegularityHestonVI}, namely $W^{2,p}_{\textrm{loc}}(\sO\cup\Gamma_1)$, are defined in the standard way \cite{Adams}, \cite{GT}.

\begin{thm}[Local $W^{2,p}$ regularity up to $\Gamma_1$ of solutions to the variational inequality]
\label{thm:LocalW2pRegularityHestonVI}
Assume the hypotheses of Theorem \ref{thm:VIExistenceUniquenessH2RegularEllipticHeston} and let $u \in H^2(\sO,\fw)\cap H^1_0(\sO\cup\Gamma_0,\fw)$ be the unique solution to Problem \ref{prob:HomogeneousHestonVIProblem}. Suppose in addition that, for $2<p<\infty$,
\begin{equation}
\label{eq:SourceObstacleLp}
f \in L^p_{\textrm{loc}}(\sO\cup\Gamma_1) \quad\hbox{and}\quad \psi \in W^{2,p}_{\textrm{loc}}(\sO\cup\Gamma_1).
\end{equation}
Then $u \in W^{2,p}_{\textrm{loc}}(\sO\cup\Gamma_1)$.
\end{thm}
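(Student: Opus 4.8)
The plan is to convert the variational inequality into a \emph{linear} equation $Au=g$ with $g\in L^p_{\mathrm{loc}}(\sO\cup\Gamma_1)$ and then invoke standard interior and boundary $W^{2,p}$ estimates for the (locally non-degenerate) operator $A$. First I would record the starting regularity. Since the hypotheses of Theorem \ref{thm:VIExistenceUniquenessH2RegularEllipticHeston} are in force, the obstacle function obeys $\psi\in H^2(\sO,\fw)$, the unique solution $u$ to Problem \ref{prob:HomogeneousHestonVIProblem} lies in $H^2(\sO,\fw)\cap H^1_0(\sO\cup\Gamma_0,\fw)$, and Lemma \ref{lem:VIStrongFormHeston} shows $u$ is a strong solution, i.e. $Au\geq f$, $u\geq\psi$, and $(Au-f)(u-\psi)=0$ a.e. on $\sO$, with $u=0$ on $\Gamma_1$ in the trace sense. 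A key simple observation is that $\Gamma_1=\HH\cap\partial\sO$, so for every bounded open $\sO'$ with $\bar\sO'$ compactly contained in $\sO\cup\Gamma_1$ the continuous function $y$ attains a positive minimum on $\bar\sO'$; hence on such an $\sO'$ the weight $\fw$ is bounded above and below by positive constants, $A$ is uniformly elliptic with smooth (affine) coefficients, and the weighted spaces $H^k(\sO',\fw)$ coincide (with equivalent norms) with the unweighted $H^k(\sO')$ for $k=0,1,2$. In particular $u\in W^{2,2}_{\mathrm{loc}}(\sO\cup\Gamma_1)$ in the usual unweighted sense, and under Hypothesis \ref{hyp:DomainCombinedCondition} (Hypothesis \ref{hyp:HestonDomainNearGammaOne} with $k=2$) the boundary portion $\Gamma_1$ is, near each of its points, a $C^{2,\alpha}$ hypersurface, in particular $C^{1,1}$.

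Next I would identify $Au$ pointwise. On the open set $\{u>\psi\}$ we have $Au=f$ a.e. On the coincidence set $\{u=\psi\}$, applying twice the standard fact that the distributional gradient of a $W^{1,1}_{\mathrm{loc}}$ function vanishes a.e. on each of its level sets — first to $u-\psi$, then to each first-order derivative $\partial_i(u-\psi)$ — gives $D^2u=D^2\psi$ a.e. on $\{u=\psi\}$, and hence $Au=A\psi$ a.e. there. Therefore
\[
Au = g := f\,\chi_{\{u>\psi\}} + A\psi\,\chi_{\{u=\psi\}}\qquad\text{a.e. on }\sO,
\]
where $f\in L^p_{\mathrm{loc}}(\sO\cup\Gamma_1)$ by hypothesis and $A\psi\in L^p_{\mathrm{loc}}(\sO\cup\Gamma_1)$ because $\psi\in W^{2,p}_{\mathrm{loc}}(\sO\cup\Gamma_1)$ and, on sets of the form $\bar\sO'\subset\sO\cup\Gamma_1$, the coefficients of $A$ (including the degenerate factor $y$) are bounded; thus $g\in L^p_{\mathrm{loc}}(\sO\cup\Gamma_1)$.

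It then remains to bootstrap: $u\in W^{2,2}_{\mathrm{loc}}(\sO\cup\Gamma_1)$ (and $2=d$ is the dimension) is a strong solution of the linear, locally uniformly elliptic equation $Au=g$ with $g\in L^p_{\mathrm{loc}}(\sO\cup\Gamma_1)$, $p>2=d$, subject to the homogeneous Dirichlet condition on the $C^{1,1}$ portion $\Gamma_1$. I would apply the interior $W^{2,p}$ regularity theorem (\cite[Theorem 9.11]{GT}, or \cite[Theorem 9.19]{GT} to promote from $W^{2,2}_{\mathrm{loc}}$) on balls $B\Subset\sO$, and the boundary $W^{2,p}$ regularity theorem (\cite[Theorem 9.13]{GT}) on half-balls $B\cap\sO$ centered at points of $\Gamma_1$, after the standard $C^{2,\alpha}$-flattening of $\Gamma_1$ furnished by Hypothesis \ref{hyp:HestonDomainNearGammaOne}; a cutoff-function patching argument over a locally finite cover of $\bar\sO'$ then gives $u\in W^{2,p}(\sO')$ for every $\sO'$ with $\bar\sO'\subset\sO\cup\Gamma_1$, i.e. $u\in W^{2,p}_{\mathrm{loc}}(\sO\cup\Gamma_1)$, as claimed.

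The main obstacle I anticipate is not the interior estimate but the bookkeeping near $\Gamma_1$: one must check that after localizing by a cutoff $\zeta$, the equation for $\zeta u$ carries an $L^p_{\mathrm{loc}}$ right-hand side, which requires controlling the commutator term $[A,\zeta]u$; this is fine because $u\in W^{2,2}_{\mathrm{loc}}\hookrightarrow W^{1,q}_{\mathrm{loc}}$ for every $q<\infty$ in dimension two, so $[A,\zeta]u\in L^p_{\mathrm{loc}}$ for any $p$, but it should be stated explicitly. A secondary point requiring care is the justification that $D^2u=D^2\psi$ a.e. on $\{u=\psi\}$, which uses only that both functions lie in $W^{2,1}_{\mathrm{loc}}$ of a neighborhood of $\sO\cup\Gamma_1$ together with the level-set lemma applied to the first derivatives; and one should note that no assertion is made near $\Gamma_0$, which is exactly why the restriction to $\sO\cup\Gamma_1$ keeps us away from the degeneracy locus of $A$.
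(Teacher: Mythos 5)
Your argument is correct, but it takes a genuinely different route from the paper's. The paper localizes: it fixes a bounded $\sU$ with $\bar\sU\subset\sO\cup\Gamma_1$ and a cutoff $\zeta$, shows via Claim \ref{claim:ObstacleFunctionLocalization} that $\zeta u$ solves the coercive variational inequality \eqref{eq:CoerciveLocalizedVI} on $\sU$ with source $f_{\zeta,\lambda}\in L^p(\sU)$ and obstacle $\zeta\psi\in W^{2,p}(\sU)$, observes that $A$ is uniformly elliptic on $\sU$, and then invokes the standard $W^{2,p}$ regularity theorem for non-degenerate obstacle problems (\cite[Exercise 1.3.1]{Friedman_1982}) as a black box. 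You instead eliminate the obstacle altogether: using the strong form of the problem (Lemma \ref{lem:VIStrongFormHeston}) together with the level-set lemma (\cite[Lemma 7.7]{GT}) applied twice to $u-\psi\in W^{2,1}_{\textrm{loc}}$, you obtain $Au=f\chi_{\{u>\psi\}}+A\psi\chi_{\{u=\psi\}}\in L^p_{\textrm{loc}}(\sO\cup\Gamma_1)$ and then run linear interior and boundary $L^p$ elliptic theory. The trade-off is clear: your route is more self-contained (only linear theory plus an elementary measure-theoretic identity), but it is available only because the $H^2(\sO,\fw)$ — hence local $W^{2,2}$ — regularity of $u$ has already been established in Theorem \ref{thm:VIExistenceUniquenessH2RegularEllipticHeston}; the cited Friedman result does not presuppose that and is itself usually proved by penalization. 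Two points you flag deserve to stay explicit in a final write-up: the commutator $[A,\zeta]u\in L^p_{\textrm{loc}}$ via $W^{2,2}\hookrightarrow W^{1,q}$ in dimension two (the paper makes exactly this Sobolev-embedding step for $f_\zeta$), and the fact that promoting a $W^{2,2}_{\textrm{loc}}$ strong solution of $Au=g$ to $W^{2,p}_{\textrm{loc}}$ rests on a uniqueness argument (\cite[Lemma 9.16, Theorems 9.15 \& 9.19]{GT}) for which $W^{2,d}_{\textrm{loc}}=W^{2,2}_{\textrm{loc}}$ in $d=2$ is precisely what is needed.
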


\begin{proof}
Let $\sU'\subset \sU \subset \sO$ and $\zeta \in C^\infty(\bar\HH)$ be as in the hypotheses of Theorem \ref{thm:H2BoundSolutionHestonVarIneqLipschitz} and preamble to Claim \ref{claim:ObstacleFunctionLocalization} and require in addition that $\sU$ is bounded and $\bar\sU \subset \sO\cup\Gamma_1$, so $\hbox{dist}(\sU,\Gamma_0)>0$. Because $\sU$ is bounded and $\bar\sU \subset \sO\cup\Gamma_1$ and $u \in H^2(\sO,\fw)$, we obtain $u \in W^{2,2}(\sU)$. Therefore, $[A,\zeta]u \in W^{1,2}(\sU)$ and thus $[A,\zeta]u \in L^p(\sU)$ by \cite[Theorem 5.4, Part I (A)]{Adams} and hence $f_\zeta \in L^p(\sU)$ by identity \eqref{eq:fCutoff} and integrability property \eqref{eq:SourceObstacleLp}. Again, because $\sU$ is bounded and $\bar\sU \subset \sO\cup\Gamma_1$ we obtain $\zeta\psi \in W^{1,p}(\sU)$ by \eqref{eq:SourceObstacleLp}. Now $\zeta u \in H^1_0(\sU\cup\Gamma_0,\fw)$ by \eqref{eq:CutoffuH1Zero} and $H^1_0(\sU\cup\Gamma_0,\fw)= W^{1,2}_0(\sU)$ since $\sU$ is bounded and $\bar\sU \subset \sO\cup\Gamma_1$. Moreover, since $\zeta u \in W^{1,2}_0(\sU)$ obeys \eqref{eq:CutoffVI}, it also obeys the coercive variational inequality (compare the proof of Lemma \ref{lem:VIExistenceUniquenessEllipticHeston}),
\begin{equation}
\label{eq:CoerciveLocalizedVI}
a_\lambda(\zeta u, v - \zeta u) \geq (f_{\zeta,\lambda}, v-\zeta u)_{L^2(\sU)}, \quad\forall v\geq \zeta\psi, v \in W^{1,2}_0(\sU),
\end{equation}
with source function $f_{\zeta,\lambda} := f_\zeta + \lambda(1+y)\zeta u \in L^2(\sU)$, noting that $L^2(\sU,\fw)=L^2(\sU)$. But $f_{\zeta,\lambda} \in L^p(\sU)$, since $u\in W^{2,2}(\sU)$ and $W^{2,2}(\sU) \to L^p(\sU)$ by Lemma  \ref{lem:H2SobolevEmbedding}. Because $A$ is uniformly elliptic on $\sU$, then \cite[Exercise 1.3.1]{Friedman_1982} implies that the solution, $\zeta u \in W^{1,2}_0(\sU)$, to the variational inequality \eqref{eq:CoerciveLocalizedVI} is in $W^{2,p}(\sU)$ and hence $u\in W^{2,p}(\sU')$ since $\zeta = 1$ on $\sU'$. Because $\sU'\subset \sO$ was otherwise arbitrary, the conclusion follows.
\end{proof}

\begin{rmk}[Alternative regularity sources]
Although \cite[Theorem 1.3.2]{Friedman_1982} yields a conclusion which is the same as \cite[Exercise 3.1]{Friedman_1982}, the hypotheses on $f,g,\psi$ are stronger than those of \cite[Exercise 3.1]{Friedman_1982} (namely, $f \in C^\alpha(\bar \sU)$, $g \in C^{2,\alpha}(\bar \sU)$, and $\psi \in C^2(\bar \sU)$). Similarly, \cite[Corollary 2.1.3]{Bensoussan_Lions} yields a conclusion which is the same as \cite[Exercise 3.1]{Friedman_1982} but the hypotheses of \cite[Theorem 2.1.9 \& Corollary 2.1.3]{Bensoussan_Lions} are not obeyed by the Heston bilinear form \eqref{eq:HestonWithKillingBilinearForm}.
\end{rmk}

We obtain the following analogue of \cite[Corollary 2.1.3]{Bensoussan_Lions}.

\begin{cor}[$C^{1,\alpha}$ regularity in the interior and up to $\Gamma_1$]
\label{cor:LocalC1alphaRegularity}
Assume the hypotheses of Theorem \ref{thm:LocalW2pRegularityHestonVI}. Then $u \in C^{1,\alpha}_{\textrm{loc}}(\sO\cup\Gamma_1)$, for $0<\alpha\leq 1-2/p$.
\end{cor}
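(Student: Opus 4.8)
The plan is to deduce the corollary directly from the local $W^{2,p}$ regularity already furnished by Theorem \ref{thm:LocalW2pRegularityHestonVI}, combined with a Sobolev embedding theorem on bounded subdomains of $\HH$ that meet $\Gamma_1$, using the fact that $d=2$ throughout this article so that the relevant Sobolev conjugate exponent produces precisely the H\"older exponent $1-2/p$.

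First I would fix an arbitrary point $P\in\sO\cup\Gamma_1$ and choose a \emph{bounded} open neighborhood $\sU$ of $P$ with $\bar\sU\subset\sO\cup\Gamma_1$, so that $\dist(\sU,\Gamma_0)>0$, and such that $\sU$ enjoys the cone property. This choice is possible because $\sO$ obeys Hypothesis \ref{hyp:HestonDomainNearGammaOne} on the geometry of $\Gamma_1$: away from $\bar\Gamma_0$ one can take $\sU$ to be (the image under the straightening charts $\Phi_j$ of) a small half-ball, which is a uniformly Lipschitz — indeed $C^{2,\alpha}$ — domain, while in the interior of $\sO$ one simply takes a Euclidean ball. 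Theorem \ref{thm:LocalW2pRegularityHestonVI} then gives $u\in W^{2,p}(\sU)$ for $2<p<\infty$; no weighted structure intervenes here since on such $\sU$ the weight $\fw$ is comparable to a positive constant.

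Next I would invoke the Sobolev embedding $W^{2,p}(\sU)\hookrightarrow C^{1,\alpha}(\bar\sU)$, valid with $\alpha=1-2/p$ whenever $2<p<\infty$ and $\sU\subset\RR^2$ satisfies the cone condition (see \cite[Theorem 5.4]{Adams} or \cite[Theorem 7.26]{GT}). Concretely, $Du\in W^{1,p}(\sU)$ and Morrey's inequality yields $Du\in C^{0,1-2/p}(\bar\sU)$, hence $u\in C^{1,1-2/p}(\bar\sU)$; by monotonicity of the H\"older scale, $u\in C^{1,\alpha}(\bar\sU)$ for every $0<\alpha\le 1-2/p$. Since $\bar\sU\supset\sU\cup(\Gamma_1\cap\sU)$, the derivatives $D^\gamma u$ for $|\gamma|\le 1$ are continuous on $\sU$ with continuous (indeed H\"older) extensions to $\sU$ together with the portion of $\Gamma_1$ it contains. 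As $P$ was arbitrary and such neighborhoods cover $\sO\cup\Gamma_1$, the definition of $C^{1,\alpha}_{\textrm{loc}}(\sO\cup\Gamma_1)$ (Definition \ref{defn:ContinuousFunctions}) is met, giving $u\in C^{1,\alpha}_{\textrm{loc}}(\sO\cup\Gamma_1)$ for $0<\alpha\le 1-2/p$.

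I do not expect a genuine obstacle here: the one point requiring a modicum of care is verifying that the neighborhoods $\sU$ can be taken with the cone (or uniform Lipschitz) property needed for the embedding to hold \emph{up to} $\Gamma_1$, and this is exactly what Hypothesis \ref{hyp:HestonDomainNearGammaOne} supplies via the boundary charts; everywhere else the embedding is purely interior and immediate. The essential work — upgrading the weighted $H^2(\sO,\fw)$ regularity of the variational-inequality solution to the unweighted $W^{2,p}_{\textrm{loc}}(\sO\cup\Gamma_1)$ statement — has already been carried out in Theorem \ref{thm:LocalW2pRegularityHestonVI}, so this corollary is a short deduction.
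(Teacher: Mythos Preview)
Your proposal is correct and follows essentially the same approach as the paper: the paper's proof simply takes $\sU$ as in the proof of Theorem \ref{thm:LocalW2pRegularityHestonVI}, invokes the Sobolev embedding $W^{2,p}(\sU)\to C^{1,\alpha}(\bar\sU)$ for $0<\alpha\leq 1-2/p$ from \cite[Theorem 5.6, Part II ($\textrm{C}'$)]{Adams}, and concludes via Theorem \ref{thm:LocalW2pRegularityHestonVI}. Your version is a more detailed unpacking of the same argument, with the cone-property verification via Hypothesis \ref{hyp:HestonDomainNearGammaOne} made explicit.
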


\begin{proof}
Let $\sU\subset \sO$ be as in the proof of Theorem \ref{thm:LocalW2pRegularityHestonVI}. According to \cite[Theorem 5.6, Part II ($\textrm{C}'$)]{Adams}, the embedding $W^{2,p}(\sU)\to C^{1,\alpha}(\bar\sU)$ is continuous for $0<\alpha\leq 1-2/p$, and so the conclusion follows from Theorem \ref{thm:LocalW2pRegularityHestonVI}.
\end{proof}

We obtain the optimal interior regularity where the obstacle function is sufficiently smooth:

\begin{cor}[Interior $C^{1,1}$ regularity where the obstacle function is $C^2$]
\label{cor:LocalC11Regularity}
Assume the hypotheses of Theorem \ref{thm:VIExistenceUniquenessH2RegularEllipticHeston} and let $u \in H^2(\sO,\fw)\cap H^1_0(\sO\cup\Gamma_0,\fw)$ be the unique solution to Problem \ref{prob:HomogeneousHestonVIProblem}. Suppose in addition that, for some $0<\alpha<1$ and $\sU''\subseteqq\sO$ an open (but possibly unbounded) subset with $C^{2,\alpha}$ boundary portion, $\partial\sU''\cap\HH$, and
\begin{equation}
\label{eq:SourceCalphaObstacleC2}
f \in C^\alpha_{\textrm{loc}}(\bar\sU''\cap\HH) \quad\hbox{and}\quad \psi \in C^2_{\textrm{loc}}(\bar\sU''\cap\HH).
\end{equation}
Then $u \in C^{1,1}(\sU'')$ and, if $\sU'' = \sO$, then $u \in C^{1,1}(\sO)$.
\end{cor}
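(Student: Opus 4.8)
The plan is to reduce the assertion to the classical interior optimal‑regularity theory for obstacle problems defined by uniformly elliptic operators, by localizing away from the degeneracy locus $\Gamma_0$. The key observation is that on any open set $\sU$ whose closure is a compact subset of $\HH$, the operator $A$ is uniformly elliptic (Remark \ref{rmk:HestonCoefficients}) with $C^\infty$ coefficients, and the weight $\fw$ of \eqref{eq:HestonWeight} is bounded above and below by positive constants; hence $L^2(\sU,\fw)$, $H^1(\sU,\fw)$, $H^2(\sU,\fw)$ agree with $L^2(\sU)$, $W^{1,2}(\sU)$, $W^{2,2}(\sU)$ with equivalent norms, and $H^1_0(\sU\cup\Gamma_0,\fw)=W^{1,2}_0(\sU)$ since $\Gamma_0$ is irrelevant for such $\sU$. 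Once the problem is localized to such a $\sU$, we face a genuinely non-degenerate obstacle problem to which standard results apply.

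First I would fix a point $P\in\sU''$ and, using that $\sU''\subseteq\HH$ is open, choose $r>0$ so small that $B:=B(P,2r)$ obeys $\bar B\subset\sU''$ and $\bar B\subset\HH$; then $\bar B\subset\bar\sU''\cap\HH$, so $f\in C^\alpha(\bar B)$ and $\psi\in C^2(\bar B)$ by hypothesis, and $A$ is uniformly elliptic with smooth coefficients on $B$. By Theorem \ref{thm:VIExistenceUniquenessH2RegularEllipticHeston} the solution $u$ lies in $H^2(\sO,\fw)$, hence $u\in W^{2,2}(B)$. Fixing a cutoff $\zeta\in C^\infty_0(B)$ with $0\le\zeta\le1$ and $\zeta=1$ on $B(P,3r/2)$, the cutoff computation in the proof of Claim \ref{claim:ObstacleFunctionLocalization} (equivalently, Corollary \ref{cor:H2BoundSolutionHestonVarIneqLipschitz}) shows that $\zeta u\in W^{1,2}_0(B)$ solves the variational inequality for $A$ (equivalently for the coercive $A_\lambda$) on $B$ with homogeneous Dirichlet data, obstacle $\zeta\psi\in C^2(\bar B)$, and source $f_\zeta=\zeta f+[A,\zeta]u$ as in \eqref{eq:fCutoff}. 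Since $[A,\zeta]$ is first order with smooth bounded coefficients on $B$ (see \eqref{eq:ACommutator}) and $u\in W^{2,2}(B)$, we have $[A,\zeta]u\in W^{1,2}(B)\hookrightarrow L^q(B)$ for every $q<\infty$ (as $\dim=2$), so $f_\zeta\in L^q(B)$ for all $q<\infty$; the $W^{2,q}$-theory for obstacle problems with uniformly elliptic operators (already used in the proof of Theorem \ref{thm:LocalW2pRegularityHestonVI}, via Friedman \cite{Friedman_1982}) then gives $u\in W^{2,q}(B(P,3r/2))$ for all $q<\infty$. Choosing $q\ge 2/(1-\alpha)$ and a further cutoff $\zeta'\in C^\infty_0(B(P,3r/2))$ with $\zeta'=1$ on $B(P,r)$, Morrey's embedding gives $[A,\zeta']u\in W^{1,q}(B(P,3r/2))\hookrightarrow C^\alpha(\overline{B(P,3r/2)})$, so the source $f_{\zeta'}=\zeta'f+[A,\zeta']u$ of the obstacle problem solved by $\zeta'u$ on $B(P,3r/2)$ is in $C^\alpha$, while its obstacle $\zeta'\psi$ is in $C^2\subset C^{1,1}$.

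It then remains to invoke the classical interior $C^{1,1}$ regularity theorem for the obstacle problem: for a uniformly elliptic operator with $C^\alpha$ coefficients on a smooth bounded domain, homogeneous Dirichlet data, $C^\alpha$ source, and $C^{1,1}$ obstacle, the solution lies in $C^{1,1}_{\textrm{loc}}$ of the domain (Frehse, Brezis--Kinderlehrer; see Friedman \cite[\S 1.3]{Friedman_1982} and Caffarelli \cite{Caffarelli_jfa_1998}); applied on $B(P,3r/2)$ this yields $\zeta'u\in C^{1,1}(B(P,r))=W^{2,\infty}(B(P,r))$, hence $u\in W^{2,\infty}(B(P,r))$. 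Since $P\in\sU''$ was arbitrary and $W^{2,\infty}$ is a local property, $u\in W^{2,\infty}_{\textrm{loc}}(\sU'')=C^{1,1}(\sU'')$; and when $\sU''=\sO$, Hypothesis \ref{hyp:DomainCombinedCondition} ensures $\partial\sO\cap\HH=\Gamma_1$ is $C^{2,\alpha}$, so this case is admissible and gives $u\in C^{1,1}(\sO)$, which is precisely the localized form of Theorem \ref{thm:MainOptimalRegularityObstacleProblem}. The main obstacle in carrying this out rigorously is this last step: the $W^{2,q}$ bootstrap by itself delivers only $u\in C^{1,\alpha}$ for every $\alpha<1$ (Corollary \ref{cor:LocalC1alphaRegularity}), stopping just short of $C^{1,1}$, so one genuinely needs the sharp obstacle-problem regularity theorem, whose proof rests on a penalization/comparison argument at free-boundary points rather than on linear elliptic estimates; a secondary point requiring care is verifying that the cutoff localization introduces no uncontrolled lower-order terms, which is why the two-step bootstrap (first to $W^{2,q}$, then to a $C^\alpha$ source and obstacle) must precede the classical theorem.
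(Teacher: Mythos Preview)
Your proposal is correct and follows essentially the same approach as the paper: localize via a cutoff to a compactly contained subdomain where $A$ is uniformly elliptic, observe that $[A,\zeta]$ is first order so the localized source $f_\zeta$ can be bootstrapped to $C^\alpha$, and then invoke the classical interior $C^{1,1}$ regularity theorem for non-degenerate obstacle problems (the paper cites \cite[Theorems 1.4.1 \& 1.4.3]{Friedman_1982}). The only organizational difference is that the paper quotes the already-established Corollary \ref{cor:LocalC1alphaRegularity} to get $u\in C^{1,\alpha}_{\textrm{loc}}$ directly (hence $[A,\zeta]u\in C^\alpha$ in one step), whereas you re-derive that via an in-line $W^{2,q}$ bootstrap and a second cutoff; both routes carry the same content.
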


\begin{proof}
Let $\sU'\subset \sU$ and $\zeta \in C^\infty(\bar\HH)$ be as in the proof of Theorem \ref{thm:LocalW2pRegularityHestonVI} and require in addition that $\sU\subset\sU''$ and $\partial\sU$ is $C^{2,\alpha}$. As in the proof of Theorem \ref{thm:LocalW2pRegularityHestonVI}, the function $\zeta u \in H^2(\sU)\cap H^1_0(\sU)$ is the unique solution to \eqref{eq:CoerciveLocalizedVI} with obstacle function, $\zeta\psi \in H^1(\sU)$, and source function, $f_\zeta\in L^2(\sU)$, as defined in \eqref{eq:fCutoff}.

Because $[A,\zeta]$ is a first-order differential operator and $u \in C^{1,\alpha}_{\textrm{loc}}(\sO\cup\Gamma_1)$ by Corollary \ref{cor:LocalC1alphaRegularity}, then $[A,\zeta]u \in C^\alpha_{\textrm{loc}}(\sO\cup\Gamma_1)$. Therefore, $f_\zeta \in C^\alpha(\bar\sU)$ by \eqref{eq:fCutoff} and \eqref{eq:SourceCalphaObstacleC2} and the fact that $\sU\Subset\sO\cup\Gamma_1$, while \eqref{eq:SourceCalphaObstacleC2} implies that $\zeta\psi \in C^2(\bar\sU)$ and Theorem \ref{thm:LocalW2pRegularityHestonVI} implies that $\zeta u\in W^{2,p}(\sU)$, for any $1<p<\infty$. Corollary \ref{cor:H2BoundSolutionHestonVarIneqLipschitz} implies that $\zeta u\in W^{2,p}(\sU)$ is the unique solution to Problem \ref{prob:HestonObstacleInhomogeneousStrong} (with $g = 0$ on $\Gamma_1$) with domain $\sU$, obstacle function, $\zeta\psi \in C^2(\bar\sU)$, and source function, $f_\zeta\in C^\alpha(\bar\sU)$.

Since $A$ is uniformly elliptic on $\sU$ by the fact that $\hbox{dist}(\sU,\Gamma_0)>0$ by our choice of $\sU$, then  \cite[Theorems 1.4.1 \& 1.4.3]{Friedman_1982} imply that $\zeta u\in W^{2,\infty}_{\textrm{loc}}(\sU)$, noting that the condition \cite[Equation (1.3.9)]{Friedman_1982} is stronger than \cite[Equation (1.3.19)]{Friedman_1982}. But $W^{2,\infty}_{\textrm{loc}}(\sU) = C^{1,1}(\sU)$ by \cite[p. 23]{Friedman_1982} and so $\zeta u\in C^{1,1}(\sU)$ and as $\zeta = 1$ on $\sU'$, then $u\in C^{1,1}(\sU')$. Because $\sU'\subset \sO$ was otherwise arbitrary, the conclusion follows.
\end{proof}

As before, it is straightforward to assemble the results we need to conclude the

\begin{proof}[Proofs of Theorems \ref{thm:MainRegularityObstacleProblem} and \ref{thm:MainOptimalRegularityObstacleProblem}]
When $g=0$, Theorem \ref{thm:MainRegularityObstacleProblem} restates Theorem \ref{thm:LocalW2pRegularityHestonVI} and Corollary \ref{cor:LocalC1alphaRegularity}. Theorem \ref{thm:MainOptimalRegularityObstacleProblem} follows from Theorem \ref{thm:VIExistenceUniquenessH2RegularEllipticHeston} and Corollary \ref{cor:LocalC11Regularity} with $\sU'' = \sO$.

When $g\neq 0$, the hypothesis $g \in W^{2,p}_{\textrm{loc}}(\sO\cup\Gamma_1)$ ensures that $\tilde f := f-Ag$ and $\tilde\psi := \psi-g$ obey \eqref{eq:SourceObstacleLp} and so we obtain Theorem \ref{thm:MainRegularityObstacleProblem} for $u$ from the result for $\tilde u = u-g$. Similarly, the hypothesis $g \in C^2_{\textrm{loc}}(\sO\cup\Gamma_1)$ ensures that $\tilde f := f-Ag$ and $\tilde\psi := \psi-g$ obey \eqref{eq:SourceCalphaObstacleC2} with $\sU'' = \sO$. Therefore, we obtain Theorem \ref{thm:MainOptimalRegularityObstacleProblem} for $u$ from the result for $\tilde u = u-g$.
\end{proof}

\appendix

\section{Weighted Sobolev spaces}
\label{sec:WeightedSobolevSpaces}
While there are many excellent references for weighted Sobolev spaces (see, for example, \cite{Kufner, KufnerOpic, LevendorskiDegenElliptic, Stredulinsky, Turesson_2000} and references contained therein), we shall need extensions or refinements of those results for the specific weighted Sobolev spaces we employ. This section serves to develop a toolkit of results for our weighted Sobolev spaces, specifically the spaces $H^1(\sO,\fw), H^1_0(\sO\cup\Gamma_0,\fw), H^1_0(\sO,\fw)$ (Definition \ref{defn:H1WeightedSobolevSpaces}) and $H^2(\sO,\fw)$ (Definition \ref{defn:H2WeightedSobolevSpaces}) required by this article. Except for a few cases specific to $d=2$, the results in this section apply to domains $\sO\subset\HH$ as in Definition \ref{defn:HestonDomain}, where $\HH = \RR^{d-1}\times(0,\infty)$ with $d\geq 2$ rather than $d=2$, as assumed in the body of this article. When $d\geq 2$, we denote points in $\HH$ by $(x,y)$, where $x=(x_1,\ldots,x_{d-1})$ and $y=x_d$, and denote Lebesgue measure on $\RR^{d-1}$ by $dx$.

\subsection{Approximation by smooth functions}
We have the following useful result and proof due to C. Pop \cite{PopThesis}.

\begin{lem}[Equivalence of weighted $H^1$ Sobolev spaces when $\beta > 1$]
\label{lem:CameliasH1ApproximationLemma}
When $\beta > 1$, one has $H^1_0(\sO\cup\Gamma_0,\fw) = H^1_0(\sO,\fw)$.
\end{lem}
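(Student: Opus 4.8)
The plan is to show that when $\beta>1$, the exponential-weight factor $y^{\beta-1}$ decays so quickly near $\Gamma_0$ that functions vanishing near $\Gamma_1$ can be approximated in $H^1(\sO,\fw)$ by functions which additionally vanish in a neighborhood of $\{y=0\}$; since the latter lie in (or can be smoothed into) $C^\infty_0(\sO)$, this gives $H^1_0(\sO\cup\Gamma_0,\fw)\subseteq H^1_0(\sO,\fw)$, and the reverse inclusion is immediate from $C^\infty_0(\sO)\subseteq C^\infty_0(\sO\cup\Gamma_0)$. So the content is the approximation statement: given $u\in C^\infty_0(\sO\cup\Gamma_0)$, produce a sequence in $C^\infty_0(\sO)$ converging to $u$ in $H^1(\sO,\fw)$.

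First I would set up the cutoff. Fix $u\in C^\infty_0(\sO\cup\Gamma_0)$ and let $\theta\in C^\infty(\RR)$ with $0\le\theta\le 1$, $\theta=0$ on $(-\infty,1]$, $\theta=1$ on $[2,\infty)$, $|\theta'|\le 2$. For $\eps>0$ put $\theta_\eps(y):=\theta(y/\eps)$ and $u_\eps:=\theta_\eps u$. Then $u_\eps\in C^\infty_0(\sO)$ since $\supp u_\eps\subset\{y\ge\eps\}\cap\supp u$ is a compact subset of $\sO$ (using Hypothesis \ref{hyp:HestonDomainNearGammaZero}, near $\Gamma_0$ the domain is a product $\Gamma_0\times(0,\delta_0)$, so staying away from $\{y=0\}$ keeps us inside $\sO$). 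I would then estimate $\|u-u_\eps\|_{H^1(\sO,\fw)}$. Writing $u-u_\eps=(1-\theta_\eps)u$, one has $D(u-u_\eps)=(1-\theta_\eps)Du - (D\theta_\eps)u$, and the only dangerous term is $\theta_\eps' u_y$-type contribution, namely
\begin{equation*}
\int_\sO y\,|D\theta_\eps|^2 u^2\,\fw\,dxdy \le \frac{4}{\eps^2}\int_{\{\eps\le y\le 2\eps\}} y\, u^2\, y^{\beta-1}e^{-\gamma|x|-\mu y}\,dxdy.
\end{equation*}
On the annular region $\eps\le y\le 2\eps$ we bound $y\cdot y^{\beta-1}=y^\beta\le (2\eps)^\beta$ and $u^2\le\|u\|_\infty^2$ and $e^{-\gamma|x|-\mu y}\le e^{-\gamma|x|}$, and the $x$-integral of $e^{-\gamma|x|}$ over the projection of $\supp u$ is finite (indeed $\le$ a fixed constant depending on $u$), while the $y$-integral over an interval of length $\eps$ contributes another factor $\eps$. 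Altogether this term is $O(\eps^{-2}\cdot\eps^{\beta}\cdot\eps)=O(\eps^{\beta-1})$, which tends to $0$ precisely because $\beta>1$. The remaining pieces — $\int y|1-\theta_\eps|^2|Du|^2\fw$, $\int y\,\theta_\eps(D\theta_\eps)\langle Du,u\rangle\fw$, and $\int(1+y)|1-\theta_\eps|^2u^2\fw$ — all converge to $0$ by dominated convergence, since $1-\theta_\eps\to 0$ pointwise on $\sO$ and is dominated by integrable functions ($u\in H^1(\sO,\fw)$ because $C^\infty_0(\sO\cup\Gamma_0)\subset H^1(\sO,\fw)$, which one checks directly or cites from Definition \ref{defn:H1WeightedSobolevSpaces} together with Corollary \ref{cor:KufnerPowerWeight}); the cross term is controlled by Cauchy--Schwarz against the already-estimated quantities.

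Having shown $u_\eps\to u$ in $H^1(\sO,\fw)$ for each $u\in C^\infty_0(\sO\cup\Gamma_0)$, I conclude: since such $u$ are dense in $H^1_0(\sO\cup\Gamma_0,\fw)$ by definition, and each is an $H^1(\sO,\fw)$-limit of elements of $C^\infty_0(\sO)$, every element of $H^1_0(\sO\cup\Gamma_0,\fw)$ lies in the closure of $C^\infty_0(\sO)$, i.e.\ in $H^1_0(\sO,\fw)$. The inclusion $H^1_0(\sO,\fw)\subseteq H^1_0(\sO\cup\Gamma_0,\fw)$ holds trivially, so the two spaces coincide. The main obstacle is the term $\int y|D\theta_\eps|^2u^2\fw$: this is exactly where the hypothesis $\beta>1$ enters, and the whole argument hinges on the elementary but essential observation that the weight $y\cdot y^{\beta-1}=y^\beta$ vanishes fast enough at $y=0$ to absorb the $\eps^{-2}$ blowup of $|D\theta_\eps|^2$ after integrating over a shell of thickness $\eps$. (If $\beta\le 1$ this fails, consistent with the fact — used elsewhere in the paper via Example \ref{exmp:CIR} — that functions in $H^1(\sO,\fw)$ need not have zero trace on $\Gamma_0$ when $\beta<1$.) One minor technical point to handle cleanly is ensuring $u_\eps\in C^\infty_0(\sO)$ rather than merely $H^1_0(\sO,\fw)$; this is where Hypothesis \ref{hyp:HestonDomainNearGammaZero} on the product structure of $\sO$ near $\Gamma_0$ is invoked, guaranteeing that truncating in $y$ away from $0$ keeps the support compactly inside $\sO$.
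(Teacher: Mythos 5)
Your proposal is correct and is essentially the paper's own argument: the same $y$-cutoff supported in a shell $\{\eps\le y\le 2\eps\}$, the same reduction to $u\in C^\infty_0(\sO\cup\Gamma_0)$ by density, dominated convergence for the benign terms, and the same key estimate on $\int_\sO y\,|D\theta_\eps|^2u^2\,\fw$ yielding $O(\eps^{\beta-1})$, which is exactly the paper's bound $m^{1-\beta}$ with $\eps=1/m$ and is where $\beta>1$ enters.
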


\begin{rmk}
\label{rmk:CameliasH1ApproximationLemmaKufner}
Lemma \ref{lem:CameliasH1ApproximationLemma} can be viewed as a special case of \cite[Theorem 9.10]{Kufner}.
\end{rmk}

\begin{proof}
Let $u \in H^1_0(\sO\cup\Gamma_0,\fw)$. Then there is a sequence $\{u_m\}_{m\geq 0} \subset C^\infty_0(\sO\cup\Gamma_0)$ such that $u_m \to u$ in $H^1(\sO,\fw)$ as $m\to\infty$. Hence, we may assume without loss of generality that $u \in C^\infty_0(\sO\cup\Gamma_0)$. Let $\{\varphi_m\}_{m\geq 1}\subset C^\infty(\RR^2)$ be a sequence of cutoff functions with the properties
$$
\varphi_m (x,y)
=
\begin{cases}
1, & y\geq 2/m,
\\
0, & y\leq 1/m,
\end{cases}
$$
and
$$
0\leq \varphi_{m,y} \leq 2m \hbox{ on $\sO$ and } \varphi_{m,x} = 0 \hbox{ on }\sO.
$$
Thus, $u_m := \varphi_m u\in H^1_0(\sO,\fw)$ and
\begin{align*}
\|u - u_m\|_{H^1(\sO,\fw)}^2
&= \int_\sO \left\{y\left((1-\varphi_m)^2u_x^2 + \varphi_{m,y}^2u^2 - 2(1-\varphi_m)\varphi_{m,y} uu_y + (1-\varphi_m)^2u_y^2\right) \right.
\\
&\qquad + \left. (1+y)(1-\varphi_m)^2u^2 \right\}\fw(x,y)\,dxdy
\\
&\leq
\int_\sO 1_{\{0 < y < 2/m\}} (1+y)u^2\fw(x,y)\,dxdy
\\
&\quad + 2\int_\sO 1_{\{0 < y < 2/m\}} y\left(u_x^2 + u_y^2\right)\fw(x,y)\,dxdy
\\
&\quad + \int_\sO yu^2\varphi_{m,y}^2\fw(x,y)\,dxdy.
\end{align*}
The first two terms converge to zero as $m\to\infty$, since
$$
\|u\|_{H^1(\sO,\fw)}^2
=
\int_\sO \left\{y(u_x^2 + u_y^2) + (1+y)u^2\right\}\fw(x,y)\,dxdy < \infty.
$$
For the third term, recall that $u\in C^\infty_0(\sO\cup\Gamma_0)$. Hence, $\|u\|_{L^\infty(\sO)} < \infty$ and therefore
\begin{align*}
\int_\sO yu^2\varphi_{m,y}^2\fw(x,y)\,dxdy
& =
\int_\sO 1_{\{1/m < y < 2/m\}} u^2\varphi_{m,y}^2 y^\beta e^{-\mu y - \gamma|x|}\,dxdy
\\
& \leq C\|u\|_{L^\infty(\sO)}^2 m^2\int_{1/m}^{2/m} y^\beta\,dy
\\
&= C\|u\|_{L^\infty(\sO)}^2 \frac{2^{\beta+1}-1}{\beta+1} m^{1 - \beta}.
\end{align*}
Consequently, the third term also tends to zero as $m\to\infty$, provided $\beta>1$.
\end{proof}

\begin{rmk}
\label{rmk:CameliasH1ApproximationLemma}
Due to the final estimate in the proof of Lemma \ref{lem:CameliasH1ApproximationLemma} involving cutoff function derivative, the case $\beta = 1$ cannot be included.
\end{rmk}

\begin{lem}[Density of smooth functions with compact support]
\label{lem:L2ApproximationLemma}
For any $\beta > 0$, one has that $C^\infty_0(\sO)$ is a dense subset of $L^2(\sO,\fw)$.
\end{lem}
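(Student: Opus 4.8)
The plan is to reduce the statement to the classical density of $C^\infty_0$ in $L^2$ of an open subset of $\RR^d$, exploiting the fact that, even though the weight $\fw$ degenerates or blows up along the boundary portion $\Gamma_0\subset\{y=0\}$ when $\beta\neq 1$, the set $\sO$ is open in $\HH$, so every compact subset of $\sO$ stays at a positive distance from $\{y=0\}$; on such a set $\fw$ is continuous, bounded above, and bounded below by a positive constant, hence comparable to Lebesgue measure. (In fact the argument does not use $\beta>0$; that hypothesis is recorded only to match the standing convention $\beta=2\kappa\theta/\sigma^2>0$.)

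First I would exhaust $\sO$ by the open subsets
\[
\sO_n := \{P\in\sO : |P|<n \ \text{and}\ \dist(P,\partial\sO)>1/n\}, \qquad n\geq 1,
\]
and note that $\overline{\sO_n}$ is a compact subset of $\sO$: it is closed and bounded, and any $P\in\overline{\sO_n}$ lies in $\bar\sO$ with $\dist(P,\partial\sO)\geq 1/n>0$, so $P\in\bar\sO\less\partial\sO=\sO$. Since $\sO$ is open, $\sO_n\uparrow\sO$, hence $\sO\less\sO_n$ decreases to the empty set. Given $f\in L^2(\sO,\fw)$, the dominated convergence theorem (with dominating function $f^2\fw\in L^1(\sO)$) gives $\|f-f\mathbf{1}_{\sO_n}\|_{L^2(\sO,\fw)}\to 0$ as $n\to\infty$. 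Hence it suffices to approximate, for each fixed $n$, the function $f\mathbf{1}_{\sO_n}$ in $L^2(\sO,\fw)$ by elements of $C^\infty_0(\sO)$.

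Second, because $\overline{\sO_n}$ is a compact subset of the open set $\sO\subset\HH$, we have $\inf_{\overline{\sO_n}}y>0$, and therefore there are constants $0<c_n\leq C_n<\infty$ with $c_n\leq\fw\leq C_n$ on $\sO_n$; thus $L^2(\sO_n,\fw)$ and $L^2(\sO_n)$ (Lebesgue measure) carry equivalent norms. By the standard density of $C^\infty_0(\Omega)$ in $L^2(\Omega)$ for an open set $\Omega\subseteq\RR^d$, applied with $\Omega=\sO_n$, and since $C^\infty_0(\sO_n)\subset C^\infty_0(\sO)$ (as $\overline{\sO_n}$ is compact in $\sO$), we may choose $\varphi\in C^\infty_0(\sO_n)$ with $\|f\mathbf{1}_{\sO_n}-\varphi\|_{L^2(\sO_n)}$ arbitrarily small; then, since $f\mathbf{1}_{\sO_n}$ and $\varphi$ both vanish on $\sO\less\sO_n$,
\[
\|f\mathbf{1}_{\sO_n}-\varphi\|_{L^2(\sO,\fw)} = \|f\mathbf{1}_{\sO_n}-\varphi\|_{L^2(\sO_n,\fw)} \leq \sqrt{C_n}\,\|f\mathbf{1}_{\sO_n}-\varphi\|_{L^2(\sO_n)}
\]
is arbitrarily small. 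Combining this with the first step via the triangle inequality produces, for any $\delta>0$, a function in $C^\infty_0(\sO)$ within $\delta$ of $f$ in $L^2(\sO,\fw)$, which proves the lemma.

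There is no essential obstacle; the only point requiring care is the one highlighted above --- that part of $\partial\sO$ lies on the line $\{y=0\}$ where $\fw$ is singular --- which is precisely what the passage to the compactly contained subdomains $\sO_n$ disposes of. One could equally argue in a single step by noting that $\fw\,dx\,dy$ is a Radon measure on $\sO$ that is finite on compact subsets, so that $C_c(\sO)$ is dense in $L^2(\sO,\fw)$ by standard measure theory, and then mollifying a compactly supported continuous function to obtain the required smooth approximants with support in a fixed compact subset of $\sO$.
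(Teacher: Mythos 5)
Your proof is correct and follows essentially the same route as the paper's: both arguments truncate to subdomains whose closures are compact in $\sO$ (hence bounded away from $\{y=0\}$ and from infinity, so that $\fw$ is comparable to Lebesgue measure there), invoke the classical density of $C^\infty_0$ in unweighted $L^2$ on the truncated domain, and control the tail by a dominated-convergence argument. The only cosmetic difference is that the paper multiplies $f$ by a smooth cutoff $\zeta_m$ before approximating, whereas you use the sharp indicator $\mathbf{1}_{\sO_n}$, which is immaterial for $L^2$ approximation.
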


\begin{proof}
Let $\eps>0$ and let $\{\zeta_m\}_{m\geq 1} \subset C^\infty_0(\HH)$ be a sequence of cutoff functions such that $0\leq\zeta_m\leq 1$, $\supp\zeta_m \subset[-2m,2m]\times[1/2m,2m]$, $\zeta_m = 1$ on $[-m,m]\times[1/m,m]$, and set $\sO_m := \sO\cap((-m,m)\times(1/m,m))$. By \cite[Corollary 2.19]{Adams}, $C^\infty_0(\sO_m)$ is a dense subset of $L^2(\sO_m)$ and so we may choose a sequence $\{f_m\}_{m\geq 1} \in C^\infty_0(\sO_{2m})$ such that
$$
\int_{\sO_{2m}}|\zeta_mf-f_m|^2\,dxdy < \frac{\eps^2}{4M_m}, \quad m\geq 1,
$$
where
$$
M_m := \max_{(x,y)\in\bar\sO_{2m}}y^{\beta-1}e^{-\gamma|x|-\mu y} > 0.
$$
Therefore, recalling that $\fw(x,y) = y^{\beta-1}e^{\gamma|x|-\mu y}$ by \eqref{eq:HestonWeight}, we obtain
\begin{equation}
\label{eq:L2ApproxFromAdams}
\int_\sO|\zeta_mf-f_m|^2 \fw\,dxdy < \frac{\eps^2}{4}, \quad m\geq 1.
\end{equation}
But $\hbox{Vol}(\sO,\fw)  = \int_\sO 1\,\fw\,dxdy < \infty$, while
$$
\int_\sO|f-\zeta_mf|^2\fw\,dxdy = \int_{\sO\less\sO_m}|(1-\zeta_m)f|^2\fw\,dxdy \leq \int_{\sO\less\sO_m}|f|^2\fw\,dxdy,
$$
and so, because $\lim_{m\to\infty}\hbox{Vol}(\sO\less\sO_m,\fw) = 0$ and $\|f\|_{L^2(\sO,\fw)} < \infty$, we have
$$
\lim_{m\to\infty}\|f-\zeta_m f\|_{L^2(\sO,\fw)} = 0,
$$
by \cite[Exercise 1.12]{RudinRealComplex}. In particular, we may choose $m_0\geq 1$ such that
\begin{equation}
\label{eq:L2ApproxCutoff}
\int_\sO|f-\zeta_mf|^2\fw\,dxdy < \frac{\eps^2}{4}, \quad m \geq m_0.
\end{equation}
Hence,
\begin{align*}
\|f-f_m\|_{L^2(\sO,\fw)} &\leq \|f-\zeta_mf\|_{L^2(\sO,\fw)} + \|\zeta_mf-f_m\|_{L^2(\sO,\fw)}
\\
&<\eps, \quad\forall m\geq m_0, \quad\hbox{(by \eqref{eq:L2ApproxFromAdams} and \eqref{eq:L2ApproxCutoff})}.
\end{align*}
This completes the proof.
\end{proof}

Using a more careful choice of cutoff function, based on \cite[Lemma 7.2.10]{DK}, we can extend the conclusion of Lemma \ref{lem:CameliasH1ApproximationLemma} to include the case $\beta = 1$:

\begin{lem}[Cut-off functions with integral norm decay]
\label{lem:DonaldsonKronheimerCutoffFunction}
There is a positive constant $c$ such that the following holds. For any $N > 4$ and $\eps > 0$, there is a $C^\infty$ cutoff function $\psi = \psi_{N,\eps}$ on $\RR$
such that
$$
\psi(y)
=
\begin{cases}
1 &\hbox{if }|y| \geq \eps/2,
\\
0 &\hbox{if }|y| \leq \eps/N,
\end{cases}
$$
and satisfying the following estimates:
$$
\int_\RR |\nabla\psi(y)|^2 |y|^{\beta-1}\,dy
\leq
\begin{cases}
c^2 2^{2-\beta}(\beta-1)^{-1}(\log N)^{-2}\eps^{\beta-1}, &\hbox{if }\beta > 1,
\\
2c^2(\log N)^{-1}, &\hbox{if }\beta = 1.
\end{cases}
$$
\end{lem}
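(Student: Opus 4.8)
The plan is to build $\psi$ as a smoothed logarithmic cut-off, modelled on \cite[Lemma 7.2.10]{DK}. I first work on the positive half-line. Put $L:=\log(N/2)$ and let $\psi_0\colon(0,\infty)\to[0,1]$ be the piecewise-Lipschitz function that equals $0$ on $(0,\eps/N]$, equals $L^{-1}\log(Ny/\eps)$ on $[\eps/N,\eps/2]$, and equals $1$ on $[\eps/2,\infty)$, so that $\psi_0'(y)=1/(Ly)$ on the transition interval and $\psi_0'=0$ off it. Extending by $\psi_0(-y):=\psi_0(y)$ gives an even Lipschitz cut-off with the required vanishing and saturation sets. Since $\psi_0$ is only Lipschitz, the first genuine step is to produce a $C^\infty$ replacement $\psi$ with the same behaviour — $\psi\equiv0$ on $\{|y|\le\eps/N\}$ and $\psi\equiv1$ on $\{|y|\ge\eps/2\}$ — satisfying the pointwise bound
\[
|\psi'(y)| \le \frac{c_0}{|y|\,\log(N/2)} \qquad\hbox{for all } y\in\RR,
\]
with a universal constant $c_0$. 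This is done in the standard way: carry out the logarithmic interpolation over a fractionally shorter interval, pass to the variable $s=\log(2y/\eps)$ on which $\psi_0$ is piecewise affine with slope $1/L$, convolve with a fixed smooth mollifier in $s$, and then reflect evenly across $y=0$, where $\psi\equiv0$ already so the reflection is smooth. Because $N>4$ one has $\log(N/2)\ge\tfrac12\log N$, so shortening the interval alters $L$ only by a universal factor.

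With such a $\psi$ the estimate is an elementary one-variable computation. Since $\psi'$ is supported in $\{\eps/N\le|y|\le\eps/2\}$ and $\psi$ is even,
\[
\int_\RR |\psi'(y)|^2 |y|^{\beta-1}\,dy \;\le\; \frac{2c_0^2}{\bigl(\log(N/2)\bigr)^2}\int_{\eps/N}^{\eps/2} y^{\beta-2}\,dy .
\]
If $\beta>1$ the integral converges at the origin and $\int_{\eps/N}^{\eps/2}y^{\beta-2}\,dy\le\int_0^{\eps/2}y^{\beta-2}\,dy=(\beta-1)^{-1}(\eps/2)^{\beta-1}$; combining this with $\log(N/2)\ge\tfrac12\log N$ gives a bound of the form $c^2\,2^{2-\beta}(\beta-1)^{-1}(\log N)^{-2}\eps^{\beta-1}$ once $c$ is taken to be a fixed multiple of $c_0$. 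If $\beta=1$ the integral is logarithmic, $\int_{\eps/N}^{\eps/2}y^{-1}\,dy=\log(N/2)$, so exactly one power of $\log(N/2)$ cancels and the right-hand side becomes $2c_0^2/\log(N/2)\le 2c^2(\log N)^{-1}$. Taking $c$ universally large enough to serve in both cases finishes the proof.

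The construction is essentially classical, so the one point that genuinely needs care is the $C^\infty$ smoothing near $y=\eps/N$: there $1/y$ is as large as $N/\eps$, and a naive mollification on the $y$-scale would inflate $|\psi'|$ by a factor comparable to $N$, destroying the estimate. Performing the smoothing in the logarithmic variable $s$ — where $\psi_0$ is piecewise affine with bounded slope — is precisely what preserves $|\psi'(y)|\le c_0(|y|\log(N/2))^{-1}$. Once that is secured, the split between $\beta>1$ and $\beta=1$ is forced by whether $\int_0^{\eps/2}y^{\beta-2}\,dy$ converges or diverges logarithmically, which is exactly the dichotomy recorded in the statement and the mechanism that lets one include the borderline exponent in the density conclusion of Lemma \ref{lem:CameliasH1ApproximationLemma}.
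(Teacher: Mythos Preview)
Your approach is essentially the paper's: build the cutoff in the logarithmic variable so that $|\psi'(y)|\le c_0/(|y|\log N)$, then integrate. The paper does it slightly more directly by fixing once and for all a smooth bump $\phi\colon\RR\to[0,1]$ with $\phi=0$ on $(-\infty,0]$ and $\phi=1$ on $[1,\infty)$, setting $\chi(t)=\phi\bigl((\log N+t)/(\log N-\log 2)\bigr)$, and defining $\psi(y)=\chi(\log|y|-\log\eps)$; this yields a $C^\infty$ function with the exact support properties from the outset and avoids the separate mollification step.

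One arithmetic point to flag: with your derivative bound $|\psi'(y)|\le c_0/(|y|\log(N/2))$ and the weight $|y|^{\beta-1}$ as written in the displayed inequality, the integrand on the right should be $y^{\beta-3}$, not $y^{\beta-2}$; with $y^{\beta-3}$ the $\beta=1$ case would give $\int_{\eps/N}^{\eps/2}y^{-2}\,dy\sim N/\eps$, which does not decay. This is in fact a typo in the lemma \emph{statement}: the paper's own proof computes $\int_\RR|\nabla\psi(y)|^2|y|^{\beta}\,dy$ (exponent $\beta$, not $\beta-1$), and that is exactly the quantity needed in the application to Lemma~\ref{lem:ImprovedH1ApproximationLemma}. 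With weight $|y|^\beta$ your computation goes through verbatim.
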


\begin{proof}
Fix a $C^\infty$ cutoff function, $\phi:\RR\to[0,1]$, such that $\phi(t)=1$ if $t\geq 1$ and $\phi(t)=0$ if $t\leq 0$. Now define a $C^\infty$ cutoff function $\chi = \chi_N : \RR \to [0, 1]$, depending on the parameter $N$, by setting
$$
\chi(t) := \phi\left(\frac{\log N + t}{\log N - \log 2}\right).
$$
Therefore,
$$
\chi(t)
=
\begin{cases}
1 &\hbox{if }t \geq -\log 2,
\\
0 &\hbox{if }t \leq -\log N,
\end{cases}
$$
and there is a positive constant $c$ independent of $N$ such that
\begin{align*}
\left|\frac{d\chi}{dt}\right|
&\leq \left|\phi'\left(\frac{\log N + t}{\log N - \log 2}\right)\right|\frac{1}{\log N - \log 2}
\\
&\leq c\frac{1}{\log N}.
\end{align*}
Now define the cutoff function $\psi = \psi_{N,\eps} : \RR \to [0, 1]$, depending on the parameters $N$ and $\eps$, by setting
$$
\psi(y) := \chi(\log |y| - \log\eps), \quad y \in \RR,
$$
and observe that $\psi$ satisﬁes the following pointwise bounds,
$$
|\nabla\psi(y)| \leq c\frac{1}{|y|\log N}, \quad y \in \RR\less\{0\},
$$
and
$$
\psi(y)
=
\begin{cases}
1 &\hbox{if }|y| \geq \eps/2,
\\
0 &\hbox{if }|y| \leq \eps/N.
\end{cases}
$$
Consequently,
$$
\int_\RR |\nabla\psi(y)|^2 |y|^\beta \,dy \leq \frac{2c^2}{(\log N)^2}\int^{\eps/2}_{\eps/N} y^{\beta-2}\,dy.
$$
If $\beta>1$,
\begin{align*}
\int_\RR |\nabla\psi(y)|^2 |y|^\beta\,dy
&\leq
\frac{2c^2}{(\log N)^2}\left((1/2)^{\beta-1} - (1/N)^{\beta-1}\right)\frac{\eps^{\beta-1}}{\beta-1}
\\
&\leq \frac{c^2 2^{2-\beta}}{(\beta-1)(\log N)^2}\eps^{\beta-1}.
\end{align*}
If $\beta=1$,
$$
\int_\RR |\nabla\psi(y)|^2 |y|^\beta\,dy \leq \frac{2c^2}{(\log N)^2}\int^{\eps/2}_{\eps/N} y^{-1}\,dy
\leq
\frac{2c^2}{(\log N)^2}(\log N - \log 2),
$$
and the conclusion follows.
\end{proof}

We now extend Lemma \ref{lem:CameliasH1ApproximationLemma} to include the case $\beta = 1$:

\begin{lem}[Equivalence of weighted $H^1$ Sobolev spaces when $\beta\geq 1$]
\label{lem:ImprovedH1ApproximationLemma}
When $\beta \geq 1$, one has $H^1_0(\sO\cup\Gamma_0,\fw) = H^1_0(\sO,\fw)$.
\end{lem}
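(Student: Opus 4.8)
The plan is to follow the scheme of the proof of Lemma~\ref{lem:CameliasH1ApproximationLemma} closely, the only new ingredient being the logarithmic cutoff functions supplied by Lemma~\ref{lem:DonaldsonKronheimerCutoffFunction}, which are tailored precisely to salvage the borderline exponent $\beta=1$. Since the case $\beta>1$ is exactly Lemma~\ref{lem:CameliasH1ApproximationLemma}, it suffices to treat $\beta=1$. The inclusion $H^1_0(\sO,\fw)\subseteq H^1_0(\sO\cup\Gamma_0,\fw)$ is immediate from Definition~\ref{defn:H1WeightedSobolevSpaces}, since $C^\infty_0(\sO)\subset C^\infty_0(\sO\cup\Gamma_0)$, so only the reverse inclusion requires proof. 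Given $u\in H^1_0(\sO\cup\Gamma_0,\fw)$, by Definition~\ref{defn:H1WeightedSobolevSpaces} we may assume without loss of generality that $u\in C^\infty_0(\sO\cup\Gamma_0)$; in particular $\|u\|_{L^\infty(\sO)}<\infty$ and $\supp u$ is bounded in the $x$-variable.

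Next I would choose a sequence of scales $\eps_m\downarrow 0$ and cutoff parameters $N_m\to\infty$ (for instance $\eps_m=1/m$ and $N_m=m$ for $m\geq 5$) and set $\psi_m(x,y):=\psi_{N_m,\eps_m}(y)$, where $\psi_{N,\eps}$ is the $C^\infty$ cutoff function of Lemma~\ref{lem:DonaldsonKronheimerCutoffFunction}; note $\psi_m$ depends on $y$ alone, $0\leq\psi_m\leq 1$, $\psi_{m,x}=0$, $\psi_m=1$ on $\{y\geq\eps_m/2\}$, and $\psi_m=0$ on $\{y\leq\eps_m/N_m\}$. Then $u_m:=\psi_m u\in C^\infty_0(\sO)$, hence $u_m\in H^1_0(\sO,\fw)$. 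Expanding $\|u-u_m\|_{H^1(\sO,\fw)}^2$ exactly as in the proof of Lemma~\ref{lem:CameliasH1ApproximationLemma} and using $(1-\psi_m)^2\leq 1$ together with $2ab\leq a^2+b^2$, I would bound it by a sum of three terms: $\int_\sO \mathbf{1}_{\{0<y<\eps_m/2\}}(1+y)u^2\fw\,dxdy$, $\ 2\int_\sO \mathbf{1}_{\{0<y<\eps_m/2\}}\,y(u_x^2+u_y^2)\fw\,dxdy$, and $\int_\sO y\,u^2\,\psi_{m,y}^2\,\fw\,dxdy$. The first two tend to $0$ as $m\to\infty$ by dominated convergence: the supports of the indicator functions shrink to a Lebesgue-null set as $\eps_m\downarrow 0$, and the integrands are dominated by $(1+y)u^2\fw$ and $y(u_x^2+u_y^2)\fw$, which are integrable by virtue of $u\in H^1(\sO,\fw)$ (and $\sO$ has finite $\fw$-volume by Remark~\ref{rmk:FiniteVolumeDomain}).

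The crux is the third term, where the linear cutoff used for $\beta>1$ fails. Using $\|u\|_{L^\infty(\sO)}<\infty$, the boundedness of $\supp u$ in $x$, $\int_\RR e^{-\gamma|x|}\,dx<\infty$, and $e^{-\mu y}\leq 1$, together with $\fw=y^{\beta-1}e^{-\gamma|x|-\mu y}$ and $\beta=1$, I would estimate $\int_\sO y\,u^2\,\psi_{m,y}^2\,\fw\,dxdy\leq C\|u\|_{L^\infty(\sO)}^2\int_0^\infty y\,|\psi_{m,y}(y)|^2\,dy$. By the $\beta=1$ case of Lemma~\ref{lem:DonaldsonKronheimerCutoffFunction} the last integral is at most $2c^2(\log N_m)^{-1}$, which tends to $0$ since $N_m\to\infty$. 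Hence $u_m\to u$ in $H^1(\sO,\fw)$, so $u\in H^1_0(\sO,\fw)$; this gives $H^1_0(\sO\cup\Gamma_0,\fw)\subseteq H^1_0(\sO,\fw)$ and therefore the claimed equality. The only real obstacle is this third term: the whole point of invoking Lemma~\ref{lem:DonaldsonKronheimerCutoffFunction} rather than a linear cutoff is that its logarithmic gradient estimate still decays to zero at the exponent $\beta=1$, where a linear cutoff would only give a bound of order $m^{1-\beta}=1$.
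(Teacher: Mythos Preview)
Your proposal is correct and follows essentially the same approach as the paper: reduce to $\beta=1$ via Lemma~\ref{lem:CameliasH1ApproximationLemma}, approximate $u\in C^\infty_0(\sO\cup\Gamma_0)$ by $\psi_m u$ with $\psi_m$ the logarithmic cutoff of Lemma~\ref{lem:DonaldsonKronheimerCutoffFunction}, and use that lemma's $\beta=1$ estimate to kill the critical gradient term. The paper's own proof is terser (it simply says ``it is enough to estimate the term\ldots'' and jumps directly to the third term) and uses $\eps=2/m$, $N=m$ rather than your $\eps_m=1/m$, $N_m=m$, but these differences are cosmetic.
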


\begin{rmk}
\label{rmk:ImprovedH1ApproximationLemma}
Since only the boundary $\Gamma_0$ is material in its proof, Lemma \ref{lem:ImprovedH1ApproximationLemma} can be strengthened to assert that $H^1(\sO,\fw) = H^1_0(\sO\cup\Gamma_1,\fw)$ when $\beta\geq 1$.
\end{rmk}

\begin{proof}
Lemma \ref{lem:CameliasH1ApproximationLemma} covers the case $\beta > 1$, so it suffices to consider $\beta=1$. From the proof of Lemma \ref{lem:CameliasH1ApproximationLemma}, it is enough to estimate the term
\begin{align*}
\int_\sO yu^2|\nabla\varphi_m(y)|^2\fw(x,y)\,dxdy
&=
\int_\sO u^2|\nabla\varphi_m(y)|^2 y e^{-\mu y - \gamma|x|}\,dxdy
\\
&\leq
C\|u\|_{L^\infty(\sO)}^2\int_{\RR^+} |\nabla\varphi_m(y)|^2 y e^{-\mu y}\,dy.
\end{align*}
In Lemma \ref{lem:DonaldsonKronheimerCutoffFunction}, choose $\eps = 2/m, m\geq 5$ and $N = m$ and $\varphi_m = \psi_{\eps,N}$, so that
$$
\int_{\RR^+} |\nabla\varphi_m(y)|^2 y e^{-\mu y}\,dy \leq c^2(\log m)^{-1}, \quad m\geq 5.
$$
Therefore,
$$
\int_\sO yu^2|\nabla\varphi_m(y)|^2\fw(x,y)\,dxdy \to 0, \hbox{ as } m\to \infty,
$$
and the result follows.
\end{proof}

We recall the classical

\begin{thm}[Hardy inequality] \cite[\S 1.5]{DrabekKufnerNicolosi}, \cite[Theorem 5.2]{Kufner}, \cite[Lemma 1.3]{KufnerOpic}, \cite[Chapter 1]{KufnerPersson}, \cite[Equation (0.32)]{KufnerSandig}, \cite[Theorem A.3]{Stein}
\label{thm:HardyInequality}
Let $1<p<\infty$, $\beta\neq p-1$. Let $v:(0,\infty)\to\RR$ be differentiable a.e. on $(0,\infty)$ such that
$$
\int_0^\infty |v'(y)|^p y^\beta\,dy < \infty.
$$
Further, let $v$ satisfy the conditions
\begin{align*}
v(0) &= \lim_{y\to 0} v(y) = 0 \quad\hbox{for}\quad \beta<p-1,
\\
v(\infty) &= \lim_{y\to \infty} v(y) = 0 \quad\hbox{for}\quad \beta>p-1.
\end{align*}
Then the following inequality holds:
\begin{equation}
\label{eq:ClassicHardyInequality}
\int_0^\infty |v(y)|^p y^{\beta-p}\,dy \leq \left(\frac{p}{\beta-p+1}\right)^p\int_0^\infty |v'(y)|^p y^\beta\,dy.
\end{equation}
\end{thm}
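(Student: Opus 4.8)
The plan is to prove Theorem~\ref{thm:HardyInequality} by the classical rescaling-plus-Minkowski argument, treating the two cases $\beta<p-1$ and $\beta>p-1$ in parallel. First I would reduce to $v\ge 0$: since $v$ is real-valued and locally absolutely continuous, so is $|v|$, with $\bigl||v|'\bigr|=|v'|$ a.e., and $|v|$ inherits the relevant endpoint condition; hence it suffices to bound $I:=\int_0^\infty v(y)^p y^{\beta-p}\,dy$ for $v\ge 0$. In the case $\beta<p-1$ (so $\beta-p+1<0$) I would use $v(0)=0$ and the representation $v(y)=\int_0^y v'(t)\,dt$, first noting that $v'$ is integrable near $0$ by H\"older,
\[
\int_0^1|v'|\,dy\le\Bigl(\int_0^1|v'|^p y^\beta\,dy\Bigr)^{1/p}\Bigl(\int_0^1 y^{-\beta/(p-1)}\,dy\Bigr)^{(p-1)/p},
\]
the last integral converging precisely because $\beta<p-1$. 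In the case $\beta>p-1$ I would instead use $v(\infty)=0$ and $v(y)=-\int_y^\infty v'(t)\,dt$, with the analogous H\"older bound on $(1,\infty)$ converging precisely because $\beta>p-1$.

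The heart of the argument is a short computation, carried out first for $\beta<p-1$. The substitution $t=ys$ gives $v(y)=y\int_0^1 v'(ys)\,ds$, hence $I^{1/p}=\bigl(\int_0^\infty|\int_0^1 v'(ys)\,ds|^p y^\beta\,dy\bigr)^{1/p}$. Minkowski's integral inequality pulls the $L^p(y^\beta\,dy)$-norm inside the $ds$-integral, and the substitution $u=ys$ in the inner integral yields $\int_0^\infty|v'(ys)|^p y^\beta\,dy=s^{-\beta-1}\int_0^\infty|v'(u)|^p u^\beta\,du$. Therefore
\[
I^{1/p}\le\Bigl(\int_0^1 s^{-(\beta+1)/p}\,ds\Bigr)\Bigl(\int_0^\infty|v'(u)|^p u^\beta\,du\Bigr)^{1/p},
\]
and $\int_0^1 s^{-(\beta+1)/p}\,ds$ converges exactly when $\beta<p-1$, with value $p/(p-\beta-1)=p/|\beta-p+1|$, which is \eqref{eq:ClassicHardyInequality}. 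For $\beta>p-1$ the parallel computation uses $t=y/s$, producing $v(y)=-y\int_0^1 v'(y/s)s^{-2}\,ds$, and after Minkowski and the substitution $u=y/s$ it produces the factor $\int_0^1 s^{(\beta+1)/p-2}\,ds$, which converges exactly when $\beta>p-1$ and equals $p/(\beta-p+1)$; this again gives \eqref{eq:ClassicHardyInequality}.

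I do not expect a serious obstacle here, as this is a classical result contained in the cited references; the only points needing care are (i) reading the hypothesis ``differentiable a.e.\ with $\int_0^\infty|v'|^p y^\beta\,dy<\infty$'' so that the fundamental-theorem-of-calculus representation of $v$ is legitimate --- that is, taking $v$ to be locally absolutely continuous, or, alternatively, first proving the inequality for $v\in C_0^\infty((0,\infty))$ and extending by density in the weighted norm --- and (ii) confirming that the one-dimensional integrals $\int_0^1 s^{-(\beta+1)/p}\,ds$ and $\int_0^1 s^{(\beta+1)/p-2}\,ds$ converge precisely under the respective case hypotheses and yield the sharp constant $\bigl(p/|\beta-p+1|\bigr)^p$. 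As a consistency check I would also record the alternative integration-by-parts derivation, $I=\tfrac{1}{\beta-p+1}\bigl[v^p y^{\beta-p+1}\bigr]_0^\infty-\tfrac{p}{\beta-p+1}\int_0^\infty v^{p-1}v'\,y^{\beta-p+1}\,dy$ followed by H\"older on the last integral, where the boundary terms vanish along a suitable sequence at the relevant endpoint by virtue of $I<\infty$ together with the tail estimates from step (i); this route reproduces the same constant.
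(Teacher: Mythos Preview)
Your proof is correct; it is the standard Minkowski-integral-inequality derivation of Hardy's inequality, and your alternative integration-by-parts route is also a well-known proof. Note, however, that the paper does not actually prove Theorem~\ref{thm:HardyInequality}: it is stated there as a classical result with citations to several standard references (Kufner, Kufner--Opic, Kufner--Persson, Stein, etc.), so there is no ``paper's own proof'' to compare against.

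One minor remark: your observation that the sharp constant is $\bigl(p/|\beta-p+1|\bigr)^p$ is correct, and the paper's statement of the constant as $\bigl(p/(\beta-p+1)\bigr)^p$ should indeed be read with an implicit absolute value in the case $\beta<p-1$. Your point~(i) about the hypothesis ``differentiable a.e.'' needing to be read as local absolute continuity (or handled by density from $C_0^\infty$) is also well taken; this is how the cited references interpret it.
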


See \cite{Feehan_generalizedhardy} for a survey of extensions of Hardy's inequality. Next we recall a special case of a result of Kufner \cite{Kufner}. If $\sO\subset\RR^d$ is a \emph{bounded} domain and $T\subset\partial\sO$, denote $W^{k,p}(\sO;d_T,\eps)$ by \cite[\S 3.3]{Kufner}
\begin{equation}
\label{eq:KufnerSobolevNorm}
W^{k,p}(\sO;d_T,\eps) := \left\{u \in L(\sO): \|u\|_{W^{k,p}(\sO;d_T,\eps)} < \infty, \quad\forall \alpha, |\alpha|\leq k\right\},
\end{equation}
where $1\leq p < \infty$, $k\geq 0$ is an integer, $\eps\in\RR$, $d_T(x) := \hbox{dist}(x,T), x\in\sO$, and
$$
\|u\|_{W^{k,p}(\sO;d_T,\eps)}^p := \sum_{|\alpha|\leq k}\int_\sO|D^\alpha u(x)|^p d_T^\eps(x)\,dx,
$$
and $D^\alpha u$ is defined in the sense of distributions \cite[\S 1]{Kufner}, and denote $L^p(\sO;d_T,\eps) = W^{0,p}(\sO;d_T,\eps)$. Require that $\bar T$ obey a uniform exterior cone condition in the sense of \cite[Definition 4.10 and Remark 7.5]{Kufner}. We have the following analogue of the density result \cite[\S 3.17 \& Theorem 3.18]{Adams}, \cite[Theorem 5.3.3]{Evans} for standard Sobolev spaces.

\begin{thm}[Density of smooth functions for power weights]
\label{thm:KufnerPowerWeight}
\cite[Theorems 7.2, 7.4, Remark 7.5, Proposition 7.6 and Remark 11.12 (iii)]{Kufner}
Let $\sO$ be a \emph{bounded}, $\sC^0$ domain in the sense of \cite[Definition 4.2]{Kufner} and require that $\bar T$ obey a uniform exterior cone condition. Then $C^\infty(\bar\sO)$ is a dense subset of $W^{k,p}(\sO;d_T,\eps)$ for $\eps > -1$.
\end{thm}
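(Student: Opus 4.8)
The plan is to follow the classical partition-of-unity together with translation-and-mollification scheme (as in \cite[\S 7]{Kufner}, \cite[Theorem 3.18]{Adams}, \cite[Theorem 5.3.3]{Evans}), keeping careful track of the degenerate weight $d_T^\eps$ appearing in \eqref{eq:KufnerSobolevNorm}. First I would cover the compact set $\bar\sO$ by finitely many open balls $B_0,B_1,\dots,B_N$ with $\bar B_0\subset\sO$ and each $B_j$, $j\geq 1$, centered at a point of $\partial\sO$, arranged so that the boundary balls split into those with $\bar B_j\cap\bar T=\emptyset$ and those centered at a point of $T$. Choosing a smooth partition of unity $\{\chi_j\}$ subordinate to this cover and writing $u=\sum_j\chi_j u$, it suffices (the sum being finite and the $\chi_j$ having bounded derivatives) to approximate each $\chi_j u$ in the norm of \eqref{eq:KufnerSobolevNorm} by functions in $C^\infty(\bar\sO)$.

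For $j=0$ and for the boundary balls with $\bar B_j\cap\bar T=\emptyset$, the weight $d_T^\eps$ is bounded above and below by positive constants on $\supp\chi_j\cap\sO$, so on that piece $W^{k,p}(\sO;d_T,\eps)$ has a norm equivalent to the ordinary Sobolev norm. Since $\sO$ is a $\mathcal{C}^0$ domain it satisfies the segment condition locally, so one can shift $\chi_j u$ a small amount into $\sO$ along a locally fixed direction and mollify; this yields the required $C^\infty(\bar\sO)$ approximations exactly as in the unweighted theory. Thus the only substantive case is a ball $B_j$ centered at a point of $T$.

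In that case I would use the uniform exterior cone condition on $\bar T$ to produce a fixed vector $v$ and $t_0>0$ such that $x+tv\in\sO$ for all $x\in\overline{\sO\cap B_j}$ and $0<t\le t_0$, together with a comparison of the form
\[
  c\,d_T(x) \le d_T(x+tv) \le C\bigl(d_T(x)+t\bigr), \qquad x\in\sO\cap B_j,\ 0<t\le t_0 .
\]
Setting $(\chi_j u)_t(x):=(\chi_j u)(x+tv)$, the shifted function is defined on a full neighborhood of $\overline{\sO\cap B_j}$, so convolving it with a mollifier $\rho_\delta$ with $\delta$ small relative to $t$ produces functions in $C^\infty(\overline{\sO\cap B_j})$ converging to $(\chi_j u)_t$ in the weighted norm as $\delta\downarrow0$; here one uses that $d_T^\eps$ is \emph{locally integrable} across $T$ on $\sO\cap B_j$, which holds precisely because $\eps>-1$ (since $d_T$ behaves like the distance to a $(d-1)$-dimensional set). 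It then remains to show $(\chi_j u)_t\to\chi_j u$ in $W^{k,p}(\sO\cap B_j;d_T,\eps)$ as $t\downarrow0$. For this I would, for each multi-index $|\alpha|\le k$, first approximate $D^\alpha(\chi_j u)$ in $L^p(\sO\cap B_j;d_T,\eps)$ by a continuous function $h$, then estimate
$\int|D^\alpha(\chi_j u)(x+tv)-D^\alpha(\chi_j u)(x)|^p d_T(x)^\eps\,dx$
by the usual three-term split; the shifted terms are dominated uniformly in $t$ using the lower bound $d_T(x+tv)\ge c\,d_T(x)$ in the comparison above, and the term involving $h$ tends to zero by its uniform continuity and the finiteness of $\int_{\supp h\cap\sO\cap B_j}d_T^\eps$, again using $\eps>-1$. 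A diagonal choice of $t$ and then $\delta$ completes the local approximation; summing over $j$ finishes the proof. I expect the main obstacle to be the geometric step in the balls centered on $T$: extracting from the uniform exterior cone condition a single local shift direction $v$ that simultaneously keeps $x+tv$ inside $\sO$ and yields the two-sided weight comparison uniformly over $\sO\cap B_j$, and then verifying that the shifted difference quotients of $D^\alpha u$ are uniformly dominated in the weighted $L^p$ norm — this is the technical content bundled into \cite[Theorems 7.2, 7.4, Remark 7.5, Proposition 7.6]{Kufner}.
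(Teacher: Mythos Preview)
The paper does not give its own proof of this theorem: it is stated as a citation of \cite[Theorems 7.2, 7.4, Remark 7.5, Proposition 7.6 and Remark 11.12 (iii)]{Kufner}, with no argument supplied. Your sketch is a faithful outline of the standard proof in that reference --- the partition-of-unity reduction, the observation that on pieces bounded away from $\bar T$ the weight is two-sided bounded so the unweighted segment-property argument applies, and on pieces meeting $T$ the exterior cone condition supplies a local shift direction along which one translates and then mollifies, using $\eps>-1$ for local integrability of $d_T^\eps$ and a two-sided comparison $c\,d_T(x)\le d_T(x+tv)\le C(d_T(x)+t)$ to control the shifted weighted norms. There is nothing to compare against in the paper itself; your approach is the one the cited source uses.
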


\begin{rmk}[$\sC^0$ domains]
\label{rmk:C0domain}
If $\sO$ has the \emph{strong local Lipschitz property} in the sense of \cite[\S 4.5]{Adams} (compare the concept of a $\sC^{0,1}$ domain in \cite[Definition 4.3]{Kufner}), then $\sO$ will be a $\sC^0$ domain; moreover, if $\sO$ has the strong local Lipschitz property, then by \cite[\S 4.7]{Adams} it has the \emph{segment property} in the sense of \cite[\S 4.2]{Adams}. When $\sO$ is bounded, then the condition that $\sO$ have the strong local Lipschitz property reduces to the condition that $\partial\sO$ is locally Lipschitz (that is, each point in $\partial\sO$ has a neighborhood $U\subset\bar\sO$ such that $U\cap\partial\sO$ is the graph of a Lipschitz function). In particular, when $\sO$ in Definition \ref{defn:HestonDomain} obeys Hypotheses \ref{hyp:HestonDomainNearGammaZero} and \ref{hyp:HestonDomainNearGammaOne} when $k=1$, then $\sO$ necessarily has the strong local Lipschitz property (see \cite[\S 4.7]{Adams}) and thus is a $\sC^0$ domain.
\end{rmk}

As a consequence of Theorem \ref{thm:KufnerPowerWeight}, we obtain

\begin{lem}[Equivalence of definitions of weighted Sobolev spaces]
\label{lem:KufnerPowerWeightedSobolevSpaceEquivalents}
Let $\sO$ be a domain as in Definition \ref{defn:HestonDomain}. If $\sO$ is \emph{bounded}, $\sO$ obeys Hypothesis \ref{hyp:HestonDomainNearGammaZero}, and $\partial\sO$ is locally Lipschitz, and  then, for any $\beta > 0$,
$$
H^k(\sO,\fw) = W^{k,2}(\sO;d_{\Gamma_0},\beta+k-1), \quad k=0,1,2.
$$
\end{lem}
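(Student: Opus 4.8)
The plan is to reduce the asserted equivalence to a comparison of power weights, combined with a weighted Hardy inequality (Theorem \ref{thm:HardyInequality}) near the degenerate boundary portion $\Gamma_0$. The first step is purely geometric. Since $\sO$ is bounded there are positive constants $c_1,c_2$ with $c_1\le e^{-\gamma|x|-\mu y}\le c_2$ on $\bar\sO$, so by \eqref{eq:HestonWeight} the weight $\fw$ is comparable on $\sO$ to $y^{\beta-1}$. Next, by Hypothesis \ref{hyp:HestonDomainNearGammaZero} we have $\sO\cap(\RR\times(0,\delta_0))=\Gamma_0\times(0,\delta_0)$ with $\Gamma_0\subseteq\RR$ a finite union of open intervals, so for $(x,y)$ in this strip the nearest point of $\Gamma_0\times\{0\}$ is $(x,0)$ and hence $d_{\Gamma_0}(x,y)=y$; on the complement $\sO\less\sO^0_{\delta_0}$ both $d_{\Gamma_0}$ and $y$ lie between $\delta_0$ and $\diam\sO$. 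Thus $d_{\Gamma_0}\simeq y$ on $\sO$, and therefore $\fw\simeq y^{\beta-1}\simeq d_{\Gamma_0}^{\beta-1}$, while $(1+y),(1+y)^2\simeq 1$ and $y^2\simeq d_{\Gamma_0}^2$ on $\sO$. Feeding these comparisons into Definitions \ref{defn:H1WeightedSobolevSpaces} and \ref{defn:H2WeightedSobolevSpaces} shows that, for $k=0,1,2$, the norm $\|u\|_{H^k(\sO,\fw)}$ is equivalent to
\[
\left(\int_\sO |D^k u|^2\, d_{\Gamma_0}^{\beta+k-1}\,dx\,dy+\sum_{j=0}^{k-1}\int_\sO |D^j u|^2\, d_{\Gamma_0}^{\beta-1}\,dx\,dy\right)^{1/2},
\]
which for $k=0$ is exactly the $W^{0,2}(\sO;d_{\Gamma_0},\beta-1)$ norm, settling that case.

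For $k=1,2$ one inclusion is immediate: because $d_{\Gamma_0}$ is bounded on $\sO$ we have $d_{\Gamma_0}^{\beta+k-1}\le C\,d_{\Gamma_0}^{\beta-1}$ there, so applying this to $u$ and to $D^j u$, $j<k$, in the displayed norm gives $\|u\|_{W^{k,2}(\sO;d_{\Gamma_0},\beta+k-1)}\le C\|u\|_{H^k(\sO,\fw)}$, i.e. $H^k(\sO,\fw)\subseteq W^{k,2}(\sO;d_{\Gamma_0},\beta+k-1)$. The substantive direction is the bound $\sum_{j<k}\int_\sO|D^j u|^2 d_{\Gamma_0}^{\beta-1}\le C\|u\|_{W^{k,2}(\sO;d_{\Gamma_0},\beta+k-1)}^2$. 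Away from $\Gamma_0$, on $\sO\less\sO^0_{\delta_0/2}$, this is trivial since there $d_{\Gamma_0}^{\beta-1}\simeq 1\simeq d_{\Gamma_0}^{\beta+k-1}$. On the product strip $\sO^0_{\delta_0}=\Gamma_0\times(0,\delta_0)$, where $d_{\Gamma_0}=y$, I would argue slicewise: using density of $C^\infty(\bar\sO)$ in both spaces (for the left-hand side via Definitions \ref{defn:H1WeightedSobolevSpaces}, \ref{defn:H2WeightedSobolevSpaces} together with Theorem \ref{thm:KufnerPowerWeight} and Remark \ref{rmk:C0domain}, noting $\sO$ is a bounded $\sC^0$ domain with locally Lipschitz boundary) one reduces to smooth $u$, and for a.e.\ $x\in\Gamma_0$ one applies, to $v=D^j u(x,\cdot)$, a boundary-term form of Hardy's inequality valid with no vanishing assumed at $y=0$: for $\alpha>0$,
\[
\int_0^{\delta_0/2} v(y)^2 y^{\alpha-1}\,dy\le C(\alpha,\delta_0)\left(\int_{\delta_0/2}^{\delta_0} v(y)^2\,dy+\int_0^{\delta_0} v'(y)^2 y^\alpha\,dy\right),
\]
obtained from $v(y)=v(y_0)-\int_y^{y_0}v'$, Cauchy--Schwarz with weight $t^\alpha$, and averaging the endpoint $y_0$ over $(\delta_0/2,\delta_0)$; for $k=2$ one uses the analogous second-order (Taylor-remainder) version to descend from $\partial_y^2 u\in L^2(y^{\beta+1})$ to $u\in L^2(y^{\beta-1})$. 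Integrating the resulting slicewise bounds over $x\in\Gamma_0$ and absorbing the endpoint integrals over $\Gamma_0\times(\delta_0/2,\delta_0)$ into the (comparable-weight) estimate on $\sO\less\sO^0_{\delta_0/2}$ then yields the claim.

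The main obstacle is exactly this last step: making the Hardy estimate near $\Gamma_0$ rigorous \emph{without} a boundary condition at $y=0$ — the very phenomenon responsible for the distinction between $H^1(\sO,\fw)$ and $H^1_0(\sO\cup\Gamma_0,\fw)$ when $\beta<1$ — and, in the case $k=2$, recovering the $L^2(d_{\Gamma_0}^{\beta-1})$ control of the \emph{first} derivatives of $u$ (which $H^2(\sO,\fw)$ demands but $W^{2,2}(\sO;d_{\Gamma_0},\beta+1)$ only supplies with the weaker weight after one Hardy step). This will require an iterated Hardy argument, careful tracking of the intermediate endpoint terms on $\Gamma_0\times(\delta_0/2,\delta_0)$, and a matching weighted interpolation estimate tying together the recovered bounds on $u$ and on $D^2 u$; by contrast the exponential and geometric factors are disposed of once and for all by the boundedness of $\sO$ in the first step.
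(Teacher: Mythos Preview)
Your overall architecture is right and matches the paper's: boundedness kills the exponential factors, Hypothesis~\ref{hyp:HestonDomainNearGammaZero} makes $d_{\Gamma_0}\simeq y$, the inclusion $H^k\subseteq W^{k,2}$ is immediate from $d_{\Gamma_0}^{\,\beta+k-1}\le C\,d_{\Gamma_0}^{\,\beta-1}$, and the reverse inclusion amounts to a Hardy-type bound for the lower-order terms near $\Gamma_0$. Your drop-by-one estimate
\[
\int_0^{\delta_0/2} v^2\,y^{\alpha-1}\,dy\ \le\ C\Bigl(\int_{\delta_0/2}^{\delta_0} v^2\,dy+\int_0^{\delta_0}(v')^2\,y^{\alpha}\,dy\Bigr)
\]
is correct and handles $k=1$ (take $v=u$, $\alpha=\beta$) and the $j=0$ term for $k=2$ (iterate once, or use your Taylor-remainder version).

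The gap is the $Du$ term in $k=2$. You need $\int|Du|^2 y^{\beta-1}$ controlled by the $W^{2,2}(\sO;d_{\Gamma_0},\beta+1)$ norm, which supplies only $\int|D^j u|^2 y^{\beta+1}$ for $j\le 2$. Your drop-by-one inequality applied to $v=u_x$ with right-hand weight $y^{\beta+1}$ yields $\int u_x^2\,y^{\beta}$, not $y^{\beta-1}$; a further iteration would require $u_{xyy}\in L^2(y^{\beta+1})$, i.e.\ a third derivative you do not have. The ``weighted interpolation'' you allude to does not rescue this either: interpolating between $\int|Du|^2 y^{\beta}$ and $\int|Du|^2 y^{\beta+1}$ only reaches exponents in $[\beta,\beta+1]$, and an integration-by-parts route $\int u_y^2 y^{\beta-1}=-\int u\,u_{yy}\,y^{\beta-1}-(\beta-1)\int u\,u_y\,y^{\beta-2}+\cdots$ produces the worse weight $y^{\beta-2}$.

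The fix is to use the \emph{classical} Hardy inequality (Theorem~\ref{thm:HardyInequality}) with $p=2$, which drops the exponent by $2$ in a single step:
\[
\int_0^\infty v^2\,y^{(\beta+1)-2}\,dy\ \le\ \Bigl(\tfrac{2}{\beta}\Bigr)^2\int_0^\infty (v')^2\,y^{\beta+1}\,dy,
\]
valid for $\beta+1>1$ provided $v(\infty)=0$. The paper secures this boundary condition by a cutoff $\chi\in C^\infty(\bar\HH)$ with $\chi=1$ on $\RR\times[0,\delta_0/2)$ and $\chi=0$ on $\RR\times(\delta_0,\infty)$, applies Hardy to $(\chi u)_x$ and $(\chi u)_y$, and absorbs the commutator terms (supported where $y\ge\delta_0/2$) into the away-from-$\Gamma_0$ estimate where all weights are comparable. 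This is precisely the ``no boundary condition at $y=0$'' issue you flagged, but resolved by moving the required vanishing to the \emph{far} endpoint via the cutoff; once you make that substitution your argument goes through for all three values of $k$.
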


\begin{proof}
The derivatives appearing in the definitions of $H^k(\sO,\fw)$ and $W^{k,2}(\sO;d_{\Gamma_0},\beta+k-1)$ are defined in the sense of distributions and so it is enough to show that the norms associated with these weighted Sobolev spaces are equivalent. This is clear when $k=0$, so it is immediate that $L^2(\sO,\fw) = W^{0,2}(\sO;d_{\Gamma_0},\beta-1)$.

By Hypothesis \ref{hyp:HestonDomainNearGammaZero}, there is a $\delta_0>0$ such that $\sO\cap\{y<y_0\} = \Gamma_0\times(0,\delta_0)$. Let $\chi\in C^\infty(\bar\HH)$ be a cutoff function such that $\chi=1$ on $\RR\times[0,\delta_0/2)$ and $\chi=0$ on $\RR\times(\delta_0,\infty)$.

Choose $k=1$ and $\eps=\beta$ and suppose $u\in H^1(\sO,\fw)$. Then,
\begin{align*}
\|u\|_{W^{1,2}(\sO;d_{\Gamma_0},\beta)}^2 &= \int_\sO\left(y|Du|^2 + yu^2\right)y^{\beta-1}\,dxdy \quad\hbox{(by \eqref{eq:KufnerSobolevNorm})}
\\
&\leq \int_\sO\left(y|Du|^2 + (1+y)u^2\right)y^{\beta-1}\,dxdy
\\
&\leq \left(\inf_{(x,y)\in\sO} e^{-\gamma|x|-\mu y}\right)\int_\sO\left(y|Du|^2 + (1+y)u^2\right)y^{\beta-1}e^{-\gamma|x|-\mu y}\,dxdy
\\
&=  C\|u\|_{H^1(\sO,\fw)}^2 \quad\hbox{(by Definition \ref{defn:H1WeightedSobolevSpaces})},
\end{align*}
where $C := \inf_{(x,y)\in\sO} e^{-\gamma|x|-\mu y} > 0$ since $\sO$ is bounded, and thus $u\in W^{1,2}(\sO;d_{\Gamma_0},\beta)$. Hence, $H^1(\sO,\fw) \subset W^{1,2}(\sO;d_{\Gamma_0},\beta)$. On the other hand, if $u\in W^{1,2}(\sO;d_{\Gamma_0},\beta)$, then
\begin{align*}
\int_\sO u^2 y^{\beta-1}\,dxdy &\leq \int_{\sO\cap\{y< \delta_0\}} (\chi u)^2 y^{\beta-1}\,dxdy + \int_{\sO\cap\{y\geq \delta_0/2\}} u^2 y^{\beta-1}\,dxdy
\\
&\leq C\int_{\sO\cap\{y< \delta_0\}} (\chi u)_y^2 y^{\beta+1}\,dxdy + \int_{\sO\cap\{y\geq \delta_0/2\}} u^2 y^{\beta-1}\,dxdy \quad\hbox{(Theorem \ref{thm:HardyInequality})}
\\
&\leq C\int_{\sO\cap\{y< \delta_0\}} u_y^2 y^{\beta+1}\,dxdy + C\int_{\sO\cap\{y\geq \delta_0/2\}} u^2 (y^{\beta+1} + y^{\beta-1})\,dxdy,
\\
&\leq C\int_{\sO\cap\{y< \delta_0\}} u_y^2 y^\beta\,dxdy + C\int_{\sO\cap\{y\geq \delta_0/2\}} u^2 y^\beta\,dxdy \quad\hbox{(since $\sO$ bounded)},
\end{align*}
noting that Theorem \ref{thm:HardyInequality} applies since $\beta+1>1$ for $\beta>0$ and $\chi u(\cdot,\delta_0)=0$. Therefore,
\begin{equation}
\label{eq:HardyInequalityWH1Equivalence}
\int_\sO u^2 y^{\beta-1}\,dxdy \leq C\int_\sO (u_y^2 + u^2) y^\beta\,dxdy.
\end{equation}
Consequently, using $\sup_{(x,y)\in\sO} e^{-\gamma|x|-\mu y} < \infty$ as $\sO$ is bounded,
\begin{align*}
\|u\|_{H^1(\sO,\fw)}^2 &= \int_\sO\left(y|Du|^2 + (1+y)u^2\right)y^{\beta-1}e^{-\gamma|x|-\mu y}\,dxdy
 \quad\hbox{(by Definition \ref{defn:H1WeightedSobolevSpaces})}
\\
&\leq \left(\sup_{(x,y)\in\sO} e^{-\gamma|x|-\mu y}\right)\int_\sO\left(y|Du|^2 + (1+y)u^2\right)y^{\beta-1}\,dxdy
\\
&\leq C\int_\sO\left(y|Du|^2 + yu^2\right)y^{\beta-1}\,dxdy \quad\hbox{(by \eqref{eq:HardyInequalityWH1Equivalence})}
\\
&= C\|u\|_{W^{1,2}(\sO;d_{\Gamma_0},\beta)}^2  \quad\hbox{(by \eqref{eq:KufnerSobolevNorm})}.
\end{align*}
Hence, $u\in H^1(\sO,\fw)$ and so $W^{1,2}(\sO;d_{\Gamma_0},\beta) \subset H^1(\sO,\fw)$. Thus, $H^1(\sO,\fw) = W^{1,2}(\sO;d_{\Gamma_0},\beta)$.

Choose $k=2$ and $\eps=\beta+1$ and suppose $u\in H^2(\sO,\fw)$. Then,
\begin{align*}
\|u\|_{W^{2,2}(\sO;d_{\Gamma_0},\beta+1)}^2 &= \int_\sO\left(y^2|D^2u|^2 + y^2|Du|^2 + y^2u^2\right)y^{\beta-1}\,dxdy  \quad\hbox{(by \eqref{eq:KufnerSobolevNorm})}
\\
&\leq C\int_\sO\left(y^2|D^2u|^2 + (1+y^2)|Du|^2 + (1+y)u^2\right)y^{\beta-1}\,dxdy \quad\hbox{(since $\sO$ bounded)}
\\
&\leq C\left(\inf_{(x,y)\in\sO} e^{-\gamma|x|-\mu y}\right)
\\
&\qquad \times\int_\sO\left(y^2|D^2u|^2 + (1+y^2)|Du|^2 + (1+y)u^2\right)y^{\beta-1}e^{-\gamma|x|-\mu y}\,dxdy
\\
&\leq C\|u\|_{H^2(\sO,\fw)}^2 \quad\hbox{(by Definition \ref{defn:H2WeightedSobolevSpaces} and as $\sO$ bounded)},
\end{align*}
and so $u\in W^{2,2}(\sO;d_{\Gamma_0},\beta+1)$. Hence, $H^2(\sO,\fw) \subset W^{2,2}(\sO;d_{\Gamma_0},\beta+1)$. On the other hand, if $u\in W^{2,2}(\sO;d_{\Gamma_0},\beta+1)$, then the derivation of \eqref{eq:HardyInequalityWH1Equivalence} shows that
\begin{equation}
\label{eq:HardyInequalityWH2Equivalenceu}
\int_\sO u^2 y^{\beta-1}\,dxdy \leq C\int_\sO (u_y^2 + u^2) y^{\beta+1}\,dxdy,
\end{equation}
while
\begin{align*}
\int_\sO |Du|^2 y^{\beta-1}\,dxdy &\leq \int_{\sO\cap\{y<\delta_0\}}\left((\chi u)_x^2 + (\chi u)_y^2\right) y^{\beta-1}\,dxdy + \int_{\sO\cap\{y\geq \delta_0/2\}}|Du|^2 y^{\beta-1}\,dxdy
\\
&\leq \int_{\sO\cap\{y<\delta_0\}}\left((\chi u)_{xy}^2 + (\chi u)_{yy}^2\right) y^{\beta+1}\,dxdy
\\
&\quad + \int_{\sO\cap\{y\geq \delta_0/2\}}|Du|^2 y^{\beta-1}\,dxdy \quad\hbox{(Theorem \ref{thm:HardyInequality})}
\\
&\leq \int_{\sO\cap\{y<\delta_0\}}\left(u_{xy}^2 + u_{yy}^2\right) y^{\beta+1}\,dxdy
\\
&\quad + \int_{\sO\cap\{y\geq \delta_0/2\}}\left(|Du|^2y^{\beta-1} + |Du|^2y^{\beta+1} + u^2y^{\beta+1}\right)\,dxdy
\\
&\leq \int_{\sO\cap\{y<\delta_0\}}\left(u_{xy}^2 + u_{yy}^2\right) y^{\beta+1}\,dxdy
\\
&\quad + \int_{\sO\cap\{y\geq \delta_0/2\}}\left(|Du|^2 + u^2\right)y^{\beta+1}\,dxdy \quad\hbox{(since $\sO$ bounded)}
\end{align*}
noting that Theorem \ref{thm:HardyInequality} applies since $\beta+1>1$ for $\beta>0$ and $\chi u(\cdot,\delta_0)=0$. Therefore,
\begin{equation}
\label{eq:HardyInequalityWH2EquivalenceDu}
\int_\sO |Du|^2 y^{\beta-1}\,dxdy \leq C\int_\sO \left(|D^2u|^2 + |Du|^2 + u^2\right)y^{\beta+1}\,dxdy.
\end{equation}
Consequently, by Definition \ref{defn:H2WeightedSobolevSpaces},
\begin{align*}
\|u\|_{H^2(\sO,\fw)}^2 &= \int_\sO\left(y^2|D^2u|^2 + (1+y^2)|Du|^2 + (1+y)u^2\right)y^{\beta-1}e^{-\gamma|x|-\mu y}\,dxdy
\\
&\leq \left(\sup_{(x,y)\in\sO} e^{-\gamma|x|-\mu y}\right)\int_\sO\left(y^2|D^2u|^2 + (1+y^2)|Du|^2 + (1+y)u^2\right)y^{\beta-1}\,dxdy
\\
&\leq C\int_\sO\left(y^2|D^2u|^2 + (1+y^2)|Du|^2 + (1+y^2)u^2\right)y^{\beta-1}\,dxdy \quad\hbox{(as $\sO$ bounded)}
\\
&\leq C\int_\sO\left(y^2|D^2u|^2 + y^2|Du|^2 + y^2u^2\right)y^{\beta-1}\,dxdy
\quad\hbox{(by \eqref{eq:HardyInequalityWH2Equivalenceu} and \eqref{eq:HardyInequalityWH2EquivalenceDu})}
\\
&= C\|u\|_{W^{2,2}(\sO;d_{\Gamma_0},\beta+1)}^2  \quad\hbox{(by \eqref{eq:KufnerSobolevNorm})}.
\end{align*}
Hence, $u\in H^2(\sO,\fw)$ and so $W^{2,2}(\sO;d_{\Gamma_0},\beta+1) \subset H^2(\sO,\fw)$. Thus, we obtain $H^2(\sO,\fw) = W^{2,2}(\sO;d_{\Gamma_0},\beta+1)$.
\end{proof}

We have the following analogue, for our domains (Definition \ref{defn:HestonDomain}) and weighted Sobolev spaces (Definitions \ref{defn:H1WeightedSobolevSpaces} and \ref{defn:H2WeightedSobolevSpaces}), of the density result \cite[Theorem 3.18]{Adams}, \cite[Theorem 5.3.3]{Evans} for standard Sobolev spaces.

\begin{cor}[Density of smooth functions in weighted Sobolev spaces]
\label{cor:KufnerPowerWeight}
Let $\sO$ be a domain as in Definition \ref{defn:HestonDomain} such that $\sO$ obeys Hypothesis \ref{hyp:HestonDomainNearGammaZero} and $\partial\sO$ has the strong local Lipschitz property. Then $C^\infty_0(\bar\sO)$ is a dense subset of $H^k(\sO,\fw)$ for $\beta > 0$ and $k=0,1,2$.
\end{cor}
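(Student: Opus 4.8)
The proof naturally splits according to $k$. For $k=0$ the assertion is immediate: Lemma \ref{lem:L2ApproximationLemma} already gives that $C^\infty_0(\sO)$, and hence a fortiori $C^\infty_0(\bar\sO)$, is dense in $L^2(\sO,\fw) = H^0(\sO,\fw)$ for every $\beta>0$. So the work lies entirely in the cases $k=1,2$, and the only genuine difficulty is that $\sO$ is allowed to be unbounded, whereas the available density statement for power-type weights, Theorem \ref{thm:KufnerPowerWeight}, is restricted to bounded $\sC^0$ domains, and the identification of $H^k(\sO,\fw)$ with a Kufner weighted Sobolev space in Lemma \ref{lem:KufnerPowerWeightedSobolevSpaceEquivalents} is likewise stated only for bounded $\sO$. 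The plan is therefore a two-stage approximation: first truncate an arbitrary $u\in H^k(\sO,\fw)$ to a function supported in a bounded part of $\sO$, then approximate the truncation by a smooth function by passing to a bounded subdomain and citing Lemma \ref{lem:KufnerPowerWeightedSobolevSpaceEquivalents} and Theorem \ref{thm:KufnerPowerWeight}.

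For the first stage, fix $u\in H^k(\sO,\fw)$ and let $\zeta_R$ be the radial cutoff function of Definition \ref{defn:RadialCutoffFunction}, so that $\zeta_R=1$ on $B(R)$, $\zeta_R=0$ off $B(2R)$, and, by Lemma \ref{lem:RadialCutoffFunction}, $D\zeta_R$ and $D^2\zeta_R$ are supported in $B(2R)\setminus\bar B(R)$ with $|D\zeta_R|\leq 10R^{-1}$ and $|D^2\zeta_R|\leq 100R^{-2}$. Set $u_R:=\zeta_R u$ and expand $D^\alpha u_R$, $|\alpha|\leq k$, by the Leibniz rule. The terms not involving a derivative of $\zeta_R$ contribute $(\zeta_R-1)D^\alpha u$, which tend to $0$ in $L^2(\sO,\fw)$ by the dominated convergence theorem, using $\zeta_R\to1$ pointwise, $|\zeta_R-1|\leq1$, and the fact that $y^2|D^2u|^2$, $(1+y)^2|Du|^2$, and $(1+y)u^2$ all lie in $L^1(\sO,\fw)$ by Definition \ref{defn:H2WeightedSobolevSpaces}. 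The terms carrying a derivative of $\zeta_R$ are supported in the annulus $B(2R)\setminus B(R)$, where $y\leq 2R$; combining the pointwise bounds on $|D\zeta_R|$ and $|D^2\zeta_R|$ with $y\leq 2R$, each such term is bounded by a constant (independent of $R$) times $\int_{\sO\setminus B(R)}\big(y^2|D^2u|^2+(1+y)^2|Du|^2+u^2\big)\fw\,dxdy$, which tends to $0$ as $R\to\infty$ since the integrand is in $L^1(\sO,\fw)$. Hence $u_R\to u$ in $H^k(\sO,\fw)$.

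For the second stage, fix $R$ and choose a bounded domain $\sO_R$ with $\sO\cap B(2R)\subset\sO_R\subset\sO$ such that $\sO_R$ has the strong local Lipschitz property, $\sO_R$ obeys Hypothesis \ref{hyp:HestonDomainNearGammaZero}, and $\partial\sO_R\cap\HH\cap B(2R)=\Gamma_1\cap B(2R)$; such an $\sO_R$ exists by a standard patching construction, since $\partial\sO$ is locally Lipschitz and, near $\Gamma_0$, $\sO$ is the product $\Gamma_0\times(0,\delta_0)$. By Lemma \ref{lem:KufnerPowerWeightedSobolevSpaceEquivalents}, $H^k(\sO_R,\fw)=W^{k,2}(\sO_R;d_{\Gamma_0},\beta+k-1)$ with equivalent norms, and since $\beta+k-1>-1$ for every $\beta>0$ and $k\geq0$, and $\bar\Gamma_0$ (a finite union of closed intervals) satisfies the uniform exterior cone condition, Theorem \ref{thm:KufnerPowerWeight} provides a sequence $\{v_n\}\subset C^\infty(\bar\sO_R)$ with $v_n\to u_R|_{\sO_R}$ in $H^k(\sO_R,\fw)$. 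Finally, since $\supp u_R\subset B(2R)$ and there $\partial\sO_R$ agrees with $\partial\sO$, the function $u_R$ vanishes on a neighborhood of $\partial\sO_R\setminus\partial\sO$; choosing a fixed cutoff $\chi_R\in C^\infty(\bar\HH)$ equal to $1$ on a neighborhood of $\supp u_R$ and vanishing near $\partial\sO_R\setminus\partial\sO$, we get $\chi_R v_n\to\chi_R u_R=u_R$ in $H^k(\sO_R,\fw)$, and, extended by zero across $\partial\sO_R\setminus\partial\sO$, $\chi_R v_n\in C^\infty_0(\bar\sO)$ with $\chi_R v_n\to u_R$ in $H^k(\sO,\fw)$. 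A diagonal argument combining the two stages yields a sequence in $C^\infty_0(\bar\sO)$ converging to $u$ in $H^k(\sO,\fw)$, as required.

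The step I expect to require the most care is the construction of the intermediate bounded domains $\sO_R$: one must produce, for each $R$, a bounded domain with the strong local Lipschitz property wedged between $\sO\cap B(2R)$ and $\sO$, still satisfying Hypothesis \ref{hyp:HestonDomainNearGammaZero} and with boundary coinciding with $\partial\sO$ on $B(2R)$, so that $\supp u_R$ meets only the ``genuine'' boundary of $\sO$; gluing the local Lipschitz charts of $\partial\sO$ to an outer smooth boundary without creating irregular corners is routine but fiddly. Everything else — the truncation estimates in the first stage and the invocation of Lemma \ref{lem:KufnerPowerWeightedSobolevSpaceEquivalents} and Theorem \ref{thm:KufnerPowerWeight} in the second — is straightforward.
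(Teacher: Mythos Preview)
Your proposal is correct and follows essentially the same route as the paper: reduce to bounded subdomains via cutoffs, invoke Lemma~\ref{lem:KufnerPowerWeightedSobolevSpaceEquivalents} to identify $H^k$ with a Kufner space, and then apply Theorem~\ref{thm:KufnerPowerWeight}. The only difference is the order of operations --- you truncate $u$ first and then approximate the truncation on a bounded $\sO_R$, whereas the paper approximates $u$ on an exhausting sequence $\sO_m$ of bounded Lipschitz subdomains and then multiplies the smooth approximant by a cutoff $\chi_m$ supported in $\sO_m$; both are standard variants of the same localization argument, and both correctly identify the construction of the intermediate bounded Lipschitz domains as the step requiring attention.
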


\begin{proof}
Fix $\eps>0$ and let $\{\sO_m\}_{m\geq 0}$ be a sequence of bounded, Lipschitz subdomains of $\sO$ such that $\cup_{m\geq 0}\sO_m = \sO$ and $\sO_m\subset\sO_{m+1}, \forall m\geq 0$, together with a sequence of cutoff functions $\{\chi_m\}_{m\geq 1} \subset C^\infty_0(\bar\HH)$ with $\supp\chi_m\subset\sO_m$, $\chi_m=1$ on $\sO_{m-1}$, and $\|\chi_m\|_{C^2(\HH)} \leq K, \forall m\geq 1$, for some universal constant $K\geq 1$ independent of $m$. (For example, we may take $\sO_m := \sO\cap B(m+1)$, $m\geq 0$, where $B(R)$ is the open ball in $\RR^2$ with center at the origin and radius $R>0$.) Because $\|\chi_m\|_{C^2(\HH)} \leq K, \forall m\geq 1$, for $k=1,2$, we have
\begin{equation}
\label{eq:SobolevNormEffectOfCutoff}
\|(1-\chi_m)v\|_{H^k(\sO_m,\fw)} \leq c_0\|v\|_{H^k(\sO_m\less\sO_{m-1},\fw)}, \quad \forall v \in H^k(\sO_m),\fw),
\end{equation}
where $c_0=c_0(K)\geq 1$ is a universal constant independent of $m\geq 1$ or $v \in H^k(\sO_m),\fw)$. Theorem \ref{thm:KufnerPowerWeight} and Lemma \ref{lem:KufnerPowerWeightedSobolevSpaceEquivalents} imply that there exists a sequence $\{u_m\}_{m\geq 1}$ such that, for each $m\geq 1$, we have $u_m \in C^\infty(\bar\sO_m)$ and
\begin{equation}
\label{eq:ApproxuOnOmplus1}
\|u-u_m\|_{H^k(\sO_m,\fw)} < \frac{\eps}{4c_0}, \quad m \geq 1.
\end{equation}
Since $\|u\|_{H^k(\sO,\fw)} < \infty$, we may choose $m_0\geq 1$ large enough that
\begin{equation}
\label{eq:HkSmallTail}
\|u\|_{H^k(\sO\less\sO_{m-1},\fw)} < \frac{\eps}{4c_0}, \quad \forall m \geq m_0.
\end{equation}
Therefore, noting that $\supp\chi_mu_m\subset\sO_m$,
\begin{align*}
\|u-\chi_mu_m\|_{H^k(\sO,\fw)} &\leq \|u\|_{H^k(\sO\less\sO_m),\fw)} + \|u-\chi_mu_m\|_{H^k(\sO_m,\fw)}
\\
&< \frac{\eps}{4c_0} + \|u-u_m\|_{H^k(\sO_m,\fw)} + \|(1-\chi_m)u_m\|_{H^k(\sO_m,\fw)}
\\
&\qquad\hbox{(by \eqref{eq:HkSmallTail} for $m\geq m_0$)}
\\
&< \frac{\eps}{2c_0} + c_0\|u_m\|_{H^k(\sO_m\less\sO_{m-1},\fw)} \quad\hbox{(by \eqref{eq:ApproxuOnOmplus1} and \eqref{eq:SobolevNormEffectOfCutoff})}
\\
&\leq \frac{\eps}{2} + c_0\|u\|_{H^k(\sO_m\less\sO_{m-1},\fw)} + c_0\|u-u_m\|_{H^k(\sO_m\less\sO_{m-1},\fw)}
\\
&\leq \frac{\eps}{2} + c_0\|u\|_{H^k(\sO\less\sO_{m-1},\fw)} + c_0\|u-u_m\|_{H^k(\sO_m,\fw)}
\\
&< \eps, \quad \forall m \geq m_0  \quad\hbox{(by \eqref{eq:HkSmallTail} and \eqref{eq:ApproxuOnOmplus1})}.
\end{align*}
Consequently, the sequence $\{\chi_mu_m\}_{m\geq 1} \subset C^\infty_0(\bar\sO)$ converges strongly in $H^k(\sO,\fw)$ to $u \in H^k(\sO,\fw)$ and this completes the proof.
\end{proof}

We then have the following analogue of \cite[Theorem 3.16]{Adams}:

\begin{cor}[Meyers-Serrin theorem for weighted Sobolev spaces]
\label{cor:MeyersSerrinWeightedSobolev}
Let $\sO$ be a domain as in Definition \ref{defn:HestonDomain} such that $\sO$ obeys Hypothesis \ref{hyp:HestonDomainNearGammaZero} and $\partial\sO$ has the strong local Lipschitz property. Then $H^k(\sO,\fw)$ is the completion of $C^\infty(\sO)\cap H^k(\sO,\fw)$ for $\beta > 0$ and $k=0,1,2$.
\end{cor}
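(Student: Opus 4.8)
The plan is to deduce the corollary immediately from Corollary \ref{cor:KufnerPowerWeight}, which, under the present hypotheses on $\sO$ and for $\beta>0$, asserts that $C^\infty_0(\bar\sO)$ is a dense subset of $H^k(\sO,\fw)$, $k=0,1,2$. The key observation is the chain of inclusions
$$
C^\infty_0(\bar\sO) \subseteq C^\infty(\sO)\cap H^k(\sO,\fw) \subseteq H^k(\sO,\fw), \quad k=0,1,2.
$$

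First I would verify the first inclusion, which is the only point requiring an (elementary) argument. If $u\in C^\infty_0(\bar\sO)$, then its restriction to $\sO$ belongs to $C^\infty(\sO)$ by Definition \ref{defn:ContinuousFunctions}; moreover $u$ and its derivatives $D^\alpha u$ with $|\alpha|\leq k$ are bounded and supported in a compact subset of $\bar\sO$, on which $y$ is bounded, while the weight $\fw(x,y)=y^{\beta-1}e^{-\gamma|x|-\mu y}$ from \eqref{eq:HestonWeight} is locally integrable on $\bar\HH$ precisely because $\beta>0$ (so that $y^{\beta-1}$ is integrable near $\{y=0\}$; compare Remark \ref{rmk:FiniteVolumeDomain}). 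Hence the norms \eqref{eq:H1NormHeston} and \eqref{eq:H2NormHeston} of $u$ are finite, so $u\in H^k(\sO,\fw)$; the second inclusion is trivial.

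Given these inclusions, Corollary \ref{cor:KufnerPowerWeight} --- density of the smaller space $C^\infty_0(\bar\sO)$ in $H^k(\sO,\fw)$ --- forces density of the intermediate space $C^\infty(\sO)\cap H^k(\sO,\fw)$ in $H^k(\sO,\fw)$. Since $H^k(\sO,\fw)$ is a Banach (indeed Hilbert) space, as noted following Definitions \ref{defn:H1WeightedSobolevSpaces} and \ref{defn:H2WeightedSobolevSpaces}, it is complete, so it is (isometrically isomorphic to) the completion of $C^\infty(\sO)\cap H^k(\sO,\fw)$ with respect to the $H^k(\sO,\fw)$ norm, which is the assertion. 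I do not anticipate any genuine obstacle: everything reduces to Corollary \ref{cor:KufnerPowerWeight} together with the trivial finiteness check above. (Should one wish to avoid invoking Corollary \ref{cor:KufnerPowerWeight}, one could instead run the classical Meyers--Serrin argument --- a locally finite cover of $\sO$ by open balls compactly contained in $\sO$, a subordinate smooth partition of unity, and mollification at scales shrinking toward $\partial\sO$ --- which works because $\fw$ is smooth and bounded above and below on each such ball; but that is longer.)
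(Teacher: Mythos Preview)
Your proposal is correct and follows exactly the same approach as the paper: the paper's proof is the single sentence ``This follows from Corollary \ref{cor:KufnerPowerWeight}, since $C^\infty_0(\bar\sO) \subset C^\infty(\sO)\cap H^k(\sO,\fw)$.'' Your additional verification of the inclusion and the remark on completeness are sound elaborations of precisely this argument.
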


\begin{proof}
This follows from Corollary \ref{cor:KufnerPowerWeight}, since $C^\infty_0(\bar\sO) \subset C^\infty(\sO)\cap H^k(\sO,\fw)$.
\end{proof}

Recall from \cite[\S 1.26]{Adams} that for each integer $\ell\geq 0$, $C^\ell(\bar\sO)$ denotes the Banach space of functions $u\in C^\ell(\sO)$ for which $D^\alpha u$ is bounded and uniformly continuous on $\sO$ for $0\leq |\alpha| \leq \ell$, with norm
$$
\|u\|_{C^\ell(\bar\sO)} := \max_{0\leq|\alpha|\leq\ell}\sup_{(x,y)\in\sO}|D^\alpha u(x,y)|.
$$
By Definitions \ref{defn:H1WeightedSobolevSpaces} and \ref{defn:H2WeightedSobolevSpaces}, one sees that $C^0(\bar\sO)\subset H^0(\sO,\fw)$, $\ell\geq 0$, and $C^\ell(\bar\sO)\subset H^1(\sO,\fw)$, $\ell\geq 1$, and $C^\ell(\bar\sO)\subset H^1(\sO,\fw)$, $\ell\geq 2$. We have the following analogue, for our weighted Sobolev spaces, of \cite[\S 3.17 \& \S 3.18]{Adams}, \cite[Theorem 5.3.3]{Evans}:

\begin{cor}[Density of functions with bounded derivatives]
\label{cor:KufnerPowerWeightBoundedDerivatives}
Let $\sO$ be a domain as in Definition \ref{defn:HestonDomain} such that $\sO$ obeys Hypothesis \ref{hyp:HestonDomainNearGammaZero} and $\partial\sO$ has the strong local Lipschitz property. Then the Banach space $C^\ell(\bar\sO)$ is a dense subset of $H^k(\sO,\fw)$ for $\beta > 0$, $k=0,1,2$, and all integers $\ell\geq k$.
\end{cor}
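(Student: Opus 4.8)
The plan is to deduce the statement directly from Corollary \ref{cor:KufnerPowerWeight}, which already provides density of $C^\infty_0(\bar\sO)$ in each $H^k(\sO,\fw)$, $k=0,1,2$, under precisely the hypotheses imposed here on $\sO$. The only work remaining is to sandwich $C^\ell(\bar\sO)$ between $C^\infty_0(\bar\sO)$ and $H^k(\sO,\fw)$ when $\ell\geq k$, and then invoke the elementary fact that a subset of a normed space which contains a dense subset is itself dense.

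First I would record the inclusion $C^\infty_0(\bar\sO)\subseteq C^\ell(\bar\sO)$, valid for every integer $\ell\geq 0$: a function in $C^\infty_0(\bar\sO)$ is smooth with compact support in $\bar\sO$, so each of its partial derivatives is bounded and uniformly continuous on $\sO$, which is exactly the requirement for membership in the Banach space $C^\ell(\bar\sO)$ of \cite[\S 1.26]{Adams}. Next I would verify $C^\ell(\bar\sO)\subseteq H^k(\sO,\fw)$ for $\ell\geq k$, $k=0,1,2$; this is the continuous embedding already noted in the paragraph preceding the statement. Concretely, if $u\in C^\ell(\bar\sO)$ then $u, Du, \dots, D^\ell u$ are uniformly bounded on $\sO$, so by Definitions \ref{defn:H1WeightedSobolevSpaces} and \ref{defn:H2WeightedSobolevSpaces} the norm $\|u\|_{H^k(\sO,\fw)}$ is controlled by $\|u\|_{C^k(\bar\sO)}$ times $\left(\int_\sO (1+y)^2\,\fw\,dxdy\right)^{1/2}$, and this last integral is finite because $\fw(x,y)=y^{\beta-1}e^{-\gamma|x|-\mu y}$ carries an exponential decay factor in both $x$ and $y$ (compare Remark \ref{rmk:FiniteVolumeDomain}), so that $(1+y)^2\fw$ and $y^2\fw$ are integrable over $\sO\subseteq\HH$ for every $\beta>0$, $\gamma>0$, $\mu>0$.

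Combining these two inclusions with Corollary \ref{cor:KufnerPowerWeight}, for $\ell\geq k$ one obtains the chain $C^\infty_0(\bar\sO)\subseteq C^\ell(\bar\sO)\subseteq H^k(\sO,\fw)$ in which the leftmost set is dense in $H^k(\sO,\fw)$; hence the intermediate set $C^\ell(\bar\sO)$ is dense in $H^k(\sO,\fw)$ as well, which is the assertion. No genuinely hard step arises here: the whole content is carried by the prior Corollary \ref{cor:KufnerPowerWeight} (whose proof in turn rests on Theorem \ref{thm:KufnerPowerWeight} and Lemma \ref{lem:KufnerPowerWeightedSobolevSpaceEquivalents}), and the only point requiring a moment's care is the verification that the bounded-derivative space $C^\ell(\bar\sO)$ actually sits inside $H^k(\sO,\fw)$, which, as indicated, follows at once from the exponential weight ensuring the relevant polynomially-weighted integrals of $\fw$ converge.
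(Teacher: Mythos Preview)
Your proposal is correct and matches the paper's own proof exactly: the paper simply notes that $C^\ell(\bar\sO)$ is dense in $H^k(\sO,\fw)$ because, a fortiori, $C^\infty_0(\bar\sO)$ is dense by Corollary \ref{cor:KufnerPowerWeight}. Your write-up is more explicit about the two inclusions $C^\infty_0(\bar\sO)\subseteq C^\ell(\bar\sO)\subseteq H^k(\sO,\fw)$, but the argument is the same.
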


\begin{proof}
The space $C^\ell(\bar\sO)$ is a dense subset of $H^k(\sO,\fw)$ for $\beta > 0$, $k=0,1,2$, and all integers $\ell\geq k$ because, a fortiori, $C^\infty_0(\bar\sO)$ is a dense subset of $H^k(\sO,\fw)$ by Corollary \ref{cor:KufnerPowerWeight}.
\end{proof}

\begin{rmk}
\label{rmk:SmoothFunctionsDenseInL2}
When $k=0$, Corollaries \ref{cor:KufnerPowerWeight}, \ref{cor:MeyersSerrinWeightedSobolev}, and \ref{cor:KufnerPowerWeightBoundedDerivatives} also follow from Lemma \ref{lem:L2ApproximationLemma} because, a fortiori, $C^\infty_0(\sO)$ is a dense subset of $L^2(\sO,\fw)$.
\end{rmk}

\subsection{Continuous and compact embeddings}
\label{subsec:WeightedSobolevContinuousCompactEmbeddings}
The continuous Sobolev embedding theorem \cite{Adams} does not hold in general for weighted Sobolev spaces on unbounded domains and partial results such as \cite[Theorem 18.11]{KufnerOpic} are not applicable to our choice of weights in Definition \ref{defn:H1WeightedSobolevSpaces}. However, we have the following partial analogue of the standard Sobolev embedding theorem \cite{Adams}.

\begin{lem}[Sobolev embedding]
\label{lem:H2SobolevEmbedding}
Let $\sO$ be a domain as in Definition \ref{defn:HestonDomain} such that $\partial\sO$ obeys the uniform interior cone condition. If $0\leq\alpha<1$, then $H^2(\sO,\fw)\subset C^\alpha_{\textrm{loc}}(\sO\cup\Gamma_1)$.
\end{lem}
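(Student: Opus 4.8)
The plan is to localize away from the degeneracy locus $\Gamma_0$ and reduce to the standard (unweighted) Sobolev embedding theorem. The key observation is that the weight $\fw(x,y)=y^{\beta-1}e^{-\gamma|x|-\mu y}$ of \eqref{eq:HestonWeight} is continuous and strictly positive on $\HH$, hence bounded above and below by positive constants on every bounded set whose closure is contained in $\HH$. Since $\Gamma_1=\HH\cap\partial\sO$, each point $P\in\sO\cup\Gamma_1$ lies in $\HH$, so one may pick $r=r_P>0$ with $\overline{B_r(P)}\subset\HH$ and, when $P\in\Gamma_1$, also $\overline{B_r(P)}\cap\partial\sO\subset\Gamma_1$. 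Set $\sU:=B_r(P)\cap\sO$; shrinking $r$ if necessary, the uniform interior cone condition on $\partial\sO$ (together with the trivial cone condition at interior points) guarantees that $\sU$ is a domain obeying the cone condition in the sense of \cite{Adams}.

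First I would show that on such a $\sU$ the weighted space coincides with the ordinary Sobolev space. On $\overline\sU$ the coordinate $y$ lies between two positive constants and $|x|$ is bounded, so there is a constant $c=c(P)\geq 1$ with $c^{-1}\leq\fw\leq c$ on $\sU$. Comparing the Definition \ref{defn:H2WeightedSobolevSpaces} of $\|\cdot\|_{H^2(\sO,\fw)}$ with the norm of $W^{2,2}(\sU)$, and noting that the derivatives in both are taken in the sense of distributions, one obtains that any $u\in H^2(\sO,\fw)$ restricts to an element of $W^{2,2}(\sU)$ with $\|u\|_{W^{2,2}(\sU)}\leq C(P)\,\|u\|_{H^2(\sO,\fw)}$.

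Next I would invoke the Sobolev embedding theorem for domains satisfying the cone condition. Since $d=2$ and $\sU\subset\RR^2$, for every $\alpha\in[0,1)$ one has the continuous embedding $W^{2,2}(\sU)\hookrightarrow C^\alpha(\bar\sU)$ \cite[Theorem 5.4, Part II ($\textrm{C}''$)]{Adams}. Hence $u$ agrees a.e.\ on $\sU$ with a function in $C^\alpha(\bar\sU)$; in particular $u$ has a representative continuous on $B_r(P)\cap\sO$ which extends $\alpha$-H\"older continuously to $\bigl(B_r(P)\cap\sO\bigr)\cup\bigl(B_r(P)\cap\Gamma_1\bigr)$. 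Two such representatives, attached to overlapping neighborhoods, coincide a.e.\ and hence everywhere on the overlap, so they define a single function on $\sO\cup\Gamma_1$ which is locally $\alpha$-H\"older continuous up to $\Gamma_1$; that is, $u\in C^\alpha_{\textrm{loc}}(\sO\cup\Gamma_1)$ in the sense of Definition \ref{defn:ContinuousFunctions}.

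I expect the only genuinely technical point to be the verification that the local pieces $\sU=B_r(P)\cap\sO$ inherit the cone condition from the uniform interior cone condition on $\partial\sO$: for $P\in\sO$ this is immediate (take $\sU$ to be a ball), and for $P\in\Gamma_1$ it follows after shrinking $r$ from the standard fact that a small ball about a boundary point satisfying the interior cone condition meets $\sO$ in a cone-domain. Everything else is the routine observation that, away from $\Gamma_0$, the weighted space $H^2(\sO,\fw)$ is locally just $W^{2,2}$, to which the classical embedding applies.
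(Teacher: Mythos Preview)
Your proposal is correct and follows essentially the same route as the paper: localize to bounded subdomains $\sO'$ with $\bar\sO'\subset\sO\cup\Gamma_1$ satisfying the cone condition, observe that on such subdomains the weight is bounded above and below so that $H^2(\sO,\fw)\subset H^2(\sO')$, and then apply the classical Sobolev embedding \cite[Theorem 5.4, Part II ($\textrm{C}''$)]{Adams}. The paper's proof is simply a more compressed version of yours; your extra care about patching the local H\"older representatives and verifying the cone condition for $B_r(P)\cap\sO$ is implicit in the paper's one-line argument.
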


\begin{proof}
Observe that $H^2(\sO,\fw)\subset H^2_{\textrm{loc}}(\sO)$. Let $\sO'\subset\sO$ be a bounded domain such that $\bar\sO'\subset\sO\cup\Gamma_1$ and $\sO'$ obeys the uniform interior cone condition. If $u \in H^2(\sO,\fw)$, then $u \in H^2(\sO')$ and thus $u \in C^\alpha(\bar\sO'), 0\leq\alpha<1$ by \cite[Theorem 5.4, Part II ($\textrm{C}''$)]{Adams}. Therefore, $u \in C^\alpha_{\textrm{loc}}(\sO\cup\Gamma_1)$.
\end{proof}

The Rellich-Kondrachov compact embedding theorem does hold in general for weighted Sobolev spaces or unbounded domains \cite{Adams}, \cite{KufnerOpic} and so we avoid its use in this article. When restrictions are added one has partial results, for example:

\begin{thm}[Compact embedding for weighted Sobelev spaces]
\label{thm:RellichBoundedDomainWeightedSobolevSpace}
Let $\sO$ be a domain as in Definition \ref{defn:HestonDomain} and require, in addition, that $\sO$ is \emph{bounded} and has the uniform cone property.
Then the embeddings
$$
H^1(\sO,\fw) \to L^2(\sO,\fw) \quad\hbox{and}\quad H^1_0(\sO,\fw) \to L^2(\sO,\fw),
$$
are compact if and only if $\beta>1$ and, respectively, $\beta\neq 1$.
\end{thm}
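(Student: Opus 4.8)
The plan is to localise the compactness question near the degenerate boundary portion $\Gamma_0$: to peel off an ``interior'' piece, on which compactness is classical, and to reduce the remaining ``boundary'' piece to a one-dimensional Hardy-type inequality in the $y$-variable, in which the critical exponent $\beta=1$ surfaces.

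First I would reduce to pure power weights. Since $\sO$ is bounded, $e^{-\gamma|x|-\mu y}$ is bounded above and below by positive constants on $\bar\sO$, and by Hypothesis \ref{hyp:HestonDomainNearGammaZero} one has $d_{\Gamma_0}=y$ on $\sO^0_{\delta_0}$; hence, by Lemma \ref{lem:KufnerPowerWeightedSobolevSpaceEquivalents}, $H^1(\sO,\fw)$ and $L^2(\sO,\fw)$ coincide, with equivalent norms, with the Kufner spaces $W^{1,2}(\sO;d_{\Gamma_0},\beta)$ and $L^2(\sO;d_{\Gamma_0},\beta-1)$, and $H^1_0(\sO,\fw)$ is the closure of $C^\infty_0(\sO)$ in the former (with $C^\infty_0(\bar\sO)$ dense in $H^1(\sO,\fw)$ by Corollary \ref{cor:KufnerPowerWeight}). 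Next, for $0<\delta\le\delta_0$ the set $\sO\setminus\sO^0_{\delta/2}$ is a bounded Lipschitz domain on which $\fw$ is comparable to $1$, so the classical Rellich--Kondrachov theorem \cite{Adams} gives that $H^1(\sO\setminus\sO^0_{\delta/2},\fw)\hookrightarrow L^2(\sO\setminus\sO^0_{\delta/2},\fw)$ is compact; applying this for $\delta=1/k$ and diagonalising extracts from any bounded sequence $\{u_n\}\subset H^1(\sO,\fw)$ a subsequence converging in $L^2(\sO',\fw)$ for every $\sO'\Subset\sO\cup\Gamma_1$. Thus the theorem is equivalent to deciding when the ``uniform tightness'' statement $\sup_n\|u_n\|_{L^2(\sO^0_\delta,\fw)}\to0$ as $\delta\downarrow0$ holds for all bounded sequences.

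For the ``if'' directions ($\beta\neq1$) the tightness is obtained from a strict improvement of the weight near $\Gamma_0$ produced by Hardy's inequality (Theorem \ref{thm:HardyInequality}, with $p=2$). For $H^1_0(\sO,\fw)$ with $0<\beta<1$, functions vanish on $\Gamma_0$ in the trace sense, so the branch of Theorem \ref{thm:HardyInequality} with $v(0)=0$ (available precisely because $\beta<1$) gives $\int_{\sO^0_\delta}u^2 d_{\Gamma_0}^{\beta-2}\,dxdy\le C\|u\|_{H^1(\sO,\fw)}^2$, and since $d_{\Gamma_0}^{\beta-1}\le\delta\,d_{\Gamma_0}^{\beta-2}$ on $\sO^0_\delta$ this yields $\|u\|_{L^2(\sO^0_\delta,\fw)}^2\le C\delta\|u\|_{H^1(\sO,\fw)}^2$, i.e.\ uniform tightness, hence compactness. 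For $H^1(\sO,\fw)$ with $\beta>1$ there is no zero trace to exploit, but now the $L^2(\sO,\fw)$-weight $d_{\Gamma_0}^{\beta-1}$ \emph{vanishes} at $\Gamma_0$; localising with a cutoff supported away from $\Gamma_0$ (legitimate since $H^1(\sO,\fw)=H^1_0(\sO\cup\Gamma_1,\fw)$ for $\beta\ge1$ by Remark \ref{rmk:ImprovedH1ApproximationLemma}) and invoking Theorem \ref{thm:HardyInequality} at the exponent $\beta+1>1$ with vanishing at the far end produces, after the same argument, the required tightness; compactness of the $H^1_0$-embedding for $\beta>1$ is then automatic because $H^1_0(\sO,\fw)$ is a closed subspace of $H^1(\sO,\fw)$.

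For the ``only if'' directions one must construct, in each failing case, a bounded sequence with no $L^2(\sO,\fw)$-convergent subsequence. For the $H^1$-embedding with $\beta\le1$ the relevant sequences concentrate near $\Gamma_0$ and exploit the presence of a nonzero $\Gamma_0$-trace (cf.\ the explicit solution of Example \ref{exmp:CIR}), built from truncations of the critical power $d_{\Gamma_0}^{-\beta/2}$; at the borderline $\beta=1$, for both embeddings, the natural building blocks are the logarithmic cutoff functions $\psi_{N,\eps}$ of Lemma \ref{lem:DonaldsonKronheimerCutoffFunction}, whose defining property is precisely that their weighted $H^1$-energy stays bounded as they ``fill in'' toward $\Gamma_0$, so that one can produce a family converging weakly to $0$ in $H^1(\sO,\fw)$ but bounded away from $0$ in $L^2(\sO,\fw)$; and for the $H^1_0$-embedding at $\beta=1$ one notes via Lemma \ref{lem:ImprovedH1ApproximationLemma} that such functions already lie in $H^1_0(\sO,\fw)=H^1_0(\sO\cup\Gamma_0,\fw)$, so the zero-trace constraint that forced compactness for $\beta<1$ is vacuous. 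The hard part will be exactly this last step: exhibiting the borderline sequences at $\beta=1$ and carrying out the logarithmically divergent computation showing their $H^1(\sO,\fw)$-norms remain bounded while their $L^2(\sO,\fw)$-norms do not decay --- this is the delicate point at which the threshold $\beta=1$ genuinely appears, and where the cutoff functions of Lemma \ref{lem:DonaldsonKronheimerCutoffFunction} are designed to be used.
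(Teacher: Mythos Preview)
The paper's proof is essentially a one-line citation: after identifying $H^1(\sO,\fw)$ and $L^2(\sO,\fw)$ with the Kufner power-weight spaces $W^{1,2}(\sO;d_{\Gamma_0},\beta)$ and $L^2(\sO;d_{\Gamma_0},\beta-1)$ via Lemma~\ref{lem:KufnerPowerWeightedSobolevSpaceEquivalents}, it invokes \cite[Theorem~19.17]{KufnerOpic} as a black box. Your attempt at a self-contained argument is therefore substantially different from --- and more ambitious than --- what the paper actually does.

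Your strategy for the ``if'' directions (classical Rellich on interior pieces plus Hardy-induced tightness near $\Gamma_0$) is the right one and, with care, can be made to work; likewise the borderline $\beta=1$ counterexamples via the logarithmic cutoffs of Lemma~\ref{lem:DonaldsonKronheimerCutoffFunction} are on target.

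The genuine gap is the ``only if'' case for $H^1(\sO,\fw)$ with $0<\beta<1$. Your proposed counterexample, ``truncations of the critical power $d_{\Gamma_0}^{-\beta/2}$'', does not work: $y^{-\beta/2}$ fails to lie in $H^1(\sO,\fw)$ since $\int (y^{-\beta/2})_y^2\,y^\beta\,dy=\tfrac{\beta^2}{4}\int y^{-2}\,dy=\infty$, and its truncations or normalizations have $H^1$-norms that blow up at least as fast as their $L^2$-norms. More fundamentally, the non-compactness in this range cannot be exhibited by functions of $y$ alone: for $0<\beta<1$ the one-dimensional embedding $W^{1,2}((0,1);y^\beta)\hookrightarrow L^2((0,1);y^{\beta-1})$ is actually \emph{compact}, because any $u$ with $\int_0^1 u_y^2\,y^\beta\,dy<\infty$ is H\"older continuous up to $y=0$ (Cauchy--Schwarz against $\int_0^y s^{-\beta}\,ds<\infty$), so Arzel\`a--Ascoli applies and yields uniform convergence, hence convergence in $L^2(y^{\beta-1})$. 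The two-dimensional failure of compactness therefore genuinely couples the $x$- and $y$-directions, and producing the right sequence is considerably more delicate than your sketch indicates --- which is exactly why the paper defers to Opic--Kufner rather than arguing directly.
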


\begin{proof}
Because of Lemma \ref{lem:KufnerPowerWeightedSobolevSpaceEquivalents} and the fact that $ L^2(\sO,\fw) =  L^2(\sO,y^{\beta-1}dy)$ for bounded $\sO$, the result is a special case of \cite[Theorem 19.17]{KufnerOpic} (see \cite[\S 15.6 \& 16.10]{KufnerOpic} for their definitions of weighted Sobolev spaces).
\end{proof}

See Antoci \cite{Antoci_2003} and Goldshtein and Ukhlov \cite{Goldshtein_Ukhlov_2009} for more recent results and surveys. Rather than the usual Rellich-Kondrachov compact embedding theorem, we instead make use of the following folk theorem in compactness arguments.

\begin{thm}[Weak $L^\infty$ convergence implies strong $L^p$ convergence]
\label{thm:WeakLInfinityIsStrongLp}
\cite{Khurana_1976, Moreira_Teixeira_2004, Zolezzi_1975}
Let $(\Omega,\Sigma,\nu)$ be a positive, totally finite measure space and let $\{u_n\}_{n=1}^\infty$ be a sequence which converges weakly to $u$ in $L^\infty(\Omega,\Sigma,\nu)$, that is $u_n\rightharpoonup u$ in $L^\infty(\Omega,\Sigma,\nu)$. Then $\{u_n\}_{n=1}^\infty$ converges strongly to $u$ in $L^p(\Omega,\Sigma,\nu)$, that is $u_n\to u$ in $L^p(\Omega,\Sigma,\mu)$, for $1\leq p<\infty$.
\end{thm}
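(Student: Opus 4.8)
The plan is to first reduce the assertion to a statement about convergence in measure, and then to establish that convergence by exploiting the full strength of \emph{weak} (as opposed to merely weak-$*$) convergence in $L^\infty$.

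\emph{Reductions.} Since $\{u_n\}$ converges weakly in the Banach space $L^\infty(\Omega,\Sigma,\nu)$, it is norm-bounded by the uniform boundedness principle, say $\|u_n\|_{L^\infty}\le M$ for all $n$, and $\|u\|_{L^\infty}\le M$. Put $v_n:=u_n-u$, so $\|v_n\|_{L^\infty}\le 2M$ and $v_n\rightharpoonup 0$ weakly in $L^\infty$. Because $\nu(\Omega)<\infty$ and $|v_n|\le 2M$ a.e., for every $\eps>0$ and every $p\in[1,\infty)$,
\[
\int_\Omega|v_n|^p\,d\nu \;\le\; \eps^p\,\nu(\Omega)+(2M)^p\,\nu\bigl(\{|v_n|>\eps\}\bigr),
\]
so it suffices to prove that $v_n\to 0$ in $\nu$-measure; letting $\eps\downarrow 0$ afterwards then yields $\|v_n\|_{L^p}\to 0$ for all $p\in[1,\infty)$ simultaneously, including $p=1$.

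\emph{Convergence in measure.} Suppose, for contradiction, that $v_n\not\to 0$ in measure. Passing to a subsequence (not relabelled) and splitting $\{|v_n|>\delta\}$ into its positive and negative parts, we may assume there are constants $\delta,\eta>0$ with $\nu(A_n)\ge\eta$ for all $n$, where $A_n:=\{v_n>\delta\}$. The aim is to produce a functional $\lambda$ in the dual of $L^\infty$ on which the $v_n$ do not tend to zero. Recall the Yosida--Hewitt description of $(L^\infty(\Omega,\Sigma,\nu))^*$ as the space $ba(\Sigma,\nu)$ of bounded finitely additive signed set functions on $\Sigma$ vanishing on $\nu$-null sets, with $\langle\lambda,g\rangle=\int_\Omega g\,d\lambda$. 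Consider the normalized set functions $\mu_n(E):=\nu(E\cap A_n)/\nu(A_n)$, which lie in the unit ball of $ba(\Sigma,\nu)$ and satisfy $\int_\Omega v_n\,d\mu_n\ge\delta$. One then extracts, by a gliding-hump passage to a subsequence of Rosenthal-lemma type, indices $(n_k)$ and a $\lambda\in ba(\Sigma,\nu)$ — for instance a suitable ultrafilter limit of the $\mu_{n_k}$ — for which the off-diagonal interactions $\int_\Omega v_{n_j}\,d\mu_{n_k}$ with $j\ne k$ are uniformly negligible, so that $\int_\Omega v_{n_k}\,d\lambda\ge\delta/2$ for all large $k$. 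This contradicts $\langle\lambda,v_n\rangle\to 0$, which holds because $v_n\rightharpoonup 0$ weakly and $\lambda\in(L^\infty)^*$. Equivalently, one may argue in the Stone-space picture: $L^\infty(\Omega,\Sigma,\nu)\cong C(K)$ for an extremally disconnected compact Hausdorff $K$, weak convergence in $C(K)$ is precisely bounded pointwise convergence on $K$, and the clopen sets $\widehat{A_n}$ (all of ``size'' $\ge\eta$) must cluster at a point of $K$ that detects the $v_{n_k}$.

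\emph{Main obstacle.} The delicate point is exactly the construction of $\lambda$: a naive ultrafilter limit $\lambda(E)=\lim_{\mathcal U}\mu_n(E)$ does \emph{not} suffice, because for $m\ne n$ the quantity $\int_\Omega v_m\,d\mu_n=\nu(A_n)^{-1}\int_{A_n}v_m\,d\nu$ is entirely uncontrolled (it may even be negative), so $\int_\Omega v_m\,d\lambda$ need not be near $\delta$. Overcoming this requires first thinning the sequence so that these cross terms are uniformly small, which is where Rosenthal's lemma enters (or, when the $A_n$ admit an infinite essentially disjoint subfamily, a direct Ramsey-type disjointification). It is also here that genuine weak convergence, rather than weak-$*$ convergence, is indispensable: for $\nu$ Lebesgue measure on $[0,1]$ the functions $v_n(x)=\operatorname{sgn}(\sin 2\pi n x)$ converge weak-$*$ to $0$ in $L^\infty$ yet have $\|v_n\|_{L^1}=1$, and the functional $\lambda$ produced above is exactly a purely finitely additive element of $(L^\infty)^*\setminus L^1$ obstructing their weak convergence.
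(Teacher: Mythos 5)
The paper does not actually prove this theorem --- it is quoted from the cited references --- so there is no in-paper argument to compare against and your proposal must stand on its own. Your first step is fine: uniform boundedness, the passage to $v_n=u_n-u$, and the reduction of strong $L^p$ convergence for every finite $p$ to convergence in $\nu$-measure via the split $\int_\Omega|v_n|^p\,d\nu\le\eps^p\nu(\Omega)+(2M)^p\nu(\{|v_n|>\eps\})$ are all correct and complete.

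The second step, however, is not yet a proof. In your primary route you must actually produce $\lambda\in ba(\Sigma,\nu)$ with $\int_\Omega v_{n_k}\,d\lambda\ge\delta/2$, and --- as you yourself point out --- the naive ultrafilter limit of the $\mu_n$ fails because the off-diagonal terms $\int_\Omega v_m\,d\mu_n$ are uncontrolled; the ``gliding-hump passage of Rosenthal-lemma type'' that is supposed to repair this is asserted but never carried out. Rosenthal's lemma requires a pairwise disjoint sequence of sets, the $A_n$ need not admit one, and the dichotomy you gesture at (disjointifiable versus not) is left unresolved. The good news is that your parenthetical Stone-space remark closes the argument in one line and makes the $ba$/Rosenthal machinery unnecessary: under $L^\infty(\nu)\cong C(K)$ the positive functional $g\mapsto\int_\Omega g\,d\nu$ is represented by a finite regular Borel measure $\hat\nu$ on $K$ with $\hat\nu(\widehat{A})=\nu(A)$ for every clopen $\widehat{A}$, and the lattice isomorphism gives $\hat v_n\ge\delta$ on $\widehat{A_n}$. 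Since $\hat\nu(\widehat{A_n})\ge\eta$ for all $n$, the reverse Fatou inequality for the finite measure $\hat\nu$ yields $\hat\nu\bigl(\limsup_n\widehat{A_n}\bigr)\ge\eta>0$, so some $t\in K$ lies in infinitely many $\widehat{A_{n_k}}$, whence $\hat v_{n_k}(t)\ge\delta$ along a subsequence; as $\delta_t\in C(K)^*=(L^\infty(\nu))^*$ and subsequences of weakly null sequences are weakly null, this contradicts $v_n\rightharpoonup 0$. With that line added (plus the routine remark that, after a further subsequence and possibly replacing $v_n$ by $-v_n$, one may take all the exceptional sets of the form $\{v_n>\delta\}$ rather than alternating in sign), the proof is complete. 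Your closing observation that genuine weak --- as opposed to weak-$*$ --- convergence is essential, with $\operatorname{sgn}(\sin 2\pi nx)$ as the witness, is correct and to the point.
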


\begin{cor}[Weak $L^\infty$ convergence implies strong $L^p$ convergence]
\label{cor:WeakLInfinityIsStrongLp}
Let $\{u_n\}_{n=1}^\infty$ be a sequence which converges weakly to $u$ in $L^\infty(\sO,\fw)$, that is $u_n\rightharpoonup u$ in $L^\infty(\sO,\fw)$. Then $\{u_n\}_{n=1}^\infty$ converges strongly to $u$ in $L^p(\sO,\fw)$, that is $u_n\to u$ in $L^p(\sO,\fw)$, for $1\leq p<\infty$.
\end{cor}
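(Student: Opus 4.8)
The plan is to obtain Corollary \ref{cor:WeakLInfinityIsStrongLp} as an immediate consequence of Theorem \ref{thm:WeakLInfinityIsStrongLp}, once one checks that its single nontrivial hypothesis --- that the ambient measure space is positive and \emph{totally finite} --- holds for the weighted measure naturally associated with $\sO$.

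First I would set $\Omega := \sO$, let $\Sigma$ be the $\sigma$-algebra of Lebesgue-measurable subsets of $\sO$, and let $\nu$ be the Borel measure on $\sO$ with density $\fw$ with respect to two-dimensional Lebesgue measure, that is $\nu(E) := \int_E \fw\,dxdy$. Since $\fw(x,y) = y^{\beta-1}e^{-\gamma|x|-\mu y} > 0$ on $\HH$ by \eqref{eq:HestonWeight}, the measure $\nu$ is positive and has exactly the same null sets as Lebesgue measure restricted to $\sO$; hence the spaces $L^p(\sO,\fw)$ (defined, for $1\le p<\infty$, by $\int_\sO|u|^p\fw\,dxdy<\infty$, in the spirit of Definition \ref{defn:H1WeightedSobolevSpaces}) and $L^\infty(\sO,\fw)$ coincide, as normed vector spaces, with $L^p(\Omega,\Sigma,\nu)$ and $L^\infty(\Omega,\Sigma,\nu)$, and the notion of weak convergence in $L^\infty$ used in the statement is precisely weak convergence in $L^\infty(\Omega,\Sigma,\nu)$. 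Next I would invoke Remark \ref{rmk:FiniteVolumeDomain}: because $0<\beta<\infty$, $\mu>0$, and $\gamma>0$, one has $\nu(\sO) = \int_\sO \fw\,dxdy < \infty$, so $(\Omega,\Sigma,\nu)$ is a positive, totally finite measure space. Applying Theorem \ref{thm:WeakLInfinityIsStrongLp} to the sequence $\{u_n\}_{n=1}^\infty$, which converges weakly to $u$ in $L^\infty(\Omega,\Sigma,\nu)$ by hypothesis, yields $u_n \to u$ strongly in $L^p(\Omega,\Sigma,\nu) = L^p(\sO,\fw)$ for every $1\le p<\infty$, which is the assertion.

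The hard part here is not mathematical but a matter of bookkeeping: one must be careful that ``$L^\infty(\sO,\fw)$'' and weak convergence therein are interpreted as the $L^\infty$ space and weak convergence of the finite measure $\nu$ (equivalently, of unweighted Lebesgue measure on $\sO$, since $\fw>0$), so that Theorem \ref{thm:WeakLInfinityIsStrongLp} applies verbatim. Once the finiteness $\nu(\sO)<\infty$ furnished by Remark \ref{rmk:FiniteVolumeDomain} is recorded, there is nothing further to prove.
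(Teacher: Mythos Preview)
Your proposal is correct and follows essentially the same approach as the paper: set $(\Omega,\Sigma,\nu) = (\sO,\sB(\sO),\fw\,dxdy)$, verify from \eqref{eq:HestonWeight} (or equivalently Remark \ref{rmk:FiniteVolumeDomain}) that $\nu(\sO)<\infty$, and apply Theorem \ref{thm:WeakLInfinityIsStrongLp}. Your version is a bit more explicit about identifying the $L^p$ spaces, but the argument is the same.
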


\begin{proof}
Set $(\Omega,\Sigma,\nu) = (\sO,\sB(\sO),\fw\,dxdy)$ and recall from \eqref{eq:HestonWeight} that
$$
\fw(x,y) = y^{\beta-1} e^{-\gamma|x|-\mu y}, \quad (x,y) \in \HH.
$$
Hence,
$$
\nu(\sO) = \int_\sO \fw(x,y)\,dxdy < \infty,
$$
and the result follows from Theorem \ref{thm:WeakLInfinityIsStrongLp}.
\end{proof}

We can strengthen the preceding corollary to obtain another useful compactness result which we employ in proofs of several key results.

\begin{cor}[Existence of convergent subsequences]
\label{cor:WeakUpperLowerBoundsIsStrongLp}
Let $1\leq r < q < \infty$ and suppose $M \in L^q(\sO,\fw)$ and $M > 0$ a.e. on $\sO$. Let $\{u_n\}_{n=1}^\infty \subset L^r(\sO,\fw)$ be a sequence such that
\begin{equation}
\label{eq:SequencePointwiseBound}
|u_n| \leq M \quad\hbox{a.e. on } \sO, n \geq 1.
\end{equation}
Then there is a subsequence, relabeled as $\{u_n\}_{n=1}^\infty$, which converges strongly in $L^r(\sO,\fw)$ to a limit $u \in L^r(\sO,\fw)$.
\end{cor}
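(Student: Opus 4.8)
The plan is to deduce the corollary from Corollary~\ref{cor:WeakLInfinityIsStrongLp} by renormalizing the sequence so that it lies in a bounded subset of $L^\infty(\sO,\fw)$, extracting a convergent subsequence there, and then transferring the resulting convergence back to $\{u_n\}$ by dominated convergence.

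First I would dispose of two elementary reductions. Since $\fw\,dxdy$ is a finite measure on $\sO$ (Remark~\ref{rmk:FiniteVolumeDomain}) and $q>r$, H\"older's inequality gives $L^q(\sO,\fw)\subseteq L^r(\sO,\fw)$ with $\|M\|_{L^r(\sO,\fw)}\le \Vol(\sO,\fw)^{1/r-1/q}\|M\|_{L^q(\sO,\fw)}<\infty$; in particular $M^r\in L^1(\sO,\fw)$, and the pointwise bound $|u_n|\le M$ forces $u_n\in L^r(\sO,\fw)$ with $|u_n|^r\le M^r$ a.e.\ for every $n$. Also, because $M>0$ a.e., the functions $v_n:=u_n/M$ are well defined a.e.\ on $\sO$ and satisfy $\|v_n\|_{L^\infty(\sO,\fw)}\le 1$.

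Next I would extract a convergent subsequence of $\{v_n\}$. The closed unit ball of $L^\infty(\sO,\fw)$ is weak-$*$ compact by Banach--Alaoglu, and it is weak-$*$ sequentially compact because the predual $L^1(\sO,\fw)$ is separable (the measure $\fw\,dxdy$ is finite and its Borel $\sigma$-algebra is countably generated); hence, after passing to a subsequence, $v_n$ converges weak-$*$ to some $v$ with $\|v\|_{L^\infty(\sO,\fw)}\le 1$. On bounded subsets of $L^\infty$ of a finite measure space the weak-$*$ and weak topologies agree --- this is the mechanism underlying Theorem~\ref{thm:WeakLInfinityIsStrongLp} --- so in fact $v_n\rightharpoonup v$ weakly in $L^\infty(\sO,\fw)$, and Corollary~\ref{cor:WeakLInfinityIsStrongLp} then gives $v_n\to v$ strongly in $L^p(\sO,\fw)$ for every $p\in[1,\infty)$; passing to a further subsequence, $v_n\to v$ a.e.\ on $\sO$.

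Finally I would put $u:=Mv$, so that $|u|\le M$ a.e.\ and hence $u\in L^r(\sO,\fw)$. Along the subsequence just produced, $u_n=Mv_n\to Mv=u$ a.e.\ on $\sO$, while $|u_n-u|^r\le(|u_n|+|u|)^r\le 2^rM^r\in L^1(\sO,\fw)$, so the dominated convergence theorem yields $u_n\to u$ strongly in $L^r(\sO,\fw)$, as required. The step I expect to be the main obstacle is the middle one: squeezing a bounded sequence in $L^\infty(\sO,\fw)$ down to a subsequence that converges \emph{a.e.}\ on $\sO$ (equivalently, weakly in $L^\infty$ in the sense required by Theorem~\ref{thm:WeakLInfinityIsStrongLp}). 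Weak-$*$ sequential compactness is automatic, but upgrading weak-$*$ to weak $L^\infty$ convergence --- that is, excluding purely oscillatory behaviour of $\{v_n\}$ --- is precisely where the hypotheses of the compactness results of \cite{Khurana_1976, Moreira_Teixeira_2004, Zolezzi_1975} behind Theorem~\ref{thm:WeakLInfinityIsStrongLp} must be invoked with care; in every application of this corollary in the body of the article the sequence $\{u_n\}$ is in addition monotone, so that a.e.\ convergence of the full sequence is automatic and this difficulty does not arise.
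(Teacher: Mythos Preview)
Your approach follows the paper's almost exactly: normalize by $M$ to obtain $v_n = u_n/M$ bounded in $L^\infty(\sO,\fw)$, invoke Corollary~\ref{cor:WeakLInfinityIsStrongLp} to obtain a strongly $L^p$-convergent subsequence, then transfer the convergence back to $u_n = Mv_n$. The only difference is in the final step: the paper chooses the specific exponent $p$ defined by $1/r = 1/p + 1/q$ and uses H\"older's inequality, $\|u_n - u\|_{L^r} \le \|(u_n - u)/M\|_{L^p}\|M\|_{L^q}$, whereas you pass to an a.e.-convergent subsequence and apply dominated convergence with majorant $2^rM^r\in L^1$. Both transfers are valid once strong $L^p$ convergence of $v_n$ is in hand.

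The gap you flag in the middle step is real, and your proposed bridge is incorrect. Weak-$*$ and weak topologies on bounded subsets of $L^\infty$ of a finite measure space do \emph{not} agree: take a bounded $\sO\subset\HH$, $M\equiv 1\in L^q(\sO,\fw)$, and $u_n(x,y)=\sin(2\pi nx)$; then $|u_n|\le M$ and $u_n\rightharpoonup 0$ weak-$*$ in $L^\infty$, yet $\|u_n\|_{L^r(\sO,\fw)}$ is bounded away from zero, so no subsequence converges strongly in $L^r$. This shows both that weak-$*$ convergence cannot be upgraded to weak $L^\infty$ convergence on bounded sets, and that the corollary as stated cannot hold without additional hypotheses. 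The paper's own proof glosses over precisely this point: it invokes Corollary~\ref{cor:WeakLInfinityIsStrongLp} as though it were a compactness result, but that corollary takes weak $L^\infty$ convergence as a \emph{hypothesis}, and bounded sequences in $L^\infty$ need not admit weakly convergent subsequences. You are right that the applications in the body of the article carry extra structure --- monotonicity of $\{u_n\}$, or prior weak $H^1$ convergence to an already-identified limit --- which supplies a.e.\ convergence of a subsequence directly, after which dominated convergence with the integrable majorant $M^r$ gives the conclusion without passing through $L^\infty$ at all.
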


\begin{proof}
By \eqref{eq:SequencePointwiseBound}, we have
$$
|u_n|/M \leq 1 \quad\hbox{a.e. on } \sO, \forall n\geq 1,
$$
Define $r<p<\infty$ by $1/r = 1/p + 1/q$. Therefore, Corollary \ref{cor:WeakLInfinityIsStrongLp} implies that, after passing to a subsequence,
$$
u_n/M \to \tilde u \quad\hbox{strongly in } L^p(\sO,\fw),
$$
for some $\tilde u  \in L^p(\sO,\fw)$. Set $u := M\tilde u$ and observe that $u \in L^r(\sO,\fw)$, since $\|M\tilde u\|_{L^r(\sO,\fw)} \leq \|M\|_{L^q(\sO,\fw)}\|\tilde u\|_{L^p(\sO,\fw)}$. But
$$
\|u_n-u\|_{L^r(\fw,\sO)} \leq \|(u_n-u)/M\|_{L^p(\fw,\sO)}\|M\|_{L^q(\fw,\sO)} \quad\forall n\geq 1,
$$
and consequently, because $\|M\|_{L^q(\fw,\sO)} < \infty$,
$$
u_n \to u \quad\hbox{strongly in }L^r(\sO,\fw),
$$
as desired.
\end{proof}

We shall also need the following well-known convergence results (see, for example, \cite{Billingsley_1995}, or \cite{Labutin}):

\begin{thm}[Convergence in measure implies convergence pointwise a.e.]
\label{thm:Billingsley}
Let $(\Omega,\Sigma,\nu)$ be a complete measurable space with $\nu(\Omega)<\infty$. Let $\{f_n\}_{n\geq 1}$ be a sequence of measurable functions such that $f_n\to f$ in measure as $n\to\infty$. Then there exists a subsequence, relabeled as $\{f_n\}_{n\geq 1}$, such that $f_n\to f$ pointwise $\nu$-a.e. on $\Omega$.
\end{thm}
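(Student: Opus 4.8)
The plan is to extract the desired subsequence directly from the definition of convergence in measure, combining a rapidly summable choice of exceptional sets with the first Borel--Cantelli lemma; this is the classical argument and presents no genuine difficulty.

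First I would unwind the hypothesis: $f_n \to f$ in measure means that for every $\varepsilon > 0$ one has $\nu(\{|f_n - f| > \varepsilon\}) \to 0$ as $n \to \infty$. Applying this with $\varepsilon = 1/k$ for each fixed $k \geq 1$, I would then choose inductively a strictly increasing sequence of indices $n_1 < n_2 < \cdots$ with
\[
\nu(E_k) < 2^{-k}, \qquad E_k := \{\omega \in \Omega : |f_{n_k}(\omega) - f(\omega)| > 1/k\} .
\]
Each $E_k \in \Sigma$ since $f_{n_k}$ and $f$ are measurable. The inductive selection is possible because, for each fixed $k$, the quantities $\nu(\{|f_n - f| > 1/k\})$ tend to $0$ in $n$, so infinitely many $n$ satisfy the required bound and in particular one can always be found beyond $n_{k-1}$.

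Next I would invoke Borel--Cantelli: since $\sum_{k\geq 1}\nu(E_k) \leq \sum_{k\geq 1}2^{-k} = 1 < \infty$, the set $E := \bigcap_{m\geq 1}\bigcup_{k\geq m}E_k$ of points belonging to infinitely many of the $E_k$ satisfies $\nu(E) = 0$. Finally, for any $\omega \in \Omega \less E$ there is $m_0(\omega)$ with $\omega \notin E_k$ for all $k \geq m_0(\omega)$, i.e.\ $|f_{n_k}(\omega) - f(\omega)| \leq 1/k$ for such $k$; letting $k \to \infty$ gives $f_{n_k}(\omega) \to f(\omega)$. Thus $f_{n_k} \to f$ everywhere on $\Omega \less E$, and since $\nu(E) = 0$ this is pointwise $\nu$-a.e.\ convergence; relabeling $\{f_{n_k}\}$ as $\{f_n\}$ yields the statement. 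The hypotheses $\nu(\Omega) < \infty$ and completeness of $(\Omega,\Sigma,\nu)$ are not actually needed for this direction — completeness merely guarantees that subsets of the null set $E$ are themselves measurable, which is harmless — so the only mild subtlety is the simultaneous bookkeeping in the inductive index selection described above, which is the step I would expect to state most carefully.
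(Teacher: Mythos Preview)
Your proof is correct and is the standard Borel--Cantelli argument. The paper does not actually prove this theorem; it merely states it as a well-known result with a citation to Billingsley, so there is no proof in the paper to compare against.
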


\begin{cor}[Convergence in $L^p$ implies convergence pointwise a.e.]
\label{cor:Billingsley}
Let $(\Omega,\Sigma,\nu)$ be a complete measurable space and let $1\leq p<\infty$. If $\{f_n\}_{n\geq 1}$ is a sequence of measurable functions such that $f_n\to f$ in $L^p(\Omega,\nu)$ as $n\to\infty$, then $f_n\to f$ in measure as $n\to\infty$; if in addition $\nu(\Omega)<\infty$, then after passing to a subsequence, $f_n\to f$ point wise $\nu$-a.e. on $\Omega$.
\end{cor}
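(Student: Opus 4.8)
The plan is to deduce the first assertion from Markov's inequality and then to obtain the second assertion by invoking Theorem \ref{thm:Billingsley}. For the first step, I would fix $\varepsilon>0$ and observe that, for each $n\geq 1$, the set $E_n(\varepsilon):=\{\omega\in\Omega:|f_n(\omega)-f(\omega)|\geq\varepsilon\}$ satisfies
\begin{equation*}
\nu\bigl(E_n(\varepsilon)\bigr) \leq \frac{1}{\varepsilon^p}\int_\Omega |f_n-f|^p\,d\nu = \frac{1}{\varepsilon^p}\|f_n-f\|_{L^p(\Omega,\nu)}^p,
\end{equation*}
by the Markov (Chebyshev) inequality applied to the non-negative measurable function $|f_n-f|^p$ and the threshold $\varepsilon^p$. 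Since $\|f_n-f\|_{L^p(\Omega,\nu)}\to 0$ as $n\to\infty$ by hypothesis, the right-hand side tends to zero for every fixed $\varepsilon>0$, which is precisely the statement that $f_n\to f$ in measure as $n\to\infty$.

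For the second step, I would assume in addition that $\nu(\Omega)<\infty$. The sequence $\{f_n\}_{n\geq 1}$ converges in measure to $f$ by the first step, and $(\Omega,\Sigma,\nu)$ is a complete measurable space with $\nu(\Omega)<\infty$, so the hypotheses of Theorem \ref{thm:Billingsley} are met. That theorem then provides a subsequence, which after relabeling we again denote by $\{f_n\}_{n\geq 1}$, such that $f_n\to f$ pointwise $\nu$-a.e. on $\Omega$. This is the desired conclusion.

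I do not anticipate any genuine obstacle here: both ingredients are standard, and the only point requiring care is to make sure the finiteness hypothesis $\nu(\Omega)<\infty$ is invoked exactly where Theorem \ref{thm:Billingsley} requires it (the first assertion needs no such hypothesis), and to note that the Markov inequality step uses only that $f_n-f\in L^p(\Omega,\nu)$, which follows from $f_n,f\in L^p(\Omega,\nu)$ together with $\|f_n-f\|_{L^p}\to0$. No further regularity or structure on $(\Omega,\Sigma,\nu)$ is needed.
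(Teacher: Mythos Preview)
Your proposal is correct and matches the paper's proof essentially line for line: the paper also bounds $\nu(\{|f-f_n|>\varepsilon\})$ by $\varepsilon^{-p}\int_\Omega|f-f_n|^p\,d\nu$ via the Chebyshev/Markov inequality to obtain convergence in measure, and then invokes Theorem~\ref{thm:Billingsley} for the pointwise a.e.\ subsequence.
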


\begin{proof}
For any $\eps>0$, then
\begin{align*}
\nu\left(\left\{P\in\Omega:|f(P)-f_n(P)|>\eps\right\}\right) &= \int_\Omega 1_{\{|f-f_n|>\eps\}}\,d\nu
\\
&\leq \frac{1}{\eps^p}\int_\Omega |f-f_n|^p\,d\nu,
\end{align*}
and since $\lim_{n\to\infty}\|f-f_n\|_{L^p(\Omega,\nu)}\to 0$, then
$$
\lim_{n\to\infty} \nu\left(\left\{P\in\Omega:|f(P)-f_n(P)|>\eps\right\}\right) = 0,
$$
as desired. Theorem \ref{thm:Billingsley} yields the remaining conclusion.
\end{proof}

\subsection{Boundary trace operators}
We recall the following special case ($p=2$) of the definition of extension operator in \cite[\S 4.24]{Adams}.

\begin{defn}[Extension operator]
\label{defn:ExtensionOperator}
For a domain $\sU\subset\RR^d$ (with $d\geq 2$) and an integer $k\geq 1$, we call a bounded linear map $E:H^k(\sU)\to H^k(\RR^d)$ a \emph{simple $k$-extension operator for $\sU$} if $Eu = u$ a.e. on $\sU$ and $\|Eu\|_{H^k(\RR^d)} \leq K\|u\|_{H^k(\sU)}$ for some constant $K>0$ depending only on $\sU$ and $k$. The operator $E$ is called a \emph{strong $k$-extension operator for $\sU$} if for each $0\leq \ell\leq k$, the restriction of $E$ to $H^\ell(\sU)$ is a simple $\ell$-extension operator for $\sU$.
\end{defn}

According to \cite[Theorem 4.26]{Adams} if $\sU$ is a half-space or $\sU$ has the uniform $C^k$-regularity property with bounded boundary $\partial\sU$, then $\sU$ has a strong $k$-extension operator; furthermore, from \cite[\S 4.29]{Adams} the domain $\sU$ can have a strong $k$-extension operator even when $\partial\sU$ is not bounded provided $\sU$ is regular enough that \cite[Equation (4.28)]{Adams} holds, a relatively mild requirement. In our applications, we shall often use a variant of Definition \ref{defn:ExtensionOperator} where $\RR^d$ is replaced by the half-space $\HH\subset\RR^d$.

The following result is a special case ($p=2$) of \cite[Theorem 5.22]{Adams} (and extension of \cite[Theorem 5.5.1]{Evans} in the case $k=1$ and $\sU$ is bounded):

\begin{thm}[Boundary trace theorem]
\label{thm:BoundaryTrace}
Let $\sU\subset\RR^d$ (with $d\geq 2$) be a domain having the uniform $C^k$-regularity property $(k\geq 1$) and suppose there is a simple $k$-extension operator $E$ for $\sU$. Then there exists a bounded linear operator
$$
T_{\partial\sU}:H^k(\sU) \to L^2(\partial\sU),
$$
such that $T_{\partial\sU}u = u|_{\partial\sU}$ when $u \in H^k(U)\cap C_{\textrm{loc}}(\bar\sU)$ and
$$
\|Tu\|_{L^2(\partial\sU)} \leq C\|u\|_{H^k(\sU)},
$$
for some constant $C$ depending only on $\sU$.
\end{thm}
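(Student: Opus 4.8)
The plan is to give the standard proof of the trace theorem in the form needed here: localize near $\partial\sU$ using the uniform $C^k$-regularity charts, reduce to a one-variable trace estimate on the half-space $\HH=\RR^{d-1}\times(0,\infty)$, and extend by density, keeping all constants uniform in the (possibly infinite) family of charts. For a fully self-contained account one simply cites \cite[Theorem 5.22]{Adams} in the case $p=2$, but the steps are as follows.

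First I would invoke the uniform $C^k$-regularity property to fix a locally finite open cover $\{U_j\}$ of $\partial\sU$, together with the $C^k$-diffeomorphisms $\Phi_j$ carrying $U_j$ onto $B(1)$ and $U_j\cap\sU$ onto $\{(x,y)\in B(1): y>0\}$, all with uniformly bounded transition data $|D^\alpha\Phi_j|,|D^\alpha\Psi_j|\le M_1$ ($|\alpha|\le k$, $\Psi_j:=\Phi_j^{-1}$) and a finite-overlap bound $R_1$ — exactly the data recorded in Hypothesis \ref{hyp:HestonDomainNearGammaOne}. Choosing a partition of unity $\{\chi_j\}$ subordinate to $\{U_j\}$ with $\|\chi_j\|_{C^k}$ bounded uniformly in $j$, I would reduce the estimate for $u$ to estimates for the $\chi_j u$. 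Composing $\chi_j u$ with $\Psi_j$ transports the problem to a function $v_j$ on $\HH$ supported in $B(1)$; the chain rule and the uniform bounds on $D^\alpha\Phi_j,D^\alpha\Psi_j$ (together with a uniform lower bound on the Jacobians, which follows from the same bounds) show that $\|v_j\|_{H^k(\HH)}$ and $\|\chi_j u\|_{H^k(U_j\cap\sU)}$ are comparable with $j$-independent constants, and likewise for the $L^2$ norms along $\partial\HH$ and $\partial\sU$. The simple $k$-extension operator $E$ is used so that $u$ (hence each $\chi_j u$) may be taken to be the restriction of a function on $\RR^d$, which makes density of smooth functions automatic and removes edge effects at $\partial U_j$.

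Second I would prove the model estimate on $\HH$: for $v\in C^\infty(\bar\HH)$ compactly supported,
$$
\int_{\RR^{d-1}}|v(x,0)|^2\,dx = -\int_{\RR^{d-1}}\int_0^\infty \partial_y\!\left(|v(x,y)|^2\eta(y)\right)dy\,dx \le C\int_{\HH}\left(|v|^2 + |Dv|^2\right)dxdy,
$$
where $\eta\in C^\infty_c([0,\infty))$ with $\eta(0)=1$, using $\partial_y(|v|^2\eta)=2vv_y\eta+|v|^2\eta'$ and Young's inequality. This defines $T_{\partial\HH}v:=v(\cdot,0)$ and, by density of such $v$ in $H^1(\HH)\supseteq H^k(\HH)$, extends it to a bounded operator. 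Transferring back through the $\Psi_j$ gives $\|T_{\partial\sU}(\chi_j u)\|_{L^2(\partial\sU)}\le C\|u\|_{H^k(U_j\cap\sU)}$ with $j$-independent $C$, and the finite-overlap bound then yields
$$
\|T_{\partial\sU}u\|_{L^2(\partial\sU)}^2 \le R_1\sum_j\|T_{\partial\sU}(\chi_j u)\|_{L^2(\partial\sU)}^2 \le C\,R_1\sum_j\|u\|_{H^k(U_j\cap\sU)}^2 \le C\,R_1^2\|u\|_{H^k(\sU)}^2,
$$
so $C$ depends only on $\sU$ (through $k$, $M_1$, $R_1$). Finally, that $T_{\partial\sU}u=u|_{\partial\sU}$ for $u\in H^k(\sU)\cap C_{\textrm{loc}}(\bar\sU)$ follows because both sides are continuous in $u$ and agree on the dense subspace of restrictions to $\bar\sU$ of $C^\infty(\RR^d)$ functions (density here is \cite[Theorem 3.18]{Adams}, whose segment-property hypothesis is implied by uniform $C^k$-regularity).

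The main obstacle is the uniformity bookkeeping across the chart family: one must ensure the partition of unity can be taken with $C^k$-bounds independent of $j$, that pullback and pushforward by $\Phi_j,\Psi_j$ distort the $H^k$ and boundary $L^2$ norms only by a $j$-independent factor (which requires both upper bounds on the derivatives of the charts and a uniform positive lower bound on their Jacobians), and that reassembling the local pieces costs only a power of $R_1$ rather than diverging. With these uniformities in hand, the half-space estimate and the density argument are entirely routine.
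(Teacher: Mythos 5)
The paper does not prove this theorem; it is stated as a direct citation of \cite[Theorem 5.22]{Adams} (the $p=2$ case). Your outline correctly reproduces the standard argument behind that cited result --- uniform boundary charts and a partition of unity with $j$-independent bounds, the half-space integration-by-parts estimate, finite-overlap reassembly, and density --- so it is consistent with the paper's (implicit) approach and I see no gap.
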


We have the following analogue of Theorem \ref{thm:BoundaryTrace} for our weighted Sobelev spaces.

\begin{lem}[$\Gamma_1$-boundary traces of functions in weighted Sobolev spaces]
\label{lem:Traces}
Let $\sO$ be as in Definition \ref{defn:HestonDomain} and suppose that, for an integer $k\geq 1$, \emph{(i)} $\Gamma_1$ has the uniform $C^k$-regularity property, \emph{(ii)} there is a simple $k$-extension operator from $\sO$ to $\HH$, and \emph{(iii)} the intersection of $\bar\Gamma_1$ with $\Gamma_0$ is $C^k$-transverse \footnote{Hypothesis \ref{hyp:HestonDomainNearGammaZero} implies that the intersection of $\bar\Gamma_1$ with $\Gamma_0$ is $C^\infty$-transverse.}, that is, $\bar\Gamma_1\pitchfork\Gamma_0$. Then there are bounded linear operators,
\begin{align}
T_{\Gamma_1\cap U}^1 {}&: H^1(\sO,\fw)\to L^2(\Gamma_1\cap U,\fw) \quad\hbox{when $k=1$} ,
\\
T_{\Gamma_1}^1 {}&: H^1(\sO,\fw)\to L^2(\Gamma_1,\fw) \quad\hbox{$k=1$} ,
\\
T_{\Gamma_1}^2 {}&: H^2(\sO,\fw)\to H^1(\Gamma_1,\fw) \quad\hbox{$k=2$},
\end{align}
for $U := \RR\times(\delta,\infty)$ and any $\delta>0$, such that
\begin{align*}
T_{\Gamma_1\cap U}^1u &= u|_{\Gamma_1\cap U} \quad\hbox{when } u\in C_{\textrm{loc}}(\sO\cup\Gamma_1),
\\
T_{\Gamma_1}^1u &= y^{1/2}u|_{\Gamma_1} \quad\hbox{when } u\in C_{\textrm{loc}}(\sO\cup\Gamma_1),
\\
T_{\Gamma_1}^2u &= u|_{\Gamma_1} \quad\hbox{when } u\in C^1_{\textrm{loc}}(\sO\cup\Gamma_1).
\end{align*}
\end{lem}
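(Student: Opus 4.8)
The plan is to build all three operators by the standard scheme: first establish the stated a priori trace estimates for $u$ in a dense class of smooth functions, then define $T^1_{\Gamma_1\cap U}$, $T^1_{\Gamma_1}$, $T^2_{\Gamma_1}$ as the continuous extensions to $H^1(\sO,\fw)$, resp.\ $H^2(\sO,\fw)$. By Corollary~\ref{cor:KufnerPowerWeightBoundedDerivatives}, $C^\infty_0(\bar\sO)$ (and hence $C^\ell(\bar\sO)$) is dense in $H^k(\sO,\fw)$, so it suffices to bound $\|u|_{\Gamma_1}\|$ in the relevant boundary norm for $u\in C^\infty_0(\bar\sO)$; the asserted agreement of the operators with genuine restriction on $C_{\textrm{loc}}(\sO\cup\Gamma_1)$ and $C^1_{\textrm{loc}}(\sO\cup\Gamma_1)$ is then automatic, since every ingredient used below (chart-wise classical traces and one-dimensional slice traces) \emph{is} a genuine restriction on continuous functions. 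I would then decompose $\Gamma_1$ using Hypotheses~\ref{hyp:HestonDomainNearGammaZero} and~\ref{hyp:HestonDomainNearGammaOne}: fix a partition of unity $\{\chi_0,\chi_1\}$ on $\bar\HH$ with $\chi_i=\chi_i(y)$, $\|\chi_i\|_{C^2(\bar\HH)}\leq C$, $\supp\chi_0\subset\RR\times[0,\delta_0)$, $\supp\chi_1\subset\RR\times(\delta_0/2,\infty)$, and treat $\chi_1u$ (the ``far'' piece, supported where $\Gamma_1$ is $C^k$-regular and covered by the charts of Hypothesis~\ref{hyp:HestonDomainNearGammaOne}) and $\chi_0u$ (the ``near-$\Gamma_0$'' piece, where $\Gamma_1\cap(\RR\times(0,\delta_0))=\partial\Gamma_0\times(0,\delta_0)$ is a finite union of vertical segments by Hypothesis~\ref{hyp:HestonDomainNearGammaZero}) separately; multiplication by $\chi_i$ is bounded on all the Sobolev and boundary spaces involved and intertwines with the trace maps.

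For the far piece $\chi_1u$: on each chart $U_j$ of Hypothesis~\ref{hyp:HestonDomainNearGammaOne} one has $U_j\subset\RR\times(\delta_0/4,\infty)$ with $\mathrm{diam}(U_j)$ bounded uniformly in $j$ (from the derivative bounds $M_1$), so the weight $\fw=y^{\beta-1}e^{-\gamma|x|-\mu y}$ is comparable on $U_j$, uniformly in $j$, to a constant multiple of Lebesgue measure --- precisely the oscillation estimate $\sup_{U_j}\fw\leq C_1\inf_{U_j}\fw$ derived in the proof of Lemma~\ref{lem:GammaOneRegularityEllipticHeston}, with $C_1=C_1(\beta,\gamma,\mu,\delta_0,M_1)$. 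Hence $H^k(U_j\cap\sO,\fw)$ and $H^k(U_j\cap\sO)$ carry uniformly equivalent norms, and likewise on $\Gamma_1\cap U_j$. I would flatten $U_j\cap\sO$ by $\Phi_j$ (which is $C^k$ with $\Phi_j$ and $\Psi_j=\Phi_j^{-1}$ of uniformly bounded derivatives), apply the classical trace theorem on the half-ball $\{x>0\}\cap B(1)$ (Theorem~\ref{thm:BoundaryTrace}, together with $H^2\hookrightarrow H^{3/2}\hookrightarrow H^1$ on the boundary when $k=2$), pull the trace back by $\Psi_j$, and sum over $j$ using the uniform finite-overlap property (each point lies in at most $R_1$ of the $U_j$) and the covering $\cup_j(\Gamma_1\cap U_j')\supset\Gamma_1\cap(\RR\times(\delta_0/2,\infty))$. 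This produces $T^1$ (into $L^2(\cdot,\fw)$) and $T^2$ (into $H^1(\cdot,\fw)$) on the part of $\Gamma_1$ with $y>\delta_0/2$, with constants controlled by $\delta_0,\delta_1,M_1,R_1$ and the fixed exponents, and there no $y^{1/2}$-weighting is needed.

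For the near-$\Gamma_0$ piece $\chi_0u$: near each of the finitely many $x_i\in\partial\Gamma_0$, Hypothesis~\ref{hyp:HestonDomainNearGammaZero} shows that $\sO$ is, locally, a half-rectangle $R_i=I_i\times(0,\delta_0)$ with $I_i$ an interval of length $\rho$ having $x_i$ as an endpoint, $\Gamma_1$ the vertical side $\{x_i\}\times(0,\delta_0)$, $dS=dy$, and $\fw\asymp y^{\beta-1}$ on $\bar R_i$ since $e^{-\gamma|x|-\mu y}$ is bounded between positive constants there. For $T^1_{\Gamma_1}$ a one-dimensional fundamental-theorem-of-calculus estimate along the horizontal slices gives, for a.e.\ $y\in(0,\delta_0)$, $\rho\,|u(x_i,y)|^2\leq C\int_{I_i}(u^2+u_x^2)(x,y)\,dx$; multiplying by $y^{\beta}$, integrating in $y$, and using $y^{\beta}\leq\delta_0y^{\beta-1}$ bounds $\int_0^{\delta_0}|y^{1/2}u(x_i,y)|^2y^{\beta-1}\,dy$ by $C\|u\|_{H^1(R_i,\fw)}^2$, since $\|u\|_{H^1(R_i,\fw)}^2\asymp\int_{R_i}(u_x^2y^{\beta}+u_y^2y^{\beta}+u^2y^{\beta-1})\,dxdy$. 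This is exactly where the factor $y^{1/2}$ in $T^1_{\Gamma_1}u=y^{1/2}u|_{\Gamma_1}$ is unavoidable: $H^1(\sO,\fw)$ controls only $y^{1/2}u_x$, not $u_x$, near $\Gamma_0$. For $T^1_{\Gamma_1\cap U}$ with $U=\RR\times(\delta,\infty)$, the vertical part of $\Gamma_1\cap U$ has $y$ bounded away from $0$, so $\fw$ is there comparable to Lebesgue measure and the same slice estimate gives $u(x_i,\cdot)\in L^2(\Gamma_1\cap U,\fw)$ with no $y^{1/2}$.

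The main obstacle is the near-$\Gamma_0$ bound for $T^2_{\Gamma_1}$, i.e.\ showing that the tangential ($=\partial_y$) derivative of the trace lies in $L^2((0,\delta_0),y^{\beta-1}\,dy)$: the naive slice estimate only bounds $\int_0^{\delta_0}|u_y(x_i,y)|^2y^{\beta-1}\,dy$ by $C\int_{R_i}(u_y^2+u_{xy}^2)y^{\beta-1}\,dxdy$, and while $H^2(\sO,\fw)$ does control $Du\in L^2(R_i,y^{\beta-1})$ (from $(1+y)|Du|\in L^2(\fw)$), it controls the second derivatives only in $L^2(R_i,y^{\beta+1})$, not in $L^2(R_i,y^{\beta-1})$. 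To close this I would first invoke Lemma~\ref{lem:KufnerPowerWeightedSobolevSpaceEquivalents} to replace $\|u\|_{H^2(R_i,\fw)}$ by the equivalent Kufner norm $\|u\|_{W^{2,2}(R_i;\,d_{\{y=0\}},\beta+1)}$, and then establish --- by flattening a neighbourhood of the corner $(x_i,0)$ and combining a classical half-ball trace with the one-dimensional Hardy inequality of Theorem~\ref{thm:HardyInequality}, exactly in the spirit of the proof of Lemma~\ref{lem:KufnerPowerWeightedSobolevSpaceEquivalents} --- a trace theorem for $W^{2,2}$ with this power weight onto the boundary portion $\{x_i\}\times(0,\delta_0)$, which is transverse to the degeneracy set $\{y=0\}$ (the exponent $\beta-1$, hence $\beta+1$, exceeding $-1$ makes the weight admissible). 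I expect this weighted corner-trace estimate to be the technically heaviest step. Assembling the far-chart bounds, the finitely many near-$\Gamma_0$ bounds, and the partition $\{\chi_0,\chi_1\}$ then yields the boundedness of all three operators, and the identifications $T^1_{\Gamma_1}u=y^{1/2}u|_{\Gamma_1}$ and $T^2_{\Gamma_1}u=u|_{\Gamma_1}$ on the stated continuous classes hold because each ingredient reduces to a genuine restriction there.
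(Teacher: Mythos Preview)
Your overall strategy---dense class, a priori estimate, extension by continuity---is correct and matches the paper's. But the paper executes the a priori estimate far more directly, and in comparison your near-$\Gamma_0$ analysis for $T^2_{\Gamma_1}$ is both harder and aimed at the wrong target.

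The paper does not split into near/far pieces at all. It reduces to the model quadrant $\sO=(x_0,\infty)\times(0,\infty)$ (with $\Gamma_1=\{x_0\}\times(0,\infty)$ vertical) and, for $u\in C^k(\bar\sO)$, obtains every trace bound by a single integration by parts in the \emph{$x$-direction}, exploiting $\fw_x=-\gamma\sign(x)\fw$. For instance, for $T^1_{\Gamma_1}$,
\[
\int_{\Gamma_1}yu^2\fw\,dy=-\int_\sO\bigl(u^2 y^\beta e^{-\gamma|x|-\mu y}\bigr)_x\,dxdy=-\int_\sO\bigl(2uu_x-\gamma\sign(x)u^2\bigr)y\fw\,dxdy\leq C\|u\|_{H^1(\sO,\fw)}^2,
\]
and for $T^2_{\Gamma_1}$,
\[
\int_{\Gamma_1}y^2(u_x^2+u_y^2)\fw\,dy=-\int_\sO\bigl((u_x^2+u_y^2)y^{\beta+1}e^{-\gamma|x|-\mu y}\bigr)_x\,dxdy\leq C\int_\sO y^2\bigl(|D^2u|^2+|Du|^2\bigr)\fw\,dxdy,
\]
which is immediately controlled by $\|u\|_{H^2(\sO,\fw)}^2$. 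No Hardy inequality, no Kufner framework, no charts on the far piece; the general $\sO$ is handled only in a closing sentence about partition of unity and boundary-straightening.

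Your perceived obstacle for $T^2_{\Gamma_1}$ comes from targeting the wrong boundary norm. You try to put $u_y(x_i,\cdot)$ into $L^2((0,\delta_0),y^{\beta-1}dy)$, but the paper's $H^1(\Gamma_1,\fw)$-norm on the trace carries the weight $y^2\fw$ (i.e.\ $y^{\beta+1}$) on derivatives, not $\fw$ (i.e.\ $y^{\beta-1}$). With that weight your own slice argument already closes: $\int_0^{\delta_0}|u_y(x_i,y)|^2y^{\beta+1}dy\leq C\int_{R_i}(u_y^2+u_{xy}^2)y^{\beta+1}dxdy$, and both terms are dominated by $\|u\|_{H^2(\sO,\fw)}^2$ since $(1+y)|Du|\in L^2(\fw)$ and $y|D^2u|\in L^2(\fw)$. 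So the ``technically heaviest step'' evaporates once you use the correct target; the Hardy/Kufner machinery is unnecessary.
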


\begin{proof}
We shall provide a detailed proof in the case where the domain $\sO$ is a quadrant, that is $\sO = (x_0,\infty)\times (0,\infty)$, so $\Gamma_0 = (x_0,\infty)\times\{0\}$ and $\Gamma_1 = \{x_0\}\times(0,\infty)$. The general case for a domain $\sO$ permitted by Definition \ref{defn:HestonDomain} follows using a partition of unity and boundary-straightening via $C^k$-diffeomorphisms of $\bar\HH$ as in the proofs of the extension and trace theorems for standard  Sobolev spaces \cite[Theorems 4.26 \& 5.22]{Adams} or \cite[Theorems 5.4.1 \& 5.5.1]{Evans}. The additional regularity conditions on $\Gamma_1$ are required for these boundary straightening arguments.

\emph{The trace operator\/} $T_{\Gamma_1\cap U}^1: H^1(\sO,\fw)\to L^2(\Gamma_1\cap U,\fw)$. We first assume $u \in C^1(\bar\sO)$. Let $\zeta \in C^\infty(\bar\HH)$ with $0\leq \zeta\leq 1$ on $\HH$, $\zeta=1$ on $\RR\times[\delta,\infty)$, $\zeta = 0$ on $\RR\times [0,\delta/2]$, and $\zeta(x,y)=\zeta(y), \forall(x,y)\in\HH$ (that is, $\zeta$ is constant with respect to $x\in\RR$). Therefore, noting that $dS = dy$ along $\Gamma_1$,
\begin{align*}
\|T_{\Gamma_1\cap U}^1u\|_{L^2(\Gamma_1\cap U,\fw)}^2 &= \int_{\Gamma_1\cap U} u^2\fw\,dS
\leq \int_{\Gamma_1} \zeta u^2\fw\,dy
\\
&= -\int_\sO \left(\zeta u^2 y^{\beta-1}e^{-\gamma|x|-\mu y}\right)_x\,dxdy \quad\hbox{(integration by parts)}
\\
&= -\int_\sO \left(\zeta_xu^2 + 2\zeta uu_x -\gamma\sign(x)\zeta u^2\right)y^{\beta-1}e^{-\gamma|x|-\mu y}\,dxdy
\\
&\leq C\int_\sO (\zeta u_x^2 + (\zeta + |\zeta_x|)u^2)\fw\,dxdy
\\
&\leq C\int_\sO (yu_x^2 + (1+y)u^2)\fw\,dxdy
\\
&\leq C\|u\|_{H^1(\sO,\fw)}^2 \quad\hbox{(by Definition \ref{defn:H1WeightedSobolevSpaces})},
\end{align*}
where $C=C(\delta,\gamma)$. Hence,
$$
\|T_{\Gamma_1\cap U}^1u\|_{L^2(\Gamma_1\cap U,\fw)} \leq C\|u\|_{H^1(\sO,\fw)}, \quad\forall u \in C^1(\bar\sO).
$$
Thus, recalling that $C^1(\bar\sO)$ is a dense subset of $H^1(\sO,\fw)$ by Corollary \ref{cor:KufnerPowerWeightBoundedDerivatives}, if $\{u_n\}_{n\geq 1} \subset C^1(\bar\sO)$ is any sequence converging in $H^1(\sO,\fw)$ to $u\in H^1(\sO,\fw)$, the sequence $\{u_n|_{\Gamma_1\cap U}\}_{n\geq 1}$ is Cauchy in $L^2(\Gamma_1\cap U,\fw)$ and converges to a function $T_{\Gamma_1\cap U}^1u \in L^2(\Gamma_1\cap U,\fw)$.

Finally, if $u\in H^1(\sO,\fw) \cap C_{\textrm{loc}}(\sO\cup\Gamma_1)$, then (as in the proof of \cite[Theorem 5.5.1]{Evans}), we may appeal to the fact that the sequence $\{u_n\}_{n\geq 1} \subset C^1(\bar\sO)$ constructed in the proof of Corollary \ref{cor:KufnerPowerWeightBoundedDerivatives} (specifically, Theorem \ref{thm:KufnerPowerWeight}) converges uniformly on compact subsets of $\sO\cup\Gamma_1$ to $u$ on $\sO\cup\Gamma_1$. Hence, $T_{\Gamma_1\cap U}^1u = u|_{\Gamma_1\cap U}$ if $u\in H^1(\sO,\fw) \cap C_{\textrm{loc}}(\sO\cup\Gamma_1)$.

\emph{The trace operator\/} $T_{\Gamma_1}^1: H^1(\sO,\fw)\to L^2(\Gamma_1,\fw)$. As in the case of $T_{\Gamma_1\cap U}^1$, for $u \in C^1(\bar\sO)$ we have
\begin{align*}
\|T_{\Gamma_1}^1u\|_{L^2(\Gamma_1,\fw)}^2 &= \int_{\Gamma_1} yu^2\fw\,dS = \int_{\Gamma_1} yu^2\fw\,dy
\\
&= -\int_\sO \left(u^2 y^\beta e^{-\gamma|x|-\mu y}\right)_x\,dxdy \quad\hbox{(integration by parts)}
\\
&= -\int_\sO \left(2uu_x -\gamma\sign(x)u^2\right)y^\beta e^{-\gamma|x|-\mu y}\,dxdy
\\
&\leq C\int_\sO (u_x^2 + u^2)y\fw\,dxdy
\\
&\leq C\int_\sO (y|Du|^2 + (1+y)u^2)\fw\,dxdy
\\
&= C\|u\|_{H^1(\sO,\fw)}^2 \quad\hbox{(by Definition \ref{defn:H1WeightedSobolevSpaces})},
\end{align*}
where $C=C(\gamma)$. Hence,
$$
\|T_{\Gamma_1}^1u\|_{L^2(\Gamma_1,\fw)} \leq C\|u\|_{H^1(\sO,\fw)}, \quad\forall u \in C^1(\bar\sO).
$$
The conclusion follows just as in the case of $T_{\Gamma_1\cap U}^1$.

\emph{The trace operator\/} $T_{\Gamma_1}^2: H^2(\sO,\fw)\to H^1(\Gamma_1,\fw)$. We first assume $u \in C^2(\bar\sO)$. Integration by parts gives
\begin{align*}
\int_{\Gamma_1} y^2(u_x^2 + u_y^2)\fw\,dS &= -\int_\sO \left((u_x^2 + u_y^2)y^{\beta+1} e^{-\gamma|x|-\mu y}\right)_x\,dxdy
\\
&= -\int_\sO \left(2u_xu_{xx} + 2u_yu_{yx} -\gamma\sign(x)(u_x^2 + u_y^2)\right)y^{\beta+1} e^{-\gamma|x|-\mu y}\,dxdy
\\
&\leq C\int_\sO y^2\left(u_{xx}^2 + u_{yx}^2 + u_x^2 + u_y^2\right)\fw\,dxdy
\\
&\leq C\|u\|_{H^2(\sO,\fw)}^2 \quad\hbox{(by Definition \ref{defn:H2WeightedSobolevSpaces})},
\end{align*}
where $C=C(\gamma)$. Similarly,
$$
\int_{\Gamma_1} yu^2\fw\,dy \leq C\int_\sO y(u_x^2 + u^2)\fw\,dxdy,
$$
for a universal constant $C$. Therefore, by Definition \ref{defn:H2WeightedSobolevSpaces},
$$
\int_{\Gamma_1} \left(y^2(u_x^2 + u_y^2) + (1+y)u^2\right)\fw\,dy \leq C\|u\|_{H^2(\sO,\fw)}^2,
$$
where $C=C(\gamma)$, and we obtain
$$
\|T_{\Gamma_1}^2u\|_{H^1(\Gamma_1,\fw)} \leq C\|u\|_{H^2(\sO,\fw)}, \quad\forall u \in C^2(\bar\sO).
$$
Thus, recalling that $C^2(\bar\sO)$ is a dense subset of $H^2(\sO,\fw)$ by Corollary \ref{cor:KufnerPowerWeightBoundedDerivatives}, if $\{u_n\}_{n\geq 1} \subset C^2(\bar\sO)$ is any sequence converging in $H^2(\sO,\fw)$ to $u\in H^2(\sO,\fw)$, the sequence $\{u_n|_{\Gamma_1}\}_{n\geq 1}$ is Cauchy in $H^1(\Gamma_1,\fw)$ and converges to a function $T_{\Gamma_1}^2u \in H^1(\Gamma_1,\fw)$.

Finally, if $u\in H^2(\sO,\fw) \cap C^1_{\textrm{loc}}(\sO\cup\Gamma_1)$, then we may appeal to the fact that the sequence $\{u_n\}_{n\geq 1} \subset C^2(\bar\sO)$ constructed in the proof of Corollary \ref{cor:KufnerPowerWeightBoundedDerivatives} (specifically, Theorem \ref{thm:KufnerPowerWeight}), together with its sequences of first-order derivatives, converge uniformly on compact subsets of $\sO\cup\Gamma_1$, to $u$ and its first-order derivatives on $\sO\cup\Gamma_1$. Hence, $T_{\Gamma_1}^2u = u|_{\Gamma_1}$ if $u\in H^2(\sO,\fw) \cap C^1_{\textrm{loc}}(\sO\cup\Gamma_1)$. This completes the proof.
\end{proof}

Defining trace operators for the boundary portion $\Gamma_0$ requires more care:

\begin{lem}[$\Gamma_0$-boundary traces of functions in weighted Sobolev spaces]
\label{lem:TracesGammaZero}
Let $\sO$ be as in Definition \ref{defn:HestonDomain} and require that $\sO$ obeys\footnote{It is enough that $\Gamma_1$ is $C^k$ in a neighborhood of $\Gamma_0$ and the intersection of $\bar\Gamma_1$ with $\Gamma_0$ is $C^k$-transverse.} Hypothesis \ref{hyp:HestonDomainNearGammaZero}. Then there are bounded linear operators,
\begin{align}
\label{eq:TracesGammaZerou}
{}&T_{\Gamma_0}^{2,0}: H^2(\sO,\fw)\to L^2(\Gamma_0,e^{-\gamma|x|}dx), \quad 0<\beta<1,
\\
\label{eq:TracesGammaZeroybetaDu}
{}&T_{\Gamma_0}^{2,1}: H^2(\sO,\fw)\to L^2(\Gamma_0,e^{-\gamma|x|}dx;\RR^2), \quad \beta>0,
\end{align}
such that
\begin{align*}
T_{\Gamma_0}^{2,0}u &= u|_{\Gamma_0} \hbox{ when } u\in C_{\textrm{loc}}(\sO\cup\Gamma_0),
\\
T_{\Gamma_0}^{2,1}u &= y^\beta Du|_{\Gamma_0} \hbox{ when } y^\beta Du\in C_{\textrm{loc}}(\sO\cup\Gamma_0;\RR^2).
\end{align*}
\end{lem}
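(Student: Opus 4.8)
The plan is to adapt the proof of Lemma \ref{lem:Traces} to the boundary portion $\Gamma_0=\{y=0\}$. First I would establish a priori bounds, uniform in $y$, on the restrictions $u(\cdot,y)$ and $y^\beta Du(\cdot,y)$ to the level sets $\{y=\mathrm{const}\}$ for $u$ in the dense class $C^2(\bar\sO)$; extend these bounds to $H^2(\sO,\fw)$ by density (Corollary \ref{cor:KufnerPowerWeightBoundedDerivatives}); define $T^{2,0}_{\Gamma_0}u$ and $T^{2,1}_{\Gamma_0}u$ as the $L^2(\Gamma_0,e^{-\gamma|x|}\,dx)$-limits of these restrictions as $y\downarrow0$, the limits existing because the bounds also yield the Cauchy property; and finally identify the limits with the pointwise boundary values by uniform-on-compacta convergence of the approximating sequence, exactly as in Lemma \ref{lem:Traces}. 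All computations live in the collar $\sO^0_{\delta_0}=\Gamma_0\times(0,\delta_0)$ supplied by Hypothesis \ref{hyp:HestonDomainNearGammaZero}, so that the fundamental theorem of calculus in $y$ against a cutoff $\zeta=\zeta(y)\in C^\infty(\bar\RR_+)$ with $\zeta\equiv1$ on $(0,\delta_0/2)$, $\supp\zeta\subset[0,\delta_0)$, produces no $\Gamma_1$-boundary contribution; the single algebraic identity driving everything is $e^{-\gamma|x|}=y^{1-\beta}e^{\mu y}\,\fw(x,y)$, which trades the measure of the target trace spaces for the Sobolev weight $\fw$ at the cost of the power $y^{1-\beta}$.

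For $T^{2,0}_{\Gamma_0}$, where $0<\beta<1$: given $u\in C^2(\bar\sO)$ and $y_0\in(0,\delta_0/2)$, the compact support of $\zeta$ in $y$ gives
\[
\int_{\Gamma_0}u(x,y_0)^2e^{-\gamma|x|}\,dx=-\int_{\Gamma_0\times(y_0,\infty)}\bigl(2\zeta\zeta'u^2+2\zeta^2uu_y\bigr)e^{-\gamma|x|}\,dx\,dy.
\]
Inserting the weight identity and using that $y^{1-\beta}e^{\mu y}$ is bounded on $(0,\delta_0)$ precisely because $\beta<1$, the right-hand side is at most $C\int_{\sO^0_{\delta_0}}\bigl(u^2+|u||u_y|\bigr)\fw\,dx\,dy$, which by Cauchy--Schwarz and Definition \ref{defn:H2WeightedSobolevSpaces} of $H^2(\sO,\fw)$ --- in particular $\|(1+y)^{1/2}u\|_{L^2(\sO,\fw)}<\infty$ and $\|u_y\|_{L^2(\sO,\fw)}\le\|(1+y)Du\|_{L^2(\sO,\fw)}<\infty$, which is where the full strength of $H^2$, and not merely $H^1$, enters --- is bounded by $C\|u\|_{H^2(\sO,\fw)}^2$, uniformly in $y_0$. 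Applying the same identity to $u(x,y)-u(x,y')=\int_{y'}^yu_\tau(x,\tau)\,d\tau$ shows $y\mapsto u(\cdot,y)|_{\Gamma_0}$ is $\tfrac12$-H\"older from $(0,\delta_0/2)$ into $L^2(\Gamma_0,e^{-\gamma|x|}\,dx)$, so it extends continuously to $y=0$, its value there being $T^{2,0}_{\Gamma_0}u$; density of $C^2(\bar\sO)$ promotes both the map and the estimate to $H^2(\sO,\fw)$.

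For $T^{2,1}_{\Gamma_0}$, where only $\beta>0$ is assumed: here the explicit prefactor $y^\beta$ absorbs the singular power produced by differentiation. For $u\in C^2(\bar\sO)$ and $y_0\in(0,\delta_0/2)$,
\[
\int_{\Gamma_0}y_0^{2\beta}u_x(x,y_0)^2e^{-\gamma|x|}\,dx=-\int_{\Gamma_0\times(y_0,\infty)}\bigl(2\zeta\zeta'y^{2\beta}u_x^2+2\beta\zeta^2y^{2\beta-1}u_x^2+2\zeta^2y^{2\beta}u_xu_{xy}\bigr)e^{-\gamma|x|}\,dx\,dy,
\]
and, inserting $e^{-\gamma|x|}=y^{1-\beta}e^{\mu y}\fw$, the factor $y^{1-\beta}$ cancels against the explicit $y$-powers, so that up to the factor $e^{\mu y}\le e^{\mu\delta_0}$ the three integrands become bounded multiples of $y^{\beta+1}u_x^2\fw$ (supported in $\supp\zeta'$), $y^{\beta}u_x^2\fw$, and $y^{\beta+1}|u_x||u_{xy}|\fw$. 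Since $y^\beta$ and $y^{\beta+1}$ are bounded on $(0,\delta_0)$ because $\beta>0$, the first two integrate to at most $C\|u\|_{H^2(\sO,\fw)}^2$, while for the third one writes $y^{\beta+1}=y^\beta\cdot y$ and applies Cauchy--Schwarz, obtaining $\le\bigl(\int_{\sO^0_{\delta_0}}y^{2\beta}u_x^2\fw\bigr)^{1/2}\bigl(\int_{\sO^0_{\delta_0}}y^2u_{xy}^2\fw\bigr)^{1/2}\le C\|u\|_{H^2(\sO,\fw)}^2$; crucially this uses only $\beta>0$, the $y^\beta$ prefactor having already soaked up the potentially singular power. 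The same bound holds for the $u_y$-component, so $\int_{\Gamma_0}|y_0^\beta Du(\cdot,y_0)|^2e^{-\gamma|x|}\,dx\le C\|u\|_{H^2(\sO,\fw)}^2$ uniformly in $y_0$; since $Du$ is bounded on $\sO$ and $\beta>0$, $y^\beta Du(\cdot,y)\to0$ uniformly on $\Gamma_0$ as $y\downarrow0$ for $u\in C^2(\bar\sO)$, so the limit $T^{2,1}_{\Gamma_0}u$ is $0$ on that class and, by density and the uniform bound, $T^{2,1}_{\Gamma_0}$ extends to the (bounded) zero operator --- consistent with the compatibility requirement, because $\|yD^2u\|_{L^2(\sO,\fw)}<\infty$ already forbids $y^\beta Du$ from having a nonzero continuous boundary value on $\Gamma_0$.

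The identification clauses --- $T^{2,0}_{\Gamma_0}u=u|_{\Gamma_0}$ when $u\in C_{\textrm{loc}}(\sO\cup\Gamma_0)$ and $T^{2,1}_{\Gamma_0}u=y^\beta Du|_{\Gamma_0}$ when $y^\beta Du\in C_{\textrm{loc}}(\sO\cup\Gamma_0;\RR^2)$ --- follow as in Lemma \ref{lem:Traces}: the sequence furnished by the density argument may be taken to converge uniformly, together with its first derivatives, on compact subsets of $\sO\cup\Gamma_0$, so the $L^2$-limit defining the trace coincides with the continuous restriction. I expect the main obstacle to be the power-counting in the $T^{2,1}_{\Gamma_0}$ estimate --- verifying that every power of $y$ produced by $\partial_y(\zeta^2y^{2\beta}u_x^2)$ recombines with the $y^{1-\beta}$ from the weight identity and with $\fw=y^{\beta-1}e^{-\gamma|x|-\mu y}$ so as to reproduce exactly the integrands controlled by $\|u\|_{H^2(\sO,\fw)}$, and in particular exhibiting cleanly why $\beta<1$ is forced for $T^{2,0}_{\Gamma_0}$ (the term $u_y^2y^{1-\beta}\fw$) but not for $T^{2,1}_{\Gamma_0}$; a secondary point is the uniform-with-derivatives convergence of the mollified, cut-off approximants up to $\Gamma_0$, needed only for the identification clauses.
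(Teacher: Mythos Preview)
Your proposal is correct and follows essentially the same approach as the paper: integrate by parts in $y$ against a cutoff $\zeta=\zeta(y)$ supported in the collar $\Gamma_0\times(0,\delta_0)$, then do the power-counting via the identity $e^{-\gamma|x|}=y^{1-\beta}e^{\mu y}\fw(x,y)$ to see that $\beta<1$ is needed for $T^{2,0}_{\Gamma_0}$ while only $\beta>0$ is needed for $T^{2,1}_{\Gamma_0}$, and extend by density of $C^2(\bar\sO)$ in $H^2(\sO,\fw)$. The paper evaluates the boundary term directly at $y=0$ for $u\in C^2(\bar\sO)$ rather than first at $y=y_0>0$ and passing to the limit, so your H\"older-continuity step is unnecessary (though not wrong); your observation that $T^{2,1}_{\Gamma_0}$ is in fact the zero operator is correct and implicit in the paper's argument, since $y^\beta Du_n|_{\Gamma_0}=0$ for each smooth $u_n$.
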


\begin{rmk}[Existence of $\Gamma_0$ boundary trace operators]
\label{rmk:WelldefinedTraceZeroOperator}
When $0<\beta<1$, \cite[Theorem 9.15 \& Equation (9.42)]{Kufner} suggests that a trace operator $T_{\Gamma_0}^{1,0}:H^1(\sO,\fw) \to L^2(\sO,e^{-\gamma|x|}dx)$ is well-defined, but we shall not need this refinement. However, when $\beta\geq 1$, $\Gamma_0$-traces of functions in $H^1(\sO,\fw)$ generally will not exist \cite[Examples 9.16 \& 9.17]{Kufner}.
\end{rmk}

\begin{rmk}[Domain of the boundary trace operator $T_{\Gamma_0}^{2,0}$]
\label{rmk:DomainT20TraceZeroOperator}
Using Hardy's inequality (Theorem \ref{thm:HardyInequality}), one can show when $0<\beta<1$ that $T_{\Gamma_0}^{2,0}$ extends to a bounded linear map from $H^1_0(\sO,\fw)$ or even $H^1_0(\sO\cup\Gamma_1,\fw)$ to $L^2(\Gamma_0,e^{-\gamma|x|}dx)$, but not from $H^1_0(\sO,\fw)$ or $H^1_0(\sO\cup\Gamma_0,\fw)$ to $L^2(\Gamma_0,e^{-\gamma|x|}dx)$, since the proof of Lemma \ref{lem:TracesGammaZero} shows that one would need $u=0$ on $\Gamma_0$ (trace sense) when applying Hardy's inequality in this situation.
\end{rmk}

\begin{proof}[Proof of Lemma \ref{lem:TracesGammaZero}]
As in the proof of Lemma \ref{lem:Traces}, we provide a detailed argument when $\sO = (x_0,\infty)\times (0,\infty)$, so $\Gamma_0 = (x_0,\infty)\times\{0\}$, with the general case for $\sO$ following with the aid of a partition of unity and boundary-straightening.

\emph{The trace operator\/} $T_{\Gamma_0}^{2,0}: H^2(\sO,\fw)\to L^2(\Gamma_0,e^{-\gamma|x|}dx)$ ($0<\beta<1$). We first suppose $u \in C^2(\bar\sO)$. Let $\zeta\in C^\infty(\bar\HH)$ with $0\leq \zeta \leq 1$ on $\HH$, $\zeta = 1$ on $\RR\times[0,1/2]$ and $\zeta = 0$ on $\RR\times[1,\infty)$. Then, noting that $dS = dx$ along $\Gamma_0$,
\begin{align*}
\int_{\Gamma_0} u^2 e^{-\gamma|x|}\,dS &= \int_{\Gamma_0} \zeta u^2 e^{-\gamma|x|}\,dx
\\
&= -\int_\sO (\zeta u^2)_y e^{-\gamma|x|}\,dxdy \quad\hbox{(integration by parts)}
\\
&= -\int_\sO (\zeta_yu^2 + 2\zeta uu_y) e^{-\gamma|x|}\,dxdy
\\
&\leq \int_\sO (|\zeta_y|u^2 + 2\zeta|u||u_y|) e^{-\gamma|x|}\,dxdy
\\
&\leq e^\mu\int_\sO (|\zeta_y|u^2 + 2\zeta|u||u_y|) y^{\beta-1}e^{-\gamma|x|-\mu y}\,dxdy \quad\hbox{(because $0<\beta<1$)}
\\
&\leq C\int_\sO (u_y^2 + u^2)y^{\beta-1}e^{-\gamma|x|-\mu y}\,dxdy
\\
&\leq C\int_\sO (|Du|^2 + u^2)\fw\,dxdy,
\end{align*}
for $C=C(\mu)$, and thus, by Definition \ref{defn:H2WeightedSobolevSpaces},
\begin{equation}
\label{eq:TracesGammaZeroU}
\|u\|_{L^2(\Gamma_0,e^{-\gamma|x|}dx)} \leq C\|u\|_{H^2(\sO,\fw)},  \quad\forall u \in C^2(\bar\sO).
\end{equation}
Thus, recalling that $C^2(\bar\sO)$ is a dense subset of $H^2(\sO,\fw)$ by Corollary \ref{cor:KufnerPowerWeightBoundedDerivatives}, if $\{u_n\}_{n\geq 1} \subset C^2(\bar\sO)$ is any sequence converging in $H^2(\sO,\fw)$ to $u\in H^2(\sO,\fw)$, the sequence $\{u_n|_{\Gamma_0}\}_{n\geq 1}$ is Cauchy in $L^2(\Gamma_0,e^{-\gamma|x|}dx)$ and converges to a function $T_{\Gamma_0}^{2,0}u \in L^2(\Gamma_0,e^{-\gamma|x|}dx)$ when $0<\beta<1$.

Finally, if $u\in H^2(\sO,\fw) \cap C_{\textrm{loc}}(\sO\cup\Gamma_0)$, then, we may appeal to the fact that the sequence $\{u_n\}_{n\geq 1} \subset C^2(\bar\sO)$ constructed in the proof of Corollary \ref{cor:KufnerPowerWeightBoundedDerivatives} (specifically, Theorem \ref{thm:KufnerPowerWeight}) converges uniformly on compact subsets of $\sO\cup\Gamma_0$ to $u$ on $\sO\cup\Gamma_0$. Hence, $T_{\Gamma_0}^{2,0}u = u|_{\Gamma_0}$ if $u\in H^2(\sO,\fw) \cap C_{\textrm{loc}}(\sO\cup\Gamma_0)$.

\emph{The trace operator\/} $T_{\Gamma_0}^{2,1}: H^2(\sO,\fw)\to L^2(\Gamma_0,e^{-\gamma|x|}dx)$ ($\beta>0$). Similarly, if $u\in C^2(\bar\sO)$ but allowing any $\beta>0$, we obtain
\begin{align*}
\int_{\Gamma_0} y^{2\beta}|Du|^2 e^{-\gamma|x|}\,dS &= \int_{\Gamma_0} \zeta y^{2\beta}|Du|^2 e^{-\gamma|x|}\,dx
\\
&= -\int_\sO (\zeta y^{2\beta}|Du|^2)_y e^{-\gamma|x|}\,dxdy  \quad\hbox{(integration by parts)}
\\
&= -\int_\sO \left(y\zeta_y|Du|^2 + 2y\zeta(u_xu_{xy} + u_yu_{yy}) + 2\zeta\beta|Du|^2\right) y^{2\beta-1}e^{-\gamma|x|}\,dxdy
\\
&\leq \int_\sO \left((y|\zeta_y| + 2\zeta\beta)|Du|^2 + 2y\zeta|Du||D^2u|\right) y^{\beta-1}e^{-\gamma|x|}\,dxdy
\\
&\quad\hbox{(since $\supp\zeta\subset[0,1]$ and $\beta>0$)}
\\
&\leq e^\mu\int_\sO \left(y|\zeta_y| + 2\zeta\beta + \zeta^2)|Du|^2 + y^2|D^2u|^2\right) y^{\beta-1}e^{-\gamma|x|-\mu y}\,dxdy
\\
&\leq C\int_\sO (y^2|D^2u|^2 + (1+y)|Du|^2)y^{\beta-1}e^{-\gamma|x|-\mu y}\,dxdy
\\
&\leq C\int_\sO (y^2|D^2u|^2 + (1+y)^2|Du|^2 + (1+y)u^2)\fw\,dxdy,
\end{align*}
for $C=C(\mu)$, and thus, by Definition \ref{defn:H2WeightedSobolevSpaces},
$$
\|y^\beta Du\|_{L^2(\Gamma_0,e^{-\gamma|x|}dx)} \leq C\|u\|_{H^2(\sO,\fw)},   \quad\forall u \in C^2(\bar\sO).
$$
Thus, recalling that $C^2(\bar\sO)$ is a dense subset of $H^2(\sO,\fw)$ by Corollary \ref{cor:KufnerPowerWeightBoundedDerivatives}, if $\{u_n\}_{n\geq 1} \subset C^2(\bar\sO)$ is any sequence converging in $H^2(\sO,\fw)$ to $u\in H^2(\sO,\fw)$, the sequence of weighted gradients $\{y^\beta Du_n|_{\Gamma_0}\}_{n\geq 1}$ is Cauchy in $L^2(\Gamma_0,e^{-\gamma|x|}dx,\RR^2)$ and converges to a limit $T_{\Gamma_0}^{2,1}u \in L^2(\Gamma_0,e^{-\gamma|x|}dx,\RR^2$ when $\beta>0$.

Finally, if $u\in H^2(\sO,\fw)$ and $y^\beta Du \in C_{\textrm{loc}}(\sO\cup\Gamma_0;\RR^2)$, then, we may appeal to the fact that the sequence $\{u_n\}_{n\geq 1} \subset C^\infty(\bar\sO)$ constructed in the proof of Corollary \ref{cor:KufnerPowerWeightBoundedDerivatives} (specifically, Theorem \ref{thm:KufnerPowerWeight}) has the property that $y^\beta Du_n$ converges uniformly on compact subsets of $\sO\cup\Gamma_0$ to $y^\beta Du \in C_{\textrm{loc}}(\sO\cup\Gamma_0;\RR^2)$. Hence, $T_{\Gamma_0}^{2,1}u = y^\beta Du|_{\Gamma_0}$ if $u\in H^2(\sO,\fw)$ and $y^\beta Du \in C_{\textrm{loc}}(\sO\cup\Gamma_0;\RR^2)$.
\end{proof}

We have the following analogue of \cite[Theorem 5.5.2]{Evans} for the $\Gamma_0$ boundary portion of $\partial\sO$:

\begin{lem}[$\Gamma_0$-trace zero functions in $H^1(\sO,\fw)$]
\label{lem:PartialEvansTraceZero}
Let $\sO$ be as in Definition \ref{defn:HestonDomain} and require that $\sO$ obeys Hypothesis \ref{hyp:HestonDomainNearGammaZero}. When $0<\beta<1$ and $u \in H^2(\sO,\fw)$,
$$
u \in H^1_0(\sO\cup\Gamma_1,\fw) \quad\hbox{if and only if}\quad T_{\Gamma_0}^{2,0}u = 0 \hbox{ a.e. on $\Gamma_0$},
$$
\end{lem}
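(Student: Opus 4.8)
The plan is to prove the two implications separately, using the trace operator $T_{\Gamma_0}^{2,0}$ provided by Lemma \ref{lem:TracesGammaZero} (which exists precisely because $0<\beta<1$) and the density of $C^2(\bar\sO)$ in $H^2(\sO,\fw)$ from Corollary \ref{cor:KufnerPowerWeightBoundedDerivatives}. I expect the model to follow for this statement is the classical argument for standard Sobolev spaces in \cite[Theorem 5.5.2]{Evans}, modified to account for the weight $\fw$, the half-plane geometry, and the fact that we are imposing the trace-zero condition only on the boundary portion $\Gamma_0$ while allowing $\Gamma_1$ to be non-empty (hence $H^1_0(\sO\cup\Gamma_1,\fw)$ rather than $H^1_0(\sO,\fw)$). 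As usual, it suffices to work out the argument when $\sO=(x_0,\infty)\times(0,\infty)$, so $\Gamma_0=(x_0,\infty)\times\{0\}$ and $\Gamma_1=\{x_0\}\times(0,\infty)$, the general case following by a partition of unity and boundary straightening, as in the proof of Lemma \ref{lem:TracesGammaZero}.

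\textbf{Easy direction: $u\in H^1_0(\sO\cup\Gamma_1,\fw)\implies T_{\Gamma_0}^{2,0}u=0$.} First I would note that $T_{\Gamma_0}^{2,0}$, originally defined on $H^2(\sO,\fw)$, restricts to a bounded linear operator from $H^1_0(\sO\cup\Gamma_1,\fw)\cap H^2(\sO,\fw)$ into $L^2(\Gamma_0,e^{-\gamma|x|}dx)$; here one uses Hardy's inequality (Theorem \ref{thm:HardyInequality}, applicable since $0<\beta<1$ gives $\beta<p-1$ with $p=2$) to control $\|u\|_{L^2(\Gamma_0,e^{-\gamma|x|}dx)}$ by the $H^1(\sO,\fw)$-norm of $u$ rather than by the full $H^2$-norm --- see Remark \ref{rmk:DomainT20TraceZeroOperator}. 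If $u\in H^1_0(\sO\cup\Gamma_1,\fw)$, pick $\{u_n\}\subset C^\infty_0(\sO\cup\Gamma_1)$ with $u_n\to u$ in $H^1(\sO,\fw)$. Each $u_n$ vanishes near $\Gamma_0$, so $u_n|_{\Gamma_0}=0$ and hence $T_{\Gamma_0}^{2,0}u_n=0$. Passing to the limit using the $H^1(\sO,\fw)\to L^2(\Gamma_0,e^{-\gamma|x|}dx)$ boundedness just noted gives $T_{\Gamma_0}^{2,0}u=0$ a.e.\ on $\Gamma_0$.

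\textbf{Hard direction: $T_{\Gamma_0}^{2,0}u=0\implies u\in H^1_0(\sO\cup\Gamma_1,\fw)$.} Given $u\in H^2(\sO,\fw)$ with $T_{\Gamma_0}^{2,0}u=0$, I would approximate $u$ by $C^2(\bar\sO)$ functions (Corollary \ref{cor:KufnerPowerWeightBoundedDerivatives}) and reduce to showing that such an approximant, say again $u\in C^2(\bar\sO)$ with $u|_{\Gamma_0}$ small in $L^2(\Gamma_0,e^{-\gamma|x|}dx)$, can be cut off near $\Gamma_0$ with controlled error in $H^1(\sO,\fw)$. Introduce the same vertical cutoffs $\varphi_m\in C^\infty(\RR)$ used in the proof of Lemma \ref{lem:CameliasH1ApproximationLemma} (with $\varphi_m=0$ for $y\le 1/m$, $\varphi_m=1$ for $y\ge 2/m$, $0\le\varphi_m'\le 2m$), set $u_m:=\varphi_m u$, and estimate $\|u-u_m\|_{H^1(\sO,\fw)}^2$. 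The terms not involving $\varphi_m'$ go to zero by dominated convergence since $u\in H^1(\sO,\fw)$. The delicate term is $\int_\sO y u^2 \varphi_{m,y}^2\,\fw\,dxdy$, which is supported in $\{1/m<y<2/m\}$ and comparable to $m^2\int_{1/m}^{2/m}\!\int_{\Gamma_0} u(x,y)^2 y^{\beta}e^{-\gamma|x|}\,dx\,dy$; using $u(x,y)^2\le 2u(x,0)^2 + 2(\int_0^y u_y\,dt)^2$ together with Cauchy--Schwarz and $\int_0^y t^\beta\,dt\le Cy^{\beta+1}$, one bounds this by $C m^2(\int_{1/m}^{2/m} y^\beta\,dy)\|u\|_{L^2(\Gamma_0,e^{-\gamma|x|}dx)}^2 + C\|y^{(\beta+1)/2}u_y\|_{L^2(\sO,\fw)}^2$; the first piece is $O(m^{1-\beta})\to 0$ since $\beta<1$ --- wait, that still vanishes but does not exploit $u|_{\Gamma_0}=0$ --- so more carefully, since $T_{\Gamma_0}^{2,0}u=0$ one has $u(x,0)=0$ for the relevant smooth approximants and the first piece drops entirely, while the second is a tail of a convergent integral, hence $\to 0$. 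Thus $u_m\to u$ in $H^1(\sO,\fw)$ with each $u_m$ vanishing in a neighborhood of $\Gamma_0$; a further standard horizontal truncation and mollification (respecting the constancy of $\fw$'s singular factor in $x$, and the fact that $u_m$ need not vanish near $\Gamma_1$) produces elements of $C^\infty_0(\sO\cup\Gamma_1)$ converging to $u$, giving $u\in H^1_0(\sO\cup\Gamma_1,\fw)$.

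\textbf{Main obstacle.} The crux is the weighted cutoff estimate in the hard direction: one must show the cutoff-derivative term $\int y u^2\varphi_{m,y}^2\fw$ vanishes and that this genuinely requires the vanishing trace on $\Gamma_0$ (and fails without it, consistent with Remark \ref{rmk:WelldefinedTraceZeroOperator} and Example \ref{exmp:CIR}). The borderline exponent arithmetic --- $y^{\beta-1}$ in $\fw$ versus the $y\cdot m^2$ from $y\varphi_{m,y}^2$, integrated over a shell of thickness $1/m$ --- is exactly where the condition $0<\beta<1$ is used, and getting the constant dependence on $\|u\|_{L^2(\Gamma_0,e^{-\gamma|x|}dx)}$ right (via the fundamental theorem of calculus bound for $u(x,y)-u(x,0)$ and Hardy's inequality) is the only non-routine computation; everything else is bookkeeping with partitions of unity and the density results already in the appendix.
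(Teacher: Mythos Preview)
Your approach is essentially the same as the paper's (both follow \cite[Theorem 5.5.2]{Evans}), and the easy direction is fine. In the hard direction, however, there are two slips worth flagging.

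First, the exponent arithmetic: you write that the boundary piece is $O(m^{1-\beta})\to 0$ since $\beta<1$. This is backwards --- for $0<\beta<1$ one has $1-\beta>0$, so $m^{1-\beta}\to\infty$. The term $m^2\int_{1/m}^{2/m} y^\beta\,dy\cdot\|u(\cdot,0)\|^2_{L^2(\Gamma_0,e^{-\gamma|x|}dx)}$ genuinely diverges unless the trace vanishes; it is precisely the divergence of this term that forces the trace-zero hypothesis, not a convergence that merely ``does not exploit'' it.

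Second, and relatedly, your reduction ``approximate by $u\in C^2(\bar\sO)$ with $u|_{\Gamma_0}$ small and then cut off'' does not work as stated: if the approximant has nonzero (merely small) trace, the boundary piece still blows up like $m^{1-\beta}$ as $m\to\infty$, so you cannot choose $m$ after fixing the approximant. The paper avoids this by first using the smooth approximants only to derive the pointwise-in-$y$ inequality
\[
\int_\RR |u(x,y)|^2 e^{-\gamma|x|}\,dx \le Cy\int_0^y\!\!\int_\RR |Du(x,z)|^2\,\fw(x,z)\,dx\,dz
\]
for the limit $u$ itself (here the trace term disappears in the limit because $T_{\Gamma_0}^{2,0}u=0$), and \emph{then} performing the vertical cutoff directly on $u$. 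With that ordering the delicate term becomes $J_m\le Cm^{-\beta}\int_0^{2/m}\!\int_\RR |Du|^2\fw\to 0$, which is the computation you were aiming for. Once you reorganize the argument this way, your sketch goes through.
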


\begin{rmk}
\label{rmk:PartialEvansTraceZero}
The result is false when $\beta\geq 1$. For example, when $\sO=\HH$, the constant function, $u = 1$, is in $H^1_0(\HH,\fw)$ by Lemma \ref{lem:ImprovedH1ApproximationLemma}.
\end{rmk}

\begin{proof}[Proof of Lemma \ref{lem:PartialEvansTraceZero}]
Suppose first that $u \in H^1_0(\sO\cup\Gamma_1,\fw)$. By the Definition \ref{defn:H1WeightedSobolevSpaces} of $H^1_0(\sO\cup\Gamma_1,\fw)$ and the Definition \ref{defn:H2WeightedSobolevSpaces} of $H^2(\sO,\fw)$, there exists $\{u_m\}_{m\geq 0}\subset C^2_0(\sO\cup\Gamma_1)$ such that $u_m\to u$ in $H^2(\sO,\fw)$. As the trace operator, $T_{\Gamma_0}^{2,0}:H^2(\sO,\fw) \to L^2(\Gamma_0,e^{-\gamma|x|}dx)$, is bounded by Lemma \ref{lem:TracesGammaZero} and $T_{\Gamma_0}^{2,0}u_m=0$ on $\Gamma_0$ for all $m\geq 0$, we deduce that $T_{\Gamma_0}^{2,0}u=0$ a.e. on $\Gamma_0$, as desired.

Conversely, suppose that $T_{\Gamma_0}^{2,0}u=0$ a.e. on $\Gamma_0$. Hence, by the proof of Lemma \ref{lem:TracesGammaZero}, there exist $\{u_m\}_{m\geq 0}\subset C^2(\bar\sO)$ such that, as $m\to\infty$,
\begin{gather}
\label{eq:umH2sequence}
u_m\to u\quad\hbox{in }H^2(\sO,\fw),
\\
\label{eq:traceumH2sequence}
T_{\Gamma_0}^{2,0}u_m = u_m|_{\Gamma_0} \to 0\quad\hbox{in }L^2(\Gamma_0,e^{-\gamma|x|}dx).
\end{gather}
It suffices to consider the case $\sO = \HH$, so $\Gamma_0=\RR\times\{0\}$ and $\Gamma_1 = \emptyset$, since the general case for $\sO$ as in Definition \ref{defn:HestonDomain} follows by standard methods \cite[Proof of Theorem 5.5.2]{Evans}. Then,
$$
u_m(x,y)- u_m(x,0) = \int_0^yu_{m,y}(x,z)\,dz
$$
and
$$
|u_m(x,y)| \leq |u_m(x,0)| + \int_0^y|u_{m,y}(x,z)|\,dz.
$$
Thus,
\begin{align*}
|u_m(x,y)|^2 &\leq 2|u_m(x,0)|^2 + 2\left(\int_0^y|u_{m,y}(x,z)|\,dz\right)^2
\\
&\leq 2|u_m(x,0)|^2 + 2y\int_0^y|u_{m,y}(x,z)|^2\,dz,
\end{align*}
and
\begin{align*}
\int_\RR|u_m(x,y)|^2e^{-\gamma|x|}\,dx &\leq C\left(\int_\RR|u_m(x,0)|^2e^{-\gamma|x|}\,dx\right.
\\
&\quad + \left. y\int_0^y\int_\RR |Du_m(x,z)|^2 e^{-\gamma|x|}\,dxdz\right)
\\
&\leq C\left(\int_\RR|u_m(x,0)|^2e^{-\gamma|x|}\,dx\right.
\\
&\quad + \left. y\int_0^y\int_\RR |Du_m(x,z)|^2 z^{\beta-1}e^{-\gamma|x|-\mu z}\,dxdz\right),
\\
&\quad\hbox{for $0<\beta<1$ and $0\leq y\leq 1$.}
\end{align*}
Letting $m\to\infty$ and applying \eqref{eq:umH2sequence} and \eqref{eq:traceumH2sequence}, we deduce that
\begin{equation}
\label{eq:TraceZeroEstimate}
\begin{aligned}
\int_\RR|u(x,y)|^2e^{-\gamma|x|}\,dx &\leq Cy\int_0^y\int_\RR |Du(x,z)|^2 \fw(x,z)\,dxdz,
\\
&\quad\hbox{for $0<\beta<1$ and $0\leq y\leq 1$.}
\end{aligned}
\end{equation}
Let $\zeta\in C^\infty(\bar\RR)$ satisfy $\zeta = 1$ on $(\infty,1]$, $\zeta = 0$ on $[2,\infty)$, and $0\leq\zeta\leq 1$, and write $\zeta_m(x,y) := \zeta(my), (x,y)\in\HH$, and set
$$
w_m := u(1-\zeta_m), \quad m\geq 1.
$$
Then
$$
w_{m,x} = u_x(1-\zeta_m), \quad w_{m,y} = u_y(1-\zeta_m) - mu\zeta_y(m\cdot).
$$
Consequently,
\begin{align*}
\|u - w_m\|_{H^1(\sO,\fw)}^2
&= \int_\sO \left\{y\left(\zeta_m^2|Du|^2 + 2muu_y\zeta_m\zeta_y(m\cdot) + m^2\zeta_y^2(m\cdot)u^2\right) \right.
\\
&\qquad + \left. (1+y)\zeta_m^2u^2 \right\}\fw(x,y)\,dxdy
\\
&\leq C\int_\sO 1_{\{0 < y < 2/m\}}\left(y|Du|^2 + (1+y)u^2\right)\fw(x,y)\,dxdy
\\
&\quad + Cm^2\int_\sO 1_{\{0 < y < 2/m\}} yu^2\fw(x,y)\,dxdy =: I_m + J_m.
\end{align*}
The integral $I_m$ converges to zero as $m\to\infty$, since
$$
\|u\|_{H^1(\sO,\fw)}^2
=
\int_\sO \left\{y|Du|^2 + (1+y)u^2\right\}\fw(x,y)\,dxdy < \infty.
$$
To estimate the term $J_m$, we apply \eqref{eq:TraceZeroEstimate},
\begin{align*}
J_m &= Cm^2\int_0^{2/m}\int_\RR yu^2\fw(x,y)\,dxdy
\\
&= Cm^2\int_0^{2/m}y^\beta \left(\int_\RR u^2 e^{-\gamma|x|}\,dx\right)e^{-\mu y}dy
\\
&\leq Cm^2\left(\int_0^{2/m}y^{1+\beta}\,dy\right)\left(\int_0^{2/m}\int_\RR|Du(x,z)|^2 z^{\beta-1}e^{-\gamma|x|-\mu z}\,dxdz\right)
\\
&\leq Cm^{-\beta}\int_0^{2/m}\int_\RR|Du|^2\fw(x,z)\,dxdz \to 0\quad\hbox{as }m\to \infty,
\end{align*}
since
$$
\|u\|_{H^2(\sO,\fw)}^2
=
\int_\sO \left\{y^2|Du|^2 + (1+y^2)|Du|^2 + (1+y)u^2\right\}\fw(x,y)\,dxdy < \infty.
$$
Consequently, the term $J_m$ also tends to zero as $m\to\infty$, and we conclude that $w_m\to u$ in $H^1(\sO,\fw)$ as $m\to\infty$. But $w_m = 0$ on $\RR\times[0,1/m)$ and we can therefore mollify the $w_m$ to produce functions $u_m\in C^2_0(\HH)$ such that $u_m\to u$ in $H^1(\sO,\fw)$ as $m\to\infty$ using the method of proofs of \cite[Theorems 5.3.1 \& 5.3.2]{Evans}. Hence, $u\in H^1_0(\sO\cup\Gamma_1,\fw)$.
\end{proof}

We also have the following analogue of \cite[Theorem 5.5.2]{Evans} for the $\Gamma_1$ boundary portion of $\partial\sO$:

\begin{lem}[$\Gamma_1$-trace zero functions in $H^1(\sO,\fw)$]
\label{lem:EvansGamma1TraceZero}
Assume the hypotheses of Lemma \ref{lem:Traces} with $k=1$. For all $\beta>0$ and $u\in H^1(\sO,\fw)$,
$$
u \in H^1_0(\sO\cup\Gamma_0,\fw) \quad\hbox{if and only if}\quad T_{\Gamma_1}^1u = 0 \hbox{ a.e. on $\Gamma_1$}.
$$
\end{lem}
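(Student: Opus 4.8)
The plan is to prove the two implications separately, the forward one being routine and the converse requiring the classical trace-zero machinery localized and adapted to the weight $\fw$.

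\emph{The implication $u \in H^1_0(\sO\cup\Gamma_0,\fw) \Rightarrow T^1_{\Gamma_1}u = 0$.} By Definition \ref{defn:H1WeightedSobolevSpaces} choose $\{u_m\}_{m\geq 0}\subset C^\infty_0(\sO\cup\Gamma_0)$ with $u_m\to u$ in $H^1(\sO,\fw)$. Each $u_m$ has compact support in $\sO\cup\Gamma_0$, hence vanishes in a neighborhood of $\bar\Gamma_1$, so $T^1_{\Gamma_1}u_m = y^{1/2}u_m|_{\Gamma_1} = 0$ by Lemma \ref{lem:Traces}. Since $T^1_{\Gamma_1}:H^1(\sO,\fw)\to L^2(\Gamma_1,\fw)$ is bounded, passing to the limit gives $T^1_{\Gamma_1}u = 0$ a.e. on $\Gamma_1$.

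\emph{The converse.} Assume $T^1_{\Gamma_1}u = 0$ a.e. on $\Gamma_1$. First I would introduce a locally finite partition of unity $\{\zeta_j\}_{j\geq 0}$ with $\zeta_0$ satisfying $\supp\zeta_0\cap\bar\Gamma_1=\emptyset$ and $\{\zeta_j\}_{j\geq 1}$ subordinate to the cover $\{U_j\}$ of $\Gamma_1\cap(\RR\times(\delta_0/2,\infty))$ from Hypothesis \ref{hyp:HestonDomainNearGammaOne}, and write $u = \zeta_0 u + \sum_{j\geq 1}\zeta_j u$ as a locally finite sum. For $j\geq 1$: since $U_j\subset\RR\times(\delta_0/4,\infty)$ is bounded and disjoint from $\Gamma_0$, the weight $\fw(x,y)=y^{\beta-1}e^{-\gamma|x|-\mu y}$ is bounded above and below by positive constants on $U_j$, so $H^1(U_j\cap\sO,\fw)=W^{1,2}(U_j\cap\sO)$ with equivalent norms, and the trace operators agree up to the factor $y^{1/2}$, which is likewise bounded above and below there. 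Consequently the hypothesis $T^1_{\Gamma_1}u=0$ on $\Gamma_1\cap U_j$ is the standard statement that $\zeta_j u$ has zero trace on $\partial(U_j\cap\sO)$ (it already vanishes identically near $\partial U_j$ since $\supp\zeta_j\Subset U_j$); straightening $\Gamma_1$ with $\Phi_j$ and applying the classical trace-zero theorem \cite[Theorem 5.5.2]{Evans} yields $\zeta_j u\in W^{1,2}_0(U_j\cap\sO)$ with approximants in $C^\infty_0(U_j\cap\sO)\subset C^\infty_0(\sO)\subset C^\infty_0(\sO\cup\Gamma_0)$, hence $\zeta_j u\in H^1_0(\sO\cup\Gamma_0,\fw)$.

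For $\zeta_0 u$, whose support is bounded away from $\bar\Gamma_1$, I would first truncate in the spatial variable by cutoff functions $\chi_m$ as in the proof of Corollary \ref{cor:KufnerPowerWeight}, so $\chi_m\zeta_0 u\to\zeta_0 u$ in $H^1(\sO,\fw)$ and each $\chi_m\zeta_0 u$ has compact support in $\bar\sO$ meeting $\partial\sO$ only inside $\Gamma_0$. Since $\Gamma_0\subset\{y=0\}=\partial\HH$ (Definition \ref{defn:HestonDomain}, and using Hypothesis \ref{hyp:HestonDomainNearGammaZero} for the product structure), near the relevant part of $\partial\sO$ the domain agrees with $\HH$, so for each fixed $m$ one shifts $\chi_m\zeta_0 u$ upward in $y$ by $\tau>0$, extends by zero below, and mollifies; letting $\tau\downarrow 0$ and then the mollification parameter $\downarrow 0$ produces functions in $C^\infty_0(\sO\cup\Gamma_0)$ converging in $H^1(\sO,\fw)$ to $\chi_m\zeta_0 u$. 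The only verification needed is continuity of vertical translation and mollification on $H^1(\sO,\fw)$, which follows from dominated convergence since $(y+\tau)^{\beta-1}e^{-\mu(y+\tau)}\to y^{\beta-1}e^{-\mu y}$ pointwise and is suitably dominated for $\tau$ small. Thus $\zeta_0 u\in H^1_0(\sO\cup\Gamma_0,\fw)$, and summing gives $u\in H^1_0(\sO\cup\Gamma_0,\fw)$.

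I expect the main obstacle to be the term $\zeta_0 u$ near $\Gamma_0$: here there is no trace-zero condition to exploit, so one must produce $C^\infty_0(\sO\cup\Gamma_0)$ approximants directly, combining the unbounded-domain cutoff, the flat product structure near $\Gamma_0$, and the delicate behaviour of the weight $y^{\beta-1}$ under small upward translations (which is precisely what makes vertical shifting, rather than some other regularization, the right tool). By contrast, the localization near $\Gamma_1$ is essentially classical once one observes that $\fw$ is nondegenerate there.
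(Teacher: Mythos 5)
Your overall strategy --- the forward implication via density of $C^\infty_0(\sO\cup\Gamma_0)$ plus boundedness of $T^1_{\Gamma_1}$, the converse by localizing and adapting \cite[Theorem 5.5.2]{Evans} --- is exactly what the paper intends, and both the forward direction and the treatment of the pieces supported in the $U_j$ are sound. However, the converse as written has two genuine gaps. First, the decomposition $u=\zeta_0u+\sum_{j\geq 1}\zeta_ju$ does not cover $\sO$ near the corners where $\Gamma_1$ meets $\Gamma_0$: the sets $U_j$ of Hypothesis \ref{hyp:HestonDomainNearGammaOne} lie in $\RR\times(\delta_0/4,\infty)$, while by Hypothesis \ref{hyp:HestonDomainNearGammaZero} the boundary portion $\Gamma_1$ continues down to $y=0$ as $\partial\Gamma_0\times(0,\delta_0)$. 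Points of $\sO$ close to $\Gamma_1\cap(\RR\times(0,\delta_0/4])$ belong to no $U_j$, and they cannot be carried by $\zeta_0$ either, since you require $\supp\zeta_0\cap\bar\Gamma_1=\emptyset$. You need an additional family of cutoffs near $\partial\Gamma_0\times[0,\delta_0/4]$, where the trace-zero condition on the vertical segments of $\Gamma_1$ (which can be exploited by \emph{horizontal} translation, harmless for the weight $e^{-\gamma|x|}$) must be combined with the approximation machinery at $\Gamma_0$.

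Second, the vertical-translation step for $\zeta_0u$ does not work: upward translation is not bounded on $H^1(\sO,\fw)$, and the claimed dominated convergence is false. After changing variables, the gradient term of the shifted function is $\int_\sO (w+\tau)^{\beta}\,|Dv(w)|^2\,e^{-\gamma|x|-\mu(w+\tau)}\,dx\,dw$, and the ratio $(w+\tau)^\beta/w^\beta$ is unbounded as $w\downarrow 0$ for every $\beta>0$, so there is no integrable dominating function; indeed one can construct $v\in H^1(\sO,\fw)$ whose upward translate has infinite weighted gradient norm (and, when $\beta>1$, infinite weighted $L^2$ norm). This is precisely the degeneracy of the weight at $\Gamma_0$ that makes translation the wrong tool. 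The repair is to drop translation entirely and invoke the paper's density results: Corollary \ref{cor:KufnerPowerWeight} gives $\phi_n\in C^\infty_0(\bar\sO)$ with $\phi_n\to\zeta_0u$ in $H^1(\sO,\fw)$, and multiplying by a fixed cutoff equal to $1$ on $\supp\zeta_0$ and vanishing near $\bar\Gamma_1$ yields approximants compactly supported in $\bar\sO\less\bar\Gamma_1=\sO\cup\Gamma_0$, hence in $C^\infty_0(\sO\cup\Gamma_0)$, which is what is required.
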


\begin{proof}
If $u \in H^1_0(\sO\cup\Gamma_0,\fw)$ then, just as in the proof of Lemma \ref{lem:PartialEvansTraceZero}, we have $T_{\Gamma_1}^1u = 0$ a.e. on $\Gamma_1$ by now appealing to Lemma \ref{lem:Traces}. Conversely, if $T_{\Gamma_1}^1u = 0$ a.e. on $\Gamma_1$, it is straightforward to adapt the proof of \cite[Theorem 5.5.2]{Evans} (and simpler than for Lemma \ref{lem:PartialEvansTraceZero}) to show that $u \in H^1_0(\sO\cup\Gamma_0,\fw)$.
\end{proof}

Although the boundary properties under discussion in Lemma \ref{lem:ybetaDuZeroTrace} will be superseded in the sequel to this article by verifications that solutions to the problems considered in this article are at least in $C^1_{\textrm{loc}}(\sO\cup\Gamma_0)$, nevertheless they provide insight when coupled with Example \ref{exmp:CIR}. Lemma \ref{lem:TracesGammaZero} shows that for all $z\in [0,\delta_0)$, where $\delta_0$ is as in Hypothesis \ref{hyp:HestonDomainNearGammaZero}, there is a well-defined trace operator
\begin{equation}
\label{eq:ybetaDuTrace}
T_y: H^2(\sO,\fw) \to L^2(\Gamma_0,e^{-\gamma|x|}\,dx),
\end{equation}
such that $T_yu = y^\beta(\rho u_x + \sigma u_y)|_{\Gamma_0\times\{y\}}$ when $y^\beta Du\in C_{\textrm{loc}}(\sO\cup\Gamma_0;\RR^2)$.

\begin{lem}[Equivalence of boundary conditions]
\label{lem:ybetaDuZeroTrace}
Let $\sO$ be as in Definition \ref{defn:HestonDomain} and require that $\sO$ obeys Hypothesis \ref{hyp:HestonDomainNearGammaZero}. Suppose $u \in H^2(\sO,\fw)$. Then there is a positive constant $C = C(\beta,\gamma,\mu,\rho,\sigma)$ such that
\begin{equation}
\label{eq:ybetaDuZeroTrace}
\|T_yu - T_0u\|_{L^1(\Gamma_0,e^{-\gamma|x|}\,dx)} \leq Cy^\beta\|u\|_{H^2(\sO,\fw)}, \quad 0< y\leq \delta_0.
\end{equation}
Moreover, $y^\beta(\rho u_x + \sigma u_y) = 0$ on $\Gamma_0$ (trace sense) if and only if $y^\beta(\rho u_x + \sigma u_y) \to 0$ in $L^1(\Gamma_0,e^{-\gamma|x|}\,dx)$ as $y\downarrow 0$.
\end{lem}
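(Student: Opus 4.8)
The plan is to establish the quantitative bound \eqref{eq:ybetaDuZeroTrace} first for smooth $u$, extend it to all of $H^2(\sO,\fw)$ by density, and then read off the equivalence as a corollary. Throughout, write $v:=\rho u_x+\sigma u_y$ and $g(x,z):=z^\beta v(x,z)$, so that $T_yu=g(\cdot,y)$ restricted to height $y$ for $0<y\le\delta_0$, while $T_0u=g(\cdot,0)$ in the trace sense of Lemma~\ref{lem:TracesGammaZero}. By Lemma~\ref{lem:HestonWeightedNeumannBoundaryProperty}, $T_0u=0$ for \emph{every} $u\in H^2(\sO,\fw)$, so it in fact suffices to bound $\|T_yu\|_{L^1(\Gamma_0,e^{-\gamma|x|}dx)}$ by $Cy^\beta\|u\|_{H^2(\sO,\fw)}$.

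First I would reduce to $u\in C^2(\bar\sO)$: by Corollary~\ref{cor:KufnerPowerWeightBoundedDerivatives} the space $C^2(\bar\sO)$ is dense in $H^2(\sO,\fw)$, and by Lemma~\ref{lem:TracesGammaZero} the trace maps $T_y$ ($0\le y\le\delta_0$) are bounded linear operators $H^2(\sO,\fw)\to L^2(\Gamma_0,e^{-\gamma|x|}dx)$, so once \eqref{eq:ybetaDuZeroTrace} is proved for smooth $u$ with a constant independent of $y$, it passes to the limit. For $u\in C^2(\bar\sO)$ the derivative $Du$ is bounded on $\bar\sO$ and $\beta>0$, so $g(x,z)\to0$ as $z\downarrow0$; hence on the product slab $\Gamma_0\times(0,\delta_0)$ furnished by Hypothesis~\ref{hyp:HestonDomainNearGammaZero} the fundamental theorem of calculus in $z$ gives $g(x,y)=\int_0^y\partial_z g(x,z)\,dz$. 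Expanding, $\partial_z g=\beta z^{\beta-1}v+z^\beta v_z$, whence the pointwise bound $|\partial_z g|\le C(z^{\beta-1}|Du|+z^\beta|D^2u|)$ with $C=C(\beta,\rho,\sigma)$.

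Next I would integrate over $\Gamma_0$ against $e^{-\gamma|x|}\,dx$, use Fubini to interchange the $x$- and $z$-integrals, and then apply Hölder's inequality on $\Gamma_0\times(0,y)$, pairing $z^{(\beta-1)/2}|Du|$ (respectively $z^{(\beta+1)/2}|D^2u|$) against $z^{(\beta-1)/2}$; since $\int_0^y z^{\beta-1}\,dz=\beta^{-1}y^\beta$ this produces the power of $y$ asserted in \eqref{eq:ybetaDuZeroTrace}, multiplied by $\big(\int_{\Gamma_0}\int_0^y(z^{\beta-1}|Du|^2+z^{\beta+1}|D^2u|^2)e^{-\gamma|x|}\,dz\,dx\big)^{1/2}$. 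On the slab $0<z\le\delta_0$ one has $e^{-\mu\delta_0}\le e^{-\mu z}\le1$ and $1\le(1+z)^2$, together with $z^{\beta-1}e^{-\gamma|x|-\mu z}=\fw$ and $z^{\beta+1}e^{-\gamma|x|-\mu z}=z^2\fw$, so this last factor is controlled by $C_{\mu,\delta_0}\|u\|_{H^2(\sO,\fw)}$ by Definition~\ref{defn:H2WeightedSobolevSpaces}. Combining the estimates — and recalling $T_0u=0$ — yields \eqref{eq:ybetaDuZeroTrace} for smooth $u$, hence for all $u\in H^2(\sO,\fw)$ by density. The final assertion of the lemma then follows immediately: \eqref{eq:ybetaDuZeroTrace} forces $T_yu\to T_0u$ in $L^1(\Gamma_0,e^{-\gamma|x|}dx)$ as $y\downarrow0$, and $T_0u=0$ by Lemma~\ref{lem:HestonWeightedNeumannBoundaryProperty}, so the weighted-Neumann trace condition and the $L^1$-convergence condition are equivalent (indeed both always hold).

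The step I expect to be the main obstacle is the weighted Hölder estimate yielding the stated power $y^\beta$: one must split the singular factor $z^{\beta-1}$ arising from $\partial_z(z^\beta)$ so that precisely the right portion recombines with $|Du|^2$ and with $z^2|D^2u|^2$ to reproduce the $H^2(\sO,\fw)$-weight $\fw$ near $\Gamma_0$, while the leftover power of $z$ integrates on $(0,y)$ to the stated order, and one must check that the $e^{-\mu z}$ and $(1+z)$ factors are harmless there. A subsidiary technical point is the justification of $\lim_{z\downarrow0}g(x,z)=0$ for smooth $u$ and of the interchange of limits in the density argument, both of which rest on the $C^2(\bar\sO)$-density of Corollary~\ref{cor:KufnerPowerWeightBoundedDerivatives} and on the continuity of the $\Gamma_0$-trace operators established in Lemma~\ref{lem:TracesGammaZero}.
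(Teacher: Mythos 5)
Your proposal is correct and follows essentially the same route as the paper's proof: write $y^\beta(\rho u_x+\sigma u_y)$ at height $y$ minus its trace at $0$ as $\int_0^y\partial_z(\cdot)\,dz$, bound the integrand pointwise by $C\bigl(z^{\beta-1}|Du|+z^{\beta}|D^2u|\bigr)$, and apply Cauchy--Schwarz against the weight $\fw$ over the thin slab $\Gamma_0\times(0,y)$, whose $\fw$-volume is $O(y^\beta)$; your only departures are the explicit $C^2(\bar\sO)$-density reduction and routing the final equivalence through Lemma \ref{lem:HestonWeightedNeumannBoundaryProperty} ($T_0u=0$), both harmless. One small caveat, which applies equally to the paper's own argument: the Cauchy--Schwarz step actually yields the square root of $\int_0^y z^{\beta-1}\,dz$, i.e.\ the power $y^{\beta/2}$ rather than $y^\beta$, which is still more than enough for the ``moreover'' assertion.
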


\begin{proof}
Denoting $w := y^\beta(\rho u_x + \sigma u_y)$, then
$$
w(\cdot,z) - w(\cdot,0) = \int_0^z w_y(\cdot,y)\,dy, \quad 0 < z \leq \delta_0.
$$
But $w_y = y^{\beta-1}(\rho u_x + \sigma u_y) + y^\beta(\rho u_{xy} + \sigma u_{yy})$, and thus
\begin{align*}
{}&\int_{\Gamma_0}|w(\cdot,z) - w(\cdot,0)|\,e^{-\gamma|x|}dx
\\
&\quad \leq \int_{\Gamma_0}\int_0^z |w_y(\cdot,y)| e^{-\gamma|x|}\,dydx
\\
&\quad = \int\limits_{\Gamma_0\times(0,z)} |y^{\beta-1}(\rho u_x + \sigma u_y) + y^\beta(\rho u_{xy} + \sigma u_{yy})|e^{-\gamma|x|}\,dxdy
\\
&\quad \leq e^{\mu}\int\limits_{\Gamma_0\times(0,z)} \left(|\rho u_x + \sigma u_y| + y|\rho u_{xy} + \sigma u_{yy}|\right)y^{\beta-1} e^{-\gamma|x|-\mu y}\,dxdy
\quad\hbox{(for $0<z\leq 1$)}
\\
&\quad = e^{\mu}\int_{\sO_z} \left(|\rho u_x + \sigma u_y| + y|\rho u_{xy} + \sigma u_{yy}|\right)\fw\,dxdy
\\
&\quad \leq e^{\mu}\hbox{Vol}^{1/2}(\sO_z,\fw)\left(\int_{\sO_z} \left(|\rho u_x + \sigma u_y| + y|\rho u_{xy} + \sigma u_{yy}|\right)^2\fw\,dxdy\right)^{1/2}
\\
&\quad \leq C\hbox{Vol}^{1/2}(\sO_z,\fw)\left(\|Du\|_{L^2(\sO_z,\fw)} + \|yD^2u\|_{L^2(\sO_z,\fw)}\right)
\\
&\quad \leq C\hbox{Vol}^{1/2}(\sO_z,\fw)\|u\|_{H^2(\sO,\fw)},
\end{align*}
where $\sO_z := \Gamma_0 \times (0,z)$ and $C = C(\mu,\rho,\sigma)$. Moreover,
\begin{align*}
\hbox{Vol}(\sO_z,\fw) &= \int\limits_{\Gamma_0\times(0,z)} y^{\beta-1} e^{-\gamma|x|-\mu y}\,dxdy
\\
&\leq 2\int_0^\infty e^{-\gamma x}\,dx \int_0^z y^{\beta-1}\,dy
\\
&= \frac{2}{\beta\gamma}z^\beta.
\end{align*}
Combining these observations gives \eqref{eq:ybetaDuZeroTrace}.

Suppose $y^\beta(\rho u_x + \sigma u_y) = 0$ on $\Gamma_0$ (trace sense). Then $T_0u=0$ a.e. on $\Gamma_0$ and \eqref{eq:ybetaDuZeroTrace} implies that
$$
T_yu  = y^\beta(\rho u_x(\cdot,y) + \sigma u_y(\cdot,y)) \to 0 \quad\hbox{in $L^1(\Gamma_0, e^{-\gamma|x|})$ as $y\to 0$}.
$$
Conversely, if $y^\beta(\rho u_x + \sigma u_y)(\cdot,y) \to 0$ in $L^1(\Gamma_0, e^{-\gamma|x|})$ as $y\to 0$, then \eqref{eq:ybetaDuZeroTrace} implies that $T_0u=0$ a.e. on $\Gamma_0$. Hence, $y^\beta(\rho u_x + \sigma u_y) = 0$ on $\Gamma_0$ (trace sense).
\end{proof}

\subsection{Weighted Sobolev spaces and the chain rule}
We have the following version, for our weighted Sobolev spaces, of the analogous results for the standard Sobolev space $H^1(\sO)$ given by \cite[Equations (2.5.44) \& (2.5.45)]{Bensoussan_Lions} (who also include $H^1_0(\sO)$), \cite[Lemma 7.6]{GT}, and \cite[Corollary 2.1.6]{Turesson_2000}, for certain types of weighted Sobolev spaces.

\begin{lem}[Weighted Sobolev spaces and the chain rule]
\label{lem:SobolevSpaceClosedUnderMaxPart}
Let $u,v\in H^1_0(\sO\cup T,\fw)$, where $T\subset\partial\sO$ is relatively open. Then $u^\pm, |u|, \max\{u,v\}, \min\{u,v\} \in H^1_0(\sO\cup T,\fw)$ and
\begin{align*}
D_iu^+ &= \begin{cases}D_iu, & u > 0, \\ 0, & u \leq 0,\end{cases}
\\
D_iu^- &= \begin{cases}0, & u \geq 0, \\ -D_iu & u < 0,\end{cases}
\\
D_i|u| &= \begin{cases}D_iu, & u > 0, \\ 0, & u = 0, \\ -D_iu & u < 0,\end{cases}
\\
D_i\max\{u,v\} &= \begin{cases}D_iu, & u > v, \\ 0, & u = v, \\ D_iv & u < v,\end{cases}
\\
D_i\min\{u,v\} &= \begin{cases}D_iv, & u > v, \\ 0, & u = v, \\ D_iu & u < v.\end{cases}
\end{align*}
\end{lem}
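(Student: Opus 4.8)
The plan is to reduce the statement to the classical chain-rule results for the unweighted Sobolev space $H^1(\sO')$ on bounded subdomains $\sO'\Subset\sO$, and then handle membership in the closed subspace $H^1_0(\sO\cup T,\fw)$ by a cutoff-and-approximation argument. First I would observe that it suffices to treat $u^+$: indeed $u^- = (-u)^+$, $|u| = u^+ + u^-$, $\max\{u,v\} = v + (u-v)^+$, and $\min\{u,v\} = u - (u-v)^+$, so all five formulas and all five membership claims follow from the single case of $u\mapsto u^+$ applied to $u$, $-u$, and $u-v$ (noting $u-v\in H^1_0(\sO\cup T,\fw)$ since the space is linear). So the real content is: if $u\in H^1_0(\sO\cup T,\fw)$ then $u^+\in H^1_0(\sO\cup T,\fw)$ with the stated weak derivative.

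Next I would establish the weak-derivative formula. Since $u\in H^1(\sO,\fw)$, Definition \ref{defn:H1WeightedSobolevSpaces} gives $y^{1/2}Du\in L^2(\sO,\fw)$ and $(1+y)^{1/2}u\in L^2(\sO,\fw)$, and in particular $u\in H^1_{\textrm{loc}}(\sO)$ in the ordinary (unweighted) sense on any $\sO'\Subset\sO$, because on such $\sO'$ the weight $\fw$ is bounded above and below by positive constants. By the classical chain rule \cite[Lemma 7.6]{GT}, $u^+\in H^1(\sO')$ with $D_iu^+ = (D_iu)\mathbf{1}_{\{u>0\}}$ a.e. on $\sO'$; since $\sO'\Subset\sO$ was arbitrary, this identity holds a.e. on $\sO$, and it shows $Du^+$ (defined distributionally on $\sO$) equals $(Du)\mathbf{1}_{\{u>0\}}$. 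Then $|Du^+|\leq|Du|$ a.e. and $|u^+|\leq|u|$ a.e., so $y^{1/2}|Du^+|$ and $(1+y)^{1/2}u^+$ lie in $L^2(\sO,\fw)$; hence $u^+\in H^1(\sO,\fw)$, with $\|u^+\|_{H^1(\sO,\fw)}\leq\|u\|_{H^1(\sO,\fw)}$.

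It remains to show $u^+$ lies in the closure $H^1_0(\sO\cup T,\fw)$ of $C^\infty_0(\sO\cup T)$. I would take a sequence $\{u_n\}\subset C^\infty_0(\sO\cup T)$ with $u_n\to u$ in $H^1(\sO,\fw)$, and argue that $u_n^+\to u^+$ in $H^1(\sO,\fw)$; this reduces the problem to approximating each $u_n^+$, which is a $W^{1,\infty}$ function with support a compact subset of $\sO\cup T$. For the convergence $u_n^+\to u^+$: one has $u_n^+\to u^+$ in $L^2(\sO,\fw)$ since $|a^+-b^+|\leq|a-b|$ together with $\|(1+y)^{1/2}(u_n-u)\|_H\to 0$ also controls the weighted $L^2$-norm of the difference against the finite measure $\fw\,dxdy$ (Remark \ref{rmk:FiniteVolumeDomain}); and for the gradients, after passing to a subsequence with $u_n\to u$ and $Du_n\to Du$ pointwise a.e. (Corollary \ref{cor:Billingsley}, applied with the finite measure $y\,\fw\,dxdy$ on the gradient terms), one has $Du_n^+ = (Du_n)\mathbf{1}_{\{u_n>0\}}\to(Du)\mathbf{1}_{\{u>0\}} = Du^+$ pointwise a.e. on the set $\{u\neq 0\}$, while on $\{u=0\}$ one uses $Du = 0$ a.e. (a standard fact) and $|Du_n^+|\leq|Du_n|$ to get the limit as well; dominated convergence in $L^2(\sO, y\fw\,dxdy)$, with dominating function obtainable after a further subsequence via the standard "dominated-convergence-from-$L^p$-convergence" trick, then gives $y^{1/2}Du_n^+\to y^{1/2}Du^+$ in $L^2(\sO,\fw)$. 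Finally, each $u_n^+$ is mollified/cut off to land in $C^\infty_0(\sO\cup T)$: since $\supp u_n^+$ is a compact subset of $\sO\cup T$ on which $\fw$ is comparable to Lebesgue measure, this is the classical statement that $W^{1,\infty}_c$ functions supported in $\sO\cup T$ are approximable in ordinary $H^1$ by $C^\infty_0(\sO\cup T)$, which transfers verbatim to the weighted norm on the (fixed) compact support. The main obstacle is the gradient convergence $Du_n^+\to Du^+$ across the level set $\{u=0\}$: one must be careful that $\mathbf{1}_{\{u_n>0\}}$ need not converge pointwise to $\mathbf{1}_{\{u>0\}}$ there, and the resolution is exactly the classical observation that $Du = 0$ a.e. on $\{u=0\}$ (hence the weak derivative of $u^+$ is insensitive to behavior on that set), combined with the uniform bound $|Du_n^+|\le|Du_n|$.
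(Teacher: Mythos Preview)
Your proposal is correct and follows the same overall strategy as the paper: reduce everything to $u^+$, invoke \cite[Lemma 7.6]{GT} locally to get the weak-derivative formula and membership in $H^1(\sO,\fw)$, then handle the closed-subspace condition $H^1_0(\sO\cup T,\fw)$ by approximation.

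The one genuine difference is in how the closed-subspace membership is established. You show directly that $u\mapsto u^+$ is continuous on $H^1(\sO,\fw)$ via a pointwise-a.e.\ plus dominated-convergence argument (with the level-set analysis on $\{u=0\}$), and then mollify each $u_n^+$. The paper instead reuses the $C^1$ regularization $f_\eps(t)=(t^2+\eps^2)^{1/2}-\eps$ for $t>0$, $f_\eps(t)=0$ for $t\le 0$, already built into the proof of \cite[Lemma 7.6]{GT}: for $u_n\in C^\infty_0(\sO\cup T)$ the composition $f_\eps(u_n)$ is $C^1$ with support contained in $\supp u_n\subset\sO\cup T$, hence (after mollification) lies in $C^\infty_0(\sO\cup T)$, and $f_\eps(u_n)\to u_n^+$ in $H^1$ as $\eps\to 0$ by the standard GT computation. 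This sidesteps the need to analyze $\mathbf{1}_{\{u_n>0\}}$ on the level set $\{u=0\}$ entirely. Your route is perfectly valid and in some sense more self-contained; the paper's route is shorter because the $f_\eps$ device packages the level-set subtlety once and for all.
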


\begin{proof}
For the case of $u^\pm$ and $|u|= u^+ + u^-$, together with $T=\partial\sO$ so $H^1_0(\sO\cup T,\fw) = H^1(\sO,\fw)$, the result follows from the proof of \cite[Lemma 7.6]{GT}, which translates from $H^1(\sO)$ to $H^1(\sO,\fw)$ without change. (Note that our convention, $x^- = \max\{-x,0\} = - \min\{x,0\}$, is opposite in sign to that of \cite[p. 152]{GT}.) Because $\max\{u,v\} = u + (v-u)^+$ and $u,v \in H^1(\sO,\fw)$, it follows that $\max\{u,v\} \in H^1(\sO,\fw)$. Similarly, as $\min\{u,v\} = u - (v-u)^-$ and $u,v \in H^1(\sO,\fw)$, it follows that $\min\{u,v\} \in H^1(\sO,\fw)$.

When $T\subsetneqq\partial\sO$, it suffices to consider the case of $u^+$, as the remaining cases follow as above. The proof of \cite[Lemma 7.6]{GT} for $u^+$ uses the approximation to $u^+$ given by
$$
f_\eps(u) = \begin{cases}(u^2+\eps^2)^{1/2} - \eps, & u > 0, \\ 0, & u \leq 0,\end{cases}
$$
for $\eps>0$. But if $u \in C^\infty_0(\sO\cup T)$, then $f_\eps(u) \in C^\infty_0(\sO\cup T)$. Thus, if $u=0$ on $T$ (trace sense), then $f_\eps(u) = 0$ on $T$ (trace sense). The remainder of the proof of \cite[Lemma 7.6]{GT} now shows that $u^+ \in H^1_0(\sO\cup T,\fw)$.
\end{proof}

\subsection{An application of Hardy's inequality}
Theorem \ref{thm:HardyInequality}, with $p=2$, yields the following version of Proposition \ref{prop:FullEnergyHeston} \emph{without} assuming $b_1=0$ when $0<\beta<1$:

\begin{prop}[Continuity estimate via Hardy's inequality]
\label{prop:FullEnergyHestonHardy}
Assume the coefficients of $A$ and constant $\gamma_0$ satisfy the hypotheses of Proposition \ref{prop:EnergyGardingHeston}. Then
\begin{equation}
\label{eq:StrongerHestonBilinearFormContinuityHardy}
|a(u,v)| \leq C\|u\|_V\|v\|_V, \quad\forall u\in H^1(\sO,\fw), v\in H_0^1(\sO\cup\Gamma_1,\fw),
\end{equation}
where $C$ is a positive constant depending at most on the coefficients $r,q,\kappa,\theta,\rho,\sigma$.
\end{prop}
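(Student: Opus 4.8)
The plan is to derive \eqref{eq:StrongerHestonBilinearFormContinuityHardy} straight from the continuity estimate \eqref{eq:StrongHestonBilinearFormContinuity} of Proposition \ref{prop:FullEnergyHeston}, which holds for \emph{every} $\gamma\geq 0$ and \emph{every} $u,v\in H^1(\sO,\fw)$ and makes no use of the normalization $b_1=0$:
\[
|a(u,v)| \leq C_5\|u\|_{H^1(\sO,\fw)}\left(\|v\|_{H^1(\sO,\fw)} + \|y^{-1/2}v\|_{L^2(\sO,\fw)}\right).
\]
Everything then reduces to absorbing the term $\|y^{-1/2}v\|_{L^2(\sO,\fw)}$ into $\|v\|_{H^1(\sO,\fw)}$, and this is precisely where the hypothesis $0<\beta<1$ and a vanishing $\Gamma_0$-trace for $v$ enter. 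Concretely, I claim that
\[
\|y^{-1/2}v\|_{L^2(\sO,\fw)} \leq C_H\|v\|_{H^1(\sO,\fw)}, \qquad \forall\, v\in H^1_0(\sO\cup\Gamma_1,\fw),
\]
by Hardy's inequality (compare Remark \ref{rmk:FullEnergyHeston}). Granting this, \eqref{eq:StrongerHestonBilinearFormContinuityHardy} follows at once with $C:=C_5(1+C_H)$, so the final constant depends only on the coefficients of $A$ through $C_5$, $\beta$, $\mu$, $\gamma_0$ (and, as noted below, the domain constant $\delta_0$).

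For the Hardy bound I would argue as follows. By Definition \ref{defn:H1WeightedSobolevSpaces}, $C^\infty_0(\sO\cup\Gamma_1)$ is dense in $H^1_0(\sO\cup\Gamma_1,\fw)$; applying the sought estimate to differences shows that if $v_n\to v$ in $H^1(\sO,\fw)$ with $v_n\in C^\infty_0(\sO\cup\Gamma_1)$, then $\{y^{-1/2}v_n\}$ is Cauchy in $L^2(\sO,\fw)$, its limit equals $y^{-1/2}v$ after passing to an a.e.-convergent subsequence, and the uniform bound passes to the limit; so it suffices to treat $v\in C^\infty_0(\sO\cup\Gamma_1)$. For such a $v$ the support is a compact subset of $\sO\cup\Gamma_1\subset\HH$, hence contained in $\{y\geq\varepsilon\}$ for some $\varepsilon>0$, so $v$ vanishes near $\Gamma_0$ and, in particular, $v(x,0)=0$ for $x\in\Gamma_0$. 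I then split
\[
\|y^{-1/2}v\|_{L^2(\sO,\fw)}^2 = \int_{\sO\cap\{y<\delta_0\}} y^{-1}v^2\,\fw\,dxdy + \int_{\sO\cap\{y\geq\delta_0\}} y^{-1}v^2\,\fw\,dxdy,
\]
with $\delta_0$ as in Hypothesis \ref{hyp:HestonDomainNearGammaZero}. On the second region $y^{-1}\leq\delta_0^{-1}$, so that piece is at most $\delta_0^{-1}\|v\|_{L^2(\sO,\fw)}^2\leq\delta_0^{-1}\|v\|_{H^1(\sO,\fw)}^2$. On the first region Hypothesis \ref{hyp:HestonDomainNearGammaZero} gives $\sO\cap\{0<y<\delta_0\}=\Gamma_0\times(0,\delta_0)$, so for each fixed $x\in\Gamma_0$ I apply the classical Hardy inequality \eqref{eq:ClassicHardyInequality} of Theorem \ref{thm:HardyInequality} with $p=2$ and weight exponent $\beta$ — here $\beta<1=p-1$, so only the condition at $y=0$ is required, and it holds — to the function $\phi(y):=v(x,y)e^{-\mu y/2}$, extended past $y=\delta_0$ by an affine cutoff (which enlarges the Hardy constant only by a factor depending on $\beta$). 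The substitution $v=\phi\,e^{\mu y/2}$ absorbs the exponential factor in $\fw$, and since $\phi'=(v_y-\tfrac{\mu}{2}v)e^{-\mu y/2}$, expanding the square produces, besides the good term $\int_\sO v_y^2\,y\,\fw\,dxdy$, only the lower-order term $C\mu^2\int_\sO v^2\,y\,\fw\,dxdy\leq C\mu^2\|v\|_{H^1(\sO,\fw)}^2$; integrating over $x\in\Gamma_0$ against $e^{-\gamma|x|}$ and using $y^\beta e^{-\gamma|x|-\mu y}=y\,\fw$ yields the bound $C(\beta,\mu)\|v\|_{H^1(\sO,\fw)}^2$ for the first region. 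This is the same mechanism used in the proof of Lemma \ref{lem:KufnerPowerWeightedSobolevSpaceEquivalents}; see \eqref{eq:HardyInequalityWH1Equivalence}.

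The main obstacle is the Hardy step, and within it the verification that the approximating smooth functions genuinely vanish near $\Gamma_0$ so that Theorem \ref{thm:HardyInequality} applies with only a boundary condition at $y=0$; this is exactly why $0<\beta<1$ is needed, the borderline case $\beta\geq 1$ having instead to exploit the identity $H^1(\sO,\fw)=H^1_0(\sO\cup\Gamma_0,\fw)$ of Lemma \ref{lem:ImprovedH1ApproximationLemma} together with a variant of the argument. The remaining ingredients — density, the subsequence/Fatou passage to the limit, and the elementary estimate on $\{y\geq\delta_0\}$ — are routine and closely follow the proof of Proposition \ref{prop:FullEnergyHeston} and the trace lemmas of Appendix \ref{sec:WeightedSobolevSpaces}. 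The one cosmetic point is that the constant produced this way also sees the domain constant $\delta_0$ (through the $\{y\geq\delta_0\}$ piece only); this can be recorded explicitly, or absorbed into the claimed dependence on the coefficients of $A$ by regarding $\delta_0$ as fixed once Hypothesis \ref{hyp:HestonDomainNearGammaZero} is in force.
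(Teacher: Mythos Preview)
Your approach is essentially the same as the paper's: both start from \eqref{eq:StrongHestonBilinearFormContinuity} and reduce to the Hardy-type bound $\|y^{-1/2}v\|_{L^2(\sO,\fw)}\leq C\|v\|_{H^1(\sO,\fw)}$, proved by density and a one-dimensional application of Theorem~\ref{thm:HardyInequality} in the $y$-variable after a cutoff. The only cosmetic differences are that the paper multiplies by a fixed cutoff $\varphi$ supported in $\{y\leq 2\}$ and splits at $y=1$ (so $e^{-\mu y}$ is simply bounded and no $\delta_0$ enters), whereas you absorb $e^{-\mu y/2}$ into $\phi$ and split at $\delta_0$; the paper's version is slightly cleaner but the arguments are interchangeable.
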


\begin{proof}
Recall that $H_0^1(\sO\cup\Gamma_1,\fw) = H^1(\sO,\fw)$ when $\beta\geq 1$ by Lemma \ref{lem:ImprovedH1ApproximationLemma} and Remark \ref{rmk:ImprovedH1ApproximationLemma}. When $\beta<1$, we require $v\in H_0^1(\sO\cup\Gamma_1,\fw)$ to ensure $v=0$ on $\Gamma_0$ (trace sense). By Definition \ref{defn:H1WeightedSobolevSpaces} we may assume without loss of generality that $v\in C^\infty_0(\sO\cup\Gamma_1)$ when $\beta<1$ and $v\in C^\infty_0(\sO\cup\Gamma_1)$ when $\beta\geq 1$ by Corollary \ref{cor:KufnerPowerWeight} and Remark \ref{rmk:ImprovedH1ApproximationLemma}. Let $\varphi\in C^\infty(\bar\RR)$ be such that $0\leq\varphi\leq 1$ on $\RR$, $\varphi(y)=1$ for $y\leq 1$, $\varphi(y)=0$ for $y\geq 2$, and $|\varphi'(y)|\leq 2$ for all $y\in\RR$.

Suppose $\supp v \subset (x_0,x_1)\times (0,\infty)$. Theorem \ref{thm:HardyInequality} yields, for all $x\in (x_0,x_1)$,
\begin{align*}
\int_0^1 v^2(x,y) y^{\beta-2}\,dy &\leq \int_0^\infty |\varphi(y)v(x,y)|^2 y^{\beta-2}\,dy
\\
&\leq \frac{4}{(\beta-1)^2}\int_0^\infty |\varphi(y)v_y(x,y) + \varphi'(y)v(x,y)|^2 y^\beta\,dy \quad\hbox{(by \eqref{eq:ClassicHardyInequality})}
\\
&\leq C\int_0^2 \left(v_y^2 + v^2\right)y^\beta\,dy,
\end{align*}
where the constant $C$ depends at most on $\beta$. Observe that
\begin{align*}
\|y^{-1/2}v\|_{L^2(\sO,\fw)}^2 &= \int_{x_0}^{x_1}\int_0^\infty y^{-1}v^2\,\fw(x,y)\,dydx
\\
&= \int_{x_0}^{x_1}\int_0^1 y^{\beta-2}v^2 e^{-\gamma|x|-\mu y}\,dydx + \int_{x_0}^{x_1}\int_1^\infty y^{\beta-2}v^2 e^{-\gamma|x|-\mu y}\,dydx.
\end{align*}
The first integral obeys
\begin{align*}
\int_{x_0}^{x_1}\int_0^1 y^{\beta-2}v^2 e^{-\gamma|x|-\mu y}\,dydx
&\leq
\int_{x_0}^{x_1}\left(\int_0^1 y^{\beta-2}v^2\,dy\right) e^{-\gamma|x|}\,dx
\\
&\leq C\int_{x_0}^{x_1}\left(\int_0^2 y^\beta (v_y^2 + v^2)\,dy\right)e^{-\gamma|x|}dx
\\
&\leq C\int_{x_0}^{x_1}\int_0^2 y(v_y^2 + v^2) y^{\beta-1} e^{-\gamma|x|-\mu y}dydx
\\
&\leq C\int_{x_0}^{x_1}\int_0^\infty y\left(v_y^2 + v^2\right) y^{\beta-1}e^{-\gamma|x|-\mu y}dydx
\\
&= C\|v\|_V^2.
\end{align*}
The second integral obeys
\begin{align*}
\int_{x_0}^{x_1}\int_1^\infty y^{\beta-2}v^2 e^{-\gamma|x|-\mu y}\,dydx
&\leq
\int_{x_0}^{x_1}\int_1^\infty v^2 y^{\beta-1}e^{-\gamma|x|-\mu y}\,dydx
\\
&\leq \int_{x_0}^{x_1}\int_0^\infty v^2 y^{\beta-1}e^{-\gamma|x|-\mu y}\,dydx
\\
&= \frac{\sigma^2}{2}\|v\|_V^2.
\end{align*}
Combining these two integral estimates yields
$$
\|y^{-1/2}v\|_{L^2(\sO,\fw)} \leq C\|v\|_V.
$$
The result now follows by combining the preceding estimate with \eqref{eq:StrongHestonBilinearFormContinuity}.
\end{proof}

\section{The Lax-Milgram theorem and a priori estimates}
\label{sec:LaxMilgram}
We summarize a few consequences of the Lax-Milgram theorem which we use throughout our article. We first recall the classical

\begin{thm}[Lax-Milgram theorem] \cite[Theorem 6.2.1]{Evans}, \cite[Theorem 5.8]{GT}
\label{thm:LaxMilgram}
Let $V$ be a Hilbert space. Suppose $b:V\times V\to\RR$ is a \emph{continuous} bilinear function, that is, there is a positive constant $c_1$ such that
\begin{equation}
\label{eq:LaxMilgramContinuous}
|b(u,v)| \leq c_1\|u\|_V\|v\|_V, \quad\forall u,v \in V,
\end{equation}
which is \emph{coercive}, that is, there is a positive constant $c_2$ such that
\begin{equation}
\label{eq:LaxMilgramCoercive}
b(u,u) \geq c_2\|u\|_V^2, \quad\forall u\in V.
\end{equation}
Then for each $f\in V'$, there exists a unique $u\in V$ such that
\begin{equation}
\label{eq:LaxMilgramEquation}
b(u,v) = f(v), \quad \forall v\in V.
\end{equation}
\end{thm}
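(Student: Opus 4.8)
The final statement is the classical Lax--Milgram theorem. The plan is to give the standard Hilbert-space argument via the Riesz representation theorem and the Banach fixed-point/contraction mapping theorem, which is the route most consistent with the self-contained style of this appendix.

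First I would invoke the Riesz representation theorem: for each fixed $u \in V$, the map $v \mapsto b(u,v)$ is a bounded linear functional on $V$ by the continuity hypothesis \eqref{eq:LaxMilgramContinuous}, so there is a unique element $Au \in V$ with $b(u,v) = (Au,v)_V$ for all $v \in V$, and $\|Au\|_V \leq c_1\|u\|_V$. Bilinearity of $b$ forces $A : V \to V$ to be linear and bounded. Likewise, $f \in V'$ is represented by a unique $w \in V$ with $f(v) = (w,v)_V$ for all $v$, so that \eqref{eq:LaxMilgramEquation} is equivalent to solving $Au = w$ in $V$.

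Next I would show $A$ is a bijection of $V$. The coercivity hypothesis \eqref{eq:LaxMilgramCoercive} gives $c_2\|u\|_V^2 \leq b(u,u) = (Au,u)_V \leq \|Au\|_V\|u\|_V$, hence $\|Au\|_V \geq c_2\|u\|_V$; this shows $A$ is injective with closed range. For surjectivity, suppose $z \perp \ran A$; then $0 = (Az,z)_V = b(z,z) \geq c_2\|z\|_V^2$, so $z = 0$, and therefore $\ran A = V$. Thus $A$ is invertible and we set $u := A^{-1}w$, which is the unique solution of \eqref{eq:LaxMilgramEquation}. (Alternatively, one can avoid the orthogonal-complement step by a contraction argument: fix $\rho > 0$ and consider $T_\rho v := v - \rho(Av - w)$; the estimate $\|T_\rho v_1 - T_\rho v_2\|_V^2 \leq (1 - 2\rho c_2 + \rho^2 c_1^2)\|v_1 - v_2\|_V^2$ shows $T_\rho$ is a contraction for $0 < \rho < 2c_2/c_1^2$, and its unique fixed point solves $Au = w$.)

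There is no genuine obstacle here; the only point requiring a little care is confirming that the representing map $A$ is \emph{linear} (which uses bilinearity of $b$ in the first argument together with uniqueness in the Riesz representation) and bounded, after which injectivity, closed range, and surjectivity follow mechanically from coercivity. Since Theorem \ref{thm:LaxMilgram} is explicitly cited from \cite[Theorem 6.2.1]{Evans} and \cite[Theorem 5.8]{GT}, it would also be entirely appropriate simply to quote those references rather than reproduce the proof; I would include the short argument above only for completeness.
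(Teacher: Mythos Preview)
Your proof is correct and follows the standard Riesz-representation argument found in the cited references. However, the paper does not actually prove Theorem~\ref{thm:LaxMilgram}: it is stated with citations to \cite[Theorem 6.2.1]{Evans} and \cite[Theorem 5.8]{GT} and used as a black box, exactly as you anticipated in your final remark.
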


\begin{cor}[A priori estimate for Lax-Milgram solutions]
\label{cor:LaxMilgram}
Let $b:V\times V\to\RR$ and $c_2>0$ be as in Theorem \ref{thm:LaxMilgram} and let $f\in V'$. If $u\in V$ is a solution to \eqref{eq:LaxMilgramEquation}, then
\begin{equation}
\label{eq:LaxMilgramAPrioriEstimate}
\|u\|_V \leq (1/c_2)\|f\|_{V'},
\end{equation}
where
$$
\|f\|_{V'} = \sup_{v\in V\setminus\{0\}}\frac{|f(v)|}{\|v\|_V}.
$$
Suppose $H$ is a Hilbert space such that $V\hookrightarrow H\hookrightarrow V'$, via inclusion and $h\mapsto (h,\cdot)_H$ respectively, and that $|v|_H \leq \|v\|_V$. If $f\in H$, then
\begin{equation}
\label{eq:LaxMilgramAPrioriEstimatefH}
\|u\|_V \leq (1/c_2)|f|_H.
\end{equation}
\end{cor}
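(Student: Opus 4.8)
The plan is to test the variational identity \eqref{eq:LaxMilgramEquation} against the solution itself and then invoke coercivity. First I would take $v = u$ in \eqref{eq:LaxMilgramEquation}, which gives $b(u,u) = f(u)$. Applying the coercivity hypothesis \eqref{eq:LaxMilgramCoercive} to the left-hand side and the definition of the dual norm, $|f(u)| \leq \|f\|_{V'}\|u\|_V$, to the right-hand side, I obtain
\[
c_2\|u\|_V^2 \leq b(u,u) = f(u) \leq \|f\|_{V'}\|u\|_V.
\]
If $\|u\|_V = 0$ the estimate \eqref{eq:LaxMilgramAPrioriEstimate} holds trivially; otherwise I divide both sides by $\|u\|_V > 0$ to conclude $\|u\|_V \leq (1/c_2)\|f\|_{V'}$.

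For the refinement \eqref{eq:LaxMilgramAPrioriEstimatefH}, I would observe that when $f\in H$, the action of $f$ on $V$ is represented by the inner product, $f(v) = (f,v)_H$ for all $v\in V$, via the embedding $H\hookrightarrow V'$ given by $h\mapsto (h,\cdot)_H$. The Cauchy-Schwarz inequality in $H$ then gives $|f(v)| = |(f,v)_H| \leq |f|_H|v|_H \leq |f|_H\|v\|_V$ for every $v\in V$, using the embedding bound $|v|_H \leq \|v\|_V$. Taking the supremum over $v\in V\setminus\{0\}$ yields $\|f\|_{V'} \leq |f|_H$, and substituting into \eqref{eq:LaxMilgramAPrioriEstimate} gives \eqref{eq:LaxMilgramAPrioriEstimatefH}.

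No genuine obstacle arises here: the argument is the standard energy estimate underlying the Lax-Milgram theorem, and neither the continuity hypothesis \eqref{eq:LaxMilgramContinuous} on $b$ nor the uniqueness assertion of Theorem \ref{thm:LaxMilgram} is needed --- only coercivity and the duality pairing, together with existence of the solution $u$ whose estimate is sought. The only minor points to handle with care are the degenerate case $u = 0$, which is dispatched immediately, and making explicit that the embedding $H\hookrightarrow V'$ is precisely the identification $h\mapsto (h,\cdot)_H$, so that the hypothesis ``$f\in H$'' unambiguously means $f(v) = (f,v)_H$ for all $v \in V$.
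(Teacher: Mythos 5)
Your proof is correct and follows essentially the same route as the paper: test against $v=u$, apply coercivity and the dual-norm bound, then for the second estimate show $\|f\|_{V'}\leq|f|_H$ via the embedding and Cauchy--Schwarz. Your observation that continuity of $b$ is not actually needed is accurate (the paper cites \eqref{eq:LaxMilgramContinuous} but its displayed inequality uses only coercivity and the duality pairing).
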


\begin{proof}
We may assume without loss that $u\neq 0$. Then \eqref{eq:LaxMilgramContinuous} and \eqref{eq:LaxMilgramCoercive} give
$$
\|u\|_V^2 \leq (1/c_2)b(u,u) = (1/c_2)f(u) \leq (1/c_2)\|f\|_{V'}\|u\|_V,
$$
and \eqref{eq:LaxMilgramAPrioriEstimate} follows. When $f\in H$, observe that
$$
\|f\|_{V'} = \sup_{v\in V\less\{0\}}\frac{|f(v)|}{\|v\|_V}
\\
\leq \sup_{v\in V\less\{0\}}\frac{|f(v)|}{|v|_H}
\\
\leq \sup_{v\in H\less\{0\}}\frac{|f(v)|}{|v|_H} = |f|_H,
$$
and this yields \eqref{eq:LaxMilgramAPrioriEstimatefH}.
\end{proof}

The following observations will be useful when constructing solutions to variational inequalities.

\begin{lem}[Bilinear forms and weak limits]
\label{lem:BilinearFormWeakLimit}
Suppose $b:V\times V\to\RR$ is a continuous bilinear form on a Hilbert space $V$. Let $\{u_n\}_{n\geq 1}, \{v_n\}_{n\geq 1} \subset V$ be sequences such that $u_n \rightharpoonup u \in V$ weakly and $v_n \to v\in V$ strongly. Then
\begin{enumerate}
\item $\lim_{n\to\infty} b(u_n,v) = b(u,v)$;
\item $\lim_{n\to\infty} b(u_n,v_n) = b(u,v)$;
\item If $b:V\times V\to\RR$ is coercive, then $b(u,u) \leq \liminf_{n\to\infty} b(u_n,u_n)$.
\end{enumerate}
\end{lem}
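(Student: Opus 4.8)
The plan is to dispatch the three assertions in order, each reducing to the defining property of weak convergence in the Hilbert space $V$ together with the continuity bound \eqref{eq:LaxMilgramContinuous} (and, for (3), the coercivity bound \eqref{eq:LaxMilgramCoercive}). For (1), I would note that for fixed $v\in V$ the map $\ell_v:w\mapsto b(w,v)$ is a bounded linear functional on $V$, with $|\ell_v(w)|\leq c_1\|v\|_V\|w\|_V$ by \eqref{eq:LaxMilgramContinuous}. Since $u_n\rightharpoonup u$ weakly in $V$ by hypothesis, the definition of weak convergence gives $\ell_v(u_n)\to\ell_v(u)$, which is exactly the claim. The symmetric reasoning (applying a bounded functional of the form $w\mapsto b(u,w)$) shows $b(u,v_n)\to b(u,v)$ as well, a fact I will reuse below.

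For (2), I would decompose $b(u_n,v_n)-b(u,v)=b(u_n,v_n-v)+\bigl(b(u_n,v)-b(u,v)\bigr)$. The second bracket tends to zero by (1). For the first term, recall that a weakly convergent sequence in a Hilbert space is norm bounded by the uniform boundedness principle, say $\sup_n\|u_n\|_V=:K<\infty$; then \eqref{eq:LaxMilgramContinuous} gives $|b(u_n,v_n-v)|\leq c_1K\|v_n-v\|_V\to 0$ since $v_n\to v$ strongly. Hence $b(u_n,v_n)\to b(u,v)$.

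For (3), the only step that uses coercivity, I would expand the nonnegative quantity $b(u_n-u,u_n-u)$ using bilinearity:
\[
b(u_n,u_n)=b(u_n-u,u_n-u)+b(u_n,u)+b(u,u_n)-b(u,u)\geq b(u_n,u)+b(u,u_n)-b(u,u),
\]
where the inequality uses \eqref{eq:LaxMilgramCoercive}, namely $b(u_n-u,u_n-u)\geq c_2\|u_n-u\|_V^2\geq 0$. Taking $\liminf$ as $n\to\infty$ and applying (1) to the sequences $b(u_n,u)$ and $b(u,u_n)$, both of which converge to $b(u,u)$, yields $\liminf_{n\to\infty}b(u_n,u_n)\geq b(u,u)+b(u,u)-b(u,u)=b(u,u)$, as required.

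I do not anticipate any genuine obstacle here: each part is a routine consequence of weak convergence together with the two standing hypotheses on $b$, and the only mild subtlety is remembering in step (2) that weak convergence entails norm boundedness so that the continuity estimate can absorb the $u_n$ factor. The argument is the standard one showing that a continuous coercive bilinear form is weakly lower semicontinuous along its diagonal.
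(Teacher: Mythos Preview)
Your proof is correct. Parts (1) and (2) are essentially identical to the paper's argument: the paper passes through the Riesz representation $b(u,v)=(Bu,v)_V$ and its adjoint $B^*$ to reach the same conclusion you get directly from observing that $w\mapsto b(w,v)$ is a bounded linear functional, and the decomposition in (2) is the same.

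For (3) you take a slightly different route. The paper observes that coercivity makes $u\mapsto\sqrt{b(u,u)}$ an equivalent norm on $V$ (after implicit symmetrization) and then invokes the standard weak lower semicontinuity of the norm. Your argument is more self-contained: you expand $b(u_n-u,u_n-u)\geq 0$ directly and use (1) in both slots. This avoids the detour through norm equivalence and in effect reproves the weak lower semicontinuity fact on the spot; the paper's version is terser but relies on a cited result, while yours exposes the mechanism.
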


\begin{proof}
By the proof of \cite[Theorem 6.2.1]{Evans} (via the Riesz Representation Theorem \cite[\S D.3]{Evans}), there is a bounded, linear operator $B:V\to V$ such that
$$
b(u,v) = (Bu, v)_V, \quad\forall u, v \in V.
$$
Moreover, if $B^*:V\to V$ is the adjoint operator defined by $(Bu, v)_V = (u, B^*v)_V$, for all $u,v \in V$, then
$$
\lim_{n\to\infty} b(u_n,v) = \lim_{n\to\infty} (Bu_n,v)_V = \lim_{n\to\infty} (u_n,B^*v)_V = (u,B^*v)_V = (Bu,v)_V = b(u,v).
$$
This proves (1). Since weakly convergent sequences are bounded \cite[\S D.4]{Evans}, there is a positive constant $K$ such that $\|u_n\|_V \leq K, \forall n\geq 1$. Then \eqref{eq:LaxMilgramContinuous} yields
\begin{align*}
|b(u_n,v_n) - b(u,v)| &= |b(u_n,v_n) - b(u_n,v) + b(u_n,v) - b(u,v)|
\\
&\leq |b(u_n,v_n-v)| + |b(u_n-u,v)|
\\
&\leq c_1K\|v_n-v\|_V + |b(u_n,v) - b(u,v)|,
\end{align*}
and so (2) follows from (1). When $b:V\times V\to \RR$ is coercive, then $u\mapsto \sqrt{b(u,u)}$ defines a norm on $V$ which is equivalent to $\|v\|_V$ and thus (3) follows from \cite[\S D.4]{Evans}.
\end{proof}

\section{Explicit solution to the elliptic Cox-Ingersoll-Ross equation}
\label{sec:CIR}
The following example illustrates some of the subtleties surrounding the boundary behavior of solutions to the elliptic Cox-Ingersoll-Ross equation, and thus the Heston equation, near $\Gamma_0$.

\begin{exmp}[Elliptic Cox-Ingersoll-Ross equation and confluent hypergeometric functions]
\label{exmp:CIR}
When the source function, $f$, in Problem \ref{prob:HestonWeakMixedBVPHomogeneous} is independent of $x$, it is natural to consider $f\in L^2(\RR_+,\fm)$ with $\fm(y):=y^{\beta-1}e^{-\mu y}$ and examine the problem of existence, uniqueness, and regularity of weak solutions $u \in H^1(\RR_+,\fm)$ to the elliptic \emph{Cox-Ingersoll-Ross} equation,
\begin{equation}
\label{eq:CIR}
Bu = f \hbox{ a.e. on }\RR_+,
\end{equation}
that is
\begin{equation}
\label{eq:CIRWeak}
b(u,v) = (f,v)_{L^2(\RR_+,\fm)}, \forall v\in H^1(\RR_+,\fm),
\end{equation}
where
$$
Bu :=  -\frac{\sigma^2}{2}yu_{yy} - \kappa(\theta-y)u_y + ru,
$$
and
$$
b(u,v) := \int_{\RR_+}\left(\frac{\sigma^2}{2}yu_yv_y + ruv\right)\fm\,dy,
$$
with (noting that the definition here differs slightly from that of Definition \ref{defn:H1WeightedSobolevSpaces})
$$
\|v\|_{H^1(\RR_+,\fm)}^2 := \int_{\RR_+}\left(zv_z^2 + v^2\right)\fm\,dz.
$$
Theorem \ref{thm:ExistenceUniquenessEllipticCoerciveHeston} shows that there exists a unique solution $u \in H^1(\RR_+,\fm)$ to \eqref{eq:CIRWeak}, while Theorem \ref{thm:GlobalRegularityEllipticHestonSpecial} shows that $u \in H^2(\RR_+,\fm)$, where  (noting that the definition here differs slightly from that of Definition \ref{defn:H2WeightedSobolevSpaces})
$$
\|v\|_{H^2(\RR_+,\fm)}^2 := \int_{\RR_+}\left(z^2v_{zz}^2 + (1+z^2)v_z^2 + \right)\fm\,dz.
$$
Lemma \ref{lem:HestonWeightedNeumannBVPHomogeneous} implies that this $u$ solves \eqref{eq:CIR}, and also obeys the weighted-Neumann boundary condition,
\begin{equation}
\label{eq:CIRWeightedNeumann}
y^\beta u'(0) = 0.
\end{equation}
It is instructive to examine the question of uniqueness of solutions to \eqref{eq:CIR} with the aid of explicit formulas from \cite[\S 13]{AbramStegun}. Suppose $f=0$ in \eqref{eq:CIR}. Writing $z := \mu y = 2\kappa y/\sigma^2$ and $v(z) := u(y)$, the equation $Bu=0$ on $\RR_+$ becomes
\begin{equation}
\label{eq:KummerEquation}
zv_{zz} + (\beta-z)v_z - av = 0 \hbox{ on }\RR_+,
\end{equation}
where $a := r/\kappa\in\RR$. This is the \emph{Kummer equation} with solution \cite[\S 13.1.11]{AbramStegun}
\begin{equation}
\label{eq:KummerSolution}
v(z) = c_1 U(a,\beta,z) + c_2 M(a,\beta,z), \quad z\in \RR_+,
\end{equation}
where $c_1,c_2\in\RR$ and $M(a,\beta,z), U(a,\beta,z)$ are the \emph{Kummer} or \emph{confluent hypergeometric functions} \cite[\S 13]{AbramStegun}. For any $\beta>0$, one knows that $M(a,\beta,z) \sim z^{-a} + z^{a-\beta}e^z$ as $z\to\infty$ \cite[\S 13.5.1]{AbramStegun} and thus $M\notin H^1(\RR_+,\fm)$ and we must have $c_2=0$, where we now write the weight as $\fm(z) = z^{\beta-1}e^{-z}$.

For any $\beta>0$, one knows that $U(a,\beta,z) \sim z^{-a}$ as $z\to\infty$ \cite[\S 13.5.2]{AbramStegun}; for $\beta>1$, one has $U(a,\beta,z) \sim z^{1-\beta}$ as $z\to 0$ by \cite[\S 13.5.6--8]{AbramStegun}; for $\beta=1$, one has $U(a,\beta,z) \sim \log z$ as $z\to 0$ by \cite[\S 13.5.9]{AbramStegun}; and, for $0<\beta<1$, one has $U(a,\beta,z) \sim z^{1-\beta}$ as $z\to 0$ by \cite[\S 13.5.10]{AbramStegun} and $U_z(a,\beta,z) = -aU(a+1,\beta+1,z) \sim z^{-\beta}$ by \cite[\S 13.4.21 \& \S 13.5.8]{AbramStegun}. Therefore, $U\notin H^1(\RR_+,\fm)$ when $\beta\geq 1$ while $U\in H^1(\RR_+,\fm)$ when $0<\beta<1$. But $U_{zz}(a,\beta,z) = a(a+1)U(a+2,\beta+2,z) \sim z^{-\beta-1}$ by \cite[\S 13.4.22 \& \S 13.5.6]{AbramStegun}. Therefore, $zU_{zz}  \sim z^{-\beta}$ as $z\to 0$ and so $U\notin H^2(\RR^+,\fm)$ by Definition \ref{defn:H2WeightedSobolevSpaces}. Consequently, we must have $c_1=0$ in \eqref{eq:KummerSolution}. (Equivalently, $U_z \sim z^{-\beta}$ as $z\to 0$ and so $U_z\notin L^2(\RR^+,\fm)$ and therefore $U\notin H^2(\RR^+,\fm)$ by Definition \ref{defn:H2WeightedSobolevSpaces}.)  Hence, $u=0$ is the unique solution in $H^1(\RR_+,\fm)$ to \eqref{eq:CIR} when $f=0$.

Observe that as $z\to 0$ we have $U(a,\beta,z) \sim \Gamma(1-\beta)/\Gamma(1+a-\beta)$ \cite[\S 13.5.10]{AbramStegun} and $z^\beta U_z(a,\beta,z) \sim
-a\Gamma(\beta)/\Gamma(a+1)$ by \cite[\S 13.4.21 \& \S 13.5.8]{AbramStegun}, and so the boundary conditions $v(0)=0$ or $z^\beta v_z(0)=0$ also imply that $c_1=0$.

Lastly, suppose that $f\in C^\alpha(\bar\RR_+)$ and $u_s\in C^{2,\alpha}(\bar\RR_+)$ is a solution to $Bu=f$ on $\RR_+$. If $u_s(0)\neq 0$ and, when $0<\beta<1$, $c\in\RR\less\{0\}$ is chosen such $u := u_s+cU$ obeys $u(0)=0$ (that is $c=-u_s(0)/U(0)$), then $u\in C(\bar\RR_+)\cap C^2(\RR_+)$ solves $Bu=f$ on $\RR_+$ and $u(0)=0$ but $u\notin C^1(\bar\RR_+)$.
\qed
\end{exmp}

%
%

\bibliography{finance,financejpm,jpm,master,mfpde,nsf,nsf08,nsf09,projects}

\providecommand{\bysame}{\leavevmode\hbox to3em{\hrulefill}\thinspace}
\providecommand{\MR}{\relax\ifhmode\unskip\space\fi MR }
\providecommand{\MRhref}[2]{%
  \href{http://www.ams.org/mathscinet-getitem?mr=#1}{#2}
}
\providecommand{\href}[2]{#2}
\begin{thebibliography}{10}

\bibitem{AbramStegun}
M.~Abramovitz and I.~A. Stegun, \emph{Handbook of mathematical functions},
  Dover, New York, 1972.

\bibitem{Adams}
R.~A. Adams, \emph{Sobolev spaces}, Academic Press, Orlando, FL, 1975.

\bibitem{Antoci_2003}
F.~Antoci, \emph{Some necessary and some sufficient conditions for the
  compactness of the embedding of weighted {S}obolev spaces}, Ricerche Mat.
  \textbf{52} (2003), 55--71.

\bibitem{Barbu_Marinelli}
V.~Barbu and C.~Marinelli, \emph{Variational inequalities in {H}ilbert spaces
  with measures and optimal stopping problems}, Appl. Math. Optim. \textbf{57}
  (2008), 237--262.

\bibitem{Bates1996}
D.~Bates, \emph{Jumps and stochastic volatility: exchange rate processes
  implicit in {D}eutsche {M}ark options}, Review of Financial Studies
  \textbf{9} (1996), 69--107.

\bibitem{Bayraktar_Xing_2009b}
E.~Bayraktar and H.~Xing, \emph{Analysis of the optmal exercise boundary of
  {A}merican options for jump diffusions}, SIAM J. Control and Optimization
  \textbf{41} (2009), 825--860.

\bibitem{Bayraktar_Xing_2009}
\bysame, \emph{On the uniqueness of classical solutions of {C}auchy problems},
  Proceedings of the American Mathematical Society \textbf{138} (2010),
  2061--2064, \url{arxiv.org/abs/0908.1086v3}.

\bibitem{Bensoussan_Lions}
A.~Bensoussan and J.~L. Lions, \emph{Applications of variational inequalities
  in stochastic control}, North-Holland, New York, 1982.

\bibitem{Billingsley_1995}
P.~Billingsley, \emph{Probability and measure}, third ed., Wiley, New York,
  1995.

\bibitem{Blanchet_Dolbeault_Monneau_2005}
A.~Blanchet, J.~Dolbeault, and R.~Monneau, \emph{On the one-dimensional
  parabolic obstacle problem with variable coefficients}, Progress in Nonlinear
  Differential Equations and Their Applications \textbf{63} (2005), 59--66.

\bibitem{Caffarelli_fermi_1998}
L.~A. Caffarelli, \emph{The obstacle problem}, Lezioni Fermiane, Accademia
  Nazionale dei Lincei, Rome, 1998.

\bibitem{Caffarelli_jfa_1998}
\bysame, \emph{The obstacle problem revisited}, J. Fourier Anal. Appl.
  \textbf{4} (1998), 383--402.

\bibitem{Caffarelli_Figalli_2011ppt}
L.~A. Caffarelli and A.~Figalli, \emph{Regularity of solutions to the parabolic
  fractional obstacle problem}, \url{arxiv.org/abs/1101.5170}, 2011.

\bibitem{Caffarelli_Petrosyan_Shahgholian_2004}
L.~A. Caffarelli, A.~Petrosyan, and H.~Shahgholian, \emph{Regularity of a free
  boundary in parabolic potential theory}, J. Amer. Math. Soc. \textbf{17}
  (2004), 827--869.

\bibitem{Caffarelli_Salsa_2005}
L.~A. Caffarelli and S.~Salsa, \emph{A geometric approach to free boundary
  problems}, Graduate Studies in Mathematics, vol.~68, American Mathematical
  Society, Providence, RI, 2005.

\bibitem{Chen_Chadam_2006}
X.~Chen and J.~Chadam, \emph{A mathematical analysis of the optimal exercise
  boundary for {A}merican put options}, SIAM J. Math. Anal. \textbf{38}
  (2006/07), 1613--1641.

\bibitem{Chen_Chadam_Jiang_Zheng_2008}
X.~Chen, J.~Chadam, L.~Jiang, and W.~Zheng, \emph{Convexity of the exercise
  boundary of the {A}merican put option on a zero dividend asset}, Math.
  Finance \textbf{18} (2008), 185--197.

\bibitem{Cheng_Costanzino_Liechty_Mazzucato_Nistor_2009}
W.~Cheng, N.~Costanzino, J.~Liechty, A.~Mazzucato, and V.~Nistor,
  \emph{Closed-form asymptotics for local volatility models},  (2009),
  preprint, \url{math.psu.edu/nistor/documents/ART/1D_model.pdf}.

\bibitem{Cherny_Engelbert_2005}
A.~S. Cherny and H-J. Engelbert, \emph{Singular stochastic differential
  equations}, Lecture Notes in Mathematics, vol. 1858, Springer-Verlag, Berlin,
  2005.

\bibitem{Constantinescu_Costanzino_Mazzucato_Nistor_2009}
R.~Constantinescu, N.~Costanzino, A.~Mazzucato, and V.~Nistor,
  \emph{Approximate solutions to second order parabolic equations {I}:
  {A}nalytic estimates},  (2009), preprint,
  \url{math.psu.edu/nistor/documents/ART/CCMNpreprint.pdf}.

\bibitem{Crandall_Ishii_Lions_1992}
M.~G. Crandall, H.~Ishii, and P-L. Lions, \emph{User's guide to viscosity
  solutions of second order partial differential equations}, Bull. Amer. Math.
  Soc. (N.S.) \textbf{27} (1992), 1--67.

\bibitem{DaskalHamilton1998}
P.~Daskalopoulos and R.~Hamilton, \emph{{$C^\infty$}-regularity of the free
  boundary for the porous medium equation}, J. Amer. Math. Soc. \textbf{11}
  (1998), 899--965.

\bibitem{Daskalopoulos_Rhee_2003}
P.~Daskalopoulos and E.~Rhee, \emph{Free-boundary regularity for generalized
  porous medium equations}, Commun. Pure Appl. Anal. \textbf{2} (2003),
  481--494.

\bibitem{DK}
S.~K. Donaldson and P.~B. Kronheimer, \emph{The geometry of four-manifolds},
  Oxford Univ. Press, Oxford, 1990.

\bibitem{DrabekKufnerNicolosi}
P.~Dr\'abek, A.~Kufner, and F.~Nicolosi, \emph{Quasilinear elliptic equations
  with degenerations and singularities}, Walter de Gruyter, New York, 1997.

\bibitem{DuffiePanSingleton2000}
D.~Duffie, J.~Pan, and K.~Singleton, \emph{Transform analysis and asset pricing
  for affine jump diffusions}, Econometrica \textbf{68} (2000), 1343--1376.

\bibitem{Ekstrom_2004}
E.~Ekstr{\"o}m, \emph{Properties of {A}merican option prices}, Stochastic
  Process. Appl. \textbf{114} (2004), 265--278.

\bibitem{Ekstrom_Tysk_2006}
E.~Ekstr{\"o}m and J.~Tysk, \emph{The {A}merican put is log-concave in the
  log-price}, J. Math. Anal. Appl. \textbf{314} (2006), 710--723.

\bibitem{Ekstrom_Tysk_bssvm}
\bysame, \emph{The {B}lack-{S}choles equation in stochastic volatility models},
  J. Math. Anal. Appl. \textbf{368} (2010), 498--507,
  \url{math.uu.se/~johant/sv.pdf}.

\bibitem{Ekstrom_Tysk_bcsftse}
\bysame, \emph{Boundary conditions for the single-factor term structure
  equation}, Ann. Appl. Probab. \textbf{21} (2011), 332--350,
  \url{math.uu.se/~johant/bux.pdf}.

\bibitem{Evans}
L.~C. Evans, \emph{Partial differential equations}, American Mathematical
  Society, Providence, RI, 1998.

\bibitem{Feehan_Pop_regularityweaksoln}
P.~Feehan and C.~Pop, \emph{Boundary regularity of weak solutions to the
  {H}eston equation}, in preparation, \url{math.rutgers.edu/~feehan}.

\bibitem{Feehan_cirsl}
P.~M.~N. Feehan, \emph{The {C}ox-{I}ngersoll-{R}oss partial differential
  equation and uniqueness for solutions to the {C}auchy and first initial
  boundary value problems}, Internet, \url{math.rutgers.edu/~feehan}.

\bibitem{Feehan_integralheston}
\bysame, \emph{Integral transform solutions to the heston {PDE}}, Internet,
  \url{math.rutgers.edu/~feehan}.

\bibitem{Feehan_generalizedhardy}
\bysame, \emph{Notes on the generalized {H}ardy inequality}, Internet,
  \url{math.rutgers.edu/~feehan}.

\bibitem{Fichera_1956}
G.~Fichera, \emph{Sulle equazioni differenziali lineari ellittico-paraboliche
  del secondo ordine}, Atti Accad. Naz. Lincei. Mem. Cl. Sci. Fis. Mat. Nat.
  Sez. I. (8) \textbf{5} (1956), 1--30.

\bibitem{DaFonseca_Grasselli_Tebaldi_2008}
J.~Da Fonseca, M.~Grasselli, and C.~Tebaldi, \emph{A multifactor volatility
  {H}eston model}, Quant. Finance \textbf{8} (2008), 591--604.

\bibitem{DiFrancesco_Pascucci_Polidoro_2008}
M.~Di Francesco, A.~Pascucci, and S.~Polidoro, \emph{The obstacle problem for a
  class of hypoelliptic ultraparabolic equations}, Proc. R. Soc. Lond. Ser. A
  Math. Phys. Eng. Sci. \textbf{464} (2008), 155--176.

\bibitem{Freidlin}
M.~Freidlin, \emph{Functional integration and partial differential equations},
  Princeton University Press, Princeton, NJ.

\bibitem{Frentz_Nystrom_Pascucci_Polidoro_2010}
M.~Frentz, K.~Nystr{\"o}m, A.~Pascucci, and S.~Polidoro, \emph{Optimal
  regularity in the obstacle problem for {K}olmogorov operators related to
  {A}merican {A}sian options}, Math. Ann. \textbf{347} (2010), 805--838.

\bibitem{FriedmanSDE}
A.~Friedman, \emph{Stochastic differential equations and applications}, vol. I,
  II, Academic, New York, 1975 and 1976.

\bibitem{Friedman_1982}
\bysame, \emph{Variational principles and free boundary problems}, Wiley, New
  York, 1982.

\bibitem{Garroni_Menaldi_2002}
M.~G. Garroni and J.~L. Menaldi, \emph{Second order elliptic
  integro-differential problems}, Chapman \& Hall/CRC, Boca Raton, FL, 2002.

\bibitem{GT}
D.~Gilbarg and N.~Trudinger, \emph{Elliptic partial differential equations of
  second order}, second ed., Springer, New York, 1983.

\bibitem{GlushkoSavchenko}
V.~P. Glushko and Y.~B. Savchenko, \emph{Higher-order degenerate elliptic
  equations: spaces, operators and boundary value problems}, Mathematical
  analysis, {V}ol.\ 23, Akad. Nauk SSSR Vsesoyuz. Inst. Nauchn. i Tekhn.
  Inform., Moscow, 1985.

\bibitem{Goldshtein_Ukhlov_2009}
V.~Goldshtein and A.~Ukhlov, \emph{Weighted {S}obolev spaces and embedding
  theorems}, Trans. Amer. Math. Soc. \textbf{361} (2009), 3829--3850.

\bibitem{Haug}
E.~G. Haug, \emph{The complete guide to option pricing formulas}, McGraw-Hill,
  New York, 2006.

\bibitem{Heston1993}
S.~Heston, \emph{A closed-form solution for options with stochastic volatility
  with applications to bond and currency options}, Review of Financial Studies
  \textbf{6} (1993), 327--343.

\bibitem{Jacob_v1}
N.~Jacob, \emph{Pseudo-differential operators and {M}arkov processes}, vol. 1:
  Fourier Analysis and Semigroups, Imperial College Press, London, 2001.

\bibitem{Jacob_v2}
\bysame, \emph{Pseudo-differential operators and {M}arkov processes}, vol. 2:
  Generators and Their Potential Theory, Imperial College Press, London, 2001.

\bibitem{Jacob_v3}
\bysame, \emph{Pseudo-differential operators and {M}arkov processes}, vol. 3:
  Markov Processes And Applications, Imperial College Press, London, 2001.

\bibitem{Janson_Tysk_2006}
S.~Janson and J.~Tysk, \emph{Feynman-{K}ac formulas for {B}lack-{S}choles-type
  operators}, Bull. London Math. Soc. \textbf{38} (2006), 269--282.

\bibitem{Khurana_1976}
S.~S. Khurana, \emph{Weakly convergent sequences in {L}${}^\infty$}, Indiana
  University Mathematics Journal \textbf{25} (1976), 77--78.

\bibitem{Kinderlehrer_Stampacchia_1980}
D.~Kinderlehrer and G.~Stampacchia, \emph{An introduction to variational
  inequalities and their applications}, Academic, New York, 1980.

\bibitem{Koch}
H.~Koch, \emph{Non-{E}uclidean singular integrals and the porous medium
  equation}, Habilitation Thesis, University of Heidelberg, 1999,
  \url{http://www.mathematik.uni-dortmund.de/lsi/koch/publications.html}.

\bibitem{Kohn_Nirenberg_1967}
J.~J. Kohn and L.~Nirenberg, \emph{Degenerate elliptic-parabolic equations of
  second order}, Comm. Pure Appl. Math. \textbf{20} (1967), 797--872.

\bibitem{Krylov_LecturesHolder}
N.~V. Krylov, \emph{Lectures on elliptic and parabolic equations in {H}\"older
  spaces}, American Mathematical Society, Providence, RI, 1996.

\bibitem{Krylov_LecturesSobolev}
\bysame, \emph{Lectures on elliptic and parabolic equations in {H}\"older
  spaces}, American Mathematical Society, Providence, RI, 2008.

\bibitem{Krylov_Priola_2010}
N.~V. Krylov and E.~Priola, \emph{Elliptic and parabolic second-order {PDE}s
  with growing coefficients}, Comm. Partial Differential Equations \textbf{35}
  (2010), 1--22.

\bibitem{Kufner}
A.~Kufner, \emph{Weighted {S}obolev spaces}, Wiley, New York, 1985.

\bibitem{KufnerOpic}
A.~Kufner and B.~Opic, \emph{Hardy type inequalities}, Pitman Research Notes in
  Mathematics, Wiley, London, 1990.

\bibitem{KufnerPersson}
A.~Kufner and L.-E. Persson, \emph{Weighted inequalities of {H}ardy type},
  World Scientific, New York, 2003.

\bibitem{KufnerSandig}
A.~Kufner and A-M. S{\"a}ndig, \emph{Some applications of weighted {S}obolev
  spaces}, Teubner, Leipzig, 1987.

\bibitem{Labutin}
D.~Labutin, \emph{Lecture notes on real analysis: spaces of measurable
  functions}, Internet,
  \url{http://www.math.ucsb.edu/~labutin/real_analysis_IV.pdf}.

\bibitem{Laurence_Salsa_2009}
P.~Laurence and S.~Salsa, \emph{Regularity of the free boundary of an
  {A}merican option on several assets}, Comm. Pure Appl. Math. \textbf{62}
  (2009), no.~7, 969--994.

\bibitem{Lee_2001}
K-A. Lee, \emph{The obstacle problem for {M}onge-{A}mp\'ere equation}, Comm.
  Partial Differential Equations \textbf{26} (2001), 33--42.

\bibitem{LevendorskiDegenElliptic}
S.~Levendorski{\u\i}, \emph{Degenerate elliptic equations}, Kluwer, Dordrecht,
  1993.

\bibitem{Lieberman}
G.~M. Lieberman, \emph{Second order parabolic differential equations}, World
  Scientific Publishing Co. Inc., River Edge, NJ, 1996.

\bibitem{Lions_1969}
J.~L. Lions, \emph{Quelques methodes de r\'esolution des probl\'emes aux
  limites non-lin\'eaires}, Dunod, Gauthier-Villars, Paris, 1969.

\bibitem{Lunardi_1997}
A.~Lunardi, \emph{Schauder estimates for a class of degenerate elliptic and
  parabolic operators with unbounded coefficients in {${\bf R}^n$}}, Ann.
  Scuola Norm. Sup. Pisa Cl. Sci. (4) \textbf{24} (1997), 133--164.

\bibitem{Mandl}
P.~Mandl, \emph{Analytical treatment of one-dimensional {M}arkov processes},
  Springer, New York, 1969.

\bibitem{Mastroeni_Matzeu_1996}
L.~Mastroeni and M.~Matzeu, \emph{Parabolic variational inequalities with
  degenerate elliptic part}, Riv. Mat. Univ. Parma \textbf{5} (1996), 223--234.

\bibitem{Mastroeni_Matzeu_1998}
\bysame, \emph{A degenerate parabolic variational inequality for the {A}merican
  option pricing problem}, Math. Models Methods Appl. Sci. \textbf{8} (1998),
  485--493.

\bibitem{McKean_1956}
H.~P. McKean, \emph{Elementary solutions for certain parabolic partial
  differential equations}, Transactions of the American Mathematical Society
  \textbf{82} (1956), 519--548.

\bibitem{Moreira_Teixeira_2004}
D.~R. Moreira and E.~V. Teixeira, \emph{On the behavior of weak convergence
  under nonlinearities and applications}, Proc. Amer. Math. Soc. \textbf{133}
  (2005), 1647--1656.

\bibitem{Murthy_Stampacchia_1968}
M.~K.~V. Murthy and G.~Stampacchia, \emph{Boundary value problems for some
  degenerate elliptic operators}, Ann. Mat. Pura Appl. \textbf{80} (1968),
  1--122.

\bibitem{Nystrom_2007}
K.~Nystr{\"o}m, \emph{Free boundary regularity for multi-dimensional {A}merican
  options through blow-ups and global solutions}, J. Comput. Math. Optim.
  \textbf{3} (2007), 39--76.

\bibitem{Nystrom_2008}
\bysame, \emph{On the behaviour near expiry for multi-dimensional {A}merican
  options}, J. Math. Anal. Appl. \textbf{339} (2008), 644--654.

\bibitem{Nystrom_Pascucci_Polidoro_2010}
K.~Nystr{\"o}m, A.~Pascucci, and S.~Polidoro, \emph{Regularity near the initial
  state in the obstacle problem for a class of hypoelliptic ultraparabolic
  operators}, J. Differential Equations \textbf{249} (2010), 2044--2060.

\bibitem{Oleinik_Radkevic}
O.~A. Ole{\u\i}nik and E.~V. Radkevi{\v{c}}, \emph{Second order equations with
  nonnegative characteristic form}, Plenum Press, New York, 1973.

\bibitem{Petrosyan_Shagholian_Uraltseva}
A.~Petrosyan, H.~Shagholian, and N.~Ural'tseva, \emph{Regularity of free
  boundaries in obstacle type problems}, book in preparation, 2011.

\bibitem{Petrosyan_Shahgholian_2007}
A.~Petrosyan and H.~Shahgholian, \emph{Parabolic obstacle problems applied to
  finance}, Recent developments in nonlinear partial differential equations,
  Contemp. Math., vol. 439, Amer. Math. Soc., Providence, RI, 2007,
  pp.~117--133.

\bibitem{PopThesis}
C.~Pop, {Ph.D}. thesis, Rutgers, The State University of New Jersey, New
  Brunswick, NJ, in preparation.

\bibitem{Priola_2009}
E.~Priola, \emph{Global {S}chauder estimates for a class of degenerate
  {K}olmogorov equations}, Studia Math. \textbf{194} (2009), 117--153.

\bibitem{ThomsonReuters}
Thomson Reuters, \emph{Web of knowledge}, Internet, \url{wokinfo.com}.

\bibitem{Rodrigues_1987}
J-F. Rodrigues, \emph{Obstacle problems in mathematical physics},
  North-Holland, New York, 1987.

\bibitem{RudinRealComplex}
W.~Rudin, \emph{Real and complex analysis}, third ed., McGraw-Hill Book Co.,
  New York, 1987.

\bibitem{Savin_2004}
O.~Savin, \emph{The obstacle problem for {M}onge {A}mp\`ere equation}, Calc.
  Var. Partial Differential Equations \textbf{22} (2005), 303--320.

\bibitem{Shreve2}
S.~E. Shreve, \emph{Stochastic calculus for finance. {V}olume {II}:
  {C}ontinuous-time models}, Springer, New York, 2004.

\bibitem{Stein}
E.~Stein, \emph{Singular integral operators and differentiability properties of
  functions}, Princeton Univ. Press, Princeton, NJ, 1970.

\bibitem{Stredulinsky}
E.~W. Stredulinsky, \emph{Weighted inequalities and degenerate elliptic partial
  differential equations}, Springer, New York, 2009.

\bibitem{StroockVaradhan1972}
D.~Stroock and S.~R.~S. Varadhan, \emph{On degenerate elliptic-parabolic
  operators of second order and their associated diffusions}, Comm. Pure Appl.
  Math. \textbf{25} (1972), 651--713.

\bibitem{Touzi_1999}
N.~Touzi, \emph{American options exercise boundary when the volatility changes
  randomly}, Applied Mathematics and Optimization \textbf{39} (1999), 411--422.

\bibitem{Troianiello}
G.~M. Troianiello, \emph{Elliptic differential equations and obstacle
  problems}, Plenum Press, New York, 1987.

\bibitem{Turesson_2000}
B.~O. Turesson, \emph{Nonlinear potential theory and weighted {S}obolev
  spaces}, Lecture Notes in Mathematics, vol. 1736, Springer-Verlag, Berlin,
  2000.

\bibitem{Zettl}
A.~Zettl, \emph{Sturm {L}iouville theory}, Mathematical Surveys and Monographs,
  vol. 121, American Mathematical Society, Providence, RI, 2005,
  \url{math.niu.edu/SL2}.

\bibitem{Zolezzi_1975}
T.~Zolezzi, \emph{On weak convergence in {L}${}^\infty$}, Indiana University
  Mathematics Journal \textbf{23} (1974), 765--766.

\end{thebibliography}
\bibliographystyle{amsplain}

\end{document}